\newlist{enuma}{enumerate}{1}
\setlist[enuma]{label=(\alph*)}
\setlist{font=\normalfont}
\tikzset{knot diagram/every knot diagram/.style={background color=gray!20,clip width=6,end tolerance=5pt,clip radius=0.2cm}}
\tikzset{edge/.style={line width=0.8}}
\tikzset{wall/.style={very thick}}
\tikzset{det/.style={edge,decoration={markings,mark=at position .5 with
	{\node[draw,fill=gray!20,isosceles triangle,sharp corners,transform shape,inner sep=0.1cm] (det){};}},postaction={decorate}}}
\tikzset{-o-/.code 2 args={
\ifthenelse{\equal{#2}{}}{
}{
	\ifthenelse{\equal{#2}{>}\OR\equal{#2}{<}}{
		\pgfkeysalso{decoration={markings,mark=at position #1 with {\arrow[scale=0.8]{#2}}},postaction={decorate}}
	}{
		\pgfkeysalso{decoration={markings,mark=at position #1 with {\draw[black, fill={#2}] circle[radius=2pt];}},postaction={decorate}}
	}
}
}}
\newcommand{\picmargin}{\mathop{}\!}
\newcommand{\stsize}{\footnotesize}
\newcommand{\TanglePic}[5]{
\picmargin
\begin{tikzpicture}[baseline=(ref.base)]
\tikzmath{\xw=#1; \yh=#2; \yd=0; \yu=0;}
\ifthenelse{\equal{#3}{<-}\OR\equal{#4}{<-}}{
	\tikzmath{\yd=-0.1;}
}{}
\ifthenelse{\equal{#3}{->}\OR\equal{#4}{->}}{
	\tikzmath{\yu=0.1;}
}{}
\tikzmath{\yt=\yh+\yu;}
\fill[gray!20] (0,\yd)rectangle(\xw,\yt);
\node(ref) at ({\xw/2},{\yh/2}) {\phantom{$-$}};
\begin{scope}[wall]
\ifthenelse{\equal{#3}{w}}{
	\draw (0,\yd) --(0,\yt);
}{\ifthenelse{\equal{#3}{}}{}{
	\draw[#3] (0,\yd) --(0,\yt);
}}
\ifthenelse{\equal{#4}{w}}{
	\draw (\xw,\yd) --(\xw,\yt);
}{\ifthenelse{\equal{#4}{}}{}{
	\draw[#4] (\xw,\yd) --(\xw,\yt);
}}
\end{scope}
#5
\end{tikzpicture}
\picmargin
}
\newcommand{\HorizontalTangle}[7]{
\TanglePic{0.9}{0.9}{#1}{#2}{
\tikzmath{\ya=\yh/2+0.2; \yb=\yh/2-0.2;}
\path (0,\ya) coordinate (C) (0,\yb) coordinate (D);
\ifthenelse{\equal{#3}{}\AND\equal{#4}{}}{}{
	\draw[left,inner sep=2pt] (C)node{\stsize #3} (D)node{\stsize #4};
}
\path (\xw,\ya) coordinate (A) (\xw,\yb) coordinate (B);
\ifthenelse{\equal{#5}{}\AND\equal{#6}{}}{}{
	\draw[right,inner sep=2pt] (A)node{\stsize #5} (B)node{\stsize #6};
}
#7
}}
\newcommand{\crossSt}[9]{
\HorizontalTangle{#1}{#2}{#6}{#7}{#8}{#9}{
\ifthenelse{\equal{#3}{a}}{
	\draw[edge] (C) ..controls ({\xw/2},\ya) and ({\xw/2},\yb).. (B);
	\draw[edge] (D) ..controls ({\xw/2},\yb) and ({\xw/2},\ya).. (A);
}{
	\begin{knot}
	\strand[edge] (C) ..controls ({\xw/2},\ya) and ({\xw/2},\yb).. (B);
	\strand[edge] (D) ..controls ({\xw/2},\yb) and ({\xw/2},\ya).. (A);
	\ifthenelse{\(\equal{#3}{n}\AND\equal{#4}{#5}\)\OR\(\equal{#3}{p}\AND\NOT\equal{#4}{#5}\)}{
		\flipcrossings{1}
	}{}
	\end{knot}
}
\ifthenelse{\equal{#4}{>}\OR\equal{#4}{<}}{
	\tikzmath{\pos=0.9;}
}{
	\tikzmath{\pos=0.8;}
}
\path[edge,-o-={\pos}{#4}] (C) ..controls ({\xw/2},\ya) and ({\xw/2},\yb).. (B);
\path[edge,-o-={\pos}{#5}] (D) ..controls ({\xw/2},\yb) and ({\xw/2},\ya).. (A);
}}
\newcommand{\cross}[5]{\crossSt{#1}{#2}{#3}{#4}{#5}{}{}{}{}}
\newcommand{\crosswall}[6]{\crossSt{}{#1}{#2}{#3}{#4}{}{}{#5}{#6}}
\newcommand{\walltwowallSt}[8]{
\HorizontalTangle{#1}{#2}{#5}{#6}{#7}{#8}{
\draw[edge,-o-={0.5}{#3}] (C) -- (A);
\draw[edge,-o-={0.5}{#4}] (D) -- (B);
}}
\newcommand{\walltwowall}[4]{\walltwowallSt{#1}{#2}{#3}{#4}{}{}{}{}}
\newcommand{\twowall}[5]{\walltwowallSt{}{#1}{#2}{#3}{}{}{#4}{#5}}
\newcommand{\kink}{
\TanglePic{0.9}{0.9}{}{}{
\begin{knot}
\strand[edge] (\xw,0.3) -- (0.6,0.3)
	..controls (0.2,0.3) and (0.2,0.7).. (0.45,0.7);
\strand[edge] (0,0.3) -- (0.2,0.3)
	..controls (0.7,0.3) and (0.7,0.7).. (0.45,0.7);
\end{knot}
\path[edge,-o-={0.5}{>}] (0.6,0.3) -- (\xw,0.3);
}}
\newcommand{\horizontaledge}[1]{
\TanglePic{0.9}{0.9}{}{}{
\draw[edge,-o-={0.8}{#1}] (0,{\yh/2}) --(\xw,{\yh/2});
}}
\newcommand{\circlediag}[2][]{
\TanglePic{0.9}{0.9}{}{}{
\draw[edge,-o-={0.1}{#2}] (0.45,0.45) circle (0.25);
}}
\newcommand{\ThreeStates}[8]{
\ifthenelse{\equal{#6}{}\AND\equal{#7}{}\AND\equal{#8}{}}{}{
	\draw (#2,#3)node[#1,inner sep=2pt]{{\stsize #6}};
	\draw (#2,#4)node[#1,inner sep=2pt]{{\stsize #7}};
	\draw (#2,#5)node[#1,inner sep=2pt]{{\stsize #8}};
}}
\newcommand{\MultiStrand}[7][b]{
\TanglePic{#2}{#3}{#4}{#5}{
\tikzmath{\xw=#2; \y1=0.2; \y4=\yh-\y1; 
\s=0.35; \y2=\y1+\s; \y3=\y4-\s; \v=(\y2+\y3)/2;}
\ifthenelse{\equal{#1}{b}}{
	\tikzmath{\ym=\y2; \yds=(\y2+\y4)/2;}
}{
	\tikzmath{\ym=\y3; \yds=(\y1+\y3)/2;}
}
\ifthenelse{\equal{#6}{1}\OR\equal{#6}{3}}{
	\draw ({\xw-0.15},\yds)node[rotate=90]{...};
}{}
\ifthenelse{\equal{#6}{2}\OR\equal{#6}{3}}{
	\draw (0.15,\yds)node[rotate=90]{...};
}{}
#7
}}
\newcommand{\sinksourcethree}[1]{
\MultiStrand{1.4}{1.3}{}{}{3}{
\coordinate (V0) at (0.6,\v);
\coordinate (V1) at (0.8,\v);
\draw[edge,-o-={.50}{#1}] (0,\y1) to [out=0, in=-120] (V0);
\draw[edge,-o-={.57}{#1}] (0,\y2) to [out=0, in=-150] (V0);
\draw[edge,-o-={.50}{#1}] (0,\y4) to [out=0, in=120] (V0);
\draw[edge,-o-={.65}{#1}] (V1) to [out=-60, in=180] (\xw,\y1);
\draw[edge,-o-={.58}{#1}] (V1) to [out=-30, in=180] (\xw,\y2);
\draw[edge,-o-={.65}{#1}] (V1) to [out=60, in=180] (\xw,\y4);
}}
\newcommand{\coupon}[2]{
\MultiStrand{1.4}{1.3}{}{}{3}{
\node (V) at (\xw/2,\yh/2) [ellipse, minimum height=0.8cm,inner sep=0pt, draw]
	{\small{#1}};
\draw[edge,-o-={.35}{#2}] (0,\y1) ..controls (\xw*0.2,\y1).. (V.-120);
\draw[edge,-o-={.50}{#2}] (0,\y2) ..controls (\xw*0.2,\y2).. (V.-170);
\draw[edge,-o-={.35}{#2}] (0,\y4) ..controls (\xw*0.2,\y4).. (V.120);
\draw[edge,-o-={.70}{#2}] (V.-60) ..controls (\xw*0.8,\y1).. (\xw,\y1);
\draw[edge,-o-={.60}{#2}] (V.-10) ..controls (\xw*0.8,\y2).. (\xw,\y2);
\draw[edge,-o-={.70}{#2}] (V.60) ..controls (\xw*0.8,\y4).. (\xw,\y4);
}}
\newcommand{\widecoupon}[8]{
\MultiStrand[#1]{1.5}{1.4}{<-}{<-}{3}{
\ThreeStates{left}{0}{\y4}{\ym}{\y1}{#3}{#4}{#5}
\ThreeStates{right}{\xw}{\y4}{\ym}{\y1}{#6}{#7}{#8}
\node (V) at (\xw/2,\v) [circle,minimum height=1cm,
	inner sep=0pt,draw]{#2};
\begin{scope}[edge]
\draw (0,\y1) ..controls (\xw*0.2,\y1).. (V.-120);
\draw (0,\y4) ..controls (\xw*0.2,\y4).. (V.120);
\draw (V.-60) ..controls (\xw*0.8,\y1).. (\xw,\y1);
\draw (V.60) ..controls (\xw*0.8,\y4).. (\xw,\y4);
\ifthenelse{\equal{#1}{b}}{
	\draw (0,\ym) ..controls (\xw*0.1,\ym).. (V.-165);
	\draw (V.-15) ..controls (\xw*0.9,\ym).. (\xw,\ym);
}{
	\draw (0,\ym) ..controls (\xw*0.1,\ym).. (V.165);
	\draw (V.15) ..controls (\xw*0.9,\ym).. (\xw,\ym);
}
\end{scope}
}}
\newcommand{\vertexnearwall}[2][b]{
\MultiStrand[#1]{1}{1.3}{}{w}{2}{
\coordinate (V) at (0.7,\v);
\draw[edge,-o-={.40}{#2}] (0,\y4) to [out=0, in=120] (V);
\draw[edge,-o-={.40}{#2}] (0,\y1) to [out=0, in=-120] (V);
\draw[edge,-o-={.45}{#2}] (0,\ym) ..controls (0.3,\ym).. (V);
}}
\newcommand{\nedgewall}[6][b]{
\MultiStrand[#1]{1}{1.3}{}{#2}{1}{
\ThreeStates{right}{\xw}{\y4}{\ym}{\y1}{#4}{#5}{#6}
\draw[edge, -o-={.5}{#3}] (0,\y4) -- (\xw,\y4);
\draw[edge, -o-={.5}{#3}] (0,\ym) -- (\xw,\ym);
\draw[edge, -o-={.5}{#3}] (0,\y1) -- (\xw,\y1);
}}
\newcommand{\capwall}[6][]{
\HorizontalTangle{}{#2}{}{}{#5}{#6}{
\draw[edge,-o-={0.8}{#4}] (\xw,\yb) ..controls (0.1,\yb) and (0.1,\ya).. (\xw,\ya);
}}
\newcommand{\capnearwall}[1]{
\HorizontalTangle{}{w}{}{}{}{}{
\draw[edge,-o-={0.8}{#1}] (0,\yb) ..controls (0.7,\yb) and (0.7,\ya).. (0,\ya);
}}
\newcommand{\vertexfour}[4][<-]{
\TanglePic{1}{1.2}{}{#1}{
\tikzmath{\s=0.35; \y2=(\yh-\s)/2; \y3=\y2+\s;}
\coordinate (V) at (0.4,{\yh/2});
\draw[edge] (V) edge +(150:\s) edge +(180:\s) edge +(-150:\s);
\draw[edge,-o-={.7}{#2}] (V) to [out=25, in=180]
	(\xw,\y3)node[right,inner sep=2pt]{{\stsize #3}};
\draw[edge,-o-={.7}{#2}] (V) to [out=-25, in=180]
	(\xw,\y2)node[right,inner sep=2pt]{{\stsize #4}};
}}
\newcommand{\vertexfourcross}[3]{
\TanglePic{1}{1.2}{}{<-}{
\tikzmath{\s=0.35; \y2=(\yh-\s)/2; \y3=\y2+\s;}
\coordinate (V) at (0.4,{\yh/2});
\draw[edge] (V) edge +(150:\s) edge +(180:\s) edge +(-150:\s);
\begin{knot}
\strand[edge] (V) to [out=45, in=175, looseness=1.5] (\xw,\y2);
\strand[edge] (V) to [out=-45, in=-175, looseness=1.5] (\xw,\y3);
\end{knot}
\path[edge,-o-={.8}{#1}] (V) to [out=30, in=175, looseness=1.5]
	(\xw,\y2)node[right,inner sep=2pt]{{\stsize #3}};
\path[edge,-o-={.8}{#1}] (V) to [out=-30, in=-175, looseness=1.5]
	(\xw,\y3)node[right,inner sep=2pt]{{\stsize #2}};
}}
\newcommand{\vertexfourleft}[4][<-]{
\TanglePic{1}{1.2}{#1}{}{
\tikzmath{\s=0.35; \y2=(\yh-\s)/2; \y3=\y2+\s;}
\coordinate (V) at (0.6,{\yh/2});
\draw[edge] (V) edge +(30:\s) edge +(0:\s) edge +(-30:\s);
\draw[edge,-o-={.7}{#2}] (V) to [out=155, in=0]
	(0,\y3)node[left,inner sep=2pt]{{\stsize #3}};
\draw[edge,-o-={.7}{#2}] (V) to [out=-155, in=0]
	(0,\y2)node[left,inner sep=2pt]{{\stsize #4}};
}}
\newcommand{\bubblestrand}[6]{
\TanglePic{1}{1}{<-}{<-}{
\ifthenelse{\equal{#1}{b}}{\tikzmath{\b=1;}}{\tikzmath{\b=-1;}}
\tikzmath{\yb=\yh/2-\b*0.2; \ys=\yh/2+\b*0.2;}
\node (B) at ({\xw/2},\yb) [circle,draw,inner sep=1pt] {\stsize #2};
\begin{scope}[every node/.style={inner sep=2pt}]
\begin{knot}
\strand[edge] (0,\ys)node[left]{\stsize #5}
	-- (\xw,\ys)node[right]{\stsize #6};
\strand[edge] (B) -- ({\xw/2},\yd);
\end{knot}
\draw[edge] (0,\yb)node[left]{\stsize #3} -- (B)
	-- (\xw,\yb)node[right]{\stsize #4};
\end{scope}
}}
\newcommand{\bubblesplit}[5]{
\TanglePic{2}{1}{<-}{<-}{
\tikzmath{\y1=\yh/2-0.2; \y2=\yh/2+0.2;}
\node (B) at ({\xw/2},\y2) [circle,draw,inner sep=1pt] {\stsize #1};
\draw[dashed] (0.6,\yd) -- (0.6,\yt) (1.4,\yd) -- (1.4,\yt);
\begin{knot}
\strand[edge,inner sep=2pt] (0,\y2)node[left]{\stsize #4}
	..controls (0.3,\y2) and (0.3,\y1).. (0.6,\y1)
	-- (1.4,\y1) ..controls (1.7,\y1) and (1.7,\y2)..
	(\xw,\y2)node[right]{\stsize #5};
\strand[edge,inner sep=2pt] (0,\y1)node[left]{\stsize #2}
	..controls (0.3,\y1) and (0.3,\y2).. (0.6,\y2)
	-- (B) -- (1.4,\y2) ..controls (1.7,\y2) and (1.7,\y1)..
	(\xw,\y1)node[right]{\stsize #3};
\strand[edge] (B) -- ({\xw/2},\yd);
\end{knot}
}}
\newcommand{\bubblemono}{
\TanglePic{2}{1.6}{}{<-}{
\tikzmath{\s=0.5; \y2=\yh/2; \y3=\y2+\s; \y1=\y2-\s;}
\fill[white] (0.5,\y2) circle[radius=0.2];
\node (B) at (1,\y2) [circle,draw,inner sep=1pt] {\stsize $\alpha$};
\begin{knot}
\strand[edge,inner sep=2pt] (\xw,\y3)node[right]{\stsize $j$}
	..controls +(-0.25,0) and +(0.25,0).. (1.5,\y2)
	..controls +(-0.25,0) and +(0.25,0).. (1,\y3)
	..controls +(-0.6,0) and +(0,0.25).. (0.2,\y2)
	..controls +(0,-0.25) and +(-0.6,0).. (1,\y1)
	..controls +(0.5,0).. (\xw,\y2)node[right]{\stsize $i$};
\strand[edge,inner sep=2pt] (0.7,\y2) -- (B)
	..controls (1.4,\y2) and (1.4,\y3).. (1.6,\y3)
	..controls +(0.2,0) and +(-0.2,0).. (\xw,\y1)node[right]{$s$};
\flipcrossings{3}
\end{knot}
\path[edge,-o-={0.1}{white}] (1,\y3)
	..controls +(-0.6,0) and +(0,0.25).. (0.2,\y2);
\draw[dashed] (1.6,\yd) -- (1.6,\yt);
}}
\newcommand{\bubblemonosplit}{
\TanglePic{0.8}{1.6}{<-}{<-}{
\tikzmath{\s=0.5; \y2=\yh/2; \y3=\y2+\s; \y1=\y2-\s;}
\begin{knot}
\strand[edge,inner sep=2pt] (0,\y2)node[left]{\stsize $j'$}
	-- (\xw,\y3)node[right]{\stsize $j$};
\strand[edge,inner sep=2pt] (0,\y3)node[left]{\stsize $s'$}
	-- (\xw,\y1)node[right]{$s$};
\strand[edge,inner sep=2pt] (0,\y1)node[left]{\stsize $i'$}
	-- (\xw,\y2)node[right]{\stsize $i$};
\end{knot}
\path[edge,inner sep=2pt,-o-={0.7}{white}] (0,\y2) -- (\xw,\y3);
\path[edge,inner sep=2pt,-o-={0.3}{black}] (0,\y1) -- (\xw,\y2);
}}
\theoremstyle{plain}
\newtheorem{theorem}{Theorem}[section]
\newtheorem{thm}{Theorem}
\newtheorem{lemma}[theorem]{Lemma}
\newtheorem{corollary}[theorem]{Corollary}
\newtheorem{proposition}[theorem]{Proposition}
\newtheorem{conjecture}{Conjecture}
\newtheorem{definition}{Definition}[section]
\theoremstyle{definition}
\newtheorem{remark}[theorem]{Remark}
\newtheorem{example}[theorem]{Example}
\renewcommand*{\fps@figure}{htb}
\newcommand{\term}[1]{\textbf{#1}}
\DeclareMathOperator{\tr}{tr}
\DeclareMathOperator{\id}{id}
\DeclareMathOperator{\pr}{pr}
\DeclareMathOperator{\Span}{span}
\DeclareMathOperator{\Fr}{Fr}
\DeclareMathOperator{\Gr}{Gr}
\DeclareMathOperator{\GKdim}{GKdim}
\DeclareMathOperator{\Mat}{Mat}
\DeclareMathOperator{\LT}{LT}
\DeclareMathOperator{\Tr}{Tr}
\DeclareMathOperator{\tTr}{\widetilde{\Tr}}
\newcommand{\detq}{\mathop{\operatorname{det}_q}}
\newcommand{\Id}{\mathrm{Id}}
\newcommand{\llex}{\mathbin{<_{\mathrm{lex}}}}
\newcommand{\lelex}{\mathbin{\le_{\mathrm{lex}}}}
\newcommand{\eqq}{\mathrel{\overset{(q)}=}}
\newcommand{\embed}{\hookrightarrow}
\newcommand{\onto}{\twoheadrightarrow}
\newcommand{\otst}{\mathbin{\overset{\mathrm{st}}{\otimes}}}
\newcommand{\nats}{\mathbb{N}}
\newcommand{\ints}{\mathbb{Z}}
\newcommand{\reals}{\mathbb{R}}
\def\BN{\mathbb N}
\def\BZ{\mathbb Z}
\def\BQ{\mathbb Q}
\def\BR{\mathbb R}
\def\BC{\mathbb C}
\def\BT{\mathbb T}
\def\bT{\mathbb T}
\newcommand{\hq}{\hat{q}}
\def\Zq{{ \BZ[\hq^{\pm 1}]}}
\def\cS{\mathscr S}
\def\cR{\mathcal R}
\def\cF{\mathcal F}
\def\cV{\mathcal V}
\def\cX{\mathcal X}
\def\cA{\mathcal A}
\def\cB{\mathcal B}
\def\cI{\mathcal I}
\def\Srd{\rd{\cS}}
\newcommand{\Mqn}{\mathcal{M}_q(n)}
\newcommand{\Oq}{\mathcal{F}}
\newcommand{\Bq}{\mathfrak{F}}
\newcommand{\marked}{\mathcal{P}}
\def\Ibad{\mathcal{I}^{\mathrm{bad}}}
\def\Sym{{\mathrm{Sym}}}
\def\Weyl{{\mathrm{Weyl}}}
\def\ord{{\mathsf{ord}}}
\def\parti{{\mathrm{par}}}
\def\fS{\mathfrak{S}}
\newcommand{\mon}{\mathfrak}
\def\fm{\mon{m}}
\def\DD{{\mathsf D}}
\newcommand{\Cut}{\mathsf{Cut}}
\newcommand{\Mon}{\mathsf{Mon}}
\newcommand{\Pol}{\mathsf{Pol}}
\newcommand{\LPol}{\mathsf{LPol}}
\newcommand{\sign}{\mathsf{sign}}
\def\JJ{{\mathbb{J}}}
\def\oo{{\mathsf d}}
\def\bk{\mathbf k}
\def\buu{{\mathbf u}}
\def\boo{{\mathbf{o}}}
\def\boi{{ \mathbf {i}}}
\def\boj{{ \mathbf {j}}}
\def\bok{{ \mathbf {k}}}
\def\bol{{ \mathbf {l}}}
\def\bi{{\bar i}}
\def\bj{{\bar j}}
\def\bV{{\rd V}}
\def\bfS{\rd{\fS}}
\def\bS{\rd \cS}
\def\btr{\rdtr}
\def\bA{\rd{\cA}}
\def\LO{\rd{\Oq}}
\newcommand{\LOsec}{\texorpdfstring{$\LO$}{F-bar} }
\def\bBq{ {\rd { \Bq }}}
\def\bG{{\bar \Gamma}}
\def\bka{{\bar \kappa}}
\def\rOr{\overleftarrow{\mathsf{or}}}
\def\tfS{{\widetilde \fS}}
\def\chu{{\check u}}
\def\chb{{\check b}}
\def\chG{{\check G}}
\def\chGm{\chG^-}
\def\ccI{ \check{\cI}}
\def\TT{{\mathsf T}}
\def\pfS{\partial \fS}
\def\al{\alpha}
\def\ve{\varepsilon}
\def\pal{{\partial \al}}
\def\la{\langle}
\def\ra{\rangle}
\def\ot{\otimes}
\def\ttt{{\mathbbm t}}
\def\aaa{{\mathbbm a}}
\def\ccc{{\mathbbm c}}
\newcommand{\vertexa}{\mathbbm{a}}
\newcommand{\returnc}{\mathbbm{c}}
\newcommand{\weight}{\mathsf{w}}
\def\ww{\weight}
\def\wwr{\cev {\ww}}
\newcommand{\lattice}{\mathsf{L}}
\def\LL{\lattice}
\newcommand{\dgrade}{\mathbbm{d}}
\def\dd{\dgrade}
\def\gaa{\mathsf g}
\def\ag{{ \gaa }}
\def\GGG{\mathsf G}
\providecommand{\abs}[1]{\lvert #1\rvert}
\newcommand{\surface}{\fS}
\newcommand{\face}{\cF}
\newcommand{\rdo}{\overline}
\DeclareMathOperator{\rdtr}{\rdo{tr}}
\newcommand{\mat}{\mathsf}
\newcommand{\rdm}[1]{\rdo{\mat{#1}}}
\newcommand{\rdt}[1]{\rdm{#1}^t}
\newcommand{\exm}[1]{\rdm{#1}_{\ext{\lambda}}}
\newcommand{\matsp}{\hspace{0.1em}}
\newcommand{\rd}[1]{\protect\ThisStyle{\makebox[0pt][l]{\ensuremath{\protect\SavedStyle\overline{\phantom{#1}}}}}#1}
\newcommand{\lv}[1]{#1'}
\newcommand{\ext}[1]{#1^\ast}
\newcommand{\bl}[1]{#1^\mathrm{bl}}
\newcommand{\qplane}{\mathbb{T}_+}
\newcommand{\qtorus}{\mathbb{T}}
\newcommand{\FG}{\mathcal{X}}
\newcommand{\lenT}{\mathcal{A}}
\newcommand{\bad}{{\mathrm{bad}}}
\newcommand{\bal}{{\mathrm{bl}}}
\newcommand{\FGbl}{{\FG^\bal}}
\newcommand{\rdbl}{\rd{\FG}^\bal}
\newcommand{\skein}{\mathscr{S}}
\newcommand{\reduceS}{\rd{\skein}}
\newcommand{\rdSplus}{\reduceS_+}
\newcommand{\RSthree}{\mathcal{RS}}
\newcommand{\stdT}{{\poly_3}}
\newcommand{\poly}{\mathbb{P}}
\def\PP{\poly}
\newcommand{\rdV}{\rd{V}}
\newcommand{\sk}{\mathsf{sk}}
\let\avec=\vec
\renewcommand{\vec}{\mathbf}
\newcommand{\cev}[1]{\protect\ThisStyle{\reflectbox{\ensuremath{\protect\SavedStyle\avec{\reflectbox{\ensuremath{\protect\SavedStyle#1}}}}}}}
\newcommand{\ceC}{\cev{C}}
\def\XS{\FG(\fS,\lambda)}
\def\AS{{\lenT}(\fS,\lambda)}
\def\bXS{\rd{\FG}(\fS,\lambda)}
\def\bAS{\rd{\lenT}(\fS,\lambda)}
\def\bmQ{\rdm{Q}}
\def\mQ{\mat{Q}}
\def\mP{\mat{P}}
\def\bmP{\rdm{P}}
\def\bmK{\rdm{K}}
\def\bmK{\rdm{K}}
\def\bmH{\rdm{H}}
\def\mK{\mat{K}}
\def\bsX{{\rd{\FG}}}
\def\tI{{\tilde I}}
\def\bbuu{\bar \buu}
\def\ror{\cev{\mathsf{or}}}
\def\norm{{ \mathrm{norm}}}
\def\irdV{{\mathring{\rdV } } }
\def\bM{{\mathbf M}}
\def\bC{{\mathbf C}}
\def\tY{\tilde Y}
\def\mH{\mathsf H}
\def\bQ{{\overline Q}}
\def\SS{\cS(\fS)}
\def\bSS{\reduceS(\fS)}
\begin{document}

\title{Quantum traces for $SL_n$-skein algebras}

\author[Thang T. Q. L\^e]{Thang T. Q. L\^e}
\address{School of Mathematics, 686 Cherry Street, Georgia Tech, Atlanta, GA 30332, USA}
\email{letu@math.gatech.edu}

\author[Tao Yu]{Tao Yu}
\address{Shenzhen International Center for Mathematics,
Southern University of Science and Technology, Shenzhen, China 
}
\email{yut6@sustech.edu.cn}

\thanks{
Supported in part by National Science Foundation. \\
2010 \textbf{Mathematics Classification:} Primary 57N10. Secondary 57M25.\\
\textbf{Keywords and phrases: skein algebra, Fock-Goncharov quantum algebra, quantum traces}%
}

\begin{abstract}
We establish the existence of several quantum trace maps. The simplest one is an algebra homomorphism between two quantizations of the algebra of regular functions on the $SL_n$-character variety of a punctured bordered surface $ \fS$ equipped with an ideal triangulation $\lambda$.
The first quantization is the (stated) $SL_n$-skein algebra $\cS(\fS)$, defined using tangle diagrams on the surface. The second quantized algebra $\bXS$ is the Fock and Goncharov's quantization of their $X$-moduli space, which belongs to a simple class of noncommutative algebras known as quantum tori. The quantum trace is an algebra homomorphism
\[\btr^X_\lambda: \bSS \to \bXS,\]
from the reduced skein algebra $\bSS$, a quotient of $\SS$, to $\bXS$. When the quantum parameter is 1, the quantum trace $\btr^X_\lambda$ coincides with the classical Fock-Goncharov homomorphism. This is a generalization of the famous Bonahon-Wong quantum trace map for the case $n=2$.

We will define the extended Fock-Goncharov algebra $\FG(\fS, \lambda)$ and show that $\btr^X_\lambda$ can be lifted to an extended quantum trace
\[\tr^X_\lambda: \SS \to \FG(\fS, \lambda).\]
We show that both $\btr^X_\lambda$ and $\tr^X_\lambda$ are natural with respect to the change of triangulations.

When each connected component of $\fS$ has non-empty boundary and no interior ideal point, we define a quantum torus $\bA(\fS, \lambda)$, which is a quantization of the Fock-Goncharov $A$-moduli space, and its extension $\cA(\fS, \lambda)$, also a quantum torus. We then show that there exist the $A$-versions of the quantum traces
\[\btr^A: \bSS \to \bA(\fS, \lambda), \qquad
\tr^A: \SS \embed \cA(\fS, \lambda)\]
where the second map is injective, while the first is injective at least when $\fS$ is a polygon. Moreover the image $\tr^A(\SS)$ is sandwiched between the quantum space $\cA_+(\fS, \lambda)$ and the quantum torus $\cA(\fS, \lambda)$. Similar fact holds for the image $\btr^A(\SS)$. The transitions from $\tr^X_\lambda$ to $\tr^A_\lambda$ and from $\btr^X_\lambda$ to $\btr^A_\lambda$ are given by multiplicatively linear maps.
\end{abstract}

\maketitle
\tableofcontents{}

\section{Introduction}

Throughout the paper, the ground ring $R$ is a commutative domain with a distinguished invertible element $\hq$. All modules and algebras are over $R$. For the reader's reference, the usual quantum parameter in the theory of quantized universal algebra of $\mathfrak{sl}_n$ is $q= (\hq)^{2n^2}$.

\subsection{Punctured surfaces} \label{sec.001}

Assume that $\fS$ is a \term{punctured surface}, i.e. it is the result of removing a finite number of points, called \term{ideal points}, from a closed oriented surface. We will consider two quantizations of the $SL_n$ character variety of $\fS$.

The first quantization $\bSS$ is a twisted version of skein algebra introduced by A. Sikora \cite{Sikora}. As an $R$-module $\bSS$ is freely spanned by link diagrams on $\fS$ subject to certain relations which are local relations of the $SL_n$ Reshetikhin-Turaev link invariants \cite{RT}, see Section \ref{sec.skein}. The product of two link diagrams is obtained by stacking the first above the second.
Sikora showed that $\bSS$ is a quantization of the $SL_n$-character variety along the Atiyah-Bott-Goldman Poisson bracket. In other words, if $\bSS_{\hq=1}$ denotes the algebra $\bSS$ with $R=\BC$ and $\hq =1$, then $\bSS_{\hq=1}$ is isomorphic to the ring
of regular functions on the $SL_n$-character variety of $\fS$. In addition, the semiclassical limit of the non-commutativity gives the Atiah-Bott-Goldman Poisson structure. For $n=2$ the skein algebra $\bSS$ is isomorphic to the Kauffman bracket skein modules \cite{Prz,Turaev}, and for $n=3$ it is isomorphic to the Kuperberg skein algebra \cite{Kuperberg}. If the quantum integers are invertible in $R$ then $\bSS$ is isomorphic to the skein algebra defined using MOY graphs \cite{MOY,CKM}.

The second quantization is Fock-Goncharov's algebra $\bXS$ which depends on an ideal triangulation $\lambda$ of $\fS$ and quantizes the $X$-variety in Fock and Goncharov's theory of higher Teichm\"uller spaces, see ~\cite{FG,FG2}.
The algebraic structure of $\bXS$ is very simple as it is a \term{quantum torus}, which by definition is an algebra of the form
\begin{equation}\label{eq.TTT}
\BT(Q) := R \la x_1^{\pm 1}, \dots, x_r^{\pm 1} \ra /(x_i x_j = \hq^{2Q_{ij}} x_j x_i),
\end{equation}
where $Q$ is an antisymmetric integer $r\times r$ matrix.
In a sense, a quantum torus is a simplest possible non-commutative algebra. Its algebraic structures and representations, etc, are known. For example, $\BT(Q)$ is a domain, and hence has a well-defined division algebra of fractions $\Fr(\BT(Q))$. Besides, the Gelfand-Kirillov dimension of $\BT(Q)$ is $r$, the size of the matrix $Q$.

For an ideal triangulation $\lambda$ of $\fS$, Fock and Goncharov define an integer antisymmetric matrix $\bQ(\fS, \lambda)$, and $\bXS$ is the quantum torus $\BT(\bQ(\fS, \lambda))$. Actually the original Fock-Goncharov algebra is the subalgebra of $\bSS$ generated by $x_i^{\pm n}$ in the presentation \eqref{eq.TTT}.

The classical Fock-Goncharov algebra $\bXS_{\hq =1}$, which is $\bXS$ with $R=\BC$ and $\hq =1$, is a Poisson algebra and is a chart of the $X$-variety, a cousin of the character variety. Fock and Goncharov \cite[Chapter 9]{FG} showed that there is a Poisson algebra homomorphism
\begin{equation}\label{eq.tr0}
\tTr_\lambda: \bSS_{\hq=1} \to \bXS_{\hq =1}.
\end{equation}

There was an important question of whether the algebra homomorphism $\tTr_\lambda$ can be lifted to the quantum level. For $n=2$ this was asked by Chekhov-Fock \cite{CF}. For general $n$ the question is formulated as a conjecture by D. Douglas \cite{Douglas}.
One main result of this paper is to answer this question in affirmative.

\begin{thm}[Part of Theorems \ref{thm-rdtrX}, \ref{thm-co-chg-X}, and \ref{thm.n3}] \label{thm.01}
Assume $\fS$ is a punctured surface with an ideal triangulation $\lambda$. There exists an algebra homomorphism
\begin{equation}
\btr^X_\lambda: \bSS \to \bXS
\end{equation}
with the following properties:
\begin{enumerate}[(i)]
\item If $R=\BC$ and $\hq =1$, then $\btr^X_\lambda$ is the Fock-Goncharov homomorphism $\tTr_\lambda$, and
\item The map $\btr^X_\lambda$ is natural with respect to the change of triangulations.
\end{enumerate}
Besides $\btr^X_\lambda$ is injective if $ n\le 3$.
\end{thm}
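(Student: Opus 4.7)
The plan is to follow the cut-and-paste strategy of Bonahon-Wong for $n=2$, now realized in the stated $SL_n$-skein framework developed earlier in the paper. The triangulation $\lambda$ cuts $\fS$ along each internal edge into ideal triangles $\triangle_1,\dots,\triangle_N$; on the skein side this gives a splitting homomorphism
\[
\theta_\lambda\colon \bSS \longrightarrow \bigotimes_{k=1}^{N} \bS(\triangle_k),
\]
through which I would factor $\btr^X_\lambda$. The construction then reduces to producing, for each triangle, a local quantum trace $\btr^X_\triangle\colon \bS(\triangle) \to \BT(\triangle)$ into the quantum torus whose generators are the Fock-Goncharov $X$-coordinates attached to $\triangle$, and to checking that these local traces glue compatibly: the image of $\bigotimes_k \btr^X_{\triangle_k} \circ \theta_\lambda$ must land inside the sub-quantum torus $\bXS \subset \bigotimes_k \BT(\triangle_k)$ cut out by the edge-matching conditions.

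I would build $\btr^X_\triangle$ by prescribing its values on a small generating set of $\bS(\triangle)$ --- oriented strands with given boundary states, trivalent sinks and sources, and boundary caps --- with monomial formulas whose $\hq \to 1$ limits reproduce Fock and Goncharov's classical assignments, and then extend multiplicatively. The main verification, and the step I expect to be the hardest, is that every defining relation of the stated $SL_n$-skein algebra is preserved at the quantum level: isotopy, the crossing relations, the $SL_n$ web/MOY relations, and the boundary state relations all translate to a finite set of monomial identities inside $\BT(\triangle)$. The bookkeeping of states and powers of $\hq$ becomes intricate for general $n$, but the assignments are essentially forced by compatibility with \cite{FG} at $\hq=1$ together with the crossing relations. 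Property (i) is then immediate from the construction.

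For property (ii) it suffices to verify naturality under a single diagonal flip, since any two ideal triangulations of $\fS$ are connected by a sequence of flips. Inside a quadrilateral one has two triangulations $\lambda, \lambda'$ related by a flip, together with Fock-Goncharov's quantum cluster transformation between the associated quantum tori; the required identity that this cluster transformation intertwines $\btr^X_{\lambda'}$ with $\btr^X_\lambda$ is a local computation on a finite generating set of webs drawn in the quadrilateral. Finally, for injectivity when $n \le 3$: the case $n=2$ recovers the Bonahon-Wong quantum trace, whose injectivity is classical. For $n=3$ the plan is to filter $\bSS$ by a suitable combinatorial degree (for instance a weighted count of web edges) and show that the associated graded of $\btr^X_\lambda$ sends basis elements to monomials with distinct leading exponents in $\bXS$; alternatively, one may deduce injectivity from the classical injectivity of the Fock-Goncharov homomorphism together with an $R$-flatness argument for $\bSS$. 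The main obstacle here is exhibiting a clean linear basis of $\bSS$ adapted to the quantum trace, so that the leading-term comparison can be carried out uniformly across all $SL_3$-web relations.
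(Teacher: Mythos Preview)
Your cut-and-glue strategy matches the paper's: define $\btr^X$ on each triangle and then show the tensor product of the triangle traces, composed with the splitting homomorphism, lands in the sub-quantum-torus $\bXS$ cut out by edge matching. Your injectivity plan for $n=3$ (filter, identify leading monomials) is also essentially what the paper does, invoking Kim's leading-term computation together with the Frohman--Sikora basis.

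Where you diverge is in two places, and one of them carries a real gap.

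\textbf{Triangle trace.} You propose to define $\btr^X_\triangle$ on generators and check all defining relations of the stated $SL_n$-skein algebra by hand. The paper explicitly remarks (Subsection~1.9) that this works but is laborious, and that in an earlier version they carried it out via the Chekhov--Shapiro quantum holonomy identities. The final paper takes a different route: it exhibits an explicit \emph{quantum torus frame} $\{\gaa_v\}$ for $\reduceS(\PP_3)$, i.e.\ a $q$-commuting family that weakly generates the (domain) $\reduceS(\PP_3)$. This yields an embedding $\btr^A\colon\reduceS(\PP_3)\hookrightarrow\rd\lenT(\PP_3)$ into an $A$-version quantum torus, and $\btr^X_\triangle$ is then the composite with an explicit monomial isomorphism $\rd\psi$. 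The payoff is that injectivity on the triangle and the sandwich $\rd\lenT_+\subset\reduceS(\PP_3)\subset\rd\lenT$ come for free, which your relation-checking approach does not give.

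\textbf{Naturality.} Here your sketch has a genuine gap. You plan to invoke ``Fock--Goncharov's quantum cluster transformation between the associated quantum tori'' and verify flip-intertwining by a local computation. Two problems. First, the quantum cluster mutations are defined on the original Fock--Goncharov algebra, which in this paper is the subalgebra generated by the $x_v^{\pm n}$; the image of $\btr^X_\lambda$ lies only in the larger \emph{balanced} subalgebra $\rdbl(\fS,\lambda)$, and the paper notes explicitly (Section~14, final comments) that the mutations do \emph{not} extend to the full $n$-th root torus $\rd\FG(\fS,\lambda)$. Showing they extend to $\rdbl$ is itself nontrivial. Second, a single-flip check is not enough: you must also show the resulting transition isomorphism is independent of the chosen flip sequence, which requires the pentagon relation. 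The paper sidesteps all of this by a different mechanism: for any polygon $\PP_k$ the reduced $A$-version trace $\btr^A_\lambda$ is an embedding with $\Fr(\reduceS(\PP_k))\cong\Fr(\rd\lenT(\PP_k,\lambda))$, so the transition map for $\PP_4$ is \emph{defined} as $\Fr(\btr^A_{\lambda'})\circ\Fr(\btr^A_\lambda)^{-1}$ and transported to the $X$-side via $\rd\psi$. The pentagon identity then follows automatically from the same argument applied to $\PP_5$, with no explicit mutation computations. Your outline would need substantial additional work to close these two points.
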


When $n=2$ the theorem was first proved by Bonahon and Wong \cite{BW}. See also \cite{Le:qtrace,CL,KLS} for other approaches to the $SL_2$-quantum trace.
For $n=3$ the theorem is also obtained independently by H. Kim \cite{Kim1, Kim2}.
Besides, D. Douglas \cite{Douglas} gave a definition of $\btr^X_\lambda$ for $n=3$ and showed that it satisfies some, but not all, defining relations of the skein algebra. Douglas also suggested a definition of $\btr^X_\lambda$ for all $n$.


Let us explain the naturality in part (ii) of Theorem \ref{thm.01}.
We will define the balanced subalgebra $\bsX^\bal(\fS, \lambda) \subset \bXS$, which is also a quantum torus of the same dimension. We will show that the image of $\btr^X_\lambda$ is in $\bsX^\bal(\fS, \lambda)$, and for another ideal triangulation $\lambda'$ there is an algebra isomorphism of division algebras
\begin{equation}
\rd{\Psi}^X_{\lambda, \lambda'}: \Fr(\bsX^\bal(\fS,\lambda') ) \xrightarrow{\cong}\Fr(\bsX^\bal(\fS,\lambda) )
\end{equation}
which intertwines $\btr^X_\lambda$ in the sense that
\begin{equation}
\rd{\Psi}^X_{\lambda, \lambda'} \circ \btr^X_{\lambda'} = \btr^X_{\lambda}.
\end{equation}
In addition, $\rd{\Psi}^X_{\lambda, \lambda} = \id$ and $\rd{\Psi}^X_{\lambda, \lambda'} \circ \rd{\Psi}^X_{\lambda', \lambda''} = \rd{\Psi}^X_{\lambda, \lambda''}$. This is the naturality of the map $\btr^X_\lambda$.


\subsection{Punctured bordered surfaces}

Theorem \ref{thm.01} is proved by cutting the surface $\fS$ into triangles and reducing the proof to the triangle case. For this purpose we need to consider surfaces with boundary.

A \term{punctured bordered surface} (pb surface for short) $\fS$ is the result of removing a finite number of points, called \term{ideal points}, from a compact surface $\bfS$ such that every boundary component of $\bfS$ contains at least one ideal point.

The first author and A. Sikora extended the notion of skein algebra to pb surfaces in \cite{LS}, where it is called the \term{stated skein algebra}. We will use the notation $\SS$ for this stated skein algebra, which is the same as the ordinary skein algebra $\bSS$ when $\pfS=\emptyset$. Naturally, we have to use tangle diagrams with endpoints on the boundary $\pfS$, and impose further boundary conditions which also come from the local identities of the $SL_n$ Reshetikhin-Turaev invariant. When $n=2$ this type of stated skein algebra was introduced by the first author in \cite{Le:triangulation}, where it was used to give a simple proof of the existence of the Bonahon-Wong quantum trace. When $n=3$, the stated skein algebra was introduced by Higgins \cite{Higgins}. The stated skein algebra is closely related to Alekseev-Grosse-Schomerus' moduli algebra \cite{AGS} and factorization homology \cite{BBJ}.

The obvious extension of the quantum trace map to the case when $\pfS\neq \emptyset$ has a big kernel. For this reason, we introduce the \term{reduced skein algebra} $\bSS$, which is a quotient algebra of $\SS$, factored out by certain elements near the boundary. If $\fS$ has empty boundary, then $\bSS$ is the same skein algebra considered in Subsection \ref{sec.001}. For $n=2$ the reduced version was defined by Costantino and the first author \cite{CL}.

For an ideal triangulation $\lambda$ of $\fS$ the Fock-Goncharov algebra $\bsX(\fS, \lambda)$ can also be defined. The Fock-Goncharov classical trace can be defined so that if $\al$ is a closed immersed curve on $\fS$ then $\tTr_\lambda(\al) \in \bsX(\fS, \lambda)_{ \hq=1 }$, with some favorable properties. The first result on quantum trace map for pb surfaces is following theorem, similar to Theorem \ref{thm.01}.

\begin{thm}[Part of Theorems \ref{thm-rdtrX}, \ref{thm-co-chg-X}, and \ref{thm.n3}] \label{thm.02}
Assume $\fS$ is a punctured bordered surface with an ideal triangulation $\lambda$.
There exists an algebra homomorphism
\begin{equation}
\btr^X_\lambda: \bSS \to \bsX(\fS, \lambda)
\end{equation}
such that the followings hold.
\begin{enumerate}[(i)]
\item If $R=\BC$, $\hq =1$, and $\al$ is a closed immersed curve on $\fS$ then $\btr^X_\lambda(\al)=\tTr_\lambda(\al)$.
\item The map $\btr^X_\lambda$ is natural with respect to the change of triangulations.
\item The map $\btr^X_\lambda$ is compatible with splitting of surface $\fS$ by edges of $\lambda$.
\end{enumerate}
Moreover $\btr^X_\lambda$ is injective for $n \le 3$.
\end{thm}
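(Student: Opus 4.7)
The plan is to construct $\btr^X_\lambda$ locally on each ideal triangle and then assemble it using the splitting property (iii) as the defining principle.

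\emph{Step 1: The triangle.} Let $T$ be an ideal triangle with its tautological triangulation $\lambda_T$. Every stated tangle diagram in $T$ can be isotoped into general position with respect to a height function and so decomposes into elementary pieces: crossings, caps and cups, the $n$-valent MOY/coupon vertices, and stated boundary arcs. I would define $\btr^X_T$ on each piece by a formula dictated by the classical Fock--Goncharov trace on an $n$-triangulation of $T$, producing Laurent monomials in $\bsX(T,\lambda_T)$. Then I would verify all local defining relations of the stated $SL_n$-skein algebra: the Reshetikhin--Turaev crossing relations, the coupon vertex relations, the boundary state-exchange and cap/cup relations, and the reduction relations that produce $\bcS$ from $\cS$.

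\emph{Step 2: Gluing.} For a general pb surface $(\fS,\lambda)$, let $T_1,\dots,T_N$ be the triangles of $\lambda$. The stated skein algebra carries a splitting homomorphism
\begin{equation*}
\Theta_\lambda : \bcS(\fS) \longrightarrow \bigotimes_{i=1}^N \bcS(T_i),
\end{equation*}
obtained by cutting along the interior edges of $\lambda$ and summing over boundary states, while the Fock--Goncharov tori have a dual gluing identifying edge variables on adjacent triangles. I define
\begin{equation*}
\btr^X_\lambda \;:=\; \Bigl(\bigotimes_{i=1}^N \btr^X_{T_i}\Bigr) \circ \Theta_\lambda.
\end{equation*}
The essential check is that this image lands in the balanced subalgebra identified with $\bsX(\fS,\lambda)$; this follows from a weight-balance count on the boundary states at each shared edge, verified locally one edge at a time. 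Property (iii) is then built-in by construction.

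\emph{Step 3: Properties and injectivity.} For (i), each $\btr^X_{T_i}$ specializes at $\hq=1$ to the classical local Fock--Goncharov trace by design, and both $\Theta_\lambda$ and the torus gluing are defined over $\BZ$, so $\btr^X_\lambda$ specializes to $\tTr_\lambda$ on closed immersed curves. For (ii), by the standard connectivity of triangulations under flips, naturality reduces to the flip of a single interior edge of a quadrilateral: one must show that the two assemblies on the two diagonals are intertwined by the Fock--Goncharov quantum mutation $\rd{\Psi}^X_{\lambda,\lambda'}$. For injectivity at $n \le 3$, I would exploit known web bases of $\bcS(\fS)$ (simple multicurves for $n=2$; Kuperberg non-elliptic webs for $n=3$) and establish, triangle by triangle, that distinct basis elements produce distinct leading Laurent monomials in $\bsX(\fS,\lambda)$ under a suitable lexicographic order on edge coordinates.

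The main obstacle is Step~1: verifying the full list of skein relations on a single triangle, especially the interaction between quantum crossings and the $n$-valent coupon vertex, together with the state-exchange relations at the boundary. A secondary obstacle is the pentagon-type identity under an edge flip in Step~3, which is itself a nontrivial quantum computation inside a quadrilateral.
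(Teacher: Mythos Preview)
Your gluing Step~2 matches the paper exactly: one composes the cutting homomorphism $\Theta_\lambda$ with $\bigotimes_\tau \btr^X_\tau$ and checks, via a weight/degree argument on each shared edge (Proposition~\ref{prop-bdry-exp}), that the image lies in $\rdbl(\fS,\lambda)$.

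Your Step~1, however, is the approach the paper deliberately avoids. Rather than defining $\btr^X_T$ on elementary pieces and checking all skein relations, the paper first proves that $\reduceS(\PP_3)$ is a domain (Section~\ref{sec.Rd2}), then exhibits an explicit \emph{quantum torus frame} $\{\gaa_v\}_{v\in\rdV}$ for it (Theorem~\ref{thm.bP3a}). This yields an embedding $\btr^A:\reduceS(\PP_3)\hookrightarrow\rd{\lenT}(\PP_3)$ directly from Proposition~\ref{r.qtframe}, and $\btr^X$ is obtained by composing with the multiplicatively linear isomorphism $\rd\psi:\rd{\lenT}(\PP_3)\to\rdbl(\PP_3)$. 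The authors remark (Subsection~\ref{ss.CS}) that they originally took your route, leaning on the Chekhov--Shapiro identities, but that approach gave neither the injectivity on the triangle nor the $A$-version, both of which the frame method delivers for free.

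Your Step~3 on naturality has a genuine subtlety. You propose using ``the Fock--Goncharov quantum mutation $\rd\Psi^X_{\lambda,\lambda'}$,'' but such mutations are only defined on the original FG algebra generated by $x_v^{\pm n}$, and the paper emphasizes (Section~\ref{sec.Nat}) that they do \emph{not} extend to the full torus $\rd\FG(\fS,\lambda)$; one must construct the transition maps on the balanced part $\rdbl$ from scratch. The paper's route is to use the $A$-version: since $\btr^A_\lambda$ is injective on polygons (Theorem~\ref{thm-Atr}), one sets $\rd\Psi^A_{\lambda'\lambda}:=\Fr(\btr^A_{\lambda'})\circ\Fr(\btr^A_\lambda)^{-1}$ for the quadrilateral and pentagon, then transports via $\rd\psi_\lambda$ to get $\rd\Psi^X$. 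Functoriality (including the pentagon relation) is then automatic from uniqueness, rather than a computation; this is the key payoff of the quantum torus frame approach.

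Your injectivity sketch for $n\le3$ is in the right spirit. For $n=3$ the paper (Section~\ref{sec.sl3}) indeed uses the Frohman--Sikora/Kuperberg basis and a leading-monomial argument (borrowed from Kim), but it passes through the positively stated subalgebra $\rdSplus(\fS)$ and shows $\reduceS(\fS)$ is its Ore localization, so that injectivity propagates.
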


Part (iii) of Theorem \ref{thm.02}, the compatibility with the splitting, is explained in Section \ref{sec.Xtrace}.


\begin{conjecture}\label{conj.inj2}
The quantum trace $\btr^X_\lambda$ of Theorem \ref{thm.02} is injective.
\end{conjecture}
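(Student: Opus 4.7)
The plan is to reduce the conjecture to the case of a single triangle via the splitting compatibility of Theorem \ref{thm.02}(iii), and then establish injectivity on a triangle by a leading-term argument with respect to a natural web basis of the reduced skein algebra.

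First, fix an ideal triangulation $\lambda$ of $\fS$. Cutting along the edges of $\lambda$ decomposes $\fS$ into triangles $T_1,\dots,T_N$, and Theorem \ref{thm.02}(iii) should provide a commutative square in which $\bSS$ embeds into a balanced subalgebra of $\bigotimes_i \reduceS(T_i)$ while $\bsX(\fS,\lambda)$ embeds into the corresponding balanced subalgebra of $\bigotimes_i \bsX(T_i,\lambda_i)$, with the quantum traces on the two levels intertwining these embeddings. Granted this, injectivity of $\btr^X_\lambda$ follows from injectivity of each $\btr^X_{T_i}$, provided the splitting map on the reduced level is itself injective.

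Second, on a single triangle $T$ I would prove injectivity of $\btr^X_T$ by constructing a linear basis of $\reduceS(T)$ whose elements have distinguishable leading terms under $\btr^X_T$. Concretely, I would use the $\mathfrak{sl}_n$-web basis of $\reduceS(T)$ (in a Cautis--Kamnitzer--Morrison-style normal form adapted to a triangle, suitably stated on the boundary), parametrize these webs by tuples of non-negative integers indexed by the corners and edges of $T$, choose a lexicographic term order on monomials of the quantum torus $\bsX(T,\lambda)$, and show that distinct basis webs map to Laurent polynomials with distinct leading monomials. Ideally the exponent vector of the leading monomial recovers the integer tuple indexing the web, yielding linear independence at once. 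For the pieces of the argument already covered by the case $n\le 3$, this step reduces to verifying that the computation there respects the lex order, which is essentially automatic.

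The main obstacle is controlling the leading terms of $\btr^X_T$ on the web basis for general $n$. For $n\le 3$ the web calculus is simple enough that leading terms can be read off directly, but for $n\ge 4$ the reduction of bubbles, $I=H$ moves, and general ladder relations produces many competing monomials, and one must show that no cancellation occurs at the top of the lex order after this reduction. A secondary difficulty is verifying that the splitting map $\bSS \to \bigotimes_i \reduceS(T_i)$ remains injective after passing to the reduced quotient, since the defining ideals on the two sides need not match a priori. I would attempt to bypass both obstacles by exploiting the extended $A$-version trace $\tr^A:\SS\embed \cA(\fS,\lambda)$ of the abstract, which is already injective: using the multiplicatively linear transition between $\tr^A$ and $\tr^X_\lambda$, and the compatibility of both with splittings, one should be able to bootstrap injectivity of $\btr^X_\lambda$ on a triangle from injectivity of $\tr^A$ on its unreduced counterpart, then propagate the conclusion globally.
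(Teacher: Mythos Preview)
This statement is a \emph{conjecture} in the paper, not a theorem: the paper proves it only for $n\le 3$ (the $n=2$ case is cited from \cite{CL,BW}, and the $n=3$ case is Theorem~\ref{thm.n3}). So there is no ``paper's own proof'' to compare against for general $n$; your proposal is an attempt at the open problem.

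Your plan has the hierarchy of difficulties inverted. Injectivity on a single triangle is \emph{not} the obstacle: the paper already proves that $\btr^X:\reduceS(\stdT)\hookrightarrow\rdbl(\stdT)$ is an embedding for all $n$ (Theorem~\ref{thm.Xtr0}, via the quantum torus frame of Theorem~\ref{thm.bP3a}), and more generally $\btr^X_\lambda$ is injective for any polygon (Corollary~\ref{cor-trX-embed}). What you call the ``secondary difficulty''---injectivity of the reduced cutting map $\reduceS(\fS)\to\bigotimes_\tau\reduceS(\tau)$---is in fact the entire problem. The unreduced cutting map is injective (Theorem~\ref{t.splitting2}), but after quotienting both sides by bad-arc ideals there is no reason the induced map should remain injective, and the paper does not claim it is.

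Your proposed bootstrap from $\tr^A$ also fails. First, $\tr^A$ (and $\btr^A$) are constructed only for surfaces with no interior punctures (Theorem~\ref{thm-Atr}), so this says nothing about the general conjecture. Second, and more fundamentally, injectivity of $\tr^A:\SS\hookrightarrow\cA(\fS,\lambda)$ on the \emph{unreduced} algebra does not descend to injectivity of $\btr^A:\bSS\to\bA(\fS,\lambda)$ on the quotient: one would need to know that $\tr^A(\Ibad)$ equals the kernel of the projection $\cA'\to\bA$, which is precisely what is unknown. The paper's actual proof for $n=3$ (Section~\ref{sec.sl3}) avoids both issues by a different route: it identifies a positively-stated subalgebra $\skein_+(\fS)\cong\RSthree(\fS)$ with a known combinatorial basis, shows $\reduceS(\fS)$ is an Ore localization of it at invertible corner arcs, and uses Kim's leading-term computation (Theorem~\ref{thm-lt-trX}) on that basis. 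The obstruction to extending this to $n\ge 4$ is the lack of an analogous web basis with computable leading terms---which is related to, but not the same as, the difficulty you identify in your second step.
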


This was proved for $n=2$ by Costantino and the first author \cite{CL}. When $n=2$ and $\fS$ does not have boundary the conjecture was proved by Bonahon and Wong \cite{BW}. In this paper we give a proof for the case $n=3$.
When $n=3$ and the surface does not have boundary H. Kim also has an independent proof \cite{Kim1}. For surfaces with non-empty boundary we do have some injectivity results. See Subsection \ref{sec.14} below.

\subsection{The extended Fock-Goncharov algebra and the extended quantum trace}

Let $\fS$ be a punctured bordered surface with an ideal triangulation $\lambda$. As the Gelfand-Kirillov dimension of the Fock-Goncharov algebra $\bsX(\fS, \lambda)$ is less than that of $\SS$ if $\pfS \neq \emptyset$, there cannot be any embedding of $\SS$ into the quantum torus $\bsX(\fS, \lambda)$.

To have a potentially {\em injective} quantum trace map for the full skein algebra $\SS$, we introduce the \term{extended Fock-Goncharov algebra} $\FG(\fS,\lambda)$, which is also a quantum torus, by using an extension of the surface $\fS$. If $\pfS= \emptyset$ then $\FG(\fS,\lambda)= \bsX(\fS,\lambda)$. When $n=2$, the extended algebra $\FG(\fS,\lambda)$ was defined by the authors in \cite{LY2}.

We show that the quantum trace map of Theorem \ref{thm.02} can be lifted to an extended quantum trace.

\begin{thm}[Part of Theorems \ref{thm.full} and \ref{thm-co-chg-X}] \label{thm.02e}
Let $\fS$ be a punctured bordered surface with an ideal triangulation $\lambda$. Suppose each connected component of $\fS$ has non-empty boundary.
\begin{enuma}

\item There exists an algebra homomorphism, called the \term{extended quantum trace map},
\begin{equation}
\tr^X_\lambda: \SS \to \FG(\fS, \lambda)
\end{equation}
which is natural with respect to the change of triangulations.
\item The extended quantum trace $\tr^X_\lambda$ is a lift of $\btr^X_\lambda$ in the following sense: The image of $\tr^X_\lambda$ lies in a subalgebra $\FG'(\fS,\lambda)$ which comes with a surjective algebra homomorphism $p:\FG'(\fS,\lambda) \onto \bXS$ such that the following diagram is commutative
\begin{equation}\label{eq.diag01}
\begin{tikzcd}
\SS \arrow[r,"\tr^X_\lambda"] \arrow[d,two heads] & \FG'(\fS, \lambda) \arrow[d,two heads,"p"]\\
\bSS \arrow[r,"\rdtr^X_\lambda"] & \rd{\FG}(\fS, \lambda)
\end{tikzcd}
\end{equation}
\end{enuma}
\end{thm}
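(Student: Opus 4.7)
The plan is to build $\tr^X_\lambda$ by reducing to triangles and then gluing, paralleling the strategy used for $\btr^X_\lambda$ in Theorem \ref{thm.02} but working on the unreduced skein algebra $\SS$ into the larger quantum torus $\FG(\fS,\lambda)$.

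First, I would treat the triangle case directly. For a triangle $\fT$ with its unique ideal triangulation $\lambda_T$, I would write down an explicit algebra homomorphism $\tr^X_{\lambda_T}: \S(\fT) \to \FG(\fT, \lambda_T)$ by specifying values on the generators of $\S(\fT)$ (stated arcs, crossings, and the $n$-valent vertex) as explicit Laurent monomials in the extended coordinates of $\FG(\fT,\lambda_T)$. The extra coordinates coming from the extension of the surface carry the ``boundary state'' data that was killed in the reduced case. I would then verify that all defining relations of $\S(\fT)$ (skein relations, boundary relations, height exchange, triviality of contractible loops and arcs) are preserved. This is the technical bulk of the construction and is mostly parallel to the verification already performed for $\btr^X_\lambda$, enriched by the bookkeeping of the extension coordinates.

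Second, I would globalize via splitting. Given an ideal triangulation $\lambda$ of $\fS$, cut $\fS$ along its interior edges into triangles $\fT_1,\dots,\fT_N$. By the splitting property of the stated skein algebra (from \cite{LS}), there is an injective algebra homomorphism $\SS \hookrightarrow \bigotimes_i \S(\fT_i)$. On the target side, $\FG(\fS,\lambda)$ embeds into a suitable tensor/localization of $\bigotimes_i \FG(\fT_i,\lambda_{T_i})$ via the gluing rule that identifies matched edge variables. I would define
\[
\tr^X_\lambda : \SS \hookrightarrow \bigotimes_i \S(\fT_i) \xrightarrow{\bigotimes_i \tr^X_{\lambda_{T_i}}} \bigotimes_i \FG(\fT_i,\lambda_{T_i})
\]
and verify that the image lands in $\FG(\fS,\lambda)$. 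Compatibility with splitting for $\tr^X_\lambda$ is then tautological.

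For part (b), I would identify $\FG'(\fS,\lambda) \subset \FG(\fS,\lambda)$ as the balanced subalgebra in which the extension coordinates appear only through the combinations that descend to $\bXS$, and let $p$ be the quotient map killing the purely extension directions. Commutativity of diagram \eqref{eq.diag01} is then a generator-by-generator verification on each triangle, comparing the formula for $\tr^X_{\lambda_T}$ with that of $\btr^X_{\lambda_T}$ from Theorem \ref{thm.02}.

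Finally, for naturality under change of triangulation, it suffices to check invariance under a single flip, since any two triangulations of $\fS$ are related by a finite sequence of flips. The flip isomorphism $\rd{\Psi}^X_{\lambda,\lambda'}$ of reduced Fock--Goncharov algebras should admit a canonical lift $\Psi^X_{\lambda,\lambda'}: \Fr(\FG(\fS,\lambda')) \xrightarrow{\cong} \Fr(\FG(\fS,\lambda))$, defined by the same rational formulas on the ``interior'' coordinates plus an explicit rule for how the extension coordinates transform. Verifying $\Psi^X_{\lambda,\lambda'} \circ \tr^X_{\lambda'} = \tr^X_\lambda$ reduces, by the splitting step above, to the local computation on a single quadrilateral with its two triangulations. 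I expect this local flip computation at the extended level to be the main obstacle: one must pin down the correct extension-coordinate transformation rule and then check the identity on a finite set of generators of $\S(\text{quadrilateral})$. The computation is made tractable by the already-established flip invariance of $\btr^X_\lambda$, which takes care of the part of the map that descends to $\bXS$, leaving only the extension-coordinate part to be verified by a direct calculation.
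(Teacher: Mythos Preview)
Your proposal has a genuine structural gap in the globalization step. You want to embed $\FG(\fS,\lambda)$ into $\bigotimes_i \FG(\fT_i,\lambda_{T_i})$ and then patch the triangle traces. But the extended Fock--Goncharov algebra is defined by attaching a triangle to each \emph{boundary} edge of $\fS$; when you cut $\fS$ along interior edges of $\lambda$, those interior edges become boundary edges of the pieces $\fT_i$, and the corresponding extension coordinates in $\FG(\fT_i,\lambda_{T_i})$ have no counterpart in $\FG(\fS,\lambda)$. So there is no natural cutting map on the extended $X$-tori analogous to the one for $\rdbl$, and your composition does not land in $\FG(\fS,\lambda)$. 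This is not just a bookkeeping issue---the extension data is fundamentally attached to the global boundary of $\fS$, not to the local boundaries of the faces.

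The paper's approach sidesteps this entirely. It never defines an unreduced triangle trace $\S(\fT)\to\FG(\fT,\lambda_T)$. Instead it forms the extended surface $\fS^\ast$ (with triangles glued once and for all to $\partial\fS$), defines a specific embedding $\iota:\fS\hookrightarrow\fS^\ast$ pushing boundary endpoints around a corner into the attached triangles, and sets $\tr^X_\lambda=\rdtr^X_{\lambda^\ast}\circ\iota_\ast$. Thus the extended trace is the already-constructed \emph{reduced} trace on a larger surface; no new triangle computation or relation-checking is needed. The subalgebra $\FG'(\fS,\lambda)$ is the monomial subalgebra $\qtorus(\mQ_\lambda;B_\lambda)$ for an explicit submonoid $B_\lambda$, and the projection $p$ is built from an extended counit $\epsilon_X$ on each attached triangle (Theorem~\ref{thm-attach-counit}); your description of $\FG'$ as ``the balanced subalgebra in which extension coordinates appear only through combinations that descend'' is too vague to specify it, and indeed the paper remarks that there is no algebra map $\FG(\fS,\lambda)\to\rd\FG(\fS,\lambda)$ extending $p$. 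For naturality, the paper again avoids a direct flip check at the extended level: it restricts $\rd\Psi^X_{\lambda^\ast(\lambda')^\ast}$ from the extended surface, using Corollary~\ref{cor-co-chg-bdry} to ensure the restriction lands in the right subalgebra.
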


The algebra $\FG'(\fS,\lambda)$ is of very simple type, as it is linearly spanned by monomials in the generators of the quantum torus $\FG(\fS,\lambda)$, and the projection $\FG'(\fS,\lambda) \onto \bXS$ is also of a very simple form, as it sends certain monomials to zero while leaves other monomials alone. In Theorem \ref{thm.full}, the algebra $\FG'(\fS,\lambda)$ is the monomial subalgebra $\BT(\mQ_\lambda,B_\lambda)$.

It should be noted that in general there is no algebra homomorphism $\FG(\fS,\lambda) \to \rd\FG(\fS,\lambda)$ such that $\FG'(\fS, \lambda)$ can be replaced by $\FG(\fS,\lambda)$ in Diagram~\eqref{eq.diag01}. Hence the construction of $\FG'(\fS, \lambda)$ is quite non-trivial.

\begin{conjecture}
The extended quantum trace homomorphism $\tr^X_\lambda$ is injective.
\end{conjecture}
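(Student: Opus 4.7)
The plan is to split the conjecture into two regimes. When each component of $\fS$ has non-empty boundary and no interior ideal point, injectivity of $\tr^X_\lambda$ is almost immediate from results already in the paper: the $A$-version trace $\tr^A_\lambda \colon \SS \embed \cA(\fS,\lambda)$ is injective, and the multiplicatively linear transition between the $X$- and $A$-sides provides a map $\Phi$ with $\Phi \circ \tr^X_\lambda = \tr^A_\lambda$, so that $\ker(\tr^X_\lambda) \subseteq \ker(\tr^A_\lambda) = 0$. The substantive content of the conjecture is therefore the remaining case, in which $\fS$ contains interior ideal points and the $A$-version is not directly available.

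For that remaining case I would pursue a ``puncture the puncture'' reduction. Given an interior ideal point $p$ of $\fS$, remove a small open disk around $p$ to produce a new pb surface $\fS^\circ$ with an extra boundary circle containing a single ideal point, and iterate over all interior punctures. The stated skein algebra embeds $\SS \embed \cS(\fS^\circ)$ by inclusion of tangle diagrams away from the new boundary, and $\lambda$ lifts to an ideal triangulation $\lambda^\circ$ of $\fS^\circ$. The key step is to construct a compatible embedding $\FG(\fS,\lambda) \embed \FG(\fS^\circ,\lambda^\circ)$, identifying the single $X$-coordinate attached to $p$ with the balanced monomial representing the monodromy around $p$ in the coordinates of $\lambda^\circ$, so as to obtain a commutative square
\[
\begin{tikzcd}
\SS \arrow[r,"\tr^X_\lambda"] \arrow[d,hook] & \FG(\fS,\lambda) \arrow[d,hook] \\
\cS(\fS^\circ) \arrow[r,"\tr^X_{\lambda^\circ}"] & \FG(\fS^\circ,\lambda^\circ).
\end{tikzcd}
\]
Since $\fS^\circ$ has no interior ideal points, $\tr^X_{\lambda^\circ}$ is injective by the previous paragraph, and injectivity descends to $\tr^X_\lambda$.

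The main obstacle is verifying commutativity of this square, which requires a local computation of $\tr^X_\lambda$ on webs encircling interior punctures---precisely the part of the classical Fock--Goncharov map whose behavior is most intricate, and which for $n \geq 3$ is not obviously compatible with the naive monomial identification of the monodromy coordinate. A secondary technical issue is that puncturing replaces one $X$-coordinate by several, so the vertical identification must be chosen to respect both the antisymmetric form defining the quantum tori and the images of the quantum traces under the splitting compatibility of Theorem~\ref{thm.02}(iii). Should this reduction prove intractable, a parallel route would be to bypass it entirely and establish the conjecture directly via a triangularity argument with respect to a web/state basis of $\SS$ and the monomial basis of $\FG(\fS,\lambda)$, generalizing to arbitrary $n$ the basis-based injectivity proofs used in Theorems~\ref{thm.01}--\ref{thm.02} for $n \leq 3$; the difficulty there shifts to identifying the correct leading-monomial ordering on $\FG(\fS,\lambda)$ and to making explicit the complicated $\tr^X_\lambda$-image of the higher-rank web generators.
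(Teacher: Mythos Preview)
This statement is a \emph{conjecture} in the paper, not a theorem; the paper does not supply a proof. What the paper does establish is exactly your first paragraph: when every component of $\fS$ has non-empty boundary and no interior ideal point, injectivity of $\tr^X_\lambda$ follows from the $A$-version (this is Corollary~\ref{cor-trX-embed}, a direct consequence of Theorem~\ref{thm-Atr}). For $n=2$ the general case is attributed to \cite{LY2}; for $n=3$ only the \emph{reduced} trace is shown to be injective (Section~\ref{sec.sl3}). Beyond these cases the conjecture is open.

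Your ``puncture the puncture'' reduction has a gap before the one you flag. The asserted embedding $\SS \embed \cS(\fS^\circ)$ is not available from the paper's machinery: the natural tool is the cutting homomorphism (Theorem~\ref{t.splitting2}), but cutting requires an interior \emph{ideal} arc, and a loop around $p$ based at a point $q\in\partial D$ is not one, since $q$ is not an ideal point of $\fS$. One could first add $q$ as a new interior puncture and then cut, but that pushes the problem to injectivity of the ``add a puncture'' map $\SS \to \cS(\fS\setminus\{q\})$, which is itself unproved here and is known to be delicate already for $n=2$. A second issue is that $\lambda$ does not literally lift to a triangulation of $\fS^\circ$: the edges of $\lambda$ incident to $p$ must be rerouted to $q$ and the new boundary edge must be incorporated, which alters the combinatorics of $\rd V_\lambda$ near $p$. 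Finally, your phrase ``the single $X$-coordinate attached to $p$'' is inaccurate: small vertices live on edges and in faces, not at ideal points, so there are many $x_v$ with $v$ near $p$, and the proposed monomial identification would have to match all of them simultaneously with the coordinates in the new annular region of $\lambda^\circ$.

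Your fallback route (a triangularity argument against a web basis) is much closer to what the paper actually does: the $n=3$ injectivity in Section~\ref{sec.sl3} proceeds exactly this way, via a canonical-position basis of Frohman--Sikora type and Kim's leading-term computation (Theorem~\ref{thm-lt-trX}). The genuine obstruction to pushing this to general $n$ is that no analogous confluent basis of $n$-webs on a punctured surface---nor the corresponding leading-term control of $\rdtr^X_\lambda$---is currently available.
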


For $n=2$ the conjecture, as well as Theorem~\ref{thm.02e}, was proved in \cite{LY2}. For general $n$ it is confirmed in special cases, see Theorem~\ref{thm.04}.

\subsection{No interior ideal points case: $A$-versions of quantum traces} \label{sec.14}

Assume that each connected component of the pb surface $\fS$ has non-empty boundary and at the same time no interior ideal point. In this case we get the strongest results. Let $\lambda$ be an ideal triangulation of $\fS$. We will show that other versions of quantum traces, called the $A$-versions, exist. They have properties more favorable than the $X$-versions quantum traces and help us to prove many results of the paper, even those concerning more general surfaces.

First we will introduce the $A$-version quantum tori
\[ \bAS= \BT(\bmP(\fS, \lambda) ), \quad \AS= \BT(\mP(\fS, \lambda)), \]
where the matrix $\bmP(\fS, \lambda)$ has the size of $\bmQ(\fS, \lambda)$ and $\mP(\fS, \lambda)$ has the size of $\mQ(\fS, \lambda)$. Theorem \ref{thm.dual} will show that $\bmP(\fS, \lambda)$ is \term{compatible with $\bmQ(\fS, \lambda)$} in the sense of \cite{BZ}.
The algebra $\bAS$ can be thought of as the quantization of the torus chart of the $A$-moduli space of Fock and Goncharov. \\
Lemma \ref{lemma-HK} shows that there are non-degenerate integer square matrices $\bmK$ and $\mK$ such that
\[ \bmK \, \matsp \bmQ(\fS, \lambda) \, \matsp \bmK^t = \bmP(\fS, \lambda), \qquad
\mK \matsp \mQ(\fS, \lambda) \matsp \mK^t = \mP(\fS, \lambda).\]
Consequently there are algebra embeddings given by multiplicatively linear maps
\begin{equation}\label{eq.psi1}
\rd{\psi}_\lambda: \bAS \embed \bXS, \qquad {\psi}_\lambda: \AS \embed \XS
\end{equation}
whose images are called the \term{balanced parts} $\bsX^\bal(\fS, \lambda)$ and $\FG^\bal(\fS,\lambda)$ of $\bXS$ and $\XS$ respectively.


The quantum torus $\AS$ is the ring of Laurent polynomials in $q$-commuting variables $x_1, \dots, x_r$ as described in the presentation \eqref{eq.TTT}, where $r= r(\fS)$. The subalgebra generated by $x_1, \dots, x_r$ is denoted by $\lenT_+(\fS, \lambda)$, and is known as a quantum space. One defines the quantum space $\bA_+(\fS, \lambda)$ similarly.

Now we can formulate the main result concerning the $A$-version of quantum trace.

\begin{thm}[Part of Theorem \ref{thm-Atr}]\label{thm.04} Suppose each connected component of a punctured bordered surface $\fS$ has non-empty boundary and no interior ideal point, and
$\lambda$ is an ideal triangulation of $\fS$.
There exists an algebra embedding
\begin{equation}\label{eq.Atr0}
\tr_\lambda^A: \SS \embed \cA(\fS,\lambda),
\end{equation}
called the $A$-version quantum trace homomorphism, such that
\begin{equation}
\cA_+(\fS,\lambda) \subset \tr_\lambda^A (\SS ) \subset \cA(\fS,\lambda)
\label{eq.sand}
\end{equation}
Moreover, $\tr^A_\lambda$ and $\tr^X_\lambda$ are intertwined by $\psi_\lambda$, so that the following diagram is commutative
\[\begin{tikzcd}[row sep=tiny]
&\cA(\fS, \lambda) \arrow[dd,"\psi_\lambda","\cong"'] \\
\SS \arrow[ru,hook,"\tr^A_\lambda"]
\arrow[rd,hook,"\tr^X_\lambda"']&\\
&\FG^\bal(\surface, \lambda)
\end{tikzcd}\]
\end{thm}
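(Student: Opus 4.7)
The plan is to define $\tr^A_\lambda$ by pulling back $\tr^X_\lambda$ along the isomorphism $\psi_\lambda \colon \AS \xrightarrow{\cong} \FG^\bal(\fS,\lambda)$ from~\eqref{eq.psi1}, namely $\tr^A_\lambda := \psi_\lambda^{-1} \circ \tr^X_\lambda$. For this to make sense, I first need to verify that $\tr^X_\lambda(\SS) \subseteq \FG^\bal(\fS,\lambda)$. Being balanced is a congruence condition on the exponent vectors of the generators of $\FG(\fS,\lambda)$, cut out by the integer matrix $\mK$. Since the balanced subalgebra is a subalgebra, it suffices to check the condition on a generating set of $\SS$; using the splitting compatibility of $\tr^X_\lambda$ (Theorem~\ref{thm.02}(iii)), I would reduce to stated webs inside a single triangle, where the explicit formula for $\tr^X_\lambda$ makes the verification a direct local computation at each intersection of the web with an edge and at each interior vertex of the web. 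Once balancedness is established, $\tr^A_\lambda$ is a well-defined algebra homomorphism $\SS \to \cA(\fS,\lambda)$, the intertwining triangle in the statement commutes by construction, and naturality under change of triangulation is inherited automatically from $\tr^X_\lambda$.

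Next I would establish the sandwich $\cA_+(\fS,\lambda) \subseteq \tr^A_\lambda(\SS) \subseteq \cA(\fS,\lambda)$. The right inclusion is immediate. For the left inclusion, for each generator $x_i$ of the quantum space $\cA_+(\fS,\lambda)$ I would exhibit an explicit stated web $\alpha_i \in \SS$ with $\tr^A_\lambda(\alpha_i) = x_i$. The generators of $\cA_+(\fS,\lambda)$ are naturally indexed by lattice points of the $n$-triangulation of each face of $\lambda$, with boundary points shared between adjacent faces; the preimages $\alpha_i$ should be small corner arcs and elementary local webs supported near each such lattice point, generalizing the corner-arc constructions used for $n=2$ in~\cite{LY2}. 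The verification $\tr^A_\lambda(\alpha_i) = x_i$ reduces, via splitting, to an explicit computation in the single-triangle case.

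The principal obstacle is proving injectivity of $\tr^A_\lambda$. Since $\cA(\fS,\lambda)$ is a quantum torus and hence a domain, I would equip $\SS$ with a length/weight filtration on stated webs and show that $\tr^A_\lambda$ respects it with an injective associated graded. The plan is: first prove injectivity in the base case of a single triangle by writing a linear basis of $\SS$ as ordered monomials in elementary web generators, computing their $\tr^A_\lambda$-images as Laurent monomials in $\cA(\fS,\lambda)$, and deducing injectivity by a leading-term argument with respect to a suitable monomial order; then extend to general $\fS$ by induction on the number of triangles of $\lambda$, using the compatibility of $\tr^A_\lambda$ with edge splittings. The splitting compatibility itself follows from the analogous property of $\tr^X_\lambda$ combined with the Berenstein-Zelevinsky compatibility $\mK\,\mQ\,\mK^t = \mP$ of Theorem~\ref{thm.dual}, which is precisely what makes the $A$- and $X$-constructions interact properly under gluing. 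The delicate point will be to control the extra "gluing" variables introduced on the cut edge when invoking the inductive hypothesis, so that cancellation in the kernel can be traced back to cancellation on the smaller pieces.
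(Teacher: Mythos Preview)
Your definition $\tr^A_\lambda := \psi_\lambda^{-1}\circ\tr^X_\lambda$ matches the paper exactly, and your plan for the sandwich inclusion $\cA_+(\fS,\lambda)\subset\tr^A_\lambda(\SS)$ by exhibiting explicit webs mapping to the generators is also the paper's approach: the paper constructs, for each $v\in V'_\lambda$, a reflection-normalized stated web $\gaa_v$ (Section~\ref{sec.Atrace}) and proves in Lemma~\ref{lemma-trX-av} that $\tr^X_\lambda(\gaa_v)=x^{\mK_\lambda(v,\cdot)}=\psi_\lambda(a_v)$, whence $\tr^A_\lambda(\gaa_v)=a_v$. Your balancedness check is unnecessary, since Theorem~\ref{thm.full} already gives $\tr^X_\lambda(\SS)\subset\FGbl(\fS,\lambda)$ as part of the construction of $\tr^X_\lambda$.

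The real divergence is in the injectivity argument, and here your plan is both harder and obstructed. You propose to induct on the number of triangles using a splitting compatibility of $\tr^A_\lambda$, but the paper explicitly notes (opening of Section~\ref{sec.Atrace}) that the $A$-tori do \emph{not} admit a cutting homomorphism compatible with the $X$-version cutting, so there is no splitting property for $\tr^A_\lambda$ to induct on; the ``delicate point'' you flag is in fact a genuine obstruction. The paper's argument is far simpler and bypasses all of this: once the left sandwich inclusion is established, one has
\[
\GKdim\bigl(\tr^A_\lambda(\SS)\bigr)\ \ge\ \GKdim\bigl(\cA_+(\fS,\lambda)\bigr)\ =\ |V_\lambda|\ =\ \GKdim(\SS),
\]
the last equality being Lemma~\ref{lemma-vset-size} combined with Theorem~\ref{thm.domain}. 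Since $\SS$ is an $R$-torsion-free domain (Theorem~\ref{thm.domain}), Lemma~\ref{r.GKdim}(b) forces $\tr^A_\lambda$ to be injective. No filtration, no leading-term analysis, no induction on triangles is needed.
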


The fact \eqref{eq.sand} that $\SS$ is sandwiched between the quantum space $\cA_+(\fS,\lambda)$ and quantum torus $\cA(\fS,\lambda)$ is an advantage of the $A$-version quantum trace over the $X$-version.
For example, from here it is easy to calculate the center of $\SS$ and study its representation theory, especially when $\hq$ is a root of 1. It also follows that the $A$-version quantum trace $\tr^A_\lambda$ induces an isomorphism of the division algebras
\begin{equation}
\Fr(\tr^A_\lambda): \Fr (\SS ) \xrightarrow{\cong } \Fr \cA(\fS,\lambda). \label{eq.Frtr1}
\end{equation}
There is a similar result for the reduced version, though the injectivity result is weaker.

\begin{thm}[Part of Theorem \ref{thm-Atr}] \label{thm.04a}
With the same assumption of Theorem \ref{thm.04},
there exists an algebra homomorphism
\begin{equation}\label{eq.bAtr0}
\rdtr_\lambda^A: \bSS \to \bA(\fS,\lambda)
\end{equation}
such that
\begin{equation}\label{eq-Aincl-rd0}
\bA_+(\fS,\lambda) \subset \rdtr_\lambda^A(\bSS) \subset \bA(\fS,\lambda).
\end{equation}
Moreover, $\tr^A_\lambda$ and $\tr^X_\lambda$ are intertwined by $\psi_\lambda$:
\begin{equation}
\btr^X_\lambda = \bar \psi_\lambda\circ \btr^A_\lambda
\end{equation}
In addition, if $\fS$ is a polygon, then $\rdtr_\lambda^A$ is injective.
\end{thm}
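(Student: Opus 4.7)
The plan is to deduce this reduced $A$-version from Theorem~\ref{thm.04} (the non-reduced $A$-version) together with the reduction for the $X$-version already built into diagram~\eqref{eq.diag01} of Theorem~\ref{thm.02e}. The unifying observation is that both $\psi_\lambda$ and $\rd{\psi}_\lambda$ are multiplicatively linear embeddings onto the balanced parts $\FG^\bal$ and $\bsX^\bal$, which forces a tight compatibility between the $A$- and $X$-pictures and lets the reduction machinery transfer essentially verbatim.

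First I would construct the map. Set $\cA'(\fS,\lambda) := \psi_\lambda^{-1}(\FG'(\fS,\lambda))$; multiplicative linearity of $\psi_\lambda$ makes this a monomial subalgebra of $\cA(\fS,\lambda)$. The embedding $\rd{\psi}_\lambda$ together with the monomial projection $p: \FG'(\fS,\lambda) \onto \bXS$ then determine a unique algebra surjection $p^A: \cA'(\fS,\lambda) \onto \bA(\fS,\lambda)$ with $p \circ \psi_\lambda = \rd{\psi}_\lambda \circ p^A$. By Theorem~\ref{thm.04} we have $\tr^A_\lambda(\SS) \subset \cA(\fS,\lambda)$; combining the intertwining $\tr^X_\lambda = \psi_\lambda \circ \tr^A_\lambda$ with the containment $\tr^X_\lambda(\SS) \subset \FG'(\fS,\lambda)$ upgrades this to $\tr^A_\lambda(\SS) \subset \cA'(\fS,\lambda)$. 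So $p^A \circ \tr^A_\lambda : \SS \to \bA(\fS,\lambda)$ is well defined, and diagram~\eqref{eq.diag01} together with injectivity of $\rd{\psi}_\lambda$ forces it to vanish on the kernel of $\SS \onto \bSS$, thereby producing both $\btr^A_\lambda$ and the intertwining identity $\btr^X_\lambda = \rd{\psi}_\lambda \circ \btr^A_\lambda$ by construction. For the sandwich~\eqref{eq-Aincl-rd0}, the right inclusion is tautological; for the left I would reuse the explicit diagrammatic preimages of generators of $\cA_+(\fS,\lambda)$ built in the proof of Theorem~\ref{thm.04}, push them to $\bSS$ through $\SS \onto \bSS$, and observe that under $p^A$ they become generators of $\bA_+(\fS,\lambda)$.

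The main obstacle is injectivity of $\btr^A_\lambda$ when $\fS$ is a polygon. The intended argument is a leading-term (``tropical'') comparison generalizing the $n=2$ proof of Costantino and the first author~\cite{CL}. A polygon has no interior ideal points and no nontrivial closed loops survive reduction, so $\bSS$ admits a combinatorially explicit basis $\{W_\mu\}$ of reduced essential $SL_n$-webs indexed by combinatorial data $\mu$ (boundary states together with internal structure). On $\bA(\fS,\lambda)$ there is the standard Laurent monomial basis $\{x^\bd\}$. I would fix a weight on the edges of $\lambda$ inducing a monomial order on $\bA(\fS,\lambda)$ and show that (i)~each $\btr^A_\lambda(W_\mu)$ has a unique highest monomial $x^{\bd(\mu)}$ with coefficient a unit of $R$, and (ii)~the assignment $\mu \mapsto \bd(\mu)$ is injective. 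Statement (ii) amounts to a tropical reconstruction principle: a reduced $SL_n$-web in a triangulated polygon is determined by the integer weights it induces on the edges of $\lambda$. Both (i) and (ii) should follow by cutting along $\lambda$ and using the triangle-case calculation of $\btr^A_\lambda$ together with its compatibility with splitting, but making the $SL_n$-web combinatorics precise enough to extract and invert this leading term is where the real work lies.
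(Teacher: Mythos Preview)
Your construction of $\btr^A_\lambda$ is correct but more circuitous than the paper's. The paper simply observes that $\btr^X_\lambda$ already lands in the balanced part $\rdbl(\fS,\lambda)$ (this is built into Theorem~\ref{thm-rdtrX}) and that $\rd\psi_\lambda:\bA(\fS,\lambda)\to\rdbl(\fS,\lambda)$ is an isomorphism (Theorem~\ref{thm.dual}), so it defines $\btr^A_\lambda := (\rd\psi_\lambda)^{-1}\circ\btr^X_\lambda$ in one line, with the intertwining relation holding by fiat. There is no need to thread through the non-reduced $\tr^A_\lambda$ and the projection $p$ of diagram~\eqref{eq.diag01}. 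For the left inclusion in the sandwich~\eqref{eq-Aincl-rd0}, the paper indeed produces explicit elements $\bar\gaa_v\in\bSS$ with $\btr^A_\lambda(\bar\gaa_v)=a_v$ (Lemma~\ref{lemma-trX-av}), matching your plan.

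The substantive divergence is in the injectivity argument for polygons. Your proposed leading-term/tropical comparison is a heavy program, and the step ``cutting along $\lambda$ and using \dots compatibility with splitting'' is problematic: as the paper emphasizes at the start of Section~\ref{sec.Atrace}, the $A$-tori do \emph{not} admit a cutting homomorphism, so there is no splitting compatibility for $\btr^A_\lambda$ to invoke. The paper's argument is far shorter and avoids all web combinatorics: the sandwich gives $\GKdim(\btr^A_\lambda(\bSS))\ge\GKdim(\bA_+(\fS,\lambda))=|\rdV_\lambda|$, and for a polygon Theorem~\ref{thm.domainr} computes $\GKdim(\bSS)=|\rdV_\lambda|$ (cf.\ Lemma~\ref{lemma-vset-size}). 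Since $\bSS$ is an $R$-torsion-free domain (again Theorem~\ref{thm.domainr}), Lemma~\ref{r.GKdim}(b) forces $\btr^A_\lambda$ to be injective. In short, the hard work was already done in establishing that $\reduceS(\PP_k)$ is a domain of the correct GK dimension; injectivity then falls out without any leading-term analysis.
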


Even though so far the injectivity of $\btr^A_\lambda$ is established only for polygons, this case is very important for us. We will use the injectivity for the case of quadrilateral and pentagon to prove the naturality properties of the $X$-version trace quantum maps of Theorems \ref{thm.01} and \ref{thm.02}. We also conjecture that $\rdtr_\lambda^A$ is always injective.

\begin{remark}
When $n=2$, the existence of a matrix compatible with $\bmQ(\fS, \lambda)$ was proved by G. Muller \cite{Muller}. A. Goncharov kindly informed us that the same fact for general $n$ (under the assumption of Theorem \ref{thm.04a}) can be derived from the results of \cite[Section 12]{GS}, even for groups more general than $SL_n$. Our approach gives an explicit, combinatorial description of a compatible matrix of $\bmQ(\fS, \lambda)$, see Subsection \ref{seb.trig1} below and Section \ref{sec.qtori}. As compatible matrices are not unique, one might ask if our $\bmP(\fS, \lambda)$ is equal to the one coming from \cite{GS}. A further question is the relationship between our $\bsX_{\hq=1}$ and the space $\mathcal{P}_{SL_n, \fS}$ of \cite{GS}, which, a priori, look different even though they have the same dimension. Note that $\bsX$ is defined as a quantum space, so that our $\bsX_{\hq=1}$ has an obvious quantization.
\end{remark}

\begin{remark}
For $n=2$, Theorem \ref{thm.04} was proved in \cite{LY2}, based on earlier work of Muller \cite{Muller}. In fact Muller constructed a skein algebra, equal to a subalgebra $\cS_+(\fS)$ of $\SS$. Then he defined the quantum cluster algebra as a localization of $\cS_+(\fS)$. In \cite{LY2} we proved that our reduced algebra $\bSS$ is equal to Muller's quantum cluster algebra. One might ask if our reduced skein algebra $\bSS$ has connection to the quantum cluster algebra, and we plan to return to this question in a future work. For a partial generalization of Muller's result to the case $n=3$ see \cite{IY}.
\end{remark}

\subsection{Triangle case} \label{seb.trig1}

There are three main steps in the proof of the existence of the quantum trace maps. First, given an ideal triangulation $\lambda$, we cut $\fS$ along edges of the triangulation and the result is a disjoint union of ideal triangles. The cutting homomorphism of the stated skein algebra \cite{LS} gives an algebra map
\begin{equation}
\Theta: \SS \to \bigotimes_{\stdT} \cS(\stdT),
\end{equation}
where the tensor product is over all ideal triangles $\stdT$ which are faces of the triangulation.

The second step is to show that for each ideal triangle $\stdT$ there are $A$-version and $X$-version quantum traces. The third step is to show that the we can patch the quantum traces from triangle to get quantum traces for the whole surface.

Let us discuss the second step, of how to construct quantum traces for the triangle. At the same time we illustrate the results of Theorem~\ref{thm.04a} by the example of the triangle. Present the triangle $\stdT$ as the simplex
\begin{equation}
\stdT = \{ (x,y,z)\in \BR^3_{ \ge 0} \mid x+y+z=n \}.
\end{equation}

\begin{figure}
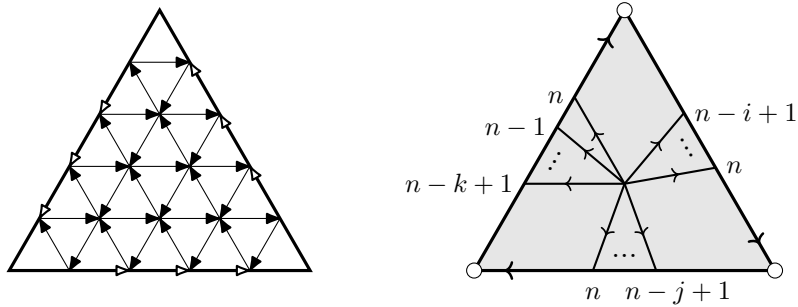

\centering
\input{def-Q} \qquad
\input{trigen-res-ordered}
\caption{Left: The quiver $\Gamma$ with $n=5$. Right: Elements $\gaa_{ijk}$}\label{f.n-triang01}
\end{figure}

Let $\Gamma$ be the quiver where the set of vertices $\bV$ consists of all points in $\stdT$ with integer coordinates, excluding the three vertices of $\stdT$. The elements of $\bV$ are connected by weighted arrows as in Figure \ref{f.n-triang01}, where a black arrow has weight 2, and a white arrow has weight 1. The Fock-Goncharov matrix $\bmQ$ is the Poisson matrix of the quiver $\Gamma$, i.e. it is the anti-symmetric map
$ \bmQ: \bV \times \bV \to \BZ$
given by
\[\bmQ(x, y) = \begin{cases} w, \qquad &\text{if there is an arrow of weight $w$ from $x$ to $y$},\\
0, &\text{otherwise}.\end{cases}\]
The Fock-Goncharov algebra is $\rd\FG(\stdT)= \BT(\bmQ)$.

We define $\bmP: \bV \times \bV \to \BZ$ as the unique anti-symmetric map satisfying
\begin{itemize}
\item $\bmP$ is invariant under rotation of the triangle $\stdT$ by $2\pi /3$, and
\item If $v=(i,j,k), v'=(i', j', k') \in \bV$ with $i \le i' , j \le j'$, then
\begin{equation}
\bmP(v,v') = n \det \begin{pmatrix} i & i' \\ j & j'
\end{pmatrix} = n(ij' - j i').
\end{equation}
\end{itemize}

Our quantization of the $A$-moduli space of Fock and Goncharov is the quantum torus
$$\bA(\stdT):=\BT(\bmP )= R \la a_v^{\pm 1}, v \in \bV \ra / (a_v a_{v'} = \hq^{2 \bmP(v,v')} a_{v'} a_v).$$

For $v=(i,j,k) \in \bV$, let $\gaa_v \in \reduceS(\stdT)$ be the element represented by the diagram in Figure~\ref{f.n-triang01} with some normalization constant, which is explained in detail in Section~\ref{sec.qtr3}. The collection $\{ \gaa_v\mid v \in \bV\}$ realizes a quantum space in $\reduceS(\stdT)$, as follows.

\begin{thm}[Parts of Theorems \ref{thm.domainr}, \ref{thm-tdual-tri} and \ref{thm.bP3a}] We have the following.
\begin{enumerate}[(i)]
\item The reduced skein algebra $\reduceS(\stdT)$ is a domain.
\item For $v,v'\in \bV$, we have $\gaa_v \gaa_{v'}= \hq^{2 \bmP(v,v')} \gaa_{v'} \gaa_v$. Consequently, there is an algebra homomorphism $f: \BT_+(\bmP) \to \reduceS(\stdT)$ given by $f(a_v) = \gaa_v$.
\item The algebra homomorphism $f$ is injective.
\item For $a\in \reduceS(\stdT)$ there is a monomial $\fm= \prod_{v\in \bV} \gaa_v^{k_v}\in \BT_+(\bmP)$ such that $a \fm \in \BT_+(\bmP)$.
\item The matrix $\bmP$ is compatible with $\bmQ$, with interior vertices being the exchangeable indices. (See Section~\ref{sec-ntri} for an explanation).
\end{enumerate}
\end{thm}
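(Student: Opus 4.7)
My strategy is to handle the five items in essentially the opposite order from how they are listed: first the purely combinatorial fact (v), then the commutation relations (ii), then the "sandwich" statement (iv) which will be the technical engine, and finally deduce (iii) and (i) as consequences.

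\emph{Step 1: Compatibility of $\bmP$ and $\bmQ$ (item (v)).} This is a finite computation inside the triangle. I would write both matrices in the basis $\bV$ of integer lattice points and show that $\bmK\matsp\bmP\bmK^t$ is diagonal on the exchangeable (interior) indices, where $\bmK=\bmQ$. By the rotational symmetry of both $\bmP$ and the quiver $\Gamma$, it suffices to compute the pairing $\sum_{w}\bmQ(v,w)\bmP(v',w)$ only for $v$ in a fundamental region with $v'$ ranging over $\bV$; the formula $\bmP(v,v')=n(ij'-ji')$ reduces this to verifying an identity of sums over the six neighbors of $v$ in $\Gamma$ (with weights $1$ and $2$). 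The definition of $\bmP$ was designed exactly so this works, so the main task is careful bookkeeping.

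\emph{Step 2: Commutation (ii).} Each $\gaa_v$ with $v=(i,j,k)$ is represented by three parallel families of boundary-parallel arcs (of cardinalities $n{-}i{+}1$, $n{-}j{+}1$, $n{-}k{+}1$) meeting at a single interior vertex, with prescribed states on the boundary. To compute $\gaa_v\gaa_{v'}$ I would stack one on top of the other and push all strands to distinct heights on each boundary edge using the stated-skein height-exchange relations from \cite{LS}. The scalar that appears when swapping two strands on a boundary edge is an explicit power of $\hq$ coming from the $R$-matrix; summing these powers across all three edges gives precisely $2\bmP(v,v')$ after applying the formula $\bmP(v,v')=n\det\bigl(\begin{smallmatrix}i&i'\\j&j'\end{smallmatrix}\bigr)$ (and using the rotational symmetry to reduce to one case). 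This gives a well-defined algebra map $f:\BT_+(\bmP)\to\reduceS(\fT)$.

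\emph{Step 3: The sandwich property (iv), and the domain / injectivity statements (i), (iii).} This is the main technical step. I would proceed as follows. Stated skein theory gives a basis of $\cS(\fT)$ by ``basis webs'' (tangle diagrams in a standard form with ordered boundary states), and passing to $\reduceS(\fT)$ amounts to killing those basis elements with a bad boundary segment. The plan is to construct, for an arbitrary reduced basis web $D$, an explicit monomial $\fm=\prod \gaa_v^{k_v}$ such that $D\cdot\fm$, after simplification, becomes itself a monomial in the $\gaa_v$. Geometrically, multiplying by $\gaa_v$ adds boundary-parallel strands of suitable lengths, and by choosing $k_v$ large one can realize any reduced diagram as a sub-web of a product of $\gaa_v$'s (this is the analogue of Muller's ``very positive'' argument in \cite{Muller} that was used for $n=2$ in \cite{LY2}). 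Combined with (ii), this shows that the Ore localization of $f(\BT_+(\bmP))$ at the multiplicative set generated by the $\gaa_v$ surjects onto the Ore localization of $\reduceS(\fT)$ — in fact equals it — and simultaneously proves (iv).

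\emph{Step 4: Wrap-up.} Once (iv) is in place, the domain property (i) follows: $\reduceS(\fT)$ embeds in its Ore localization, which is a quotient of the quantum torus $\BT(\bmP)$; but any nontrivial quotient of a quantum torus by a homogeneous ideal collapses generators, which is incompatible with the existence in $\reduceS(\fT)$ of the obvious linearly independent monomials in the $\gaa_v$ produced in Step 3. Hence the Ore localization of $f(\BT_+(\bmP))$ is all of $\BT(\bmP)$, a domain, so $\reduceS(\fT)$ is a domain. Injectivity of $f$ (item (iii)) is then immediate: any nonzero element of $\BT_+(\bmP)$ can be multiplied by a monomial to land in $\BT_+(\bmP)$ nontrivially, and a domain has no zero-divisors.

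\emph{Main obstacle.} Steps 1 and 2 are bookkeeping that the formulas for $\bmP$ were reverse-engineered to make work. The real difficulty is Step 3: producing a monomial $\fm$ with $D\fm\in f(\BT_+(\bmP))$ for every reduced web $D$. This requires a concrete understanding of reduced basis webs in the triangle (not just the usual $\cS(\fT)$-basis, where the $\reduceS$-relations kill many elements), and a controlled way of filling in parallel boundary strands so that after applying the defining relations of the reduced stated skein algebra (boundary relations, bad-arc relations, crossing relations from the $SL_n$ Reshetikhin-Turaev calculus) the result is once again a single monomial in the $\gaa_v$. This is where most of the real content lies, and it is where the explicit combinatorial description of $\gaa_v$ in Figure~\ref{f.n-triang01} and the rotational symmetry will be used heavily.
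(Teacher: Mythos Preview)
Your proposal has a genuine logical gap in the dependency structure, and your proposed Step~3 does not match what the paper does (nor is it clear it would work).

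\textbf{The order is reversed.} You try to deduce the domain property (i) from the sandwich property (iv) via an Ore-localization argument. But forming the Ore localization at the $\gaa_v$ already requires them to be regular, which you do not know until you know $\reduceS(\fT)$ is a domain. The paper resolves this by proving (i) \emph{first and independently}: Theorem~\ref{thm.PP3} gives an explicit isomorphism
\[
\reduceS(\PP_3)\;\cong\;\LO\{\DD\}^{-1}[x_1^{\pm1},\dots,x_{n-1}^{\pm1};\tau_1,\dots,\tau_{n-1}],
\]
built from the quasimonomial-basis machinery of Sections~\ref{sec.Oqsln}--\ref{sec.Rd2}. Since $\LO$ is a domain (Proposition~\ref{r.redOq}) and skew-Laurent extensions of domains are domains, (i) follows. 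This structure theorem simultaneously computes $\GKdim\reduceS(\PP_3)$.

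\textbf{Step~3 is the wrong mechanism.} Your ``Muller-style absorption'' argument (multiply an arbitrary reduced web $D$ by a large monomial in the $\gaa_v$ and simplify to a monomial) is not what the paper does, and for $SL_n$ webs with sinks and sources it is not clear such a direct geometric argument exists. Instead, the paper proceeds algebraically: Lemma~\ref{lemma-agen-decomp} factors each $\gaa_{ijk}$ as $[M_1(i,k)M_2(j)]_\Weyl$ where $M_1,M_2$ are quantum minors. Using the structure isomorphism above, the known quantum torus frame $\{\bar v_{ij}\}$ of $\LO$ (Theorem~\ref{thm.qtorus1}) is transported to a frame $\{M_1(i,k)\}\cup\{x_s^{-1}\}$ of $\reduceS(\PP_3)$, and then modified step-by-step to the set $\{\gaa_v\}$. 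Weak generation (iv) comes for free from this transport; injectivity (iii) then follows from Proposition~\ref{r.qtframe}(c), using that $|\bV|=\GKdim\reduceS(\PP_3)$ and that $\reduceS(\PP_3)$ is already known to be a domain. For (ii), rather than summing $R$-matrix scalars over three edges, the paper uses the quantum-minor factorization and known commutation of minors (\cite{Goodearl}, Lemma~\ref{r.height5}).
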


From (i)-(iv) it is not difficult to show that $\reduceS(\stdT)$ embeds into $\BT(\bmP)$ such that
$$ \BT_+(\bmP) \subset \reduceS(\stdT) \subset \BT(\bmP)= \bA(\stdT).$$
 The second embedding is the $A$-version quantum trace $\rdtr^A: \reduceS(\stdT)\embed \bA(\stdT) =\BT(\bmP) $.
By composing with the multiplicatively linear isomorphism $\rd{\lenT}(\stdT) \xrightarrow{\cong} \rdbl(\stdT)$ we get the $X$-version quantum trace for the ideal triangle:
\begin{equation}\label{eq.trTri}
\rdtr^X: \reduceS(\stdT)\embed \bsX(\stdT).
\end{equation}

For a connected surface $\fS$ with non-empty boundary and no interior punctures, a similar collection $\{\gaa_v\}$, realizing a quantum space in $\SS$, can be constructed, see Section \ref{sec.Atrace}.

\subsection{On naturality with respect to triangulation changes}

Given two triangulations $\lambda$ and $\lambda'$, to show that there is a natural transition isomorphism
\[\rd{\Psi}^X_{\lambda', \lambda}: \Fr( \rdbl(\fS, \lambda) ) \xrightarrow{\cong } \Fr( \rdbl(\fS, \lambda') )\]
intertwining the quantum traces $\rdtr^X_\lambda$ and $\rdtr^X_{\lambda'}$ is not easy, even in the case $n=2$ \cite{BW} and $n=3$ \cite{Kim2}. In the cited works, the transition isomorphisms are constructed explicitly by hand, and there are many cases involved and a lot of identities to prove. Here we use another approach, via the $A$-version quantum traces.

First assume the surface $\fS$ is connected, with non-empty boundary and no interior point. The construction of $A$-version transition isomorphism is easy. We define
\[\Psi^A_{\lambda', \lambda} := \Fr(\tr^A_{\lambda'} ) \circ \Fr(\tr^A_\lambda )^{-1},\]
where $\Fr(\tr^A_\lambda )$ is the isomorphism given by \eqref{eq.Frtr1}. Clearly $\Psi^A_{\lambda', \lambda}$ intertwines $\tr^A_\lambda$ and $\tr^A_{\lambda'}$. Using the linear isomorphism $\psi_\lambda$ of \eqref{eq.psi1} (with restriction onto $\FGbl(\fS, \lambda)$), we get the $X$-version natural transition isomorphism for this special type of surfaces.

For the reduced map $\rdtr^A_\lambda$ the above simple argument does not work since we do not know if $\rdtr^A_\lambda$ is injective. However, since $\rdtr^A_\lambda$ is injective for polygons, the above argument shows that we have the natural transition isomorphisms, both the $A$- and $X$- versions, for polygons.

Now assume $\fS$ is an arbitrary punctured bordered surface having two different ideal triangulations $\lambda$ and $\lambda'$. There is a sequence of flips connecting $\lambda$ and $\lambda'$, where a flip replaces a diagonal of a quadrilateral by the other diagonal. Using the transition isomorphism for the involved quadrilaterals and the local nature of the $X$-algebra $\bsX(\fS, \lambda)$ we can define a transition isomorphism $\rd{\Psi}^X_{\lambda', \lambda}$, which a priori might depend on the sequence of flips connecting $\lambda$ and $\lambda$. Two sequences of flips leading $\lambda$ to $\lambda'$ differ by the so-called pentagonal moves, and using the transition isomorphism for pentagons we will show that the transition map $\rd{\Psi}^X_{\lambda', \lambda}$ does not depend on the choice of the flip sequence.

\subsection{Integrality, GK dimension, orderly finite generation}

Recall that a not necessarily commutative ring $A$ is a domain if $ab=0$ implies $a=0$ or $b=0$.

In order to establish the existence of the quantum trace maps we need to prove that the stated skein algebra $\SS$ and its reduced quotient $\bSS$ for many surfaces are domains. This fact has its own independent interest, and is part of the following.

\begin{thm} [Parts of Theorems \ref{thm.domain} and \ref{thm.domainr}]
\label{th.05}
Assume the ground ring $R$ is a domain, and each connected component of a punctured bordered surface $\fS$ has non-empty boundary.
\begin{enuma}
\item The skein algebra $\SS$ is domain.
\item As $R$-modules both the domain $\SS$ and the target space $\FG(\fS, \lambda)$ of the extended quantum trace are free and have the same Gelfand-Kirillov dimension $r(\fS)$ given by
\begin{equation}\label{eq.rfS0}
r(\fS) = (n^2-1) \left( \# \pfS - \chi(\fS) \right),
\end{equation}
where $\# \pfS$ is the number of connected components of $\pfS$, and $\chi(\fS)$ is Euler characteristic of $\fS$.
\item The algebra $\SS$ is orderly finitely generated, i.e. it has elements $g_1, \dots, g_k$ such that the set $\{ g_1^{m_1}\dots g_k^{m_k}\mid m_i \in \BN\}$ spans $\SS$ over $R$.
\item If $\fS$ is a polygon, then the reduced skein algebra $\bSS$ is a domain.
\end{enuma}
\end{thm}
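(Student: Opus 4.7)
The overall strategy is to fix an ideal triangulation $\lambda$ of $\fS$ and reduce each assertion to the triangle case via the cutting (splitting) homomorphism
\[\Theta: \SS \hookrightarrow \bigotimes_\fT \cS(\fT),\]
whose injectivity is part of the foundational theory of stated skein algebras and is assumed available at this point in the paper. Each face $\fT$ of $\lambda$ is an ideal triangle, so has non-empty boundary and \emph{no} interior ideal point, hence Theorem \ref{thm.04} applies to it and supplies an injective $A$-version quantum trace $\tr^A:\cS(\fT)\embed\cA(\fT)$ into a quantum torus.

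Part (d) is the most direct: since $\fS$ is a polygon, Theorem \ref{thm.04a} supplies an injective algebra homomorphism $\rd\tr^A_\lambda:\bSS\embed\bA(\fS,\lambda)$; the target is a quantum torus, hence a domain, and a subalgebra of a domain is a domain. For part (a), I would compose $\Theta$ with the $A$-version triangle trace on each tensor factor to embed $\SS$ into $\bigotimes_\fT \cA(\fT)$. The stated tensor product $\otst$ introduces controlled $\hq$-commutation between the factors (as dictated by the splitting map along the triangulation edges), so the target remains a quantum torus, which is a domain; therefore $\SS$ is a domain.

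For part (c), I would construct an orderly generating set by combining, for each triangle $\fT$, the vertex generators $\{\gaa_v:v\in\bV(\fT)\}$ of Section \ref{seb.trig1} together with a peripheral loop element near each interior ideal point of $\fS$. A lexicographic ordering on (face, vertex) pairs, followed by puncture loops, together with commutation of the $\gaa_v$ tracked through $\tr^A$, promotes ``spans'' to ``orderly spans.'' For part (b), summing the local counts across $\lambda$ gives exactly
\[(n^2-1)\bigl(\#\pfS-\chi(\fS)\bigr) = r(\fS)\]
generators, by combining the per-triangle count with Euler's formula relating the numbers of faces, edges, and ideal vertices of $\lambda$. This yields $\GKdim\SS\le r(\fS)$. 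Freeness of $\SS$ as an $R$-module follows from the standard stated-tangle-diagram basis for stated skein algebras, and freeness plus GK dimension of $\FG(\fS,\lambda)$ is immediate from its definition as a quantum torus of size $r(\fS)$. The matching lower bound $\GKdim\SS\ge r(\fS)$ follows by composing $\Theta$ with the triangle quantum traces to embed $\SS$ into a quantum torus of GK dimension $r(\fS)$.

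The main obstacle will be the orderly-spanning argument in part (c) together with the combinatorial bookkeeping in part (b): one must choose edge generators consistently across the two triangles sharing each edge of $\lambda$ without double-counting, treat each interior ideal point with the correct number of peripheral loop generators, and verify that the quantum-trace images of the chosen monomials are $R$-linearly independent in the target quantum torus. The latter is the real heart of the argument, and it relies on a leading-term analysis with respect to the natural monomial ordering on $\bigotimes_\fT\cA(\fT)$ inherited from the combinatorial structure of the extended quiver $\Gamma$ across the triangulation.
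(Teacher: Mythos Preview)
Your proposal has a fundamental circularity. You invoke Theorem~\ref{thm.04} and Theorem~\ref{thm.04a} (the $A$-version quantum traces), but in the paper these are parts of Theorem~\ref{thm-Atr}, whose proof explicitly \emph{uses} Theorems~\ref{thm.domain} and~\ref{thm.domainr}. Specifically, the injectivity of $\tr^A_\lambda$ in Theorem~\ref{thm-Atr}(a) is established by appealing to Theorem~\ref{thm.domain} (that $\SS$ is a torsion-free domain) together with a GK-dimension comparison via Lemma~\ref{r.GKdim}(b); and the injectivity of $\rdtr^A_\lambda$ for polygons in Theorem~\ref{thm-Atr}(b) uses Theorem~\ref{thm.domainr} in the same way. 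Even the triangle trace of Theorem~\ref{thm.bP3a} requires $\reduceS(\stdT)$ to be a domain, which is the $k=3$ case of Theorem~\ref{thm.domainr}. So you cannot bootstrap from the quantum traces: the logical order in the paper is the reverse of what you assume.

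There is also a directional error in your part (b): embedding $\SS$ into a quantum torus of GK dimension $r(\fS)$ gives $\GKdim\SS \le r(\fS)$, not the lower bound you claim. The paper instead obtains equality directly. For (a), it cuts $\fS$ into polygons and builds a quasimonomial basis for each $\cS(\PP_k)$ by induction (Proposition~\ref{r.polygon}), then embeds via the cutting homomorphism. For (b), (c), it uses a \emph{saturated system of arcs} $a_1,\dots,a_r$ with $r=r(\fS)$: by Theorem~\ref{thm.saturated} the arc algebras $\cS(a_i)\cong\Oq$ or $\Bq$ form a tensor product factorization of $\SS$, each factor has uniform GK dimension $n^2-1$, and Proposition~\ref{r.tensorP}(c) makes GK dimension additive. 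Orderly generation follows since each $\Oq$ and $\Bq$ is orderly finitely generated (Proposition~\ref{r.basisOq}, Theorem~\ref{thm.basisBq}). For (d), the argument is a delicate inductive skew-Laurent extension (Theorem~\ref{r.Pkk}), not a consequence of an embedding into a quantum torus.
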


We conjecture that both $\SS$ and $\bSS$ are domains for any punctured bordered surface. For $n=2$ and $\fS$ a closed surface (without ideal points) the conjecture was proved by Przytycki and Sikora \cite{PS2}. For $n=2$ and $\fS$ has empty boundary but not a closed surface the conjecture was proved by Bonahon and Wong \cite{BW}. For $n=2$ and other surfaces the conjecture was proved in \cite{Le:triangulation} for $\SS$ and in \cite{CL} for $\bSS$. When $n=3$ and each connected component of the surface has at least one ideal point, the conjecture for $\SS$ is proved by Higgins \cite{Higgins}. In Theorem~\ref{thm.n3} we prove the conjecture for $\bSS$ and $n=3$. For further results see \cite{CKL}.

When we are finishing our paper Baseilhac, Faigt, and Roche sent out a preprint \cite{BFR} where Theorem \ref{th.05}(a) is proved for a slightly smaller class of surfaces, but for all simple Lie algebras whereas we consider only $\mathfrak{sl}_n$. On the other hand, for $\mathfrak{sl}_n$ our result is more general in that our ground ring is an arbitrary integral domain $R$, while in \cite{BFR} the ground ring is the field of rational function $\BQ(q)$. Note that if $A$ is a $\BZ[q,q^{-1}]$-domain, then the change of ground ring algebra $A\ot_{\BZ[q,q^{-1}]} R$ might not be a domain, where $R$ is commutative $\BZ[q,q^{-1}]$-domain. In \cite{BFR} it is also proved that under the same assumption (and the over the field $\BQ(q)$) the algebra $\SS$ is finitely generated.

The result of Theorem \ref{th.05}(d), even though applied to polygons only, will be crucial in our paper. In general, the integrality of the reduced skein algebra is more difficult to establish, as it is a quotient algebra. In fact the proof of Theorem \ref{th.05}(d) is one of the most difficult one of the paper.

\subsection{Another approach to the reduced quantum trace for triangle}

The theory of the stated skein algebra allows us to write down a presentation of the reduced skein algebra $\bS(\stdT)$ of the triangle in terms of generators and relations. Thus in order to define, say the $X$-version quantum trace $\btr^X$ in \eqref{eq.trTri}, one can first try to define it for generators and then check that all relations are satisfied. The latter is not easy, as demonstrated in the case $n=2$ in the original proof of the existence of quantum trace by Bonahon and Wong \cite{BW}. For $n=3$ Douglas used computer to check a few, but not all, relations. In the old version of our paper we were able to check all the relations by using
the main results of Chekhov and Shapiro \cite{CS}, which are certain identities for quantum holonomy. This would give a shorter proof of the existence of the reduced $X$-version quantum trace. But with this approach we could not have the injectivity of $\btr^X$, nor the $A$-versions of the quantum trace. Further we could not have the embedding and the naturality for the quadrilateral and pentagon, which are crucial for the proof of the naturality for general surfaces. The new approach in the current paper via quantum torus frame allows us to prove not only the existence of the reduced $X$-version quantum trace, but also many other related facts. Besides, the geometric picture of the quantum torus frame gives a more satisfactory explanation of the nature of the quantum trace maps. The holonomy of paths in \cite{CS} (or the one suggested in \cite{Douglas}), properly normalized, is actually equal to our reduced quantum trace. However, in both \cite{CS} and \cite{Douglas} the $SL_n$ skein algebras are not considered. In particular, there are no sinks and sources and the extra relations coming from them.

Going backwards, with the new approach in the current paper, we can recover the main results of \cite{CS}, see Subsection \ref{ss.CS}.

\subsection{Applications} The quantum traces will have applications in the study of the skein algebras, in particular, the representation theory of $\SS$ and $\bSS$ at roots of unity, and the corresponding TQFTs. We will address these questions in future work.

G. Scharder and A. Shapiro \cite{SS0} (see also \cite{Shen}) showed that there is an embedding of the quantized universal algebra $U_q(\mathfrak{sl}_n)$ into a quantum torus.
In the upcoming work \cite{LS2} the first author and S. Sikora show how to use the quantum trace map for the once-punctured bigon $ \PP_{2,1}$ to recover this result, over the integral ring $\BZ[q, q^{-1}]$. The target space is the quantum torus $ \rd \FG( \PP_{2,1}, \lambda )$, where $\lambda$ is the ``obvious" triangulation.

\subsection{Organization of the paper} Sect.~\ref{sec.alg} contains algebraic background materials. Sect.~\ref{sec.Oqsln} recalls and studies the quantized algebras of regular functions on $SL_n$ and its Borel subgroups. In Sect.~\ref{sec.skein} we define the stated skein algebras and prove a few auxiliary results. Sect.~\ref{sec.pMon} studies the punctured monogon. In Sect.~\ref{sec.Int} we prove the integrality and calculate the GK dimension of stated skein algebra in many cases. Sect.~\ref{sec.Rd} introduces the reduced skein algebra, which is proved to be a domain in an important case of the polygon in Sect.~\ref{sec.Rd2}. In Sect.~\ref{sec-ntri} we recall the Fock-Goncharov $X$-space of the triangle and introduce the $A$-space. Sect.~\ref{sec.qtr3} proves the existence of the $A$- and $X$- quantum traces for the triangle. Sect.~\ref{sec.qtori} recalls the Fock-Goncharov $X$-space of a triangulated surfaces and introduces its extended version as well as the $A$-spaces. Sect.~\ref{sec.Xtrace} proves the existence of the $X$-quantum trace and its extended version. Sect.~\ref{sec.Atrace} proves the existence of the $A$-quantum traces. Sect.~\ref{sec.Nat} proves the naturality of the quantum traces with respect to the change of triangulations. Sec.~\ref{sec.sl3} treats the case of $SL_3$. In Appendix we prove certain matrix identities of surfaces and Theorem~\ref{thm-trX-CS}.

\subsection{Acknowledgments}

The authors would like to thank D.~Allegretti, F. Bonahon, L.~Chekhov, F.~Costantino,
D.~Douglas, V.~Fock, A.~Goncharov, D.~Jordan, H.~Kim, J.~Korinmann, M.~Shapiro,
A.~Sikora, Z. Wang, and H. Wong for helpful discussions. The first author is partially support by NSF grant DMS-2203255.

The authors presented the results of this paper at many seminars and conferences, including Topology seminars at Georgie Washington University (September 2020), Michigan stated University (March 2021), Conference “Quantum Topology and Geometry”, IHP Paris (June 2022), and Conference “Geometric Representation Theory \& Quantum Topology”, Univercite Paris cite (December 2022), and would like to thank the organizers for the opportunities to present their work.

\section{Notations, algebraic preliminaries}\label{sec.alg}

We fix notations and review the theory of quantum tori, the Gelfand-Kirillov dimension, and the skew-Laurent extension. We will introduce the notions \term{quantum torus frame, tensor product factorization}, and \term{quasi-monomial basis} and prove basic facts about them, which will be used extensively in the paper.

\subsection{Notations, conventions} \label{ss.notation}

We denote by $\BN, \BZ, \BC$ respectively the set of non-negative integers, the set of integers, and the set of complex numbers. We emphasize that $0\in\BN$.

Throughout the number $n$ in $SL_n$ is fixed. Let $\JJ= \{1,2,\dots, n\}$. The \term{conjugate} of $i\in \JJ$ is $\bi:= n+1-i$. Let $\Sym_n$ be the group of permutations of $\JJ$. As usual for $\sigma\in \Sym_n$ the \term{length} $\ell(\sigma)$ is the number of inversions of $\sigma$.

We use Kronecker's delta notation and its sibling:
\begin{equation*}
\delta_{i,j} =\begin{cases} 1, & \text{if } j=i, \\0, & \text{if } j \neq i,\end{cases}, \qquad
\delta_{j>i} =\begin{cases} 1, & \text{if } j>i, \\0, & \text{if } j \le i.\end{cases}
\end{equation*}

All rings are associative and unital, and ring homomorphisms preserve the unit. For a subset $S$ of a ring $A$, denote by $A/(S)$ the quotient $A/I$ where $I\lhd A$ is the two-sided ideal generated by $S$.
For a positive integer $r$ and let $\Mat_r(A)$ be the ring of all $r\times r$ matrices with entries in $A$.

\subsection{Ground ring}\label{ss.ground}

The ground ring $R$ is a commutative domain with a distinguished invertible element $\hq$. An example is $R=\Zq$, the ring of Laurent polynomials in the free variable $\hq$ with integer coefficients. Denote $\Fr(R)$ the field of fractions of $R$. All algebras are $R$-algebras where $1\in R$ acts as the identity, and tensor products are over $R$ unless otherwise stated.

The element $q= \hq^{2n^2}$ is the usual quantum parameter appeared in quantum group theory. For a non-negative integer $m$ we define the quantum integer $[m]$ and its factorials by
\[ [m] = \frac{q^m - q^{-m}}{q - q^{-1}}, \quad
[m]!= \prod_{i=1}^m [i], \quad [0]!=1.\]

We will often use the following constants in $R$:
\begin{equation}
\ttt= (-1)^{n-1} q^{\frac{n^2-1}{n}}, \qquad
\aaa= q^{(1-n)(2n+1)/4}, \qquad
\ccc_i= q^{\frac{n-1}{2n}}(-q)^{n-i}, \quad i \in \JJ.
\end{equation}

\subsection{Monomials, Ore sets} \label{sec.mono}

In the remaining part of this section we fix an $R$-algebra $A$. An element $a\in A$ is \term{regular} if it is not a zero divisor, i.e. if $ax =0$ or $xa=0$ then $x=0$. If every non-zero element of $A$ is regular then we call $A$ a \term{domain}, or \term{$R$-domain}.

For a subset $S\subset A$ let $\Mon_m(S)$ be the set of all products of $\le m$ elements of $S$. Let $\Mon(S) =\bigcup_{m=1}^\infty \Mon_m(S)$, whose elements are called \term{$S$-monomials}. The $R$-spans of $\Mon_m(S)$ and $\Mon(S)$ are denoted respectively by $\Pol_m(S)$ and $\Pol(S)$. Note that $\Mon(S)$ is the multiplicative subset generated by $S$ (containing 1) and $\Pol(S)$ is the $R$-subalgebra of $A$ generated by $S$.

The multiplicative subset $\Mon(S)$ is a \term{right Ore set} if for every $s\in \Mon(S)$ and $a\in A$ we have $s A \cap a\Mon(S) \neq \emptyset$ and $s$ is regular. When $\Mon(S)$ is a right Ore set one can define the right quotient algebra $A S^{-1}$ which contains $A$ as a subalgebra, in which every element of $S$ is invertible, and every its element can be presented by $a s^{-1}$ with $a\in A$ and $s\in \Mon(S)$.

If the set of nonzero elements in a domain $A$ is a right Ore set, then $A$ is called an \term{Ore domain}, and $\Fr(A)$ denotes its division ring of fractions.

\subsection{$q$-commuting elements}

For $x,y\in A$ we write $x \eqq y$ if $x = \hq^{2k} y$ for $k\in \BZ$. We say $x,y\in A$ are \term{$q$-commuting} if $xy \eqq yx$. A set $S= \{x_1,x_2,\dots,x_m\}$ is \term{$q$-commuting} if any two its elements are $q$-commuting, i.e. $x_i x_j = \hq^{2k_{ij}} x_j x_i$ for $k_{ij} \in \BZ$. For such a set define the
\term{Weyl-normalization} of $x_1x_2\dots x_m$ by
\[[x_1x_2\dots x_m]_\Weyl =\hq^{-\sum_{i<j}k_{ij}}x_1x_2\dots x_m.\]
It is easy to check that if $\sigma$ is a permutation of $\{1,2,\dots,m\}$, then
\[[x_1x_2\dots x_m]_\Weyl=[x_{\sigma(1)}x_{\sigma(2)}\dots x_{\sigma(m)}]_\Weyl.\]

\subsection{Normal elements} \label{ss.normal}

Suppose $B$ is an $R$-subalgebra of $A$ and $S\subset A$ is a subset. Let $SB$ (respectively $BS$) be the $R$-span of elements of the form $sb$ (respectively $bs$) where $s\in S$ and $b\in B$.

We say $S$ is \term{$B$-normal} if $SB= BS$. In case $S=\{s\}$, we say $s$ is $B$-normal. If $s\in A$ is $A$-normal and regular, then there is an algebra automorphism $\tau_s: A \to A$ such that $sa = \tau_s(a) s$ for all $a\in A$.

An algebra automorphism $f: A \to A$ is \term{diagonal} if there is a set of algebra generators of $A$ consisting of eigenvectors of $f$.

An element $s\in A$ is \term{$q$-commuting} with $B$ if there is a set $G$ of algebra generators of $B$ such that $s$ is $q$-commuting with each element of $G$. Clearly if a regular $s\in A$ is $q$-commuting with $A$ then $s$ is $A$-normal, and $\tau_s$ is a diagonal automorphism.

\subsection{Orderly finitely generated algebra} An $R$-algebra $A$ is {\bf orderly finitely generated} if has elements $g_1, \dots, g_k$ such that the set $\{ g_1^{m_1} \dots g_k^{m_k} \mid m_i \in \BN\}$ spans $A$ over $R$. In that case we say that the sequence $(g_1, \dots, g_k)$ {\bf orderly generates} $A$.

\begin{lemma}\label{r.ogen}
Suppose $A= A_1 A_2$ where each $A_i$ is a subalgebra of $A$ and is orderly finitely generated. Then $A$ is orderly finitely generated.
\end{lemma}

\begin{proof}
If sequences $G_1$ and $G_2$ orderly generate $A_1$ and $A_2$ respectively then the concatenation $G_1 G_2$ orderly generates $A$.
\end{proof}

\subsection{Gelfand-Kirillov dimension}

The Gelfand-Kirillov (GK) dimension is a noncommutative analog of the Krull dimension. It is usually defined when the ground ring is a field. Since our ground ring $R$ is not a field, we will change $R$ to its field of fraction $\Fr(R)$ before defining the GK dimension. Thus, for an $R$-module $M$ define
\[\dim_R M = \dim_{\Fr(R)} (M \ot _R \Fr(R)).\]

Let $A$ be a finitely generated $R$-algebra. Choose a finite set $S$ of $R$-algebra generators. The \term{Gelfand-Kirillov dimension} is defined as
\[\GKdim A = \limsup_{m\to\infty}\frac{\log \dim_R (\Pol_m(S))}{\log m}.\]
It is known that the GK dimension is independent of the choice of $S$.

If a finite set $G$ orderly generates $A$ then it is easy to show that $\GKdim (A) \le |G|$. This is a good intuition about the GK dimension.

\begin{lemma}\label{r.GKdim}
Let $A$ and $B$ be finitely generated $R$-algebras.
\begin{enuma}
\item If $B$ is a subalgebra or a quotient of $A$, then $\GKdim B\le\GKdim A$.
\item Suppose $A$ is a torsion-free $R$-module and a domain, $f: A \to B$ is a surjective algebra homomorphism, and $\GKdim(A)\le\GKdim(B)$, then $f$ is an algebra isomorphism.
\item Suppose $s\in A$ is regular and $q$-commuting with $A$. Then $\{s^k \mid k \in \BN\}$ is a right Ore set of $A$ and $ \GKdim (A\{s\} ^{-1}) = \GKdim (A)$.
\end{enuma}
\end{lemma}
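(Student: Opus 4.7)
The plan is to treat the three parts separately, each reducing to a standard fact in Gelfand--Kirillov theory.

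For part (a), I would argue directly from the definition. For a subalgebra inclusion $B \subseteq A$, choose a finite $R$-algebra generating set $S_0$ of $B$ and enlarge it to a finite generating set $S$ of $A$; then $\Pol_m(S_0) \subseteq \Pol_m(S)$, and the $\Fr(R)$-dimension inequality is preserved by $\limsup$. For a surjection $f:A\twoheadrightarrow B$, if $S$ generates $A$, then $f(S)$ generates $B$ and $\Pol_m(f(S)) = f(\Pol_m(S))$; a surjection of $\Fr(R)$-modules cannot raise dimension, so the inequality follows.

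For part (b), I would base change along $R\hookrightarrow K:=\Fr(R)$. The torsion-freeness of $A$ gives the injection $A\hookrightarrow A_K:=A\otimes_R K$, and similarly for $B$; since $R$ is central in $A$ and $A$ is an $R$-domain, a short Ore argument shows $A_K$ is a $K$-domain. By the very definition of $\GKdim$ in the paper, $\GKdim A = \GKdim_K A_K$ and likewise for $B$. The induced surjection $f_K:A_K \twoheadrightarrow B_K$ therefore has $\GKdim A_K \le \GKdim B_K$, and combining with (a) gives equality. At this point one invokes the classical fact (see e.g.\ Krause--Lenagan, \emph{Growth of Algebras and Gelfand--Kirillov Dimension}) that for a finitely generated domain $A_K$ over a field and a nonzero two-sided ideal $I \lhd A_K$, one has $\GKdim(A_K/I) < \GKdim(A_K)$. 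Hence $\ker f_K=0$, and by the injectivity of $A\hookrightarrow A_K$ this forces $f$ to be injective.

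For part (c), the $q$-commutation hypothesis furnishes algebra generators $g_1,\dots,g_N$ of $A$ with $s g_i = \qq^{2e_i} g_i s$ for integers $e_i$, so that $sa = \tau(a) s$ for all $a\in A$ where $\tau$ is the diagonal automorphism $g_i\mapsto \qq^{-2e_i}g_i$. In particular $s^k A = A s^k$ for all $k$, and together with regularity of each $s^k$ this gives the right Ore condition; the localization $A\{s\}^{-1}$ is thereby well defined. To compute its GK dimension, take the generating set $S:=\{g_1,\dots,g_N,s^{-1}\}$. Using the relation $s^{-1}g_i = \qq^{2e_i} g_i s^{-1}$ to move every $s^{-1}$ to the right, any monomial of length $\le m$ in $S$ is of the form $a s^{-k}$ with $k\le m$ and $a\in\Pol_{m-k}(\{g_i\})$. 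Right multiplication by $s^m$ is injective (since $s^m$ is invertible in the localization), and sends such an element to $a s^{m-k} \in \Pol_{2m}(\{g_i\})$. Consequently $\dim_R \Pol_m(S) \le \dim_R \Pol_{2m}(\{g_i\})$, giving $\GKdim A\{s\}^{-1} \le \GKdim A$. The reverse inequality is immediate from (a) applied to the inclusion $A\hookrightarrow A\{s\}^{-1}$.

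The main obstacle is the classical fact used in part (b), namely that Gelfand--Kirillov dimension strictly drops under proper quotients of a finitely generated domain over a field. This is a standard but non-trivial theorem that is black-boxed into the argument; everything else is routine bookkeeping with generating sets, monomial expansions, and central Ore localization.
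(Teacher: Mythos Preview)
Your proofs of (a) and (b) match the paper's approach: (a) is standard bookkeeping with generating sets, and (b) reduces to the field case and invokes the classical fact that GK dimension strictly drops on proper quotients of a finitely generated domain over a field (the paper cites \cite[Proposition~3.15]{KL}, phrased as ``if an ideal contains a regular element then the quotient has strictly smaller GK dimension'').

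For (c) you take a genuinely different, more elementary route. The paper observes that the automorphism $\tau_s$ (with $as = \tau_s(a)s$) is \emph{locally algebraic}---every element lies in a finitely generated $\tau_s$-stable $R$-submodule, since $G$-monomials are eigenvectors---and then cites \cite[Theorem~2]{Leroy} to conclude $\GKdim(A\{s\}^{-1}) = \GKdim(A)$. Your argument instead does the growth estimate by hand: push all $s^{-1}$'s to the right in any $S$-monomial to get the normal form $as^{-k}$, then right-multiply by $s^m$ to land back in $A$. This avoids the black box from Leroy and is fully self-contained; the trade-off is that the paper's approach immediately generalizes to any locally algebraic automorphism, which is useful later (Lemma~\ref{r.Ore4}(c)). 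One small cleanup: to make your bound $\Pol_m(S)\cdot s^m \subseteq \Pol_{2m}(\{g_i\})$ literally true you should include $s$ among the $g_i$; otherwise replace $2m$ by $(d+1)m$ where $s\in\Pol_d(\{g_i\})$, which of course does not affect the $\limsup$.
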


\begin{proof}
(a) is well known \cite[Proposition 8.2.2]{MR}.

(b) Since $A$ is torsion free, the natural map $A \to A \ot_R \Fr(R)$ is injective. The statement is reduced to the case when $R$ is a field, which is assumed now.

By assumption and part (a),
\[\GKdim (A) \le \GKdim(B) = \GKdim(A/\ker f) \le \GKdim(A),\]
which implies $\GKdim (A) = \GKdim(A/\ker f)$. By \cite[Proposition 3.15]{KL}, if an ideal $I \lhd A$ contains a regular element, then $\GKdim(A/I) < \GKdim (A)$. Since $A$ is a domain, any non-zero element of $A$ is regular. Hence $\ker f=\{0\}$. This shows $f$ is injective.

(c) Assume $s$ is $q$-commuting with each $g\in G$, a set of algebra generators of $A$. Let us show that the algebra automorphism $\tau_s: A \to A$, given by $as = \tau_s (a)s$, is \term{locally algebraic} in the sense that any $a\in A$ is contained in a finitely generated $R$-submodule of $A$ which is invariant under $\tau_s$. In fact, since $a$ is a finite $R$-linear combination of $G$-monomials, the $R$-span of the involved monomials is invariant under $\tau_s$ and contains $a$.

By \cite[Theorem 2]{Leroy}, since $\tau_s$ is locally algebraic, $\GKdim (A\{s^{-1}\}) = \GKdim (A)$.
\end{proof}

\subsection{Algebra with Reflection} \label{ss.reflection}

Suppose $R=\BZ[\hq^{\pm 1}]$. An \term{$R$-algebra with reflection} is an $R$-algebra $A$ equipped with a $\BZ$-linear anti-involution $\omega$, called the \term{reflection}, such that $\omega(\hq)=\hq^{-1}$. In other words, $\omega : A \to A$ is a $\BZ$-linear map such that for all $x,y \in A$,
\[\omega(xy)=\omega(y)\omega(x),\qquad \omega(\hq x)=\hq^{-1} \omega(x),\qquad \omega^2=\id.\]

An element $z\in A$ is called \term{reflection invariant} if $\omega(z)=z$. If $B$ is another $R$-algebra with reflection $\omega'$, then a map $f:A\to B$ is \term{reflection invariant} if $f\circ \omega=\omega'\circ f$.

In some calculations, reflection invariance allows us to ignore overall scalars and recover them later.

\subsection{Quantum tori}

The \term{quantum space} and \term{quantum torus} associated to an antisymmetric matrix $Q\in \Mat_r(\BZ)$ are the algebras
\begin{align}
\mathbb{T}_+(Q)& := R\langle x_1,\dots,x_r\rangle/\langle x_ix_j=\hq^{2Q_{ij}}x_jx_i\rangle \label{eq.qspace1} \\
\mathbb{T}(Q)& := R\langle x_1^{\pm1},\dots,x_r^{\pm1}\rangle/\langle x_ix_j=\hq^{2Q_{ij}}x_jx_i\rangle. \label{eq.qtorus1}
\end{align}
We say $A$ is a \term{quantum space (or quantum torus)} on the variables $x_1, \dots, x_r$ if $A= \BT_+(Q)$ (respectively $A= \BT(Q)$) for a certain anti-symmetric $Q$ with the above presentation. All quantum tori and quantum spaces are domains, see e.g. \cite{GW}.

For $\bk=(k_1,\dots, k_r)\in \BZ^r$, let
\[ x^\bk := [ x_1 ^{k_1} x_2^{k_2} \dots x_r ^{k_r} ]_\Weyl = \hq^{- \sum_{i<j} Q_{ij} k_i k_j} x_1 ^{k_1} x_2^{k_2} \dots x_r ^{k_r}\]
be the Weyl normalized monomial. Then $\{x^\bk \mid \bk \in \BZ^r\}$ is a free $R$-basis of $\bT(Q)$, and
\begin{equation}\label{eq.prod}
x^\bk x ^{\bk'} = \hq^{\la \bk, \bk'\ra_Q} x^{\bk + \bk'},\quad
\text{where } \la \bk, \bk'\ra_Q := \sum_{1\le i, j \le r} Q_{ij} k_i k'_j .
\end{equation}
Hence we have the following $\BZ^r$-grading of the algebra $\bT(Q)$:
\begin{equation}\label{eq.grad}
\bT(Q) = \bigoplus_{\bk\in \BZ^r} R x^\bk
\end{equation}

Suppose $Q'\in \Mat_{r'}(\BZ)$ is another antisymmetric matrix such that $HQ' H^T= Q$, where $H$ is an $r\times r'$ matrix and $H^T$ is its transpose. Then the $R$-linear map
\begin{equation}\label{eq.psiH}
\psi_H:\bT(Q)\to \bT(Q'), \quad \psi_H( x^\bk)= x^{\bk H}
\end{equation}
is an algebra homomorphism, called a \term{multiplicatively linear homomorphism}. Here $\bk H$ is the product of the row vector $\bk$ and the matrix $H$.

When $R=\BZ[\hq^{\pm 1}]$, the algebra $\qtorus(Q)$ has a reflection anti-involution $\omega:\qtorus(Q)\to\qtorus(Q)$ defined by
\[\omega(x_i)=x_i.\]
All normalized monomials $x^\bk$ are reflection invariant, and all multiplicatively linear homomorphisms are reflection invariant.

\subsection{Monomial subalgebra}

If $\Lambda\subset \BZ^r$ is a submonoid, then the $R$-submodule $\bT(Q;\Lambda)\subset \bT(Q)$ spanned by $\{x^\bk\mid \bk \in \Lambda \}$ is an $R$-subalgebra of $\bT(Q)$, called a \term{monomial subalgebra}. By \cite[Lemma 2.3]{LY2}, the monomial algebra $\bT(Q;\Lambda)$ is a domain, and its GK dimension is the rank of the abelian group generated by $\Lambda$. In particular, if $Q$ has size $r\times r$, then
\begin{equation}
\GKdim(\mathbb{T}(Q)) = \GKdim(\mathbb{T}_+(Q))=r.
\end{equation}

If $\Lambda$ is $\BN$-spanned by a finite set $G$, then $G$, with any total order, is orderly generating $\bT(Q;\Lambda)$.

\subsection{Embedding into quantum tori}

We introduce the notion of quantum torus frame and show how to use it to embed certain algebras into quantum tori. This approach was first used in \cite{Muller}.

Assume $A$ is an $R$-domain and $S\subset A$ is a subset of non-zero elements. Recall that $\Pol(S)$ is the $R$-subalgebra of $A$ generated by $S$.
Let $\LPol(S)$ be the set of all $a \in A$ for which there is an $S$-monomial $\fm$ such that $a\fm \in \Pol(S)$. In a sense, such an $a$ would be a Laurent polynomial in $S$. If $A= \LPol(S)$ we say $S$ \term{weakly generates} $A$.

\begin{definition}\label{def.qtf}
Let $A$ be an $R$-domain. A finite set $S=\{a_1, \dots, a_r\} \subset A$ is a \term{quantum torus frame} for $A$ if the following conditions are satisfied.
\begin{enumerate}
\item The set $S$ is $q$-commuting and each element of $S$ is non-zero.
\item The set $S$ weakly generates $A$.
\item the set $\{a_1 ^{n_1} \dots a_r^{n_r} \mid n_i \in \BN \}$ is $R$-linearly independent.
\end{enumerate}
\end{definition}

\begin{proposition}\label{r.qtframe}
Let $A$ be an $R$-domain and $S=\{a_1, \dots, a_r\}\subset A$.
\begin{enuma}
\item Suppose $S$ is a quantum torus frame of $A$ with $a_i a_j = \hq^{2Q_{ij}} a_j a_i$, where $Q\in \Mat_r(\BZ)$ is an anti-symmetric matrix. Then there is an $R$-algebra embedding $f: A \embed \bT(Q)$ such that
\begin{equation}\label{eq.23}
\bT_+(Q) \subset f(A) \subset \bT(Q).
\end{equation}
Besides $A$ is an Ore domain and $f$ induces an isomorphism of division algebras
\[\Fr(A) \xrightarrow{\cong} \Fr(\bT(Q)).\]
\item In addition, suppose $R=\ints[\hq^{\pm1}]$ and $A$ has a reflection $\omega'$ such that elements of $S$ are reflection invariant, then the embedding $f$ is reflection invariant.
\item If condition $(3)$ in the definition of a quantum torus frame is replaced with
\begin{center}
$(3')$ the GK dimension of $A$ is $r$,
\end{center}
then $S$ is still a quantum torus frame for $A$.
\end{enuma}
\end{proposition}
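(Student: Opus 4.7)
My plan is to prove (a) by realizing the embedding as an Ore localization of $A$ at $\Mon(S)$, which will coincide with the Ore localization of $B := \Pol(S) \cong \bT_+(Q)$ at $\Mon(S)$, and hence with $\bT(Q)$. Parts (b) and (c) will follow from the explicit form of this embedding.

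For part (a), I first note that condition (1) gives a surjective $R$-algebra homomorphism $g: \bT_+(Q) \twoheadrightarrow B$, $x_i \mapsto a_i$, which by (3) is an isomorphism. Each $a_i$ $q$-commutes with $B$ and is nonzero in the domain $A$, so $\Mon(S)$ is a right Ore set in $B$, and $g$ extends to an isomorphism $B[\Mon(S)^{-1}] \xrightarrow{\cong} \bT(Q)$. The main technical step is to verify that $\Mon(S)$ is also a right Ore set in $A$: given $a \in A$ and $\mathfrak{m} \in \Mon(S)$, use (2) to write $a\mathfrak{n} = b \in B$ for some $\mathfrak{n} \in \Mon(S)$; since $\mathfrak{m}$ normalizes $B$, we have $b\mathfrak{m} = \mathfrak{m} b'$ with $b' \in B \subseteq A$, so $a(\mathfrak{n}\mathfrak{m}) = \mathfrak{m} b'$, as required, and elements of $\Mon(S)$ are automatically regular in the domain $A$. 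The natural map $B[\Mon(S)^{-1}] \to A[\Mon(S)^{-1}]$ is then surjective by (2) and injective because elements of $\Mon(S)$ are regular in $A$, hence an isomorphism. The composite $f: A \hookrightarrow A[\Mon(S)^{-1}] \cong \bT(Q)$ is the desired embedding, and $\bT_+(Q) \subseteq f(A) \subseteq \bT(Q)$ is immediate. For the Ore-domain claim, one writes every element of $\Fr(\bT(Q))$ as $r_1 r_2^{-1}$ with $r_i \in \bT_+(Q) \subseteq f(A)$ (using that $\bT(Q)$ is the two-sided Ore localization of $\bT_+(Q)$); the right Ore condition for all nonzero elements of $f(A)$, and the identification $\Fr(A) \cong \Fr(\bT(Q))$, follow by clearing denominators inside the division ring $\Fr(\bT(Q))$.

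For part (b), the isomorphism $g$ is automatically reflection-invariant because it sends invariant generators $x_i$ to invariant generators $a_i$. The claim $\omega \circ f = f \circ \omega'$ then reduces to a direct calculation: applying $\omega'$ to $a\mathfrak{n} = b$ yields $\omega'(\mathfrak{n}) \omega'(a) = \omega'(b)$, and since $\omega'(\mathfrak{n})$ is the reverse of the $q$-commuting monomial $\mathfrak{n}$, it equals $\qq^{2c} \mathfrak{n}$ for an explicit integer $c$; one can then deduce $\omega'(a) \mathfrak{n} \in B$ by right-multiplying through $\mathfrak{n}$, and a short computation matches $f(\omega'(a))$ with $\omega(f(a))$.

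For part (c), all steps of (a) that do not use condition (3) still go through from (1) and (2) alone; in particular, $A[\Mon(S)^{-1}] \cong B[\Mon(S)^{-1}]$ as before. Iteratively applying Lemma~\ref{r.GKdim}(c) to $a_1, \dots, a_r$ (each $q$-commutes with $B$ and, after successive inversions, with each partial localization) gives $\GKdim(B) = \GKdim(B[\Mon(S)^{-1}])$. Combined with $\GKdim(A) \leq \GKdim(A[\Mon(S)^{-1}]) = \GKdim(B[\Mon(S)^{-1}])$ and $\GKdim(B) \leq \GKdim(A) = r$, we conclude $\GKdim(B) = r$. Since $g: \bT_+(Q) \twoheadrightarrow B$ is a surjection from a free $R$-domain of GK dimension $r$, Lemma~\ref{r.GKdim}(b) forces $g$ to be injective, which is exactly condition (3). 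The main obstacle throughout is establishing the right Ore condition for $\Mon(S)$ in $A$---this is where (1) (which provides the normalization of $B$ by $\Mon(S)$) and (2) (which pushes elements of $A$ into $B$ by right multiplication) combine non-trivially.
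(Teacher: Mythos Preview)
Your proof is correct and follows essentially the same approach as the paper's. For (a) you explicitly carry out the Ore-localization argument that the paper cites from \cite{LY2}; for (b) both you and the paper push the relation $a\mathfrak{n}\in\Pol(S)$ through the reflection and use that Weyl-normalized monomials are reflection invariant; for (c) both arguments hinge on Lemma~\ref{r.GKdim}(b), with the only difference being that the paper applies it to the surjection $\bT(Q)\twoheadrightarrow A[\Mon(S)^{-1}]$ while you apply it to $\bT_+(Q)\twoheadrightarrow\Pol(S)$ after establishing $\GKdim(\Pol(S))=r$ via the same chain of localizations.
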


\begin{proof}
The $q$-commutation of $S$ implies there is an algebra homomorphism
\[ h: \BT_+(Q) \to A, \quad h(x_i)= a_i.\]
The image of $h$ is $\Pol(S)$, and $h$ maps monomials to monomials.

(a) Condition (3) is equivalent to $h$ is injective, so that $h: \BT_+(Q) \to \Pol(S)$ is bijective. Then (a) is \cite[Proposition 2.2]{LY2}, and it is proved there that $f$ is the unique extension of $h^{-1}:\Pol(S)\to\qplane(Q)$.

(b) By weak generation, for any $a\in A$, there exists normalized monomials $x^\vec{k},x^{\vec{k}_i}\in\qplane(Q)$ and $c_i\in R$ such that
\[ah(x^\vec{k})=\sum_{i=1}^k c_ih(x^{\vec{k}_i}),\qquad
f(a)=\sum_{i=1}^k c_ix^{\vec{k}_i}x^{-\vec{k}}.\]
Clearly, $h$ is reflection invariant. Thus,
\[h(x^\vec{k})\omega'(a)=\sum_{i=1}^k\omega'(c_i)h(x^{\vec{k}_i}).\]
Note $\omega(c_i)=\omega'(c_i)$. Using the definition of $f$,
\[f(\omega'(a))=x^{-\vec{k}}\sum_{i=1}^k\omega'(c_i)x^{\vec{k}_i}=\omega(f(a))\]

(c) In the proof of \cite[Proposition 2.2]{LY2} it is shown that $\Mon(S)$ is a right Ore set for and $A$. Here is a proof: For $b\in \Pol(S)$ and $s\in \Mon(S)$, the $q$-commutation show that there is an element $b^\ast= b^\ast(b,s)\in \Pol(S)$ such that $bs = s b^\ast$. Let $a\in A$ and $s\in \Mon(S)$. By weak generation there is $s'\in \Mon(S)$ such that $b= as'\in \Pol(S)$.
We have
\[ a\Mon(S) \ni as' s = bs = s b^\ast \in s A.\]
This shows $a\Mon(S) \cap s A\neq \emptyset$. Hence $\Mon(S)$ is a right Ore set for $A$.

The embedding $A \embed AS^{-1}$ shows that $\GKdim (AS^{-1}) \ge \GKdim (A)=r$. The universality of the right quotients implies the composition
\[ g: \BT_+(Q) \xrightarrow{h} A \embed AS^{-1}\]
can be extended to an algebra homomorphism $\tilde g: \BT(Q) \to AS^{-1}$. The weak generation implies $\tilde g$ is surjective. As $\GKdim(\BT(Q))= r \le \GKdim(AS^{-1})$ and $\BT_+(Q)$ is a free $R$-module, by Lemma \ref{r.GKdim}(b) the map $\tilde h$ is bijective. It follows that $h$ is bijective, which implies Condition (3).
\end{proof}

\begin{lemma} \label{r.qframe3}
Suppose $S$ is a $q$-commuting set of of non-zero elements of an $R$-domain $A$.
\begin{enuma}
\item If $ a\fm \in \LPol(S)$ where $a\in A$ and $\fm$ is an $S$-monomial, then $a\in \LPol(S)$.
\item The set $\LPol(S)$ is a subalgebra of $A$.
\end{enuma}
\end{lemma}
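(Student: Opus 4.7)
The plan is to verify both claims directly from the definition of $\LPol(S)$, using only the facts that $\Pol(S)$ is a subalgebra (and hence absorbs products of $S$-monomials on either side), that the product of two $S$-monomials is again an $S$-monomial, and that any two $S$-monomials $q$-commute (a consequence of $S$ being $q$-commuting). I do not expect a genuine obstacle here; the statement is essentially a bookkeeping fact about ``Laurent polynomials'' in $q$-commuting generators.

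For part (a), I would argue in one line: if $a\fm \in \LPol(S)$, then by definition there is an $S$-monomial $\fm'$ with $(a\fm)\fm' \in \Pol(S)$. Since $\fm\fm'$ is again an $S$-monomial and the product in $A$ is associative, $a(\fm\fm') \in \Pol(S)$, which is exactly the statement $a \in \LPol(S)$.

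For part (b), I would first observe that $1 \in \LPol(S)$ (take $\fm = 1$), and then verify closure under addition and multiplication separately. Given $a_1, a_2 \in \LPol(S)$, choose $S$-monomials $\fm_1, \fm_2$ with $b_i := a_i \fm_i \in \Pol(S)$. For addition, multiply $a_1 + a_2$ on the right by $\fm_1 \fm_2$. Expand $(a_1 + a_2)\fm_1\fm_2 = b_1 \fm_2 + a_2 \fm_1 \fm_2$; the first summand lies in $\Pol(S)$ because $\Pol(S)$ is a subalgebra containing $\fm_2$, while in the second summand we use the $q$-commutation $\fm_1 \fm_2 = \qq^{2k} \fm_2 \fm_1$ to rewrite it as $\qq^{2k} b_2 \fm_1 \in \Pol(S)$. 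So $(a_1+a_2) \fm_1 \fm_2 \in \Pol(S)$, showing $a_1 + a_2 \in \LPol(S)$.

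For multiplication, I would take $\fm = \fm_2 \fm_1$ and compute $a_1 a_2 \fm_2 \fm_1 = a_1 b_2 \fm_1$. Since $b_2 \in \Pol(S)$ is an $R$-linear combination of $S$-monomials $\fn_j$, write $a_1 b_2 \fm_1 = \sum_j c_j a_1 \fn_j \fm_1$. Each $\fn_j$ $q$-commutes with $\fm_1$, so $\fn_j \fm_1 = \qq^{2\ell_j} \fm_1 \fn_j$, and hence $a_1 \fn_j \fm_1 = \qq^{2\ell_j}(a_1 \fm_1) \fn_j = \qq^{2\ell_j} b_1 \fn_j \in \Pol(S)$. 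Summing, $a_1 a_2 \fm \in \Pol(S)$, so $a_1 a_2 \in \LPol(S)$. Finally, stability under scalar multiplication by $R$ is immediate from the same witness monomial. This exhausts the subalgebra axioms.
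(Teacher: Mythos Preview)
Your proof is correct and follows essentially the same approach as the paper's: both arguments for part (b) write $a_2\fm_2$ as an $R$-linear combination of $S$-monomials and then $q$-commute each past $\fm_1$ to land in $\Pol(S)$. You are slightly more thorough in that you also spell out closure under addition and the unit, which the paper leaves implicit; your argument for (a) is also marginally cleaner, since $\fm\fm'$ is already an $S$-monomial without needing to invoke $q$-commutation.
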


\begin{proof}
(a) As $a\fm \in \LPol(S)$, there is an $S$-monomial $\fm'$ such that $a \fm \fm' \in \Pol(S)$. Since $S$ is $q$-commuting we have $\fm \fm'\eqq \fm''$ for an $S$-monomial $\fm''$. Since $a \fm''\in \Pol(S)$, we have $a \in \LPol(S)$.

(b) We need to show that if $x, x'\in \LPol(S)$ then $xx'\in \LPol(S)$. There are $S$-monomials $\fm, \fm'$ such that $x \fm, x' \fm'\in \Pol(S)$. Thus $x' \fm' = \sum c_i \fm_i$ where $c_i\in R$ and each $\fm_i$ is an $S$-monomial. As $S$-monomials are $q$-commuting we have $c_i \fm_i \fm = c'_i \fm \fm_i$ with $c_i' \eqq c_i$. Now
\[(x x') (\fm' \fm) = x \Big( \sum c_i \fm_i\Big) \fm = \sum_i c'_i (x \fm) \fm_i \in \Pol(S).\]
This shows $xx'\in \LPol(S)$.
\end{proof}

\subsection{Tensor product factorization} \label{ss.tproduct}

We introduce the notion of tensor product factorization, which will play an important role in the paper.

\begin{definition}\label{def.22}
A \term{tensor product factorization} of an $R$-algebra $A$ is a collection $A_1,\dots, A_k$ of $R$-subalgebras of $A$ such that
\begin{enumerate}[(i)]
\item the $R$-linear map $A_1 \ot \dots \ot A_k \to A$ given by $a_1 \ot \dots \ot a_k \to a_1 \dots a_k$ is bijective,
\item each $A_i$ has a finite set $G_i$ of $R$-algebra generators such that for any two indices $i,j$,
\begin{equation}\label{eq.quad}
\Pol_1(G_i) \Pol_1(G_j) = \Pol_1(G_j) \Pol_1(G_i).
\end{equation}
\end{enumerate}
\end{definition}

If $A_1, \dots, A_k$ form a tensor product factorization of $A$, we will write
\[ A = A_1 \boxtimes \dots \boxtimes A_k.\]

Condition \eqref{eq.quad}, called the \term{quadratic exchange law}, implies
\begin{itemize}
\item [(ii')] $A_i A_j = A_j A_i$.
\end{itemize}
If in Definition \ref{def.22} Condition (ii) is replaced by the weaker (ii'), then we say that $A_1, \dots, A_k$ form a \term{weak tensor product decomposition} of $A$. This notion is equivalent to the notion of ``twisted tensor product" \cite{CSV}.

An example of a tensor product factorization is the $R$-algebra $A_1 \otst \dots \otst A_k$, which is the $R$-module tensor product $A_1 \ot \dots \ot A_k$ equipped with the \term{standard product}, i.e.
\[(a_1 \ot \dots \ot a_k) (a'_1 \ot \dots \ot a'_k)= a_1 a'_1 \ot \dots \ot a_k a'_k.\]
Even for the standard tensor product the GK dimension is not additive. In general,
\[\GKdim (A_1 \otst A_2) \le \GKdim(A_1) + \GKdim(A_2),\]
but we don't have the equality. However, the equality holds under a mild condition, see~\cite{KL}. This mild condition can be easily adapted to the case of tensor product decomposition. For this, we say an $R$-algebra $A$ has \term{uniform GK dimension} if it has a \term{uniform GK set}, which by definition is a finite set $S$ of generators such that
\begin{equation*}
\GKdim A = \lim_{m\to\infty}\frac{\log \dim_R (\Pol_m(S))}{\log m}.
\end{equation*}
Note that on the right-hand side is the ordinary limit, not the superior limit.

\begin{proposition}\label{r.tensorP}
Let $A_1,\dots, A_k$ be a tensor product decomposition of an $R$-algebra $A$. Assume each $A_i$ is finitely generated as an $R$-algebra.
\begin{enuma}
\item If $S_i \subset A_i$ and $S_i$ is $A_j$-normal for all $1\le i, j \le k$, then $A_1/(S_1), \dots, A_k/(S_k)$ form a tensor product factorization of $A/ (S_1 \cup \dots \cup S_k)$.
\item One has $\GKdim (A) \le\sum_{i=1}^k \GKdim(A_i)$.
\item If the each $A_i$ has uniform GK dimension then $\GKdim (A) =\sum_{i=1}^k \GKdim(A_i).$
\end{enuma}
\end{proposition}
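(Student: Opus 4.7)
My plan is to handle the three parts in order, using the tensor product factorization iso $\mu: A_1\otimes\cdots\otimes A_k\xrightarrow{\cong}A$ and the quadratic exchange law repeatedly.

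For part (a), let $I_i\lhd A_i$ be the two-sided ideal generated by $S_i$ and let $J$ be the two-sided ideal of $A$ generated by $S_1\cup\dots\cup S_k$. First I would observe that $S_i$ is $A_i$-normal, so $I_i=A_iS_i=S_iA_i$. Next, since $1\in A_j$ for all $j$, one has $A\cdot A_i=A$, and combining this with the $A$-normality of $S_i$ (which follows from $A_j$-normality for each $j$) gives
\[AI_i = A(A_iS_i)=AS_i=S_iA=I_iA.\]
In particular $J=\sum_i AS_i=\sum_i AI_i$. To transport $AI_i$ across $\mu$, I would use the quadratic exchange law (ii'), which in particular gives $A_jA_i=A_iA_j$, together with normality, to move $I_i$ into the $i$-th factor: writing $A=A_1\cdots A_k$ and using $A_j I_i=I_iA_j$ repeatedly, one gets $AI_i=A_1\cdots A_{i-1}I_iA_{i+1}\cdots A_k$, which corresponds under $\mu$ to $A_1\otimes\cdots\otimes I_i\otimes\cdots\otimes A_k$. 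Summing over $i$, $\mu$ identifies $\ker\bigl(A_1\otimes\cdots\otimes A_k\to(A_1/I_1)\otimes\cdots\otimes(A_k/I_k)\bigr)$ with $J$, yielding the desired $R$-module isomorphism $(A_1/I_1)\otimes\cdots\otimes(A_k/I_k)\xrightarrow{\cong}A/J$. The quadratic exchange law for the quotients follows by taking images of the generating sets $G_i$ through the canonical surjections.

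For part (b), choose finite generating sets $G_i$ for $A_i$ as in Definition \ref{def.22}, and let $G=G_1\cup\cdots\cup G_k$. The quadratic exchange law $\Pol_1(G_i)\Pol_1(G_j)=\Pol_1(G_j)\Pol_1(G_i)$ implies by induction that any word of length $\le m$ in $G$ can be rewritten as an $R$-linear combination of products $\fm_1\fm_2\cdots\fm_k$ with $\fm_i\in\Pol_m(G_i)$ (each quadratic swap does not increase the total degree, so all intermediate words stay within degree $m$). Hence
\[\dim_R\Pol_m(G)\le\prod_{i=1}^k\dim_R\Pol_m(G_i),\]
and applying $\log$, dividing by $\log m$, and passing to $\limsup$ (using $\limsup(a_m+b_m)\le\limsup a_m+\limsup b_m$) yields $\GKdim(A)\le\sum_i\GKdim(A_i)$.

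For part (c), I need the reverse inequality. The tensor product factorization makes $\mu$ an $R$-module iso, so its restriction to $\Pol_m(G_1)\otimes\cdots\otimes\Pol_m(G_k)$ is injective; its image lies in $\Pol_{km}(G)$. This gives
\[\dim_R\Pol_{km}(G)\ge\prod_{i=1}^k\dim_R\Pol_m(G_i).\]
Taking logarithms, dividing by $\log(km)$, and using $\log m/\log(km)\to 1$, together with the uniform GK hypothesis on each $A_i$ (which promotes $\limsup$ to a genuine limit for the $A_i$), I obtain
\[\GKdim(A)\ge\limsup_{m\to\infty}\frac{\log\dim_R\Pol_{km}(G)}{\log(km)}\ge\sum_{i=1}^k\GKdim(A_i).\]
Combined with (b), this gives equality. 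The subtlest step is part (a), where the simultaneous use of normality across all factors and the quadratic exchange law to move $I_i$ into its proper tensor slot must be justified carefully; parts (b) and (c) are then standard GK-dimension bookkeeping, with uniform GK dimension needed in (c) to convert a $\limsup$ into an actual limit along the sub-sequence $km$.
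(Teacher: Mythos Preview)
Your proof is correct and matches the paper's argument in all three parts. The only minor wrinkle is in (c): the paper explicitly switches to uniform GK generating sets $T_i$ for the lower bound, whereas you reuse the $G_i$ and assert that uniform GK dimension makes the limit exist for $G_i$; this is true (monotonicity of $\dim_R\Pol_m$ plus the standard comparison between finite generating sets shows that if one generating set is a uniform GK set then all are), but strictly speaking it deserves a line of justification.
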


\begin{proof}
Let $G_i\subset A_i$ be a finite set of generators for which the quadratic exchange law \eqref{eq.quad} holds.

(a) To simplify the notation we assume $k=2$. The proof for general $k$ is similar.

Let $I_i=S_i A_i = A_i S_i \lhd A_i$ be the ideal generated by $S_i$, and $I\lhd A$ be the ideal generated by $S_1 \cup S_2$. Since $S_iA_j = A_j S_i$ and $A_1 A_2 = A_2 A_1 = A$, we have $I_i A_j = A_j I_i$ and
\[I = (S_1 \cup S_2) A ( S_1 \cup S_2) = \sum _{1\le i,j\le 2} S_i A_1 A_2 S_j = I_1 A_2 + I_2 A_1.\]
Let $\tI_1$ be the image of $ I_1 \ot A_2 \to A_1 \ot A_2$ and $\tI_2$ be the image of $ A_1 \ot I_2 \to A_1 \ot A_2$. We have the following $R$-linear isomorphism
\[A_1/I_1 \ot A_2/I_2 \xrightarrow{\cong} (A_1 \ot A_2)/(\tI_1 + \tI_2) \xrightarrow{f} A/(I_1 A_2 + I_2 A_1) = A/I,\]
where the first map is a known isomorphism in linear algebra and $f$ is the descendant of the isomorphism $A_1 \ot A_2\to A$ given by $a_1 \ot a_2 \to a_1 a_2$. Thus the map
\[A_1/I_2 \ot a_2/I_2 \to A/I, \quad a_1 \ot a_2 \mapsto a_1 a_2,\]
is an $R$-isomorphism.

Let $\bar G_i \subset A_i/I_i$ be the image of $G_i$. The quadratic exchange law for $G_1, G_2$ descends to a quadratic exchange law for $\bar G_1, \bar G_2$. Thus $A_1/I_1$ and $A_2/I_2$ form a tensor product factorization of $A/I$.

(b) Let $G= \bigcup_{i=1}^k G_i$. From the quadratic exchange law \eqref{eq.quad} one has
\[\Pol_m(G) \subset \Pol_m(G_1) \dots \Pol_m(G_k).\]
It follows that
\begin{align*}
\GKdim(A) &=\limsup_{m\to \infty}\frac{\log \dim_R \Pol_m(G)}{\log m} \\
&\le \sum_{i=1}^k \limsup_{m\to \infty}\frac{\log \dim_R \Pol_m(G_i)}{\log m} = \sum_{i=1}^k \GKdim(A_i).
\end{align*}

(c) Let $T_i$ be a uniform GK set for $A_i$, and $T= \bigcup_{i=1}^k T_i$. From
\[\Pol_{km}(T) \supset \Pol_{m}(T_1) \dots \Pol_{m}(T_k),\]
we get
\begin{align*}
\GKdim(A) &=\limsup_{m\to \infty}\frac{\log \dim_R \Pol_{km}(T)}{\log m} \\
&\ge \sum_{i=1}^k \lim_{m\to \infty}\frac{\log \dim_R \Pol_{m}(T_i)}{\log m} = \sum_{i=1}^k \GKdim(A_i).\qedhere
\end{align*}
\end{proof}

\subsection{Skew-Laurent extension} \label{sec.skew}

Suppose $\tau: A \to A$ is an algebra automorphism. The \term{skew-Laurent extension} $A[x^{\pm 1};\tau]$ is an $R$-algebra containing $A$ as a subalgebra and an invertible element $x$ such that
\begin{itemize}
\item as a left $A$-module $A[x;\tau]$ is free with basis $\{x^k \mid k\in \BZ\}$, and
\item for all $a\in A$ we have $a x = \tau(x) a$.
\end{itemize}
Such an algebra exists uniquely. The subalgebra $A[x;\tau]= \bigoplus_{k\in \BN} A x^k$ is called a \term{skew-polynomial extension} of $A$.


\begin{lemma}\label{r.Ore4}
Let $\tau: A \to A$ be an algebra automorphism.
\begin{enuma}
\item If $A$ is a domain then $A[x^{\pm 1};\tau]$ and $A[x;\tau]$ are domain.
\item Suppose $I \lhd A$ is an ideal such that $\tau(I)=I$ where $\tau$ is an automorphism of $A$. Then $A[x^{\pm 1};\tau]/(I) \cong (A/I)[x^{\pm 1};\tau]$.
\item If $\tau$ is locally algebraic then $\GKdim (A[x^{\pm 1};\tau]) = \GKdim (A) +1$.
\end{enuma}
\end{lemma}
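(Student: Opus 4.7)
My plan is to handle each of the three parts in turn, using standard leading-coefficient arguments for (a), a direct ideal computation for (b), and an appeal to the structure of skew-Laurent extensions together with the locally-algebraic hypothesis for (c).

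For part (a), I would argue by inspection of highest-degree terms. Every nonzero element of $A[x;\tau]$ has the form $f = \sum_{k=0}^{n} a_k x^k$ with $a_n \neq 0$; call $n$ the degree and $a_n$ the leading coefficient. If $g = \sum_{k=0}^{m} b_k x^k$ with $b_m \neq 0$, then using $b_k x^k = x^k \tau^{-k}(b_k)$ (or, equivalently, moving each $a_k$ past $x^k$ to produce terms of degree $\le n + m$), the coefficient of $x^{n+m}$ in $fg$ is $a_n\,\tau^{n}(b_m)$. Since $\tau$ is an automorphism and $A$ is a domain, this leading coefficient is nonzero, so $fg \neq 0$. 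The same argument shows $A[x^{\pm 1};\tau]$ is a domain by using the top degree (or equivalently bottom degree) of a Laurent polynomial.

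For part (b), the key observation is that the hypothesis $\tau(I) = I$ (hence also $\tau^{-1}(I) = I$) implies that the two-sided ideal of $A[x^{\pm 1};\tau]$ generated by $I$ is exactly $J := \bigoplus_{k \in \BZ} I x^k$. Indeed, $J$ is clearly a left $A$-submodule, and for any $a \in I$ and $k \in \BZ$ we have $(ax^k)\cdot b = a\,\tau^{k}(b)\,x^k \in I x^k$ for $b \in A$, since $\tau^k(b) \in A$ and $a \in I$; conversely $b \cdot (ax^k) = (ba)x^k \in Ix^k$. Also $J$ is closed under multiplication by $x^{\pm 1}$ since $\tau^{\pm 1}(I) = I$. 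So $J$ is a two-sided ideal containing $I$, and it is minimal such since any two-sided ideal containing $I$ must contain all $x^{k} I x^{-k}= \tau^{k}(I)x^{k}\cdot x^{-k}\cdot x^{k}=\tau^k(I)x^k = I x^k$. The natural quotient map $A \onto A/I$ extends to an $R$-algebra surjection $A[x^{\pm 1};\tau] \onto (A/I)[x^{\pm 1};\bar\tau]$, where $\bar\tau$ is the induced automorphism on $A/I$, with kernel exactly $J$, giving the claimed isomorphism.

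For part (c), the bound $\GKdim(A[x^{\pm 1};\tau]) \ge \GKdim(A) + 1$ is straightforward: if $G$ is a finite generating set for $A$ and one enlarges it to $G \cup \{x, x^{-1}\}$, then monomials of degree $\le m$ include products of degree-$m$ monomials in $G$ times $x^k$ for $|k| \le m$, contributing an extra factor of $m$ in the growth. For the upper bound, the locally-algebraic hypothesis on $\tau$ guarantees that for any finite generating set $G$ of $A$ there is a finite $\tau$-invariant $R$-submodule $V \supset G$; then $V + Rx + Rx^{-1}$ generates $A[x^{\pm 1};\tau]$ as an algebra, and using $ax = x\tau^{-1}(a)$ one can rewrite any degree-$m$ word in these generators as a sum $\sum_{|k|\le m} v_k x^k$ with each $v_k$ in the span of products of $\le m$ elements of $V$. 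Comparing growth rates then yields $\GKdim(A[x^{\pm 1};\tau]) \le \GKdim(A) + 1$. This is exactly the argument structure used in \cite[Theorem~2]{Leroy}, to which I would appeal for the clean statement; the locally-algebraic assumption is the crucial ingredient, as without it the skew-Laurent extension can have strictly larger GK dimension. The main point to verify carefully is that local algebraicity is preserved when passing from generators to the $\tau$-invariant enlargement $V$, which is immediate from the definition.
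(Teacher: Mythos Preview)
Your proposal is correct and fills in exactly what the paper leaves to references: the paper's proof simply cites \cite[Corollary I.7.4]{Kass} for (a), says (b) ``follows easily from the definition,'' and cites \cite[Proposition 1]{Leroy} for (c). Your leading-term argument for (a), ideal computation for (b), and growth-comparison sketch for (c) are the standard proofs behind those citations.

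One small cleanup in (b): your minimality computation $x^{k} I x^{-k}= \tau^{k}(I)x^{k}\cdot x^{-k}\cdot x^{k}=\tau^k(I)x^k = I x^k$ is garbled (the left side equals $\tau^k(I)=I$, not $Ix^k$); the clean argument is simply that any two-sided ideal containing $I$ is closed under right multiplication by $x^k$, hence contains $Ix^k$ for all $k$, so contains $J$. Also note the paper cites Proposition~1 of Leroy for (c) rather than Theorem~2 (which it uses elsewhere for localizations).
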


\begin{proof}
For (a) see \cite[Corollary I.7.4]{Kass}. Part (b) follows easily from the definition, while (c) is \cite[Proposition 1]{Leroy}.
\end{proof}

\begin{example} \label{exa.001}
Suppose $\tau_1, \dots, \tau_r$ are pairwise commuting algebra automorphisms of $A$. Define the iterated skew-Laurent extensions
\[ A[ x_1^{\pm 1},\dots, x_r^{\pm 1}; \tau_1, \dots, \tau_r]:=A[x_1^{\pm 1};\tilde \tau_1] \dots [x_r^{\pm 1}; \tilde\tau_r],\]
where $\tilde \tau_{i}$ is the algebra automorphism of $A[x_1^{\pm 1};\tilde \tau_1] \dots [x_{i-1}^{\pm 1}; \tilde\tau_{i-1}]$ which is $\tau_i$ on $A$, and $\tilde \tau_{i} (x_k) = x_k$ for $k < i$. It is easy to check that $\tilde \tau_i$ is a well-defined algebra automorphism.

If $S$ is a quantum torus from of $A$ then clearly $S \cup \{x_1, \dots, x_r\}$ is a quantum torus frame of $A[ x_1^{\pm 1},\dots, x_r^{\pm 1}; \tau_1, \dots, \tau_r]$.
\end{example}

\subsection{Quasi-monomial basis}

We introduce the notion of quasi-monomial basis and use it to show that many algebras are domains.

\begin{definition}
\begin{enuma}
\item An \term{enhanced monoid} is a submonoid $\Lambda$ of a free abelian group equipped with a monoid homomorphism $\oo: \Lambda\to \BZ^r$.
\item A set $E$ is a \term{quasimonomial $R$-basis} of an $R$-algebra $A$ if it is a free $R$-basis of $A$ and can be parameterized by an enhanced monoid $(\Lambda,\oo)$, i.e. $E=\{e(m) \mid m \in \Lambda\}$, such that
\begin{equation}\label{eq.ord5}
e(m) e(m') \eqq e(m+m') + A(\oo<m+m'),
\end{equation}
where $A(\oo<k)$ is the $R$-span of $e(k')$ with $\oo(k') \llex \oo(k)$. Here $\llex$ is the lexicographic order on $\BZ^r$.
\end{enuma}
\end{definition}

\begin{proposition}\label{r.domain9}
If an $R$-algebra $A$ has a quasi-monomial basis then $A$ is a domain.
\end{proposition}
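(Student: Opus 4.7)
The plan is to establish integrality by a leading-term argument: find a total order $\prec$ on $\Lambda$ that refines the partial order pulled back from the lex order on $\BZ^r$ via $\oo$ and that is compatible with addition, then show that for nonzero $a,b \in A$ the $\prec$-maximal basis term in $ab$ has a nonzero coefficient.

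\medskip

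First I would construct the total order. Since $\Lambda$ sits inside a free abelian group $G$, pick any total order $<_G$ on $G$ that is compatible with addition (for example, lex order after choosing a basis of $G$). Define
\[
m \prec m' \quad \Longleftrightarrow \quad \oo(m) <_{\lex} \oo(m') \text{, or } \oo(m)=\oo(m') \text{ and } m <_G m'.
\]
Because both $\oo$ and $<_G$ are compatible with the additive structure, a routine check (splitting into the two cases defining $\prec$) shows $\prec$ is a total order on $\Lambda$ compatible with addition, i.e.\ if $m \prec m'$ then $m+p \prec m'+p$ for every $p\in\Lambda$.

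\medskip

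Next I would rewrite the structural relation \eqref{eq.ord5} in the language of $\prec$. Any $e(k')$ with $\oo(k') <_{\lex} \oo(m+m')$ automatically satisfies $k' \prec m+m'$, so
\[
e(m)\,e(m') \;\eqq\; e(m+m') \;+\; \sum_{k' \prec m+m'} \gamma^{(m,m')}_{k'}\, e(k'),
\]
for some $\gamma^{(m,m')}_{k'}\in R$. Hence $e(m)e(m')$ has a unique $\prec$-leading basis vector $e(m+m')$ whose coefficient lies in $\hq^{\BZ}$, and is therefore a unit in $R$.

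\medskip

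Now let $a=\sum_{m\in S_a} c_m e(m)$ and $b=\sum_{m'\in S_b} d_{m'} e(m')$ be nonzero, with $c_m, d_{m'}\in R\setminus\{0\}$ and $S_a, S_b$ finite. Let $m_0=\max_\prec S_a$ and $m'_0=\max_\prec S_b$. Since $\prec$ is compatible with addition and $\Lambda$ is cancellative (being a submonoid of a free abelian group), for any $(m,m')\in S_a\times S_b$ we have $m+m'\preceq m_0+m'_0$ with equality iff $(m,m')=(m_0,m'_0)$. Expanding
\[
ab \;=\; \sum_{(m,m')\in S_a\times S_b} c_m d_{m'}\,e(m)e(m'),
\]
every term $c_m d_{m'}\,e(m)e(m')$ with $(m,m')\neq (m_0,m'_0)$ contributes only basis vectors $e(k')$ with $k' \prec m_0+m'_0$, while the single term $(m_0,m'_0)$ contributes $c_{m_0}d_{m'_0}\hq^{2N} e(m_0+m'_0)$ (plus strictly lower terms) for some $N\in\BZ$. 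Therefore the coefficient of $e(m_0+m'_0)$ in $ab$ equals $\hq^{2N}c_{m_0}d_{m'_0}$, which is nonzero since $R$ is a domain, $\hq$ is invertible, and $c_{m_0},d_{m'_0}\neq 0$. Hence $ab\neq 0$, so $A$ is an $R$-domain.

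\medskip

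The only potential subtlety is verifying that the refined order $\prec$ is compatible with addition on $\Lambda$; this is where the hypothesis that $\Lambda$ lives in a free abelian group (so that a translation-invariant total order exists and cancellation holds) is essential. Beyond that, the argument is a standard leading-coefficient computation.
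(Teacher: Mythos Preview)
Your proof is correct and follows essentially the same lead-term strategy as the paper: refine the partial order induced by $\oo$ to a translation-invariant total order using the ambient free abelian group, then argue that the $\prec$-top coefficient of $ab$ is a unit times $c_{m_0}d_{m'_0}$. The paper phrases this as replacing $\oo$ by the injective $\oo'(m)=(\oo(m),m)\in\BZ^r\times\BZ^k$ and using lex order there, which is exactly your $\prec$.
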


\begin{proof}
This follows from a lead term argument, or the theory of filtered algebras.

First assume $\oo: \Lambda\to \BZ^r$ is injective. A non-zero $x \in A$ has a unique presentation $x = \sum _{m\in J} c_m e(m)$, where $J \subset \Lambda$ is a finite non-empty set and $ 0\neq c_m\in R$. Define the lead term $\LT(x)= c_{m_0} e(m_0)$, where $m_0\in J$ has maximum value of $\oo$. From \eqref{eq.ord5} it follows that for non-zero $x,x'\in A$, with $\LT(x) = ce(m)$ and $\LT(x')= c'e(m')$ we have
\[ x x' = cc' e(m+m') + A(\oo<m+m').\]
The right-hand side is an $R$-linear combination of elements of the basis $E$, in which the coefficient of $e(m+m')$ is non-zero. Hence the right-hand side is non-zero, which means $x x'\neq 0$. Thus $A$ is a domain.

Now assume $\oo$ is not injective. Recall that $\Lambda\subset \BZ^k$ for certain $k$. Consider $\oo': \Lambda\embed \BZ^r \times \BZ^k$ given by $\oo'(m) = (\oo(m), m)$. Since $\oo(m) \llex \oo(m')$ implies $\oo'(m) \llex \oo'(m')$, Identity \eqref{eq.ord5} still holds true if $\oo$ is replaced with $\oo'$. As $\oo'$ is injective, by the above case $A$ is a domain.
\end{proof}

\begin{lemma} \label{r.domain2}
Let $A_1$ and $A_2$ form a weak tensor product factorization of an $R$-algebra $A$. Suppose for $i=1,2$ the $R$-algebra $A_i$ has a quasimonomial basis $\{e (m) \mid m \in \Lambda_i \}$, parameterized by the enhanced monoid $(\Lambda_i, \oo_i)$. Assume for $m\in \Lambda_1, r\in \Lambda_2$ we have
\begin{equation}
e(r) e(m) \eqq e(m) e(r) + A_1( \oo_1 < m) A_2. \label{eq.ord7}
\end{equation}
Then $A$ has a quasimonomial basis and hence is a domain. More precisely, the set
\[B=\{e(m) e(r) \mid (m,r) \in \Lambda_1 \times \Lambda_2 \},\]
with the enhancement $\oo= \oo_1\times \oo_2$, is a quasimonomial basis of $A$.
\end{lemma}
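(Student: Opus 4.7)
My plan is to verify directly that $B$ is a quasi-monomial basis of $A$ with the enhancement $\oo=\oo_1\times\oo_2$, and then invoke Proposition~\ref{r.domain9}. The basis property is essentially free: condition (i) of the weak tensor product factorization says $A_1\otimes A_2\to A$ is bijective, and since $\{e(m)\}_{m\in\Lambda_1}$ and $\{e(r)\}_{r\in\Lambda_2}$ are free $R$-bases of the factors, $B$ is a free $R$-basis of $A$. The product $\Lambda_1\times\Lambda_2$ sits inside the corresponding product of free abelian groups, and $\oo_1\times\oo_2$ is a monoid homomorphism into $\BZ^{r_1}\times\BZ^{r_2}$, so $(\Lambda_1\times\Lambda_2,\oo_1\times\oo_2)$ is an enhanced monoid. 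Equip $\BZ^{r_1}\times\BZ^{r_2}\cong\BZ^{r_1+r_2}$ with the lex order.

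The substantive step is verifying the multiplicative identity \eqref{eq.ord5} for $B$. Given $(m_i,r_i)\in\Lambda_1\times\Lambda_2$ for $i=1,2$, I would first rearrange
\[
e(m_1)e(r_1)\cdot e(m_2)e(r_2)=e(m_1)\bigl[e(r_1)e(m_2)\bigr]e(r_2),
\]
apply the exchange hypothesis \eqref{eq.ord7} to write $e(r_1)e(m_2)\eqq e(m_2)e(r_1)+X$ with $X\in A_1(\oo_1<m_2)A_2$, and then expand the principal term $[e(m_1)e(m_2)][e(r_1)e(r_2)]$ using the quasi-monomial identities in $A_1$ and $A_2$ separately. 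This yields
\[
e(m_1)e(r_1)e(m_2)e(r_2)\eqq e(m_1+m_2)e(r_1+r_2)+E_1+E_2+e(m_1)Xe(r_2),
\]
where $E_1\in A_1(\oo_1<m_1+m_2)\cdot e(r_1+r_2)$ and $E_2\in e(m_1+m_2)\cdot A_2(\oo_2<r_1+r_2)$, plus even lower-order contributions from the product of the two error spaces.

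The main step, and the only one that is not purely formal, is to check that every error contribution lies in $A(\oo<(m_1+m_2,r_1+r_2))$ for the lex order on $\BZ^{r_1+r_2}$. For $E_1$ the first coordinate is strictly smaller; for $E_2$ the first coordinate coincides and the second strictly drops. The delicate term is $e(m_1)Xe(r_2)$: expanding $X=\sum c_{m',r'}e(m')e(r')$ with $\oo_1(m')<\oo_1(m_2)$, each summand becomes $[e(m_1)e(m')]\,[e(r')e(r_2)]$, and the quasi-monomial property of $A_1$ shows that its $B$-expansion only involves basis elements $e(m'')e(r'')$ with $\oo_1(m'')\le\oo_1(m_1+m')$. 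Here additivity of the monoid homomorphism $\oo_1$ together with translation-invariance of the lex order on $\BZ^{r_1}$ gives
\[
\oo_1(m_1+m')=\oo_1(m_1)+\oo_1(m')<\oo_1(m_1)+\oo_1(m_2)=\oo_1(m_1+m_2),
\]
so the first coordinate strictly decreases in all such terms, which is exactly what is needed.

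Combining these observations yields identity \eqref{eq.ord5} for $B$, so $B$ is a quasi-monomial basis and Proposition~\ref{r.domain9} gives that $A$ is a domain. The main obstacle, as foreshadowed, is the bookkeeping for the exchange-error term $e(m_1)Xe(r_2)$: what rescues it is exactly the monoid-homomorphism hypothesis on $\oo_1$, which propagates the strict inequality $\oo_1(m')<\oo_1(m_2)$ through left translation by $m_1$.
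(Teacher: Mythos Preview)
Your proposal is correct and follows essentially the same approach as the paper's proof: apply the exchange hypothesis \eqref{eq.ord7} to reorder $e(r_1)e(m_2)$, then use the quasi-monomial identities in $A_1$ and $A_2$ on the principal term, and verify that all error contributions land in $A(\oo<(m_1+m_2,r_1+r_2))$ for the lex order. The paper's write-up is terser, collapsing your separate analyses of $E_1$, $E_2$, and the exchange error into a single line, but the logic---including the key use of additivity of $\oo_1$ to propagate the strict inequality through left multiplication by $e(m_1)$---is identical.
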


\begin{proof}
By the weak tensor product factorization, the set $B$ is a free $R$-basis of $A$. Using \eqref{eq.ord7} and then \eqref{eq.ord5} we have, for $t,m\in \Lambda_1$, $r, s\in \Lambda_2$,
\begin{align*}
(e(t) e(r)) ( e(m) e(s)) &\eqq e(t) e(m) e(r) e(s) + e(t) A_1(\oo_1 < m) A_2\\
&\eqq e(t+m) e(r+s) + A_1(\oo_1 < t+m) A_2\\
& \eqq e(t+m) e(r+s) + A ( \oo < (t+m, r+s)),
\end{align*}
which proves \eqref{eq.ord5} and hence the statement.
\end{proof}

\begin{lemma} \label{r.domain3}
Assume an $R$-algebra $A$ has a quasimonomial basis $\{e(m) \mid m \in \Lambda\}$ parameterized by an enhanced monoid $(\Lambda, \oo)$. Assume an ideal $I \lhd A$ is the $R$-span of $\{e(m) \mid m \in \Lambda \setminus \bar \Lambda\}$, where $\bar \Lambda$ is a submonoid of $\Lambda$. Then the quotient $A/I$ has a quasimonomial basis parameterized by $(\bar \Lambda, \oo)$ and hence is a domain.
\end{lemma}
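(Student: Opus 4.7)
The plan is to construct a quasimonomial basis on $A/I$ directly from the one on $A$ and then invoke Proposition~\ref{r.domain9} to conclude that $A/I$ is a domain. First, since $\{e(m)\mid m\in\Lambda\}$ is a free $R$-basis of $A$ and $I$ is by hypothesis the $R$-span of the sub-basis $\{e(m)\mid m\in \Lambda\setminus\bar\Lambda\}$, the residues $\bar e(m):=e(m)+I$ for $m\in\bar\Lambda$ form a free $R$-basis of the quotient $A/I$. This parameterizes the candidate basis by $\bar\Lambda$, and since $\bar\Lambda$ is a submonoid of the ambient free abelian group, the restriction $\oo|_{\bar\Lambda}:\bar\Lambda\to\BZ^r$ is still a monoid homomorphism, so $(\bar\Lambda,\oo|_{\bar\Lambda})$ is an enhanced monoid.

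Next, I would verify the quasimonomial relation \eqref{eq.ord5} on $A/I$. For $m,m'\in\bar\Lambda$, since $\bar\Lambda$ is a submonoid, $m+m'\in\bar\Lambda$. Applying the projection $A\to A/I$ to the identity
\[e(m)e(m')\eqq e(m+m')+A(\oo<m+m')\]
in $A$ yields
\[\bar e(m)\bar e(m')\eqq \bar e(m+m')+\overline{A(\oo<m+m')}.\]
Now $A(\oo<m+m')$ is the $R$-span of those $e(k')$ with $\oo(k')<_\lex\oo(m+m')$; under the projection, the summands with $k'\notin\bar\Lambda$ vanish and only those with $k'\in\bar\Lambda$ survive. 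Hence $\overline{A(\oo<m+m')}$ coincides with $(A/I)(\oo<m+m')$, the $R$-span of $\bar e(k')$ with $k'\in\bar\Lambda$ and $\oo(k')<_\lex\oo(m+m')$. This is exactly the relation \eqref{eq.ord5} for the basis $\{\bar e(m)\mid m\in\bar\Lambda\}$ with enhancement $\oo|_{\bar\Lambda}$.

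Finally, Proposition~\ref{r.domain9} applied to this quasimonomial basis on $A/I$ gives the conclusion that $A/I$ is a domain. There is no real obstacle here; the only subtlety is confirming that the lower-order error term $A(\oo<m+m')$ behaves correctly modulo $I$, and this is immediate from the fact that $I$ is spanned precisely by the complementary basis elements $e(m)$ with $m\in\Lambda\setminus\bar\Lambda$.
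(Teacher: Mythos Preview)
Your proof is correct and follows essentially the same approach as the paper: project the quasimonomial relation \eqref{eq.ord5} through $A\to A/I$, observe that the residues $\bar e(m)$ for $m\in\bar\Lambda$ form a free $R$-basis of $A/I$, and note that the lower-order term lands in $(A/I)(\oo<m+m')$ because basis elements indexed by $\Lambda\setminus\bar\Lambda$ die in the quotient. Your write-up is a bit more explicit than the paper's, but the argument is identical.
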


\begin{proof}
Let $p: A \to A/I$ be the natural projection. Clearly the set $\bar B= \{p(e(m)) \mid m \in \bar \Lambda\}$ is a free $R$-basis of $A/I$.
Apply $p$ to both sides of \eqref{eq.ord5} we get that, for $m,m'\in \bar \Lambda$,
\[ p(e(m)) p(e(m')) \eqq p(e(m+m') + (A/I)(\bar \oo < m+ m').\]
This proves $\bar B$ is a quasimonomial basis of $A/I$.
\end{proof}

\section{Quantized algebras of regular functions on $SL_n$ and its Borel subgroup} \label{sec.Oqsln}

In this section we review the quantized algebra $\Oq$ of regular functions on $SL_n$, which is usually denoted by $\mathcal O_q(SL_n)$ in many texts. We also consider the quotient $\LO$, the quantized algebra of regular functions on the Borel subgroup of $SL_n$. These algebras will be the building blocks for (reduced) stated skein algebra of surfaces. We will show that both $\Oq$ and $\LO$ have quasi-monomial bases, a frequently used fact. We present a quantum torus frame for $\LO$, which will be used for the construction of quantum trace maps later.

Recall that the ground ring $R$ is a commutative domain with a distinguished invertible element $\hq$. Also $\JJ = \{1,2, \dots, n\}$ and $\llex$ is the lexicographic order on $\BZ^r$.

\subsection{Quantum matrices}

\begin{definition}
\begin{enuma}
\item A $k \times m$ matrix with entries in a ring is a called a \term{$q$-quantum matrix} if any $2\times 2$ submatrix $\begin{pmatrix}a & b \\ c& d\end{pmatrix}$ of it satisfies the relations
\begin{equation}\label{eq.Mq2}
ab = q ba, \quad ac = q ca, \quad bd = q db, \quad cd = q dc,\quad
bc = cb, \quad ad - da = (q-q^{-1}) bc.
\end{equation}
\item The \term{$q$-quantum matrix algebra} $\Mqn$ is the $R$-algebra generated by entries $u_{ij}$ of the matrix $\buu= (u_{ij})_{i,j=1}^n$ subject to the relations \eqref{eq.Mq2} for any $2\times 2$ submatrix $\begin{pmatrix}a & b \\ c& d\end{pmatrix}$.
\end{enuma}
\end{definition}

The algebra $\Mqn$ is also known as the quantized algebra of coordinate functions on $n\times n$ matrices. The defining relations of $\Mqn$ can be written by one matrix equation
\begin{equation}
(\buu \ot \buu) \cR = \cR (\buu \ot \buu), \label{e.Mq}
\end{equation}
where $\buu \ot \buu$ is the $n^2\times n^2$ matrix with entries $(\buu \ot \buu)^{ik}_{jl} := u_{ij} u_{kl}$ for $i,j,k,l\in \JJ $ and $\cR$ is the $n^2 \times n^2$ matrix given by
\begin{equation}\label{e.R}
\cR^{ij}_{lk} = q^{-\frac 1n} \left( q^{\delta_{i,j}} \delta_{j,k} \delta_{i,l} + (q-q^{-1})
\delta_{j<k} \delta_{j,l} \delta_{i,k}\right).
\end{equation}
This is the $R$-matrix of the fundamental representation of $\mathfrak{sl}_n$, cf. \cite[Equ. 8.4.2(60) and Section 9.2]{KS}.

The defining relations can also be rewritten as follows. For $i,j,k,l\in \JJ$ let
\[C_{ij,kl} := \delta_{ik} + \delta_{i<k} \delta_{jl}.\] Then the defining relation \eqref{e.Mq} is equivalent to: for $(i,j) <_{\text{lex}} (k,l)\in \JJ^2$,
\begin{equation}\label{eq.M110}
u_{ij} u_{kl}- q^{C_{ij,kl}} u_{kl} u_{ij}= \delta_{i<k} \delta_{j<l}(q-q^{-1}) u_{il} u_{kj}.
\end{equation}

The \term{quantum determinant} of the $q$-quantum matrix $\buu$, defined by
\begin{equation}\label{eq.det}
\detq(\buu)
:=\sum_{\sigma\in \Sym_n} (-q)^{\ell(\sigma)}u_{1\sigma(1)}\cdots u_{n\sigma(n)}
=\sum_{\sigma\in \Sym_n} (-q)^{\ell(\sigma)}u_{\sigma(1)1}\cdots u_{\sigma(n)n},
\end{equation}
is a central element in $\Mqn$, cf. \cite[9.2.2]{KS}.

The \term{adjugate} $\buu^!\in \Mat_n(\Mqn)$ is the $n\times n$ matrix with entries
\begin{equation}
(\buu^!)_{ij} = (-q)^{i-j} {\det}_q(\buu^{ji}),
\end{equation}
where $\buu^{ji}$ is the result of removing the $j$-th row and the $i$-column from $\buu$. Then $\buu^!$ is $q^{-1}$-quantum and
\begin{equation}\label{eq.adjugate}
\buu^! \buu = \buu \buu^! = ({\det}_q \buu) \id.
\end{equation}

\subsection{Cramer's rule}

We use \eqref{eq.adjugate} to solve linear equations.

\begin{proposition}\label{e.Cramer}
Let $M'=[c|M]$ be an $n \times (n+1)$ $q$-quantum matrix with entries in a ring $A$, with the last $n$ columns forming a submatrix $M$ and the first column being $c$. Let $M_i$ be the result of removing the $(i+1)$-th column from $M'$. Assume ${\det}_q(M) $ is invertible in $A$. Suppose $x= (x_1,x_2, \dots, x_n)^T$ is a column of elements of $A$. Then $Mx =c$ if and only if
\[ x_i = (-q)^{i-1} ({\det}_q(M))^{-1} {\det}_q(M_i)\quad \text{for all }i=1,\dots, n.\]
\end{proposition}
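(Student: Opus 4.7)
The plan is to reduce Cramer's rule to two applications of the adjugate identity~\eqref{eq.adjugate}, together with a Laplace column-expansion identity for the quantum determinant that comes out of the same adjugate relation.

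First, I will dispose of the forward implication. Assume $Mx=c$. Multiplying this relation on the left by $M^!$ and using $M^!M=(\det_q M)\,\Id$ gives $(\det_q M)\,x_i=(M^!c)_i$. Writing out the right-hand side with the definition of the adjugate, one obtains
\[
(\det_q M)\,x_i=\sum_{j=1}^n(-q)^{i-j}\,{\det}_q(M^{ji})\,c_j.
\]
The entire problem therefore reduces to the identity
\begin{equation}\label{eq.key-cramer-plan}
\sum_{j=1}^n(-q)^{1-j}\,{\det}_q(M^{ji})\,c_j = {\det}_q(M_i).
\end{equation}
My plan for~\eqref{eq.key-cramer-plan} is to recognize it as the first-column Laplace expansion of $\det_q(M_i)$. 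The key observation is that $M_i$ is an $n\times n$ submatrix of the $q$-quantum matrix $M'=[c\mid M]$, hence $M_i$ is itself $q$-quantum, so any general column-expansion formula for quantum determinants applies to it. The required formula comes directly from the second half of~\eqref{eq.adjugate}: reading off the diagonal entries of $\buu\,\buu^!=(\det_q\buu)\,\Id$ yields $\det_q(\buu)=\sum_i(-q)^{1-i}\,\det_q(\buu^{i1})\,\buu_{i1}$ for any $q$-quantum matrix $\buu$ (expansion along the first column). Applying this to $M_i$, whose first column is $c$ and whose minor obtained by deleting row $j$ and column $1$ is exactly $M^{ji}$ (since removing the first column of $M_i$ restores $M$ without its $i$-th column), produces~\eqref{eq.key-cramer-plan}. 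Factoring out $(-q)^{i-1}$ from the sum above then gives $x_i=(-q)^{i-1}(\det_q M)^{-1}\det_q(M_i)$.

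For the converse, I will define $\tilde x_i:=(-q)^{i-1}(\det_q M)^{-1}\det_q(M_i)$ and run the same computation backwards: the column-expansion identity gives $(\det_q M)\,\tilde x=M^!c$, and then left-multiplying by $M$ and using $MM^!=(\det_q M)\,\Id$ yields $(\det_q M)\,M\tilde x=(\det_q M)\,c$. Invertibility of $\det_q(M)$ in $A$ then lets us cancel the scalar factor and conclude $M\tilde x=c$.

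The only nontrivial point is \eqref{eq.key-cramer-plan}, and even that is not an obstacle once one notices that $M_i$ is a submatrix of a $q$-quantum matrix: one must verify that passing from $M'$ to $M_i$ really selects a full set of $n$ columns (so that being quantum is inherited) and that the resulting first-column minors match the $M^{ji}$ appearing in the adjugate of $M$. Once this bookkeeping is in place, the proof is essentially two applications of~\eqref{eq.adjugate} and a column Laplace expansion, with no new computation required.
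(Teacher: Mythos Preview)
Your proof is correct and follows essentially the same route as the paper: both multiply by the adjugate, use $M^{ji}=(M_i)^{j1}$, and recognize $\sum_j(-q)^{1-j}\det_q((M_i)^{j1})\,c_j$ as the first-column Laplace expansion of $\det_q(M_i)$; the paper simply packages the two directions as a single chain of equivalences. One small slip: the expansion $\det_q(\buu)=\sum_j(-q)^{1-j}\det_q(\buu^{j1})\,\buu_{j1}$ (minor on the left, entry on the right) is the $(1,1)$ entry of $\buu^!\,\buu$, not of $\buu\,\buu^!$, and in your converse you implicitly use that $\det_q M$ commutes with the entries of $M$ (which follows from centrality of $\det_q$ in $\mathcal{M}_q(n)$)---both are harmless but worth stating.
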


\begin{proof}
For a square matrix $X$ let $X^{ji}$ be the result of removing the $j$-th row and the $i$-th column from $X$. Note that $M^{ji} = (M_i)^{j1}$. From \eqref{eq.adjugate},
\begin{align*}
M x = c &\Longleftrightarrow \id x = ({\det}_q(M))^{-1} M^! c \\
& \Longleftrightarrow x_i = ({\det}_q(M))^{-1} \sum_{j=1}^n (-q)^{i-j} {\det}_q (M^{ji}) c_j \\
& \Longleftrightarrow x_i = (-q)^{i-1} ({\det}_q(M))^{-1} \sum_{j=1}^n (-q)^{1-j} {\det}_q ((M_i)^{j1}) c_j \\
& \Longleftrightarrow x_i = (-q)^{i-1} ({\det}_q(M))^{-1} {\det}_q(M_i).\qedhere
\end{align*}
\end{proof}

\subsection{The quantized algebra of regular functions on $SL_n$}

The quotient
\[\Oq := \Mqn/({\det}_q \buu-1)\]
is known as the \term{quantized algebra of regular functions on $SL_n$}. By abuse of notation, we denote the image of $u_{ij}\in \Mqn$ under the natural projection $\Mqn \to \Oq$ also by $u_{ij}$.

It is known that $\Oq$ is Hopf algebra (see e.g. \cite[9.2.2]{KS}) where the comultiplication, the counit, and the antipode are given by
\begin{align}
\Delta(u_{ij}) & = \sum_{k=1}^n u_{ik} \ot u_{kj}, \quad \ve(u_{ij})= \delta_{ij}.\label{eq.Deltave}\\
S({u}_{ij} )&= (\buu^!)_{ij} = (-q)^{i-j} {\det}_q(\buu^{ji}).\label{e-Oq-ops}
\end{align}
Here $\buu^{ji}$ is the result of removing the $j$-th row and $i$-th column from $\buu$.

\subsection{Degrees and filtrations}

Define three degrees $d_0, d_1$, and $d_2$ by
\begin{equation}
d_0(u_{ij})=1, \ d_1(u_{ij})=i-j, \ d_2 (u_{ij}) = 6ij - (n+1) (2n+1).
\end{equation}
Then for each $i=1,2,3$ and a word $w$ in the letters $\{ u_{ij}\}$ define $d_i(w)$ additively, i.e. if $w= u_{i_1 j_1} \dots u_{i_k j_k}$ then $d_i(w) = d_i( u_{i_1 j_1}) + \dots d_i( u_{i_k j_k})$. Note that $d_0(w)\in \BN$ is the length of the word $w$.

\begin{proposition}\label{r.d1}
 The degree $d_1$ descends to a $\BZ$-grading of the algebra $\Oq$. That is,
\begin{equation}
\Oq = \bigoplus_{k\in \BZ} \Oq_{d_1=k}, \quad \Oq_{d_1=k}\, \Oq_{d_1=k'} \subset
 \Oq_{d_1=k+k'},
\end{equation}
where $\Oq_{d_1=k}:= R\text{-span of} \ \{[w] \mid d_1(w)=k)\}$.
\end{proposition}

\begin{proof}
One easily sees that $d_1$ respects the defining relations \eqref{eq.M110} and the relation $\det_q(\buu)=1$.
\end{proof}

\subsection{Quasi-monomial bases of $\Oq$}

We present now a quasi-monomial basis for $\Oq$. To parameterize a basis of $\Oq$, consider the monoid
\begin{equation}\label{eq.Gamma}
\Gamma = \Mat_n(\BN)/ (\Id).
\end{equation}
Here $\Mat_n(\BN)=\BN^{n\times n}$ is an additive monoid, and $(\Id)$ is the submonoid generated by the identity matrix. Two matrices $m,m'\in \Mat_n(\BN)$ determine the same element in $\Gamma$ if and only if $m-m'= k \Id$ for $k\in \BZ$. Each $m\in \Gamma$ has a unique lift $\hat m\in \Mat_n(\BN)$, called the \term{minimal representative}, such that $\min_{i} \hat m_{ii}=0$. Note that $\Gamma\cong \BN^{n^2-n}\oplus \BZ^{n-1}$, hence it is a submonoid of a free abelian group.

\begin{proposition}[Theorem 2.1 of \cite{Gavarini}] \label{r.basisOq}
For any linear order $\ord$ on $\JJ^2$, the set
\begin{equation}\label{eq.Bord}
B^\ord:=\{b(m):= \prod_{(i,j)\in \JJ ^2} u_{ij}^{\hat m _{ij}} \mid m \in \Gamma = \Mat_n(\BN)/(\Id)\}
\end{equation}
where the product is taken with respect to the order $\ord$, is a free $R$-basis of $\Oq$. Consequently $\Oq$ is orderly finitely generated.
\end{proposition}

Let $d_2: \Mat_n(\BN)\to \BZ$ be the $\BN$-linear map defined by: For $m= (m_{ij})_{i,j=1}^n \in \Mat_n(\BN)$ let
\begin{equation}
d_2(m):= d_2 (u_{ij}^{m _{ij}}) = \sum_{i,j} [6ij - (n+1) (2n+1)] m_{ij}. \label{eq.d2}
\end{equation}
The term $-(n+1)(2n+1)$ was added in the definition of $d_2$ so that $d_2(\Id)=0$. Hence $d_2$ descends to a monoid homomorphism, also denoted by $d_2: \Gamma \to \BZ$. Consider the enhanced monoid $(\Gamma, d_2)$. Recall that the ground ring $R$ is a commutative domain.

\begin{proposition}\label{r.Oq} Let $\ord$ be a linear order on $\JJ ^2$.

\begin{enuma}
\item The algebra $\Oq$ has uniform GK dimension $n^2-1$, with uniform GK set
$$G=\{u_{ij} \mid i,j \in \JJ^2\}.$$
\item The set $B^\ord$, parameterized by the enhanced monoid $(\Gamma,d_2)$, is a quasi-monomial basis of $\Oq$.
\end{enuma}
\end{proposition}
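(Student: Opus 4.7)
Since Proposition~\ref{r.basisOq} already shows that $B^\ord$ is a free $R$-basis of $\Oq$, for part (b) it suffices to verify the quasi-monomial multiplicative identity
\[b(m) b(m') \eqq b(m+m') + A(d_2 < m+m')\]
for all $m, m' \in \Gamma$. The key numerical input is that every correction term produced by the defining relations of $\Oq$ has strictly smaller $d_2$. For the braiding relation \eqref{eq.M110}, a direct computation gives
\[d_2(u_{il}u_{kj}) - d_2(u_{ij}u_{kl}) = -6(i-k)(j-l) < 0\]
when $i<k$ and $j<l$. For the quantum determinant relation, one rewrites $u_{11}u_{22}\cdots u_{nn} = 1 - \sum_{\sigma\neq\id}(-q)^{\ell(\sigma)} u_{1\sigma(1)}\cdots u_{n\sigma(n)}$ and applies the strict rearrangement inequality $\sum_i i\sigma(i) < \sum_i i^2$ for $\sigma \neq \id$ to conclude that each off-identity summand has $d_2 < 0 = d_2(u_{11}\cdots u_{nn})$.

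I would prove (b) in two stages. First, apply \eqref{eq.M110} iteratively to $b(m)b(m')$ to bring the generators into $\ord$-order. Each step is either a $q$-commutation, which preserves the monomial up to a power of $\hq$, or a replacement by a correction of strictly smaller $d_2$ and the same length. This produces $b(m)b(m') \eqq \hq^\alpha\, b(\hat m + \hat m') + (\text{strictly lower } d_2)$, where $\hat m + \hat m' \in \Mat_n(\BN)$ represents $m+m'$ in $\Gamma$ but may fail to be the minimal representative. Second, setting $k := \min_i(\hat m + \hat m')_{ii}$, further $q$-commutations factor $b(\hat m + \hat m') \eqq b(\widehat{m+m'}) \cdot (u_{11}u_{22}\cdots u_{nn})^k$ modulo strictly smaller $d_2$ corrections, and then $\det_q \buu = 1$ replaces each factor $u_{11}\cdots u_{nn}$ by $1$ plus permutation terms of strictly smaller $d_2$. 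Expanding $(1 + \text{lower})^k$, only the constant branch contributes at the top $d_2$ level, yielding $b(\widehat{m+m'}) = b(m+m')$.

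For (a), I would show that $\Pol_m(G)$ coincides with the $R$-span of $\{b(\hat m) \in B^\ord : d_0(\hat m) \leq m\}$, where $d_0(\hat m) = \sum_{ij}\hat m_{ij}$ is the total length. The containment $\supseteq$ is immediate. For $\subseteq$, any word of length $k \leq m$ reduces via \eqref{eq.M110} (length-preserving) and $\det_q \buu = 1$ (length-preserving on the permutation summands, length-decreasing by $n$ on the "$1$" summand) to basis elements of length $\leq k$. The count of minimal matrices $\hat m$ with $\sum_{ij}\hat m_{ij} \leq m$ equals $\binom{m+n^2}{n^2} - \binom{m+n^2-n}{n^2}$, after subtracting those with $\min_i \hat m_{ii} \geq 1$ via the substitution $\hat m \mapsto \hat m - \Id$; this difference is a polynomial in $m$ of degree exactly $n^2-1$. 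Hence $\dim_R \Pol_m(G)$ has true polynomial growth of degree $n^2-1$, so the $\limsup$ defining the GK dimension is a genuine limit and equals $n^2-1$, which makes $G$ a uniform GK set.

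The main obstacle is the iterative bookkeeping in (b): the corrections produced by each application of the defining relations must themselves be expanded in $B^\ord$, generating further corrections. This is handled by induction on $d_2$, which is well-founded within a single reduction because the length $d_0$ stays bounded by $d_0(\hat m) + d_0(\hat m')$ throughout, and this in turn bounds $d_2$ from below.
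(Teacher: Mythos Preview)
Your proposal is correct and follows essentially the same route as the paper: both use that the corrections from \eqref{eq.M110} and from the determinant relation have strictly smaller $d_2$ (the paper phrases the latter via Cauchy--Schwarz, you via the rearrangement inequality), and both run the induction with $d_0$ bounded to ensure well-foundedness. The only notable difference is cosmetic: the paper packages the reduction as a separate lemma about an arbitrary word $[w]$ and uses a sandwich $f_k \le \dim_R\Pol_k(G)\le n f_k$ for part~(a), whereas you compute the exact count $\binom{m+n^2}{n^2}-\binom{m+n^2-n}{n^2}$ directly.
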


\begin{proof}
(a) As $B^\ord$ is a free $R$-basis of $\Oq$, the set $\Pol_k(G)$ is the $R$-module freely spanned by monomials $\prod u_{ij}^{m_{ij}}$ of total degree $\le k$, with one of $m_{ii}$ equal to 0. Hence,
\[ \dim_R(\Pol_k(G)) = \Big| \{m \in \Mat_n(\BN) \mid \sum m_{ij} \le k, \min_i m_{ii} =0\} \Big|\]
Then $ f_k \le \dim_R(\Pol_k(G)) \le n f_k$, where
\[ f_k = \Big| \{m \in \Mat_n(\BN) \mid \sum m_{ij} \le k, m_{11} =0\} \Big|\]
Since $f_k$ is the dimension of the space of polynomials in $n^2-1$ commutative variables of totals degrees $\le k$, we have
\[ \lim_{k \to \infty} \frac{\log f_k}{k}= n^2-1.\]
By sandwich limit theorem, we also have
\[ \lim_{k\to \infty} \frac{\log \dim_R (\Pol_k(S))}{k}= n^2-1.\]
This show $G$ is a uniform GK set for $\Oq$, and that $\GKdim(\Oq))= n^2-1$.

(b) We need to prove that for $m,m'\in \Gamma$,
\begin{equation}\label{eq.filter9a}
b(m) b(m') \eqq b(m+m') \mod \Oq (d_2 <m+m').
\end{equation}
For a word $w$ in $u_{ij}$ let $\tilde m(w)_{ij}\in \BN$ be the number of times $u_{ij}$ appears in $w$. Let $m(w)\in \Gamma$ be the element determined by $\tilde m(w)\in \Mat_n(\BN)$.

\begin{lemma} \label{r.ord001}
For a word $w$ in the letters $\{u_{ij}\}$ one has
\begin{align}
\label{eq.boo2}
[w] &\eqq b(m(w)) \mod \Oq(d_2 < m(w)) \\
[w] &\in R\text{-span of} \ \{ b(m) \mid d_0 (b(m)) \le d_0(w) \}. \label{eq.boo5}
\end{align}
\end{lemma}

\begin{proof}
Let $d_{02}(w)= (d_0(w), d_2(w))\in \BN \times \BZ$. We prove \eqref{eq.boo2} and \eqref{eq.boo5} by induction on $d_{02}(w)$, using the partial order on $\BN \times \BZ$ defined by $(k,l) \le _\parti (k',l')$ if $k\le k'$ and $l\le l'$. Note that, because of the presence of $d_0$, there are only a finite number of words $w'$ such that $d_{02} (w') <_\parti d_{02}(w)$. The base case, when $d_0(w)=0$, is trivial since $w$ is the empty word.

By \eqref{eq.M110} the defining relations of $\Oq$ are, for $(i,j) <_{\text{lex}} (k,l)$ and $C_{ij,kl} := \delta_{ik} + \delta_{i<k} \delta_{jl}$,
\begin{align}
u_{ij} u_{kl}- q^{C_{ij,kl}} u_{kl} u_{ij}&= \delta_{i<k} \delta_{j<l}(q-q^{-1}) u_{il} u_{kj}, \label{eq.M11} \\
1- u_{11}\dots u_{nn}&= \sum_{\id \neq \sigma \in \Sym_n} (-q)^{\ell(\sigma)} u_{1 \sigma(1)} \dots u_{n \sigma(n)}. \label{eq.M12}
\end{align}
The main property of $d_2$ is that in each equation, all the monomials in the left-hand side have the same $d_{2}$, which is higher than $d_2$ of any monomial in the right-hand side. For \eqref{eq.M12} this is true due to the Cauchy-Schwarz inequality.

Relation \eqref{eq.M11} shows that if $w'$ is a permutation of $w$, then
$$[w]\eqq [w'] + \boo_{02}(w),$$ where $\boo_{02}(w)$ stand for an $R$-linear combination of $[w']$ with $d_{02}(w') <_\parti d_{02} (w)$. Permutations and Relation \eqref{eq.M12} shows that if $\tilde m(w)_{ii} \ge 1$ for all $i$ then $[w]\eqq [w'] + \boo_{02}(w)$, where $\tilde m(w')= \tilde m - \Id$. Combining the two operations we get
\begin{equation}\label{eq.36}
[w]\eqq b(m(w)) + \boo_{02}(w).
\end{equation}
Induction on $d_{02}(w)$ we get both \eqref{eq.boo2} and \eqref{eq.boo5}.
\end{proof}

Return to the proposition. Let $m,m'\in \Gamma$. Assume $b(m)$ and $b(m')$ are represented by words $w, w'$ respectively. Since $b(m) b(m')= [ww']$ from \eqref{eq.boo2} we have
\[b(m) b(m')\eqq b(m+m') \mod \Oq(d_2 < m+m').\]
This completes the proof of the proposition.
\end{proof}

As a corollary, we get the following well-known result.
\begin{corollary}\label{r.Oq000}
Over any ground ring $R$ which is a domain, the algebra $\Oq$ is a domain.
\end{corollary}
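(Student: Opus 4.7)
The corollary is essentially an immediate consequence of two results already established in the excerpt, so the plan is to simply invoke them in the right order.

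First, I would recall Proposition \ref{r.Oq}(b): for any linear order $\ord$ on $\JJ^2$, the set $B^\ord = \{b(m) \mid m \in \Gamma\}$ defined in \eqref{eq.Bord} is a quasi-monomial $R$-basis of $\Oq$, with enhanced monoid structure given by $(\Gamma, d_2)$ where $\Gamma = \Mat_n(\BN)/(\Id)$ and $d_2: \Gamma \to \BZ$ is as in \eqref{eq.d2}. In particular, this means that $\Gamma$ embeds in a free abelian group (indeed $\Gamma \cong \BN^{n^2-n} \oplus \BZ^{n-1}$), so the definition of enhanced monoid is satisfied, and the key multiplicative relation
\[ b(m)\,b(m') \eqq b(m+m') \mod \Oq(d_2 < m+m') \]
holds for all $m, m' \in \Gamma$.

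Second, I would apply Proposition \ref{r.domain9}, which asserts that any $R$-algebra possessing a quasi-monomial basis is a domain (provided the ground ring $R$ is a domain, which is our standing assumption from Subsection \ref{ss.ground}). Combining these two propositions yields the corollary immediately.

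There is really no obstacle here, since all the hard work has been done in establishing the lead-term/quasi-monomial machinery and in verifying that $(B^\ord, d_2)$ is a quasi-monomial basis of $\Oq$ (the nontrivial point there being Lemma \ref{r.ord001}, whose proof relies on the fact that in each defining relation of $\Oq$, the dominant-$d_2$ monomials all appear on the left-hand side, cf. the Cauchy–Schwarz observation for \eqref{eq.M12}). So the proof of the corollary is a single sentence invoking Propositions \ref{r.Oq}(b) and \ref{r.domain9}.
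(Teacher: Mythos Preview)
Your proposal is correct and matches the paper's approach exactly: the paper states this result as an immediate corollary of Proposition~\ref{r.Oq}(b) (existence of a quasi-monomial basis) together with Proposition~\ref{r.domain9} (an algebra with a quasi-monomial basis is a domain), without even writing out a proof.
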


\begin{remark}
As far as we know, for arbitrary domain $R$, this result was first proved in \cite{Levasseur}.
\end{remark}

Let us record here variations of several facts we just proved. Define $d_1: \Gamma \to \BZ$ by
\begin{equation}\label{eq.d1}
d_1(m):= d_1( \prod u_{ij}^{\hat m_{ij}} )= \sum_{ij} (i-j) \hat m_{ij}
\end{equation}
where $\hat m\in \Mat_n(\BN)$ is a lift of $m$. Clearly $d_1$ is well-defined. Let $d_{12}=(d_1, d_2): \Gamma\to \BZ^2$ and $d_{01}(w)=(d_0(w), d_1(w))\in \BN \times \BZ$.

\begin{corollary} \label{r.boo12}
Suppose $m,m'\in \Gamma$ and $w$ is a word in the letters $u_{ij}$. Then
\begin{align}
\label{eq.boo12}
b(m) b(m') & \eqq b(m+m') + \Oq (d_{12} \llex m+m')\\
[w] &\in R\text{-span of} \ \{ b(m) \mid d_{01} (b(m)) \lelex d_{01}(w) \}. \label{eq.boo5a}
\end{align}
\end{corollary}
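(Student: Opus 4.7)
The plan is to derive both statements from the $\BZ$-grading by $d_1$ (Proposition~\ref{r.d1}) combined with the results already proved in the previous lemma and proposition. The key observation is that $d_1$ is well-defined on $\Gamma$, since $d_1(\Id) = \sum_i (i-i) = 0$, and is additive as a monoid homomorphism $\Gamma \to \BZ$; moreover $d_1(b(m)) = d_1(m)$ directly from the definitions of $b(m)$ and $d_1$.

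For the first statement, I start from the already-established relation $b(m) b(m') \eqq b(m+m') \mod \Oq(d_2 < m+m')$ of Proposition~\ref{r.Oq}(b). Expanding $b(m)b(m') - \qq^{\ast}b(m+m')$ in the basis $B^\ord$, every basis element $b(m'')$ appearing satisfies $d_2(m'') < d_2(m+m')$. But $b(m)b(m')$ is $d_1$-homogeneous of degree $d_1(m) + d_1(m') = d_1(m+m')$, so by the $\BZ$-grading every such $b(m'')$ must also satisfy $d_1(m'') = d_1(m+m')$. Combining these, $(d_1(m''),d_2(m'')) <_\lex (d_1(m+m'), d_2(m+m'))$, which is exactly $d_{12}(m'') <_\lex d_{12}(m+m')$.

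For the second statement, I invoke \eqref{eq.boo5} to write $[w]$ as an $R$-linear combination of basis elements $b(m)$ with $d_0(b(m)) \le d_0(w)$. Since $[w]$ is $d_1$-homogeneous of degree $d_1(w)$ and the basis $B^\ord$ is $d_1$-homogeneous (each $b(m)$ has $d_1$-degree $d_1(m)$), the $\BZ$-grading forces every $b(m)$ with nonzero coefficient to satisfy $d_1(b(m)) = d_1(w)$. Then I check both cases: if $d_0(b(m)) < d_0(w)$, then $d_{01}(b(m)) <_\lex d_{01}(w)$ automatically; if $d_0(b(m)) = d_0(w)$, then $d_{01}(b(m)) = d_{01}(w)$. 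In either case $d_{01}(b(m)) \le_\lex d_{01}(w)$, giving the required inclusion.

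There is no real obstacle here; the whole argument is a bookkeeping refinement in which $d_1$ contributes no new inequality but merely strengthens the existing ones to lexicographic ones by virtue of being an exact grading rather than a filtration. The only point to verify carefully is that $d_1$ truly descends to $\Gamma$ and is compatible with the basis $B^\ord$, which follows immediately from $d_1(\Id)=0$.
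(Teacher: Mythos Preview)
Your proposal is correct and follows essentially the same approach as the paper's own proof: use the $\BZ$-grading by $d_1$ (Proposition~\ref{r.d1}) to refine \eqref{eq.filter9a} to \eqref{eq.boo12} and \eqref{eq.boo5} to \eqref{eq.boo5a}, observing that the $d_1$-homogeneity forces equality in the $d_1$-component so the strict (respectively weak) inequality in $d_2$ (respectively $d_0$) alone already gives the lexicographic inequality. The paper's proof is the same argument stated in two sentences; your write-up simply spells out the bookkeeping more explicitly.
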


\begin{proof} Since $d_1$ gives a $\BZ$-grading on $\Oq$ (by Proposition \ref{r.d1}), all the terms in \eqref{eq.filter9a} can be assumed to have the same $d_1$, in which case it implies \eqref{eq.boo12}. Similarly \eqref{eq.boo5} implies~\eqref{eq.boo5a}.
\end{proof}

\subsection{The quantized algebra \LOsec of regular functions on the Borel subgroup} \label{sec.Bq}

Let $G^-= \{u_{ij}\in \Oq \mid i<j, \, i, j \in \JJ \}$ and $\cI^-\lhd \Oq$ be the 2-sided ideal generated by $G^-$. Then
\[\LO:= \Oq/\cI^-\]
is known as the quantized algebra of regular functions on the Borel subgroup of $SL_n$. Let $\bar u_{ij}\in \LO$ be the image of $u_{ij}$. Since $\bar u_{ij}=0$ if $i<j$, the $q$-quantum matrix $\bar \buu= ( \bar u_{ij})_{i,j=1}^n$ is lower triangular.

\begin{proposition}\label{r.redOq0}
The following holds in $\Oq$.
\begin{enuma}
\item For $i,j,k \in \JJ =\{1,\dots, n \}$,
\begin{align}
\bar u_{ii} \, \bar u_{jj} & = \bar u_{jj} \, \bar u_{ii} \label{eq.iijj}\\
\bar u_{ii} \, \bar u_{jk} &\eqq \bar u_{jk} \, \bar u_{ii} \label{eq.iijk}\\
\prod_{i=1}^n \bar u_{ii} &=1. \label{eq.diag0}
\end{align}
Consequently each $\bar u_{ii}$ is invertible.
\item The ideal $\cI^-$ is a Hopf-ideal of $\Oq$, i.e.
\begin{align}
\epsilon(\cI^-)&=0,\quad
\Delta(\cI^-) \subset \Oq \ot \cI^- + \cI^-\ot \Oq, \label{eq.Hopfideal1}\\
S(\cI^-)&= \cI^-. \label{eq.HopfS}
\end{align}
Consequently $\LO$ inherits a Hopf algebra structure from $\Oq$.
\item The set $G^-$ is $\Oq$-normal. In other words $G^- \Oq = \Oq G^- = \cI^-$.
\end{enuma}
\end{proposition}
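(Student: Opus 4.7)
The plan is to handle the three parts largely in parallel, with each reducing to a direct manipulation using either the $2\times 2$ quantum matrix relations \eqref{eq.Mq2}, the comultiplication/antipode formulas \eqref{eq.Deltave}--\eqref{e-Oq-ops}, or the relation \eqref{eq.M110}.

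For part (a), I would derive \eqref{eq.iijj} and \eqref{eq.iijk} by applying the $2\times 2$ relation $ad-da=(q-q^{-1})bc$ (and its siblings $bc=cb$, $ab=qba$, $ac=qca$) to suitably chosen submatrices of $\buu$, then pass to $\LO$. For \eqref{eq.iijj} with $i<j$, the relevant submatrix is on rows/columns $i,j$; the commutator equals $(q-q^{-1})u_{ij}u_{ji}$, which vanishes in $\LO$ since $\bar u_{ij}=0$. For \eqref{eq.iijk}, only the case $j\ge k$ is nontrivial (otherwise $\bar u_{jk}=0$), and I would stratify by the position of $i$ relative to $\{j,k\}$: same row $i=j$, same column $i=k$, or distinct rows and columns. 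In each stratum, the appropriate $2\times 2$ submatrix yields a $q$-commutation modulo a correction that always contains a strict upper triangular factor $u_{pq}$ with $p<q$, which dies in $\LO$. For \eqref{eq.diag0}, I would expand $\det_q \buu = 1$ via \eqref{eq.det}; in $\LO$, a permutation $\sigma$ contributes nonzero only if $\sigma(i)\le i$ for all $i$, and the only such permutation is the identity, leaving $\bar u_{11}\cdots\bar u_{nn}=1$.

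For part (b), the counit condition is immediate from $\epsilon(u_{ij})=\delta_{ij}$. The coproduct condition follows from $\Delta(u_{ij})=\sum_k u_{ik}\otimes u_{kj}$: for $i<j$, each index $k$ either satisfies $k>i$ (so $u_{ik}\in\cI^-$) or $k\le i<j$ (so $u_{kj}\in\cI^-$), placing every summand in $\Oq\otimes\cI^-+\cI^-\otimes\Oq$. The antipode condition is the main obstacle. For $i<j$, $S(u_{ij})$ is a scalar multiple of $\det_q(\buu^{ji})$, whose expansion is a sum over bijections $\phi\colon\JJ\setminus\{j\}\to\JJ\setminus\{i\}$ of products $\prod_r u_{r,\phi(r)}$. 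A pigeonhole argument shows $\phi$ must send some $r\le i$ to $\phi(r)>i\ge r$: indeed, the $i$-element set $\{1,\dots,i\}$ lies in $\JJ\setminus\{j\}$ (as $j>i$), so $\phi$ restricts to an injection into $\JJ\setminus\{i\}$; if no $r\le i$ mapped above $i$, then $\phi(\{1,\dots,i\})\subset\{1,\dots,i-1\}$, contradicting injectivity. Thus every term has a factor in $G^-$, giving $S(\cI^-)\subset\cI^-$. For the equality $S(\cI^-)=\cI^-$, I would use that $S$ is bijective on $\Oq$ and that $S^2$ acts diagonally on the generators $u_{ij}$ (hence preserves $\cI^-$ as a set), so $\cI^-=S^2(\cI^-)\subset S(\cI^-)$.

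For part (c), since $\cI^-$ is by construction the two-sided ideal generated by $G^-$, it suffices to prove $G^-\Oq=\Oq G^-$; each side is then a two-sided ideal containing $G^-$, so both equal $\cI^-$. The key computation is that for $u_{ij}\in G^-$ (so $i<j$) and any generator $u_{kl}$, one has $u_{ij}u_{kl}\in\Oq G^-$. Applying \eqref{eq.M110} in the case $(i,j)<_\lex(k,l)$ yields $u_{ij}u_{kl}=q^{C_{ij,kl}}u_{kl}u_{ij}+\delta_{i<k}\delta_{j<l}(q-q^{-1})u_{il}u_{kj}$; the first term already has $u_{ij}\in G^-$ on the right, and in the correction $u_{il}\in G^-$ (since $i<j<l$), so a second application of \eqref{eq.M110} to $u_{il}u_{kj}$—whose correction vanishes because $l>j$ kills $\delta_{l<j}$—moves $u_{il}$ to the right. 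The symmetric lex case is analogous. Extending by induction on word length in the generators yields $G^-\Oq\subset\Oq G^-$, and the reverse inclusion follows by the same argument run in mirror.

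The main technical obstacle is the antipode calculation in (b), where the pigeonhole step is the crucial combinatorial input; the case-analyses in (a) and (c) are mechanical once the right $2\times 2$ submatrix or application of \eqref{eq.M110} is identified.
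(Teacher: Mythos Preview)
Your proposal is correct and follows essentially the same approach as the paper: part (a) via the $2\times 2$ relations and the determinant expansion, part (b) via the explicit formulas for $\epsilon$, $\Delta$, $S$, and part (c) via commutation of $u_{ij}\in G^-$ past an arbitrary generator. The only cosmetic differences are that the paper phrases the antipode step as ``$\bar\buu^{ji}$ is lower triangular with a zero on the diagonal, hence $\det_q(\bar\buu^{ji})=0$'' (equivalent to your pigeonhole), and in (c) the paper works directly with the $2\times 2$ submatrix containing $x=u_{ij}$ and $y=u_{kl}$, noting that the off-diagonal corner $z$ lies in $G^-$ and $zv=vz$, rather than iterating \eqref{eq.M110}; you also supply the reverse inclusion $\cI^-\subset S(\cI^-)$ via $S^2$ acting diagonally, which the paper leaves implicit.
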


\begin{proof}
(a) Identities \eqref{eq.iijj} and \eqref{eq.iijk} follow from Relation \eqref{eq.M11}, while Identity \eqref{eq.diag0} follows from Relation \eqref{eq.M12}, taking into account $\bar u_{ij} = 0$ for $i<j$.

(b) Assume $i<j$. By definition $\ve(u_{ij}) = \delta_{ij}=0$. Hence $\epsilon(\cI^-)=0$.

For any $k$, either $i<k$ or $k<j$, hence
\[ \Delta(u_{ij}) = \sum_k u_{ik} \ot u_{kj} \in \Oq \ot \cI^- + \cI^- \ot \Oq.\]
This proves \eqref{eq.Hopfideal1}. By \eqref{e-Oq-ops},
\[ S(u_{ij})= (-q)^{i-j} {\det}_q (\buu^{ji}).\]
The fact $ i < j$ implies the submatrix $\bar \buu^{ji}$ is lower triangular and having a 0 on its diagonal. Hence ${\det}_q(\bar \buu^{ji})=0$. This shows ${\det}_q (\buu^{ji})\in \cI^-$, proving \eqref{eq.HopfS}.

(c) Let $x= u_{ij}$ with $i-j<0$. We need to show $x \Oq\subset \Oq G^-$ and $\Oq x \subset G^- \Oq$. Since $\{u_{kl}\}$ is a set of generators, it is enough to show that for arbitrary $y= u_{kl}$ we have
\begin{equation}\label{eq.35}
xy \in \Oq G^-, \quad yx \in G^- \Oq.
\end{equation}
Let $M$ be a $2\times 2$ submatrix of $\buu$ containing $x$ and $y$. If one of $x,y$ is not on the diagonal of $M$ then they $q$-commute, and \eqref{eq.35} is true. Assume $x,y$ are on the diagonal of $M$. Let $z$ be the top right corner entry and $v$ be the bottom left corner of $M$. Since $x\in G^-$ we must have $z\in G^-$. By the 5-th identity of \eqref{eq.Mq2} we have $zv= vz \in G^- \Oq \cap \Oq G^-$. By the 6-th identity of \eqref{eq.Mq2} we have
\[ xy -yx = \pm (q - q^{-1}) zv,\]
from which we have \eqref{eq.35}.
\end{proof}

\subsection{Quasi-monomial basis of \LOsec}

We show that a subset of the quasimonomial basis $B^\ord$ of $\Oq$ given by Proposition \ref{r.Oq} descends to a quasimonomial basis of $\LO$. In particular this will imply that \LOsec is a free $R$-module, and is a domain.

Recall that $B^\ord$, where $\ord$ is a linear order of $\JJ^2$, is parameterized by $\Gamma$,
\[ B^\ord = \{b(m)
\mid m \in \Gamma= \Mat_n (\BN)/(\Id)\}.\]
Let $\bar b(m)$ be the image of $b(m)$ under the projection $\Oq \to \LO$.

Consider the submonoid $\bG\subset \Gamma$ consisting of upper triangular matrices
\begin{equation}
\bG = \{m \in \Mat_n (\BN) \mid m_{ij} =0 \text{ if } i< j\}/(\Id) \subset \Gamma.
\end{equation}
We enhance $\bG$ by $d_2: \bG \to \BZ$, which is the restriction of $d_2: \Gamma \to \BZ$.

\begin{proposition}\label{r.redOq}
Let $\ord$ be a linear order of $\JJ^2$.
\begin{enuma}
\item The set $\bar B^\ord=\{\bar b(m) \mid m \in \bar \Gamma \}$ is a quasimonomial basis of $\LO$ parameterized by the enhanced monoid $(\bG, d_2)$. Consequently $\LO$ is a domain.
\item The algebra $\LO$ has uniform GK dimension $(n-1)(n+2)/2$.
\end{enuma}
\end{proposition}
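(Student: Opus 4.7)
The plan is to deduce the proposition directly from Proposition \ref{r.Oq} by applying Lemma \ref{r.domain3} to the projection $\Oq \twoheadrightarrow \LO$. The key identification is that $\cI^-$, the kernel of this projection, equals the $R$-span of those basis elements $b(m)$ with $m \in \Gamma \setminus \bG$. Once this is established, Lemma \ref{r.domain3} delivers the quasimonomial basis $\bar B^\ord$ for $\LO$ parameterized by $(\bG, d_2)$, and Proposition \ref{r.domain9} then gives the domain property.

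The inclusion $R\text{-span}\{b(m) \mid m \in \Gamma \setminus \bG\} \subset \cI^-$ is immediate, since $m \notin \bG$ forces the minimal representative $\hat m$ to satisfy $\hat m_{ij}>0$ for some $i<j$, and hence $b(m)$ contains the $G^-$-letter $u_{ij}$. For the reverse inclusion I would invoke Proposition \ref{r.redOq0}(c), which yields $\cI^- = \Oq G^-$, and conclude that $\cI^-$ is $R$-spanned by products $b(m') u_{ij}$ with $m' \in \Gamma$ and $i<j$. Writing $w$ for the defining word of $b(m') u_{ij}$ and expanding $[w]$ into the basis $B^\ord$ via Lemma \ref{r.ord001}, it suffices to show that every basis element appearing in the expansion is indexed by $\Gamma \setminus \bG$. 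This amounts to verifying, at each step of the reduction, that the property \emph{``the word contains a letter of $G^-$''} is preserved by the defining relations \eqref{eq.M11} and \eqref{eq.M12}; this is the main technical obstacle.

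For \eqref{eq.M11}, a swap of $u_{ab},u_{cd}$ with $(a,b)<_\lex(c,d)$, $a<c$, $b<d$ produces a correction $(q-q^{-1})\,u_{ad} u_{cb}$. If the $G^-$-letter of $w$ is not one of the two swapped letters, it survives in both the leading term and the correction. If it is $u_{ab}$ or $u_{cd}$, then $a<b$ or $c<d$ combined with $a<c$ and $b<d$ forces $a<d$ (either via $a<b<d$ or via $a<c<d$), so $u_{ad}\in G^-$ and the correction still contains a $G^-$-letter. For \eqref{eq.M12}, only the all-diagonal subword $u_{11}\cdots u_{nn}$ is replaced, and since none of its letters lies in $G^-$, any $G^-$-letter of $w$ sits in the untouched portion and persists in every summand on the right-hand side. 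By induction on $d_{02}(w)$, the full expansion of $[w]$ lies in $R\text{-span}\{b(m'')\mid m''\in \Gamma \setminus \bG\}$, giving the required inclusion and completing part (a).

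For part (b) I would mirror the proof of Proposition \ref{r.Oq}(a). Set $\bar G=\{\bar u_{ij}\mid i\geq j\}$, which generates $\LO$ as an $R$-algebra. Using the freeness of $\bar B^\ord$, the submodule $\Pol_k(\bar G)$ is freely spanned by those $\bar b(m)$ with $\sum_{i\geq j}\hat m_{ij}\leq k$, subject to the normalization $\min_i \hat m_{ii}=0$. Fixing one of the diagonal coordinates to zero and letting all other lower-triangular coordinates vary gives sandwich estimates
\[ f_k \;\leq\; \dim_R \Pol_k(\bar G) \;\leq\; n\, f_k, \]
where $f_k$ counts $(n(n+1)/2-1)$-tuples of non-negative integers with sum at most $k$. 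Since $f_k$ grows polynomially of degree $n(n+1)/2-1$, the logarithmic limit equals $(n-1)(n+2)/2$ as an honest limit rather than just a $\limsup$, showing that $\bar G$ is a uniform GK set of the stated dimension.
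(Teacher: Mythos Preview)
Your proposal is correct and follows essentially the same approach as the paper: identify $\cI^-$ with the $R$-span of $\{b(m)\mid m\in\Gamma\setminus\bG\}$ by tracking that the property ``contains a $G^-$-letter'' is preserved under the reduction moves \eqref{eq.M11} and \eqref{eq.M12}, then apply Lemma~\ref{r.domain3}; part~(b) is the same sandwich count. One minor simplification: you do not need Proposition~\ref{r.redOq0}(c) to start the reverse inclusion, since $\cI^-$ is by definition the two-sided ideal generated by $G^-$ and is therefore already spanned by words containing a $G^-$-letter.
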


\begin{proof}
(a) We will show that $B^- := \{b(m) \mid m \in \Gamma \setminus \bG\}$ spans $\cI^-$ over $R$. Then Lemma \ref{r.domain3} proves part (a).

Since $m\in \Gamma \setminus \bG$ if and only if $m_{ij} = 0$ for some $i<j$, we have $ B^- \subset \cI^-$.

We need to revisit the proof of Proposition \ref{r.Oq}(c), and use the notations therein. Let $W^-$ be the set of all words $w$ in letters $u_{ij}$ containing at least one letter in $G^-$, i.e. $\tilde m(w)_{ij} >0$ for some pair $(i,j)$ with $ i<j$. By definition $\cI^-$ is spanned by $\{[w] \mid w\in W^-\}$. Let us look at the process of bringing $w$ to $b(m(w))$ using Relation \eqref{eq.M11} and \eqref{eq.M12}. Each monomial of the right-hand side of \eqref{eq.M12} is in $W^-$, while if a monomial in the left-hand side of \eqref{eq.M11} is in $W^-$ then so is the monomial in the right-hand side. Hence the proof of Identity \eqref{eq.36} shows that if $w\in W^-$, then
\[ [w] \eqq b(m(w)) + \boo_{02}(w),\]
where $\boo_{02}(w)$ is an $R$-linear combination of $w'\in W^-$ with $d_{02}(w') <_\parti d_{02}(w)$. Clearly for $w\in W^-$ we have $b(m(w)) \in B^-$. Hence by induction on $d_{02}(w)$ we can express $w$ as an $R$-linear combination of elements of $B^-$. Thus $\cI^-$ is spanned by $B^-$.

(b) The proof is identical to that of Proposition \ref{r.Oq}(b), except for the number of variables: Let $\bar G=\{\bar u_{ij} \mid 1 \le j \le i \le n\}$. As $\bar B^\ord$ is a free $R$-basis of $\LO$, the set $\Pol_k(\bar G )$ is the $R$-module freely spanned by monomials $\prod \bar u_{ij}^{m_{ij}}$ of total degree $\le k$, with one of $m_{ii}$ equal to 0, and $m_{ij} =0$ for $i<j$. By considering cases $m_{ii}=0$, we get
\[ f _k \le \dim_R \Pol_k(\bar G ) \le n f_k,\]
where $f_k$ is the dimension of space of polynomials in $(n-1)(n+2)/2$ variables of totals degrees $\le k$. Hence
\[ \lim_{k\to \infty} \frac{\log \dim_R (\Pol_k(\bar G))}{k}= (n-1)(n+2)/2.\]
This show $\bar G$ is a uniform GK set for $\LO$, and that $\GKdim(\Oq))= (n-1)(n+2)/2$.
\end{proof}

\subsection{Quantum torus frame for \LOsec}

\begin{theorem}\label{thm.qtorus1}
For $j \le i \in \JJ= \{1, \dots, n\}$ let
\begin{equation}\label{eq.bvij}
\bar v_{ij} = M ^{[i; n]}_{[j; j+\bar i -1]}(\bbuu),
\end{equation}
where $[k;l]= \{m \in \JJ \mid k \le m \le l \}$, and $M^I_J(\bbuu)$ is the quantum determinant of the $I \times J$ submatrix of $\bbuu$. Then $\TT=\{\bar v_{ij} \mid 1 \le j \le i \le n, i\ne1 \}$ is a quantum torus frame for $\LO$.
\end{theorem}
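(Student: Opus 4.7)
The plan is to verify the three defining conditions of a quantum torus frame (Definition~\ref{def.qtf}) for $\TT$, in the equivalent form provided by Proposition~\ref{r.qtframe}(c) in which linear independence is replaced by the GK-dimension equality $|\TT|=\GKdim(\LO)$. The count $|\TT|=\sum_{i=2}^{n} i = (n-1)(n+2)/2$ matches $\GKdim(\LO)$ by Proposition~\ref{r.redOq}(b), so this condition is immediate. The remaining work consists of (i) nonvanishing, (ii) $q$-commutation, and (iii) weak generation.

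For (i), I would identify the leading term of each $\bar v_{ij}$ under the enhanced-monoid filtration $(\bG,d_2)$ of Proposition~\ref{r.redOq}. The expansion of the quantum determinant $\bar v_{ij}$ is a signed sum of products $\bar u_{i,c_1}\bar u_{i+1,c_2}\cdots \bar u_{n,c_{n-i+1}}$, where $(c_1,\dots,c_{n-i+1})$ runs over permutations of $(j,j+1,\dots,j+n-i)$. In $\LO$ the term survives only if $c_r\le i+r-1$ for all $r$, and among all such permutations the rearrangement inequality shows that the identity permutation $c_r=j+r-1$ maximizes $d_2$. The corresponding diagonal product $\bar u_{i,j}\bar u_{i+1,j+1}\cdots \bar u_{n,j+n-i}$ is a nonzero element of the quasi-monomial basis $\bar B^\ord$, so $\bar v_{ij}\neq 0$.

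For (ii), I would establish an explicit identity of the form $\bar v_{ij}\,\bar v_{i'j'} = \hq^{2k(i,j;i',j')}\,\bar v_{i'j'}\,\bar v_{ij}$. This is a standard commutation relation for quantum minors: all row sets $[i;n]$ are bottom-aligned suffixes and all column sets $[j;j+n-i]$ are contiguous intervals, so the two index configurations always fall into the classical ``weakly separated'' case in which quantum minors $q$-commute. The exponent $k(i,j;i',j')$ can be extracted by running the RTT relations \eqref{e.Mq} inductively on the sizes of the two minors, or equivalently by invoking the well-known Faddeev--Reshetikhin--Takhtadzhyan formula for products of row-suffix quantum minors.

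For (iii), I run a backward induction on $a\in\{n,n-1,\dots,2\}$ and, for each $a$, a backward induction on $b\in\{a,a-1,\dots,1\}$, showing $\bar u_{ab}\in\LPol(\TT)$; the case $\bar u_{11}$ then follows from \eqref{eq.diag0}. The base $a=n$ is trivial since $\bar u_{nb}=\bar v_{nb}$. For $a<n$ and $b=a$, the lower-right block of $\bbuu$ is lower triangular in $\LO$, so its quantum determinant collapses to its diagonal product, yielding $\bar v_{aa}=\bar u_{aa}\,\bar v_{a+1,a+1}$ and hence $\bar u_{aa}\in\LPol(\TT)$ by Lemma~\ref{r.qframe3}(a). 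For $b<a$, expand $\bar v_{ab}$ by quantum cofactor expansion along its first row:
\[
\bar v_{ab} = \sum_{k=0}^{a-b}(-q)^{k}\,\bar u_{a,b+k}\,N_k,
\]
where each $N_k$ is a quantum minor of the submatrix in rows $[a+1;n]$, so lies in $\LPol(\TT)$ by the outer induction hypothesis and Lemma~\ref{r.qframe3}(b); the terms with $k\ge 1$ also involve $\bar u_{a,b+k}$ (with $b+k>b$) which lie in $\LPol(\TT)$ by the inner induction. Isolating the $k=0$ term then gives $\bar u_{a,b}\,\bar v_{a+1,b+1}\in\LPol(\TT)$, and a final application of Lemma~\ref{r.qframe3}(a) yields $\bar u_{a,b}\in\LPol(\TT)$.

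The main obstacle I anticipate is step (ii): carrying out the $q$-commutation computation with correct exponents requires either a careful induction using the RTT relations or invocation of an explicit quantum-minor commutation theorem. Step (iii), although a double induction, is conceptually straightforward once Laplace expansion is in place, and step (i) is essentially bookkeeping against the filtration already constructed in Proposition~\ref{r.redOq}.
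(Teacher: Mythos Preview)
Your proposal is correct and follows essentially the same route as the paper: nonvanishing via the diagonal term in the basis $\bar B^\ord$, weak generation via Laplace expansion and backward induction on the row index using Lemma~\ref{r.qframe3}, and the GK-dimension count via Proposition~\ref{r.qtframe}(c). The one place the paper is more efficient is your step~(ii), which you correctly flag as the main obstacle: rather than running an RTT induction, the paper simply invokes \cite[Identity~3.13]{Goodearl}, which directly gives the $q$-commutation of two quantum minors with nested row sets $[i;n]\subset[i';n]$ and overlapping contiguous column intervals, with an explicit exponent.
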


\begin{proof}
First we prove that $\TT$ is a $q$-commuting set. This follows immediately from a known criterion for the $q$-commutation of two quantum minors. More precisely, from
\cite[Identity 3.13]{Goodearl}, we have: If $i \le i'$,
\begin{equation}\label{eq-v-comm}
\bar v_{ij} \bar v_{i',j'} = q^{\sign(j-j')\, \abs{J'\setminus J}} \bar v_{i',j'} \bar v_{ij}.
\end{equation}
where $\sign(x)=1,0,$ or $-1$ according as $x >0, x=0$, or $x <0$ respectively, and $J= [j; j+\bar i -1], J'= [j'; j'+\bar {i'} -1]$.

Let us prove $\LPol(\TT) = \LO$. By Lemma \ref{r.qframe3} it is enough to show that each generator $\bar u_{ij}$, with $i\ge j$, is in $\LPol(\TT)$. We use induction on the lexicographic order of $(i,j)$, beginning with $(i,j)= (n,n)$ and going down. Since $\bar u_{nj} =\bar v_{nj} \in \LPol(\TT)$ we will assume $i<n$. Let $\LO_{> ij}$ be the subalgebra generated by $\bar u_{i'j'}$ with $(i,j)\llex(i',j')$. By Laplace's expansion along the first row, see \eqref{eq.adjugate}, of the quantum determinant in \eqref{eq.bvij}, we have
\[ \bar v_{ij} = \bar u_{ij} \bar v_{i+1, j+1} \mod \LO_{> ij}.\]
By induction hypothesis we have $\LO_{> ij} \subset \LPol(\TT)$. Hence $ \bar u_{ij} \bar v_{i+1, j+1} \in \LPol(\TT)$. Then by Lemma~\ref{r.qframe3}, we have $ \bar u _{ij} \in \LPol(\TT)$. By induction all $\bar u_{ij}$ with $1 \le j \le i \le n$ and $(i,j) \neq 1$ is in $\LPol(\TT)$. Note that $\bar v_{2,2}= \bar u_{22} \dots \bar u_{nn}$. Since $\bar u_{11} \bar v_{2,2}=1\in \LPol(\TT)$, we also have $\bar u_{11} \in \LPol(\TT)$. This completes the proof that $\LPol(\TT) = \LO$.

Let us now prove each $\bar v_{ij}\in \TT$ is not $0$. In the determinant formula \eqref{eq.det} for $\bar v_{ij}$, each monomial in the right-hand side is either 0 or an element of the basis $\bar B^\ord$, where $\ord$ is the lexicographic order on $\BN^2$. Moreover, all the non-zero monomials are distinct elements of $\bar B^\ord$. One of them is non-zero, for example the diagonal monomial. Hence $\bar v_{ij}\neq 0$.

The set $\bar G$ is $q$-commuting, consisting of non-zero elements, and weakly generating $\LO$. Besides $|\bar G|= (n-1)(n+2)/2 =\GKdim (\LO)$. By Proposition \ref{r.qtframe}, the set $\bar G$ is a quantum torus frame for $\LO$.
\end{proof}

\section{Stated $SL_n$-skein algebra} \label{sec.skein}

In this section we recall the definition of the stated $SL_n$ skein algebra \cite{LS} and survey its main properties. To each boundary edge of the surface we introduce a $\BZ^{n-1}$-grading which will play an important role later. We also establish several $q$-commutation results which are consequences of the upper triangular nature of the braiding (or $R$-matrix).

Recall that the ground ring $R$ is a commutative domain with a distinguished invertible element $\hq$.

\subsection{Punctured bordered surface and $n$-web}

\begin{definition}
A \term{punctured border (pb) surface $\fS$} is a surface of the form $\fS =\bfS\setminus \cV$, where $\bfS$ is a compact oriented 2-dimensional manifold with (possibly empty) boundary $\partial \bfS$, and $\marked\subset \bfS $ is a finite set such that every component of $\partial \bfS $ intersects $\cV$. Each connected component of $\partial \fS =\partial \bfS \setminus \marked$ is diffeomorphic to the open interval $(0,1)$ and is called a \term{boundary edge}. A point $x\in \cV$ is called an \term{ideal point}, or a \term{puncture}, of $\fS$. A puncture on $\partial \bfS$ is called a \term{vertex}.

A pb surface $\fS$ is \term{essentially bordered} if every connected component of it has non-empty boundary.
\end{definition}

An \term{ideal arc} in $\fS$ is an embedding $c:(0,1)\embed\fS$ which can be extended to an immersion $\bar c : [0,1] \to \bfS$ such that $\bar c(0), \bar c(1) \in \marked$. An ideal arc $c$ is \term{trivial} if the extended map $\bar c$ can be homotoped relative its boundary to a point.

A closed interval properly embedded in $\fS$ is called a \term{$\pfS$-arc}. A $\pfS$-arc is \term{trivial} if it is homotopic relative its boundary points to a subinterval of $\pfS$.

The thickening of $\fS$ is the oriented 3-manifold $\tfS := \fS \times (-1,1)$. We often identify $\fS$ as the subset $\fS \times \{0\}$ of $\tfS $. For a point $(x,t)\in \tfS = \fS \times (-1,1)$, its height is $t$. A vector at $(x,t)$ is \term{upward vertical} if it is along the positive direction of the component $(-1,1)$. We denote by $\pr: \tfS \to \fS$ the projection onto the first component. If $b$ is a boundary edge of $\fS$ then $\tilde b:= b \times (-1,1)$ is called a \term{boundary wall} of $\tfS $. The boundary $\partial \tfS$ of $\tfS$ is the union of all the boundary walls.

\begin{definition}\label{d.n-web}
An \term{$n$-web} over $\fS$ is a set $\al\subset \tfS= \fS \times (-1,1)$ each connected component of which is either a properly embedded oriented circles and or a finite directed graph satisfying
\begin{enumerate}
\item Every vertex is either $1$-valent or $n$-valent. Each $n$-valent vertex is a sink or a source. We denote set of 1-valent vertices, called \term{endpoints} of $\al$, by $\pal$.
\item Each edge of the graph is a smooth embedding of the closed interval $[0,1]$ into $\tfS$.
\item $\alpha$ is equipped with a \term{framing}, which is a continuous non-vanishing vector field transversal to $\alpha$. In particular, the framing at a vertex is transversal to all incident edges.
\item The set of half-edges at every $n$-valent vertex is cyclically ordered.
\item $\al\cap \partial \tfS= \pal$, the framing at an endpoint is upward vertical, and on each boundary wall the endpoints of $\al$ have distinct heights.
\end{enumerate}
\end{definition}

The points of $\pal$ over a boundary edge $b$ is ordered by their heights. Together they give a partial order on $\pal$, where two points are comparable if and only if they are in the same boundary wall.

We consider $n$-webs up to \term{isotopy} which are continuous deformations of $n$-webs in their class. By convention, the empty set is considered as an $n$-web which is isotopic only to itself. Any isotopy preserves the height order.

Every $n$-web can be isotoped to a \term{vertical position}, where
\begin{itemize}
\item the framing is upward vertical everywhere,
\item $\al$ is in general position with respect to the projection $\pr: \tfS \to \fS$, and
\item at every $n$-valent vertex, the cyclic order of half edges, after projected onto $\fS$, is the positive orientation of $\fS$ (counterclockwise if drawn on the pages of the paper).
\end{itemize}

\begin{definition}
Suppose $\al$ is an $n$-web in vertical position.
The projection $D=\pr(\al)$, together with the usual over/underpassing at each double point, and the partial order on $\partial D= \pr(\partial \al)$ induced from the height order, is called the \term{diagram} of $\al$.

An $n$-web diagram is the diagram of an $n$-web.
\end{definition}

The orientation of a boundary edge $e$ of $\fS$ is \term{positive} if it is induced from the orientation of $\fS$. In picture the convention is that the positive orientation of a boundary edge is the counterclockwise one. If the height order of an $n$-web diagram $\al$ is given by the positive orientation, i.e. the height order increases when following the positive direction on each boundary edge, then we say $\al$ has \term{positive order}. One define \term{negative order} similarly, using the \term{negative orientation}, which is the opposite of the positive orientation.

\subsection{Defining Relations}

Let $\fS$ be a pb surface. Recall that for $i\in \JJ=\{1,2, \dots,n\}$ its conjugate $\bi$ is $n+1-i$. Also $\Sym_n$ is the symmetric group of $\JJ$.

A \term{state} of an $n$-web $\al$ is a map $s: \partial \al \to \JJ$. Let $\SS$ be the $R$-module freely spanned by stated $n$-webs over $\fS$ modulo the following defining relations using the constants $\ttt, \aaa, \ccc_i$ defined in Subsection \ref{ss.ground}.
\begin{gather}
q^{\frac 1n} \cross{}{}{p}{>}{>} - q^{-\frac 1n}\cross{}{}{n}{>}{>}
= (q-q^{-1})\walltwowall{}{}{>}{>}, \label{e.pm} \\
\kink = \ttt \horizontaledge{>}, \label{e.twist}\\
\circlediag{<} = (-1)^{n-1} [n] \TanglePic{0.9}{0.9}{}{}{}, \label{e.unknot}\\
\sinksourcethree{>}=(-q)^{\binom{n}{2}}\cdot \sum_{\sigma\in S_n}
(-q^{(1-n)/n})^{\ell(\sigma)} \coupon{$\sigma_+$}{>}. \label{e.sinksource}
\end{gather}
where the ellipse enclosing $\sigma_+$ is the minimum crossing positive braid representing a permutation $\sigma\in S_n$ and $\ell(\sigma)$ is the length of $\sigma\in \Sym_n$.
\begin{align}
\vertexnearwall{white} & = \aaa \sum_{\sigma \in S_n} (-q)^{\ell(\sigma)} \nedgewall{<-}{white}{$\sigma(n)$}{$\sigma(2)$}{$\sigma(1)$}
\label{e.vertexnearwall}\\
\capwall{<-}{right}{white}{$i$}{$j$} & = \delta_{\bar j,i} \ccc_i,\label{e.capwall}\\
\capnearwall{white} &= \sum_{i=1}^n (\ccc_{\bar i})^{-1} \twowall{<-}{white}{black}{$i$}{$\bar{i}$}
\label{e.capnearwall}\\
\crosswall{<-}{p}{white}{white}{$i$}{$j$}
&=q^{-\frac{1}{n}}\left(\delta_{{j<i}}(q-q^{-1})\twowall{<-}{white}{white}{$i$}{$j$}+q^{\delta_{i,j}}\twowall{<-}{white}{white}{$j$}{$i$}\right),
\label{e.crossp-wall}
\end{align}
where small white circles represent an arbitrary orientation (left-to-right or right-to-left) of the edges, consistent for the entire equation. The black circle represents the opposite orientation. When a boundary edge of a shaded area is directed, the direction indicates the height order of the endpoints of the diagrams on that directed line, where going along the direction increases the height, and the involved endpoints are consecutive in the height order. The height order outside the drawn part can be arbitrary.

For two $n$-webs $\al, \beta$ its product $\al \beta\in \SS$ is the result of stacking $\al$ above $\beta$. This means, we first isotope so that $\al \subset \fS \times (0,1)$ and $\beta \subset \fS\times(-1,0)$, then $\al \beta = \al \cup \beta$.

\subsection{Edge grading by weight lattice}\label{sec-grading}

Recall that the weight lattice $\LL$ of the Lie algebra $\mathfrak{sl}_n(\BC)$ is the abelian group generated by $\ww_1, \ww_2, \dots, \ww_n$, modulo the relation
\begin{equation}
\ww_1 + \ww_2 + \dots +\ww_n=0.
\end{equation}
Then $\LL \cong \BZ^{n-1}$. Let $\ror: \LL \to \LL$ be the involution given by $\ww_i \mapsto \wwr_i:= - \ww_{\bi}$. There is a standard symmetric bilinear form on $L$ with values in $\frac{1}{n}\ints$, where
\begin{equation}\label{eq-boundary-pairing}
\langle \ww_i ,\ww_j\rangle=\delta_{ij}-1/n.
\end{equation}

In the standard setting, the fundamental weights $\varpi_i$ are
\begin{equation}\label{eq.rw}
\varpi_i = \ww_1 + \dots + \ww_i, \quad i =1, \dots, n-1.
\end{equation}
Then one check easily that $\ror(\varpi_i)= \varpi_{n-i}$, and
\begin{align}
\la \varpi_i, \varpi_{i'} \ra &= \, \min \{i,i'\} - ii'/n, \label{eq.weights} \\
\la \ror(u) , \varpi_{i} \ra &= \la u , \varpi_{n-i} \ra. \label{eq.rorbra}
\end{align}
For convenience, define $\varpi_0=\varpi_n=0$ so that the equalities above hold.

As described in \cite[Section 8.4.1]{KS}, the simple $\mathfrak{sl}_n(\BC)$-module $V=\BC^n$ with highest weight $\varpi_1$ has basis $\{ v_1, \dots, v_n \}$, where $v_n$ is the highest weight vector. The dual space $V^\ast$, with basis $\{ v^1, \dots, v^n\}$ dual to $\{ v_1, \dots, v_n \}$, is the simple $\mathfrak{sl}_n$-module of highest weight $\varpi_{n-1}$. Then $v_i$ has weight $\ww_{\bi }$ and $v^i$ has weight $-\ww_i$.

Fix a boundary edge $e$ of a pb surface $\fS$. For a stated web diagram $\alpha$ over $\fS$ define
\begin{equation}
\dd_e(\al) = \sum _{x\in \partial \al \cap e} \ww^\ast_{\overline{s(x)}}\in \LL,
\end{equation}
where $s(x)$ is the state of $x$ and $\ww^\ast=\ww$ or $\wwr$ according as $\al$ points out of the surface at $x$ or $\al$ points into the surface at $x$. Let $\Gr^e_{\bk}(\SS)\subset \SS$ be the $R$-span of all elements represented by stated web diagrams $\al$ with $\dd_e(\al) = \bk$.

\begin{proposition} \label{r.grade1}
Suppose $e$ is a boundary edge of a pb surface $\fS$. We have
\begin{equation}\label{eq.decomp00}
\SS = \bigoplus_{\ww \in \LL} \Gr^e_{\bk}(\SS),
\end{equation}
which gives an $\LL$-grading of the algebra $\SS$. This means
\begin{equation}\label{eq.decomp01}
\Gr^e_{\bk}(\SS) \Gr^e_{\bk'}(\SS) \subset \Gr^e_{\bk+\bk'}(\SS).
\end{equation}
\end{proposition}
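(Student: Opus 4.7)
The proof has two independent parts: the multiplicativity \eqref{eq.decomp01} and the existence of the direct sum decomposition \eqref{eq.decomp00}.

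The multiplicativity is essentially immediate from the definition of the product by stacking. When one forms $\al\beta$ by placing $\al$ above $\beta$, the set of endpoints on $e$ is the disjoint union of those of $\al$ and those of $\beta$, each retaining its state and its orientation relative to $\fS$. Since $\dd_e$ is defined as a sum over endpoints, one has $\dd_e(\al\beta)=\dd_e(\al)+\dd_e(\beta)$, which descends to \eqref{eq.decomp01}.

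For the decomposition, I would let $M$ be the free $R$-module on stated $n$-web diagrams and $N\subset M$ be the submodule generated by the defining relations. The function $\dd_e$ makes $M$ into an $\LL$-graded module in the obvious way. It suffices to verify that every defining relation in \eqref{e.pm}--\eqref{e.crossp-wall} is $\dd_e$-homogeneous; then $N$ is a graded submodule, and $\SS=M/N$ inherits the grading $\bigoplus_\bk \Gr^e_\bk(\SS)$. The interior relations \eqref{e.pm}, \eqref{e.twist}, \eqref{e.unknot}, and \eqref{e.sinksource} do not touch $\partial\fS$, so they trivially preserve $\dd_e$. For the remaining four wall relations one needs only to treat the case where the wall depicted equals $e$; otherwise they again reduce to the interior case. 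The verifications rest on the two identities $\sum_{k=1}^n \ww_k=0$ in $\LL$ and $\wwr_i=-\ww_{\bar i}$.

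Concretely: in \eqref{e.vertexnearwall} the LHS contributes $0$ to $\dd_e$, while each summand of the RHS carries $n$ endpoints on $e$ whose states form a permutation $\sigma(1),\dots,\sigma(n)$ of $1,\dots,n$ with a single orientation, and the weight sum is $\sum_i \ww^\ast_{\overline{\sigma(i)}}=\sum_k \ww^\ast_k=0$ for $\ww^\ast\in\{\ww,\wwr\}$. In \eqref{e.capwall}, when $\bar j = i$ the two endpoints have conjugate states and opposite orientations, so their weights cancel to give $\ww_{\bar i}+\wwr_i=0$, matching the degree of the empty diagram on the RHS; when $\bar j\neq i$ the LHS is homogeneous of some nonzero degree and the RHS is $0$, which is permissible. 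In \eqref{e.capnearwall} the LHS has no endpoint on $e$ and each summand on the RHS has two endpoints of conjugate states and opposite orientations, so the weights again cancel. In \eqref{e.crossp-wall} both sides carry the same multiset of (state, orientation) data $\{(i,\ww^\ast),(j,\ww^\ast)\}$, hence the same degree $\ww^\ast_{\bar i}+\ww^\ast_{\bar j}$.

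The one genuine obstacle is keeping straight the orientation conventions at the boundary; everything depends on the choice that an outgoing endpoint with state $i$ corresponds to the weight vector $v_i\in V$ of weight $\ww_{\bar i}$, while an incoming endpoint with state $i$ corresponds to $v^i\in V^\ast$ of weight $-\ww_i=\wwr_{\bar i}$. This is precisely the convention built into the definition of $\dd_e$, and is the one dictated by the Reshetikhin--Turaev construction, under which every defining relation is a morphism of $sl_n$-representations and therefore weight-preserving. With that convention fixed the case-by-case verification above goes through and the proposition follows.
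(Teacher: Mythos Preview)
Your proof is correct and follows the same approach as the paper: verify that each defining relation is $\dd_e$-homogeneous (so the grading descends to the quotient) and observe that stacking is additive on $\dd_e$. The paper's own argument is the terse version of yours---it simply asserts that $\dd_e$ is preserved by all the defining relations and that $\dd_e(\al\al')=\dd_e(\al)+\dd_e(\al')$, while you carry out the case-by-case boundary checks explicitly and supply the same Reshetikhin--Turaev weight-preservation rationale that the paper records in the paragraph following its proof.
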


\begin{proof}
It is easy to check that $\dd_e(\al)$ is preserved by all the defining relations and hence we have \eqref{eq.decomp00}. From the definition we also have $\dd_e(\al \al') = \dd_e(\al) + \dd_e(\al')$, proving \eqref{eq.decomp01}.
\end{proof}

The degree $\dd_e(\al)$ can be understood as the total weight of $\al$ on edge $e$ as follows. Each stated endpoint $x$ of $\al$ stands for a vector in $V$ or $V^\ast$: if $x$ is outgoing endpoint the vector is $v_{s(x)}$, and if $x$ is an incoming endpoint, the vector is $v^{\overline{s(x)}} $. Then $\deg_e(\al)$ is the total weight of all endpoints in $\al \cap e$. Proposition~\ref{r.grade1} holds true because all the defining relations, being relations of the Reshetikhin-Turaev operator invariants, preserves the total weight.

\subsection{Edge weight isomorphisms}\label{ss.mark}

Let $R^\times$ be the multiplicative group of invertible elements in $R$. Recall that we define diagonal automorphism in Subsection \ref{ss.normal}.

\begin{proposition}\label{r.edgeweight}
Assume $\eta: \JJ \to R^\times$ is a map such that $\prod_{i=1}^n \eta(i)=1$ and $e$ is a boundary edge of a pb surface $\fS$.
\begin{enuma}
\item There exists a unique $R$-algebra isomorphism $\phi_{e,\eta}:\cS(\fS) \to \cS(\fS)$ such that if $D$ is a stated $n$-web diagram on $\fS$ then
\begin{equation} \phi_{e,\eta} (D) = \eta^\ast(\dd_e(D)) D, \label{eq.edgeiso}
\end{equation}
where $\eta^\ast: \LL \to R^\times$ is the group homomorphism defined by $\eta^\ast(\ww_i) = \eta(i)$.
\item The algebra automorphism $\phi_{e,\eta} $ is diagonal.
\item Any two such automorphisms $\phi_{e,\eta_1}$ and $\phi_{e,\eta_2}$ commute.
\end{enuma}
\end{proposition}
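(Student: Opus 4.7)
The strategy is to leverage the $\LL$-grading $\cS(\fS) = \bigoplus_{\bk \in \LL} \Gr^e_\bk(\cS(\fS))$ given by Proposition~\ref{r.grade1} and define $\phi_{e,\eta}$ as a ``scalar action'' on each graded component. Essentially all three parts of the proposition are then immediate consequences of the compatibility between this grading and multiplication.

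First I would check that $\eta^\ast$ makes sense as a group homomorphism $\LL \to \Rx$. The hypothesis $\prod_{i=1}^n \eta(i) = 1$ implies that each $\eta(i)$ is invertible in the commutative ring $R$ (since $\eta(i) \cdot \prod_{j\neq i}\eta(j) = 1$), so the rule $\ww_i \mapsto \eta(i)$ extends to a homomorphism from the free abelian group on $\ww_1,\dots,\ww_n$ to $\Rx$, and the defining relation $\ww_1 + \cdots + \ww_n = 0$ of $\LL$ is sent to $\prod_i \eta(i) = 1$; hence $\eta^\ast: \LL \to \Rx$ is a well-defined group homomorphism.

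Next I would define $\phi_{e,\eta}: \cS(\fS) \to \cS(\fS)$ to be the unique $R$-linear endomorphism acting by the scalar $\eta^\ast(\bk)$ on each homogeneous piece $\Gr^e_\bk(\cS(\fS))$. Since any stated $n$-web diagram $D$ lies in $\Gr^e_{\dd_e(D)}(\cS(\fS))$, formula \eqref{eq.edgeiso} holds. Uniqueness of $\phi_{e,\eta}$ is automatic since stated $n$-web diagrams span $\cS(\fS)$ over $R$. For the algebra homomorphism property, pick $x \in \Gr^e_\bk(\cS(\fS))$ and $y \in \Gr^e_{\bk'}(\cS(\fS))$; by \eqref{eq.decomp01} the product $xy$ lies in $\Gr^e_{\bk+\bk'}(\cS(\fS))$, so
\[\phi_{e,\eta}(xy) = \eta^\ast(\bk+\bk')\, xy = \eta^\ast(\bk)\eta^\ast(\bk')\, xy = \phi_{e,\eta}(x)\,\phi_{e,\eta}(y),\]
using that $\eta^\ast$ is a group homomorphism. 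Bijectivity of $\phi_{e,\eta}$ follows by observing that the map $\eta^{-1}: \JJ \to R$ with $\eta^{-1}(i) = \eta(i)^{-1}$ also satisfies $\prod_i \eta^{-1}(i)=1$, and the resulting $\phi_{e,\eta^{-1}}$ is a two-sided inverse. This proves part (a).

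Part (b) is then immediate: the set of all (isotopy classes of) stated $n$-web diagrams is an $R$-spanning family, hence in particular an algebra generating set, and by construction each such diagram is an eigenvector of $\phi_{e,\eta}$ with eigenvalue $\eta^\ast(\dd_e(D)) \in \Rx$; this is exactly the definition of a diagonal automorphism given in Subsection~\ref{ss.normal}. For part (c), both $\phi_{e,\eta_1}$ and $\phi_{e,\eta_2}$ act as scalar multiplication on each $\Gr^e_\bk(\cS(\fS))$ (by $\eta_1^\ast(\bk)$ and $\eta_2^\ast(\bk)$ respectively), and scalars in the commutative ring $R$ commute; thus $\phi_{e,\eta_1}\circ \phi_{e,\eta_2}$ and $\phi_{e,\eta_2}\circ \phi_{e,\eta_1}$ agree on every graded piece, hence on all of $\cS(\fS)$. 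There is no serious obstacle here — all three assertions reduce to Proposition~\ref{r.grade1} plus the observation that $\eta^\ast$ is a well-defined character of $\LL$.
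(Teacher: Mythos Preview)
Your proposal is correct and follows essentially the same approach as the paper: both rely on the $\LL$-grading from Proposition~\ref{r.grade1} and the general fact that a character of the grading group yields an algebra automorphism by scalar action on graded pieces. Your write-up simply spells out in more detail what the paper summarizes in a sentence, including the verification that $\eta^\ast$ is well-defined on $\LL$ and the explicit inverse $\phi_{e,\eta^{-1}}$.
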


\begin{proof}
(a) follows from a general and easy fact of graded algebras: For any group homomorphism $\eta^\ast$ from the grading group $\LL$ to $R^\times$ the map $\phi_{e,\eta}$ of \eqref{eq.edgeiso} is an algebra homomorphism.
Its inverse is $\phi_{e, \mu}$, where $\mu(i) = \eta(i)^{-1}$.

(b) Since $\SS$ is spanned by stated $n$-webs, which are eigenvectors of $\phi_{e,\eta}$, the latter is diagonal.

(c) is obvious from the definition.
\end{proof}

\begin{remark}
If $\eta(i) \eta(\bi) =1$ then $\phi_{e,\eta}$ is the marking automorphism of \cite[Section 4.10]{LS}.
\end{remark}

\subsection{Reversing orientation} \label{sec.reverse}

\begin{proposition}[Corollary 4.8 of \cite{LS}]\label{r.reverse}
Assume $\fS$ is a pb surface. There is a unique $R$-algebra automorphism $\rOr: \SS \to \SS$ such that if $\al$ is a stated $\pfS$-web diagram then $\rOr(\al)$ is the result of reversing the orientation of $\al$.
\end{proposition}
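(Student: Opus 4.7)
The plan is to construct $\rOr$ first on the free $R$-module spanned by stated $n$-web diagrams, then check it descends to $\SS$, and finally verify algebra properties. Concretely, for any stated $n$-web diagram $\al$, define $\rOr(\al)$ to be the same underlying diagram with every edge orientation reversed, leaving the framing, states $s:\partial\al\to\JJ$, and height order unchanged (note that reversing orientation interchanges sources and sinks). Extend $R$-linearly. Since the reversed diagram is again a stated $n$-web diagram, and since stacking commutes with the reversal operation (reversing orientations of $\al$ stacked above $\beta$ coincides with stacking $\rOr(\al)$ above $\rOr(\beta)$), the map is already multiplicative and involutive at the level of the free module.

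The content of the proof is the descent to $\SS$, which requires checking that each of the defining relations \eqref{e.pm}--\eqref{e.crossp-wall} is preserved under $\rOr$. For the ``bulk'' relations I would handle each in turn. Relation \eqref{e.pm} involves two strands carrying the same orientation, so reversing both arrows produces an instance of the same skein relation read upside down; a $180^\circ$ rotation argument shows this identity is already a defining relation. Relation \eqref{e.twist} is preserved because the framing vector and the writhe of the kink are both orientation independent, so the coefficient $\ttt$ is unchanged. Relation \eqref{e.unknot} is symmetric in the orientation of the circle since both orientations have the same value $(-1)^{n-1}[n]$ under the Reshetikhin--Turaev formalism. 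Relation \eqref{e.sinksource} converts a sink to a source (and the positive braid $\sigma_+$ is unchanged by flipping all strand orientations), and the sink/source versions of the relation are both part of the definition.

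The more delicate relations are the boundary relations \eqref{e.vertexnearwall}--\eqref{e.crossp-wall}, which are displayed with the convention ``small white circles represent an arbitrary orientation consistent for the entire equation''. My plan is to exploit exactly this: the convention means each of these relations is simultaneously a family of identities indexed by a choice of ``outward'' or ``inward'' orientation; $\rOr$ just sends the white-circle version to the black-circle version and conversely, producing another instance of the same defining relation with identical coefficients $\aaa$, $\ccc_i$. The only place this requires genuine checking is that the state relabelings in \eqref{e.capwall} and \eqref{e.capnearwall} (governed by the involution $i\mapsto\bar i$) are themselves symmetric under orientation reversal, which is immediate since conjugation is an involution and $\ccc_{\bar i}\ccc_i$ appears symmetrically in the cap/cup pair.

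Once every defining relation is checked, $\rOr$ descends to a well-defined $R$-linear endomorphism of $\SS$. Multiplicativity on $\SS$ follows from multiplicativity on diagrams, and $\rOr^2=\id$ on diagrams descends to $\rOr^2=\id$ on $\SS$, giving the automorphism property. Uniqueness holds because stated $n$-web diagrams span $\SS$, so any $R$-algebra map is determined by its values on them. The main obstacle is the bookkeeping in the boundary relations, where one must be sure the ``consistent orientation'' convention truly covers all cases; if a desired orientation is not literally among the defining relations, one can derive it by pre- and post-composing with \eqref{e.capwall} and \eqref{e.capnearwall} to bend strands and convert outgoing endpoints to incoming ones at the cost of a factor $\ccc_i$, which is absorbed precisely because $\dd_e$ tracks the weight on each boundary edge.
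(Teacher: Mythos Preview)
The paper does not supply its own proof of this proposition: the bracketed ``[Corollary 4.8]'' in the heading is a citation to \cite{LS}, and the result is simply quoted. So there is no argument in the paper to compare against.

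Your outline is the natural approach and matches what one would do in \cite{LS}: define $\rOr$ on diagrams and check descent relation by relation. The treatment of \eqref{e.pm}--\eqref{e.unknot} and of the boundary relations \eqref{e.vertexnearwall}--\eqref{e.crossp-wall} is fine, since the white-circle convention explicitly makes those identities orientation-symmetric by fiat. The one place your sketch is too quick is \eqref{e.sinksource}: that relation is stated with a fixed orientation (the diagrams carry ``$>$'' arrows, not white circles), so the claim that ``the sink/source versions of the relation are both part of the definition'' is not literally true. Reversing orientation swaps the sink and the source and reverses the strands through the braid $\sigma_+$; one must actually derive the reversed identity from the stated one, e.g.\ by a $180^\circ$ rotation of the local picture together with an analysis of how $\sigma_+$ transforms, or by capping off with \eqref{e.capwall}/\eqref{e.capnearwall} to convert the sink into a source and invoking the original relation. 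This is doable but is real work, not a tautology.
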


\subsection{Cutting homomorphism}

We now present a main feature of the stated skein algebra: the cutting homomorphism.

Let $c$ be an ideal arc in the interior of a pb surface $\fS $. The cutting $\Cut_c(\fS)$ is a pb surface having two boundary edges $c_1, c_2$ such that $\fS= \Cut_c(\fS)/(c_1=c_2)$, with $c=c_1=c_2$.

An $n$-web diagram $D$ is \term{$c$-transverse} if the $n$-valent vertices of $D$ are not in $c$ and $D$ is transverse to $c$. Assume $D$ is a stated $c$-transverse $n$-web diagram. Let $h$ be a linear order on the set $D \cap c$. Let $p: \Cut_c(\fS) \to \fS$ be the natural projection map.
For a map $s: D \cap c \to \JJ$, let $(D,h,s)$ be the stated $n$-web diagram over $\Cut_c(\fS)$ which is $p^{-1}(D)$ where the height order on $c_1 \cup c_2$ is induced (via $p$) from $h$, and the states on $c_1 \cup c_2$ are induced (via $p$) from $s$.

\begin{theorem}[Theorem 5.2 and Proposition 7.11 of \cite{LS}]\label{t.splitting2}
Suppose $c$ is an interior ideal arc of a punctured bordered surface $\fS$. There is a unique $R$-algebra homomorphism $\Theta_c: \skein(\fS) \to \skein(\Cut_c(\fS))$ such that if $D$ is a stated $c$-transverse diagram of a stated $n$-web $\al$ over $\fS$ and $h$ is any linear order on $D \cap c$, then
\begin{equation}\label{eq.cut00}
\Theta_c(\al) =\sum_{s: D \cap c \to \JJ} (D, h, s).
\end{equation}
If in addition $\fS$ is essentially bordered, then $\Theta_c$ is injective.
\end{theorem}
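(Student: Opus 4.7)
The plan is to first define $\Theta_c$ on stated $c$-transverse $n$-web diagrams by the prescribed formula $(D,\sigma) \mapsto \sum_{s:D\cap c \to \JJ} (D,h,s)$, and then verify, in order, (i) independence of the auxiliary height order $h$ on $D\cap c$, (ii) invariance under isotopy moves that change $D \cap c$, (iii) compatibility with the defining relations \eqref{e.pm}--\eqref{e.crossp-wall}, and (iv) multiplicativity under stacking. Once (i)--(iv) are established, the map descends to a well-defined $R$-algebra homomorphism $\skein(\fS) \to \skein(\Cut_c(\fS))$, and uniqueness is automatic since stated $n$-webs generate $\skein(\fS)$ as an $R$-module.

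For step (i), swapping two consecutive heights on $c$ introduces, after cutting, a crossing between two strands meeting $c_1 = c_2$. The difference between the two height orders is controlled by relation \eqref{e.pm}: the $\qq^{\pm 1/n}$-scaled positive/negative crossings differ by the smoothing term $(q-q^{-1})$ times an uncrossed diagram. The crucial computation is that, after summing over all pairs of states $(s(p),s(p')) \in \JJ^2$ on the two swapped points, the contributions of the smoothing term cancel against the $q$-powers created by reordering basis vectors, so the total sum is unchanged. This is the diagrammatic shadow of the fact that the copairing $\sum_i v_i \otimes v^i \in V \otimes V^\ast$ is braiding-invariant. For step (ii), the move that changes $|D \cap c|$ by $\pm 2$ (pushing a trivial arc across $c$) is handled by the boundary cap relations \eqref{e.capwall}--\eqref{e.capnearwall}: the sum $\sum_i (\ccc_{\bar i})^{-1} [\text{pair of strands with states } (i,\bar i)]$ on $c_1 \cup c_2$ equals the ``identity'' cap, which is precisely the RT-evaluation of the identity endomorphism of $V$. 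Step (iii) now reduces easily: the defining relations are local and can be isotoped away from $c$ using (ii), after which the formula trivially preserves them. Step (iv) is automatic because a $c$-transverse diagram of $\alpha\beta$ is obtained by stacking $c$-transverse diagrams of $\alpha$ and $\beta$; with a height order compatible with the stacking, the sum over states on $(D_\alpha\cup D_\beta) \cap c$ factors as a product.

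For injectivity when $\fS$ is essentially bordered, I would use the (known) existence of a free $R$-basis of $\skein(\fS)$ indexed by suitable isotopy classes of stated ``reduced'' or ``normal'' $n$-web diagrams transverse to $c$, together with a filtration on $\skein(\Cut_c(\fS))$ refining the $\LL$-grading $\dd_{c_1}$ of Proposition \ref{r.grade1}. The leading term (with respect to, say, the lexicographic order on $(|D\cap c|, \dd_{c_1})$) of $\Theta_c(D,\sigma)$ singles out a distinguished summand $(D,h,s^\ast)$ with $s^\ast$ maximal, and distinct basis elements produce distinct leading summands, giving linear independence of the image.

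The main obstacle is step (i): carefully tracking the $\qq^{\pm 1/n}$ normalizations and the cyclic indices under the $R$-matrix and verifying that the sum-over-states cancellation really does reduce the two height orders to the same element of $\skein(\Cut_c(\fS))$. A secondary difficulty is the sink/source relation \eqref{e.sinksource} when a sink or source sits close to $c$: one must use an isotopy plus (i) to commute the $n$-valent vertex past $c$ without changing the sum, which is essentially where the Weyl antisymmetrizer $\sum_\sigma (-q^{(1-n)/n})^{\ell(\sigma)} \sigma_+$ interacts with the summation over boundary states on $c$.
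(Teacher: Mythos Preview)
The paper does not prove this theorem at all: it is stated as a citation of Theorem~5.2 and Proposition~7.11 of \cite{LS} and no proof is given. So there is nothing in the paper to compare your proposal against.

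That said, your outline is essentially the standard route and is broadly sound. One correction worth flagging: in step~(i), the relation governing a height swap on the newly created boundary edges $c_1,c_2$ is the boundary height-exchange relation \eqref{e.crossp-wall}, not the interior skein relation \eqref{e.pm}. After cutting, a swap of two consecutive heights on $c$ becomes a simultaneous swap on $c_1$ and on $c_2$; since $c_1$ and $c_2$ inherit opposite orientations, applying \eqref{e.crossp-wall} on both sides and summing over the pair of states produces exactly the cancellation you describe (this is the statement that the canonical copairing $\sum_i v_i\otimes v^i$ is fixed by the braiding). Your remaining steps (ii)--(iv) are the right moves. For injectivity, the argument in \cite{LS} (Proposition~7.11) does not use a leading-term computation as you propose; it goes through the identification of $\Theta_c$ with a comodule map for the $\Oq$-coaction and uses Hopf-algebraic properties, but your filtration idea is a reasonable alternative provided one has an explicit basis of $\skein(\Cut_c\fS)$ in hand.
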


If $\Cut_c\surface=\surface_1\sqcup\surface_2$ is disconnected, then there is a natural isomorphism
\begin{equation}
\skein(\Cut_c\surface)\cong\skein(\surface_1)\otimes\skein(\surface_2).
\end{equation}
In this case, $\Theta_c$ has an alternative form $\skein(\surface)\to\skein(\surface_1)\otimes\skein(\surface_2)$.

\subsection{Polygons}

We will define polygons and explain the relation between the bigon and $\Oq$, the quantized algebra of functions on $SL_n$ defined in Section \ref{sec.Oqsln}.

An \term{ideal $k$-gon}, or simply a \term{$k$-gon}, is the result of removing $k$ points on the boundary of the standard closed disk.
A \term{based $k$-gon} is a $k$-gon with one distinguished vertex, called the \term{based vertex}. Given two based $k$-gons there is a unique, up to isotopies, orientation preserving diffeomorphism between them, preserving the base. In this sense the based $k$-gon is unique, and we denote it by $\PP_k$.

Thus $\PP_1$ is the monogon. By \cite[Theorem 6.1]{LS}, we have an isomorphism $R\cong \skein(\PP_1)$, given by $x \to x\cdot \emptyset$. We will often identify $\skein(\poly_1)\equiv R$.

The bigon will play an important role. In picture the based bigon $\PP_2$ is depicted with the based vertex at the top, and we can define the \term{left edge $e_l$} the \term{right edge $e_r$}, as in Figure \ref{fig-bigon}(a). We often depict $\PP_2 $ as the square $[-1,1] \times (-1,1)$, as in Figure \ref{fig-bigon}(b).

By \cite[Theorem 6.3]{LS}, we have an isomorphism of $R$-algebra $ \Oq \cong \cS(\PP_2)$, which maps the generator $u_{ij}$ to the stated $\partial \PP_2$-arc described in Figure \ref{fig-bigon}(c). We will identify $\Oq \equiv \cS(\PP_2)$, and abusing notations, also use $u_{ij}$ to denote the stated $\partial\PP_2$-arc in Figure \ref{fig-bigon}(c). Let $\cev{u}_{ij}$ be the same arc $u_{ij}$ with reverse orientation.

\begin{figure}
\centering
\input{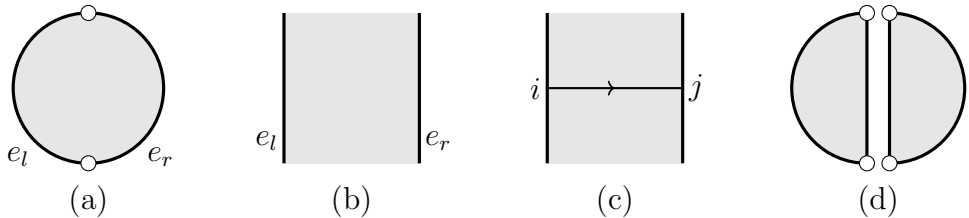}
\caption{(a) \& (b) Bigon $\PP_2$. (c) Stated arc $u_{ij}$, (d) splitting of $\PP_2$}\label{fig-bigon}
\end{figure}

In \cite{LS} it is shown that the counit, comultiplication, and antipode all have simple geometric description. In particular, by cutting $\PP_2 $ along the an interior ideal arc connecting the two vertices, we get two copies of $\PP_2$. See Figure \ref{fig-bigon}(d). The cutting homomorphism
\[ \skein(\PP_2) \to \skein(\PP_2 ) \ot \skein(\PP_2 )\]
is the coproduct under the identification $\skein(\PP_2) \equiv \Oq$.

Geometrically the antipode is given by
\begin{equation}\label{eq.Saij}
S(u_{ij}) = (-q)^{i-j}\cev{u}_{\bj \bi}.
\end{equation}

Let us discuss the counit $\epsilon$. Recall that in any coalgebra
\[ \epsilon(x) = \sum \epsilon(x_1) \epsilon(x_2), \quad \text{where } \Delta(x) = \sum x_1 \ot x_2.\]
Hence the calculation of $\epsilon(\al)$, where $\al$ is a stated $n$-web diagram over $\PP_2$, is reduced to the cases when $\al$ is one of the stated $n$-webs given in the following \cite[Section 6]{LS}:
\begin{align}
\epsilon( u_{ij}) &= \epsilon ( \cev u_{ij}) = \delta_{i,j} \label{eq.epsuij}\\
\epsilon\left(\crossSt{<-}{<-}{p}{white}{white}{$i'$}{$j'$}{$j$}{$i$}\right)
& = q^{-\frac{1}{n}} \left( q^{\delta_{i,j}} \delta_{i,i'}\delta_{j,j'}
+ (q-q^{-1}) \delta_{i<j}\delta_{i,j'}\delta_{j,i'}\right). \label{eq.epsR}\\
\epsilon\left(\crossSt{<-}{<-}{n}{white}{white}{$i'$}{$j'$}{$j$}{$i$}\right)
& = q^{\frac{1}{n}} \left( q^{-\delta_{i,j}} \delta_{i,i'}\delta_{j,j'}
- (q-q^{-1}) \delta_{j<i}\delta_{i,j'}\delta_{j,i'}\right). \label{eq.epsRn}\\
\epsilon\left(\crossSt{<-}{<-}{n}{white}{black}{$i'$}{$j'$}{$j$}{$i$}\right)
& = q^{\frac{1}{n}} \left( q^{-\delta_{i,\bar j}} \delta_{i,i'}\delta_{j,j'}
- (-q)^{j'-j}(q-q^{-1}) \delta_{i<i'}\delta_{i,\bar j}\delta_{i', \bar{j'}}\right). \label{eq.epsRr}\\
\epsilon\left(\crossSt{<-}{<-}{p}{white}{black}{$i'$}{$j'$}{$j$}{$i$}\right)
& = q^{-\frac{1}{n}} \left( q^{\delta_{i,\bar j}} \delta_{i,i'}\delta_{j,j'}
+ (-q)^{j'-j}(q-q^{-1}) \delta_{j<j'}\delta_{i,\bar j}\delta_{i', \bar{j'}}\right). \label{eq.epsRrp}
\end{align}
The right-hand side of \eqref{eq.epsR} is the $R$-matrix $\cR _{ij}^{i'j'}$ defined in \eqref{e.R}.
The last three identities \eqref{eq.epsRn}--\eqref{eq.epsRrp} follow from the first two \eqref{eq.epsuij}--\eqref{eq.epsR} and the isotopy invariance of $n$-webs in $\cS(\PP_2)$. Moreover, for a stated $n$-web $\al$ over $\PP_2$, the value $\epsilon(\al)$ is equal to a specific matrix element of the Reshetikhin-Turaev operator of a tangle associated to $\al$, see \cite[Proposition 6.6]{LS} for details.

\subsection{Coaction of $\Oq$ on $\skein(\surface)$}

Suppose $\fS$ is a punctured bordered surface and $b$ is a boundary edge. Let $c$ be an interior ideal arc isotopic to $b$. Then $b$ and $c$ cobound a bigon. By cutting $\fS$ along $c$ we get a surface $\fS'$ and a based bigon with $b$ considered its right edge. As $\fS'$ is diffeomorphic to $\fS$ via a unique up to isotopy diffeomorphism, we identify $\cS(\fS') = \cS(\fS)$. The cutting homomorphism gives an algebra homomorphism
\begin{equation}\label{e.coact}
\Delta_b: \cS(\fS) \to \cS(\fS) \ot \Oq,
\end{equation}
which gives a right coaction of the Hopf algebra $\Oq$ on $\cS(\fS)$, see \cite[Section 7]{LS}. The right coactions at different boundary edges commute. Since $\Delta_b$ is an algebra homomorphism, $\cS(\fS)$ is a \term{right comodule-algebra} over $\Oq$, as defined in \cite[Section III.7]{Kass}.

One frequently used basic property of a coaction is the following. For $x\in \cS(\fS)$, we have
\begin{equation}\label{eq.coact}
x = \sum x_1 \epsilon(x_2),\quad
\text{where } \Delta(x) = \sum x_1 \ot x_2.
\end{equation}

By making a different identification, we also obtain a left $\Oq$-comodule structure.

As an application, we derive the following generalization of \eqref{e.vertexnearwall}.

\begin{lemma}\label{lemma-vertex-rev}
Let $J=\{j_1,\dots,j_k\}\subset\JJ$. Define $\vec{j}:[1;k]\to\JJ$ by $\vec{j}(i)=j_i$. Then
\begin{gather}
\input{vertex-rev}\label{eq-vertex-rev}\\
\input{vertex-rev-left}\label{eq-vertex-rev-left}
\end{gather}
Here the sums are over bijections $\sigma_2:[1;n-k]\to\bar{J}^c$.

The equalities also hold when the states have repetition, where the sum is empty and interpreted as zero.
\end{lemma}

\begin{proof}
We start with \eqref{eq-vertex-rev}. Move the vertex upward and then toward the right edge. Then using the defining relations, we have
\[\input{vertex-rev-calc}\]
To make all returning arcs nonzero, we must have
\[\sigma(t)=\bar{j}_{k+1-t},\quad t=1,\dots,k.\]
This is only possible if the states are distinct. Thus if the states repeat, all terms are zero. When the states are distinct, define the restrictions
\[\sigma_1=\sigma|_{[1,k]}:[1,k]\to\bar{J},\qquad
\sigma_2:[1;n-k]\to\bar{J}^c\quad \sigma_2(i)=\sigma(k+i).\]
Then the diagram evaluates to
\begin{equation}\label{eq-vrev-calc}
\input{vertex-rev-calc2}.
\end{equation}
We can decompose the length $\ell(\sigma)$ as
\begin{align*}
\ell(\sigma)&=\ell(\sigma_1)+\ell(\sigma_2)+\abs{\{(a,b)\in\bar{J}\times \bar{J}^c\mid a>b\}}\\
&=\ell(\vec{j})+\ell(\sigma_2)+\left(\sum_{s\in\bar{J}}s\right)-\frac{k(k+1)}{2}.
\end{align*}
Thus the coefficient in \eqref{eq-vrev-calc} is
\begin{align*}
\vertexa(-q)^{\ell(\sigma)}\prod_{t=1}^k\returnc_{\sigma(t)}
&=q^{\frac{(1-n)(2n+1)}{4}}(-q)^{\ell(\vec{j})+\ell(\sigma_2)+\left(\sum_{s\in\bar{J}}s\right)-\frac{k(k+1)}{2}}\prod_{s\in\bar{J}}\left(q^{\frac{n-1}{2n}}(-q)^{n-s}\right)\\
&=(-1)^{\binom{n}{2}}q^{\frac{1}{2n}\left(\binom{k}{2}-\binom{n-k}{2}\right)}(-q)^{\ell(\vec{j})-\binom{n-k}{2}}(-q)^{\ell(\sigma_2)}.
\end{align*}

This proves \eqref{eq-vertex-rev}. As a corollary,
\begin{equation}
\epsilon\left(\input{minor-vertex}\right)=\begin{cases}
(-1)^{\binom{n}{2}}q^{\frac{1}{2n}\left(\binom{k}{2}-\binom{n-k}{2}\right)}(-q)^{\ell(\vec{j})-\ell(\vec{s})},&J\cup S=\JJ,\\
0,&\text{otherwise},
\end{cases}
\end{equation}
where $S=\{s_1,\dots,s_{n-k}\}\subset\JJ$, and $\vec{s}:[1;n-k]\to\JJ$ is given by $\vec{s}(i)=s_i$.

To prove \eqref{eq-vertex-rev-left}, split off the vertex and use the coaction on the left edge.
\end{proof}

\subsection{Upper triangular nature of the $R$-matrix}

The upper triangular nature of the $R$-matrix allows us to write down the top degree part of certain products in $\SS$.

For two sequences $\vec{i}=(i_1,\dots,i_k)$, $\vec{i}'=(i'_1,\dots,i'_k)$, we write $\vec{i}'\gg \vec{i}$ if $\vec{i}'\ne \vec{i}$ and $i'_s\ge i_s$ for all $s=1,\dots,k$.

\begin{lemma}\label{r.upper}
In the following diagrams, the orientations of the strands are arbitrary.
\begin{enuma}
\item The counit $\epsilon$ satisfy
\begin{align}
\epsilon\left(\crossSt{<-}{<-}{p}{}{}{$i'$}{$j'$}{$j$}{$i$}\right)
=\epsilon\left(\crossSt{<-}{<-}{n}{}{}{$j'$}{$i'$}{$i$}{$j$}\right)
&=0&&\text{if }i'<i\text{ or }j'>j.\\
\epsilon\left(\crossSt{<-}{<-}{p}{}{}{$i'$}{$j'$}{$j$}{$i$}\right)
\eqq\epsilon\left(\crossSt{<-}{<-}{n}{}{}{$j'$}{$i'$}{$i$}{$j$}\right)
&\eqq1&&\text{if }i'=i\text{ and }j'=j.
\end{align}
\item For any orientation of the strands in the following tangles, we have
\begin{gather}
\input{height-ex-mult}\label{eq-height-group}\\
\input{height-ex-mult2}
\end{gather}
for some scalars $c_{\vec{i}'\vec{j}'},\bar{c}_{\vec{i}'\vec{j}'}\in R$.
\end{enuma}
\end{lemma}

\begin{proof}
(a) The statements follow from Identities \eqref{eq.epsR}--\eqref{eq.epsRrp}.

(b) Using \eqref{eq.coact}, we cut $\fS$ along ideal arcs parallel to the drawn boundary edge to split off one crossing at a time and apply the counit to the bigons. Using part (a), all terms with a decreased new state on an overpasses or an increased state on an underpass are zero. In addition, if the new states match the old ones, the coefficient is $\eqq1$. Thus we obtain the identities in (b).
\end{proof}

\subsection{Height exchange}\label{ss.height}

For a non-stated $n$-web diagram $\al$ over $\fS$ let $M(\al)\subset \SS$ be the $R$-span of $\al$ with arbitrary states.

\begin{lemma} \label{r.height}
Suppose $\al$ and $\al'$ are $n$-web diagrams over $\fS$ which differ only in the height order. Then $M(\al) = M(\al')$.
\end{lemma}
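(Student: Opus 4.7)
The plan is to reduce to the case of an elementary transposition of heights of two consecutive endpoints on a single boundary edge, and then use Relation \eqref{e.crossp-wall} (or equivalently Lemma \ref{r.upper}(b)) to show that swapping their order expresses each stated version of $\al$ as an $R$-linear combination of stated versions of $\al'$.

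First I would observe that any two height orders compatible with a fixed non-stated diagram $\al$ differ by a finite sequence of elementary swaps, each swap interchanging two endpoints $x,y\in\pal$ that are consecutive in the height order on some boundary edge $e$. Since $M(\al) = M(\al')$ is symmetric in $\al,\al'$, it suffices to prove one inclusion $M(\al) \subset M(\al')$ after a single such elementary swap.

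For the elementary swap, isotope $\al$ in a small neighborhood of $e$ so that the strands at $x$ and $y$ run nearly horizontally into the wall, close enough that the height exchange takes place entirely inside a small rectangle abutting $e$, with $\al$ leaving this rectangle in horizontal position. Then for every choice of states $i$ on $x$ and $j$ on $y$, the local picture near the wall is exactly the left-hand side of \eqref{e.crossp-wall} (with a possibly trivial crossing, and with orientations matching whether each strand is outgoing or ingoing). Applying \eqref{e.crossp-wall} rewrites this local tangle, after absorbing the $q^{\pm 1/n}$ crossing, as an $R$-linear combination of two boundary-parallel stated tangles in which the heights of $x$ and $y$ have been interchanged; the remaining states on other boundary endpoints, as well as the rest of the diagram, are unchanged. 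Thus each generator of $M(\al)$ lies in $M(\al')$, proving $M(\al) \subset M(\al')$.

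The only subtle point is orientation: the relation \eqref{e.crossp-wall} is stated for a particular orientation convention, but reversing orientation on either strand produces an analogous height-exchange relation (via Proposition \ref{r.reverse} and Relations \eqref{e.capwall}--\eqref{e.capnearwall}, which exchange states and orientations while keeping the upper-triangular structure intact). Equivalently, Lemma \ref{r.upper}(b) already packages exactly the statement we need for all orientations: swapping two consecutive heights at a wall expresses the result in the basis of unswapped diagrams with states modified in an upper-triangular (hence invertible) fashion. Either way, the conclusion $M(\al) = M(\al')$ follows. I do not foresee a main obstacle; the lemma is essentially a repackaging of the $R$-matrix exchange relations already built into the defining relations of $\SS$.
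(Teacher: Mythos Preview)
Your approach is correct but takes a different route from the paper. The paper's proof is a one-line application of the coaction identity \eqref{eq.coact}: since the cutting homomorphism $\Delta_b$ along an ideal arc parallel to $b$ allows an \emph{arbitrary} choice of height order on the cut (Theorem~\ref{t.splitting2}), choosing that order to agree with $\al'$ and applying the counit immediately writes each stated $\al$ as an $R$-linear combination of stated $\al'$. No reduction to elementary swaps, no crossing bookkeeping, and all orientations are handled at once.

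Your argument via \eqref{e.crossp-wall} works, but one step is compressed to the point of being misleading. The diagram $\al$ has \emph{no} crossing near the wall, so its local picture is not literally the left-hand side of \eqref{e.crossp-wall}. What you need to say is: slide the higher of the two consecutive endpoints along the boundary edge past the other one. This is a valid isotopy of $n$-webs (the heights stay distinct throughout), and it introduces a single crossing near the wall whose sign is forced by which strand is higher. The resulting diagram now matches the left-hand side of \eqref{e.crossp-wall} (or its negative-crossing or opposite-orientation analog, as you note). Resolving the crossing gives non-crossed diagrams with the swapped connectivity; sliding the endpoints back along the edge (again a valid isotopy) turns each of these into a stated $\al'$. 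Your phrase ``with a possibly trivial crossing'' obscures this; the crossing is genuine and essential, not trivial.

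In short: your route is more explicit and stays close to the defining relations, at the cost of casework on crossing signs and orientations and two hidden isotopy steps. The paper's coaction argument packages all of this into a single structural identity.
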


\begin{proof}
The diagrams of $\al$ and $\al'$ are identical everywhere except near the boundary. Hence the coaction identity \eqref{eq.coact} shows that each stated $\al$ is an $R$-linear combination of stated $\al'$. This shows $M(\al) \subset M(\al')$. This converse inclusion is proved similarly. Thus $M(\al) = M(\al')$.
\end{proof}

\begin{lemma}\label{r.height2}
Assume $1\le i <j \le n$. We have the following
\begin{alignat}{2}
\vertexfour{white}{$i$}{$i$}&=0.\\
\vertexfour{white}{$i$}{$j$}&=(-q)\vertexfour{white}{$j$}{$i$}.&\qquad
\vertexfourleft{white}{$i$}{$j$}&=(-q)\vertexfourleft{white}{$j$}{$i$}.\\
\vertexfour[->]{white}{$j$}{$i$}&=q^{-\frac{1}{n}}\vertexfour{white}{$j$}{$i$}.&
\vertexfourleft[->]{white}{$i$}{$j$}&=q^{\frac{1}{n}}\vertexfourleft{white}{$i$}{$j$}.
\end{alignat}
\end{lemma}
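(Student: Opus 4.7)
Identities (1)--(3) follow directly from Lemma~\ref{lemma-vertex-rev}. For (1), with wall-state sequence $\vec{j}=(i,i)$, the repetition makes the sum in \eqref{eq-vertex-rev} empty by the last sentence of that lemma, so $\vertexfour{white}{$i$}{$i$}=0$. For (2), both $\vertexfour{white}{$i$}{$j$}$ and $\vertexfour{white}{$j$}{$i$}$ are instances of \eqref{eq-vertex-rev} with $k=2$; the only piece of the coefficient that differs is the factor $(-q)^{\ell(\vec{j})}$. Since $i<j$, the sequences $\vec{j}=(j,i)$ and $\vec{j}=(i,j)$ have lengths $1$ and $0$ respectively (in the convention of the lemma, $j_1$ corresponds to the highest height = bottom endpoint under $<-$), producing the ratio $-q$. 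Identity (3) is the same argument with the left-wall version \eqref{eq-vertex-rev-left}.

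For (4) and (5) the strategy is to reinterpret a wall-arrow reversal as a positive near-wall crossing, then resolve that crossing via \eqref{e.crossp-wall}. Concretely, for (4) I will exhibit a 3D isotopy in $\tfS$ that takes $\vertexfour[->]{white}{$j$}{$i$}$ to the diagram consisting of a vertex whose two wall-bound strands meet the right wall via a positive crossing, with the wall drawn in the $<-$ convention and carrying wall states $i$ at the top and $j$ at the bottom. The isotopy slides the two wall endpoints along the right wall at their \emph{fixed} 3D heights $h_j>h_i$, so both the height order on the wall and the cyclic order of half-edges at the $n$-valent vertex are preserved; the motion forces the two strands to weave past each other in the 2D projection, and because the strand with state $j$ (the higher one in 3D) ends up going from its vertex side to the \emph{lower} wall position, the resulting 2D crossing is positive in the paper's convention (bottom-left-to-top-right goes over). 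Applying \eqref{e.crossp-wall} with $i<j$ (so $\delta_{j<i}=\delta_{i,j}=0$) gives $\crosswall{<-}{p}{white}{white}{$i$}{$j$}=q^{-1/n}\twowall{<-}{white}{white}{$j$}{$i$}$, and after re-inserting the vertex the right-hand side is exactly $q^{-1/n}\vertexfour{white}{$j$}{$i$}$.

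Identity (5) is the mirror argument: reflecting the picture horizontally takes $\vertexfour$ to $\vertexfourleft$ and sends the right wall to the left wall, while also reversing what counts as a positive crossing. The left-wall analogue of \eqref{e.crossp-wall} (obtained by reflecting both sides of that relation, which swaps the role of $\crosswall{<-}{p}{}{}{}{}$ and $\crosswall{<-}{n}{}{}{}{}$ and hence negates the exponent of $q^{1/n}$) then yields the factor $q^{+1/n}$. The roles of $i$ and $j$ are swapped compared with (4) precisely because reflecting the picture exchanges the top and bottom labels.

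The main obstacle will be the precise verification of the isotopy in (4): one must check that the wall-endpoint slide, together with the corresponding deformation of the strand interiors and of the other $n-2$ half-edges at the vertex, constitutes a genuine isotopy of $n$-webs in $\tfS$ (respecting the distinct-heights condition and the cyclic order at the vertex) and that it produces exactly a positive (not negative) near-wall crossing. Once this geometric fact is established, the algebraic identity is an immediate application of \eqref{e.crossp-wall}.
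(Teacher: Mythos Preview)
Your argument for (1)--(4) is correct and matches the paper's proof exactly: identities (1)--(3) via Lemma~\ref{lemma-vertex-rev}, and (4) by isotoping the two wall-strands past each other into a positive near-wall crossing and then applying \eqref{e.crossp-wall}.

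For (5), however, your ``mirror'' argument has a gap. A horizontal reflection is orientation-reversing, so it is not an isotopy of $n$-webs and does not directly transport \eqref{e.crossp-wall} to a left-wall relation; the claim that ``reflecting both sides \dots\ swaps positive and negative crossings and hence negates the exponent of $q^{1/n}$'' is a heuristic, not a proof. The paper instead obtains (5) from (4) by a $180^\circ$ rotation of the local picture, which \emph{is} orientation-preserving: rotating both sides of (4) carries $\vertexfour[->]{white}{$j$}{$i$}$ to $\vertexfourleft{white}{$i$}{$j$}$ and $\vertexfour{white}{$j$}{$i$}$ to $\vertexfourleft[->]{white}{$i$}{$j$}$, so (4) reads $\vertexfourleft{white}{$i$}{$j$}=q^{-1/n}\vertexfourleft[->]{white}{$i$}{$j$}$, which is exactly (5). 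This avoids having to formulate and justify a separate left-wall crossing relation.
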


\begin{proof}
The first two lines follow from Lemma~\ref{lemma-vertex-rev}.

For the third line, using \eqref{e.crossp-wall},
\begin{equation}
\vertexfour[->]{white}{$j$}{$i$}
=\vertexfourcross{white}{$i$}{$j$}
=q^{-\frac{1}{n}}\vertexfour{white}{$j$}{$i$}.
\end{equation}
The second equality is obtained by a $180^\circ$ rotation.
\end{proof}

\begin{lemma}\label{r.height5}
Suppose $\vec{i}, \vec{j} \subset \JJ$ are sequences of consecutive numbers, with either $\max \vec{i} \ge \max \vec{j}$ or $\min \vec{i} \ge \min \vec{j}$, then
\begin{equation}\label{eq-comm-consec}
\input{comm-consec-v}
\end{equation}
where the bracket is defined in \eqref{eq-boundary-pairing}, and by abuse of notations, $\vec{i}$ and $\vec{j}$ also denote the corresponding $\dd$-grading.
\end{lemma}

\begin{proof}
This follows from a more detailed calculation of the coefficients in \eqref{eq-height-group}. Since the orientations are consistent near the boundary, \eqref{eq.epsR} restricts the sum further to $\vec{i}\sqcup\vec{j}=\vec{i}'\sqcup\vec{j}'$. In addition, if the strands connected to the same vertex have repeated states, then the diagram is zero. Combined with the condition on the states $\vec{i},\vec{j}$ and the original restrictions in \eqref{eq-height-group}, the sum is always zero, and only the first term remains. To find the exact coefficient of this term, again use \eqref{eq.epsR}. States cannot exchange between $\vec{i}$ and $\vec{j}$, so only the first term in \eqref{eq.epsR} counts. Each pair of states contributes $q^{-1/n}$, and each overlap $\vec{i}\cap\vec{j}$ has an additional factor of $q$. This agrees with the definition of $\langle\vec{i},\vec{j}\rangle$ in \eqref{eq-boundary-pairing}.
\end{proof}

\subsection{Reflection} \label{sec.reflection}

We introduced algebras with reflection in Subsection~\ref{ss.reflection}.

\begin{proposition}[Theorem 4.9 of \cite{LS}]\label{r.reflection}
Assume $\fS$ is a pb surface and $R= \Zq$. There is a unique reflection $\omega: \SS \to \SS$ such that if $\al$ is a stated $n$-web diagram then $\omega(\al)$ is obtained from $\al$ by switching all the crossings and reversing the height order on each boundary edge.
\end{proposition}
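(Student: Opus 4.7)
My plan is to construct $\omega$ at the level of the free $R$-module spanned by stated $n$-web diagrams and then show it descends to $\SS$. Both operations---switching every crossing and reversing the height order on each boundary edge---are simultaneously realized by the vertical involution $\rho:\tfS\to\tfS$, $(x,t)\mapsto(x,-t)$, applied to the underlying web with the framing reset to upward-vertical at each endpoint. Indeed, $\rho$ turns over-strands into under-strands and reverses the $t$-coordinates of boundary points, so it simultaneously reverses the height order along every boundary wall. I would declare $\omega(\al)=\rho_*(\al)$ on stated $n$-web diagrams (with states preserved) and extend $\BZ$-linearly by $\omega(\hq)=\hq^{-1}$.

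Uniqueness is immediate since stated $n$-web diagrams $R$-span $\SS$. The anti-multiplicative property is geometric: the product $\al\beta$ is defined by stacking $\al$ above $\beta$ in $\fS\times(-1,1)$, and $\rho$ interchanges the two vertical halves, so $\omega(\al\beta)=\rho_*(\beta)\rho_*(\al)=\omega(\beta)\omega(\al)$ after resetting the stacking intervals. The rule $\omega(\hq x)=\hq^{-1}\omega(x)$ is part of the definition, and $\omega^2=\id$ follows from $\rho^2=\id$ together with $(\hq^{-1})^{-1}=\hq$.

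The substance of the proof is \emph{well-definedness}: each of the local defining relations \eqref{e.pm}--\eqref{e.crossp-wall} must, after the substitution $\hq\mapsto\hq^{-1}$, be sent by $\omega$ to a relation already holding in $\SS$. I would check these one at a time. The crossing relation \eqref{e.pm} is self-dual, since $\rho$ exchanges positive and negative crossings while the $\hq^{\pm1/n}$ prefactors also swap. The kink relation \eqref{e.twist} maps to the identity ``negative kink equals $\ttt^{-1}$ times the trivial arc,'' which follows from \eqref{e.twist} combined with \eqref{e.pm}, using $\ttt\mapsto\ttt^{-1}$. The unknot evaluation \eqref{e.unknot} is symmetric because $[n]$ is invariant under $q\leftrightarrow q^{-1}$. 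The four wall relations \eqref{e.vertexnearwall}--\eqref{e.crossp-wall} fall out of Lemma~\ref{lemma-vertex-rev} and the counit formulas \eqref{eq.epsuij}--\eqref{eq.epsRrp}, together with the transformations $\aaa\mapsto\aaa^{-1}$ and $\ccc_i\mapsto\ccc_i^{-1}$ under $q\mapsto q^{-1}$.

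The main obstacle is the sink--source relation \eqref{e.sinksource}. After applying $\omega$, every positive braid coupon $\sigma_+$ becomes a negative braid coupon $\sigma_-$, while the coefficients are replaced by $\hq\mapsto\hq^{-1}$. To match this with the original family of relations, I would use \eqref{e.pm} to re-express each negative crossing as a positive crossing plus a smoothing, and re-index the resulting sum via the involution $\sigma\mapsto w_0\sigma w_0$ on $\Sym_n$ (where $w_0$ is the longest element, satisfying $\ell(w_0\sigma w_0)=\ell(\sigma)$ and $w_0^2=e$, while $\sigma\mapsto w_0\sigma$ and $\sigma\mapsto\sigma^{-1}$ both give length-complement involutions). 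The delicate step is tracking the overall power of $\hq$ coming from the prefactor $(-q)^{\binom{n}{2}}$, the length weights $(-q^{(1-n)/n})^{\ell(\sigma)}$, and the collective smoothing corrections; everything else reduces to short local computations with the counit.
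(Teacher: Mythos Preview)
The paper does not prove this proposition; it is quoted verbatim from \cite{LS} as Theorem~4.9 there, so there is no argument here to compare against. Your outline---define $\omega$ on diagrams via the vertical flip $\rho$, extend $\BZ$-linearly with $\hq\mapsto\hq^{-1}$, observe anti-multiplicativity geometrically, then verify each defining relation is sent to a consequence of the relations---is the standard one and is the approach taken in \cite{LS}.

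That said, two of your relation checks are stated as intentions rather than carried out, and at least one hides a genuine subtlety. For the cap relation \eqref{e.capwall}, your recipe does \emph{not} reduce to the same relation with the two states exchanged: a direct computation gives $\ccc_i\,\ccc_{\bar i}=\ttt$, not $1$, so after height-reversal and $q\mapsto q^{-1}$ the image of \eqref{e.capwall} differs from the state-swapped instance of \eqref{e.capwall} by exactly the framing factor $\ttt$. Resolving this requires keeping track of how the two strand orientations interact with height-reversal and invoking either \eqref{e.twist} or the companion relation \eqref{e.capnearwall}; simply citing ``$\ccc_i\mapsto\ccc_i^{-1}$'' and Lemma~\ref{lemma-vertex-rev} does not close the gap. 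For \eqref{e.sinksource} you correctly identify the mechanism (rewrite each $\sigma_-$ using \eqref{e.pm} and re-index over $\Sym_n$), but the bookkeeping of the $\binom{n}{2}$ crossing resolutions and the resulting smoothing terms is the entire content of that verification and cannot be left implicit.
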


A stated web diagram $\al$ over a pb surface $\fS$ is \term{reflection-normalizable} if over the ground ring $\Zq$ we have $\omega(x) = \hq^{2k} x$ for $k\in \BZ$. Clearly such a $k$ is unique. In that case, over any ground ring $R$, we define the \term{reflection-normalization} by
\begin{equation}\label{eq.reflec}
[x]_{\norm}:= \hq^k x,
\end{equation}
Then when $R=\Zq$ we have $\omega([x]_{\norm})= [x]_{\norm}$, i.e. $[x]_{\norm}$ is reflection invariant. Note the Weyl-normalization of a monomial in a quantum torus agrees with the reflection-normalization.

\begin{lemma}\label{r.triad}
For $i+j+k=n$, the following stated web diagram is reflection-normalizable.
\[\alpha=\input{trigen-hex}.\]
\end{lemma}

\begin{proof}
First we assume that the 3 drawn solid lines are in 3 distinct boundary edges. Then $\alpha$ is reflection-normalizable because from Lemma \ref{r.height2}(c) we have
\[\omega(\alpha)=q^{-\frac{1}{n}\left(\binom{i}{2}+\binom{j}{2}+\binom{k}{2}\right)}\alpha.\]

If two or all three of the solid lines are in the same boundary edge, then we use \eqref{eq-comm-consec} to conclude that $\alpha$ is reflection-normalizable.
\end{proof}

\subsection{Embedding of punctured bordered surfaces} \label{sec.embed}

A proper embedding $f:\fS _1 \embed \fS _2$ of punctured bordered surfaces defines an $R$-linear map $f_\ast: \cS(\fS_1) \to \cS(\fS_2)$ as follows.
Suppose $\al$ is a stated $\pfS_1$-tangle diagram with negative order. Let $[\al]\in \cS(\fS_1)$ be the element determined by $\al$. Define $f_\ast([\al]) = [f(\al)] \in \cS(\fS_2)$, where $f(\al)$ is given the negative boundary order. Clearly $f_\ast$ is a well-defined $R$-linear map, and does not change under ambient isotopies of $f$. In general $f_\ast$ is not an algebra homomorphism.

A proper embedding $f:\fS _1 \embed \fS _2$ of punctured bordered surfaces is \term{strict} if no two boundary edges of $\fS_1$ are mapped under $f$ into one boundary edge of $\fS_2$. Then $f_\ast$ is an algebra homomorphism if and only $f$ is strict.

\subsection{Geometric picture of quantum minor} \label{ss.Weyl}

We will show that a quantum minor of the quantum matrix $\buu$ is expressed by a simple diagram under the identification $\cS(\PP_2)=\Oq$, and show how to cut the quantum minors into smaller pieces.

Let $\binom{\JJ}{k}$ be the set of all $k$-element subsets of $\JJ =\{1,\dots, n\}$. If $I\subset\JJ$, define
\[\bar{I}=\{\bar{i}\mid i\in I\},\quad
I^c=\JJ\setminus I,\quad \bar{I}^c=(\bar{I})^c.\]
For $I,J\in \binom{\JJ}{k}$, let $M^I_J(\buu)\in \Oq$ be the quantum determinant of the $I\times J$ submatrix of $\buu$. We identify $\cS(\PP_2)= \Oq$, so that $u_{ij}\in \Oq$ is identify with the stated arc in Figure \ref{fig-bigon}(c). Assume $a$ is an oriented $\pfS$-arc in a pb surface $\fS$, and $N(a)$ is a small tubular open neighborhood $N(a)$ of $a$ in $\fS$. There is a unique up to isotopy diffeomorphism $f: \PP_2 \to N(a)$ such that the beginning point of $a$ is in the image of the left edge. Let $M^I_J(a)= f_\ast(M^I_J(\buu))$ and depict it by the diagram in Figure~\ref{minor-IJ}.

\begin{figure}
\centering
\input{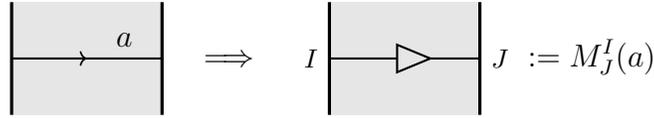}
\caption{Diagrammatic notation for quantum minor}\label{minor-IJ}
\end{figure}

\begin{lemma}\label{r.det}
Assume $I=\{i_1,\dots, i_k\}$ and $J=\{j_1,\dots, j_k\}$ are subsets of $\JJ$. Write $\bar{I}^c=\{ s_1, \dots, s_{n-k}\}$. The following stated $n$-web diagram over $\PP_2$
\[\alpha=\input{minor-source}\]
is reflection-normalizable, and its reflection-normalization is $\pm M^I_J(\buu)$. More precisely,
\begin{equation}\label{eq.detu}
M^I_J(\buu)=(-1)^{\binom{n}{2}}(-q)^{\ell(\vec{s})-\ell(\vec{j})}q^{\frac{1}{2n}\left(\binom{n-k}{2}-\binom{k}{2}\right)}\alpha.
\end{equation}
Here for $\vec{i}=(i_1, \dots, i_k)$ we define $\ell(\vec i)$ as the number of inversion in the map $t\to i_t$.

A similar result holds for the diagram with a sink, states $I$ on the left, and states $\bar{J}^c$ on the right.
\end{lemma}

\begin{proof}
From Lemma \ref{r.height2}(b-c) it is easy to see that $\al$ is reflection-normalizable. Using Lemma \ref{r.height2}(b) to permute the states on the boundary, Identity \eqref{eq.detu} is reduced to the case where $j_1 < \dots < j_k$, which we will assume now. Note $\ell(\vec{j})=0$.

Applying Lemma~\ref{lemma-vertex-rev} on the left edge, we get
\[\alpha = \input{minor-src-left}\]
where the sum is over bijections $\sigma_2:[1;k]\to\bar{S}^c=I$. The diagram on the right-hand side is the product $u_{\sigma(1)1}\cdots u_{\sigma(k)k}$. Thus the sum is the determinant $M^I_J(\vec{u})$. This proves the lemma.
\end{proof}

\begin{lemma}\label{r.cutdet}
Assume $I,J \in \binom{\JJ}{k}$. Then
\begin{equation}\label{eq.cutdet}
\input{tri-det-split}
\end{equation}
where $\bar{L} =\{\bar{l}\mid l\in L\}$ and $C_L \in R$ is the unit given by
\[C_L=(-q^{1+\frac{1}{n}})^{\binom{k}{2}}\prod_{l\in L}\returnc_{\bar{l}}^{-1}.\]
\end{lemma}

\begin{proof}
Let $J=\{j_1,\dots,j_k\}$ with $j_1<\cdots<j_k$. Using Lemmas~\ref{r.det} and \ref{r.height2},
\[\input{tri-det-splpf}\]
Here the constants are
\begin{align*}
w_1&=(-1)^{\binom{n}{2}}(-q)^{\ell(\vec{s})+\binom{k}{2}}q^{\frac{1}{2n}\left(\binom{n-k}{2}+\binom{k}{2}\right)},\\
w_2&=w_1\left(\prod_{l\in L}\returnc_{\bar{l}}^{-1}\right)
(-1)^{\binom{n}{2}}(-q)^{-\ell(\vec{s})}q^{\frac{1}{2n}\left(\binom{k}{2}-\binom{n-k}{2}\right)}=C_L.
\end{align*}
The sum can be group by the subset $L=\{l_1,\dots,l_k\}$. Note $\ell(\bar{l}_k,\dots,\bar{l}_1)=\ell(\vec{l})$. Thus for a fixed $L$, the sum in the bottom right is the $\bar{L}\times J$ quantum minor. This proves the lemma.
\end{proof}

\section{Punctured monogon algebra} \label{sec.pMon}

We will study the stated skein algebra $\Bq$ of the once-punctured monogon and a quotient $\bBq$ of it. Later we will show that for any essentially bordered surface $\fS$, the algebra $\SS$ has a tensor product factorization where each factor is either $\Oq$ or $\Bq$.

Recall that the ground ring $R$ is a commutative domain with a distinguished invertible element~$\hq$.

\subsection{Main results of section}

The \term{$m$-punctured $k$-gon} $\PP_{k,m}$ is the result of removing $m$ interior punctures from the $k$-gon $\PP_k$. We will call $\PP_{1,1}$ simply the \term{punctured monogon} and denote $\Bq:= \cS(\PP_{1,1})$. In \cite{LS} it was proved that $\Bq$ is the \term{transmutation} \cite{Majid} of the quantized algebra of regular function $\Oq$ of $SL_n$. As such $\Bq$ was studied in the literature, but mostly for the case when the ground ring is a field. For example when $R=\BC(\hq)$ it is proved \cite{KolbS} that $\Bq$ is a domain and the proof seems to base heavily on the fact that $R$ is a field, as it uses the dual quantum group and decomposition of modules into irreducible submodules. Here we prove that $\Bq$ has a quasimonomial basis, which in particular implies that it is a domain whenever $R$ is a domain. The proof also allows us to show that a quotient $\bBq$ of $\Bq$, later known as the reduced skein algebra of $\PP_{1,1}$, is a domain, that both $\Bq$ and $\bBq$ have uniform GK dimensions. Note the sole fact that $\Bq$ is a domain can be proved using method of the next section.

Let $\chu_{ij}\in \Bq$ be the element represented by arc $a$ of Figure~\ref{fig-mono-corner} with state $i$ on the left and $j$ on the right. Denote
\[ \chG= \{\chu _{ij} \mid i,j \in \JJ \}, \quad \chGm = \{\chu_{ij} \in \chG \mid i< j \}.\]
Define
\[ \bBq = \Bq/(\chGm) = \Bq / \ccI^-,\]
where $\ccI^-\lhd \Bq$ is the 2-sided ideal generated by $\chGm$.

\begin{figure}
\centering
\input{mono-corner}
\caption{Oriented arc $a$ gives linear isomorphism $\kappa: \Oq \to \cB_q$}\label{fig-mono-corner}
\end{figure}

\begin{theorem} \label{r.Bq}
\begin{enuma}
\item The algebra $\Bq=\cS(\PP_{1,1})$ has a quasimonomial basis. Consequently $\Bq$ is
a domain and a free $R$-module. Besides $\Bq$ has uniform GK dimension $n^2-1$ and is orderly finitely generated.
\item The algebra $\bBq$ is a domain and a free $R$-module, and it has uniform GK dimension $(n-1)(n+2)/2$.
\item Let $\overleftarrow{\chGm}=\{\overleftarrow{\chu}_{ij} \mid i <j \in \JJ\}$, where $\overleftarrow{\chu}_{ij}$ is ${\chu}_{ij}$ with reverse orientation. Then
\begin{equation}\label{eq.normal22}
\chGm \Bq = \Bq {\chGm} = \overleftarrow{\chGm} \Bq = \overleftarrow{\chGm} \Bq= \ccI^-.
\end{equation}
\end{enuma}
\end{theorem}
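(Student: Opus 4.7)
The plan is to mirror Propositions \ref{r.Oq} and \ref{r.redOq}, which established the analogous statements for $\Oq$ and $\LO$. The algebra $\Bq = \cS(\PP_{1,1})$ is generated by the stated arcs $\chu_{ij}$ (geometrically analogous to the generators $u_{ij}$ of $\Oq = \cS(\PP_2)$), and its multiplication differs from that of $\Oq$ only because arcs in $\PP_{1,1}$ can braid around the interior puncture---this is Majid's transmutation. Crucially, by the upper-triangular nature of the $R$-matrix (Lemma \ref{r.upper}), the differences between the products in $\Bq$ and in $\Oq$ all land in strictly lower $d_2$-degree, where $d_2$ is defined in \eqref{eq.d2}.

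For part (a), I would first derive analogs of Relations \eqref{eq.M11}--\eqref{eq.M12} in $\Bq$ for the $\chu_{ij}$, of the shape
\[
\chu_{ij}\chu_{kl} - q^{C_{ij,kl}}\chu_{kl}\chu_{ij} = \delta_{i<k}\delta_{j<l}(q-q^{-1})\chu_{il}\chu_{kj} + (\text{$d_2$-lower correction}),
\]
together with a quantum-determinant type relation, all correction terms having $d_2$ strictly below the leading monomials. These should be derived by cutting $\PP_{1,1}$ along an ideal arc separating the boundary vertex from the interior puncture (Theorem \ref{t.splitting2}) and controlling the resulting braiding via Lemma \ref{r.upper}. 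Fix a linear order $\ord$ on $\JJ^2$ and set $\check{b}(m) = \prod_\ord \chu_{ij}^{\hat m_{ij}}$. The inductive argument of Lemma \ref{r.ord001} then transfers verbatim to show that $\check{B}^\ord = \{\check{b}(m) \mid m \in \Gamma\}$ spans $\Bq$ and satisfies the quasimonomial relation \eqref{eq.ord5} with enhancement $d_2$. Linear independence of $\check{B}^\ord$ follows from Majid's transmutation identification of $\Bq$ and $\Oq$ as $R$-modules, reducing the problem to Proposition \ref{r.Oq}. Proposition \ref{r.domain9} then yields that $\Bq$ is a domain; $R$-freeness is immediate, the uniform GK dimension $n^2-1$ is computed exactly as in Proposition \ref{r.Oq}(a), and orderly finite generation follows since $\Gamma$ is finitely generated as a monoid.

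For part (b), I would show, following Proposition \ref{r.redOq}, that the ideal $\ccI^-$ is the $R$-span of $\{\check{b}(m) \mid m \in \Gamma \setminus \bG\}$, where $\bG \subset \Gamma$ is the submonoid of (classes of) upper-triangular matrices. The key observation is that the $\Bq$-correction terms in the relations above do not leave $\chGm\Bq$, again by the upper-triangular property. Lemma \ref{r.domain3} then delivers a quasimonomial basis for $\bBq$ indexed by $(\bG, d_2)$, so $\bBq$ is a domain and free $R$-module, and the uniform GK dimension $(n-1)(n+2)/2$ is obtained by the counting of Proposition \ref{r.redOq}(b). For part (c), once (b) is in hand, the equalities $\chGm\Bq = \Bq\chGm = \ccI^-$ are immediate because $\ccI^-$ is a two-sided ideal spanned by the basis elements indexed by $\Gamma \setminus \bG$. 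The statement for $\overleftarrow{\chGm}$ follows by applying the orientation-reversal automorphism $\rOr$ of Proposition \ref{r.reverse}, combined with an antipode-type formula analogous to \eqref{eq.Saij} expressing each $\overleftarrow{\chu}_{ij}$ (up to invertible scalar) as a quantum minor in the $\chu$-entries that manifestly lies in $\ccI^-$ when $i < j$.

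The main obstacle is to make the $\Bq$-analogs of \eqref{eq.M11}--\eqref{eq.M12} precise and to confirm that all correction terms are of strictly smaller $d_2$-value. This requires a careful geometric analysis of $n$-webs braiding around the interior puncture, using Lemma \ref{r.upper} and the coaction formulas of Section \ref{sec.skein}; once these relations are established, the remainder of the argument is a direct parallel to the $\Oq$/$\LO$ case.
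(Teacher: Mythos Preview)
Your overall strategy --- control the transmutation by the upper-triangularity of the $R$-matrix (Lemma~\ref{r.upper}) and then run the $\Oq$/$\LO$ arguments --- is the right idea, and it is also what the paper does. But the paper's execution differs in an essential way, and your proposal has a genuine gap in the choice of enhancement.

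\textbf{The grading gap.} You assert that the transmutation corrections are $d_2$-lower. This is false. By Lemma~\ref{r.upper} the correction terms in $\check x_{\boi\boj}\check y_{\bok\bol}$ have the right-edge states $\boj$ raised and the left-edge states $\bok$ lowered. For $u_{ij}$ one has $d_2(u_{ij})=6ij-\mathrm{const}$, so raising $j$ with $i$ fixed \emph{increases} $d_2$. Thus the correction to the $x$-factor can have strictly larger $d_2$, and your proposed enhancement $(\Gamma,d_2)$ does not see the transmutation terms as smaller. The degree that \emph{does} drop is $d_1(u_{ij})=i-j$: both $\boj\gg\boj'$ and $\bok'\ll\bok$ strictly decrease total $d_1$. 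The paper therefore uses the lexicographic enhancement $d_{12}=(d_1,d_2)$, not $d_2$ alone; see Proposition~\ref{r.Bq1} and Corollary~\ref{r.boo12}.

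\textbf{The paper's route.} Rather than deriving explicit $\Bq$-analogs of \eqref{eq.M11}--\eqref{eq.M12}, the paper exploits the bijective $R$-linear map $\kappa:\Oq\to\Bq$ (from the arc embedding) and proves a single lemma (equations \eqref{eq.M20}--\eqref{eq.BO}) saying that $\kappa$ and $\kappa^{-1}$ commute with multiplication up to terms with $\boj'\gg\boj$, $\bok'\ll\bok$. This immediately transfers the quasimonomial basis $B^\ord$ of $\Oq$ to $\kappa(B^\ord)$ in $\Bq$ with enhancement $d_{12}$, and it also gives $\kappa(\cI^-)=\ccI^-$ for free. Your plan to write down $\Bq$-relations is more laborious and, without $\kappa$, makes the linear-independence step (which you defer to ``transmutation identification'') harder to pin down: note that your $\check B^\ord=\{\prod_\ord\chu_{ij}^{\hat m_{ij}}\}$ is a priori \emph{different} from $\kappa(B^\ord)$, and showing it is a basis requires the separate filtration argument in Theorem~\ref{thm.basisBq}.

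\textbf{Part (c).} Your argument for $\overleftarrow{\chG^-}$ is circular: applying $\rOr$ to $\chGm\Bq=\ccI^-$ yields $\overleftarrow{\chG^-}\Bq=\rOr(\ccI^-)$, and $\rOr(\ccI^-)=\ccI^-$ is exactly the claim to be proved. There is also no antipode on $\Bq$ in the ordinary sense. The paper instead works on the $\Oq$ side: it uses the antipode identity $S(u_{ij})\eqq\cev u_{\bar j\bar i}$ and $S(\cI^-)=\cI^-$ (Proposition~\ref{r.redOq0}) to get $\overleftarrow{G}^-\Oq=\cI^-$, and then transports this through $\kappa$ via \eqref{eq.13a}--\eqref{eq.13d}.
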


We don't really need the following result for the existence of quantum traces. However it has independent interest.

\begin{theorem}\label{thm.basisBq}
For any linear ord $\ord$ on the set $\JJ^2$, the set
$$ \check B^\ord: = \{ \check b(m):= \prod_{(i,j) \in \JJ^2} \check u_{ij}^{\hat m_{ij} } \mid m \in \Gamma= \Mat_n(\BN)/(\Id) \}$$
where the product is taken with respect to the order $\ord$, is a free basis of $\Bq$. Consequently $\Bq$ is orderly finitely generated.
\end{theorem}

\subsection{From bigon $\PP_2$ to punctured monogon $\PP_{1,1}$}

A tubular neighborhood $N(a)$ of $a$ is diffeomorphic to the based bigon $\PP_2$, where the left edge is defined to be the one containing the beginning point of $a$. A special case of \cite[Theorem 7.13]{LS} states that the embedding $N(a) \embed \PP_{1,1}$ induces a bijective $R$-linear map
\[\kappa:\cF= \cS(N(a)) \to \cS(\PP_{1,1})= \Bq.\]
However $\kappa$ does not preserve the product. In fact, as explained in \cite[Section 7]{LS} the product in $\Bq$ can be obtained from that of $\Oq$ by Majid's transmutation \cite{Majid}, i.e. $\Bq$ is the transmutation of $\Oq$, or the product on $\Bq$ is the covariantized product \cite{Majid}. Below we show that up to elements of lower orders in some filtration, the products in $\Oq$ and $\Bq$ are almost the same.

Let $x$ be an $n$-web diagram over $\PP_2$ having negative (i.e. clockwise) order on both edges of $\PP_2$. By putting states on boundary points of $x$ we get a stated $n$-web $x_{\boi \boj}$ where $\boi$ (respectively $\boj$) is the sequence of states on the left (respectively right) edge, in clockwise order. Denote $\kappa(x_{\boi \boj}) = \check x_{\boi \boj}\in \Bq$. For two sequences $\boi=(i_1,\dots, i_m)$ and $\boi'=(i'_1, \dots, i'_m)$ of the same length we write $\boi \ll \boi'$ if $\boi \neq \boi'$ and $i_k \le i'_k$ for all $k=1, \dots, m$.

\begin{lemma}
Let $x, y$ be $n$-web diagrams over $\PP_2$ having negative order on both edges of $\PP_2$. Consider $x_{\boi \boj}, y_{\bok \bol}$ as elements of $\cS(\PP_2) = \Oq$. Then
\begin{align}
\kappa(x_{\boi \boj} y_{\bok \bol}) &\eqq \check x_{\boi \boj}\check y_{\bok \bol} + \Span\{\check x_{\boi \boj'} \check y_{\bok' \bol} \mid \boj' \gg \boj, \bok' \ll \bok \} \label{eq.M20}\\
\kappa^{-1}( \check x_{\boi \boj} \check y_{\bok \bol}) &\eqq x_{\boi \boj}y_{\bok \bol} + \Span\{x_{\boi \boj'} y_{\bok' \bol} \mid \boj' \gg \boj, \bok' \ll \bok \} \label{eq.M21}\\
\kappa(u_{i_1j_1}\dots u_{i_k j_k}) & \eqq \chu_{i_1j_1}\dots \chu_{i_k j_k} + \Span\{\chu_{i_1' j_1'}\dots \chu_{i_k' j_k'} \mid \boi' \ll \boi, \boj' \gg \boj \} \label{eq.OB}\\
\kappa^{-1}(\chu_{i_1j_1}\dots \chu_{i_k j_k}) & \eqq u_{i_1j_1}\dots u_{i_k j_k} + \Span\{u_{i_1' j_1'}\dots u_{i_k' j_k'} \mid \boi' \ll \boi, \boj' \gg \boj \} \label{eq.BO}
\end{align}
\end{lemma}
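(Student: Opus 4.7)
The four identities are linked: my plan is to prove \eqref{eq.M20} and \eqref{eq.M21} first, then deduce \eqref{eq.OB} and \eqref{eq.BO} by induction on $k$. Writing $u_{i_1 j_1}\cdots u_{i_k j_k}=u_{i_1 j_1}\cdot(u_{i_2 j_2}\cdots u_{i_k j_k})$ and applying \eqref{eq.M20} with $y$ equal to the second factor, then using the inductive hypothesis on $\kappa$ of this second factor, the ``inner'' correction indices produced by the split merge with those of the recursion to give precisely the correction terms with $\boi'\ll\boi$ and $\boj'\gg\boj$ in \eqref{eq.OB}; the directional constraints compose correctly because $\ll$ and $\gg$ are transitive.

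To prove \eqref{eq.M20}, I would represent $x_{\boi\boj}$ and $y_{\bok\bol}$ by diagrams supported in the tubular neighborhood $N(a)\cong\PP_2$. Stacking in $\PP_2$ and then applying $\kappa=f_*$ produces a representative of $\kappa(x_{\boi\boj}\,y_{\bok\bol})\in\Bq$, whereas stacking the $\kappa$-images directly in $\PP_{1,1}$ produces $\check x_{\boi\boj}\,\check y_{\bok\bol}$. Both elements are represented by the same underlying (unstated) $n$-web in $N(a)\subset\PP_{1,1}$, so they differ only in the induced height ordering of endpoints on the single boundary wall of $\PP_{1,1}$. The negative-order convention built into $f_*$ interleaves the endpoints around the vertex of $\PP_{1,1}$ so that $\boj$-endpoints (on one side of the vertex) end up below $\bok$-endpoints (on the other side), whereas the direct stacking puts every endpoint of $\check x$ above every endpoint of $\check y$, so in particular every $\boj$-endpoint lies above every $\bok$-endpoint. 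The two orderings are converted into each other by exchanging $\boj$- and $\bok$-heights one adjacent pair at a time; each such exchange is localized by applying the coaction \eqref{e.coact} on a small bigon neighborhood near the vertex and reduces to evaluating $\epsilon$ on a single $R$-matrix crossing. By Lemma \ref{r.upper}(a), this counit value equals a power of $\qq$ on the diagonal and vanishes unless the overpass state does not decrease and the underpass state does not increase. Accumulating all the exchanges yields $\check x_{\boi\boj}\,\check y_{\bok\bol}$ as the leading term plus corrections $\check x_{\boi\boj'}\,\check y_{\bok'\bol}$ with $\boj'\gg\boj$ and $\bok'\ll\bok$, which is exactly \eqref{eq.M20}.

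Identity \eqref{eq.M21} then follows by inverting the upper-triangular linear system between the families $\{\kappa(x_{\boi\boj'}y_{\bok'\bol})\}_{\boj'\gg\boj,\bok'\ll\bok}$ and $\{\check x_{\boi\boj}\check y_{\bok\bol}\}_{\boj,\bok}$: the inverse of an upper-triangular matrix with invertible diagonal (here, powers of $\qq$) is upper-triangular of the same shape, so the first column of the inverted system gives \eqref{eq.M21}. The principal technical obstacle will be bookkeeping—matching the negative-order conventions between $\PP_2$ and $\PP_{1,1}$ under $f_*$, tracking the cyclic order of endpoints around the vertex, and verifying that no spurious cross-swaps appear between $\boi$-endpoints and $\bol$-endpoints during the reordering. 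This can be handled cleanly by isolating a small bigon neighborhood of the vertex via the cutting homomorphism of Theorem \ref{t.splitting2} and reducing each elementary exchange of an adjacent $\boj$/$\bok$ pair to a counit computation on the bigon.
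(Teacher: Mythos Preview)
Your plan is essentially the paper's: relate $\kappa(x_{\boi\boj}y_{\bok\bol})$ to $\check x_{\boi\boj}\check y_{\bok\bol}$ by a height-exchange (equivalently, a crossing) between the $\boj$- and $\bok$-strands near the vertex of $\PP_{1,1}$, evaluate via the coaction/counit, and then get \eqref{eq.OB}, \eqref{eq.BO} by induction. The paper packages the $\boj/\bok$ swap as a single cut along an ideal arc in $\PP_{1,1}$, so that the correction is one counit evaluation on a bigon carrying exactly one block of crossings (trivial strands for $\boi,\bol$ force $\boi'=\boi$, $\bol'=\bol$, and Lemma~\ref{r.upper}(b) handles the $\boj/\bok$ crossing in one shot); your ``one adjacent pair at a time'' is just Lemma~\ref{r.upper}(b) unrolled. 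One small imprecision: $\kappa=f_*$ is defined on diagrams in \emph{negative} order, and the stacked diagram of $xy$ in $\PP_2$ is not in negative order on both edges, so you should first isotope to negative order (this is what introduces the $\boj/\bok$ crossing) before invoking $f_*$.

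The one place you genuinely diverge is \eqref{eq.M21}. You obtain it by inverting the upper-triangular system coming from \eqref{eq.M20}; this works, since the index set of states is finite and the diagonal entries are units. The paper instead gives a direct argument completely parallel to \eqref{eq.M20}: draw $\check x_{\boi\boj}\check y_{\bok\bol}$ in $\PP_{1,1}$, recognize it (after isotopy into $N(a)$) as the product $x_{\boi\boj}y_{\bok\bol}$ in $\PP_2$ with the \emph{opposite} crossing between the $\boj$- and $\bok$-strands, and cut once to reduce to the counit of a bigon with a negative crossing. That symmetric argument is shorter and avoids the bookkeeping of the inversion, but both are valid.
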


\begin{proof}
The diagrams $x_{\vec{i}\vec{j}}y_{\vec{k}\vec{l}}$ and its image under $\kappa$ are shown in the first row of Figure~\ref{fig-mono-prod}. Then we split along the dashed line and use coaction \eqref{e.coact} to obtain
\begin{equation}
\kappa(x_{\vec{i}\vec{j}}y_{\vec{k}\vec{l}})
=\sum_{\vec{i}'\vec{j}'\vec{k}'\vec{l}'} \epsilon_{\vec{i}'\vec{j}'\vec{k}'\vec{l}'} \, \check{x}_{\vec{i}'\vec{j}'} \, \check{y}_{\vec{k}'\vec{l}'},
\end{equation}
where $\epsilon_{\vec{i}'\vec{j}'\vec{k}'\vec{l}'}$ is the counit of the bigon. Since $\epsilon(u_{ij}) = \epsilon(\cev u_{ij})= \delta_{ij}$, we see that $\epsilon_{\vec{i}'\vec{j}'\vec{k}'\vec{l}'}=0$ unless $\vec{i}'=\vec{i}$, $\vec{l}'=\vec{l}$. Assume $\vec{i}'=\vec{i}$, $\vec{l}'=\vec{l}$.
By Lemma \ref{r.upper}(b), the counit value $\epsilon_{\vec{i}'\vec{j}'\vec{k}'\vec{l}'}$ is non-zero only when
\begin{enumerate}
\item [(i)] either $\vec{j}'=\vec{j}$, $\vec{k}'=\vec{k}$, in which case $\epsilon_{\vec{i}\vec{j}\vec{k}\vec{l}}\eqq 1$, or
\item[(ii)]$\vec{j}' \gg \vec{j}$, $\vec{k}' \ll \vec{k}$
\end{enumerate}
Cases (i) and (ii) give respectively the first and the second terms in the right-hand side of \eqref{eq.M20}. This proves \eqref{eq.M20}.

\begin{figure}
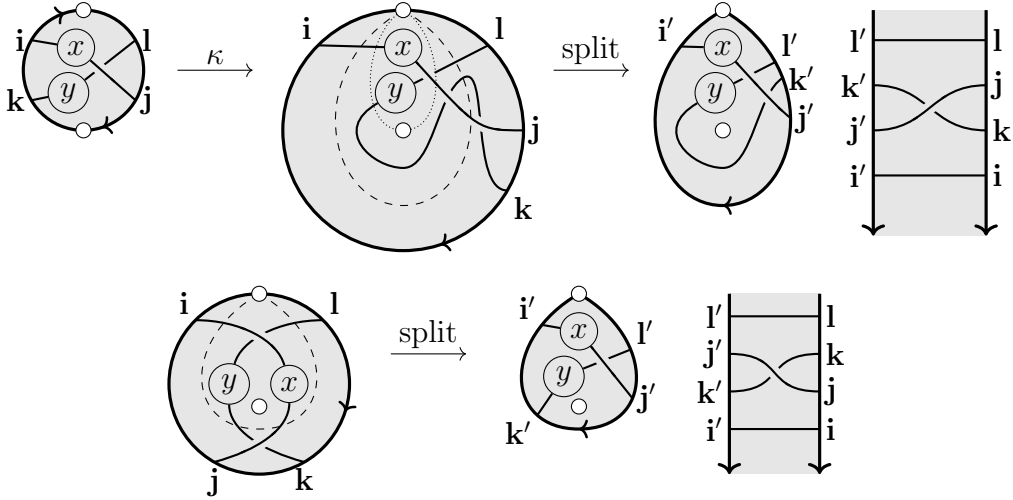

\centering
\input{mono-prod}
\vskip1em
\input{mono-prod-inv}
\caption{Evaluating $\kappa$ and $\kappa^{-1}$}\label{fig-mono-prod}
\end{figure}

Similarly, \eqref{eq.M21} follows from the second row of Figure~\ref{fig-mono-prod}, which reads
\begin{equation}
\kappa^{-1}(\check{x}_{\vec{i}\vec{j}}\check{y}_{\vec{k}\vec{l}})
=\sum_{\vec{j}'\vec{k}'} \bar{\epsilon}_{\vec{i}\vec{j}'\vec{k}'\vec{l}}\, x_{\vec{i}\vec{j}'}\, y_{\vec{k}'\vec{l}},
\end{equation}
where $\bar{\epsilon}_{\vec{i}\vec{j}'\vec{k}'\vec{l}}$ is the counit of the bigon, which is nonzero only if first one has $\vec i = \vec i', \vec l =\vec l'$ and then either $\vec{j}' \gg \vec{j}$, $\vec{k}' \ll \vec{k}$ or $\vec{j}'=\vec{j}$, $\vec{k}'=\vec{k}$. For the latter case $\bar{\epsilon}_{\vec{i}\vec{j}\vec{k}\vec{l}}\eqq 1$. This proves \eqref{eq.M21}.

Then \eqref{eq.OB} and \eqref{eq.BO} follow respectively from \eqref{eq.M20} and \eqref{eq.M21} by induction.
\end{proof}

\subsection{Quasimonomial basis for $\Bq$}

Fix a linear order $\ord$ on the set $\JJ^2$. By Theorem \ref{r.Oq} the set $B^\ord=\{b(m) \mid m \in \Gamma\}$ is a quasimonomial basis of $\Oq$.

\begin{proposition}\label{r.Bq1}
The set $\kappa(B^\ord)$ is a quasimonomial basis of $\Bq$.
\end{proposition}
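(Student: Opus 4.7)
The plan is to take the enhancement $d_{12}:=(d_1,d_2)\colon \Gamma\to\BZ^2$ from Corollary~\ref{r.boo12}, together with the lexicographic order on $\BZ^2$, as the enhancement of the parameterizing monoid. Since $\kappa\colon\Oq\to\Bq$ is an $R$-linear bijection, the image $\kappa(B^\ord)$ is automatically a free $R$-basis of $\Bq$; and if we set $\Bq(d_{12}<_\lex k):=\kappa(\Oq(d_{12}<_\lex k))$ for $k\in\Gamma$, all that remains to verify is the quasi-monomial product condition
\[
\kappa(b(m))\,\kappa(b(m')) \eqq \kappa(b(m+m')) \mod \Bq(d_{12}<_\lex m+m')
\]
for all $m,m'\in\Gamma$.

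My first step would be to compare the two natural products involving $\kappa$. On the $\Oq$ side, Corollary~\ref{r.boo12} gives $b(m)b(m')\eqq b(m+m')$ modulo $\Oq(d_{12}<_\lex m+m')$, and applying the $R$-linear $\kappa$ yields $\kappa(b(m)b(m')) \eqq \kappa(b(m+m'))$ modulo $\Bq(d_{12}<_\lex m+m')$. On the $\Bq$ side, writing $b(m)=x_{\boi\boj}$ and $b(m')=y_{\bok\bol}$ as stated $n$-web diagrams over $\PP_2$ and applying~\eqref{eq.M20}, I get
\[
\kappa(b(m)b(m')) \eqq \kappa(b(m))\,\kappa(b(m')) + E,
\]
where $E$ is an $R$-linear combination of terms $\check{x}_{\boi\boj'}\,\check{y}_{\bok'\bol}$ with $\boj'\gg\boj$ and $\bok'\ll\bok$, both strict. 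Combining the two displays reduces the proof to the inclusion $E\subset\Bq(d_{12}<_\lex m+m')$.

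To verify this inclusion I would apply $\kappa^{-1}$ to a typical error term and use~\eqref{eq.M21}, obtaining
\[
\kappa^{-1}(\check{x}_{\boi\boj'}\,\check{y}_{\bok'\bol}) \eqq x_{\boi\boj'}\,y_{\bok'\bol} + \Span\{x_{\boi\boj''}\,y_{\bok''\bol}\mid \boj''\gg\boj',\ \bok''\ll\bok'\}.
\]
Each summand on the right is a word $w$ in the letters $u_{ij}$ whose first degree satisfies
\[
d_1(w)-d_1(m+m') = -\sum_s(j''_s-j_s) + \sum_s(k''_s-k_s),
\]
which is strictly negative because $\boj''\gg\boj$ is strict (making the first sum strictly positive) and $\bok''\ll\bok$ is strict (making the second sum strictly negative). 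Since $d_1$ is a $\BZ$-grading on $\Oq$ (Proposition~\ref{r.d1}) and the basis elements $b(k)$ are $d_1$-homogeneous, the expansion of $[w]$ in $B^\ord$ contains only terms $b(k)$ with $d_1(k)=d_1(w)<d_1(m+m')$, hence with $d_{12}(k)<_\lex d_{12}(m+m')$. Applying $\kappa$ then gives $E\subset\Bq(d_{12}<_\lex m+m')$, as needed.

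The main subtlety of the argument is the choice of enhancement: the naive candidate $d_2$ alone does not work, since in the error terms produced by~\eqref{eq.M20} the shift $\boj\to\boj'$ can push $d_2$ up while the shift $\bok\to\bok'$ pushes it down, with no definite sign. Placing $d_1$ as the leading coordinate of the enhancement resolves this cleanly, because both shifts strictly decrease $d_1$. Once the right enhancement is in place the argument is bookkeeping using the $d_1$-grading of $\Oq$ together with the quasimonomial structure of $B^\ord$ already established in Proposition~\ref{r.Oq}(b) and Corollary~\ref{r.boo12}.
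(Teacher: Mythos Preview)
Your proof is correct and uses the same key idea as the paper: take $d_{12}=(d_1,d_2)$ as the enhancement and exploit that the error terms coming from \eqref{eq.M20}/\eqref{eq.M21} have strictly smaller $d_1$. The paper's argument is slightly more streamlined: it applies $\kappa^{-1}$ directly to $\chb(m)\chb(m')$ via \eqref{eq.M21}, obtaining $b(m)b(m')$ plus terms already in $\Oq(d_{12}<_\lex m+m')$, then invokes \eqref{eq.boo12} and applies $\kappa$; this avoids the extra round-trip through \eqref{eq.M20} that you take to control the error $E$.
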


\begin{proof}
Recall the map $d_{12}: \Gamma \to \BZ^2$ defined in Corollary \ref{r.boo12}, and for $m\in \Gamma$
\[ \Oq( d_{12} < m ) = \Span\{b(m')\mid d_{12}(m') \llex d_{12}(m)\}.\]
For $m\in \Gamma$, let $\chb(m) = \kappa(b(m))$. Clearly,
\[\Bq( d_{12} < m ) = \kappa(\Oq( d_{12} < m )) = \Span\{\chb(m')\mid d_{12}(m') \llex d_{12}(m)\}.\]
Using \eqref{eq.M21} and then \eqref{eq.boo12}, we have
\begin{equation*}
\kappa^{-1}(\chb (m) \chb (m')) \eqq b(m) b(m') + \Oq( d_{12} < m ) \eqq b(m+m') + \Oq( d_{12} < m ).
\end{equation*}
Applying $\kappa$ to both sides, we get
\[ \chb(m) \chb(m') \eqq \chb (m+m') + \Bq( d_{12} < m ).\]
This shows $\{\chb(m) \mid m\in \Gamma\}$ is a quasi-monomial basis of $\Bq$, parameterized by the enhanced monoid $(\Gamma, d_{12})$.
\end{proof}

\subsection{Proof of Theorem \ref{r.Bq} part (a)}\label{sec-Bqa}

Proposition \ref{r.Bq1} shows $\Bq$ has a quasimonomial basis. By Proposition \ref{r.domain9}, $\Bq$ is domain and free as an $R$-module.

The set $G= \{u_{ij} \mid (i,j) \in \JJ^2\}$ is a uniform GK set of generators of $\Oq$, by Proposition \ref{r.Oq}. From \eqref{eq.OB}, we have $\kappa(\Pol_m(G)) \subset \Pol_m(\kappa(G))$. Similarly, from \eqref{eq.BO} we have the converse inclusion. Hence $\kappa(\Pol_m(G)) = \Pol_m(\kappa(G))$. As $\kappa$ is a linear isomorphism, we conclude that $\kappa(G)$ is a uniform GK set of generators of $\Bq$, and that $\GKdim (\Bq)= \GKdim(\Oq) = n^2-1$.

\subsection{From \LOsec to \texorpdfstring{$\bBq$}{F-frak-bar}}

Recall that $G^- = \{u_{ij} \mid i < j \in \JJ \}$, and $\cI^-\lhd \Oq$ is the ideal generated by $G^-$. Let $\overleftarrow{G}^- = \{\cev u _{ij} \mid i < j \in \JJ \}$.

\begin{proposition}
We have $\kappa(\cI^-) = \ccI^-$. Moreover
\begin{align}
\kappa( G^- \Oq) & = \chG^- \Bq \label{eq.13a} \\
\kappa( \overleftarrow{G}^- \Oq) & = \overleftarrow{\chG}^- \Bq \label{eq.13b} \\
\kappa( \Oq G^- ) & = \Bq\chG^- \label{eq.13c} \\
\kappa( \Oq\overleftarrow{G}^- ) & = \Bq \overleftarrow{\chG}^-\label{eq.13d}
\end{align}
\end{proposition}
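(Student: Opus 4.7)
\smallskip

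\noindent\textbf{Plan of proof.} I will first establish the four identities \eqref{eq.13a}--\eqref{eq.13d}; the assertion $\kappa(\cI^-)=\ccI^-$ will then follow quickly. Since $G^-$ is $\Oq$-normal (Proposition~\ref{r.redOq0}(c)), the ideal $\cI^-$ equals both $G^-\Oq$ and $\Oq G^-$. Once \eqref{eq.13a} and \eqref{eq.13c} are proved, we get $\chG^-\Bq=\kappa(G^-\Oq)=\kappa(\Oq G^-)=\Bq\chG^-$, and this common value, being simultaneously a left and a right ideal containing $\chG^-$, must be the two-sided ideal $\ccI^-$. Hence $\kappa(\cI^-)=\ccI^-$, and \eqref{eq.13b}, \eqref{eq.13d} give alternative generating sets for the same ideal.

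The central identity \eqref{eq.13a} will be proved by a matching pair of inclusions based on Identities~\eqref{eq.OB} and~\eqref{eq.BO}. As an $R$-module the right ideal $G^-\Oq$ is spanned by monomials $u_{i_1j_1}u_{i_2j_2}\cdots u_{i_kj_k}$ with $i_1<j_1$. By \eqref{eq.OB}, $\kappa$ of such a monomial is $\eqq$ $\chu_{i_1j_1}\cdots\chu_{i_kj_k}$ plus a sum of terms $\chu_{i'_1j'_1}\cdots\chu_{i'_kj'_k}$ with $\vec i'\ll\vec i$ and $\vec j'\gg\vec j$; in particular $i'_1\le i_1<j_1\le j'_1$, so every first factor $\chu_{i'_1j'_1}$ lies in $\chG^-$, and the whole expression lies in $\chG^-\Bq$. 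This yields $\kappa(G^-\Oq)\subseteq\chG^-\Bq$. The reverse inclusion is the symmetric argument using \eqref{eq.BO}: $\chG^-\Bq$ is spanned (for example, via the quasimonomial basis from Proposition~\ref{r.Bq1}) by monomials $\chu_{i_1j_1}\cdots\chu_{i_kj_k}$ with $i_1<j_1$, and by \eqref{eq.BO} the $\kappa^{-1}$-preimage of such a monomial is a linear combination of $u$-monomials whose first factors all lie in $G^-$.

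For the remaining three identities, the same scheme applies once we have the appropriate analogues of \eqref{eq.OB} and \eqref{eq.BO}. The general formulas \eqref{eq.M20} and \eqref{eq.M21} hold for \emph{arbitrary} webs $x,y$ over $\PP_2$ with negative order on both edges, and the induction that derives \eqref{eq.OB} from \eqref{eq.M20} goes through verbatim when some of the arcs are taken with reversed orientation; it also gives, by grouping, the corresponding statement when the distinguished factor is the \emph{last} one rather than the first. Concretely, for \eqref{eq.13c} one uses monomials $u_{i_1j_1}\cdots u_{i_kj_k}$ with $i_k<j_k$ and observes that the correction terms have $i'_k\le i_k<j_k\le j'_k$, forcing the last factor into $\chG^-$; for \eqref{eq.13b} and \eqref{eq.13d} one replaces the first (respectively last) factor by a reversed arc $\cev u_{ij}$, and the same componentwise inequalities keep the relevant factor in $\overleftarrow{\chG}^-$.

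\smallskip

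\noindent\textbf{Main obstacle.} There is no substantive difficulty beyond bookkeeping. The only delicate point is to set up, for each of $G^-\Oq$, $\Oq G^-$, $\overleftarrow{G}^-\Oq$, $\Oq\overleftarrow{G}^-$, a spanning set of monomials to which the counit-driven estimates \eqref{eq.M20}--\eqref{eq.M21} (and their reversed-arc and grouped-at-the-end variants) can be applied, and then to verify that the ``lower'' correction terms always preserve the key index inequality so that the distinguished factor remains in $\chG^-$ or $\overleftarrow{\chG}^-$. After that, the four inclusions for the four identities are four parallel one-line arguments.
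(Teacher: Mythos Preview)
Your proposal is correct and follows essentially the same approach as the paper. The one minor difference is that the paper applies the two-factor formulas \eqref{eq.M20} and \eqref{eq.M21} directly, taking $x_{\boi\boj}$ (for \eqref{eq.13a}, \eqref{eq.13b}) or $y_{\bok\bol}$ (for \eqref{eq.13c}, \eqref{eq.13d}) to be a single bad arc $u_{ij}$ or $\cev u_{ij}$ with $i<j$; since those formulas are stated for arbitrary webs, the reversed-orientation cases require no extra work, and there is no need to pass through the iterated monomial formulas \eqref{eq.OB}, \eqref{eq.BO}.
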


\begin{proof}
In \eqref{eq.M20} let $x_{\boi \boj}= u_{ij}$ with $ i<j$, we get
\[\kappa(G^- \Oq) \subset\chG^- \Bq.\]
In \eqref{eq.M21} let $x_{\boi \boj}= u_{ij}, i<j$, we get the converse inclusion. Hence we get \eqref{eq.13a}. The identical argument, with $x_{\boi \boj}= \cev u_{ij}$ (with $ i<j$), proves \eqref{eq.13b}.

Similarly let $y_{\boi \boj}= u_{ij}$ with $ i<j$ in \eqref{eq.M20} we get
$\kappa(G^- \Oq) \subset\chG^- \Bq$. Let $y_{\boi \boj}= u_{ij}$ with $ i<j$ in \eqref{eq.M21} we get the converse inclusion. This proves \eqref{eq.13c}. The identical argument, with $y_{\boi \boj}= \cev u_{ij}$ (with $ i<j$), proves \eqref{eq.13d}.

By Proposition \ref{r.redOq0},
\begin{equation} \label{eq.14}
G^- \Oq = \Oq G^-=\cI^-.
\end{equation}
Hence \eqref{eq.13a} and \eqref{eq.13c} imply $\chG^- \Bq = \Bq \chG^- = \ccI^-$, and \eqref{eq.13a} shows that $\kappa(\cI^-) = \ccI^-$.
\end{proof}

\subsection{Proof of Theorem \ref{r.Bq} part (c)}

We have $S(\cI^-)= \cI^-$ by \eqref{eq.HopfS}. By \eqref{eq.Saij} we have $S(u_{ij})\eqq u_{\bar j \bar i}$. Note that $i<j$ if and only if $\bar j < \bar i$. Thus applying $S$ to \eqref{eq.14}, we get
\[ \Oq \overleftarrow{G}^-= \overleftarrow{G}^- \Oq = \cI^-.\]
We conclude that all the left-hand sides of \eqref{eq.13a}-\eqref{eq.13d} are equal to $\cI^-$:
\begin{equation}\label{eq.normal11}
G^- \Oq = \Oq G^- = \Oq \overleftarrow{G}^-= \overleftarrow{G}^- \Oq = \cI^-.
\end{equation}
It follows that all the right-hand sides are equal, and equal to $\ccI^-$:
\begin{equation}\label{eq.normal12}
\chG^- \Bq = \Bq \chG^- = \Bq \overleftarrow{\chG}^-= \overleftarrow{\chG}^- \Bq = \ccI^-.
\end{equation}
This proves \eqref{eq.normal22}.

\subsection{Proof of Theorem \ref{r.Bq} part (b)}

As $\kappa(\cI^-) = \ccI^-$, the bijective $R$-linear map $\kappa$ descends to a bijective $R$-linear map $\bka: \LO \to \bBq$.

By Proposition \ref{r.redOq}, the set
\[\bar B^\ord = \{\pr(b(m)) \mid m\in \bG \},\quad
\text{where }\bG= \{m \in \Gamma \mid \hat m_{ij} = 0 \text{if } i <j \}\subset \Gamma,\]
is a quasimonomial basis of $\LO$. Here $\pr: \Oq\onto \LO$ is the natural projection.

\begin{proposition}\label{r.Bq2}
The set $\bar \kappa(B^\ord)$ is a quasimonomial basis of $\Bq$.
\end{proposition}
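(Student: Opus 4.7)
The plan is to transport the quasimonomial structure from $\LO$ to $\bBq$ through the linear bijection $\bka$, using Proposition~\ref{r.Bq1} on $\Bq$ as the intermediate step. The argument should be a direct quotient analogue of the proof of Proposition~\ref{r.Bq1}, with the nontrivial input being that $\kappa(\cI^-) = \ccI^-$ together with the explicit spanning set for $\cI^-$ established inside the proof of Proposition~\ref{r.redOq}.

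First I would write $\pi:\Bq \twoheadrightarrow \bBq$ for the quotient projection and set $\chb(m) = \kappa(b(m))$ and $\bar\chb(m) = \pi(\chb(m))$ for $m\in\Gamma$. Recall from the proof of Proposition~\ref{r.redOq} that $\cI^-$ is spanned as an $R$-module by $\{b(m) : m\in\Gamma\setminus\bG\}$. Since $\kappa$ is an $R$-linear isomorphism with $\kappa(\cI^-)=\ccI^-$, the set $\{\chb(m) : m\in\Gamma\setminus\bG\}$ spans $\ccI^-$. As $\{\chb(m) : m\in\Gamma\}$ is a free $R$-basis of $\Bq$ by Proposition~\ref{r.Bq1}, it follows that $\{\bar\chb(m) : m\in\bG\}$ is a free $R$-basis of $\bBq$.

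Second, I would verify the quasimonomial multiplication rule. For $m,m'\in\bG$, Proposition~\ref{r.Bq1} provides
\[\chb(m)\,\chb(m') \eqq \chb(m+m') \mod \Bq(d_{12}<m+m'),\]
where $\Bq(d_{12}<m+m')$ is the $R$-span of $\chb(m'')$ with $m''\in\Gamma$ and $d_{12}(m'')<_\lex d_{12}(m+m')$. Applying $\pi$ and using $\bar\chb(m'')=0$ for $m''\in\Gamma\setminus\bG$, the image of this error space is precisely the $R$-span of $\bar\chb(m'')$ with $m''\in\bG$ and $d_{12}(m'')<_\lex d_{12}(m+m')$, i.e.\ $\bBq(d_{12}<m+m')$ in the target enhancement. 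Note $m+m'\in\bG$ since $\bG$ is a submonoid. Hence
\[\bar\chb(m)\,\bar\chb(m') \eqq \bar\chb(m+m') \mod \bBq(d_{12}<m+m'),\]
showing that $\{\bar\chb(m):m\in\bG\}$, parameterized by the enhanced monoid $(\bG,d_{12})$, is a quasimonomial basis of $\bBq$.

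The only mildly delicate point, which I would make sure to state cleanly, is the compatibility of the filtration with the quotient: the error term in the quasimonomial relation on $\Bq$ has no component outside $\bG$ after projection, precisely because the dropped basis vectors $\chb(m'')$ with $m''\in\Gamma\setminus\bG$ all lie in $\ccI^-$. Since Proposition~\ref{r.domain9} then yields that $\bBq$ is a domain, this also completes the free-module/domain halves of Theorem~\ref{r.Bq}(b); the uniform GK dimension assertion would follow from the orderly-generating property by the same counting argument used in Proposition~\ref{r.redOq}(b), transported through $\bka$ exactly as in Section~\ref{sec-Bqa}.
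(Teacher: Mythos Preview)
Your proof is correct and follows essentially the same route as the paper. The paper's proof is shorter only because it invokes Lemma~\ref{r.domain3} as a black box: after observing that $\{\chb(m):m\in\Gamma\setminus\bG\}$ is a free $R$-basis of $\ccI^-$ (since $\kappa(\cI^-)=\ccI^-$ and $\kappa$ is bijective), that lemma immediately yields the quasimonomial basis of the quotient. Your second step simply unpacks the proof of Lemma~\ref{r.domain3} in this specific setting, so the arguments are the same in substance.
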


\begin{proof}
By Proposition \ref{r.redOq} the set $B^-=\{b(m) \mid m \in \Gamma \setminus \bG\}$ is a free $R$-basis of $\cI^-$. Since $\kappa(\cI^-) = \ccI^-$, the set $\check B^-=\{\chb(m) \mid m \in \Gamma \setminus \bG\}$ is a free $R$-basis of $\ccI^-$. It follows from Lemma \ref{r.domain3} that $\bar \kappa(\bar B^\ord)$ is a quasimonomial basis of $\Bq$.
\end{proof}

\begin{proof}[Proof of Theorem \ref{r.Bq} part (b)]
As $\bBq$ has a quasimonomial basis, it is a domain.

By Proposition \ref{r.redOq}, the set $\bar G= \{\bar u_{ij} \mid i \ge j\}$ is a uniform GK set of generators of $\LO$. From \eqref{eq.OB} we have $\bar{\kappa}(\Pol_m(\bar G )) \subset \Pol_m(\bar \kappa(\bar G))$. Similarly from \eqref{eq.BO} we have the converse inclusion. Hence $\bar{\kappa}(\Pol_m(\bar G )) = \Pol_m(\bar \kappa(\bar G))$. As $\bar \kappa$ is a linear isomorphism, we conclude that $\bar \kappa(S)$ is a uniform GK set of generators of $\bBq$, and that $\GKdim (\bBq)= \GKdim(\LO) = (n-1)(n+2)/2$.
\end{proof}

\subsection{Proof of Theorem \ref{thm.basisBq}} Define the degrees $d_0$ and $d_1$ for letters $u_{ij}$ and $\chu_{ij} $ by
$$ d_0(u_{ij})= d_0(\chu_{ij})=1, \quad d_1(u_{ij})= d_0(\chu_{ij})=i-j .$$
For a word $w$ in the letters $u_{ij}$ or $\chu_{ij}$ we define $d_0(w)$ and $d_1(w)$ additively. If $w$ is a word in $u_{ij}$ (respectively $\chu_{ij}$) let $[w]\in \Oq$ (respectively $\Bq$) be the element it represents. Let $d_{01}(w)= (d_0(w), d_1(w))\in \BN \times \BZ$.

For alphabets $\{u_{ij}\}$ and $\{\chu_{ij}\}$, the sets of all possible values of $d_{01}(w)$ of all words are the same, and are denoted by
$\Lambda\subset \BN \times \BZ$. Then $\Lambda$ is a submonoid of $\BN\times \BZ$, and is well-ordered in the lexicographic order of $\BN \times \BZ$ because for each $k\in \BN$ there is only a finite number of words $w$ with $d_0\le k$.

For $k \in \Lambda$ let $\Oq_k$ (respectively $\Bq_k$) be the $R$-span of $[w]$, where $w$ are words in $u_{ij}$ (respectively $\chu_{ij}$) with $d_{01}(w) \le k$. Then $(\Oq_k)_{k\in \Lambda}$ is a $\Lambda$-filtration of $\Oq$ and $(\Bq_k)_{k\in \Lambda}$ is a $\Lambda$-filtration of $\Bq$.

The second term on the right-hand side of \eqref{eq.OB} has $d_{01}$ less than that of the remaining terms.
Hence \eqref{eq.OB} implies that $\kappa(\Oq_k) \subset \Bq_k$. Similarly \eqref{eq.BO} implies the $\kappa^{-1}(\Bq_k) \subset \Oq_k$. It follows that $\kappa(\Oq_k) = \Bq_k$. Equ. \eqref{eq.OB} implies
\begin{equation}\label{eq.110a}
\kappa(b(m)) \eqq \check b(m) + \Bq_{ < d_{01}( \check b(m)) }.
\end{equation}
Since $B^\ord$ is a free $R$-basis of $\Oq$, Equ. \eqref{eq.boo5a} implies $B^\ord \cap \Oq_k$ is a free $R$-basis of $\Oq_k$. Hence~\eqref{eq.110a} and induction on $k\in \Lambda$ show that $\check B^\ord \cap \Bq_k$ is a free $R$-basis of $\Bq_k$. It follows that $\check B^\ord$ is a free $R$-basis of $\Bq$.
This completes the proof of Theorem \ref{thm.basisBq}.

\section{Integrality and GK dimension}
\label{sec.Int}

Recall that a punctured bordered surface $\fS$ is \term{essentially bordered} if each connected component of $\fS$ has non-empty boundary, and tensor product factorization was introduced in Subsection \ref{ss.tproduct}.

\subsection{Main results of section}
For an essentially bordered pb surface $\fS$ define
\begin{equation}\label{eq.rfS}
r(\fS) = \# \pfS - \chi(\fS),
\end{equation}
where $\# \pfS$ is the number of components of $\pfS$ and $\chi(\fS)$ is the Euler characteristics.

\begin{theorem} \label{thm.domain}
Let $\fS$ be an essentially bordered pb surface, and
the ground ring $R$ is a commutative domain with a distinguished invertible $\hq$.
\begin{enuma}
\item The algebra $\SS$ is a domain, and is free as an $R$-module.
\item The GK dimension of $\fS$ is
\begin{equation}\label{eq.GKfS}
\GKdim(\SS) = (n^2-1)r(\fS).
\end{equation}
\item There is a tensor product factorization
\[ \SS = A_1 \boxtimes A_2 \boxtimes \dots \boxtimes A_r,\]
where $r= r(\fS)$ and each $A_i$ is isomorphic to either $\Oq$ or $\Bq$.

\item The algebra $\SS$ is orderly finitely generated.
\end{enuma}
\end{theorem}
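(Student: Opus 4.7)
My plan is to prove all four parts by induction on a complexity measure of $\fS$, with the atomic cases $\fS = \PP_2$ and $\fS = \PP_{1,1}$ (where $\SS \cong \Oq$ or $\Bq$) as base. For these atomic pieces, Proposition \ref{r.Oq} and Theorem \ref{r.Bq} already provide the domain property, freeness as an $R$-module, orderly finite generation, the existence of a quasi-monomial basis, and uniform GK dimension $n^2-1$. Parts (a), (b), (d) all follow from part (c) together with these atomic properties—part (a) via iterated application of Lemma \ref{r.domain2} to lift a quasi-monomial basis to $\SS$ (which then gives the domain property via Proposition \ref{r.domain9} and freeness from the tensor factorization), part (b) via Proposition \ref{r.tensorP}(c) applied to factors of uniform GK dimension $n^2-1$, and part (d) via Lemma \ref{r.ogen} concatenating the orderly generating sets of each factor. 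So my main task is to establish the tensor product factorization in~(c).

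The critical non-base atomic case is the triangle $\PP_3$, where $r(\PP_3) = 2$ but no non-trivial interior ideal arc exists to enable further cutting. I would directly exhibit $\cS(\PP_3) = A_1 \boxtimes A_2$ with each $A_i \cong \Oq$ by setting $A_i$ to be the image of the embedding $\cS(\PP_2) \hookrightarrow \cS(\PP_3)$ induced by a tubular bigon neighborhood of a chosen boundary edge $e_i$ (two of the three edges). The linear bijection $A_1 \otimes A_2 \to \cS(\PP_3)$ would come from constructing a matching stated-tangle basis on $\cS(\PP_3)$ that factorizes through the two boundary bigons, and the quadratic exchange law between $A_1$ and $A_2$ would be extracted from the height-exchange relations (Lemmas \ref{r.height2} and \ref{r.height5}) together with the upper-triangular nature of the $R$-matrix (Lemma \ref{r.upper}). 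I anticipate that this triangle case will be the main technical obstacle, since it does not reduce to a cutting argument and must be verified by hand.

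For the inductive step on a general $\fS$ with $r(\fS) > 1$ and $\fS$ neither atomic nor a triangle, I would choose an interior ideal arc $c$ whose cut produces simpler components: typically by peeling off a triangle near a boundary vertex (cutting from $v_{i-1}$ to $v_{i+1}$ past some vertex $v_i$), or, when $\fS$ has an interior puncture, by encircling a puncture via an arc back to the boundary to detach a once-punctured monogon. In each case the cutting homomorphism $\Theta_c : \SS \hookrightarrow \cS(\Cut_c\fS)$ is injective by Theorem \ref{t.splitting2}, and both the inductive hypothesis and the triangle case provide a tensor factorization of $\cS(\Cut_c\fS)$ into copies of $\Oq$ and $\Bq$. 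I would then lift this external factorization to an internal factorization of $\SS$ itself by identifying the subalgebras of $\SS$ whose images under $\Theta_c$ correspond to each tensor factor; note that the two ``adjacent'' factors corresponding to the new boundary edges $c_1, c_2$ arising from the cut must merge into a single factor in $\SS$, accounting for the correct net count $r(\fS)$ (rather than $r(\fS)+1$, which is $r(\Cut_c\fS)$ by the elementary Euler-characteristic calculation $r(\Cut_c\fS) = r(\fS) + 1$). Verifying the strict quadratic exchange law across this merge again relies on the same height-exchange lemmas and upper-triangular $R$-matrix used to settle the triangle case.
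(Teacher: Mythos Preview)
Your overall strategy diverges substantially from the paper's, and the inductive ``merging'' step contains a genuine gap.

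The paper does not proceed by induction on complexity. For part~(c) it invokes directly the notion of a \emph{saturated system of arcs} (Theorem~\ref{thm.saturated}, quoted from \cite{LS}): one chooses $r(\fS)$ pairwise disjoint $\pfS$-arcs $a_1,\dots,a_r$ in $\fS$ satisfying the saturation condition, and the associated arc algebras $\cS(a_i)\cong\Oq$ or $\Bq$ already form a (weak) tensor product factorization of $\SS$ by the cited result; the quadratic exchange law is then supplied by the one-line height argument Lemma~\ref{r.height}. No cutting, no triangle case by hand, and no induction are needed. For part~(a) the paper again avoids building a quasimonomial basis on $\SS$ directly: it cuts $\fS$ into a disjoint union of polygons $\fS'$, shows via Proposition~\ref{r.polygon} that $\cS(\fS')$ has a quasimonomial basis and hence is a domain, and then uses only the injectivity of the cutting map $\Theta:\SS\hookrightarrow\cS(\fS')$ to conclude that $\SS$ is a domain.

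The gap in your proposal is the lifting step. After cutting along an interior arc $c$ and obtaining a tensor factorization of $\cS(\Cut_c\fS)$ with $r(\fS)+1$ factors, you assert that the two factors adjacent to the new edges $c_1,c_2$ ``merge into a single factor in $\SS$''. But $\Theta_c$ is injective, not surjective, so a factorization of the codomain does not restrict to one of the domain; and there is no canonical way to identify a subalgebra of $\SS$ whose image under $\Theta_c$ is the product of the two adjacent factors. Concretely, arcs in $\Cut_c\fS$ ending on $c_1$ or $c_2$ become, after gluing, arcs in $\fS$ that may pass through the interior (no longer $\pfS$-arcs), so they need not generate an arc algebra of the required type inside $\SS$. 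Your proposal offers no mechanism for producing the correct $r(\fS)$ internal subalgebras, and the height-exchange lemmas you cite do not address this point. The paper sidesteps the issue entirely by choosing the saturated arcs inside $\fS$ from the outset rather than trying to reconstruct them from a cut surface.
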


The idea is to cut $\SS$ along ideal arcs to obtain polygons.

\subsection{Arc algebras}

An oriented $\pfS$-arc $a$ defines the \term{arc algebra} $\cS(a)$ as follows. There are two cases.
\begin{enumerate}[{Case} 1]
\item The two endpoints of $a$ are on two different boundary edges. Let $N(a)$ be a small tubular open neighborhood of $a$. The orientation of $a$ identifies $N(a)$ with a based bigon, where the beginning point of $a$ is on the left edge. Define $\cS(a):= \cS(N(a))$, which is identified with $ \Oq$.
\item The two endpoints of $a$ are in the same boundary edge $b$ of $\fS$. Let $N(a)$ be a small tubular neighborhood of $a \cup b$, which is diffeomorphic to the punctured monogon $\PP_{1,1}$. Define $\cS(a) := \cS(N(a)) = \Bq$. Here our $a$ is identified with the arc $a$ of Figure \ref{fig-mono-corner}.
\end{enumerate}
A collection $A=\{a_1,\dots , a_r\}$ of disjoint oriented $\pfS$-arcs is \term{saturated} if
\begin{enumerate}[(i)]
\item each connected component of $\surface \setminus \bigcup_{i=1}^r a_i$ contains exactly one ideal point (interior or boundary) of $\surface$, and
\item $A$ is maximal with respect to the above condition.
\end{enumerate}

\begin{theorem}[Corollary 7.20 of \cite{LS}]\label{thm.saturated}
Assume $\{a_1,\dots , a_r\}$ is a saturated system of oriented $\pfS$-arcs, where $\fS$ is an essentially bordered pb surface.
\begin{enumerate}
\item $r= \#\pfS - \chi(\fS)$.
\item For each $i$, the embedding $N(a_i) \embed \fS$ induces an embedding of algebras $\cS(a_i) = \cS(N(a_i)) \embed \SS$. We identify $\cS(a_i)$ with the image under the embedding.
\item The algebras $\cS(a_1), \dots, \cS(a_r)$ form a tensor product factorization of $\SS$.
\end{enumerate}
\end{theorem}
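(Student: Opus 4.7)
The plan is to prove the three parts in order, exploiting the splitting homomorphism $\Theta_c$ (Theorem \ref{t.splitting2}) as the main tool and the classification of complementary pieces that results from the saturation hypothesis.

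For part (1), I would give an Euler characteristic argument. The maximality plus the ``exactly one ideal point'' condition on a component forces that each closed complementary piece (obtained by reinserting its unique ideal point into $\bfS$) is a closed topological disk: if the ideal point is a boundary vertex the component is already a monogon-type disk, and if it is an interior puncture the region is a punctured disk whose closure in $\bfS$ is still a disk. Hence each piece has $\chi=1$. Gluing the $s$ closed disks back together along the $r$ arcs recovers $\bfS$, and each arc identifies two $\chi=1$ edges into a single $\chi=1$ edge, so $\chi(\bfS)=s-r$. Saturation forces $s=|\mathcal V|=p_\partial+p_{\mathrm{int}}$, and with the convention $\chi(\fS)=\chi(\bfS)-p_{\mathrm{int}}$ together with $\#\pfS=p_\partial$, algebra yields $r=\#\pfS-\chi(\fS)$.

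For part (2), the inclusion $N(a_i)\hookrightarrow\fS$ is a strict proper embedding (the two boundary edges of the bigon land on two different boundary edges of $\fS$; in the type-2 case $N(a_i\cup b)$ has a single boundary edge and the embedding remains strict), so by Subsection \ref{sec.embed} it induces an algebra homomorphism $\cS(a_i)\to\SS$. For injectivity, choose an interior ideal arc $c_i$ that runs parallel to $a_i$ on the side away from $N(a_i)$ and whose endpoints are punctures. Cutting along $c_i$ gives a splitting $\Theta_{c_i}:\SS\hookrightarrow\cS(N(a_i))\otimes\cS(\fS')$, where $\fS'$ is the complement; this splitting is injective by Theorem \ref{t.splitting2} since $\fS$ is essentially bordered. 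The composition $\cS(a_i)\to\SS\xrightarrow{\Theta_{c_i}}\cS(a_i)\otimes\cS(\fS')$ carries any $x\in\cS(N(a_i))$ to $x\otimes 1_{\emptyset}$, hence is injective, and therefore so is the first map.

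For part (3), I would iterate the splitting along a disjoint system of interior ideal arcs $c_1,\dots,c_k$ chosen inside the complement of the $N(a_i)$'s so that the resulting cut pieces are exactly the $N(a_i)$'s together with monogons $\PP_1$ (possible precisely because saturation forces every complementary component to be a monogon of $\PP_1$-type once the type-2 arcs have already absorbed each interior puncture into an $N(a_i\cup b)\cong\PP_{1,1}$). Since $\cS(\PP_1)=R$, the iterated splitting becomes an injective algebra homomorphism
\[
\Theta:\SS\hookrightarrow\bigotimes_{i=1}^{r}\cS(a_i).
\]
Since the $N(a_i)$'s are disjoint inside $\fS$, the elements $x_i\in\cS(a_i)\subset\SS$ can be stacked independently and their product $x_1\cdots x_r$ is taken by $\Theta$ to $x_1\otimes\cdots\otimes x_r$. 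Hence the multiplication map $m:\bigotimes_i\cS(a_i)\to\SS$ satisfies $\Theta\circ m=\id$, so $m$ is a bijection and $\Theta$ is its inverse. This establishes condition (i) of Definition \ref{def.22}. For condition (ii), take the standard generators $G_i=\{u_{pq}^{(i)}\}$ of $\cS(a_i)\cong\Oq$ in the type-1 case, or $G_i=\{\chu_{pq}^{(i)}\}$ of $\Bq$ in the type-2 case. If $a_i$ and $a_j$ share no boundary edge, the generators literally commute. If they share a boundary edge, rewriting $g_ig_j$ as a linear combination of $g_j'g_i'$ is a height-order exchange near that shared edge, and by the upper-triangular height-exchange identities (Lemma \ref{r.upper} and \eqref{eq.epsR}) each such exchange produces an $R$-linear combination of products of single generators, giving the quadratic exchange law.

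The main technical obstacle is organizing the interior cutting system $\{c_i\}$ so that the complementary pieces after cutting are precisely the $N(a_i)$'s and genuine monogons; this is what makes saturation essential and is where a careful topological argument (using maximality to rule out stray complementary pieces without ideal points) is needed before the splitting theorem can be applied cleanly.
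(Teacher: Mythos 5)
The paper does not reprove parts (1), (2), or the bijectivity of the multiplication map $\bigotimes_i\cS(a_i)\to\SS$ (condition (i) of Definition~\ref{def.22}) — all of these, together with the weaker commutation $A_iA_j=A_jA_i$, are cited verbatim from \cite[Corollary 7.20]{LS}. The only thing proved is the upgrade from the weak tensor product factorization to the full one, i.e.\ the quadratic exchange law~\eqref{eq.quad}, and this is done exactly the way you propose: take $G_i$ to be the stated versions of $a_i$, observe that $\Pol_1(G_i)\Pol_1(G_j)=M(a_i\overleftarrow\sqcup a_j)$, and invoke the height-exchange principle (Lemma~\ref{r.height}, which is the clean packaging of the counit/coaction computation you spell out via~\eqref{eq.epsR}). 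So your condition-(ii) argument agrees with the paper, and your proofs of (1) and (2) are supererogatory but reasonable.

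However, your proposed proof of condition (i) has a genuine, unrepairable gap. You want to choose interior \emph{ideal} arcs $c_1,\dots,c_k$ so that the cut pieces are precisely the $N(a_i)$'s plus monogons, and then use $\cS(\PP_1)=R$ together with $\Theta\circ m=\id$ to conclude $m$ is bijective. The obstruction is that the splitting homomorphism $\Theta_c$ of Theorem~\ref{t.splitting2} only exists for \emph{ideal} arcs, whose endpoints are at ideal points of $\fS$; but $\partial N(a_i)\setminus\pfS$ consists of $\pfS$-arcs whose endpoints lie in the \emph{interior} of boundary edges, so $N(a_i)$ can never be a component of $\Cut_{c_1,\dots,c_k}(\fS)$. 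If you isotope the two boundary arcs of $N(a_i)$ to ideal arcs, one becomes a loop at the adjacent vertex $v$ (trivial arc) and the other becomes an arc between the two \emph{far} vertices of the two boundary edges, and the resulting piece containing $a_i$ is a quadrilateral $\PP_4$, not a bigon. An Euler characteristic count makes the impossibility precise: for $\fS=\PP_3$ with two type-1 arcs, cutting $k$ times produces total Euler characteristic $1+k$ with $3+2k$ boundary edges, while $2$ bigons and $m$ monogons give $\chi=2+m$ and $4+m$ boundary edges; solving $1+k=2+m$, $3+2k=4+m$ yields $m=-1$, a contradiction. In particular one can never split $\cS(\PP_3)$ onto $\Oq\otimes\Oq$ by cutting — yet the claimed tensor product factorization $\cS(\PP_3)\cong\Oq\boxtimes\Oq$ does hold, showing that the bijectivity must be established by a completely different mechanism (in~\cite{LS} it comes from an explicit-basis / filtration argument, not a splitting). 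Moreover even if a splitting onto the $N(a_i)$'s were available, your claimed identity $\Theta\circ m=\id$ would only give injectivity of $m$ and surjectivity of $\Theta$; you still need injectivity of $\Theta$ as a separate input, so the logic only closes if the cutting lands \emph{exactly} in $\bigotimes_i\cS(a_i)$, which it cannot.
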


\begin{proof}
Parts (a), (b), and the fact that $\cS(a_i), \dots, \cS(a_i)$ form a weak tensor product factorization of $\SS$ was proved in \cite[Corollary 7.20]{LS}.

For each $i$ let $G_i$ be the set of stated $\pfS$-arcs which are $a_i$ with all possible states. That is, $G_i$ is the image of $G=\{u_{ij} \mid i,j \in \JJ\}$ under the identification $\cS(a_i) = \Oq$. Then $G_i$ is an algebra generator set for $\cS(a_i)$.

Let $a_i\overleftarrow\sqcup a_j$ be the $n$-web diagram, which is $a_i \sqcup a_j$, with boundary order defined so that on each boundary edge any endpoint of $a_i$ is higher than any endpoint of $a_j$. Then $\Pol_1(G_i) \Pol_1(S_j)= M(a_i\overleftarrow\sqcup a_j)$, where $M(\al)$ is defined in Subsection \ref{ss.height}. Because $a_i\overleftarrow\sqcup a_j$ and $a_j\overleftarrow\sqcup a_i$ differ only in the boundary order, by Lemma \ref{r.height} we have $\Pol_1(G_i) \Pol_1(G_j) = \Pol_1(G_j) \Pol_1(G_i)$. Thus $\cS(a_i), \dots, \cS(a_i)$ form a tensor product factorization of $\SS$.
\end{proof}

Recall the notion of strict embedding in Subsection \ref{sec.embed}.

\begin{corollary}\label{r.embed1}
Suppose $\fS'\embed \fS$ is a strict embedding of essentially bordered pb surfaces. Assume that there is a saturated system of $\fS'$ which is a subset of a saturated system of $\fS$. Then the natural map $\cS(\fS') \to \SS$ is an algebra embedding.
\end{corollary}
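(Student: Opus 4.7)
The plan is to reduce injectivity of the natural algebra homomorphism $\iota: \cS(\fS') \to \SS$ to an elementary fact about tensor products of free $R$-modules, using the tensor product factorizations produced by Theorem~\ref{thm.saturated}.

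I would start by letting $A' = \{a_1, \dots, a_{r'}\}$ denote the given saturated system of $\fS'$, extended to a saturated system $A = \{a_1, \dots, a_{r'}, a_{r'+1}, \dots, a_r\}$ of $\fS$. Applying Theorem~\ref{thm.saturated} to each surface yields tensor product factorizations
\[ \cS(\fS') = \cS(a_1) \boxtimes \cdots \boxtimes \cS(a_{r'}), \qquad \SS = \cS(a_1) \boxtimes \cdots \boxtimes \cS(a_r). \]
For each $i$, the arc algebra $\cS(a_i) = \cS(N(a_i))$ is defined via a tubular neighborhood $N(a_i)$ of $a_i$ (together with the containing boundary edge in Case~2). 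Since $\fS' \embed \fS$ is strict, for $i \leq r'$ one may take $N(a_i) \subset \fS'$ so that both $N(a_i) \embed \fS'$ and $N(a_i) \embed \fS$ are strict embeddings, and by functoriality of $\cS(-)$ under strict embeddings (Subsection~\ref{sec.embed}) the map $\iota$ is compatible with these arc subalgebras: the composition $\cS(a_i) \embed \cS(\fS') \xrightarrow{\iota} \SS$ equals the direct embedding $\cS(a_i) \embed \SS$ from Theorem~\ref{thm.saturated}.

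From this compatibility it follows that, under the $R$-module isomorphisms furnished by the two tensor product factorizations, $\iota$ corresponds to the map
\[ \bigotimes_{i=1}^{r'} \cS(a_i) \longrightarrow \bigotimes_{i=1}^{r} \cS(a_i), \qquad x_1 \otimes \cdots \otimes x_{r'} \longmapsto x_1 \otimes \cdots \otimes x_{r'} \otimes 1 \otimes \cdots \otimes 1. \]
Each factor $\cS(a_i)$ is isomorphic to either $\Oq$ or $\Bq$. By Proposition~\ref{r.Oq} and Proposition~\ref{r.Bq1}, each admits a free $R$-basis (the quasimonomial basis), and in each case the unit $1$ is a basis element (corresponding to the zero class in $\Gamma$). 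Hence $1 \otimes \cdots \otimes 1$ extends to a free $R$-basis of $\bigotimes_{i=r'+1}^{r} \cS(a_i)$, so the inclusion $R \embed \bigotimes_{i=r'+1}^{r} \cS(a_i)$ sending $1 \mapsto 1 \otimes \cdots \otimes 1$ splits as $R$-modules. Since $\bigotimes_{i=1}^{r'} \cS(a_i)$ is free over $R$ and hence flat, tensoring preserves injectivity, and the claimed embedding follows.

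The only step that requires care is the compatibility assertion for the arc-subalgebra embeddings used above; this is not a computation but reduces to the fact that the embeddings of Theorem~\ref{thm.saturated} are induced by the inclusion $N(a_i) \embed \fS$ and therefore behave naturally under strict embeddings of the ambient surface.
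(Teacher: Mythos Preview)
Your proof is correct and is precisely the argument the paper has in mind: the corollary is stated without proof in the paper, being treated as an immediate consequence of Theorem~\ref{thm.saturated}, and you have supplied exactly the details that justify this---namely, that under the two tensor product factorizations the induced map becomes $x_1\otimes\cdots\otimes x_{r'}\mapsto x_1\otimes\cdots\otimes x_{r'}\otimes 1\otimes\cdots\otimes 1$, which is injective because each factor is $R$-free with $1$ as a basis element.
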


\subsection{Integrality for the polygon}

Recall that $\PP_k$ is the $k$-gon with a based vertex. Let $v_1,\dots, v_k$ be the vertices of $\PP_k$ in counterclockwise order, beginning at the based vertex. Let $a_i$ be the oriented corner arc at $v_i$ as depicted in Figure \ref{fig-poly-arc}. Fix a linear order $\ord$ on the set $\JJ^2$. By Proposition \ref{r.Oq} the set $B^\ord =\{b(m) \mid m\in \Gamma\}$ is a quasimonomial basis of $\Oq$, parameterized by the enhanced monoid $(\Gamma, d_2)$. Let $b_i(m)$ is the image of $b(m)$ under the identification $\cS(a_i) = \Oq$.

\begin{figure}
\centering
\input{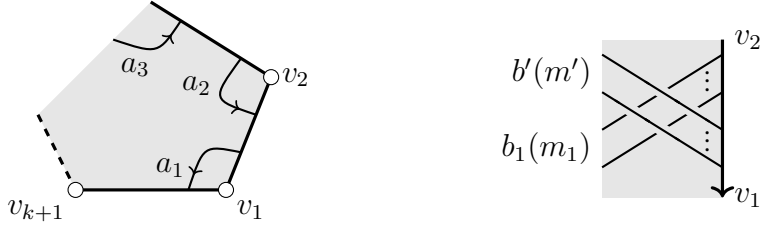}
\caption{Corner arcs of the polygon $\poly_{k+1}$ and the product $b'(m')b_1(m_1)$}\label{fig-poly-arc}
\end{figure}

\begin{proposition}\label{r.polygon}
The algebra $\cS(\PP_k)$ has a quasimonomial basis
\[B:= \{b_1(m_1) \dots b_{k-1}(m_{k-1}) \mid (m_1,\dots, m_{k-1}) \in \Gamma^{k-1} \}\]
parameterized by $(\Gamma^{k-1}, d_2^{k-1})$. Consequently $\cS(\PP_k)$ is a domain.
\end{proposition}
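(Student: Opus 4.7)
I would build the quasimonomial structure on $\cS(\PP_k)$ from the factors $\cS(a_i) \cong \Oq$ by combining the tensor product factorization of Theorem~\ref{thm.saturated} with iterated applications of Lemma~\ref{r.domain2}, using the quasimonomial basis of $\Oq$ from Proposition~\ref{r.Oq}. First, $\{a_1,\ldots,a_{k-1}\}$ is a saturated system on $\PP_k$: the $k-1$ corner arcs isolate $v_1,\ldots,v_{k-1}$ in small triangles while the complement contains $v_k$, yielding $k$ components each with exactly one ideal point; this matches \eqref{eq.rfS}, which gives $r(\PP_k)=k-1$. Hence Theorem~\ref{thm.saturated} applies to give
\[\cS(\PP_k) = \cS(a_1) \boxtimes \cdots \boxtimes \cS(a_{k-1}),\]
and each $\cS(a_i)\cong\Oq$ already carries the quasimonomial basis $\{b_i(m_i)\mid m_i\in\Gamma\}$ parameterized by $(\Gamma,d_2)$. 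In particular, $B$ is immediately a free $R$-basis of $\cS(\PP_k)$.

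To upgrade $B$ to a quasimonomial basis, I would induct downward on $i$, establishing the quasimonomial property for $A^{(i)}:=\cS(a_i)\boxtimes\cdots\boxtimes\cS(a_{k-1})$ parameterized by $(\Gamma^{k-i},d_2^{k-i})$. The base case $i=k-1$ is Proposition~\ref{r.Oq}. For the inductive step I apply Lemma~\ref{r.domain2} with $A_1=\cS(a_i)$ and $A_2=A^{(i+1)}$; both factors have quasimonomial bases by the base case and inductive hypothesis. The key observation is that for $j\ge i+2$ the arcs $a_i$ and $a_j$ are disjoint and share no boundary edge, so $\cS(a_j)$ commutes outright with $\cS(a_i)$. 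Therefore the exchange condition \eqref{eq.ord7} reduces to verifying the commutation between $\cS(a_i)$ and $\cS(a_{i+1})$ across the single shared edge $v_iv_{i+1}$.

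The main obstacle is this exchange. The plan is to localize the commutation to a bigon neighborhood of $v_iv_{i+1}$ via the coaction \eqref{e.coact}, so that the height exchange of the boundary endpoints of $b_i(m_i)$ and $b_{i+1}(m_{i+1})$ takes place inside a controlled region. Applying the upper-triangular form of the $R$-matrix from Lemma~\ref{r.upper}(b), with the scalar $q$-commutation piece extracted as in Lemma~\ref{r.height5}, produces the straight (commuted) diagram up to a uniform $q$-power plus correction terms in which the shared-edge states have been redistributed. The crucial bookkeeping step is to verify, using the per-edge $\LL$-grading of Proposition~\ref{r.grade1} (which forces the states on the non-shared boundary edges of $a_i$ to be preserved) together with the $d_{12}$-refinement of Corollary~\ref{r.boo12} to re-expand each correction in the basis of $\Oq\cong\cS(a_i)$, that each correction lies in $\cS(a_i)(d_2<m_i)\cdot A^{(i+1)}$. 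Once \eqref{eq.ord7} is established in this form, Lemma~\ref{r.domain2} completes the inductive step, and Proposition~\ref{r.domain9} yields that $\cS(\PP_k)$ is a domain.
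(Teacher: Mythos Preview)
Your proposal is correct and follows essentially the same route as the paper: both iterate Lemma~\ref{r.domain2} over the arc-algebra factors, reducing the exchange condition \eqref{eq.ord7} to the interaction of two adjacent corner arcs on their single shared edge and handling that via Lemma~\ref{r.upper}(b). The only differences are cosmetic---the paper inducts on the polygon size (embedding $\cS(\PP_k)$ into $\cS(\PP_{k+1})$ via Corollary~\ref{r.embed1}) rather than on your tail subalgebras $A^{(i)}$, and it does not invoke Lemma~\ref{r.height5} or the $\LL$-grading (neither is needed: Lemma~\ref{r.upper} already only perturbs states on the shared edge, and \eqref{eq.boo2} alone suffices to place the corrections in $\Oq(d_2<m_i)$).
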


\begin{proof}
We proceed by induction. When $k=2$ this is Proposition \ref{r.Oq}. Assume that the statement is true for $k$.

If we remove the boundary edge connecting $v_1$ and $v_{k+1}$ from $\PP_{k+1}$, the result is a $k$-gon, for which $a_2, \dots, a_k$ form a saturated system. By Theorem \ref{thm.saturated} and Corollary \ref{r.embed1}, we can identify $\cS(\PP_k)$ with the subalgebra of $\cS(\PP_{k+1})$ generated by $\cS(a_2), \dots, \cS(a_k)$. Besides $\skein(a_1)$ and $\skein(\poly_k)$ form a tensor product factorization of $\cS(\PP_{k+1})$.

Let $b'(m')=b_2(m_2) \dots b_k(m_k)$ for $m'=(m_2,\dots, m_k) \in \Gamma^{k-1}$. By the induction hypothesis, $B'=\{b'(m') \mid m' \in \Gamma^{k-1} \}$ is a quasimonomial basis of $\skein(\poly_k) \subset \skein(\poly_{k+1})$. Consider the commutation of an element of the basis $B'$ and an element of the basis $B^\ord$ of $\cS(a_1)$. We have
\[ b'(m') b_1 (m_1) \eqq b_1(m_1) b'(m') + \sum_{d_2(m'_1) < d_2(m_1)} b_1(m'_1) \cS(\PP_{k}),\]
which follows from Lemma~\ref{r.upper}, where $\vec{j}'\ll\vec{j}$ implies $d_2(m'_1)<d_2(m_1)$. See Figure \ref{fig-poly-arc}. By Lemma~\ref{r.domain2}, the set $B$ is a quasimonomial basis for $\cS(\PP_{k+1})$ parameterized by $(\Gamma^k, d_2^k)$.
\end{proof}

\subsection{Proof of Theorem \ref{thm.domain}}

(a) Cut $\fS$ along ideal arcs to get a disjoint union $\fS'$ of polygons $P_1, \dots, P_k$. If $i\neq j$ then each element of $\cS(P_i)$ commutes with each element of $\cS(P_j)$, and each $\cS(P_i)$ has a quasimonomial basis by Proposition \ref{r.polygon}. Hence by Lemma~\ref{r.domain2} the algebra $\cS(\fS')= \ot \cS(P_i)$ has a monomial basis, and is a domain.

By Theorem \ref{t.splitting2}, the cutting homomorphism $\cS(\fS)\to \cS(\fS')$ is an embedding. It follows that $\cS(\fS)$ is a domain.

(b) and (c). Since $\fS$ is essentially bordered, it has a saturated system of $\pfS$-arcs $a_1, \dots, a_r$. By Theorem \ref{thm.saturated}, $\cS(a_1), \dots, \cS(a_k)$ form a tensor product factorization of $\SS$. Each $\cS(a_i)$ is either $\Oq$ or $\Bq$, and both have uniform GK dimension $n^2-1$, by Propositions~\ref{r.Oq} and~\ref{r.Bq}. By Proposition \ref{r.tensorP}, the GK dimension of $\SS$ is $r(n^2-1)$. By Theorem \ref{thm.saturated}(a), we have $r= r(\fS)=\#\pfS -\chi(\fS)$.

(d) By part (c) we have $\SS = A_1 \dots A_r$. Each $A_i$ is orderly finitely generated by Proposition \ref{r.basisOq} and Theorem \ref{thm.basisBq}. Hence by Lemma~\ref{r.ogen} the algebra $\SS$ is orderly finitely generated.

\section{Reduced skein algebra} \label{sec.Rd}

We define the reduced skein algebra $\bSS$ and establish some of its properties.

\subsection{Bad arcs} \label{ss.badarcs}

Let $v$ be a vertex of a pb surface $\fS$. We call $v$ a \term{monogon vertex} if the connected component of $\fS$ containing $v$ is a monogon, having $v$ as its only vertex. The \term{corner arcs} $C(v)_{ij}$ and $\ceC(v)_{ij}$, where $i,j \in \JJ $, are depicted in Figure \ref{fig-corner}. We also denote by $C(v)$ (resp. $\ceC(v)$) the arcs $C(v)_{ij}$ (resp. $\ceC(v)_{ij}$) without states. Note that $C(v)$ is a trivial $\pfS $-arc if and only if $v$ is a monogon vertex.

\begin{figure}
\centering
\input{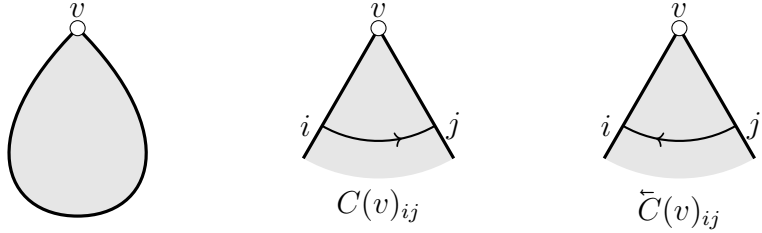}
\caption{Monogon vertex and corner arcs}\label{fig-corner}
\end{figure}

For a non-monogon vertex $v$, let
\[C_v = \{C(v)_{ij} \mid i <j \} , \quad \ceC_v = \{\ceC(v)_{ij} \mid i <j\}.\]
If $v$ is a monogon vertex let $C_v = \ceC_v = \emptyset$.
An element of $C_v$ or $\ceC_v $ is called a \term{bad arc} at $v$. Let $\Ibad_v \lhd\SS$ be the 2-sided ideal generated by $C_v \cup \ceC_v$, and $\Ibad \lhd \SS$ be the two-sided ideal generated by all bad arcs. The quotient algebra
\[\Srd(\fS):= \cS(\fS)/\Ibad\]
is called the \term{reduced $SL_n$-skein algebra} of $\fS$.

\begin{proposition}\label{prop-cut-rd}
Suppose $c$ is an interior ideal arc of a punctured bordered surface $\surface$. Then the cutting homomorphism $\Theta_c:\skein(\surface)\to\skein(\Cut_c(\surface))$ descends to the reduced algebra
\begin{equation}
\rd\Theta_c:\reduceS(\surface)\to\reduceS(\Cut_c(\surface)).
\end{equation}
\end{proposition}

\begin{proof}
We just need to show that the image of bad arcs are in $\Ibad$. Let $\alpha=C(v)_{ij}$ be a bad arc. The case $\ceC(v)_{ij}$ is similar.

If the ideal arc $c$ does not end on $v$, then after isotopy, $\alpha$ is disjoint from $c$. Then the image is clearly a bad arc.

If one of the endpoints of $c$ is $v$, then the image of $\alpha$ has the form
\begin{equation}
\Theta_c(\alpha)=\sum_{s\in\JJ}C(v_1)_{is}C(v_2)_{sj},
\end{equation}
where $v_1,v_2$ are vertices of $\Cut_c(\surface)$ corresponding to $v$. The bad condition $i<j$ implies that at least one of the factors is bad in each term of the sum. Hence the image is in $\Ibad$.

The last case where both endpoints of $c$ are $v$ is similar.
\end{proof}

\subsection{Normality of $C_v$ and \texorpdfstring{$\ceC_v$}{cevCv}}

\begin{theorem}\label{r.normal1}
Let $\al$ be a non-stated $n$-web diagram. Recall that $M(\al)\subset \SS$ is $R$-span of all stated $n$-webs which are $\al$ with arbitrary states. We have
\begin{align}
C_v M(\al)& = M(\al) C_v, \quad \ceC_v M(\al)= M(\al) \ceC_v, \label{eq.normal1}\\
C_v \SS &= \ceC_v \SS. \label{eq.normal2}
\end{align}
Consequently,
\begin{align}
\Ibad_v & =C_v \SS = \ceC_v \SS= \SS C_v = \SS \ceC_v \label{eq.Ibadv}\\
\Ibad &= \sum _{v: \text{vertices}} \Ibad_v. \label{eq.Ibad}
\end{align}
\end{theorem}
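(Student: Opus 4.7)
The plan is to deduce the chain of identities \eqref{eq.normal1}--\eqref{eq.Ibad} from two core facts: a normality statement $C_v M(\alpha) = M(\alpha) C_v$ (and its reverse-orientation counterpart) at the level of $R$-modules spanned by stated webs, and an equality $C_v \SS = \ceC_v \SS$ of left-ideals that relates the two orientations. Once these are established, the consequences \eqref{eq.Ibadv} and \eqref{eq.Ibad} follow by standard ideal-theoretic arguments from the definition of $\Ibad_v$ as the two-sided ideal generated by $C_v \cup \ceC_v$.

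To prove $C_v M(\alpha) = M(\alpha) C_v$, I would represent both sides geometrically. Fix $\beta \in M(\alpha)$ and $i<j$. The product $C(v)_{ij} \beta$ places $C(v)_{ij}$ above $\beta$ on both boundary edges $e_1, e_2$ adjacent to $v$. Since $C(v)$ lives in a small tubular neighborhood of $v$, the $n$-web $C(v) \sqcup \beta$ admits a representative with no crossings in the interior of $\fS$, so the only nontrivial content is the boundary height order on $e_1, e_2$. I would swap the heights one endpoint at a time, using \eqref{e.crossp-wall} (or the multi-strand Lemma \ref{r.upper}(b)) as the local exchange rule. Each swap produces a main term (states unchanged) plus upper-triangular corrections. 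The crucial observation is that the corner arc $C(v)_{ij}$ has \emph{opposite} boundary orientations at $e_1$ and $e_2$: at one endpoint it is outgoing while at the other it is incoming. Via the conjugation $\bi = n+1-i$ that relates states on incoming versus outgoing strands in the $\dd$-grading of Proposition~\ref{r.grade1}, the correction on $e_1$ can only \emph{lower} the state $i$ while the correction on $e_2$ effectively \emph{raises} the state $j$; hence $i<j$ is preserved in every term of the expansion, yielding $C_v M(\alpha) \subset M(\alpha) C_v$. The reverse inclusion is symmetric, and the same argument applied to arcs with reversed orientation gives $\ceC_v M(\alpha) = M(\alpha) \ceC_v$.

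For the identity $C_v \SS = \ceC_v \SS$, it suffices to show both inclusions. Given $\ceC(v)_{ij}$ with $i<j$, I would insert a near-boundary cap at its $e_1$-endpoint and apply \eqref{e.capnearwall} to rewrite the incoming strand as a U-shape absorbed by a short outgoing stated arc. The resulting diagram becomes a finite sum, indexed by an auxiliary state $i'$, of products $C(v)_{i'j} \cdot y_{i'}$ for some $y_{i'} \in \SS$. Tracking the conjugation-invariant structure built into \eqref{e.capnearwall}, together with the bad-arc hypothesis $i<j$, shows that every $i'$ produced in the expansion satisfies $i'<j$, so each summand lies in $C_v \SS$. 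This establishes $\ceC_v \subset C_v \SS$, and by symmetry $C_v \subset \ceC_v \SS$. The remaining identities follow formally: because $C_v$ is both left- and right-normal and $C_v \SS = \ceC_v \SS$, all four one-sided products in \eqref{eq.Ibadv} coincide and equal $\Ibad_v$, while \eqref{eq.Ibad} holds because the generating set of $\Ibad$ partitions by vertex.

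The main obstacle I expect is the orientation bookkeeping in the first step: naively applying Lemma~\ref{r.upper} on both edges would produce corrections dropping \emph{both} states $i$ and $j$, which could easily violate $i<j$ (e.g., for $n=3$ with $(i,j)=(2,3)$ dropping to $(2,1)$). Exploiting the orientation opposition at $e_1$ and $e_2$ to arrange that one correction lowers and the other raises is delicate. A cleaner route, which may be preferable in practice, is to use the coaction $\Delta_{e_1}: \SS \to \SS \ot \cF$ to reduce to a bigon computation: the endpoint of $C(v)_{ij}$ at $e_1$ contributes a bigon-piece generator whose orientation places it within $G^-$ (up to the standard identifications), and the normality $G^- \cF = \cF G^-$ from Proposition~\ref{r.redOq0}(c) then transfers to the surface via the comodule-algebra structure, reassembling to the desired equality after applying the coaction on the other edge as well.
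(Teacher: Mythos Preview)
Your outline is right and the final deductions of \eqref{eq.Ibadv}--\eqref{eq.Ibad} from \eqref{eq.normal1}--\eqref{eq.normal2} are fine, but both core steps have gaps in the mechanism you invoke.

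For \eqref{eq.normal1}, the asymmetry you need --- that the correction terms replace $C(v)_{ij}$ by $C(v)_{i'j'}$ with $i'\le i$ and $j'\ge j$ --- is \emph{not} caused by the opposite strand orientations at the two endpoints. Lemma~\ref{r.upper} is explicitly orientation-independent, and the $\dd_e$-grading of Proposition~\ref{r.grade1} is a conservation law that constrains totals, not the direction of individual state shifts. What the paper does is work in a bigon neighborhood of the corner arc and split via the coaction: isotoping $(\alpha,s)a_{ij}$ so that the corner arc passes over the $\alpha$-bubble forces a \emph{positive} crossing near the $i$-endpoint and a \emph{negative} crossing near the $j$-endpoint (this is a topological fact about how the arc wraps around the bubble, unrelated to edge orientation). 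Applying Lemma~\ref{r.upper}(a) to each counit then gives $i'\le i$ from the positive side and $j'\ge j$ from the negative side. Your ``cleaner route'' via the coaction is exactly this argument; the point is that the crossing-sign asymmetry, not orientation, does the work.

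For \eqref{eq.normal2}, your cap-insertion sketch is not convincing: applying \eqref{e.capnearwall} at an endpoint of $\ceC(v)_{ij}$ introduces a sum over \emph{all} auxiliary states, and there is no evident reason the surviving terms satisfy $i'<j$. The paper instead reduces to the arc algebra $\cS(C(v))$, which is $\Oq$ or $\Bq$ depending on whether $v$ meets two edges or one. In $\Oq$ the equality $G^-\Oq=\overleftarrow{G}^-\Oq$ is obtained by applying the antipode $S$ (an anti-automorphism with $S(u_{ij})\eqq\cev u_{\bar j\,\bar i}$ and $S(\cI^-)=\cI^-$) to the already-known normality $G^-\Oq=\Oq G^-$; the $\Bq$ case is the transmuted analog. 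The surface identity then follows from $C_v\,\SS=C_v\,\cS(C(v))\,\SS=\ceC_v\,\cS(C(v))\,\SS=\ceC_v\,\SS$.
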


\begin{proof}
For a map $s:\partial\alpha\to\{1,\dots,n\}$ let $(\alpha,s)$ be the stated $n$-web diagram which is $\al$ stated by $s$. Define a partial order $\preceq$ on $\JJ^2$ such that $(i',j') \preceq (i,j)$ if $i' \le i$ and $j' \ge j$.

\begin{lemma}\label{lemma-comm-arc}
In $\skein(\surface)$, for $a_{ij} = C(v)_{ij}$ or $\ceC(v)_{ij}$, one has
\begin{align}
a_{ij}(\al, s)& \eqq (\alpha,s)a_{ij} +
\Span\{(\al, s') a_{i'j'}\mid (i',j') \prec (i,j)\},\label{eq.hmono}\\
(\alpha,s)a_{ij}&\eqq a_{ij}(\al, s) + \Span\{a_{i'j'} (\alpha, s')\mid (i',j') \prec (i,j)\}.\label{eq.hmono2}
\end{align}
\end{lemma}

\begin{proof}
The case when $R=\Zq$ implies the general case. Assuming $R=\Zq$, the two statements are related by the reflection of Subsection \ref{sec.reflection}. Let us prove \eqref{eq.hmono2}.

Since $a_{ij}$ is a corner arc, $\alpha$ can be isotoped so that it does not intersect $\al$.

First, suppose $v$ is incident to two different edges. The calculation can be done in a neighborhood of $a_{ij}$, which is identified with the bigon such that $v$ is the top vertex. Then

\begin{align}
(\alpha,s)a_{ij}&=\bubblestrand{b}{$\alpha$}{$s_1$}{$s_2$}{$i$}{$j$}
=\bubblesplit{$\alpha$}{$s_1$}{$s_2$}{$i$}{$j$}\\
&=\sum_{i',j',s'}\epsilon\left(\crossSt{<-}{<-}{p}{}{}{$i$}{$s_1$}{$s'_1$}{$i'$}\right) \bubblestrand{t}{$\alpha$}{$s'_1$}{$s'_2$}{$i'$}{$j'$} \epsilon\left(\crossSt{<-}{<-}{n}{}{}{$s'_2$}{$j'$}{$j$}{$s_2$}\right)\\
&=\sum_{i',j',s'}\epsilon\left(\crossSt{<-}{<-}{p}{}{}{$i$}{$s_1$}{$s'_1$}{$i'$}\right) \epsilon\left(\crossSt{<-}{<-}{n}{}{}{$s'_2$}{$j'$}{$j$}{$s_2$}\right) a_{i'j'}(\alpha,s')
\end{align}
Here, the second line uses the coactions on both edges of the bigon. By Lemma \ref{r.upper}, the counit values are $\eqq 1$ if $(i',j') = (i,j)$, and non-zero only if $i' \le i$, $j' \ge j$ (and $s'_1\gg s_1$, $s'_2\ll s_2$). This implies~\eqref{eq.hmono2}.

Now suppose $v$ is incident to only one edge. The neighborhood of $a_{ij}$ together with the boundary edge is identified with the punctured monogon.
\begin{equation}
(\alpha,s)a_{ij}=\bubblemono
=\sum_{i',j',s'}a_{i'j'}(\alpha,s')\epsilon\left(\bubblemonosplit\right).
\end{equation}
In this picture, the unmarked sides of the square is the vertex $v$, and the unshaded region in the middle is the puncture of the monogon. By Lemma \ref{r.upper}, the counit is $\eqq 1$ if $(i',j') = (i,j)$, and non-zero only if only if $(i',j')\preceq(i,j)$. This implies \eqref{eq.hmono2}.
\end{proof}

Let us prove \eqref{eq.normal1}. If $(i',j') \preceq(i,j)$ and $C(v)_{ij}$ is a bad arc, then $C(v)_{i'j'}$ is also a bad arc since $i'\le i< j \le j'$. Equation \eqref{eq.hmono} shows that $C_v M(\al)\subset M(\al) C_v$ and $\ceC_v M(\al)\subset M(\al) \ceC_v$. The converse inclusions follow from \eqref{eq.hmono2}. This proves \eqref{eq.normal1}.

Let us prove \eqref{eq.normal2}. First assume $v$ is the based vertex of the bigon $\PP_2$. By the identification $\cS(\poly_2) = \Oq$ we have $C(v)_{ij} = u_{ij}$. Thus $C_v \cS(\poly_2) = \cI^-$, and \eqref{eq.normal2} is Identity \eqref{eq.normal11}.

Similarly, when $v$ is the vertex of the punctured monogon $\poly_{1,1}$, Identity \eqref{eq.normal2} follows from~\eqref{eq.normal12}.

Now assume $v$ is an arbitrary non-monogon vertex of a pb surface $\fS$. Then $\cS(C(v))$ is either $\Oq$ or $\Bq$, according as $v$ is incident with two different edges or one edge. In either case, we have
\[ C_v \SS= C_v \cS(C(v)) \SS = \ceC_v \cS(C(v)) \SS = \ceC_v \SS,\]
proving \eqref{eq.normal2}.

Let us prove \eqref{eq.Ibadv}. By \eqref{eq.normal1}, we have $C_v \SS = \SS C_v$. This implies $I_1=C_v \SS$ is a two-sided ideal. Similarly, $I_2= \ceC_v \SS= \SS \ceC_v$ is a two-sided ideal. By \eqref{eq.normal2}, we have $I_1=I_2$, which proves \eqref{eq.Ibadv}.

Let us prove \eqref{eq.Ibadv}. It follows from \eqref{eq.normal1} that $(\cup_{v} C_v)$ is $\SS$-normal. It follows that $I= (\cup_{v} C_v) \SS$ is two-sided ideal. Similarly $I'= (\cup_{v} \ceC_v) \SS$ is a two-sided ideal. From \eqref{eq.normal2} we have $I=I'$, which implies that $I=I'=\Ibad$. Then
\[ \Ibad = (\cup_{v} C_v) \SS = \sum_v C_v \SS = \sum_v \Ibad_v,\]
proving \eqref{eq.Ibadv}.
\end{proof}

\subsection{Top right corner quantum minor} \label{sec.Du}

Recall that $M^{I}_{J}(\buu)\in\Oq$ is the $(I \times J)$ quantum minor of the quantum matrix $\buu$, where $I, J \in \binom{\JJ}k$. Also $[i;j]= \{ k \in \BZ, i \le k \le j\}$.

For each $i\in \JJ$ let $D_i(\buu):= M^{[1;i]}_{[\bar{i};n]}(\buu)\in\Oq$, which is a top right corner quantum minor of size $i$. Note that $D_n(\buu)={\det}_q(\buu)=1$. In the notation of Subsection \ref{ss.Weyl},
\[D_i(\vec{u})=\begin{tikzpicture}[baseline=(ref.base)]
\fill[gray!20] (-1,-0.75) rectangle (1,0.75);
\draw[wall] (-1,-0.75) -- +(0,1.5) (1,-0.75) -- +(0,1.5);
\draw[det] (-1,0)node[left]{\stsize$[1;i]$}
	-- (1,0)node[right]{\stsize$[\bar{i};n]$};
\node(ref) at (0,0) {\phantom{$-$}};
\end{tikzpicture}.\]
Let $D(\buu):= D_1(\buu) D_2(\buu) \dots D_{n-1} (\buu)$. By \cite[Theorem 4.3]{JZ}, we have

\begin{proposition}\label{r.73}
Any two $D_i(\buu), D_j(\buu)$ are commuting, and each $D_i(\buu)$ is $q$-commuting with each $u_{kl}$. Consequently $D(\buu)$ is $q$-commuting with each $u_{kl}$.
\end{proposition}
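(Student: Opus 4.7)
The plan is to verify the three assertions in order: pairwise commutation of the $D_i(\buu)$, $q$-commutation of each $D_i(\buu)$ with each generator $u_{kl}$, and then the consequence for the product $D(\buu) = D_1(\buu) \cdots D_{n-1}(\buu)$.

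First, for commutation of $D_i(\buu)$ and $D_j(\buu)$ with $i<j$: the row index sets $[1;i]\subset[1;j]$ are nested, and so are the column index sets $[\bar i;n]\subset[\bar j;n]$ (since $i<j$ forces $\bar i>\bar j$). For quantum minors in $\Oq$ with doubly-nested row and column sets, the classical Leclerc--Zelevinsky quantum minor commutation identities give exact commutation (the $\hq$-exponent is zero). Alternatively, I would give a purely diagrammatic verification: represent each $D_i(\buu)$ in a bigon by the labelled arrow of Subsection~\ref{ss.Weyl}, use Lemma~\ref{r.cutdet} to split $D_j(\buu)$ along an interior arc isolating the block $[1;i]\times[\bar i;n]$, and then commute the resulting pieces past $D_i(\buu)$ via the height exchange of Lemma~\ref{r.height5}. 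The nesting of the indices ensures that all $\hq$-factors cancel.

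Second, for $q$-commutation of $D_i(\buu)$ with $u_{kl}$: observe $u_{kl}=M^{\{k\}}_{\{l\}}(\buu)$ is itself a $1\times 1$ quantum minor, so the same Leclerc--Zelevinsky rules apply. More concretely, stack the two horizontal strands representing $D_i(\buu)$ and $u_{kl}$ in the bigon $\PP_2$, and use the $\LL$-grading from Proposition~\ref{r.grade1} together with Lemma~\ref{r.height5} (applied separately on the left and right edges) to exchange heights. The result is
\[ D_i(\buu)\, u_{kl} = \hq^{2 e_i(k,l)}\, u_{kl}\, D_i(\buu) \]
for an integer $e_i(k,l)\in\BZ$ that depends only on the indicators $\mathbbm{1}_{k\in[1;i]}$ and $\mathbbm{1}_{l\in[\bar i;n]}$. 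We do not need the explicit value of $e_i(k,l)$; integrality and the fact that it is a single integer (and not a sum over a basis expansion) are the essential points.

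Finally, the statement about $D(\buu)$ is immediate: by the first assertion the $D_i(\buu)$ pairwise commute, so the product $D_1(\buu)\cdots D_{n-1}(\buu)$ is unambiguous, and by the second assertion each factor $q$-commutes with $u_{kl}$, whence so does $D(\buu)$ with exponent $\sum_{i=1}^{n-1} e_i(k,l)$. The main obstacle I foresee is the diagrammatic verification of the first assertion: tracking the cancellation of $\hq$-factors through the splitting and height exchange requires care, though the qualitative picture is clear. The cleanest approach is probably to simply invoke [JZ, Theorem~4.3] for the quantum minor commutations, with the diagrammatic picture serving as geometric intuition.
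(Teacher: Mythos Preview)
Your proposal is correct and ultimately aligns with the paper: the paper does not prove this proposition at all but simply cites \cite[Theorem 4.3]{JZ}, which is exactly the approach you recommend at the end as ``cleanest.'' Your additional diagrammatic sketches go beyond what the paper provides; they are plausible intuition, though the invocation of Lemma~\ref{r.cutdet} for the first assertion is a bit loose (that lemma is formulated for a triangle configuration, not a bigon), and tracking the exact $\hq$-exponent cancellation via Lemma~\ref{r.height5} would require more care than you indicate --- but since you yourself flag this and fall back on the citation, there is no real gap.
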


This fact will be generalized in Lemma~\ref{r.commun2} below.

\subsection{Algebra near a vertex} \label{sec.corner0}

For an oriented $\pfS$-arc $a$ whose endpoints are on two different boundary edges, let $D_i(a), D(a), M^I_J(a) \in \SS$ be the images of $D_i(\buu), D(\buu), M^I_J(\buu)$ respectively under the algebra homomorphism $\Oq= \cS(a) \to \SS$.

\begin{lemma}\label{r.commun2}
Assume $v$ is a vertex of a pb surface which is incident with two different boundary edges. Let $a= C(v)$.

For $i, j\in \JJ$ the elements $D_i(a)$ and $D_j(a)$ commute, and $D_i(a)$ is $q$-commuting with any stated $\pfS$-arc $\al$ in $\SS$.

Consequently $D(a)$ is $q$-commuting with every stated $\pfS$-arc in $\SS$.
\end{lemma}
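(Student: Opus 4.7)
Part~(1) is immediate: the strict embedding $N(a)\hookrightarrow\fS$ induces an algebra homomorphism $\cS(N(a))=\Oq\to\SS$ (Theorem~\ref{thm.saturated}) sending the commuting pair $D_i(\buu),D_j(\buu)$ from Proposition~\ref{r.73} to $D_i(a),D_j(a)$. Part~(3) follows from part~(2) since a product of elements each $q$-commuting with~$x$ still $q$-commutes with~$x$.

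For part~(2), my plan is to realize the conjugation by $D_i(a)$ as a specific diagonal automorphism of~$\SS$ built from the edge-weight automorphisms of Proposition~\ref{r.edgeweight}. In~$\Oq$, the $q$-commutations $D_i(\buu)\,u_{kl}=\hq^{2f(i,k,l)}u_{kl}\,D_i(\buu)$ from Proposition~\ref{r.73} assemble into a diagonal algebra automorphism~$\tau_i$ of~$\Oq$. Since $\Oq$ is $\LL\times\LL$-graded by the two edges of the bigon~$N(a)$ (Proposition~\ref{r.grade1}) and each $u_{kl}$ is bihomogeneous, the exponents $f(i,k,l)$ depend only on the bigrading; checking the defining relations~\eqref{eq.M11} against the explicit quantum-minor expansion of $D_i(\buu)$ then shows that $\tau_i$ equals the restriction to $\cS(N(a))$ of $\phi_{e_1,\eta_1^i}\circ\phi_{e_2,\eta_2^i}$ for characters $\eta_1^i,\eta_2^i:\JJ\to R^\times$ with $\prod_k\eta_j^i(k)=1$. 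I then claim the identity $D_i(a)\,x=\tau_i(x)\,D_i(a)$ extends from $x\in\cS(N(a))$ to every stated $\pfS$-arc~$x$ in~$\fS$. To prove this I pick an ideal arc~$c$ cutting out a bigon region $N\supset a\cup\{v\}$, isotope $x$ to be disjoint from~$a$ and transverse to~$c$, and apply the splitting homomorphism $\Theta_c$, which is injective on the essentially bordered component containing~$v$ by Theorem~\ref{t.splitting2}. Writing $\Theta_c(x)=\sum_s \beta_s\otimes \gamma_s$ indexed by cut states~$s$, each $\beta_s\in\cS(N)$ is $\dd$-homogeneous and satisfies $D_i(\buu)\beta_s=\tau_i(\beta_s)D_i(\buu)$. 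Since $D_i(a)$ has no endpoints on~$c$, the character defining~$\tau_i$ is trivial on the $\dd$-grading contributions coming from the $c$-portion of $\partial N$; the remaining contributions, from the $\pfS$-portions of $\partial N$, are determined by the fixed gradings $\dd_{e_1}(x),\dd_{e_2}(x)$ of $x$ in~$\fS$ and hence do not depend on~$s$. Thus $\tau_i(\beta_s)=\hq^{2k}\beta_s$ for a single constant~$k$, giving $\Theta_c(D_i(a)x)=\hq^{2k}\Theta_c(xD_i(a))$; by injectivity of~$\Theta_c$, this yields the $q$-commutation in~$\SS$.

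The \textbf{main obstacle} is the $s$-independence of the exponent~$k$. This hinges on the concrete factorization of $\tau_i$ through the edge-grading automorphisms of Proposition~\ref{r.edgeweight}: conjugation by $D_i(a)$ only reads $\dd$-grading on edges where $D_i(a)$ itself has endpoints, namely $e_1,e_2$, so the new states introduced on~$c$ by the splitting contribute nothing to the commutation scalar. Verifying that $\tau_i$ admits such a factorization is a combinatorial computation from~\eqref{eq.M11}, made tractable by the fact that $D_i(\buu)$ is a corner quantum minor whose bigrading is supported precisely on the two boundary edges of the bigon.
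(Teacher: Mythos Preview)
Your argument for part~(1) and the reduction of part~(3) to part~(2) are fine and match the paper. The difficulty is entirely in part~(2), and here your cutting strategy has a genuine topological gap.

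You write ``pick an ideal arc $c$ cutting out a bigon region $N\supset a\cup\{v\}$''. But no such ideal arc exists. An ideal arc runs between ideal points, and the vertex $v$ sits between the two boundary edges $e_1,e_2$; any interior ideal arc $c$ that, together with portions of $\partial\fS$, bounds a region containing the corner arc $a$ will produce a region with at least \emph{three} boundary edges (pieces of $e_1$, $e_2$, and $c$), i.e.\ a triangle $\PP_3$, not a bigon. Consequently $\cS(N)\neq\Oq$, and the identity $D_i(\buu)\beta_s=\tau_i(\beta_s)D_i(\buu)$ that you proved in $\Oq$ does not apply to the pieces $\beta_s\in\cS(N)$. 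To invoke it you would first need the $q$-commutation of $D_i(a)$ with arbitrary elements of $\cS(\PP_3)$, which is the very statement you are trying to prove (for $\fS=\PP_3$). The neighborhood $N(a)$ in the paper is a tubular neighborhood of the $\partial\fS$-arc $a$; it is diffeomorphic to $\PP_2$, but it is \emph{not} carved out by ideal arcs of $\fS$, so Theorem~\ref{t.splitting2} is not available for it.

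The paper's argument is shorter and avoids cutting entirely. It repeats the coaction computation from Lemma~\ref{lemma-comm-arc} with the single corner arc $a_{ij}$ replaced by the minor $M^I_J(a)$: applying the $\Oq$-coactions along $e_1$ and $e_2$ and using the upper-triangularity of the $R$-matrix (Lemma~\ref{r.upper}) yields
\[
M^I_J(a)\,\alpha \;\eqq\; \alpha\,M^I_J(a)\;+\;\sum_{I'\ll I,\ J'\gg J} M^{I'}_{J'}(a)\,\SS.
\]
For $D_i(a)=M^{[1;i]}_{[\bar i;n]}(a)$ the index sets are extremal, so no $I'\ll[1;i]$ or $J'\gg[\bar i;n]$ exists and the sum is empty. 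That is the whole proof. Your observation that conjugation by $D_i(\buu)$ in $\Oq$ factors through the edge-weight automorphisms $\phi_{e_1,\eta_1}\!\circ\phi_{e_2,\eta_2}$ is correct and pleasant, but by itself it does not propagate the relation from $\cS(N(a))$ to $\SS$: the height exchange of $D_i(a)$ with an arbitrary strand on $e_1$ or $e_2$ is a priori not a scalar, and showing that it is one amounts precisely to the vanishing of the off-diagonal terms above.
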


\begin{proof}
As $D_i(\buu) D_j(\buu)= D_j(\buu) D_i(\buu)$, we have $D_i(a)D_j(a)= D_j(a) D_i(a)$.

By mimicking the calculations in Lemma~\ref{lemma-comm-arc} where $a_{ij}$ is replaced with $M^I_J(a)$, we get
\begin{equation}
M^I_J(a)\alpha \eqq \alpha M^I_J(a) + \sum_{\substack{I'\ll I\\ J'\gg J}}M^{I'}_{J'}(a)\skein(\surface).
\end{equation}
Since $D_i(a)=M^I_J(a)$ where $I=[1,i]$ and $J=[\bar{i},n]$, there are no $I'$ or $J'$ satisfying the restriction of the sum. Thus the sum is empty, and the equation reduces to a $q$-commuting relation.
\end{proof}

\subsection{Individual $\Ibad_v$}

In view of \eqref{eq.Ibad} let us study $\SS/\Ibad _v$, where $v$ is a vertex of $\fS$. For example, when $\fS=\PP_2$, the bigon, and $v$ is the based vertex, then $\SS/\Ibad _v$ is exactly $\LO$, if we identify $\cS(\PP_2)= \Oq$.

\begin{lemma}\label{r.Ibad1}
Let $v$ be a vertex of a pb surface $\fS$ incident with two different edges. Let $C_v^{\mathrm{diag}}=\{C(v)_{ii}, \ceC(v)_{ii} \mid i\in \JJ\}$.
\begin{enuma}
\item Any two elements from $C_v^{\mathrm{diag}}$ commute in $\SS/\Ibad_v$. Moreover in $\SS/\Ibad_v$,
\begin{gather}
\prod_{i=1}^nC(v)_{ii} = \prod_{i=1}^n\ceC(v)_{ii} =1,\\
\ceC(v)_{ii} = C(v)_{\bar i\,\bar i}^{-1}. \label{eq.ceCii}
\end{gather}
Consequently every element of $C_v^{\mathrm{diag}}$ is invertible in $\SS/\Ibad _v$.
\item For any $x\in C_v^{\mathrm{diag}}$ and any state $n$-web $y$ over $\fS$, $x y \eqq y x$ in $\SS/\Ibad _v$.
\item Let $a=C(v)$ or $\cev{C}(v)$. Then in $\SS/\Ibad _v$,
\begin{equation}
M^{[1;i]}_J(a)=\begin{cases}
0,&J\ne[1,i],\\
\prod_{k=1}^i a_{kk},&J=[1;i].
\end{cases}
\qquad
M^I_{[\bar{i};n]}(a)=\begin{cases}
0,&I\ne[\bar{i};n],\\
\prod_{k=\bar{i}}^n a_{kk},&I=[\bar{i};n].
\end{cases}
\end{equation}
Equivalently, in terms of diagrams,
\begin{equation}\label{eq-minor-vrd}
\input{minor-vrd2}
\end{equation}
\end{enuma}
\end{lemma}

\begin{proof}
(a) First assume $\fS=\PP_2$ and $v$ is the based vertex. Then $\cS(\fS)= \Oq$ and $\Ibad= \cI^-$. Hence $\SS/\Ibad _v= \LO$. Besides $\LO \ni \bar u_{ij} = C(v)_{ij}\in \SS/\Ibad _v$.

In Subsection \ref{sec.Bq}, we see that any two $\bar u_{ii}$ and $\bar u_{jj}$ commute. By \eqref{eq.diag0}, we have $\prod_{i} \bar u_{ii}=1$. Applying the antipode $S$ we get $\prod_{i} \ceC(v)_{ii}=1$. From \eqref{eq.Saij} and then \eqref{e-Oq-ops} we have
\[ \ceC(v)_{ii} = S(\bar u_{ii}) = \prod_{j\neq \bar i} \bar u_{jj}= \bar u_{\bar i\,\bar i}^{-1}=C(v)_{\bar i\,\bar i}^{-1},\]
which proves \eqref{eq.ceCii}. From here we see that any two elements of $\{C(v)_{ii}, \ceC(v)_{ii} \mid i\in \JJ\}$ commute. This proves (a) for the case when $\fS=\PP_2$.

Consider now the general case. The arc $C(v)$ gives an algebra homomorphism from $\Oq$ onto $\cS(C(v))$ which maps $\cI^-$ onto $\Ibad_v$. Hence it descends to an algebra homomorphism $ \LO \to \SS/\Ibad _v$ which shows that all the statements in (a) are true for $\fS$.

(b) In \eqref{eq.hmono}, let $a_{ii}=x$ and $(\al,s)=y$. If $(i',j') \prec (i,i)$ then $a_{i'j'}$ is a bad arc. Hence \eqref{eq.hmono} implies $xy\eqq yx$ in $\SS/\Ibad _v$.

(c) Consider the first identity. Assume $J= \{j_1 < \dots < j_i \}$. The left-hand side is
\[ \text{LHS}= M^{[1;i]}_{J}= \sum_{\sigma \in \Sym i} (-q)^{\ell(\sigma)} \bar u_{1 j_{\sigma(1)}} \dots \bar u_{i j_{\sigma(i)}}.\]
If $J \neq [1;i]$ then there is $k\in [1,i]$ such that $k < j_{\sigma(k)}$, showing that the each term in the sum is zero. On the other hand when $I= [1;i]$ the only non-zero term is the one with $\sigma= \id$. Hence we have the formula.

The proof the second identity is similar.
\end{proof}

\section{Reduced skein algebra of polygons} \label{sec.Rd2}

Recall that $\PP_k$ is the ideal $k$-gon, with vertices $v_1, \dots, v_k$ in counterclockwise order.
We will show that the reduced skein algebra $\cS(\PP_k)$ of a polygon is a domain, calculate its GK dimension, and give an explicit description for the case $k=3$.

\subsection{Main results of section}

\begin{theorem}\label{thm.domainr}
The algebra $\Srd(\PP_k)$ is an $R$-torsion free domain with GK dimension
\begin{equation}\label{eq.GKrd}
\GKdim(\Srd(\PP_k)) = k\frac{(n-1)(n+2)}{2} - n^2+1.
\end{equation}
\end{theorem}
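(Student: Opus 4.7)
The plan is to proceed in two main steps. The first reduces to a tensor product of Borel algebras by modding out bad arcs at $k-1$ of the $k$ vertices; the second, which is the hard part, further quotients by the bad arcs at the remaining vertex and identifies a quasi-monomial basis of the result.

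For the first step, Theorem~\ref{thm.saturated} gives a tensor product factorization $\cS(\PP_k) = \cS(a_1) \boxtimes \cdots \boxtimes \cS(a_{k-1})$, where $a_i$ is the oriented corner arc at the vertex $v_i$ for $i<k$ and each $\cS(a_i) \cong \Oq$. By Theorem~\ref{r.normal1}, for every $i$ the bad arc set $C_{v_i} \subset \cS(a_i)$ is $\cS(a_j)$-normal for all $j$, so Proposition~\ref{r.tensorP}(a) produces a tensor product factorization of the partial quotient
$$\mathcal{Q}_k := \cS(\PP_k)\Big/\sum_{i=1}^{k-1} \Ibad_{v_i} \;\cong\; \LO^{\boxtimes(k-1)}.$$
Combining the quasi-monomial basis of $\LO$ from Proposition~\ref{r.redOq} with Lemma~\ref{r.domain2}, $\mathcal{Q}_k$ inherits a quasi-monomial basis parameterized by the product monoid $\bG^{k-1}$, so it is an $R$-torsion free domain; Proposition~\ref{r.tensorP}(c) then gives $\GKdim(\mathcal{Q}_k) = (k-1)(n-1)(n+2)/2$.

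The second step is to further quotient $\mathcal{Q}_k$ by the image $J_k$ of $\Ibad_{v_k}$. The bad arcs at $v_k$ do not lie in any single factor $\cS(a_i)$: geometrically, $C(v_k)$ is isotopic to a long arc looping past $v_{k-1}, \dots, v_1$. My approach is to decompose each bad element $C(v_k)_{lm}$ (with $l<m$) into products of quantum minors from the factors $\cS(a_i)$ by repeatedly applying the cutting identity Lemma~\ref{r.cutdet} across arcs parallel to $a_1, \dots, a_{k-1}$, together with the diagrammatic identification of quantum minors in Lemma~\ref{r.det}. In $\mathcal{Q}_k$, the diagonal minors from each $\LO$ factor are invertible and $q$-commute with all elements (Lemma~\ref{r.Ibad1}), which greatly simplifies these expressions.

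The main obstacle will be to show that $J_k$ is the $R$-span of $\{e(m) : m \in \bG^{k-1} \setminus \bar\Lambda\}$ for a suitable submonoid $\bar\Lambda \subset \bG^{k-1}$, which is exactly the hypothesis required to invoke Lemma~\ref{r.domain3}. This requires extracting a unique leading term of each $C(v_k)_{lm}$ under the $d_2$-filtration by combining the upper-triangular structure of the $R$-matrix (Lemma~\ref{r.upper}) with the minor commutation laws (Lemma~\ref{r.commun2}), and then verifying that the collection of these leading monomials is precisely the complement of a submonoid in $\bG^{k-1}$; reflection invariance (Subsection~\ref{sec.reflection}) is used throughout to pin down normalization constants. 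Once $J_k$ is so identified, Lemma~\ref{r.domain3} yields that $\Srd(\PP_k) = \mathcal{Q}_k/J_k$ is an $R$-torsion free domain with a quasi-monomial basis parameterized by $\bar\Lambda$, and the GK dimension equals the rank of the lattice spanned by $\bar\Lambda$, which is
$$(k-1)\frac{(n-1)(n+2)}{2} - \frac{n(n-1)}{2} = k\frac{(n-1)(n+2)}{2} - (n^2-1).$$
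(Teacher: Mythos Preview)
Your first step is essentially what the paper does in Lemma~\ref{r.Akl}: the algebras $A_{k,l}$ obtained by killing bad arcs at all but $l$ of the vertices are tensor product factorizations into copies of $\Oq$ and $\LO$, hence are domains with the expected GK dimension via the quasi-monomial basis argument.

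The second step, however, contains a genuine gap, and the paper takes a completely different route here. Your plan is to show that the residual ideal $J_k \lhd \mathcal{Q}_k$ is the $R$-span of basis elements indexed by the complement of a submonoid, so that Lemma~\ref{r.domain3} applies. But the generators $C(v_k)_{lm}$ of $J_k$ are \emph{not} monomials in the quasi-monomial basis of $\mathcal{Q}_k$: after applying Lemma~\ref{r.cutdet} repeatedly across $a_1,\dots,a_{k-1}$, each $C(v_k)_{lm}$ becomes a sum over many tuples of intermediate index sets, and in $\mathcal{Q}_k$ many of these terms survive (only those with a strictly upper-triangular minor in some factor vanish). Even granting a unique leading term under $d_2$, you would still need to show that the two-sided ideal generated by these sums is spanned by basis elements, which is exactly the step that works for $\LO$ in Proposition~\ref{r.redOq} only because each generator $u_{ij}$ there \emph{is} a single basis element and the relations~\eqref{eq.M11}--\eqref{eq.M12} preserve membership in $W^-$. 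No such mechanism is available here, and the paper's authors explicitly flag this as ``one of the most difficult'' proofs in the paper.

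The paper's actual argument is inductive and structural rather than combinatorial. One embeds $\PP_{k-1}\hookrightarrow\PP_k$ by deleting an edge, and identifies (Theorem~\ref{r.Pkk})
\[
\Srd(\PP_k)\;\cong\;E_{k-1}\{\DD^{-1}\}[x_1^{\pm1},\dots,x_{n-1}^{\pm1};\tau_1,\dots,\tau_{n-1}],
\]
where $E_{k-1}=A_{k-1,1}$ is already known to be a domain by Lemma~\ref{r.Akl}, $\DD$ is the product of top-right corner minors (shown non-zero and $q$-central via Lemmas~\ref{r.AkBk} and~\ref{r.87}), and the $\tau_i$ are commuting diagonal edge-weight automorphisms. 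The key computation is Cramer's rule (Proposition~\ref{e.Cramer}), used in~\eqref{eq.bji} to express the off-diagonal elements $\beta_{ji}$ at the new vertex in terms of quantum minors of the long arc and the diagonal elements $x_i$, thereby proving surjectivity of the skew-Laurent extension map. Domain-ness and the GK dimension then follow from Lemma~\ref{r.Ore4}.
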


Actually the proof will give an explicit description of $\bS(\PP_k)$. Let us spell out the details for $\PP_3$, an important case for us.

Recall that $\LO=\Oq/\cI^-$ has set of algebra generators $\{\bar u_{ij}, j \le i \in \JJ\}$ and is a domain of uniform GK dimension $(n-1)(n+2)/2$, see Subsection \ref{sec.Bq}. For $i\in \JJ$ let $\tau_i: \LO \to \LO$ be the diagonal automorphism defined by
\[ \tau_i(\bar u_{jk}) = q^{\delta_{ij} - 1/n}\bar u_{jk}.\]
It is easy to check that $\tau_i$ is a well-defined algebra automorphism of $\LO$, and that $\tau_i \tau_j = \tau_j \tau_i$.

{ For $i\in \JJ$ consider the bottom left quantum minor ${}_i D:=M^{[\bi;n]}_{[1;i]}(\bar \buu)\in \LO$. Let
\[\DD =\ ({}_1D) ({}_2D) \dots ({}_{n-1}D)\in \LO.\]
}

\begin{theorem} \label{thm.PP3}
\begin{enuma}
\item { $\DD$ is a non-zero element $q$-commuting with $\LO$, and is an eigenvector of each automorphism $\tau_i, i=1, \dots, n-1$.} Consequently, one can define the Ore localization $\LO\{\DD\}^{-1}$ and then the iterated skew-Laurent extension
\[\LO\{\DD\}^{-1}[ x_1^{\pm 1}, \dots, x_{n-1}^{\pm 1};\tau_1, \dots, \tau_{n-1}],\]
as in Example \ref{exa.001}.
\item There is a unique algebra isomorphism
\[\LO\{\DD\}^{-1}[ x_1^{\pm 1}, \dots, x_{n-1}^{\pm 1};\tau_1, \dots, \tau_{n-1}] \xrightarrow{\cong} \Srd(\PP_3)\] given by
\[ \bar u _{ij} \to C(v_1)_{ij}\ \text{for } i \ge j \in \JJ, \ x_i \to \ceC({v_2})_{ii}, \ \text{for } i = 1, \dots , n-1 .\]
\end{enuma}
\end{theorem}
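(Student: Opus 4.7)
For part (a), the $q$-commutation of each $D_i(\buu)$ with every generator $u_{kl}$ is Proposition~\ref{r.73}, which descends to $q$-commutation of $\DD$ with $\LO$. Collecting the exponents factor by factor via the quantum-minor commutation formula and writing them as integer linear combinations of $\delta_{ij}-1/n$ identifies $\DD$ as a simultaneous eigenvector of each $\tau_i$. Non-vanishing of $\DD$ in $\LO$ would be verified by computing its leading term with respect to the quasi-monomial basis of Proposition~\ref{r.redOq}. Once these are established, $\DD$ is regular (as $\LO$ is a domain by Proposition~\ref{r.redOq}) and $q$-commuting with $\LO$, so the right Ore set $\{\DD^k\}_{k\ge 0}$ yields a localization $\LO\{\DD\}^{-1}$ by Lemma~\ref{r.GKdim}(c); Example~\ref{exa.001} then supplies the iterated skew-Laurent extension.

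For part (b), I plan three stages. In Stage~1, Theorem~\ref{thm.saturated} applied to the saturated system $\{C(v_1),C(v_2)\}$ gives a tensor product factorization $\cS(\PP_3)=\cS(C(v_1))\boxtimes \cS(C(v_2))\cong \Oq\boxtimes\Oq$; since the bad arcs at each vertex are $\cS(\PP_3)$-normal (Theorem~\ref{r.normal1}), Proposition~\ref{r.tensorP}(a) identifies $\cS(\PP_3)/(\Ibad_{v_1}+\Ibad_{v_2})\cong \LO\boxtimes \LO$, with factors generated by the lower-triangular parts of $C(v_1)$ and $C(v_2)$. In Stage~2, the remaining ideal $\Ibad_{v_3}$ is analysed by applying Lemma~\ref{r.cutdet} to each $M^{[1;i]}_{[\bar i;n]}(C(v_3))$ along an ideal arc separating $v_3$ from $\{v_1,v_2\}$, and then reducing each of the two resulting pieces modulo $\Ibad_{v_1}$ and $\Ibad_{v_2}$ using the vanishing statement of Lemma~\ref{r.Ibad1}(c); the surviving summands, combined with the normalization $\prod_i \ceC(v_3)_{ii}=1$ from Lemma~\ref{r.Ibad1}(a), should (i) force $\DD$ to become invertible in the quotient and (ii) express each off-diagonal $C(v_2)_{ij}$ (with $j<i$) as a rational expression in the $C(v_1)_{\bullet\bullet}$ and the $\ceC(v_2)_{kk}$.

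In Stage~3, I would define the candidate homomorphism $\Phi$ by $\bar u_{ij}\mapsto C(v_1)_{ij}$ and $x_i\mapsto \ceC(v_2)_{ii}$, check the relations of $\LO$ on the generators via the identification $\cS(C(v_1))/\Ibad_{v_1}=\LO$, check invertibility of $\DD$ from Stage~2, and verify the skew-Laurent commutation $\ceC(v_2)_{ii}\cdot C(v_1)_{jk}=q^{\delta_{ij}-1/n}\cdot C(v_1)_{jk}\cdot \ceC(v_2)_{ii}$ by applying Lemma~\ref{r.height5} on the edge shared by $v_1$ and $v_2$. Surjectivity of $\Phi$ then follows from Stage~2. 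The source is a domain by Lemma~\ref{r.Ore4}(a), free as an $R$-module, and has GK dimension $\GKdim(\LO)+(n-1)=(n-1)(n+4)/2$ by Lemma~\ref{r.Ore4}(c); this matches $\GKdim(\Srd(\PP_3))$ computed from~\eqref{eq.GKrd}, so Lemma~\ref{r.GKdim}(b) upgrades surjectivity to bijectivity. The main obstacle is Stage~2: executing the cutting identity on $C(v_3)_{ij}$ inside $\LO\boxtimes \LO$, tracking all the scalar factors from Lemma~\ref{r.det}, and showing that the resulting quotient is exactly the localization at $\DD$ together with the freely adjoined invertible generators $\ceC(v_2)_{ii}$.
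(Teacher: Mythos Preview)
Your overall architecture---tensor-product factorization, construct a surjection $\Phi$, then the GK-dimension match via Lemma~\ref{r.GKdim}(b) for injectivity---is the same as the paper's (which in fact proves the general $\PP_k$ statement as Theorem~\ref{r.Pkk} and then specializes to $k=3$). The paper takes the quotients in the order $\Ibad_{v_2}+\Ibad_{v_3}$ first, then $\Ibad_{v_1}$, rather than your $\Ibad_{v_1}+\Ibad_{v_2}$ then $\Ibad_{v_3}$, but this reordering is harmless.

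The gap is in the specifics of Stage~2. First, there is no interior ideal arc in $\PP_3$ separating $v_3$ from $\{v_1,v_2\}$; what is actually used is relation~\eqref{e.capnearwall} at the boundary edge $v_1v_2$. More seriously, the minors $D_i(C(v_3))=M^{[1;i]}_{[\bar i;n]}(C(v_3))$ you propose to decompose are \emph{zero} in $\Srd(\PP_3)$ for every $i<n$ by Lemma~\ref{r.Ibad1}(c) at $v_3$, so they yield no useful relations. Invertibility of $\DD$ comes instead from applying Lemma~\ref{r.cutdet} to $D_i(C(v_1))$---the arc from which $\DD$ is actually built---splitting it across the far edge into corner minors at $v_2$ and $v_3$, where Lemma~\ref{r.Ibad1}(c) reduces the sum to a single product of invertible diagonal elements (this is Lemma~\ref{r.AkBk}(b)). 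The expressibility of the off-diagonal $\ceC(v_2)$-entries is obtained by pushing the \emph{individual} bad arcs $C(v_3)_{ji}$ (not their minors) toward the edge $v_1v_2$ via~\eqref{e.capnearwall}; this produces a linear system~\eqref{eq.15} that is solved by the quantum Cramer rule of Proposition~\ref{e.Cramer}, yielding~\eqref{eq.bji}. This Cramer step is the essential technical ingredient missing from your outline. Finally, Lemma~\ref{r.height5} requires consecutive states with a $\max/\min$ hypothesis and does not give the skew-Laurent commutation for arbitrary single states; use Lemma~\ref{r.Ibad1}(b) (equivalently the coaction computation around~\eqref{eq.bii5}) instead.
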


{The reason for using a different corner than Section~\ref{sec.Du} is purely conventional. When $\DD$ is defined in Lemma~\ref{r.AkBk} for all polygons, we still use the top right corner. However, the arc $a$ used there, specialized to $\poly_3$, is opposite of the choice $a_1$ used for the identification $\bar{A}_1=\LO$ in Section~\ref{sec-poly-quot}. This transposes the indices.}

\subsection{Quotients of $\cS(\PP_k)$}
\label{sec-poly-quot}

Recall that if $S$ is a subset of a ring $A$ then we denote $A/(S)$ the quotient $A/I$, where $I$ is the ideal generated by $S$.

Let $a_i= C(v_i)$ be the oriented corner arc at $v_i$. Let $A_i:= \cS(a_i)$ which is identified with $ \Oq$. Under the identification, the ideal $I_i:= C_{v_i} A_i \lhd A_i$ is equal to $\cI^-$. Hence we can identify $\bar A _i:=A_i/I_i\equiv \LO$. By Theorem \ref{thm.saturated}, any $k-1$ from the algebras $A_1,A_2, \dots, A_{k}$ form a tensor product factorization of $\cS(\PP_k)$. In particular, $\cS(\PP_k)= A_2 \dots A_{k}$.

By \eqref{eq.normal1} each $I_i$ is $A_j$-normal,
\begin{equation}\label{eq.com4}
I_i A_j = A_j I_i.
\end{equation}
Hence each $I_i$ is $\cS(\PP_k)$-normal, and $\cI^\bad \lhd \cS(\PP_k)$ has the form
\begin{equation}
\cI^\bad = \sum _{i=1}^k I_i \cS(\PP_k).
\end{equation}

For $0\le l<k$ let $A_{k,l}= \cS(\PP_k)/I_{k,l}$, where $ I_{k,l}= I_{{l+1}}\cS(\PP_k) + \dots +I_{k}\cS(\PP_k).$ Note that $A_{k,0} = \Srd(\PP_k)$.

\begin{lemma} \label{r.Akl}
Let $1 \le l <k$. The algebra $A_{k,l}$ is a domain and a free $R$-module, and it has GK dimension
\begin{equation}\label{eq.GKdim7}
\GKdim(A_{k,l})= (l+k-2)\frac{n(n-1)}{2} + (k-1)(n-1).
\end{equation}
\end{lemma}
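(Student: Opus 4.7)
The plan is to exhibit $A_{k,l}$ as an iterated tensor product factorization with $l-1$ unreduced factors $\Oq$ and $k-l$ reduced factors $\LO$, then transfer the quasimonomial basis and uniform GK dimension from each piece to the whole.

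\textbf{Step 1 (Factorization).} Since $l \ge 1$, the corner arcs $a_2, a_3, \dots, a_k$ form a saturated system of $\pfS$-arcs for $\PP_k$ (omitting $a_1$). By Theorem~\ref{thm.saturated}, this gives a tensor product factorization
\[\cS(\PP_k) = A_2 \boxtimes A_3 \boxtimes \cdots \boxtimes A_k.\]
By Theorem~\ref{r.normal1}, each generating set $C_{v_i} \subset A_i$ is $A_j$-normal for every $j$. Taking $S_i = C_{v_i}$ for $l+1 \le i \le k$ and $S_i = \emptyset$ for $2 \le i \le l$, and applying Proposition~\ref{r.tensorP}(a), we obtain a tensor product factorization
\[A_{k,l} = A_2 \boxtimes \cdots \boxtimes A_l \boxtimes \bar A_{l+1} \boxtimes \cdots \boxtimes \bar A_k,\]
where $A_i \cong \Oq$ for $i \le l$ and $\bar A_i = A_i/I_i \cong \LO$ for $i > l$.

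\textbf{Step 2 (Domain and freeness).} By Proposition~\ref{r.Oq}, each $\Oq$-factor has the quasimonomial basis $B^\ord$ parameterized by $(\Gamma, d_{12})$; by Proposition~\ref{r.redOq}, each $\LO$-factor has the quasimonomial basis $\bar B^\ord$ parameterized by $(\bG, d_2)$. I would combine these iteratively using Lemma~\ref{r.domain2}. The required hypothesis~\eqref{eq.ord7} is verified exactly as in the proof of Proposition~\ref{r.polygon}: since elements of $A_i$ and $A_j$ (and their reduced quotients) satisfy the quadratic exchange law~\eqref{eq.quad} coming from the tensor product factorization, a single application of the upper-triangular identities of Lemma~\ref{r.upper} shows that a product of basis elements from different factors equals the ``commuted'' product up to terms of strictly smaller $d_2$-index on one factor. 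Iterating Lemma~\ref{r.domain2} therefore yields a quasimonomial basis of $A_{k,l}$, and Proposition~\ref{r.domain9} gives the domain property. Freeness as an $R$-module is automatic from having an explicit basis.

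\textbf{Step 3 (GK dimension).} By Propositions~\ref{r.Oq} and~\ref{r.redOq}, both $\Oq$ and $\LO$ have uniform GK dimension, equal to $n^2-1$ and $(n-1)(n+2)/2$ respectively. Applying Proposition~\ref{r.tensorP}(c) to the factorization from Step 1,
\[\GKdim(A_{k,l}) = (l-1)(n^2-1) + (k-l)\frac{(n-1)(n+2)}{2}.\]
Expanding and simplifying (using $n^2-1 = (n-1)(n+1)$) yields
\[\GKdim(A_{k,l}) = \frac{n-1}{2}\bigl[n(l+k-2) + 2(k-1)\bigr] = (l+k-2)\frac{n(n-1)}{2} + (k-1)(n-1),\]
which is the claimed formula.

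\textbf{Main obstacle.} The only non-routine point is verifying the approximate $q$-commutation~\eqref{eq.ord7} across different tensor factors in Step 2, because Proposition~\ref{r.tensorP}(a) only guarantees the set-theoretic factorization and the quadratic exchange law, not a basis-level identity. Proposition~\ref{r.polygon} already handled the unreduced case; the new ingredient here is checking that the same Lemma~\ref{r.upper} argument still produces only lower-$d_2$ correction terms after reducing by $I_i$ on the reduced factors---which it does, because the reduction quotient preserves the relevant upper-triangular structure on states (the basis $\bar B^\ord$ is simply the restriction of $B^\ord$ to the submonoid $\bG$).
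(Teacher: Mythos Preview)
Your proposal is correct. Steps~1 and~3 coincide with the paper's argument verbatim: the tensor product factorization $A_{k,l}=A_2\boxtimes\cdots\boxtimes A_l\boxtimes\bar A_{l+1}\boxtimes\cdots\boxtimes\bar A_k$ via Proposition~\ref{r.tensorP}(a), and the additivity of uniform GK dimension via Proposition~\ref{r.tensorP}(c).

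The difference lies in Step~2. You build the quasimonomial basis from scratch by iterating Lemma~\ref{r.domain2}, re-running the induction of Proposition~\ref{r.polygon} with some factors replaced by their reduced versions $\bar A_i\cong\LO$, and re-checking hypothesis~\eqref{eq.ord7} at each stage. The paper instead takes the quasimonomial basis of $\cS(\PP_k)$ already established in Proposition~\ref{r.polygon}, shows via the normality~\eqref{eq.com4} that the ideal $I_{k,l}$ is spanned exactly by those basis elements $b_2(m_2)\cdots b_k(m_k)$ with $(m_2,\dots,m_k)\in\Gamma^{k-1}\setminus(\Gamma^{l-1}\times\bG^{k-l})$, and then invokes Lemma~\ref{r.domain3} once. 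The paper's route is more economical: it avoids redoing the height-exchange verification for reduced factors by passing to the quotient only at the very end, whereas your approach requires confirming that Lemma~\ref{r.upper} error terms remain in the correct filtration piece after projection to each $\bar A_i$—which, as you correctly note in your ``main obstacle'', does hold because $\bar B^{\ord}$ is just the restriction of $B^{\ord}$ to $\bG$ and the $d_2$-ordering is inherited.
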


\begin{proof}
The normality \eqref{eq.com4} and Lemma \ref{r.tensorP} show that $A_2, \dots, A_l, \bar A_{l+1}, \dots, \bar A_{k}$ form a tensor product factorization of $A_{k,l}$. In particular, as $R$-modules $A_{k,l}$ is isomorphic to the tensor product of all the factors. Since each factor is a free $R$-module, so is $A_{k,l}$. Each factor $A_i=\Oq$ has uniform GK dimension $n^2-1$, by Proposition \ref{r.Oq}, and each factor $\bar A_i=\LO$ has uniform GK dimension $(n+2)(n-1)/2$, by Proposition \ref{r.redOq}. By Proposition \ref{r.tensorP}, the GK dimension is additive when each factor has uniform GK dimension. This proves \eqref{eq.GKdim7}.

Let us use the notations of the proof of Proposition \ref{r.redOq}, where it is proved that the set $\{b(m) \mid m \in \Gamma\setminus \bG\}$ is a free $R$-basis of $\cI^-$ and $\{e(m) \mid m \in \Gamma\}$ is a free $R$-basis of $\Oq$. Under the identification $\cS(a_i) = \Oq$ let $b_i(m)$ be the element corresponding to $b(m)$. By Proposition \ref{r.polygon} the set
\[\{b_2(m_2) \dots b_{k}(m_{k}) \mid ( m_2, \dots, m_{k}) \in \Gamma^{k-1} \}\]
is a quasimonomial basis of $\cS(\PP_k)$. The normality \eqref{eq.com4} shows that for $i\ge 2$,
\[I_i \cS(\PP_k)= A_2 \dots A_{i-1} I_i A_{i+1} \dots A_{k},\]
which is spanned by $\{b_2(m_1) \dots b_k(m_k) \mid m_i \in \Gamma \setminus \bG, ( m_2, \dots, m_{k}) \in \Gamma^{k-1} \}$. Hence $I_{k,l}$ is spanned by
\[\{b_2(m_1) \dots b_k(m_k) \mid ( m_2, \dots, m_{k}) \in \Gamma^{k-1} \setminus \Gamma^{l-1}\times \bG^{k-l} \}.\]
By Lemma \ref{r.domain3}, the quotient $A_{k,l}= \cS(\PP_k)/I_{K,l}$ has a quasimonomial basis and is a domain.
\end{proof}

\subsection{A copy of $\PP_{k-1}$ in $\PP_k$}

The result of removing the edge $v_{k-1}v_k$ from $\PP_k$ is $\PP_{k-1}$, giving an embedding $\PP_{k-1} \embed \PP_k$. Note that $a_1, \dots, a_{k-2}$ form a saturated system for $\PP_{k-1}$. By Corollary \ref{r.embed1}, the embedding $\PP_{k-1} \embed \PP_k$ induces an embedding $\cS(\PP_{k-1}) \embed \cS(\PP_k)$, and we identify $\cS(\PP_{k-1})$ with the image of this embedding. Let
\begin{align}
E_k &:=\cS(\PP_k)/(I_{{2}} + \dots +I_{k})= A_{k,1} \label{eq.EkAk} \\
B_{k-1}&:= \cS(\PP_{k-1}) /(I_{{2}}+ \dots +I_{k-2}). \notag
\end{align}
The embedding $\cS(\PP_{k-1}) \embed \cS(\PP_k)$ descends an algebra homomorphism
\begin{equation}\label{eq.h}
h:B_{k-1}\to E_k.
\end{equation}

Let $a,b,c$ be the oriented $\partial \PP_k$-arcs depicted in Figure~\ref{fig-poly-arc-rd}. Note that $b= \ceC(v_{k-1})$.

\begin{figure}
\centering
\input{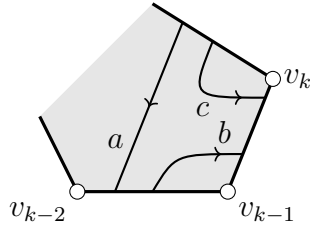}
\caption{Arcs $a,b,c$}\label{fig-poly-arc-rd}
\end{figure}

Recall we define $D(a)\in \cS(\PP_{k-1})$ in Subsection \ref{sec.corner0}.

\begin{lemma} \label{r.AkBk}
Let $\DD\in B_{k-1}$ be the image of $D(a)$ under the projection $\cS(\PP_{k-1})\to B_{k-1}$.
\begin{enuma}
\item $\DD$ is $q$-commuting with $B_{k-1}$.
\item $h(\DD)$ is invertible in $E_k$. Consequently $\DD$ is regular.
\end{enuma}
\end{lemma}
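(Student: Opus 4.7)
The plan for part (a) is to combine Lemma~\ref{r.commun2} with a generation argument. Lemma~\ref{r.commun2} states that each $D_i(a)$ is $q$-commuting with every stated $\partial\PP_{k-1}$-arc in $\cS(\PP_{k-1})$. By Theorem~\ref{thm.saturated}, $\cS(\PP_{k-1}) = A_1 \boxtimes \cdots \boxtimes A_{k-2}$ where $A_i = \cS(a_i) \cong \Oq$, and each $A_i$ is generated as an $R$-algebra by stated corner arcs. The union of these generators is a set of stated-arc generators of $\cS(\PP_{k-1})$, so $D(a) = D_1(a)\cdots D_{n-1}(a)$ is $q$-commuting with $\cS(\PP_{k-1})$ in the sense of Subsection~\ref{ss.normal}. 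This property descends to the quotient $B_{k-1}$, proving (a).

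For the invertibility in part (b), the plan is to decompose each $D_i(a) = M^{[1;i]}_{[\bar i;n]}(a)$ geometrically and then collapse the result in $E_k$. Under the embedding $\PP_{k-1}\embed\PP_k$, the arc $a$ becomes a long boundary arc in $\PP_k$ running from the edge $v_kv_1$ to the edge $v_{k-2}v_{k-1}$, separating off a quadrilateral region containing both $v_{k-1}$ and $v_k$. I will introduce an interior ideal arc $\gamma$ from $v_{k-2}$ to $v_k$ so that $\PP_k$ cuts into a triangle $T$ with vertices $v_{k-2}, v_{k-1}, v_k$ and a $(k-1)$-gon, then apply the splitting homomorphism $\Theta_\gamma$ together with Lemma~\ref{r.cutdet} inside $T$ to expand $M^{[1;i]}_{[\bar i;n]}(a)$ as a sum of products of quantum minors of corner arcs at $v_{k-1}$ (recall $b=\cev{C}(v_{k-1})$) and $v_k$, together with a quantum minor of the ``outside'' piece of $a$. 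In $E_k$, the ideals $I_{k-1}$ and $I_k$ are annihilated, and Lemma~\ref{r.Ibad1}(c) forces every summand to vanish except one, whose value equals---up to a unit of $R$---a product of the diagonal corner arcs $C(v_{k-1})_{jj}$ and $C(v_k)_{jj}$. By Lemma~\ref{r.Ibad1}(a), these diagonals are invertible in $E_k$, so each $h(D_i(a))$ is invertible; multiplying over $i=1,\dots,n-1$ yields the invertibility of $h(\DD)$.

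The regularity conclusion will follow from the fact that $B_{k-1}$ is a domain. Applying Proposition~\ref{r.tensorP}(a)---since each $I_i$ is $\cS(\PP_{k-1})$-normal by Theorem~\ref{r.normal1}---$B_{k-1}$ acquires the tensor product factorization $A_1 \boxtimes \bar A_2 \boxtimes \cdots \boxtimes \bar A_{k-2}$. Each factor has a quasi-monomial basis by Propositions~\ref{r.Oq} and~\ref{r.redOq}, so iterating Lemma~\ref{r.domain2} as in the proof of Lemma~\ref{r.Akl} produces a quasi-monomial basis of $B_{k-1}$, and hence $B_{k-1}$ is a domain. Since $h(\DD)$ is invertible, in particular $h(\DD)\ne 0$, so $\DD\ne 0$ in $B_{k-1}$; being non-zero in a domain, $\DD$ is regular. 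The main obstacle will be the combinatorial bookkeeping of the decomposition in part (b): choosing $\gamma$ and orientations consistently, executing the splitting together with Lemma~\ref{r.cutdet}, and verifying that precisely one term survives the Lemma~\ref{r.Ibad1}(c) collapse to yield the claimed invertible product of diagonal corner arcs.
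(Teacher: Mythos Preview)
Your approach is essentially the paper's. Part~(a) is identical; for part~(b) the paper also decomposes $D_i(a)$ via Lemma~\ref{r.cutdet} and collapses with Lemma~\ref{r.Ibad1}(c), but it does so \emph{directly} in $\cS(\PP_k)$ using the arcs $b=\cev C(v_{k-1})$ and $c$ (the corner arc at $v_k$) of Figure~\ref{fig-poly-arc-rd}: one application of Lemma~\ref{r.cutdet} already yields $D_i(a)=\sum_J C_J\,M^{\bar J}_{[\bar i;n]}(\cev b)\,M^{[1;i]}_J(c)$, so your splitting homomorphism $\Theta_\gamma$ and auxiliary arc $\gamma$ are unnecessary detours. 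For the regularity clause your argument that $B_{k-1}\cong A_1\boxtimes\bar A_2\boxtimes\cdots\boxtimes\bar A_{k-2}$ is a domain (so $\DD\neq 0$ suffices) is exactly what is needed and is a bit more explicit than the paper's terse line; note this also follows immediately from the isomorphism $B_{k-1}\cong A_{k-1,2}$ and Lemma~\ref{r.Akl}.
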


\begin{proof}
(a) As $D(a)$ is $q$-commuting with $\cS(\PP_{k-1})$ by Lemma \ref{r.commun2}, the element $\DD$ is $q$-commuting with $B_{k-1}$.

(b) By Lemma \ref{r.cutdet}, there are units $C_J\in R$ such that
\[ D_i(a) = M^{[1;i]} _{[\bi ; n]} (a) = \sum _{J \subset \binom{\JJ}{i} } C_J\, M^{\bar J }_{ [\bi; n]} (\cev b)\, M^{[1;i]} _{J}(c). \]
By Lemma \ref{r.Ibad1}(c), the element $M^{[1;i]} _{J}(c) $ is non-zero only when $J=[1;i]$. Then $\bar{J}=[\bar{i};n]$, and both $M^{[1;i]} _{[1;i]}(c)$ and $ M^{[\bi ;n] }{ [\bi; n]} (\cev b)$ are invertible because each is a product of diagonal elements at vertices $v_{k-1}$ and $v_{k}$ up to an invertible scalar, see Lemma \ref{r.Ibad1}. It follows that $\DD$ is invertible. Since $E_k \cong A_{k,1}$ is a domain and a non-zero algebra by Lemma \ref{r.Akl}, any invertible element of $E_k$ is regular.
\end{proof}

By Lemma \ref{r.AkBk} the element $\DD$ is $q$-commuting with $B_{k-1}$. Hence by Lemma \ref{r.GKdim} we can define the right Ore localization
$B_{k-1}\{\DD^{-1}\}$ which has the GK dimension of $B_{k-1} \cong A_{k-1,2}$. Using \eqref{eq.GKdim7} we have
\begin{equation}\label{eq.GK9}
\GKdim (B_{k-1}\{\DD^{-1}\}) = \GKdim (A_{k-1,2}) = (k-1) n(n-1)/2 + (k-2)(n-1).
\end{equation}

\subsection{Diagonal automorphisms of $B_{k-1}$}

For each $i\in \JJ$ define
\[ \eta_i: \JJ \to R, \quad \eta_i(j) = q^{\delta_{ij} - 1/n}.\]
It is easy to check that $\prod_{j\in \JJ} \eta_i(j) =1$. For the edge $v_{k-2}v_{k-1}$, we can define the diagonal automorphism
\[\tau_i:= \phi_{v_{k-2}v_{k-1}, \eta_i}: \cS(\PP_{k-1}) \to \cS(\PP_{k-1})\]
using Proposition \ref{r.edgeweight}, and $\tau_i, \tau_j$ commute for any $i,j\in\JJ$. By definition, any stated arc (in particular a bad arc) is an eigenvector of $\tau_i$ with an invertible eigenvalue. This shows
\begin{equation}\label{eq.taui}
\tau_i I_j= I_j.
\end{equation}
Hence $\tau_i$ descends to a diagonal automorphism of the quotient $B_{k-1}$, which is also denoted by $\tau_i$.

\begin{lemma} \label{r.85}
For each $i$, the element $\DD$ is an eigenvector of $\tau_i$.
\end{lemma}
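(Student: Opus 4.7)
The plan is to use the $\LL$-grading on the boundary edge $e := v_{k-2}v_{k-1}$ introduced in Proposition~\ref{r.grade1} and relate it directly to the action of $\tau_i$. Since $\tau_i = \phi_{e,\eta_i}$ is by Proposition~\ref{r.edgeweight} the diagonal automorphism that acts on a $\dd_e$-homogeneous element of degree $\boo \in \LL$ by multiplication by $\eta_i^*(\boo)$, it is enough to show that the lift $D(a) = D_1(a) \cdots D_{n-1}(a) \in \cS(\PP_{k-1})$ is $\dd_e$-homogeneous; the claim in $B_{k-1}$ then follows by pushing the same scalar through the quotient map.

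First I would examine each factor $D_i(a) = M^{[1;i]}_{[\bar i; n]}(a)$ individually. From Lemma~\ref{r.det} (together with the diagrammatic setup of Subsection~\ref{ss.Weyl}), $D_i(a)$ is, up to a unit, a signed sum of stated versions of the underlying arc along $a$, in which the states on the endpoint of $a$ lying on $e$ range over the orderings of the fixed set $[\bar i; n]$ (while the other endpoint of $a$ lies on a different boundary edge of $\PP_{k-1}$). Because $\dd_e$ is defined by summing $\ww^{\ast}_{\overline{s(x)}}$ over the boundary points $x \in \partial\alpha \cap e$, it depends only on the multiset of states at those points and not on their order. Consequently every term in $D_i(a)$ carries the same $\dd_e$-degree $\boo_i := \sum_{j \in [\bar i;n]} \ww^{\ast}_{\bar j} \in \LL$, so $D_i(a) \in \Gr^e_{\boo_i}(\cS(\PP_{k-1}))$.

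The second step is purely formal: by \eqref{eq.decomp01}, the grading is multiplicative, so $D(a) \in \Gr^e_{\boo}(\cS(\PP_{k-1}))$ with $\boo = \boo_1 + \cdots + \boo_{n-1}$. Hence $\tau_i(D(a)) = \eta_i^*(\boo)\, D(a)$, and passing to the quotient $\cS(\PP_{k-1}) \twoheadrightarrow B_{k-1}$ (well defined on $\tau_i$ by \eqref{eq.taui}) gives $\tau_i(\DD) = \eta_i^*(\boo)\,\DD$, as required.

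The only substantive point to watch is that each factor $D_i(a)$ really is $\dd_e$-homogeneous; once this diagrammatic observation is in place the rest is bookkeeping. There is no genuine obstacle—no commutation identities, no use of the reduction relations modulo $I_2 + \cdots + I_{k-2}$, and no information about the specific eigenvalue is required beyond the fact that it is an invertible scalar in $R^\times$.
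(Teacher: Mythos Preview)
Your proof is correct and follows essentially the same idea as the paper's. The paper argues directly from the determinant expansion \eqref{eq.detDj}, observing that each stated arc $a_{mj}$ is a $\tau_i$-eigenvector whose eigenvalue depends only on the second index $j$ (the state on $e$), so every monomial in the expansion of $D_j(a)$ carries the same eigenvalue; you recast the identical observation in the language of the $\dd_e$-grading of Proposition~\ref{r.grade1} and Proposition~\ref{r.edgeweight}.
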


\begin{proof}
It is enough to show that each $D_j(a)$ is an eigenvector of $\tau_i$. By definition
\[\tau_i(a_{mj})= q^{-\delta_{i\bar j} + 1/n} a_{mj}.\]
Thus $a_{mj}$ is an eigenvector with eigenvalue $q^{-\delta_{i\bar j} + 1/n}$, which depends only on the second index $j$. By the determinant formula,
\begin{equation}\label{eq.detDj}
D_j(a) = \sum_{\sigma \in \Sym_j} (-q)^{\ell (\sigma)} a_{\sigma(1), \bar j} a_{\sigma(2),\bar j +1} \dots a_{\sigma(j), n}.
\end{equation}
All the terms of the right-hand side are eigenvectors of $\tau_i$ of the same eigenvalue. Hence $D_j(a)$ is also an eigenvector of that same eigenvalue.
\end{proof}

It follows from the lemma above that $\tau_i$ extends to a diagonal automorphism on the localization $B_{k-1}\{\DD^{-1}\}$, and we denote this extension also by $\tau_i$. Since $\tau_i, \tau_j$ commute on $B_{k-1}$, they also commute on $B_{k-1}\{\DD^{-1}\}$.

As explained in Example \ref{exa.001}, we can define the iterated skew-Laurent extensions
\[B_{k-1}\{\DD^{-1}\}[x_1^{\pm 1}, \dots, x_{n-1}^{\pm 1}; \tau_1, \dots, \tau_{n-1}],\]
which is a domain and a free $R$-module, and it has GK dimension $n-1$ more than that of $B_{k-1}\{D^{-1}\}$. From \eqref{eq.GK9} and Lemma \ref{r.Ore4}, we have that
\begin{equation} \label{eq.Ek}
\GKdim(B_{k-1}\{\DD^{-1}\}[x_1^{\pm 1}, \dots, x_{n-1}^{\pm 1}; \tau_1, \dots, \tau_{n-1}])= \GKdim (E_k).
\end{equation}

\begin{lemma}
The homomorphism $h$ extends to an algebra isomorphism
\begin{equation}\label{eq.Ak}
B_{k-1}\{\DD^{-1}\} [x_1^{\pm1}, \dots , x_{n-1}^{\pm1}; \tau_ 1, \dots \tau_{n-1} ] \cong E_k.
\end{equation}
\end{lemma}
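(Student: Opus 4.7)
The plan is to extend $h$ in two stages—first to the Ore localization at $\DD$, then to the skew-Laurent extension—and then to close with a Gelfand--Kirillov dimension argument.

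First I would extend $h$ to the Ore localization. By Lemma~\ref{r.AkBk}, the element $\DD$ is $q$-commuting with $B_{k-1}$ and $h(\DD)$ is invertible in $E_k$. The Ore criterion for $q$-commuting regular elements (Lemma~\ref{r.GKdim}(c)) makes $\Mon(\{\DD\})$ into a right Ore set in $B_{k-1}$, so the universal property of Ore localization extends $h$ uniquely to an algebra homomorphism $\tilde h : B_{k-1}\{\DD^{-1}\} \to E_k$. Next, for $i=1,\ldots,n-1$, set $y_i := [\ceC(v_{k-1})_{ii}] \in E_k$. Because $I_{k-1}$ lies inside $I_{k,1} = \ker(\cS(\PP_k) \onto E_k)$, Lemma~\ref{r.Ibad1}(a) guarantees that each $y_i$ is invertible in $E_k$ and that the $y_i$'s mutually commute.

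The main technical step is to verify the skew-Laurent relation
\begin{equation}\label{eq.star}
y_i \, \tilde h(b) \;=\; \tilde h(\tau_i(b)) \, y_i \quad \text{in } E_k
\end{equation}
for every $b \in B_{k-1}$. By $R$-linearity and the tensor product factorization from Lemma~\ref{r.Akl}, it suffices to prove \eqref{eq.star} when $b$ is (the class of) a stated $n$-web supported in $\PP_{k-1} \subset \PP_k$. In $\cS(\PP_k)$, commuting $\ceC(v_{k-1})_{ii}$ past such a $b$ only creates interactions at the shared edge $v_{k-2}v_{k-1}$, since $b$ has no endpoints on the edge $v_{k-1}v_k$ that was removed to form $\PP_{k-1}$. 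Using the $\Oq$-coaction~\eqref{e.coact} at $v_{k-2}v_{k-1}$ together with the height-exchange Lemma~\ref{r.height5}, the exchange produces
\[y_i\,b \;=\; \eta_i^*\!\bigl(\dd_{v_{k-2}v_{k-1}}(b)\bigr)\, b\, y_i \;+\; (\text{error in } I_{k-1}\cS(\PP_k)),\]
where the error terms come from off-diagonal $R$-matrix contributions and carry a corner bad-arc state on $v_{k-2}v_{k-1}$. These errors vanish in $E_k$, and the surviving scalar is precisely the scalar by which $\tau_i = \phi_{v_{k-2}v_{k-1},\eta_i}$ acts on $b$ by Proposition~\ref{r.edgeweight}. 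Lemma~\ref{r.85} extends \eqref{eq.star} from $B_{k-1}$ across the localization at $\DD$. I expect this step to be the main obstacle: although Lemma~\ref{r.Ibad1}(b) already guarantees that $y_i$ $q$-commutes with everything in $E_k$, pinning down the exact $q$-scalar $\eta_i^*(\dd_{v_{k-2}v_{k-1}}(b))$ and identifying the error terms with the bad-arc ideal at $v_{k-1}$ requires careful $R$-matrix bookkeeping, split over cases according to the orientations and heights of the endpoints of $b$ on $v_{k-2}v_{k-1}$.

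With \eqref{eq.star} established, the universal property of the iterated skew-Laurent extension yields a unique algebra homomorphism $H$ from the source of the claimed isomorphism to $E_k$ with $H|_{B_{k-1}\{\DD^{-1}\}} = \tilde h$ and $H(x_i) = y_i$. To see $H$ is surjective, apply Proposition~\ref{r.tensorP}(a) to the factorization $\cS(\PP_k) = A_2 \boxtimes \cdots \boxtimes A_k$ together with the $\cS(\PP_k)$-normality of the $I_j$ from Theorem~\ref{r.normal1}, giving $E_k = \bar A_2 \boxtimes \cdots \boxtimes \bar A_k$. The factors $\bar A_2,\ldots,\bar A_{k-2}$ lie in $\tilde h(B_{k-1})$; the diagonal invertible generators of $\bar A_{k-1}$ are exactly $y_1,\ldots,y_{n-1}$ (with the last diagonal determined by $\prod_i \ceC(v_{k-1})_{ii}=1$ from Lemma~\ref{r.Ibad1}(a)); and the remaining generators of $\bar A_{k-1}$ and $\bar A_k$ are expressible in terms of the $y_i$, $\DD^{-1}$, and elements of $B_{k-1}$ via the quantum-minor expansions of Lemmas~\ref{r.det} and~\ref{r.cutdet} (together with the invertibility of the top-right corner minors in $E_k$ used in the proof of Lemma~\ref{r.AkBk}(b)). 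Finally, the source of $H$ is a domain and a torsion-free $R$-module, being the iterated skew-Laurent extension of the localization of the free-$R$-module domain $B_{k-1}$ (Lemmas~\ref{r.Akl} and~\ref{r.Ore4}), and its GK dimension equals that of $E_k$ by~\eqref{eq.Ek}. Lemma~\ref{r.GKdim}(b) then forces the surjection $H$ to be an isomorphism.
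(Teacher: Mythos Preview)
Your overall architecture matches the paper's proof: extend $h$ to the localization, verify the skew-Laurent relation for $y_i=\beta_{ii}=[\ceC(v_{k-1})_{ii}]$, extend to the iterated skew-Laurent ring, prove surjectivity, and close with the GK dimension argument via Lemma~\ref{r.GKdim}(b) and~\eqref{eq.Ek}. The first, second, and last of these are essentially correct. Two remarks on the skew-Laurent verification: Lemma~\ref{r.height5} is not the right tool (it treats blocks of consecutive states at a vertex, not a single corner arc crossing a generic stated arc); what the paper actually uses is the coaction~\eqref{eq.coact} at the edge $v_{k-2}v_{k-1}$ together with the triangularity statement of Lemma~\ref{r.upper}. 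Also, the two error mechanisms are distinct: terms with $i'<i$ die because the counit vanishes, while terms with $i'>i$ die because the resulting corner arc $b_{i'i}=\ceC(v_{k-1})_{i,i'}$ is bad and hence zero in $E_k$. Your description collapses these into a single ``bad-arc'' case.

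The real gap is in your surjectivity argument. First, you do not need $\bar A_k$ at all: since $A_1,\dots,A_{k-1}$ already generate $\cS(\PP_k)$ and $A_1,\dots,A_{k-2}$ lie in $\cS(\PP_{k-1})$, the algebra $E_k$ is generated by $h(B_{k-1})$ together with $\bar A_{k-1}$ alone. Second, and more importantly, your appeal to Lemmas~\ref{r.det} and~\ref{r.cutdet} does not produce the off-diagonal generators $\beta_{ji}$ ($j<i$) of $\bar A_{k-1}$---those lemmas manipulate quantum \emph{minors}, whereas $\beta_{ji}$ is a single stated arc. The paper supplies the missing idea: the bad arc $C(v_k)_{ji}$ vanishes in $E_k$ (since $I_k$ is in the defining ideal), and the defining relation~\eqref{e.capnearwall} rewrites $C(v_k)_{ji}$ as $\sum_m \ccc_m^{-1} a_{jm}\,b_{\bar m,i}$, where the $a_{jm}$ lie in $B_{k-1}$. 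Varying $j$ over $1,\dots,i-1$ yields a square linear system for the unknowns $z_j=(\ccc_{\bar i}/\ccc_{\bar j})\beta_{ji}\beta_{ii}^{-1}$, whose coefficient matrix is a quantum matrix with quantum determinant $D_{i-1}(a)$. Quantum Cramer's rule (Proposition~\ref{e.Cramer}) then gives the explicit formula
\[
\beta_{ji}=q^{-1}\,D_{i-1}(a)^{-1}\,D_{j,i-1}(a)\,\beta_{ii}\;=\;H\!\bigl(q^{-1}D_{i-1}(a)^{-1}D_{j,i-1}(a)\,x_i\bigr),
\]
which places every $\beta_{ji}$ in the image of $H$. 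This use of the vanishing of $C(v_k)_{ji}$ to set up a solvable linear system is the step your proposal does not supply.
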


\begin{proof}
Let $b_{ij}= \ceC(v_{k-1})_{ji}\in \cS(\PP_k)$ be the element represented by $b$ stated with $i$ at the beginning point and $j$ at the terminating point. Denote by $\beta_{ij}$ the image of $b_{ij}$ in the quotient $E_k$ of $\cS(\PP_k)$. Note that $\beta_{ij} = 0$ if $j >i$.

Let us show that for $i=1,\dots, n-1$ and $x\in B_{k-1}$,
\begin{equation}\label{eq.bii5}
\beta_{ii} h'(x) = h'(\tau_i(x)) \beta_{ii}.
\end{equation}
It is enough to consider the case when $x$ is a nontrivial stated arc. If $x$ is a stated arc that does not end on the edge $v_{k-2}v_{k-1}$, then $\tau_i(x)=x$, and $x$ does not intersect $\beta_{ii}$, so they commute. Hence \eqref{eq.bii5} holds. If $x$ is a stated arc ending on $v_{k-2}v_{k-1}$, then
\begin{equation}
\input{poly-arc-bi}
\end{equation}
By Lemma~\ref{r.upper}, the counit is zero if $i'<i$. The arc $b_{i'i}$ is bad if $i'>i$. Thus in $B_{k-1}\{\DD^{-1}\}$, only the $i'=i$ term is nonzero, and it matches \eqref{eq.bii5} by comparing the definition of $\tau_i$ with \eqref{eq.epsR} and \eqref{eq.epsRr}.

By Lemma \ref{r.AkBk}, the element $h(\DD)$ is invertible in $E_{k-1}$. The universality of localization implies that $h$ can be extended to an algebra homomorphism $h':B_{k-1}\{\DD ^{-1}\}\to E_k$. In $E_{k}$, the elements $\beta_{11}, \dots, \beta_{n-1, n-1}$ pairwise commute, and together with \eqref{eq.bii5} this implies $h'$ can be extended to an algebra homomorphism
\[ h'' : B_{k-1}\{\DD^{-1}\}[x_1^{\pm 1}, \dots, x_{n-1}^{\pm 1}; \tau_1, \dots, \tau_{n-1}] \to E_k,\]
such that $h''(x_i)= \beta_{ii}$. The domain of $h''$ is a free $R$-module, a domain, and has the same GK dimension as the codomain by \eqref{eq.Ek}. By Lemma~\ref{r.GKdim}, to show that $h''$ is an isomorphism, it is enough to show that $h''$ is surjective.

Note that $A_1,\dots, A_{k-1}$ generate the algebra $\cS(\PP_k)$ while $A_1,\dots, A_{k-2}$ generate the algebra $\cS(\PP_{k-1})$. Hence as an algebra, $E_k$ is generated by $B_{k-1}$ and $\bar A_{k-1}$, the latter being the image of $A_{k-1}$ under the projection $\cS(\PP_k) \onto E_k$.

$\bar A_{k-1} \cong \LO$ is generated by $\beta_{ji}$ with $j\le i$, and by definition $\beta_{ii}=h''(x_i)$ is in the image of $h''$. Thus it is enough to show that $\beta_{ji}$ with $j<i$ is in the image of $h''$. For this we will show:
\begin{equation}\label{eq.bji}
\beta_{ji} = h''\left(q^{-1} D_{i-1}(a)^{-1} D_{j,i-1}(a) x_i\right),
\end{equation}
where $D_{j,i-1}(a)$ is the quantum determinant of the $[1,i-1] \times ([\bar i, n] \setminus \{\bar j\})$-submatrix of the matrix $\mathbf{a}=(a_{ij})_{i,j=1}^n$.

For $j< i$, we have $C(v_k)_{ji}=0$ in $E_k$ as it is a bad arc. Using Equation \eqref{e.capnearwall} to express $C(v_k)_{ji}$ in terms of $a$'s and $b$'s, we get
\begin{equation}\label{eq.15}
\input{poly-arc-split}
\end{equation}
Fix $i$ for the moment and consider $j=1,\dots, i-1$. Let $z_j= (\ccc_{\bar i}/ \ccc_{\bar j} )b_{ji}b_{ii}^{-1}$. After multiplying on the right by $\ccc_{\bar i} b_{ii}^{-1}$, Equations \eqref{eq.15} with $j=1, \dots, i-1$ becomes
\[ \begin{pmatrix}
a_{1,\bar i} & a_{1, \bar i+1} & \dots & a_{1,n} \\
a_{2,\bar i} & a_{2, \bar i+1} & \dots & a_{2,n} \\
\vdots & \vdots & \dots & \vdots \\
a_{i-1,\bar i} & a_{i-1, \bar i+1} & \dots & a_{i-1,n}
\end{pmatrix}
\begin{pmatrix}1 \\ z_{i-1} \\ \vdots \\ z_1\end{pmatrix} =0.\]
(All identities are in $E_k$.) Solving this linear system using Proposition \ref{e.Cramer}, we get $z_j=- (-q)^{i-j-1} D_{i-1}(a)^{-1} D_{j,i-1}(a) $. Hence in $E_k$ we have
\[ b_{ji}= (\ccc_{\bar j}/ \ccc_{\bar i} ) z_j b_{ii} = q^{-1} D_{i-1}(a)^{-1} D_{j,i-1}(a) x_i. \]
This proves \eqref{eq.bji}, and the lemma.
\end{proof}

\subsection{Structure of \texorpdfstring{$\reduceS(\PP_k)$}{S-bar(Pk)}}

By definition,
\begin{align*}
B_{k-1} &= \cS(\PP_{k-1}) / (I_2 + \dots + I_{k-2})\\
E_{k-1} &= \cS(\PP_{k-1}) / (I_1 + I_2 + \dots + I_{k-2}).
\end{align*}
Hence $E_{k-1} = B_{k-1} /(I_1)$, where, by abusing notations, we denote the image of $I_1$ under the projection $\cS(\PP_{k-1}) \onto E_{k-1}$ also by $I_1$.

\begin{lemma} \label{r.87}
Let $p:B_{k-1} \onto E_{k-1}$ be the natural projection. Then the element $p(\DD)$ is non-zero and $q$-commuting with $E_{k-1}$.
\end{lemma}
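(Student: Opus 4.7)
The claim has two parts, both of which will follow quickly from what is already established. The first---that $p(\DD)$ $q$-commutes with $E_{k-1}$---will be an immediate consequence of Lemma~\ref{r.AkBk}(a): since $\DD$ is $q$-commuting with all of $B_{k-1}$ and $p$ is a surjective algebra homomorphism, every relation $\DD\, y \eqq y\, \DD$ with $y \in B_{k-1}$ descends to $p(\DD)\, p(y) \eqq p(y)\, p(\DD)$ in $E_{k-1}$. So the real work lies in proving $p(\DD) \neq 0$.

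For the nonvanishing, the plan is to factor the homomorphism $h: B_{k-1} \to E_k$ of \eqref{eq.h} through the projection $p$. The map $h$ is induced by the strict embedding $\PP_{k-1} \embed \PP_k$, which preserves the vertex $v_1$: the two boundary edges of $\PP_{k-1}$ incident to $v_1$ lie inside the two boundary edges of $\PP_k$ incident to $v_1$. Consequently every corner arc at $v_1 \in \PP_{k-1}$ (and its reverse) maps to a corner arc at $v_1 \in \PP_k$ (and its reverse), so the ideal $I_1 \lhd \cS(\PP_{k-1})$ is sent into $I_1 \lhd \cS(\PP_k)$, which is killed in $E_k = \cS(\PP_k)/(I_1 + \dots + I_{k-1})$. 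Hence $h$ descends to an algebra homomorphism $\tilde h: E_{k-1} \to E_k$ with $h = \tilde h \circ p$.

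Given this factorization, the proof concludes instantly: by Lemma~\ref{r.AkBk}(b), $h(\DD)$ is invertible, hence nonzero, in $E_k$; and from $h(\DD) = \tilde h(p(\DD))$ one reads off that $p(\DD) \neq 0$ in $E_{k-1}$. The only point requiring care is the geometric verification that corner arcs at $v_1$ are compatible under the embedding $\PP_{k-1} \embed \PP_k$, but this is routine given that $\PP_{k-1}$ is obtained from $\PP_k$ by removing the edge $v_{k-1}v_k$, which leaves $v_1$ and its two adjacent edges unchanged. I do not expect this to present a serious obstacle.
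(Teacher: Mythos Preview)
Your argument for the $q$-commutation is fine, but the proof of nonvanishing contains a fatal error in the definition of $E_k$.  You write that $I_1 \lhd \cS(\PP_k)$ ``is killed in $E_k = \cS(\PP_k)/(I_1 + \dots + I_{k-1})$.'' That is not the definition: by \eqref{eq.EkAk}, $E_k = \cS(\PP_k)/(I_2 + \dots + I_k) = A_{k,1}$, and the ideal $I_1$ of bad arcs at $v_1$ is precisely the one \emph{not} quotiented out.  In fact $I_1 \not\subset I_2 + \dots + I_k$, since otherwise $E_k$ would coincide with $\Srd(\PP_k)$, contradicting the GK-dimension count in Lemma~\ref{r.Akl} and \eqref{eq.GKrd}.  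Your geometric observation is correct---the embedding $\PP_{k-1} \hookrightarrow \PP_k$ does send the corner arcs at $v_1$ to corner arcs at $v_1$---but precisely because $I_1$ survives in $E_k$, the map $h:B_{k-1}\to E_k$ does \emph{not} kill $I_1$, and the factorization $h = \tilde h \circ p$ through $E_{k-1}=B_{k-1}/(I_1)$ does not exist.  So nothing in Lemma~\ref{r.AkBk}(b) can be transported to $E_{k-1}$ along the lines you suggest.

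The paper's proof proceeds very differently.  It writes $E_{k-1} = \bar A_1 \boxtimes \dots \boxtimes \bar A_{k-2}$ as an $R$-module tensor product of copies of $\LO$, equips it with the $\BZ^{k-2}$-grading coming from the $d_1$-grading on each factor, and then expands $\rOr(D_i(a))$ via repeated applications of Lemma~\ref{r.cutdet} into a sum of products of quantum minors across the factors.  The point is that the unique term of maximal $(d_1^{k-2})$-degree is a product of copies of the quantum minor $\bar v_{\bar i,1}$, which is a nonzero element of the quantum torus frame of $\LO$ (Theorem~\ref{thm.qtorus1}).  Hence $\rOr(D_i(a))\neq 0$ in $E_{k-1}$, so $D_i(a)\neq 0$, and since $E_{k-1}$ is a domain (Lemma~\ref{r.Akl}), the product $\DD=\prod_i D_i(a)$ is nonzero.
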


\begin{proof}
Since $\DD$ is $q$-commuting with $B_{k-1}$ it is clear that $p(\DD)$ is $q$-commuting with $E_{k-1}$.

Since $\rOr$ is an $R$-linear isomorphism, we only need to show that $\rOr(\DD)$ is non-zero in
\[ E_{k-1} = \bar A_1 \boxtimes \dots \boxtimes \bar A_{k-2}.\]

Recall that $d_1(\bar u_{ij}) = i-j$ define a $\BZ$-grading on $\LO$. Any quantum minor $M^I_J$ of the quantum matrix $\bar \buu= (\bar u_{ij})_{i,j=1}^n$ where $I, J \subset \JJ$ have the same cardinality is $d_1$-homogeneous. If we keep the sizes fixed so that $|I|= |J|=i$, then the quantum minor with the largest $d_1$-degree is the one with $I=[\bar i, n]$ and $J= [1,i]$. In this case $M^I_J=\bar v_{\bar i, 1}$ is an element of the quantum torus frame of $\LO$ given in Theorem \ref{thm.qtorus1}, where we proved that it is non-zero.

Since each $\bar A_i =\LO$ is $d_1$-graded, we can equip the $R$-module $E_{k-1}=\bar A_1 \otimes \dots \otimes \bar A_{k-2}$ with a $\BZ^{k-2}$-grading (not compatible with the algebra structure).

\begin{figure}
\centering
\input{poly-arc-Di-split}
\caption{Decomposing $\protect\rOr(D_i(a))$}\label{fig-poly-Di-split}
\end{figure}

Let $I_1= I_{k-1} = [1;i]$. Using Lemma \ref{r.cutdet} repeatedly, we can express $\rOr(D_i(a))$ in $\bar A_2 \dots \bar A_{k-2}$ by
\begin{equation}
\rOr(D_i(a)) = \sum_{I_2, \dots, I_{k-2} \in\binom{\JJ}{i}} c(I_2,\dots, I_{k-2}) M^{\bar I_2}_{I_1}(C(v_1)) M^{\bar I_3}_{I_2}(C(v_2)) \dots M^{\bar I_{k-1}}_{I_{k-2}}(C(v_{k-1})),
\end{equation}
where $c(I_2,\dots, I_{k-2})\in R$ is invertible. Each term in the sum is $(d_1^{k-2})$-homogeneous, and the term with maximal $(d_1^{k-2})$-degree is the one with $I_2 = \dots = I_{k-2}= [1,i]$. In the maximal degree term, each factor is a copy of $\bar v_{\bar i, 1}$, which is non-zero. Therefore, $\rOr(D_i(a))$ is non-zero in $E_{k-1}$, and so is $D_i(a)$. As $E_{k-1}$ is a domain, the product $\DD = \prod D_i(a)$ is non-zero.
\end{proof}

The projection of $\DD$ under $B_{k-1} \onto E_k$ is also denoted by $\DD$. By the above Lemma $\DD$ is a regular element $q$-commuting with $E_{k-1}$. Hence we can define the localization $E_{k-1} \{\DD ^{-1} \}$. The diagonal automorphism $\tau_i= \phi_{e, \eta_i}$ of $B_{k-1}$ is defined as the edge-weight automorphism on edge $e= v_{k-2} v_{k-1}$. Since $e$ is also an edge of $\PP_{k-1}$, the diagonal automorphism $\tau_i$ descends to a diagonal automorphism of $E_{k-1}$.

We also denote by $\DD$ the image of $\DD$ under the projection $B_{k-1} \onto E_{k-1}$. By Lemma~\ref{r.87} $\DD$ is a non-zero element $q$-commuting with $E_{k-1}$. By Lemma \ref{r.GKdim} we can construct the localization $E_{k-1} \{\DD^{-1}\}$. Since $\tau_i(I_1)= I_1$ and $\DD$ is an eigenvector of $\tau_i$ for all $i=1,\dots, n-1$, the diagonal automorphism $\tau_i$ descend to a diagonal automorphism of $E_{k-1} \{\DD^{-1}\}$ denoted by the same notation, and they commute. As in example \ref{exa.001} we can construct the skew-Laurent extension $E_{k-1} \{\DD^{-1}\} [x_1^{\pm 1}, \dots, x_{n-1}^{\pm 1}; \tau_1, \dots, \tau_{n-1} ]$.

\begin{theorem}\label{r.Pkk}
We have an algebra isomorphism
\begin{equation}
\Srd(\PP_k) = E_{k-1} \{\DD^{-1}\} [x_1^{\pm 1}, \dots, x_{n-1}^{\pm 1}; \tau_1, \dots, \tau_{n-1} ].
\end{equation}
\end{theorem}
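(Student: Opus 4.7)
The plan is to bootstrap from the isomorphism \eqref{eq.Ak} already established in the previous lemma, namely
\[E_k \;\cong\; B_{k-1}\{\DD^{-1}\}[x_1^{\pm1},\dots,x_{n-1}^{\pm1};\tau_1,\dots,\tau_{n-1}],\]
and to quotient both sides by (the appropriate incarnation of) the ideal $I_1$. Concretely, using the definitions, one has
\[\reduceS(\PP_k) \;=\; A_{k,0} \;=\; A_{k,1}/p(I_1\cS(\PP_k)) \;=\; E_k/\langle p(I_1)\rangle,\]
where $p:\cS(\PP_k)\to E_k$ is the natural projection and $\langle p(I_1)\rangle$ is the two-sided ideal of $E_k$ generated by $p(I_1)$. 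Under the identification $\cS(\PP_{k-1})\subset \cS(\PP_k)$ furnished by the saturated system $a_1,\dots,a_{k-2}$, the subalgebra $A_1=\cS(a_1)$ lies in $\cS(\PP_{k-1})$, hence so does $I_1=C_{v_1}A_1$; and under $h:B_{k-1}\to E_k$ the ideal $\langle p(I_1)\rangle$ is precisely the ideal generated by the image of $I_1\subset B_{k-1}$ inside the skew-Laurent extension.

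Therefore it suffices to show that quotienting by $I_1$ commutes with the Ore localization at $\DD$ and with the iterated skew-Laurent extension. For the localization step, note that $I_1\lhd B_{k-1}$ is preserved by the automorphism $\tau_\DD$ of $\DD$-conjugation (since $\DD$ is $q$-commuting with $B_{k-1}$ by Lemma~\ref{r.AkBk}(a)), and the image $\bar{\DD}$ of $\DD$ in $B_{k-1}/I_1=E_{k-1}$ is non-zero by Lemma~\ref{r.87}. As $E_{k-1}$ is a domain (Lemma~\ref{r.Akl}), $\bar{\DD}$ is regular and $q$-commuting with $E_{k-1}$, so the Ore localization $E_{k-1}\{\bar{\DD}^{-1}\}$ exists, and the standard isomorphism
\[B_{k-1}\{\DD^{-1}\}\big/\langle I_1\rangle \;\cong\; E_{k-1}\{\bar{\DD}^{-1}\}\]
holds. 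For the skew-Laurent step, \eqref{eq.taui} shows $\tau_i(I_1)=I_1$ for $i=1,\dots,n-1$, and $\tau_i$ fixes $\DD$ up to a unit scalar (Lemma~\ref{r.85}); hence each $\tau_i$ descends to a commuting family of diagonal automorphisms of $E_{k-1}\{\bar{\DD}^{-1}\}$, and Lemma~\ref{r.Ore4}(b) applied iteratively gives
\[B_{k-1}\{\DD^{-1}\}[x_1^{\pm1},\dots,x_{n-1}^{\pm1};\tau_1,\dots,\tau_{n-1}]\big/\langle I_1\rangle \;\cong\; E_{k-1}\{\bar{\DD}^{-1}\}[x_1^{\pm1},\dots,x_{n-1}^{\pm1};\tau_1,\dots,\tau_{n-1}].\]
Combining with \eqref{eq.Ak} yields the claimed isomorphism.

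The main obstacle is the bookkeeping identification of the ideal $\langle p(I_1)\rangle \lhd E_k$ with the ideal generated by $I_1\subset B_{k-1}$ inside the skew-Laurent extension; one must check that no ``extra'' elements arise from the generators $x_i$ (i.e.\ that conjugation by the $x_i$ sends $I_1$ into itself at the level of $B_{k-1}$, not just after localizing at $\DD$). This is exactly the content of \eqref{eq.taui}, so the verification is direct but is the key point. Once this normality statement is in hand, the rest is an application of the two elementary facts that Ore localization and skew-Laurent extension each commute with quotienting by a normal ideal preserved by the relevant automorphisms.
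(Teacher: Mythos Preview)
Your proposal is correct and follows essentially the same approach as the paper's proof. The paper's argument is simply a terser version of yours: it writes $\Srd(\PP_k)=E_k/(I_1)$, applies \eqref{eq.Ak}, invokes $\tau_i(I_1)=I_1$ to commute the quotient past the skew-Laurent extension (your Lemma~\ref{r.Ore4}(b) step), and identifies $B_{k-1}\{\DD^{-1}\}/I_1$ with $E_{k-1}\{\DD^{-1}\}$; your additional care about the ideal bookkeeping and the regularity of $\bar{\DD}$ via Lemmas~\ref{r.87} and \ref{r.Akl} makes explicit what the paper leaves implicit.
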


\begin{proof}
By definition $\Srd(\PP_k)= E_k/(I_{1}$). From \eqref{eq.Ak},
\begin{align*}
\Srd(\PP_k) &= (B_{k-1} \{\DD^{-1}\})[x_1^{\pm 1}, \dots, x_{n-1}^{\pm 1}; \tau_1, \dots, \tau_{n-1} ]/I_{1}\\
&= (B_{k-1} \{\DD^{-1}\}/I_{1} ) [x_1^{\pm 1}, \dots, x_{n-1}^{\pm 1}; \tau_1, \dots, \tau_{n-1} ] \qquad \text{because }\tau_i(I_{1})= I_{1}\\
&= (E_{k-1}\{\DD^{-1}\})[x_1^{\pm 1}, \dots, x_{n-1}^{\pm 1}; \tau_1, \dots, \tau_{n-1} ]\qedhere
\end{align*}
\end{proof}

\subsection{Proof of Theorems \ref{thm.domainr}}

By \eqref{eq.EkAk} we have $E_{k-1}\cong A_{k-1,1}$ which, by Lemma \ref{r.Akl}, is a domain of GK dimension $(k-2)\frac{(n-1)(n+2)}{2}$. As $\DD$ is $q$-commuting with $E_{k-1}$ Lemma \ref{r.GKdim} shows that $\GKdim (E_{k-1} \{\DD^{-1}\}) = \GKdim E_{k-1}$. Each $\tau_i$ is a diagonal automorphism and hence locally algebraic. By Lemma \ref{r.Ore4} the ring $E_{k-1} \{\DD^{-1}\} [x_1^{\pm 1}, \dots, x_{n-1}^{\pm 1}; \tau_1, \dots, \tau_{n-1} ]$ is a an $R$-torsion free domain and has GK dimension
\begin{align*}
\GKdim(E_{k-1} \{\DD^{-1}\} [x_1^{\pm 1}, \dots, x_{n-1}^{\pm 1}; \tau_1, \dots, \tau_{n-1} ])
&= \GKdim (E_{k-1} \{\DD^{-1}\}) + n-1 \\
&= (k-2)\frac{(n-1)(n+2)}{2} + n-1\\
&= k\frac{(n-1)(n+2)}{2} - n^2+1.
\end{align*}
Theorem \ref{thm.domainr} follows, since $\Srd(\PP_k) = E_{k-1} \{\DD^{-1}\} [x_1^{\pm 1}, \dots, x_{n-1}^{\pm 1}; \tau_1, \dots, \tau_{n-1} ]$ by Theorem \ref{r.Pkk}. \qed

\subsection{Proof of Theorems \ref{thm.PP3}}

Theorem \ref{thm.PP3} is a special case of Theorem \ref{r.Pkk}.

\section{Quantum tori associated to the triangle}\label{sec-ntri}

In this section we recall Fock-Goncharov' algebra $\rd{\FG}(\PP_3)$ of the ideal triangle $\PP_3$, and define a new algebra $\bA(\PP_3)$, a quantization of the $A$-space. Both $\rd{\FG}(\PP_3)$ and $\rd{\lenT}(\PP_3)$ are quantum tori and serve as building blocks for the construction of the $A$- and $X$- quantum tori of triangulated surfaces. We show that the matrices of the $\rd{\FG}(\PP_3)$ and $\rd{\lenT}(\PP_3)$ form a compatible pair.

As usual, for any set $S$ let $\BZ^S$ denote the $\BZ$-module of all maps $S \to \BZ$.

\subsection{The $n$-triangulation of the triangle}

Use barycentric coordinates for $\stdT$ so that
\begin{equation}
\stdT=\{(i,j,k)\in\reals^3\mid i,j,k\ge0,i+j+k=n\}\setminus\{(0,0,n),(0,n,0),(n,0,0)\}.
\end{equation}
Here $(i,j,k)$ (or $ijk$ for brevity) are the barycentric coordinates. Let $v_1=n00$, $v_2=0n0$, $v_3=00n$. (This is opposite of the order used in the previous sections.) The edge following $v_i$ in the clockwise orientation is denoted $e_i$. We will draw $\stdT$ in the standard plane as an equilateral triangle with $v_1$ at the top. See Figure~\ref{fig-coords} for an example.

\begin{figure}
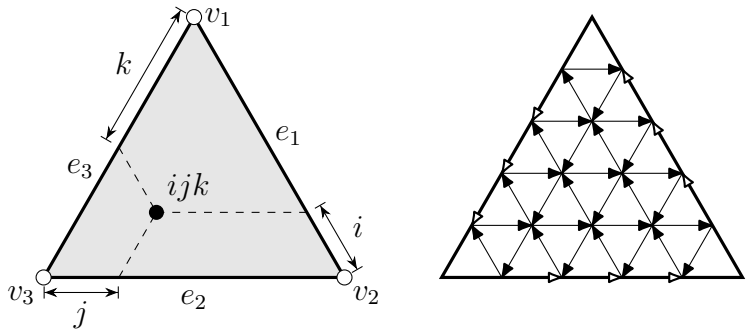

\centering
\input{barycentric}
\quad
\input{def-Q}
\caption{Barycentric coordinates $ijk$ and a $5$-triangulation with its quiver}\label{fig-coords}
\end{figure}

The \term{$n$-triangulation} of $\stdT$ is obtained by subdividing $\stdT$ into $n^2$ small triangles using lines $i,j,k=\text{constant integers}$. An example of a $5$-triangulation is shown in Figure~\ref{fig-coords}.

The vertices and edges of all small triangles, except for the vertices of $\stdT$ and the small edges adjacent to them, form a directed graph (or quiver) $\Gamma_\stdT$.
Here the direction of a small edge, also called an \term{arrow}, is defined as follows. If the small edge $u$ is in the boundary $\partial\stdT$ then $u$ has the positive (or counterclockwise) direction of $\partial \stdT$. If $u$ is interior then the direction of $u$ is the same as that of a boundary edge parallel to $u$. Assign weight $1$ to any boundary arrow and weight $2$ to any interior arrow.

The vertex set $\rd{V}=\rd{V}_\stdT$ of $\Gamma_\stdT$ is the set of points with integer barycentric coordinates:
\begin{equation}
\rd{V}=\{ijk\in\stdT\mid i,j,k\in\ints\}.
\end{equation}
Elements of $\rdV$ are called \term{small vertices}, and small vertices on the boundary of $\stdT$ are called the \term{edge vertices}.

Ignoring the assignment of the based vertex, the triangle $\stdT$ has a $\BZ/3$-symmetry that cyclically permutes the boundary edges. If $\stdT$ is presented as an equilateral triangle as in Figure \ref{fig-coords}, then the symmetry is generated by the rotation by $2\pi/3$.

\subsection{Fock-Goncharov algebra}

We define now the Fock-Goncharov algebra, or the reduced $X$-torus, of the ideal triangle.

Let $\bmQ = \bmQ_\stdT$ be the adjacency matrix of the weighted quiver $\Gamma_\stdT$. In other words,
\[\bmQ: \rd{V} \times \rd{V}\to \BZ\]
is the antisymmetric function defined by
\begin{equation} \label{eq.bmQ}
\bmQ(v,v') = \begin{cases} w, \quad & \text{if there is an arrow from $v$ to $v'$ of weight $w$},\\
0, &\text{if there is no arrow between $v$ and $v'$}.
\end{cases}
\end{equation}
The \term{Fock-Goncharov algebra} $\rd{\FG}(\PP_3)$, also called the \term{reduced $X$-torus}, is defined by
\begin{equation}
\rd{\FG}(\PP_3)= \qtorus(\bmQ) = R \la x_v^{\, \pm 1}, v \in \rd{V}\ra / ( x_v x_{v'} = \hq^{2 \bmQ_\stdT (v,v')} x_{v'} x_v ).
\end{equation}
The set of Weyl-normalized monomials $\{ x^ \bk \mid \bk \in \BZ^{ \rd{V} } \}$ is a free $R$-basis of $\rd{\FG}(\PP_3)$.

\begin{remark}
The original version of the Fock-Goncharov algebra is defined with $\hq$ replaced by $\hq^{n^2}$. Equivalently it is the subalgebra of our $\rd\FG(\PP_3)$ generated by $\{x^{ n \bk}\mid \bk \in \BZ^{\rdV}\}$.
\end{remark}

\subsection{The balanced Fock-Goncharov algebra}

We introduce now an important subalgebra of $\rd{\FG}(\PP_3)$, called the balanced Fock-Goncharov algebra.

Let $\vec{k}_1,\vec{k}_2,\vec{k}_3:\rd{V} \to\ints$ be the functions defined by
\begin{equation}\label{eq-bal-basis}
\vec{k}_1(ijk)=i,\quad \vec{k}_2(ijk)=j,\quad \vec{k}_3(ijk)=k.
\end{equation}
Let $\rd{\Lambda}=\rd{\Lambda}_\stdT\subset\ints^{\rd{V}}$ be the subgroup generated by $\vec{k}_1,\vec{k}_2,\vec{k}_3$ and $(n\ints)^{\rd{V}}$. Vectors in $\rd{\Lambda}$ are called \term{balanced}. Note $\vec{k}_1+\vec{k}_2+\vec{k}_3\in(n\ints)^{\rd{V}}$, so only two of these vectors are necessary in the definition of $\rd{\Lambda}$.

The \term{balanced Fock-Goncharov algebra} is the monomial subalgebra
\[\rdbl(\stdT)=\bT(\rdm{Q};\rd{\Lambda})= R\text{-span of\ } \{ x^\bk \mid \bk \in \rd{\Lambda}\}.\]

\subsection{The $A$-version quantum torus} \label{sec.Atori1}

We define now the reduced $A$-torus $\bA(\PP_3)$ of the triangle, which is a quantum torus $\BT(\bmP)$. The matrix $\bmP= \bmP_\stdT$ comes from the commutations of a set of special elements of $\rd{\cS}(\PP_3)$, see Section \ref{sec.qtr3}. Here we give a purely combinatorial definition of $\bmP$: Define the $\BZ/3$-invariant function
\[\bmP: \rd{V} \times \rd{V} \to n\BZ\]
such that if two small vertices $v=ijk, v'=i'j'k'$ in $\rd{V}$ satisfy
\begin{itemize}
\item[(*)] either $i \le i'$ and $j\le j'$, or $i \ge i'$ and $j\ge j'$,
\end{itemize}
then
\begin{equation}\label{eq-P-tri}
\bmP (v, v')= n \left|\begin{matrix}i &j \\ i'& j'\end{matrix}\right| = n(ij' -ji').
\end{equation}
Here, $\BZ/3$-invariance means for any rotation $\tau\in\ints/3$,
\[\rdm{P} (\tau(x),\tau(x'))=\rdm{P}(x,x').\]
Let us explain why $\bmP$ is well-defined. Condition (*) is equivalent to: The line $vv'$ in the planar picture forms with the horizontal axis an angle $\le 60^\circ$ in the upper half-plane, i.e. the line $vv'$ has slope in $[0, \sqrt 3/2]$. Any pair $v,v'\in \rd{V}_\stdT$ satisfy condition (*) after a rotation in $\BZ/3$ and hence $\bmP(v, v')$ can be defined. The only ambiguous case is when $vv'$ has slop $0$, so that a clockwise rotation by $2\pi/3$ also makes $vv'$ satisfy (*). But one can easily check that $\bmP$ agrees on the original pair and the new pair. Thus $\rdm{P}$ is well-defined. In addition, it is antisymmetric.

The \term{reduced $A$-torus} $\bA(\stdT)$ is the quantum torus $\bT( \bmP)$:
\begin{equation}
\bA(\stdT)= R \la a_v^{\, \pm 1}, v \in \rd{V}\ra / ( a_v a_{v'} = \hq^{2 \bmP(v,v')} a_{v'} a_v ).
\end{equation}
The following set of Weyl-normalized monomials is a free $R$-basis of $\rd{\FG}(\PP_3)$:
\begin{equation}\label{eq.basisA}
\{ a^ \bk \mid \bk \in \BZ^{ \rd{V} } \}
\end{equation}
The \term{positive part} $\rd{\lenT}_+(\stdT)$ is the quantum space $\rd{\lenT}_+(\stdT):=\qplane(\rdm{P})$.

\subsection{Transition between $A$- and $X$-tori} \label{sec-AX-tri}

We will show that there is an algebra isomorphism from the $A$-torus to the balanced the $X$-torus given by a multiplicative linear homomorphism, and that the matrices $\bmP$ and $\bmQ$ form a ``compatible pair".

Define a $\ints/3$-invariant map
\[\rdm{K}=\rdm{K}_\stdT:\rd{V}_\stdT\times\rd{V}_\stdT\to\ints\]
such that if $v=ijk$ and $v'=i'j'k'$ satisfy $i'\le i$ and $j'\ge j$, then
\begin{equation}\label{eq-trigen-exp}
\rdm{K}(v,v')=jk'+ki'+i'j.
\end{equation}
It is easy to see that every pair of $v$ and $v'$ can be rotated into a position where the definition applies.

A special case that will be useful later is
\begin{equation}\label{eq-trigen-0}
\rdm{K}_\stdT((ijk),(i',n-i',0))=n\min\{i,i'\}-ii'
=n\langle\varpi_i,\varpi_{i'}\rangle,
\end{equation}
where the last equality follows from \eqref{eq.weights}. In particular, it is independent of $j,k$, and it is zero if $i=0$.

In what follows we consider a function $ \rdV \times \rdV \to \BZ$ as a $\rdV \times \rdV$ matrix, and a function $\rdV \to \BZ$ as a $1 \times \rdV$ matrix (or a horizontal vector).

\begin{theorem}\label{thm-tdual-tri}
\begin{enuma}
\item The $R$-linear map $\rd{\psi}: \rd{\lenT}(\stdT) \to \rd{\FG}(\stdT)$, given on the basis \eqref{eq.basisA} by
\begin{equation}
\rd{\psi} (a^\vec{k})=x^{\vec{k}\rdm{K}}
\end{equation}
is an $R$-algebra embedding with image equal to balanced subalgebra $\rdbl(\PP_3)$.
\item If $\irdV\subset \rdV$ is the subset of all small vertices in the interior of $\PP_3$, then
\begin{equation} \label{eq.compa}
\bmP \, \bmQ = \left[
\begin{array}{c|c}
-4 n^2 (\Id_{\irdV \times \irdV } )& \ast \\ \hline
0 & \ast
\end{array}\right]
\end{equation}
where the upper left block is the diagonal $(\irdV \times \irdV)$-matrix with $-4n^2$ on the diagonal, and the lower left block is a $0$ matrix.
\end{enuma}
\end{theorem}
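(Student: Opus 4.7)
The centerpiece of \textbf{part (a)} is the matrix identity
\[
\rdm{K}\,\bmQ\,\rdm{K}^{T} = \bmP.
\]
Granted this identity, the general construction of multiplicatively linear homomorphisms of quantum tori (see~\eqref{eq.psiH}) immediately makes $\rd{\psi}$ an $R$-algebra homomorphism. I would verify the identity entry by entry: for fixed $v_1, v_2 \in \rdV$ the sum
\[
(\rdm{K}\,\bmQ\,\rdm{K}^{T})(v_1, v_2) = \sum_{w, w' \in \rdV} \rdm{K}(v_1, w)\,\bmQ(w, w')\,\rdm{K}(v_2, w')
\]
collapses because $\bmQ$ is supported on the edges of the quiver $\Gamma_\stdT$ (at most six nonzero summands for each fixed $v_2$), and by the $\BZ/3$-symmetry of all three matrices one reduces to $v_1, v_2$ in a fundamental region where the closed-form~\eqref{eq-trigen-exp} applies; the resulting identity is polynomial in the barycentric coordinates and checked directly. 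Once this is done, injectivity of $\rd\psi$ reduces to $\det(\rdm K) \ne 0$, which I would establish by an explicit block-determinant computation separating boundary from interior vertices. For the image: in the fundamental sector the formula $\rdm{K}(v, v') = jk' + (n-i)i'$ of~\eqref{eq-trigen-exp} exhibits the row $\rdm K(v, \cdot)$ as $j\,\vec{k}_3 + (n-i)\,\vec{k}_1$ modulo $n\BZ^{\rdV}$, and $\BZ/3$-invariance extends this globally, so every row lies in $\rd\Lambda$; conversely, the edge-vertex formula~\eqref{eq-trigen-0} allows one to recover each $\vec k_\alpha$ modulo $n\BZ^{\rdV}$ as an integer combination of rows of $\rdm K$. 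Combining, the row-span of $\rdm K$ equals $\rd\Lambda$, yielding $\rd\psi(\rd{\lenT}(\stdT)) = \rdbl(\stdT)$.

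For \textbf{part (b)}, I would compute $(\bmP\,\bmQ)(v, v'')$ directly for each interior $v'' = (i_0, j_0, k_0)\in\irdV$. Let $N_\alpha^{\pm}$ ($\alpha = 1,2,3$) denote the six neighbors of $v''$ in $\Gamma_\stdT$, paired by direction parallel to $e_\alpha$, with the sign indicating incoming ($-$) or outgoing ($+$); all adjacent arrows have weight $2$. Then the column of $\bmQ$ at $v''$ reduces the product to
\[
(\bmP\,\bmQ)(v, v'') = 2\sum_{\alpha=1}^{3} \bigl[\,\bmP(v, N_\alpha^-) - \bmP(v, N_\alpha^+)\,\bigr].
\]
When $v = v''$, the $\BZ/3$-invariant formula~\eqref{eq-P-tri} evaluates each bracket to $-2n$ times the barycentric coordinate fixed by the $\alpha$-th direction, so the three pieces sum to $-2n(i_0 + j_0 + k_0) = -2n^2$, giving the total $-4n^2$ on the diagonal. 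When $v \ne v''$, the function $v' \mapsto \bmP(v, v')$ is piecewise affine near $v''$; after using $\BZ/3$-symmetry to place $v$ into a sector on which this function is affine on all six neighbors of $v''$, the three differences telescope to zero. The same affine-telescoping argument applies when $v$ is a boundary vertex, producing the zero lower-left block of~\eqref{eq.compa}.

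\textbf{Main obstacle.} The matrix identity in part (a) is delicate because $\rdm K$ and $\bmP$ are both piecewise linear, with \emph{different} fundamental regions; even after using $\BZ/3$-symmetry, several combinations of sector-placements for $(v_1, v_2)$ must be checked separately. The same combinatorial care is needed in part (b) when $v$ is adjacent to $v''$: then $v$ coincides with an $N_\alpha^\pm$, $\bmP(v, \cdot)$ is no longer affine on all six neighbors, and the telescoping must be verified by hand in these near-diagonal cases.
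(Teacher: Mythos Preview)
Your approach is correct in outline but substantially more laborious than the paper's, and there is one genuine gap.

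The paper avoids all direct case-by-case verification by introducing an auxiliary matrix $\rdm{H}:\rdV\times\rdV\to\BZ$, defined as $-\tfrac12\rdm{Q}$ except on pairs lying on a common boundary edge, where it is given by a simple local rule. The single nontrivial computation is the identity $\rdm{H}\,\rdm{K}=n\,\id$ (carried out in an appendix). Two further identities, $\rdm{P}=n(\rdm{K}-\rdm{K}^t)$ and $\rdm{Q}=\rdm{H}^t-\rdm{H}$, are immediate from the definitions. Then your central identity $\rdm{K}\,\rdm{Q}\,\rdm{K}^t=\rdm{P}$ drops out by pure algebra:
\[
\rdm{K}(\rdm{H}^t-\rdm{H})\rdm{K}^t
= (\rdm{H}\rdm{K})^t\rdm{K}^t\,\rdm{K}^{-t}\cdot\rdm{K} - \rdm{K}\cdot n\,\id\cdot\rdm{K}^{-1}\rdm{K}^t
= n\rdm{K}-n\rdm{K}^t=\rdm{P}.
\]
Non-degeneracy of $\rdm{K}$ is immediate from $\rdm{H}\rdm{K}=n\,\id$, and the image characterization follows from the equivalence $\vec k\in\rd\Lambda \Leftrightarrow \vec k\,\rdm{H}\in (n\BZ)^{\rdV} \Leftrightarrow \vec k\in(\text{row span of }\rdm{K})$, where the middle condition is checked on generators.

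The gap in your proposal is in the image computation: you show each row of $\rdm{K}$ lies in $\rd\Lambda$, and that each $\vec k_\alpha$ lies in $(\text{row span})+n\BZ^{\rdV}$, but you never show $n\BZ^{\rdV}\subset\text{row span}$. Without this, ``row span $=\rd\Lambda$'' does not follow. The paper gets it for free from $\rdm{H}\rdm{K}=n\,\id$, since then $n e_v=(\rdm{H}\text{-row at }v)\rdm{K}$ for every $v$. Your block-determinant plan would need to produce $\det\rdm{K}=\pm[\BZ^{\rdV}:\rd\Lambda]$ exactly, which is a harder computation than you indicate.

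For part~(b), the paper's route is dramatically shorter than your telescoping argument. Since interior vertices never lie on a boundary edge, $\rdm{Q}=-2\rdm{H}$ on both $\rdV\times\irdV$ and $\irdV\times\rdV$. Writing $\rdm{P}\,\rdm{Q}=n\rdm{K}\rdm{Q}+n(\rdm{Q}\rdm{K})^t$ and using $\rdm{K}\rdm{H}=\rdm{H}\rdm{K}=n\,\id$, each summand contributes $-2n\cdot n=-2n^2$ on the $\irdV\times\irdV$ diagonal and $0$ on the lower-left block. This bypasses entirely the piecewise-affinity and near-diagonal cases you flag as the main obstacle.
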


\begin{remark}
\begin{enuma}
\item If $\tilde B $ is the $\rdV \times \irdV$-submatrix of $\bmQ$, then Equ. \eqref{eq.compa} shows that the pair $(\bmP, \tilde B)$ is compatible in the theory of quantum cluster algebra \cite{BZ}.
\item The pair $(\bmP, \bmQ)$ is also compatible in the sense of \cite{GS}: Let $\BZ[\rdV]$ be the free $\BZ$-module with basis $\rdV$, equipped with the skew-symmetric bilinear defined by $(v, v')= \bmQ(v,v')$. For $v\in \rdV$ let $f_v = -\frac{1}{2n}\sum_{v'\in \rdV} K(v, v') v' \in \BQ[\rdV]$. Then $(\{ f_v\}, \BZ [\rdV])$ is a compatible pair in the sense of \cite[Section 18]{GS}. The result of \cite[Section 12]{GS} implies that $\bmQ$ has a compatible matrix. However compatible matrix might not be unique, and we don't know if our $\bmP$ is the same compatible matrix obtained in \cite{GS}.
\end{enuma}
\end{remark}

\subsection{The inverse of $\bmK$}

Let $\rdm{H}= \rdm{H}_\stdT:\rd{V} \times\rd{V}\to\ints$ be the map defined as follows:
\begin{itemize}
\item If $v$ and $v'$ are not on the same boundary edge then let $\rdm{H}(v,v')=-\frac{1}{2}\rdm{Q}(v,v')\in\ints$.
\item If $v$ and $v'$ are on the same boundary edge, then let
\[ \bmH(v,v') = \begin{cases} 1  \qquad &\text{when $v=v'$},\\
 -1 &\text{when there is arrow from $v$ to $v'$} \\
0 &\text{otherwise}
\end{cases} \]
\end{itemize}
See Figure~\ref{fig-H} for an illustration of $\rdm{H}$ values.

\begin{lemma}\label{lemma-HK-tri}
The following matrix identities hold.
\begin{enuma}
\item $n(\rdm{K} -\rdt{K} )=\rdm{P} $.
\item $\rdt{H} -\rdm{H} =\rdm{Q} $.
\item $\rdm{H}\, \rdm{K} =n\id$. In particular $\bmK$ is non-degenerate.
\item $\rdm{K}\, \rdm{Q}\, \rdt{K} =\rdm{P} $.
\end{enuma}
\end{lemma}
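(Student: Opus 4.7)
The plan is to establish (a), (b), (c) directly, and then derive (d) as a formal matrix identity. Indeed, (c) says $\rdm{K}^{-1} = \tfrac{1}{n}\rdm{H}$, so also $\rdm{K}\rdm{H} = n\id$ and, by transposing, $\rdt{H}\rdt{K} = n\id$. Using (b) to substitute $\rdm{Q} = \rdt{H}-\rdm{H}$,
\[\rdm{K}\,\rdm{Q}\,\rdt{K} = \rdm{K}(\rdt{H}\rdt{K}) - (\rdm{K}\rdm{H})\rdt{K} = n\rdm{K} - n\rdt{K},\]
which equals $\rdm{P}$ by (a).

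For (a), I would exploit that both sides are antisymmetric and $\BZ/3$-invariant, reducing to a single standard sector, say $i'\le i$ and $j'\ge j$. There $\rdm{K}(v,v')=jk'+ki'+i'j$ by definition, while $\rdm{K}(v',v)$ is obtained by rotating $(v',v)$ cyclically into its own defining sector and then applying the same formula. Substituting $k=n-i-j$ and $k'=n-i'-j'$ and subtracting, the algebraic simplification collapses to $n(ij'-ji')$, which matches $\rdm{P}(v,v')$ in this sector; the other sectors follow by $\BZ/3$-invariance.

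For (b), the identity is automatic when $v$ and $v'$ do not lie on a common boundary edge, since then $\rdm{H} = -\tfrac{1}{2}\rdm{Q}$ and antisymmetry of $\rdm{Q}$ gives the result. When $v$ and $v'$ share a boundary edge, the prescribed values $0,\pm 1$ of $\rdm{H}$ match the weight-$1$ boundary arrows of $\Gamma_\stdT$ in each of the three configurations (coinciding vertices, an arrow from $v$ to $v'$, or the reverse), which is a short direct check.

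The hard part will be (c). For fixed $v$, the support of $\rdm{H}(v,\cdot)$ is tiny: it consists of the quiver-neighbors of $v$ (at most six when $v$ is interior), together with $v$ itself when $v$ is on the boundary. Thus $\sum_{v''}\rdm{H}(v,v'')\,\rdm{K}(v'',v')$ is really a short sum. My plan is to fix $v$, invoke $\BZ/3$-symmetry to reduce to three representative positions (interior vertex; boundary vertex away from the corners; boundary vertex adjacent to a corner), enumerate the neighbors of $v$ explicitly in each case, and substitute the defining formula for $\rdm{K}(v'',v')$ after identifying the correct $\BZ/3$-sector for each pair. The principal technical subtlety is that the six neighbors of $v$ may fall in different sectors relative to $v'$, so one must track which rotated form of the formula applies to each summand. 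Once organized by the position of $v'$ relative to $v$, the contributions cancel in pairs along all but one combinatorial direction and produce $n\delta_{v,v'}$; for boundary $v$, the extra $\pm 1$ corrections in the definition of $\rdm{H}$ precisely compensate for the quiver-neighbors that are absent on $\partial\stdT$.
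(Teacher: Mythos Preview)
Your proposal is correct and matches the paper's approach: (a) and (b) are declared straightforward verifications, (d) is derived from (a)--(c) by the same one-line manipulation $\rdm{K}(\rdt{H}-\rdm{H})\rdt{K}=n(\rdm{K}-\rdt{K})$, and (c) is carried out in an appendix by precisely the case analysis you outline, fixing $v$ (interior or boundary, the latter reduced by rotation to one edge) and then splitting on the position of $v'$. One minor simplification: you need not single out boundary vertices adjacent to a corner, since the defining formula for $\rdm{K}$ extends by zero to the three corner vertices of $\stdT$, so the ``missing'' neighbor in that case contributes nothing and the generic boundary computation already covers it.
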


\begin{proof}
(a) and (b) are straightforward calculations using the definitions. The proof of (c) will be given in Subsection~\ref{sec-dual-tri}.

(d) is equivalent to (a) assuming (b) and (c):
\[\rdm{K}\, \rdm{Q}\, \rdt{K}
=\rdm{K} (\rdt{H} -\rdm{H} )\rdt{K}
=n(\rdm{K} -\rdt{K} )=\rdm{P} .\qedhere\]
\end{proof}

\begin{proposition}\label{prop-bal-tri}
Let $\vec{k}$ be a vector in $\ints^{\rd{V}}$. The following are equivalent.
\begin{enumerate}
\item $\vec{k}$ is balanced.
\item $\vec{k}\rdm{H} \in(n\ints)^{\rd{V} }$.
\item There exists a vector $\vec{c}\in\ints^{\rd{V} }$ such that $\vec{k}=\vec{c}\rdm{K} $.
\end{enumerate}
\end{proposition}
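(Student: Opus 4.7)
The key input is Lemma~\ref{lemma-HK-tri}(c): $\rdm{H}\rdm{K}=n\,\id$. Over $\rats^{\rd{V}}$ this makes $\rdm{K}$ invertible with inverse $\rdm{H}/n$, so the unique rational solution of $\vec{c}\rdm{K}=\vec{k}$ is $\vec{c}=\vec{k}\rdm{H}/n$, which is integral iff $\vec{k}\rdm{H}\in(n\ints)^{\rd{V}}$; this gives (2)$\Leftrightarrow$(3). What remains is to identify the image $L:=\{\vec{c}\rdm{K}\mid\vec{c}\in\ints^{\rd{V}}\}$, which is the $\ints$-span of the rows of $\rdm{K}$, with $\rd{\Lambda}$. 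Note that $(n\ints)^{\rd{V}}\subseteq L$ because $n\,\vec{e}_v=(\vec{e}_v\rdm{H})\rdm{K}$ for each $v\in\rd{V}$.

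For $L\subseteq\rd{\Lambda}$, fix $v=ijk\in\rd{V}$ and consider the row $\rdm{K}(v,\cdot)$. On the sector $\{v'=i'j'k':i'\le i,\,j'\ge j\}$, formula~\eqref{eq-trigen-exp} gives $\rdm{K}(v,v')=jk'+ki'+i'j$, which reduces modulo $n$ via $i'+j'+k'\equiv 0\pmod n$ to
\[\rdm{K}(v,v')\;\equiv\;k\,\vec{k}_1(v')-j\,\vec{k}_2(v')\pmod n.\]
The $\ints/3$-rotational symmetry of $\stdT$ produces analogous bilinear expressions on the two rotated sectors, and every $v'\in\rd{V}$ lies in at least one such sector. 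A short check using $i+j\equiv -k\pmod n$ together with the identity $\vec{k}_1+\vec{k}_2+\vec{k}_3\equiv 0\pmod n$ on $\rd{V}$ shows that all three formulas agree modulo $n$. Hence $\rdm{K}(v,\cdot)\equiv k\,\vec{k}_1-j\,\vec{k}_2\pmod{(n\ints)^{\rd{V}}}$ on all of $\rd{V}$, which places the row in $\rd{\Lambda}$.

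For the reverse inclusion $\rd{\Lambda}\subseteq L$, since $(n\ints)^{\rd{V}}\subseteq L$ it suffices to exhibit $\vec{k}_1,\vec{k}_2,\vec{k}_3$ in $L$ modulo $(n\ints)^{\rd{V}}$. Specializing the formula above to $v=(1,n-1,0)$, for which $k=0$ and $-(n-1)\equiv 1\pmod n$, gives $\rdm{K}(v,\cdot)\equiv\vec{k}_2\pmod{(n\ints)^{\rd{V}}}$; applying the two $\ints/3$-rotations yields $\vec{k}_3$ and $\vec{k}_1$ from the rows at $(0,1,n-1)$ and $(n-1,0,1)$ respectively.

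The main obstacle is confirming the $\ints/3$-compatibility of the modular formula across the three sectors. Once this single bilinear identity is in hand, both inclusions $L\subseteq\rd{\Lambda}$ and $\rd{\Lambda}\subseteq L$ follow immediately from the matrix identity in Lemma~\ref{lemma-HK-tri}(c).
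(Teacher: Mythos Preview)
Your proof is correct and follows essentially the same route as the paper: both use $\rdm{H}\rdm{K}=n\id$ for (2)$\Leftrightarrow$(3) and the congruence $\rdm{K}(v,\cdot)\equiv k\vec{k}_1-j\vec{k}_2\pmod n$ for (3)$\Rightarrow$(1). The only difference is in the remaining implication: the paper verifies (1)$\Rightarrow$(2) by directly checking $(\vec{k}_a\rdm{H})(v)=0$ for $a=1,2,3$, whereas you show $\rd{\Lambda}\subseteq L$ by specializing the same congruence to exhibit $\vec{k}_1,\vec{k}_2,\vec{k}_3$ as explicit rows of $\rdm{K}$ modulo $n$. Your version is slightly more economical since it reuses the modular formula; the paper's version is a more direct one-line computation. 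Your explicit flagging of the sector-compatibility check (which the paper leaves implicit) is a genuine clarification.
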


\begin{proof}
(1)$\Rightarrow$(2) can be directly verified on the generators in \eqref{eq-bal-basis}:
\[(\vec{k}_a\rdm{H} )(v)=0\qquad\text{for }a=1,2,3.\]

(2) and (3) are equivalent by Lemma~\ref{lemma-HK-tri} with $\vec{c}=\vec{k}\rdm{H} /n$.

(3)$\Rightarrow$(1) because $\rdm{K} (v,\cdot)\equiv k\vec{k}_1-j\vec{k}_2\pmod{n}$ by \eqref{eq-trigen-exp}.
\end{proof}

\subsection{Proof of Theorem \ref{thm-tdual-tri}}

\begin{proof}
(a) Recall that $\rd{\psi}(a^\bk) = x^{ \bk \bmK}$.
The identity of Lemma~\ref{lemma-HK-tri}(d) shows that
$\rd{\psi} $ is $R$-algebra homomorphism. The image of $\rd{\psi} $ is $\rdbl(\stdT)$ by Proposition~\ref{prop-bal-tri}. The non-degeneracy of $\bmK$, see Lemma~\ref{lemma-HK-tri}(c), shows that $\rd{\psi}$ maps the $R$-basis \eqref{eq.basisA} of $\bA(\PP_3)$ injectively into an $R$-basis of $\rd{\FG}(\PP_3)$, hence $\rd\psi$ is injective.

(b) Using the Identities of Lemma~\ref{lemma-HK-tri} and skew-symmetric property of $\bmQ$ we have
\begin{align} \label{eq.hkh1}
\bmP \, \bmQ = n (\bmK - \bmK ^t ) \bmQ= n \, \bmK \, \bmQ +n \, (\bmQ\, \bmK)^t.
\end{align}
By definition $\bmQ= -2 \bmH$ on the block $\rdV \times \irdV$, which we will focus on. Since $\bmK\, \bmH = n \Id$, we get
\begin{equation} \bmK\, \bmQ =
\left[\begin{array}{c|c}
-2n (\Id_{\irdV \times \irdV } )& \ast \\ \hline
0 & \ast
\end{array}\right]
\label{eq.HKH}
\end{equation}
Similarly, using $ \bmH\, \bmK = n \Id$ and focusing on the block $\irdV \times \rdV$, we see that $(\bmQ\, \bmK)^t$ is also equal to the right-hand side of \eqref{eq.HKH}. Using these values of $\bmK \, \bmQ $ and $(\bmQ\, \bmK)^t$ in \eqref{eq.hkh1}, we get \eqref{eq.compa}.
\end{proof}

\section{Quantum trace maps, triangle case} \label{sec.qtr3}

We show that the $A$-version quantum trace $\btr^A$ exists for the ideal triangle by exhibiting a quantum torus frame for $\rd\cS(\PP_3)$. Then we derive the $X$-version $\btr^X$. We show that $\btr^X$ has a grading on the boundary edge, a fact used later to patch the $\btr^X$ of the triangles to give a global $X$-version quantum trace for general pb surface. We also show how to recover the quantum holonomy results of \cite{CS} using the existence of $\btr^X$. Finally we give an extension of the counit for $\LO$, which will be used later to relate the reduced and non-reduced quantum traces for general surfaces.

In this section we continue to use the notations of the preceding section.

\subsection{Quantum torus frame and quantum traces}

For a small vertex $(ijk) \in \rdV = \rdV_{\PP_3}$ the diagram $\gaa'_{ijk}$ in Figure~\ref{fig-trigen-res} is reflection-normalizable by Lemma \ref{r.triad}. Define
\begin{equation}\label{eq.wijk}
\gaa_{ijk} = (-1)^{\binom{n}{2}+\binom{k}{2}}w_{ijk} \gaa'_{ijk}, \quad \text{where }
w_{ijk}=q^{-\frac{1}{2n}\binom{i}{2}}q^{\frac{1}{2n}\binom{j}{2}}q^{\frac{1}{2n}\binom{k}{2}} q^{\binom{k}{2}}.
\end{equation}
Here $w_{ijk}$ is the reflection-normalization factor so $\gaa_{ijk}$ is reflection invariant. The sign is introduced to simplify Lemma~\ref{lemma-agen-decomp} and ensure $\ints/3$-invariance.

 {Recall that by Theorem \ref{thm.domainr}, $\reduceS(\PP_3)$ is a domain, and we define the quantum torus frame in Definition~\ref{def.qtf}.}

\begin{figure}
\centering
\input{trigen-res}
\caption{Diagram $\gaa'_{ijk}$} \label{fig-trigen-res}
\end{figure}

\begin{theorem}\label{thm.bP3a}
The set $\GGG=\{\gaa_v \mid v \in \rd{V}\}$ is a $\ints/3$-invariant quantum torus frame for $\reduceS(\stdT)$ with the commutation rule
\begin{equation}\label{eq-trigen-comm}
\gaa_v \gaa_{v'} = \hq^{2\rdm{P} (v,v')} \gaa_{v'} \gaa_v.
\end{equation}
Consequently we have a reflection invariant and $\ints/3$-equivariant embedding
\begin{equation}\label{eq.trA01}
\rdtr^A: \reduceS(\stdT) \embed \rd{\lenT}(\stdT), \quad
\btr^A(\gaa_v) = a_v.
\end{equation}
Moreover if $\reduceS(\stdT)$ is identified with its image, then
\begin{equation}\label{eq-sandwich}
\rd{\lenT}_+(\stdT) \subset \reduceS(\stdT) \subset \rd{\lenT}(\stdT).
\end{equation}
\end{theorem}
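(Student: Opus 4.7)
The plan is to apply Proposition~\ref{r.qtframe} with the GK-dimension replacement~(3') of hypothesis~(3). By Theorem~\ref{thm.domainr} applied to $k=3$, the algebra $\reduceS(\PP_3)$ is a domain of GK dimension $3(n-1)(n+2)/2-n^2+1=(n-1)(n+4)/2$, and counting integer lattice points gives $|\rdV|=\binom{n+2}{2}-3=(n-1)(n+4)/2$, matching exactly. Consequently the scheme is: (i) verify $\gaa_v\gaa_{v'}\eqq\hq^{2\bmP(v,v')}\gaa_{v'}\gaa_v$, (ii) verify each $\gaa_v\ne 0$, (iii) verify weak generation. Then Proposition~\ref{r.qtframe}(c) certifies $\GGG$ as a quantum torus frame, and parts~(a)-(b) yield the embedding, the sandwich~\eqref{eq-sandwich}, and reflection invariance.

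For the commutation~\eqref{eq-trigen-comm}, I would invoke the manifest $\ints/3$-symmetry of the construction to reduce to pairs with $i\le i'$ and $j\le j'$, the case in which $\bmP(v,v')$ is defined directly by~\eqref{eq-P-tri}. Stack $\gaa_v$ above $\gaa_{v'}$, then re-order the boundary heights on each of the three edges using Lemma~\ref{r.height5}: each edge contributes a $\hq$-power recorded by the bilinear form $\langle\cdot,\cdot\rangle$ applied to the states on that edge. The three edge contributions add up, and a direct count shows their sum equals $2\bmP(v,v')$ once the normalizing prefactor $w_{ijk}$ in~\eqref{eq.wijk} (chosen precisely for this purpose, and forcing reflection invariance via Lemma~\ref{r.triad}) is absorbed. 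Non-vanishing of $\gaa_v$ will follow from the decomposition of $\gaa'_{ijk}$ along the central sink/source via Lemma~\ref{r.det} into a signed sum of products of quantum minors of corner arcs, which are visibly non-zero in $\reduceS(\PP_3)$ using the concrete description of Theorem~\ref{thm.PP3}.

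The main obstacle will be weak generation. Using Theorem~\ref{thm.PP3}, the algebra $\reduceS(\PP_3)$ is generated by the lower-triangular corner-arc elements $C(v_1)_{ij}$ $(i\ge j)$ of $\LO$ together with the invertible diagonal elements $\ceC(v_2)_{ii}$ and the Ore denominator $\DD$. The strategy is to identify explicit $\GGG$-monomials, supported at small vertices lying on a single edge of $\PP_3$, with quantum minors along that edge by peeling off the central sink/source of $\gaa'_{ijk}$ via Lemma~\ref{lemma-vertex-rev} and Lemma~\ref{r.det}, and to realize $\DD$ as another such $\GGG$-monomial through Lemma~\ref{r.cutdet}. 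This should express each generator of $\reduceS(\PP_3)$ as a $\GGG$-monomial up to a unit, hence prove $\LPol(\GGG)=\reduceS(\PP_3)$. The bookkeeping here—translating diagrammatic $\gaa_v$ into the algebraic generators from Section~\ref{sec.Rd2}—is the step requiring the most care.

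Once these three items are established, the conclusions follow quickly: the embedding $\btr^A\colon\reduceS(\PP_3)\embed\bA(\PP_3)$ with $\btr^A(\gaa_v)=a_v$ and the inclusion chain $\bA_+(\PP_3)\subset\reduceS(\PP_3)\subset\bA(\PP_3)$ are Proposition~\ref{r.qtframe}(a). The $\ints/3$-equivariance is automatic from the symmetric construction of $\GGG$, $w_{ijk}$, and $\bmP$. Reflection invariance of $\btr^A$ is Proposition~\ref{r.qtframe}(b), given that each $\gaa_v$ is reflection invariant by design.
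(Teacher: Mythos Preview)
Your overall strategy is correct and matches the paper's: verify the three hypotheses of Proposition~\ref{r.qtframe} using the GK-dimension variant~(3'), with the dimension count you give being exactly right. However, your execution differs from the paper's in one organizing insight that you only gesture at.

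The paper's proof is built around an explicit decomposition (Lemma~\ref{lemma-agen-decomp}): \(\gaa_{ijk}=[M_1(i,k)\,M_2(j)]_\Weyl\), where \(M_1(i,k)=M^{[i+1;i+k]}_{[\bar k;n]}(\cev C(v_1))\) and \(M_2(j)=M^{[\bar j;n]}_{[\bar j;n]}(C(v_2))\). This single formula does all the work. For the commutation~\eqref{eq-trigen-comm}, the paper computes the $q$-commutation of the four pieces \(M_1(i,k),M_2(j),M_1(i',k'),M_2(j')\) using \eqref{eq-v-comm} and Lemma~\ref{r.height5}, rather than exchanging heights on all three edges simultaneously as you propose. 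Your three-edge approach is plausible and the hypotheses of Lemma~\ref{r.height5} are satisfied (both state sequences end at $n$), but the bookkeeping is more delicate than you suggest, and you would still need the decomposition for the rest of the argument anyway.

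The more significant divergence is in weak generation. You propose to show each generator of \(\reduceS(\PP_3)\) (from Theorem~\ref{thm.PP3}) lies in \(\LPol(\GGG)\); the paper goes the other direction. It observes that \(M_1(i,k)=\rOr(\bar v_{\bar k,i+1})\), so \(\{M_1(i,k)\}\) is already a quantum torus frame for \(\LO\) by Theorem~\ref{thm.qtorus1}, and since \(\DD\) is a product of the \(M_1(0,k)\), it is also a frame for \(\LO\{\DD^{-1}\}\). Adjoining the \(x_s^{-1}\) gives a frame for the skew-Laurent extension, and then three successive monomial substitutions (\(x_s^{-1}\to C(v_2)_{ss}\to M_2(j)\), then \(M_1(i,k)\to[M_1(i,k)M_2(j)]_\Weyl=\gaa_{ijk}\)) transform this known frame into \(\GGG\). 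This avoids having to express the individual \(C(v_1)_{ij}\) in \(\LPol(\GGG)\), which is where your sketch is vaguest: your ``peeling off the central sink/source'' is essentially rediscovering Lemma~\ref{lemma-agen-decomp}, and realizing \(\DD\) via Lemma~\ref{r.cutdet} would still leave you needing the connection to Theorem~\ref{thm.qtorus1} through \(\rOr\).
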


From the isomorphism $\rd\psi: \rd{\lenT}(\stdT) \xrightarrow\cong \rdbl(\stdT)$, we obtain the $X$-version:

\begin{theorem}\label{thm.Xtr0}
There is a unique $\ints/3$-equivariant algebra embedding
\begin{equation}\label{eq.bXtrT}
\rdtr^X: \reduceS(\stdT) \embed \rdbl(\stdT), \quad \text{given by} \ \rdtr^X(\gaa_v)=x^{\rdm{K}(v,\cdot)}.
\end{equation}
Here $\rdm{K}(v,\cdot): \rdV \to \BZ$ is the map $v' \mapsto \rdm{K}(v,v')$. We have the commutative diagram
\begin{equation}\label{eq.diag00}
\begin{tikzcd}[row sep=tiny]
&\rd{\lenT}(\stdT) \arrow[dd,"\rd \psi","\cong"'] \\
\reduceS(\stdT) \arrow[ru,hook,"\btr^A"]
\arrow[rd,hook,"\btr^X"']&\\
&\rdbl(\stdT)
\end{tikzcd}
\end{equation}
\end{theorem}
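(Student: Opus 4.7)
The plan is to define $\rdtr^X$ as the composition
\[
\rdtr^X \;:=\; \rd{\psi} \circ \btr^A : \reduceS(\stdT) \embed \rd{\lenT}(\stdT) \xrightarrow{\;\cong\;} \rdbl(\stdT),
\]
where $\btr^A$ is the embedding produced in Theorem~\ref{thm.bP3a} and $\rd{\psi}$ is the multiplicatively linear isomorphism of Theorem~\ref{thm-tdual-tri}(a). Since each is an injective algebra homomorphism, so is the composition, and Diagram \eqref{eq.diag00} commutes by construction. To verify the formula on generators, I would compute
\[
\rdtr^X(\gaa_v) \;=\; \rd{\psi}(a_v) \;=\; \rd{\psi}(a^{\,\mathbf{e}_v}) \;=\; x^{\mathbf{e}_v \rdm{K}} \;=\; x^{\rdm{K}(v,\cdot)},
\]
where $\mathbf{e}_v\in\ints^{\rdV}$ is the standard basis vector at $v$; the next to last equality is the definition of $\rd\psi$ on basis monomials.

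For $\ints/3$-equivariance, I would note that the action of $\tau\in\ints/3$ on $\rdV_\stdT$ permutes the generators of both quantum tori: on $\rd{\lenT}(\stdT)$ by $a_v\mapsto a_{\tau v}$ and on $\rdbl(\stdT)\subset\rd\FG(\stdT)$ by $x_v\mapsto x_{\tau v}$. The key identity to check is that $\rd\psi$ intertwines these two actions. This reduces to the $\ints/3$-invariance of $\rdm{K}$, which holds by its very construction, since for any $\tau\in\ints/3$ and $v,v'\in\rdV$ we have $\rdm{K}(\tau v,\tau v')=\rdm{K}(v,v')$, so that $x^{\rdm{K}(\tau v,\cdot)} = x^{\tau\cdot\rdm{K}(v,\cdot)}$, matching the permutation of the basis. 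Combined with the $\ints/3$-equivariance of $\btr^A$ already proved in Theorem~\ref{thm.bP3a}, this gives the $\ints/3$-equivariance of $\rdtr^X$.

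Uniqueness is the only point requiring a small argument, because the generators $\gaa_v$ need not generate $\reduceS(\stdT)$ as an algebra in the usual sense; they only \emph{weakly} generate it, as the set $\GGG$ is a quantum torus frame (Definition~\ref{def.qtf}). Suppose $f,g:\reduceS(\stdT)\to\rdbl(\stdT)$ are two algebra homomorphisms with $f(\gaa_v)=g(\gaa_v)=x^{\rdm{K}(v,\cdot)}$ for every $v\in\rdV$. Then $f$ and $g$ agree on the polynomial subalgebra $\Pol(\GGG)$. For an arbitrary $a\in\reduceS(\stdT)$, weak generation provides a $\GGG$-monomial $\fm$ with $a\fm\in\Pol(\GGG)$, so $f(a)f(\fm)=g(a)g(\fm)$. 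Since $f(\fm)=g(\fm)$ is a Weyl-normalized monomial in $\rdbl(\stdT)$, it is invertible in the quantum torus, and we conclude $f(a)=g(a)$.

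There is no genuine obstacle: the statement is essentially a formal consequence of the two earlier theorems. The only mild subtlety is recognizing that uniqueness must be phrased via weak generation rather than ordinary generation, since $\reduceS(\stdT)$ is not a quantum space but sits strictly between $\rd{\lenT}_+(\stdT)$ and $\rd{\lenT}(\stdT)$ by \eqref{eq-sandwich}.
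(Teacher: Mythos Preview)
Your proof is correct and follows essentially the same approach as the paper: define $\rdtr^X = \rd\psi \circ \btr^A$, read off the formula on generators from the definition of $\rd\psi$, and deduce uniqueness from weak generation. The paper's proof is considerably terser (two sentences), but your expanded treatment of the $\ints/3$-equivariance of $\rd\psi$ via the invariance of $\rdm{K}$, and of uniqueness via invertibility of monomials in the target torus, simply fills in details the paper leaves implicit.
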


\begin{proof}[Proof of Theorem \ref{thm.bP3a}]
A routine calculation using Lemma~\ref{r.height2} shows $\ints/3$-invariance.

 {
Next we show that $\gaa_{ijk}$ is the product of two quantum minors and use the $q$-commutation between quantum minors. Let $I_1=\{(i,k)\in\nats^2\mid i\ge0,k\ge1,i+k\le n\}$. For $(i,k) \in I_1$ and $j\in\{1,\dots,n-1\}$, let $M_1(i,k), M_2(j) \in \reduceS(\stdT)$ be the following quantum minors using notations from Subsection~\ref{ss.Weyl}.
\begin{equation}
\input{trigen-minors}
\end{equation}
In other words,
$M_1(i,k)=M^{[i+1; i+k]}_{[\bar{k};n]}(\cev{C}(v_1))$ and $M_2(j)=M^{[\bar{j};n]}_{[\bar j; n]}(C(v_2))$, where the corner arcs $\ceC(v_1),C(v_2)$ are defined in Subsection \ref{ss.badarcs}. For convenience, let $M_2(0)=M_1(i,0)=1$.
}

\begin{lemma}\label{lemma-agen-decomp}
The elements $M_1(i,k)$ and $M_2(j)$ are $q$-commuting in $\reduceS(\stdT)$, and
\begin{equation}\label{eq-agen-decomp}
\gaa_{ijk}=\left[M_1(i,k)M_2(j)\right]_\Weyl.
\end{equation}
\end{lemma}

\begin{proof}
The $k=0$ or $j=0$ case is a special case of \eqref{eq.detu}. Now assume $j,k\ge1$. From Lemmas~\ref{r.height5} and \ref{r.det}, the elements $M_1(i,k)$ and $M_2(j)$ are $q$-commuting, and
\begin{equation}\label{eq.mqcom}
M_2(j) M_1(i,k) = q^{\frac{jk}{n}} M_1(i,k) M_2(j).
\end{equation}

Applying Lemma~\ref{lemma-vertex-rev} on the right edge, we get
\begin{equation}
\gaa'_{ijk}=(-1)^{\binom{n}{2}}q^{\frac{1}{2n}\left(\binom{i}{2}-\binom{n-i}{2}\right)}(-q)^{-\binom{n-i}{2}}
\sum_{\sigma_2}(-q)^{\ell(\sigma_2)}\input{trigen-g-res}
\end{equation}
where $\sigma_2:[i+1;n]\to[i+1;n]$. For the corner arcs in the bottom right not to be bad arcs, we must have
\[\sigma_2(i+t)=\bar{t},\quad t=1,\dots,j.\]
The product of these corner arcs is equal to $M_2(j)$ by Lemma~\ref{r.Ibad1}.

Let $\sigma_3=\sigma_2|_{[\bar{k};n]}$. Then $\sigma_3:[\bar{k};n]\to[i+1;i+k]$, and $\ell(\sigma_2)=\binom{j}{2}+\ell(\sigma_3)+jk$. Thus
\begin{align*}
\gaa'_{ijk}&=(-1)^{\binom{n}{2}}q^{\frac{1}{2n}\left(\binom{i}{2}-\binom{n-i}{2}\right)}(-q)^{-\binom{n-i}{2}+\binom{j}{2}+jk}
\sum_{\sigma_3}(-q)^{\ell(\sigma_3)}\input{trigen-g-res2}\\
&=(-1)^{\binom{n}{2}}q^{\frac{1}{2n}\left(\binom{i}{2}-\binom{n-i}{2}\right)}(-q)^{\binom{k}{2}}
M_2(j)M_1(i,k).\\
\gaa_{ijk}&=(-1)^{\binom{n}{2}+\binom{k}{2}}w_{ijk}\gaa'_{ijk}
=q^{-\frac{jk}{2n}} M_2(j) M_1(i,k)=\left[M_1(i,k)M_2(j)\right]_\Weyl.
\end{align*}
where for the last identity we use \eqref{eq.mqcom} and the definition of the Weyl-normalization.
\end{proof}

Let us prove \eqref{eq-trigen-comm}. Using the $\ints/3$ symmetry and switching $v$ and $v'$ if necessary, we can assume $v=ijk$ and $v'=i'j'k'$ with $i\ge i'$ and $j\ge j'$.

By Lemma~\ref{r.Ibad1}, $M_2(j)$ and $M_2(j')$ commute. Also $M_1(i,k) $ and $M_1(i', k')$ commute by \eqref{eq-v-comm}. Finally, using Lemma~\ref{r.height5}, we get
\[M_1(i,k) M_2(j')=q^{-\frac{j'k}{n}}M_2 (j')M_1(i,k),\quad
M_1(i', k')M_2(j) =q^{-\frac{jk'}{n}+(j-j')}M_2(j) M_1(i',k').\]
Putting all these together and using Lemma \ref{lemma-agen-decomp}, we have
\begin{align*}
\gaa_v \gaa_{v'}&= [M_1(i,k)M_2(j) ]_\Weyl [M_1(i',k')M'_2(j')]_\Weyl\\
&=q^{-\frac{j'k}{n}}(q^{-\frac{jk'}{n}+(j-j')})^{-1}[M_1(i',k')M'_2(j')]_\Weyl[M_ 1(i,j)M_2(j) ]_\Weyl \\
&=q^{\frac{1}{n}(ij'-ji')}\gaa_{v'}\gaa_v
= \hq^{2\rdm{P}(v,v')}\gaa_{v'}\gaa_v,
\end{align*}
where for the last equality we use the definition of $\rdm{P}(v,v')$ in \eqref{eq-P-tri}. This proves \eqref{eq-trigen-comm}.

Use the isomorphism of Theorem~\ref{thm.PP3} to identify
\begin{equation}\label{eq.iden5}
\bS(\stdT) \equiv \LO\{\DD\}^{-1}[x_1^{\pm 1}, \dots, x_{n-1}^{\pm 1} ;\tau_1, \dots, \tau_{n-1}].
\end{equation}
Under this identification,
\begin{align}\label{eq.401}
M_1(i,k)&=M^{[i+1;i+k]}_{[\bar{k};n]}(\cev{C}(v_1))=\rOr(\bar{v}_{\bar{k},i+1}),
\end{align}
where $\{\bar{v}_{ij} \mid (i,j) \in I_2\}$ for $I_2:= \{ (i,j) \mid 1\le j\le i\le n,i\ne1\}$ is a quantum torus frame of $\LO$; see Theorem ~\ref{thm.qtorus1}. Since the map $(i,k) \to (\bar k, i+1)$ is a bijection between $I_1$ and $I_2$, Identity~\eqref{eq.401} shows that the set $\{ M_1(i,k) \mid (i,k) \in I_1\}$ is a quantum torus frame of $\LO$. By definition $\DD = \DD_1 \dots \DD_{n-1}$ where $\DD_k= M_1( 0,k )$. Hence $\{ M_1(i,k) \mid (i,k) \in I_1\}$ is also a quantum torus frame of $\LO\{\DD\} ^{-1}$. It follows that
\[G'=\{ M_1(i,k) \mid (i,k) \in I_1\} \cup \{ x_s^{-1} \mid s= 1, \dots n-1\}\]
is a quantum torus frame of $\bS(\stdT)$. Under the identification \eqref{eq.iden5} we have $x_s = \ceC(v_2)_{ss}=C(v_2)_{\bar s\bar s}^{-1}$ by \eqref{eq.ceCii}. Thus, if we replace $x_s^{-1} $ in $G'$ with $C(v_2)_{ss}$ we still have a quantum torus frame for $\bS(\stdT)$.

By Lemma \ref{r.Ibad1} we have
\[ M_2(j)=\prod_{s=\bar{j}}^n C(v_2)_{ss}.\]
Together with $\prod_{s=1}^n C(v_2)_{ss}=1$, this shows that all the monomials in $M_2(j), j=1, \dots, n-1$ with integer powers are the same as all the monomials in $C(v_2)_{jj}, j=1, \dots, n-1$. Thus we can replace $C(v_2)_{jj}$ by $M_2(j)$, and the set
\[G''=\{ M_1(i,k) \mid (i,k) \in I_1\} \cup \{ M_2(j) \mid j= 1, \dots n-1\}\]
is a quantum torus frame for $\bS(\stdT)$. As each $M_2(j)$ is invertible, we can modify each element
\[ M_1(i,k)\to q^{-\frac{jk}{2n}} M_2(j) M_1(i,k) = \gaa_{ijk},\] and still have a quantum torus frame. The last modification changes $G''$ to $\GGG$. Thus $\GGG$ is a quantum torus frame for $\bS(\stdT)$.

Proposition~\ref{r.qtframe} shows that the map $\btr^A$ given by \eqref{eq.trA01} is a well defined reflection invariant algebra embedding, with the sandwichness property \eqref{eq-sandwich}. The $\BZ/3$-invariance is clear from the definition.
\end{proof}

\begin{proof}[Proof of Theorem \ref{thm.Xtr0}]
Define $\btr^X = \rd \psi \circ \btr^A$. Clearly, $\btr^X$ is an algebra embedding satisfying \eqref{eq.bXtrT}, and Diagram \eqref{eq.diag00} is commutative. Since $\GGG=\{ \gaa_v \mid v \in \rdV\}$ is a quantum torus frame, it weakly generates $\rd{\cS}(\PP_3)$. Thus the algebra homomorphism satisfying \eqref{eq.bXtrT} is unique.
\end{proof}

\subsection{Boundary terms of $\rdtr^X$}

Suppose $\alpha$ is a stated web over $\PP_3$ and $u$ is a small vertex on the boundary. We now show that $\rdtr^X(\al)$ is homogeneous in any variable $x_u$. This will help to patch together the $\rdtr^X$ of different ideal triangles to give a global quantum trace.

The quantum torus $\rd{\FG}(\PP_3)$ has $R$-basis $\{ x^\bk \mid \bk \in \BZ^{\rdV}\}$. For $v\in \rdV$, an element of $\rd{\FG}(\PP_3)$ is \term{homogeneous in $x_v$ of order $d$} if it is an $R$-linear combination of $x^\bk$ with $\bk(v)=d$.

\begin{proposition}\label{prop-bdry-exp}
Assume $\al$ is a stated web diagram over $\PP_3$ and $e$ is a boundary edge. Let $u_1, \dots , u_{n-1}$ be the small vertices on $e$ listed in the positive order. Then $\btr^X(\al) \in \rd\FG(\PP_3)$ is homogeneous in $x_{u_i}$ of order $n \la \dd_e(\al), \varpi_i\ra$.
\end{proposition}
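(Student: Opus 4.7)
The plan is to establish the identity on the generators $\gaa_v\in\GGG$ and then transport it to arbitrary $\al$ using the fact that $\GGG$ is a quantum torus frame, so in particular weakly generates $\reduceS(\PP_3)$.

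By the $\ints/3$-equivariance of $\rdtr^X$ (Theorem~\ref{thm.Xtr0}), it suffices to handle $e=e_1$; the small vertices on $e_1$ in positive (counterclockwise) order are $u_i=(i,n-i,0)$, $i=1,\dots,n-1$. For $v=(abc)\in\rdV$, reading Figure~\ref{fig-trigen-res} shows that $\gaa'_{abc}$ meets $e_1$ in exactly $a$ strands, all oriented out of $\fS$ and carrying the states $n-a+1,\dots,n$. Since the strands are outgoing, $\ww^\ast=\ww$, and
\[\dd_{e_1}(\gaa_v)=\sum_{t=1}^{a}\ww_t=\varpi_a\]
(the scalar in \eqref{eq.wijk} does not affect $\dd_{e_1}$). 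On the target side, $\rdtr^X(\gaa_v)=x^{\rdm{K}(v,\cdot)}$ has $x_{u_i}$-exponent $\rdm{K}(v,u_i)$, which by the special-case formula~\eqref{eq-trigen-0} together with the pairing~\eqref{eq.weights} equals
\[\rdm{K}\bigl((abc),(i,n-i,0)\bigr)=n\min\{a,i\}-ai=n\langle\varpi_a,\varpi_i\rangle=n\langle\dd_{e_1}(\gaa_v),\varpi_i\rangle.\]
This establishes the proposition for every $\gaa_v$.

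To extend to an arbitrary stated web diagram $\al$, I invoke two compatible gradings: the $\LL$-grading of $\reduceS(\PP_3)$ by $\dd_{e_1}$ (Proposition~\ref{r.grade1}) and the $\ints$-grading of $\rd\FG(\PP_3)$ by the degree in $x_{u_i}$, together with the $\ints$-linear map $\ww\mapsto n\langle\ww,\varpi_i\rangle$. A Weyl-normalized product $\gaa_{v_1}\cdots\gaa_{v_k}\in\Pol(\GGG)$ maps to $x^{\sum_t\rdm{K}(v_t,\cdot)}$, whose $x_{u_i}$-degree is $\sum_t n\langle\dd_{e_1}(\gaa_{v_t}),\varpi_i\rangle=n\langle\dd_{e_1}(\text{product}),\varpi_i\rangle$, so the predicted homogeneity holds on $\Pol(\GGG)$. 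Weak generation furnishes a $\GGG$-monomial $\fm$ with $\al\fm\in\Pol(\GGG)$; $\dd_{e_1}$-homogeneity of $\al$ passes to $\al\fm$, so $\rdtr^X(\al\fm)$ and $\rdtr^X(\fm)$ are $x_{u_i}$-homogeneous of the predicted degrees. Then $\rdtr^X(\al)=\rdtr^X(\al\fm)\,\rdtr^X(\fm)^{-1}$ is $x_{u_i}$-homogeneous of order $n\langle\dd_{e_1}(\al),\varpi_i\rangle$.

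The whole argument pivots on the combinatorial formula~\eqref{eq-trigen-0} for $\rdm{K}$ evaluated at boundary vertices, reinterpreted through the standard weight pairing; no real obstacle appears. The only bookkeeping to watch is the translation between the barycentric labels of small vertices and the ordering of boundary states in the generator diagram $\gaa'_{abc}$.
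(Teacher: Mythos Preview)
Your proof is correct and follows essentially the same approach as the paper: reduce to $e=e_1$ by $\ints/3$-equivariance, verify the claim on the generators $\gaa_v$ via the identity $\rdm{K}((abc),(i,n-i,0))=n\langle\varpi_a,\varpi_i\rangle$ from \eqref{eq-trigen-0}, and then propagate to arbitrary $\al$ using weak generation (the paper phrases this as the sandwich property \eqref{eq-sandwich}). The only cosmetic difference is that the paper multiplies by the monomial on the left rather than the right, and both arguments leave implicit the easy observation that the $\dd_{e_1}$-grading descends from $\SS$ to $\reduceS(\PP_3)$ so that a $\dd_{e_1}$-homogeneous element of $\Pol(\GGG)$ can be written as a combination of $\GGG$-monomials of the same $\dd_{e_1}$-degree.
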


Here we recall that the degree $\dd_e(\al)\in \LL$ in subsection~\ref{sec-grading}, where $\LL$ is the weight lattice and $\varpi_i$'s are the fundamental weights.

\begin{proof}
Due to the $\BZ/3$ invariance, we can assume $e=e_1$. Then $u_i$ has barycentric coordinate $u_{i}= (i, n-i, 0)$.

First assume $\al= a_{sjk}$, which has $s$ outgoing endpoints on $e_1$ stated by $\bar s, \dots, n$. By definition, $\dd_e(a_{sjk}) = \ww_1 + \dots + \ww_s= \varpi_s$. Therefore,
\begin{equation}\label{eq.si}
n \la \dd_e(a_{sjk}), \varpi_{i} \ra = n \la \varpi_{s}, \varpi_{i} \ra.
\end{equation}
By definition $ \rdtr^X(a_{sjk})) = x^{\bmK(sjk, \cdot) }$. Thus the exponent of $x_{u_i}$ in $\btr^X(a_{sjk}))$ is
\[\bmK((sjk), u_i)= \bmK((sjk),(i, n-i, 0)),\]
which agrees with the right-hand side of \eqref{eq.si} using \eqref{eq-trigen-0}. This proves the statement for $\al= a_{sjk}$. By additivity of the degrees, the statement is true when $\al$ is a monomial in $a_v, v\in \rdV$.

Now assume $\al$ is an arbitrary stated web diagram. The sandwichness \eqref{eq-sandwich} means there exists a monomial $a^\bk$ such that $a^\bk \al$ is an $R$-linear combination of monomials in $a_v$. The additivity of both $\dd_e$ and $x_{u_i}$-degree implies the statement holds for general $\alpha$.
\end{proof}

\subsection{Explicit form of $\btr^X$}\label{ss.CS}

We now relate the $X$-version quantum trace to the quantum transport matrices of \cite{CS,SS2}. Let $\al$ be a stated $\partial \PP_3$-arc, we will show $\btr^X(\al)= \sum _\bk x^{\bk}$ with explicit $\bk$. Due to the $\BZ/3$ symmetry, we assume $\al$ is a corner arc at $v_1$. In other words, $\al= C(v_1)_{ij}$ or $\al= \cev C(v_1)_{ij}$, see Figure \ref{fig-corner}.

\begin{figure}
\centering
\input{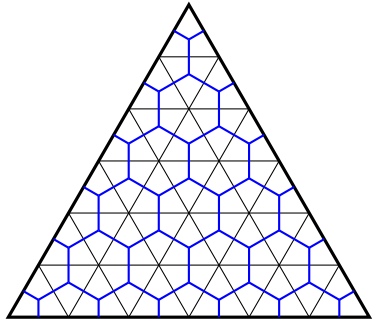}
\caption{Dual graph of the $n$-triangulation}\label{fig-trigen-dual}
\end{figure}

Consider the dual graph of the $n$-triangulation, shown in Figure~\ref{fig-trigen-dual}. We define the set $P(\al)$ of paths \term{compatible} with $\al$ as follows.

Suppose $\al= C(v_1)_{ij}$. On each edge $v_1 v_2$ and $v_1 v_3$ number the vertices of the dual graph from $1$ to $n$, starting at the vertex nearest to $v_1$. A directed path in the dual graph is \term{compatible} with $\alpha$ if
\begin{itemize}
\item it goes from the $i$-th point the left edge to the $j$-th point on the right edge, and
\item the vertical segments of the path must be upwards, and all other segments must be from left to right.
\end{itemize}

Now suppose $\al = \cev C(v_1)_{ij}$. On each edge $v_1 v_2$ and $v_1 v_3$ number the vertices of the dual graph decreasingly from $n$ to $1$, starting at the vertex nearest to $v_1$. A directed path in the dual graph is \term{compatible} with $\alpha$ if
\begin{itemize}
\item it goes from the $j$-th point the right edge to the $i$-th point on the left edge, and
\item the vertical segments of the path must be upwards, and all other segments must be from right to left.
\end{itemize}

Examples of paths compatible with $\al$ are given in Figure~\ref{fig-trigen-path}. If $\al$ is a bad arc, i.e. if $i<j$, then $P(\al)=\emptyset$ due to the condition on the vertical segments. If $i=j$, then there is a unique compatible path, which has no vertical segment.

\begin{figure}
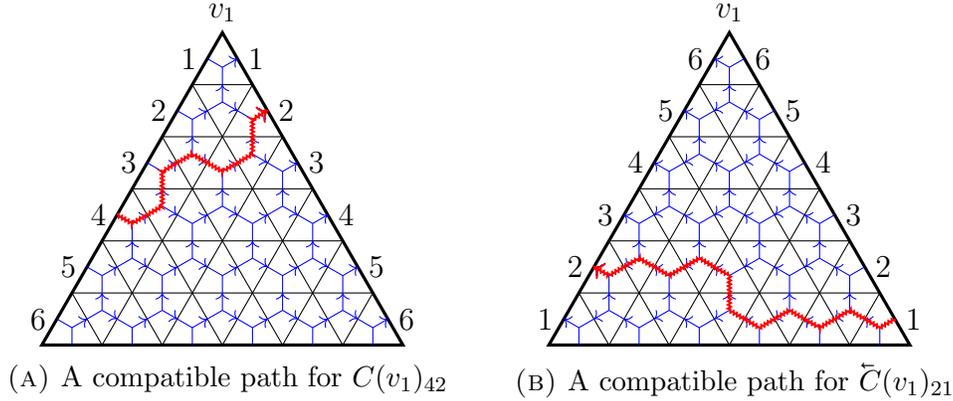

\centering
\begin{subfigure}[t]{0.4\linewidth}
\centering
\input{trigen-Cv1}
\subcaption{A compatible path for $C(v_1)_{42}$}
\end{subfigure}
\begin{subfigure}[t]{0.4\linewidth}
\centering
\input{trigen-ceCv1}
\subcaption{A compatible path for $\ceC(v_1)_{21}$}
\end{subfigure}
\caption{Examples of paths}\label{fig-trigen-path}
\end{figure}

For each directed path $p\in P(\al)$ let $\vec{k}'_p\in\ints^{\rd{V}}$ be the vector with value $n$ for all small vertices lying on the left of $p$ and $0$ otherwise. For $v=(i_1i_2i_3)\in \rdV$ define
\begin{equation}\label{eq-path-val}
\begin{aligned}
\vec{k}_p&:=\vec{k}'_p-\vec{k}_1,&
\vec{k}_p(v)&=\vec{k}'_p(v)-i_1,&&\text{if }\alpha=C(v_1)_{ij},\\
\vec{k}_p&:=\vec{k}'_p-\vec{k}_2-\vec{k}_3,&
\vec{k}_p(v)&=\vec{k}'_p(v)-i_2-i_3,&&\text{if }\alpha=\ceC(v_1)_{ij},
\end{aligned}
\end{equation}
where $\vec{k}_1,\vec{k}_2,\vec{k}_3$ are the generators of the balanced subgroup defined in \eqref{eq-bal-basis}.

The connection between $\btr^X$ and the quantum transports \cite{CS,SS2} is given the the following theorem whose proof is given in Appendix \ref{sec-trX-CS}.

 {
\begin{theorem}\label{thm-trX-CS}
For every simple stated $\partial\stdT$-arc $\al$ we have
\begin{equation}\label{eq.301}
\btr^X(\al) = \sum_{p\in P(\alpha)}x^{\vec{k}_p}.
\end{equation}
\end{theorem}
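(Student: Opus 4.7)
The plan is to reduce the theorem to corner arcs via symmetry, handle the bad-arc cases trivially, and then in the non-trivial cases expand the arc into the quantum-torus frame $\{\gaa_v\}$ via iterated applications of the cutting lemma for quantum minors, matching terms to paths. By the $\BZ/3$-equivariance of $\btr^X$ (Theorem \ref{thm.Xtr0}), together with the fact that boundary-parallel simple arcs are trivial in $\bS(\stdT)$, it suffices to treat $\al = C(v_1)_{ij}$ or $\al = \cev C(v_1)_{ij}$. When $\al$ is a bad arc, i.e.\ $i<j$ for $C(v_1)_{ij}$ or $j<i$ for $\cev C(v_1)_{ij}$, the left-hand side vanishes in $\bS(\stdT)$, while the constraint that vertical segments of $p$ point upward and horizontal segments run left-to-right forces $P(\al)=\emptyset$; so both sides are zero.

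In the non-bad case I would use Lemma \ref{r.det} to identify $\al$, up to an explicit unit scalar, with a corner quantum minor of an arc encircling $v_1$. I would then expand this quantum minor by iterated application of the cutting lemma (Lemma \ref{r.cutdet}) along lines parallel to each of the three edges of $\stdT$, obtaining an expression of $\al$ as a sum indexed by successive intermediate subsets $L_r \in \binom{\JJ}{i}$ of Weyl-normalized products of the two families of corner quantum minors $M_1(\cdot,\cdot)$ and $M_2(\cdot)$ from Lemma \ref{lemma-agen-decomp}. By that lemma, each such Weyl product is exactly the generator $\gaa_v$ for a uniquely determined $v\in\rdV$. Applying $\btr^X(\gaa_v)=x^{\bmK(v,\cdot)}$ from Theorem \ref{thm.Xtr0} term by term yields a sum of monomials $x^{\vec k}$ in $\rdbl(\stdT)$.

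The main obstacle is to match this iterated cutting expansion with the path sum on the right-hand side. The key combinatorial bijection is as follows: the sequence of intermediate subsets $(L_1, L_2, \ldots)$ produced by the cuts encodes, row-by-row in the $n$-triangulation, the heights at which the arc passes from one strip to the next; consecutive transitions in this sequence correspond to either an upward vertical step or a horizontal step of a directed path $p$ in the dual graph, and the direction constraints on $p$ are precisely the nonvanishing conditions from Lemma \ref{r.Ibad1}(c) that prune the bad terms at each cut. I would verify by induction on $n$ (reducing to the sub-triangle obtained by discarding the top row) that the resulting exponent vector $\sum_v k_v\,\bmK(v,\cdot)$ telescopes via \eqref{eq-trigen-exp} to $\vec k'_p - \vec k_1$ in the $C(v_1)_{ij}$ case and to $\vec k'_p - \vec k_2 - \vec k_3$ in the $\cev C(v_1)_{ij}$ case, in accordance with \eqref{eq-path-val}.

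A final consistency check uses reflection invariance: both sides are reflection invariant (the left because $\gaa_v$ and $\btr^X$ are, the right because each $x^{\vec k_p}$ is a Weyl-normalized monomial), which pins down any would-be overall $\hq$-power ambiguity that might otherwise accrue from the reflection-normalization scalars $w_{ijk}$ and the constants $\aaa,\ccc_i,\ttt$ appearing in the skein relations. Together with the $\BZ/3$ reduction, the inductive matching, and the bad-arc vanishing, this yields \eqref{eq.301}.
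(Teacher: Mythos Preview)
Your proposal has a genuine gap in its central step. Lemma \ref{r.cutdet} is a skein identity that splits a quantum minor at an \emph{ideal vertex} of a triangle into a sum of products of two corner minors; it does not cut along the lines of the $n$-triangulation. Inside the single ideal triangle $\stdT$ there are no further ideal arcs to cut along, so the ``iterated application of the cutting lemma along lines parallel to each of the three edges'' simply cannot be carried out. Consequently the claimed expansion of a corner arc $C(v_1)_{ij}$ as a sum over intermediate subsets $L_r$ of Weyl products of $M_1(\cdot,\cdot)$ and $M_2(\cdot)$ is unsupported. Even granting some such expansion, your identification of each term with a single $\gaa_v$ via Lemma \ref{lemma-agen-decomp} would fail: that lemma expresses $\gaa_{ijk}$ as a product of \emph{two} specific minors, whereas an iterated cut would produce products of many minors of varying shapes. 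The ``induction on $n$'' is also ill-posed, since $n$ is the fixed $SL_n$ parameter, not a size parameter of the arc.

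The paper's proof goes in the opposite direction. It defines the path sum $T(\alpha)$ and shows that when $T$ is applied to the quantum minor $M_1(i,k)=M^{[i+1;i+k]}_{[\bar k;n]}(\cev C(v_1))$, the terms coming from path diagrams with overlapping paths cancel in pairs (Lemma \ref{lemma-trigen-qminor}), leaving the single normalized monomial corresponding to the unique disjoint path diagram. A direct exponent comparison then gives $T(M_1(i,k))=\rdtr^X(M_1(i,k))$ (Lemma \ref{lemma-trX-CS-M1}). Since both $T(\cev C(v_1)_{ij})$ and $\rdtr^X(\cev C(v_1)_{ij})$ solve the same triangular system of determinantal equations in Lemma \ref{lemma-trX-CS-uni}, uniqueness forces them to agree. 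The $C(v_1)_{ij}$ case is then deduced from the $\cev C(v_m)$ cases via the skein relation \eqref{e.capnearwall} and the matrix identity $\vec M_1=\cev{\vec M}_3\,\vec C^{-1}\cev{\vec M}_2$. The crucial idea you are missing is the cancellation-of-crossing-paths argument together with the uniqueness step; there is no direct expansion of $\alpha$ in the frame $\{\gaa_v\}$ in the paper.
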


Note that the sum is empty for bad arcs since there are no compatible paths. The theorem is trivial in this case.

Let us comment on the connection to Chekhov and Shapiro's work \cite{CS}. For each $m=1,2,3$, let $\bM_m,\cev\bM_m\in\Mat_n(\rd\FG(\PP_3))$ be the $n\times n$ matrices with entries in $\rd\FG(\PP_3)$ defined by
\[(\bM_m)_{ij} = \sum_{p\in P( C(v_m)_{ij})}x^{\vec{k}_p}, \quad
(\cev \bM_m)_{ij} = \sum_{p\in P( \ceC(v_m)_{ji})}x^{\vec{k}_p}.\]
Then our $\bM_1$ and $\cev \bM_2$ are equal respectively to $\mathcal M_1 D_1^{-1}$ and $\mathcal M_2 [D_1 D_2]_\Weyl^{-1}$ of \cite{CS}. A main result of \cite{CS} can be formulated as follows.

\begin{theorem}[Chekhov and Shapiro \cite{CS}] \label{thm.CS}
\begin{enuma}
\item Each $\bM_m$ and $\cev \bM_m$ is a quantum $q$-matrix, and
\begin{equation} \label{eq.btensor}
(\bM_1)_{ij} \, (\cev \bM_2)_{kl} = \sum_{j',l'} \cR_{jl}^{j'l'} (\cev \bM_2)_{kl'} \, (\bM_1)_{ij'}.
\end{equation}
\item If $\bC$ is the $n\times n$ anti-diagonal matrix defined by $\bC_{ij} = \delta_{\bar i j} \ccc_j$, then
\begin{equation}\label{eq.200}
\cev \bM_2 = \bM_3 \bC \bM_1.
\end{equation}
\end{enuma}
\end{theorem}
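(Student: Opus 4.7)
The strategy is to lift both identities from $\Mat_n(\rd\FG(\PP_3))$ to identities among corresponding diagrams in the reduced skein algebra $\rd\cS(\PP_3)$, and then prove those skein-level identities directly. By the definitions preceding the theorem together with Theorem~\ref{thm-trX-CS}, we have $(\bM_m)_{ij}=\btr^X(C(v_m)_{ij})$ and $(\cev\bM_m)_{ij}=\btr^X(\ceC(v_m)_{ji})$. Since Theorem~\ref{thm.Xtr0} gives that $\btr^X$ is injective, the induced entrywise map $\Mat_n(\rd\cS(\PP_3))\hookrightarrow\Mat_n(\rd\FG(\PP_3))$ is injective, so it suffices to verify both claims inside $\rd\cS(\PP_3)$.

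For part~(a), the $q$-quantum-matrix relations for $\bM_m$ and $\cev\bM_m$ reduce to the known quantum-matrix structure of $\Oq$. A tubular neighborhood of the corner arc $C(v_m)$ together with the adjacent piece of $\partial\PP_3$ is a bigon, and Corollary~\ref{r.embed1} gives a strict-embedding algebra homomorphism $\cS(\PP_2)=\Oq\hookrightarrow\cS(\PP_3)$ sending $u_{ij}\mapsto C(v_m)_{ij}$; by \eqref{eq.Saij} the image of $\cev u_{ij}$ is (up to a unit) $\ceC(v_m)_{ij}$. The defining relations \eqref{eq.Mq2} in $\Oq$ therefore transfer to $\bM_m$, and the $q^{-1}$-quantum nature of $\buu^!$ recorded in \eqref{eq.adjugate} yields the parallel statement for $\cev\bM_m$. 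The braiding identity \eqref{eq.btensor} arises from the commutation of $C(v_1)$ and $\ceC(v_2)$ on their shared boundary edge $e_1$: swapping the height orderings of their endpoints on $e_1$ is governed by the coaction \eqref{e.coact} applied at $e_1$, which is a bigon-based construction and therefore produces an $R$-matrix scattering whose coefficients, in view of \eqref{eq.epsR}--\eqref{eq.epsRrp}, are exactly $\cR^{j'l'}_{jl}$ from \eqref{e.R}.

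For part~(b), after applying $(\btr^X)^{-1}$ entrywise, the assertion becomes the skein identity
\[\ceC(v_2)_{lk}\;=\;\sum_{s=1}^n \ccc_{\bar s}\,C(v_3)_{ks}\,C(v_1)_{\bar s\,l}\quad\text{in }\rd\cS(\PP_3).\]
The proof proceeds by isotoping $\ceC(v_2)_{lk}$ away from $v_2$ so that it crosses the interior of $\PP_3$ close to the left edge $e_3$, inserting a ``cap-near-wall'' along $e_3$ via \eqref{e.capnearwall} (which opens the arc into two strands both ending on $e_3$, weighted by $\ccc_{\bar s}^{-1}$), and then capping off the resulting central U-shape by the ``cap-on-wall'' relation \eqref{e.capwall} (which contributes $\ccc_s$ and pairs states as $\bar s\leftrightarrow s$). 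What remains is a corner arc at $v_3$ with states $(k,s)$ together with a corner arc at $v_1$ with states $(\bar s,l)$, summed over $s$ with coefficient $\ccc_s\cdot\ccc_{\bar s}^{-1}\cdot\ccc_{\bar s}=\ccc_{\bar s}$; this is precisely the anti-diagonal weighting encoded by $\bC$. Injectivity of $\btr^X$ then transports the identity to \eqref{eq.200}.

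The main obstacle lies in part~(b): one must set up the isotopy and the two cap insertions so that no extraneous powers of $\hq$, $\ttt$, or $\aaa$ appear, that the boundary height orderings on $e_3$ at the moment of insertion satisfy the hypotheses of \eqref{e.capwall}--\eqref{e.capnearwall}, and that no bad-arc contributions leak in during the intermediate steps (which would make the identity hold only modulo $\Ibad$ rather than in $\rd\cS(\PP_3)$). In part~(a), the analogous delicacy is confirming that the single-crossing resolution on $e_1$ produces exactly $\cR$ and not a related tensor, which is a direct computation using the counit values \eqref{eq.epsR}--\eqref{eq.epsRrp}. Once these scalar and height verifications are settled, the injectivity of $\btr^X$ delivers both claims as stated.
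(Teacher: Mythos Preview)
Your overall strategy coincides with the paper's: establish the identities in $\rd\cS(\PP_3)$ using skein relations for the corner arcs, and then push forward along the algebra map $\btr^X$ (Theorem~\ref{thm-trX-CS}). For part~(a) this is exactly the paper's argument: the arc algebras $\cS(C(v_m))$ and $\cS(\ceC(v_m))$ are copies of $\Oq$, and the relation~\eqref{eq.btensor} is the braided-tensor-product multiplication of $A_1$ and $\cev A_2$ recorded in \cite[Example~7.8]{LS}. One minor remark: injectivity of $\btr^X$ is not needed here, since an algebra homomorphism carries identities to identities; you only need Theorem~\ref{thm-trX-CS} to know that $\btr^X$ sends the corner arcs to the entries of $\bM_m,\cev\bM_m$.

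For part~(b) you have the right idea but the execution is off. A \emph{single} application of \eqref{e.capnearwall} to the piece of $\ceC(v_2)_{lk}$ isotoped near $e_3$ already splits the arc into two corner arcs, one at $v_3$ and one at $v_1$; there is no ``central U-shape'' remaining, so \eqref{e.capwall} plays no role. Your own coefficient computation confirms that something is wrong: $\ccc_s\cdot\ccc_{\bar s}^{-1}\cdot\ccc_{\bar s}=\ccc_s$, not $\ccc_{\bar s}$. The paper carries out the one-step computation explicitly in the Appendix (for the rotated variant $C(v_1)_{ij}=\sum_k\ccc_k^{-1}\,\ceC(v_3)_{ki}\,\ceC(v_2)_{j\bar k}$), where one sees that the single factor $\ccc_k^{-1}$ from \eqref{e.capnearwall}, together with the assignment of the paired states $k,\bar k$ on the new edge and the induced height order, is precisely the anti-diagonal matrix (appearing there as $\bC^{-1}$). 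The careful bookkeeping you flag as the ``main obstacle'' is real, but it concerns the single relation \eqref{e.capnearwall}, not a second cap.
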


Part (a) is \cite[Theorem~2.5]{CS}, and the proof there is quite involved. Part (b) is \cite[Theorem~2.6]{CS}, see \cite[Remark~2.7]{CS}.
}

Using Theorem \ref{thm-trX-CS}, we can get an alternative proof of Theorem \ref{thm.CS} and a new perspective of it as follows. Many identities proved in \cite{CS} can be derived from the relations in stated skein algebra of surfaces. Thus, let $A_m, \cev A_m \subset \cS(\PP_3)$ be the arc algebras of the arcs $C(v_m)$ and $\cev C (v_m)$ respectively. Each is a quantum matrix $\Oq$. Since $\bM_m = \rdtr^X(A_m ) $ and $\cev \bM_m = \rdtr^X(\cev A_m)$ by Theorem~\ref{thm-trX-CS}, both $\bM_m$ and $\cev \bM_m$ are quantum matrices. By \cite[Example~7.8]{LS}, the algebra $\cS(\PP_3)$ is the braided tensor product of $A_1$ and $\cev A_2$, and Equ. \eqref{eq.btensor} expresses exactly the multiplication in the braided tensor product. Finally, Equ. \eqref{eq.200} follows from the defining relation \eqref{e.capnearwall} when we push the arc $\cev C(v_2)$ to near the edge $v_1 v_3$.

\begin{remark}
Conversely, from Theorem \ref{thm.CS}, with a little work using skein $SL_n$-theory, we can construct $\btr^X$ for the ideal triangle by setting $\btr^X(C(v_m)_{ij}) = (\bM_m)_{ij}$. This was our original approach to constructing the $X$-version of quantum trace. However this approach does not explain why $\btr^X$ is injective, nor does it give the $A$-version with its geometric picture. While trying to show that $\rd \cS(\PP_3)$ is a domain we found a quantum torus frame for it, and from there we get the $A$-version of the quantum trace, and then recover the $X$-version.
\end{remark}

\subsection{Extension of the counit of \LOsec} \label{sec-attach-counit}
We will show that under the natural embedding
\[i_2:\LO\hookrightarrow\reduceS(\stdT)\xhookrightarrow{\btr^X}\rdbl(\stdT).\]
through the corner arc $a= C(v_2)$, the counit of $\LO$ can be extended to a subalgebra of $\rdbl(\stdT)$ containing the image of $i_2$. The result is used later to relate the reduced and non-reduced traces for general surfaces.

Let $B$ be the submonoid of the balanced subgroup $\rd{\Lambda}$ consisting of vectors $\vec{k}\in\rd{\Lambda}$ such that
\begin{itemize}
\item $\vec{k}(ijk)=0$ if $j=0$.
\item $\vec{k}(i'j'k')\le\vec{k}(ijk)$ whenever $j'=j$ and $k'\ge k$.
\end{itemize}
Define $\rd{B}$ as the subgroup where the vectors satisfy the equality in the second condition.

Next we define generators for $B$ and $\rd{B}$. Let
\[V_2=\{ijk\mid j\ne0\},\qquad
\rd{V}_2=\{(n-j,j,0)\mid j\ne0\}.\]
For each $ijk\in V_2$, define a vector $\vec{b}_{ijk}\in(n\ints)^{\rd{V}_\stdT}$ by
\begin{equation}
\vec{b}_{ijk}(i'j'k')=n\delta_{jj'}\delta_{k'\ge k}.
\end{equation}
Illustrations of $\vec{b}_{ijk}$ can be found in Figure~\ref{fig-attach-counit}. Then $\vec{k}_2$ and $\vec{b}_{ij0}$ are in $\rd{B}$, and $-\vec{b}_{ijk}$ is in $B$, but the positive multiples of $\vec{b}_{ijk}$ with $ijk\notin\rd{V}_2$ are not in $B$.

\begin{lemma}\label{lemma-attach-counit}
$\rd{B}$ is a free abelian group of rank $n-1$. It is generated as a group by the $n-1$ elements $\vec{k}_2$ and $\vec{b}_{n-j,j,0}$, $j\ge2$.

$B\cong \rd{B}\oplus\nats^{\binom{n}{2}}$, where the generators of the second part correspond to $-\vec{b}_{ijk}$, $ijk\in V_2\setminus\rd{V}_2$.
\end{lemma}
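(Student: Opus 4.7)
The plan is to characterize the elements of $\rd B$ and $B$ by their restrictions to the level sets $\{j=\text{const}\}$ (which I will call \emph{rows}), and then reduce the lemma to an elementary counting exercise.

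First I would put an arbitrary $\vec{k}\in\rd\Lambda$ into a normal form. Writing $\vec{k}=a_1\vec{k}_1+a_2\vec{k}_2+a_3\vec{k}_3+n\vec{m}$ with $\vec{m}\in\ints^{\rd V}$ and using $\vec{k}_1+\vec{k}_2+\vec{k}_3=n\cdot\mathbbm{1}$ to eliminate $a_1$, we have $\vec{k}(ijk)=a_2 j+a_3 k+n m_{ijk}$. If $\vec{k}$ vanishes on the edge $j=0$, then evaluating at $(i,0,1)$ forces $a_3\in n\ints$, and after absorbing the $n$-multiple into $\vec{m}$ we reach
\[\vec{k}(ijk)=a_2\, j + n\, m_{ijk},\qquad m_{i,0,k}=0.\]

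For $\rd B$, the defining equality condition says $m_{ijk}$ depends only on $j$; call it $c_j$, with $c_0=0$. Since $\vec{b}_{n-j,j,0}$ is precisely $n$ times the characteristic vector of row $j$, I obtain $\vec{k}=a_2\vec{k}_2+\sum_{j=1}^{n-1}c_j\vec{b}_{n-j,j,0}$. The vector $\vec{b}_{0,n,0}$ is omitted because row $j=n$ contains only the excluded vertex $v_2$. The identity $n\vec{k}_2=\sum_{j=1}^{n-1}j\,\vec{b}_{n-j,j,0}$ (both sides evaluate to $nj$ on row $j$) rewrites as $\vec{b}_{n-1,1,0}=n\vec{k}_2-\sum_{j=2}^{n-1}j\,\vec{b}_{n-j,j,0}$, eliminating the $j=1$ generator. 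Linear independence of the remaining $n-1$ vectors follows by evaluating a hypothetical relation $\alpha\vec{k}_2+\sum_{j=2}^{n-1}\beta_j\vec{b}_{n-j,j,0}=0$ first on row $j=1$ (yielding $\alpha=0$) and then on each row $j=l\ge 2$ (yielding $\beta_l=0$).

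For the monoid $B$, the inequality $\vec{k}(i'jk')\le\vec{k}(ijk)$ for $k'\ge k$ translates, via the normal form, into $m_{j,k}:=m_{(n-j-k)jk}$ being non-increasing in $k$ on each row $j\ge 1$. Setting $e_{j,l}=m_{j,l-1}-m_{j,l}\in\nats$ for $j\ge 1$ and $1\le l\le n-j$, a direct expansion gives
\[\vec{k}=\Bigl(a_2\vec{k}_2+\sum_{j=1}^{n-1}m_{j,0}\,\vec{b}_{n-j,j,0}\Bigr)\;+\;\sum_{j\ge 1,\;1\le l\le n-j}e_{j,l}\,\bigl(-\vec{b}_{n-j-l,j,l}\bigr).\]
The bracketed term lies in $\rd B$; the second sum is an $\nats$-linear combination of $-\vec{b}_{ijk}$ with $ijk=(n-j-l,j,l)\in V_2\setminus\rd V_2$, and the set of such indices has cardinality $\sum_{j=1}^{n-1}(n-j)=\binom{n}{2}=|V_2\setminus\rd V_2|$. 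Directness of the sum is immediate: an element lying in both pieces would be simultaneously constant on each row $j\ge 1$ and equal to $-\sum e_{j,l}\vec{b}_{n-j-l,j,l}$, whose row-$j$ value $-n\sum_{l\le k}e_{j,l}$ depends non-trivially on $k$ unless every $e_{j,l}$ vanishes. The only real technical point is the normal-form reduction (forcing $a_3\in n\ints$); the rest is bookkeeping.
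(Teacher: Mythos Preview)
Your proposal is correct and follows essentially the same route as the paper: both put $\vec{k}$ into the normal form $a\,\vec{k}_2+n\,\vec{m}$ by evaluating the vanishing condition on the edge $j=0$, read off the constancy/monotonicity of $\vec{m}$ along rows, and then write $n\vec{m}$ as a combination of the $\vec{b}_{ijk}$ via a telescoping sum (your differences $e_{j,l}$ are exactly the negatives of the paper's coefficients $\vec{k}'(ijk)-\vec{k}'(i+1,j,k-1)$). The only cosmetic difference is that the paper first subtracts off the $\rd B$-part $\sum_j \vec{k}'(n-j,j,0)\vec{b}_{n-j,j,0}$ before telescoping, whereas you do both in one step.
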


\begin{proof}
By definition, a vector $\vec{k}\in\rd{\Lambda}_\stdT$ can be written as
\begin{equation}
\vec{k}=a\vec{k}_1+b\vec{k}_2+n\vec{k}'
\end{equation}
for some $a,b\in\ints$ and $\vec{k}'\in\ints^{\rd{V}_\stdT}$. The condition $\vec{k}(1,0,n-1)=a+n\vec{k'}(1,0,n-1)=0$ shows that $a$ is a multiple of $n$. Thus the term $a\vec{k}_1$ can be absorbed into $n\vec{k}'$, so we can assume $a=0$.

If $\vec{k}\in\rd{B}$, then $n\vec{k}'=\vec{k}-b\vec{k}_2$ is also in $\rd{B}$. Then we can directly verify that
\begin{equation}\label{eq-attach-extend}
n\vec{k}'=\sum_{j=1}^{n-1}\vec{k}'(n-j,j,0)\vec{b}_{n-j,j,0}.
\end{equation}
Thus $\rd{B}$ is generated as a group by $\vec{k}_2$ and $\vec{b}_{n-j,j,0}$. By definition, $n\vec{k}_2=\sum_{j=1}^{n-1}j\vec{b}_{n-j,j,0}$, so $\vec{b}_{n-1,1,0}$ is redundant as a generator. It is easy to show that the remaining generators are independent.

If $\vec{k}\in B$, then similarly $n\vec{k}'\in B$. By subtracting the right-hand side of \eqref{eq-attach-extend}, which is an element of $\rd{B}$, we can assume $\vec{k}'(ij0)=0$. Let $V_{\vec{k}}\subset V_2\setminus\rd{V}_2$ be the subset of small vertices $ijk$ such that $\vec{k}'(ijk)<\vec{k}'(i+1,j,k-1)$. Then we can directly verify that
\begin{equation}
n\vec{k}'=\sum_{ijk\in V_{\vec{k}}}(\vec{k}'(ijk)-\vec{k}'(i+1,j,k-1))\vec{b}_{ijk}.
\end{equation}
The coefficients are all negative. This shows that $B$ is generated by $\rd{B}$ and $-\vec{b}_{ijk}$, $ijk\in V_2\setminus\rd{V}_2$. It is easy to see that the generators are independent.
\end{proof}

\begin{theorem}\label{thm-attach-counit}
The image of $i_2$ is contained in the monomial subalgebra $\qtorus(\rdm{Q}_\stdT;B)$. The $R$-linear map $\epsilon_X:\qtorus(\rdm{Q}_\stdT;B)\to R$ defined by
\begin{equation}\label{eq-attach-counit}
\epsilon_X(x^{\vec{k}})=1,\quad \vec{k}\in\rd{B},\qquad \epsilon_X(x^{\vec{k}})=0,\quad \vec{k}\notin\rd{B}
\end{equation}
is an algebra homomorphism such that $\epsilon_X\circ i_2$ is the counit $\epsilon(\bar{u}_{st})=\delta_{st}$ of $\LO$.
\end{theorem}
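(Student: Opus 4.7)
The proof breaks into three tasks: (i) showing $i_2(\LO)\subset\qtorus(\rdm{Q}_\stdT;B)$; (ii) verifying that the $R$-linear map $\epsilon_X$ is a well-defined algebra homomorphism on $\qtorus(\rdm{Q}_\stdT;B)$; and (iii) checking that $\epsilon_X\circ i_2$ agrees with the counit of $\LO$ on generators.

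For (i), the plan is to apply Theorem~\ref{thm-trX-CS}, transported by the $\ints/3$-symmetry that carries $v_1$ to $v_2$, in order to write
\[i_2(\bar{u}_{st})=\btr^X(C(v_2)_{st})=\sum_{p\in P(C(v_2)_{st})}x^{\vec{k}_p},\]
and then to verify combinatorially that every exponent $\vec{k}_p$ satisfies the two defining conditions of $B$. The vanishing condition $\vec{k}_p(ijk)=0$ whenever $j=0$ is immediate: a compatible path for a $C(v_2)$-corner arc lives in the region between $e_1$ and $e_2$ and never crosses $e_3=\{j=0\}$, so the ``vertices lying on the left of $p$'' count is constant along $e_3$ and is cancelled by the correction term in the rotated version of \eqref{eq-path-val}. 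The monotonicity condition along constant-$j$ rows reduces to observing that (after rotation) the allowed vertical segments of the path all point in one direction, forcing the ``left of $p$'' function to be a non-increasing step function in $k$ on each such row. Since $B$ is a monoid, this suffices to conclude that the whole subalgebra $i_2(\LO)$ is contained in $\qtorus(\rdm{Q}_\stdT;B)$.

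For (ii), Lemma~\ref{lemma-attach-counit} gives a direct-sum decomposition $B\cong\rd{B}\oplus\nats^{\binom{n}{2}}$, whose $\nats$-summands are generated by the vectors $-\vec{b}_{ijk}$ with $ijk\in V_2\setminus\rd{V}_2$. Consequently, for $\vec{k},\vec{k}'\in B$, the sum $\vec{k}+\vec{k}'$ lies in $\rd{B}$ iff both summands do. Hence $\epsilon_X$ is exactly the projection onto the $R$-span of $\{x^{\vec{k}}\mid\vec{k}\in\rd{B}\}$, and to check that it is an algebra map it suffices to show that this span is closed under multiplication \emph{with trivial scalar factor}, i.e.\ that the generators $\vec{k}_2$ and $\vec{b}_{n-j,j,0}$ of $\rd{B}$ are pairwise $\bmQ$-orthogonal. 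This is a direct unpacking of the definition of $\bmQ$ from~\eqref{eq.bmQ} applied to the explicit support of these generators.

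For (iii), evaluate on the generators $\bar{u}_{st}$ with $s\ge t$ (recall $\bar{u}_{st}=0$ when $s<t$). When $s=t$, there is a unique compatible path for $C(v_2)_{ss}$, containing no vertical segments; its exponent $\vec{k}_p$ is expressible in terms of $\vec{k}_2$ and the $\vec{b}_{n-j,j,0}$ and hence lies in $\rd{B}$, giving $\epsilon_X(i_2(\bar u_{ss}))=1$. When $s>t$, every compatible path must contain at least one vertical segment, which contributes a strictly decreasing step along some constant-$j$ row; this forces $\vec{k}_p\notin\rd{B}$ for every $p$, so $\epsilon_X(i_2(\bar u_{st}))=0$.

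The main obstacle will be step (i): establishing the monotonicity condition uniformly over all compatible paths for $C(v_2)_{st}$, and correctly tracking how the correction term in the definition of $\vec{k}_p$ changes under the $\ints/3$ rotation $v_1\mapsto v_2$. Once this combinatorial verification is in hand, steps (ii) and (iii) reduce to checks on a small number of explicit generating vectors.
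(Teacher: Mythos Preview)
Your proposal is correct and follows essentially the same route as the paper: both use the path description of $\btr^X$ (Theorem~\ref{thm-trX-CS}) rotated to $v_2$, analyze the exponents of compatible paths for $C(v_2)_{st}$ to land in $B$ (in $\rd B$ exactly when $s=t$), and then use the monoid decomposition of Lemma~\ref{lemma-attach-counit} together with the vanishing of $\rdm{Q}_\stdT$ on $\rd B$ to see that $\epsilon_X$ is multiplicative. One small methodological difference worth noting: where you propose to verify the $\rdm{Q}_\stdT$-orthogonality of the generators of $\rd B$ by direct quiver computation, the paper instead deduces it from the commutativity of the diagonal elements $\bar u_{ss}$ in $\LO$ (since $x^{n\vec b_{n-j,j,0}}$ is a ratio of two $i_2(\bar u_{ss})$'s), remarking that the direct check is also possible but less pleasant; and rather than your qualitative monotonicity argument for step~(i), the paper writes down the explicit identities $\vec k_{p_s}=-\vec k_2+\sum_{j=\bar s}^{n-1}\vec b_{n-j,j,0}$ and $\vec k_{p_{st}}=\vec k_{p_s}-\sum_j \vec b_{i_j,j,k_j}$, which make membership in $\rd B$ and $B\setminus\rd B$ immediate.
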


\begin{proof}
To find the image of $i_2$, we start with the generators $\bar{u}_{st}$ with $s\ge t$.

When $s=t$, the image of $\bar{u}_{ss}$ is given by the unique compatible path $p_s$ that only has segments pointing downward or toward the left. The value of the path is
\begin{equation}\label{eq-attach-diag}
i_2(\bar{u}_{ss})=x^{\vec{k}_{p_s}},\qquad
\vec{k}_{p_s}=-\vec{k}_2+\sum_{j=\bar{s}}^{n-1}\vec{b}_{n-j,j,0}\in\rd{B}.
\end{equation}
The sum is understood to be $0$ if $s=1$. See Figure~\ref{fig-attach-counit} left. The blue shade represents the sum, and the dots correspond to $(n-j,j,0)$. Thus
\[i_2(\bar{u}_{ss})\in\qtorus(\rdm{Q}_\stdT;\rd{B})\subset\qtorus(\rdm{Q}_\stdT;B).\]

\begin{figure}
\centering
\input{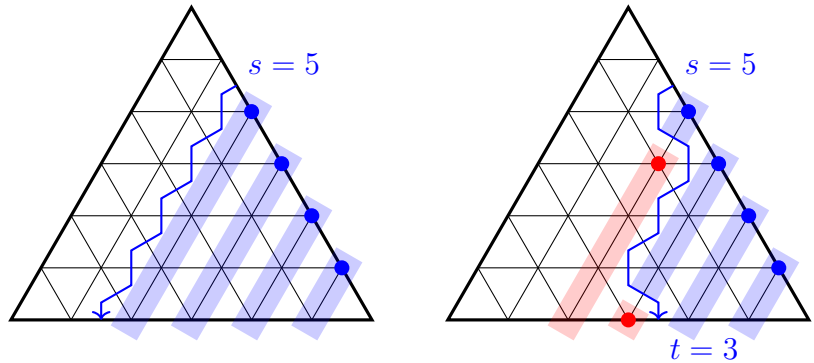}
\caption{Image of $i_2$}\label{fig-attach-counit}
\end{figure}

As a consequence, the restriction of $\rdm{Q}_\stdT$ to $\rd{B}$ is zero. This can be proved directly, but it is much easier to use the result above. Using \eqref{eq-attach-diag},
\[x^{n\vec{b}_{n-j,j,0}}=i_2(\bar{u}_{\bar{j}\bar{j}})i_2(\bar{u}_{\bar{j}-1,\bar{j}-1})^{-1}.\]
Since the diagonal elements $\bar{u}_{ss}$ commute with each other \eqref{eq.iijj}, so do $x^{\vec{b}_{n-j,j,0}}$. Thus
\[\rdm{Q}_\stdT(\vec{b}_{n-j,j,0},\vec{b}_{n-j',j',0})=0\qquad
\text{for all }j,j'\ne0.\]
It is easy to see that $\rd{B}$ is contained in the subgroup generated by $\frac{1}{n}\vec{b}_{n-j,j,0}$. This proves the claim.

Next consider $i_2(\bar{u}_{st})=\rdtr^X(C(v_2)_{st})$ with $s>t$. Any compatible path $p_{st}$ will have $s-t$ segments going toward the right. These segments occur at different $j$ coordinates with $\bar{s}\le j<\bar{t}$. Let $(i_j,j,k_j)$ be the small vertex to the immediate right of such a segment (as viewed from the path). By definition, $k_j\ne0$. Then the exponents for the path $p_{st}$ is
\begin{equation}
\vec{k}_{p_{st}}=\vec{k}_{p_s}-\sum_{j=\bar{s}}^{\bar{t}-1}\vec{b}_{i_j,j,k_j}\in B\setminus\rd{B}.
\end{equation}
See Figure~\ref{fig-attach-counit} right. The combined shade represents $\vec{k}_{p_s}+\vec{k}_2$, which is the same as the previous picture. The blue shade represents $\vec{k}_{p_{st}}+\vec{k}_2$. The red shade represents the sum, and the dots correspond to $(i_j,j,k_j)$. Thus
\begin{equation}\label{eq-attach-nondiag}
i_2(\bar{u}_{st})=\sum_{p_{st}}x^{\vec{k}_{p_{st}}}
\in\bigoplus_{\vec{k}\in B\setminus\rd{B}} R x^{\vec{k}}
\subset\qtorus(\rdm{Q}_\stdT;B).
\end{equation}

Since the images of the generators $\bar{u}_{st}$ are in $\qtorus(\rdm{Q}_\stdT;B)$, the image of $i_2$ is also in it. $\qtorus(\rdm{Q}_\stdT;B)$ has a presentation with all monomials as generators and relations given by \eqref{eq.prod}. It is easy to see that $\epsilon_X$ respects the relations. Thus $\epsilon_X$ is well-defined. Finally, by \eqref{eq-attach-diag} and \eqref{eq-attach-nondiag}, $\epsilon_X\circ i_2$ matches the counit of $\LO$.
\end{proof}

\section{Quantum tori associated to ideal triangulations} \label{sec.qtori}

This section is devoted to the combinatorics of ideal triangulations of surfaces. For an ideal triangulation $\lambda$ of a punctured bordered surface $\fS$
we will recall the Fock-Goncharov algebra $\bXS$ and introduce its extension $\XS$, which is a quantum torus having the GK dimension of the stated skein algebra $\SS$, and is the target space of the extended quantum trace.

When $\fS$ has no interior ideal point we will introduce the $A$-version quantum tori $\bAS$ and $\AS$ and prove a compatibility between the $A$-tori and the $X$-tori. The algebra $\bAS$ can be thought of as the quantization of $A$-moduli space of Fock and Goncharov.

\subsection{Ideal triangulation and Fock-Goncharov algebra} \label{ss.rdFG}

\begin{definition}
Let $\fS$ be a punctured bordered surface.
\begin{enuma}
\item $\fS$ is \term{exceptional} if it is the once- or twice-punctured sphere, the monogon, or the bigon.
\item $\fS$ is \term{triangulable} if every connected component of it has at least one ideal point and is not exceptional.
\item An \term{(ideal) triangulation} of a triangulable surface $\surface$ is a maximal collection $\lambda$ of non-trivial ideal arcs which are pairwise disjoint and pairwise non-isotopic. We consider ideal triangulations up to isotopy.
\end{enuma}
\end{definition}

The triangle $\stdT$ has a unique triangulation consisting of the 3 boundary edges up to isotopy. By abuse of notation, the triangulation is also denoted $\stdT$.

Fix an ideal triangulation $\lambda$ of $\fS$. An element of $\lambda$ is called \term{boundary} if it is isotopic to a boundary edge. By cutting $\fS$ along all non-boundary edges we get a disjoint union of ideal triangles, each is called a \term{face} of the triangulation. Let $\face_\lambda$ denote the set of faces. Then
\begin{equation}\label{eq.glue}
\fS = \Big( \bigsqcup_{\tau\in\face_\lambda} \tau \Big) /\sim,
\end{equation}
where each face $\tau$ is a copy of $\stdT$, and $\sim$ is the identification of certain pairs of edges of the faces. Note that one might glue two edges of the same face. Each face $\tau$ comes with a \term{characteristic map} $f_\tau: \tau \to \fS$, which is a homeomorphism when restricted to the interior of $\tau$ or the interior of each edge of $\tau$.

An \term{$n$-triangulation} of $\lambda$ is a collection of $n$-triangulations of the faces $\tau$ which are compatible with the gluing $\sim$. \term{Compatibility} means whenever an edge $b$ is glued to another edge $b'$, the edge-vertices on $b$ are glued to the edge-vertices on $b'$. Then define the \term{reduced vertex set}
\[\rd{V}_\lambda=\bigcup_{\tau\in\face_\lambda}\rd{V}_\tau, \quad \rd{V}_\tau=f_\tau(\rd{V}_\stdT).\]
The images of the weighted quivers $\Gamma_\tau$ under $f_\tau$ together form a quiver $\Gamma_\lambda$ on $\fS$.
Note that when edges $b$ and $b'$ are glued, a small edge on $b$ is then glued to a small edge of $b'$ with opposite direction, resulting an arrow of weight $0$.

Let $\bmQ_\lambda: \rd{V}_\lambda\times \rd{V}_\lambda \to \BZ$ be the signed adjacency matrix of the weighted quiver $\Gamma_\lambda$.
The ($n$-th root version) \term{Fock-Goncharov algebra} is the quantum torus of $\bmQ_\lambda$:
\begin{equation}
\bXS= \bT(\bmQ_\lambda) = R \la x_v^{\pm 1}, v \in \rd{V}_\lambda \ra / (x_v x_{v'}= \hq^{\, 2 \bmQ_\lambda(v,v')} x_{v'}x_v \ \text{for } v,v'\in \rd{V}_\lambda ).
\end{equation}

Another way to define $\bXS$ is as follows. Consider the tensor product algebra
\begin{equation}\label{eq.Xlambda}
\rd{\cX}_\lambda:= \bigotimes_{\tau\in \cF_\lambda} \rd{\cX}(\tau)
= \bigotimes_{\tau\in \cF_\lambda} \BT(\bmQ_\tau)
= \BT\Big( \bigoplus_{\tau\in \cF_\lambda} \bmQ_\tau \Big),
\end{equation}
where the last identity is the natural identification. Then $\bXS$ is the $R$-submodule of $\rd{\cX}_\lambda$ spanned by $x^{\bk}$ with $\bk({v'}) = \bk({v''})$ whenever $v'$ is glued to $v''$ in the identification \eqref{eq.glue}.

We define the \term{extension by zero} for matrices to simplify some definitions. Let $M_\stdT:\rd{V}_\stdT\times\rd{V}_\stdT\to\ints$ be a matrix associated to the standard triangle $\stdT$, and $f_\tau:\rd{V}_\stdT\to\rd{V}_\lambda$ be the map of small vertices induced by the characteristic map. Define the extension of $M_\stdT$ by zero, denoted $M_\tau:\rd{V}_\lambda\times\rd{V}_\lambda\to\ints$, by
\begin{equation}
M_\tau(u,v)=\sum_{u'\in f_\tau^{-1}(u)}\sum_{v'\in f_\tau^{-1}(v)} M_\stdT(u',v').
\end{equation}

Since $\Gamma_\lambda$ is obtained by gluing copies of $\Gamma_\stdT$, $\bmQ_\lambda$ can be written as
\begin{equation}\label{eq-Q-def}
\bmQ_\lambda = \sum_{\tau\in\face_\lambda}\bmQ_\tau,
\end{equation}
where $\rdm{Q}_\tau$ is the extension of $\rdm{Q}_\stdT$ by zero.

\begin{remark}
The original Fock-Goncharov algebra can be embedded as the subalgebra of $\bXS$ generated by $x^{\pm n}, x\in \rd{V}_\lambda$, which is isomorphic to the quantum torus $\qtorus(n^2\rdm{Q}_\lambda)$. Thus $\bXS$ can be considered as the $n$-th root version of the original Fock-Goncharov algebra.
\end{remark}

\subsection{The extended Fock-Goncharov algebra}

Given a punctured bordered surface $\surface$, attach a copy of $\stdT$ to each boundary edge of $\surface$. The resulting surface is denoted $\ext{\surface}$. We adopt the convention that in an attached triangle, the attaching edge is the $e_1$ edge. See Figure~\ref{fig-attach-T}.

\begin{figure}
\centering
\input{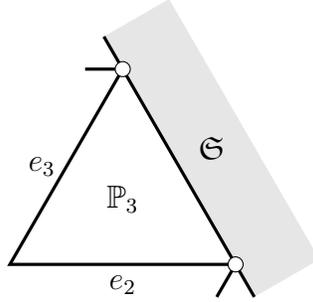}
\caption{Attaching triangles}\label{fig-attach-T}
\end{figure}

If the surface $\surface$ has an ideal triangulation $\lambda$, then there is an unique extension $\ext{\lambda}\supset\lambda$ to an ideal triangulation of $\ext{\surface}$ by adding all the new boundary edges. The new faces are exactly the glued triangles. Let $\rd{V}_{\ext{\lambda}}$ be the reduced vertex set of the extended $n$-triangulation. Define the \term{$X$-vertex set} $V_\lambda\subset\rd{V}_{\ext{\lambda}}$ as the subset of all small vertices not on the $e_3$ edge in the attached triangles. Let \term{$A$-vertex set} $\lv{V}_\lambda\subset\rd{V}_{\ext{\lambda}}$ be the subset of all small vertices not on the $e_2$ edge in the attached triangles. Note $\rd{V}_\lambda$ is naturally a subset of both $V_\lambda$ and $\lv{V}_\lambda$.

Let $\mQ_\lambda: V_\lambda \times V_\lambda \to \BZ$ be the restriction of $ \bmQ_{\ext{\lambda}}: \rd{V}_{\ext{\lambda}} \times \rd{V}_{\ext{\lambda}} \to \BZ$. The \term{extended $X$-algebra} is defined as
\[\XS = \bT(\mQ_\lambda).\]
There is a natural identification of subalgebras $\rd{\FG}(\surface,\lambda)\subset\FG(\surface,\lambda)\subset\rd{\FG}(\ext{\surface},\ext{\lambda})$.

\begin{lemma}\label{lemma-vset-size}
Suppose $\lambda$ is any ideal triangulation of a triangulable surface $\surface$. Recall that $\#\partial\surface$ is the number of boundary edges, and $r(\surface)=\#\partial\surface-\chi(\surface)$ is defined in \eqref{eq.rfS}. Then
\begin{align}
\abs{V_\lambda}=\abs{\lv{V}_\lambda}&=(n^2-1)r(\surface)=\GKdim\skein(\surface).\label{eq-vset-size1}\\
\abs{\rd{V}_\lambda}&=\abs{V_\lambda}-\binom{n}{2}\#\partial\surface.\label{eq-vset-size2}
\end{align}
In particular, if $\surface=\poly_k$ is a polygon, then $\abs{\rd{V}_\lambda}=\GKdim\reduceS(\surface)$ by \eqref{eq.GKrd}.
\end{lemma}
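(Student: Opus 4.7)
The proof is a direct combinatorial count, so the plan is to carefully tally the small vertices in each stratum (face interiors vs.\ edge vertices, original vs.\ attached-triangle vertices) and then apply Euler's formula in the form that accounts for ideal boundary vertices.

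First I would compute the vertex count of $\rd V_\stdT$ by type. The $n$-triangulation of $\stdT$ has $\binom{n+2}{2}$ points with non-negative integer barycentric coordinates, of which the three corners are excluded, so $|\rd V_\stdT|=\binom{n+2}{2}-3$. Of these, $3(n-1)$ lie on the three boundary edges (away from the corners) and the remaining $\binom{n-1}{2}$ are interior to $\stdT$. Each edge contributes $n-1$ edge vertices.

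Next I would count $|\rd V_\lambda|$. Every face $\tau\in\cF_\lambda$ contributes its $\binom{n-1}{2}$ interior small vertices, and each ideal arc of $\lambda$ (interior or boundary) contributes $n-1$ edge small vertices that are shared by whichever faces are adjacent. Writing $F$ for the number of faces and $E$ for the number of edges of $\lambda$, this gives
\[|\rd V_\lambda|=F\binom{n-1}{2}+E(n-1).\]
The standard face--edge counting $3F=2E_{\mathrm{int}}+\#\partial\surface$ yields $E=\tfrac12(3F+\#\partial\surface)$. Since every ideal point is a vertex of $\lambda$, Euler's formula applied to the compact surface $\bar\surface_0$ (obtained by filling in all punctures of $\surface$) gives $V_\partial+V_{\mathrm{int}}-E+F=\chi(\bar\surface_0)=\chi(\surface)+V_{\mathrm{int}}$, so $V_\partial-E+F=\chi(\surface)$, i.e.\
\[F=\#\partial\surface-2\chi(\surface),\qquad F+\#\partial\surface=2r(\surface).\]

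For $|V_\lambda|$ I would count how many new small vertices each attached triangle contributes. An attached triangle has $|\rd V_\stdT|$ small vertices distributed as $n-1$ on $e_1$ (shared with $\surface$, hence already in $\rd V_\lambda$), $n-1$ on $e_2$ (new, belonging to $V_\lambda$), $n-1$ on $e_3$ (new, excluded from $V_\lambda$), and $\binom{n-1}{2}$ in its interior (new, in $V_\lambda$). Two attached triangles never share a small vertex. Hence the contribution of each attached triangle to $V_\lambda\setminus\rd V_\lambda$ is $(n-1)+\binom{n-1}{2}=\binom{n}{2}$, which immediately yields $|\rd V_\lambda|=|V_\lambda|-\binom{n}{2}\#\partial\surface$, establishing \eqref{eq-vset-size2}. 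Plugging this into the expression for $|\rd V_\lambda|$ and simplifying,
\[|V_\lambda|=\frac{(n-1)\bigl[F(n-2)+(3F+\#\partial\surface)+n\#\partial\surface\bigr]}{2}=\frac{(n^2-1)(F+\#\partial\surface)}{2}=(n^2-1)r(\surface).\]
The symmetric role of $e_2$ and $e_3$ in the definitions gives $|\lv V_\lambda|=|V_\lambda|$, and the last equality $\GKdim\skein(\surface)=(n^2-1)r(\surface)$ is Theorem~\ref{thm.domain}(b). For the polygon case, substituting $\#\partial\poly_k=k$ into \eqref{eq-vset-size2} recovers the formula $k\frac{(n-1)(n+2)}{2}-(n^2-1)$ of Theorem~\ref{thm.domainr}. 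There is no real obstacle; the only potentially subtle point is the correct form of Euler's formula for punctured bordered surfaces, which must separate boundary vertices from interior punctures, and which I would derive explicitly as above.
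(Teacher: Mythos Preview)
Your proof is correct and follows essentially the same approach as the paper: count interior small vertices per face and edge small vertices per edge to get $|\rd V_\lambda|$, add $\binom{n}{2}$ per attached triangle to reach $|V_\lambda|$, and reduce everything to the face and edge counts via the Euler relation $V_\partial-E+F=\chi(\surface)$. The paper states the edge and face counts ($e=2v-3\chi$, $f=v-2\chi$ with $v=\#\partial\surface$) as a ``standard Euler characteristic argument'' without derivation, whereas you supply that derivation explicitly; otherwise the two arguments coincide.
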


\begin{proof}
$V_\lambda\setminus\rd{V}_\lambda$ consists of small vertices in the attached triangles (excluding the attaching edges and the unused edges). There are $\#\partial\surface$ such triangles, and there are $\binom{n}{2}$ extra small vertices in each. This shows \eqref{eq-vset-size2}.

Let $v=\#\partial\surface$. A standard Euler characteristic argument shows that $\lambda$ has $e=2v-3\chi$ edges and $v-2\chi$ faces. Each edge has $n-1$ small vertices, and each face has $f=(n-1)(n-2)/2$ small vertices in the interior. Thus
\[\abs{V_\lambda}=e(n-1)+f\frac{(n-1)(n-2)}{2}+v\frac{n(n-1)}{2}=(n^2-1)r(\surface).\qedhere\]
\end{proof}

\begin{remark}
Even though the bigon $\PP_2$ is exceptional, we can still use the definition of an ideal triangulation and its extension. By attaching triangles to the boundary of the bigon, we obtain a quadrilateral $\poly_4$, in which the bigon embeds as a neighborhood of a diagonal $e$. $\lambda=\{e\}$ can be considered as an ideal triangulation of $\poly_2$, and $\ext{\lambda}$ can be defined as before. Then Lemma~\ref{lemma-vset-size} can be directly verified, as well as many results in the rest of the paper.
\end{remark}

\subsection{Skeletons of small vertices} \label{ss.skeleton}

Suppose $\surface$ does not have interior punctures, and $\lambda$ is an ideal triangulation. Since there is no interior ideal point, each characteristic map $f_\tau : \tau \to \fS$ is an embedding, and we will identify $f_\tau(\tau)$ with $\tau$, which is a copy of $\PP_3$.

For a small vertex $v\in \rdV_\lambda$ and an ideal triangle $\tau \in \cF_\lambda$, we now define its \term{skeleton} $ \sk_\tau(v)\in \BZ [ \rdV_\tau]$ and a graphical representation.

Choose a face $\nu\in\face_\lambda$ which contains $v$. There are two such $\nu$ when $v$ is on an interior edge of the triangulation. Otherwise, $\nu$ is unique. Assume $v=(ijk)\in V_\nu$. Draw a weighted directed graph $Y_v$ properly embedded into $\nu$ as in Figure \ref{fig-skel-main}. Here an edge of $Y_v$ has weight $i$, $j$ or $k$ according as the endpoint lands on the edge $e_1$, $e_2$ or $e_3$ respectively. The directed weighted graph $Y_v$ is unique up to ambient isotopy of the ideal triangle $\nu$.

\begin{figure}
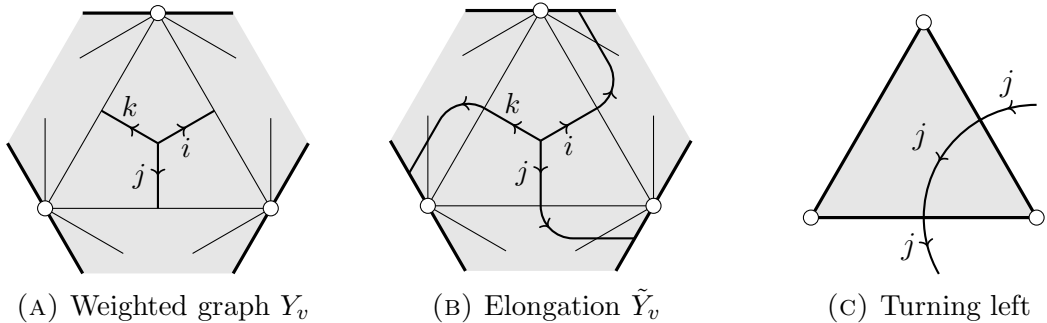

\centering
\begin{subfigure}[b]{0.3\linewidth}
\centering
\input{skel-main}
\subcaption{Weighted graph $Y_v$}\label{fig-skel-main}
\end{subfigure}
\begin{subfigure}[b]{0.3\linewidth}
\centering
\input{skel-longed}
\subcaption{Elongation $\tY_v$}\label{fig-skel-long}
\end{subfigure}
\begin{subfigure}[b]{0.3\linewidth}
\centering
\input{skel-turn-left}
\subcaption{Turning left}\label{fig-skel-left}
\end{subfigure}
\caption{Graphs associated to a small vertex $v$}\label{fig-skel}
\end{figure}

Elongate the nonzero-weighted edges of $Y_v$ to get an embedded weighted directed graph $\tY_v$ as in Figure~\ref{fig-skel-long}. Here the edge is elongated by using left turn whenever it enters a triangle, see Figure~\ref{fig-skel-left} for left turn. The portion of the elongated edge between the entering point and the exiting point in a triangle $\tau$ is called a \term{segment} of $\tY_v$ in $\tau$. In addition, we also consider $Y_v$ as a segment of $\tY_v$, called the \term{main segment}.

For the main segment $s=Y_v$ define $Y(s) = v \in \rdV_\nu$. For an arc segment $s$ in a triangle $\tau$ define $Y(s)\in \rdV_\tau$ to be the small vertex of the following weighted graph
\[\input{skel-arc}\]

For example, if in the above picture the top ideal vertex is $v_1$, then $Y(s) = (n-j, j, 0) \in \rdV_\tau$. Define $\sk_\tau(v)$ by
\begin{equation}\label{eq-skel-def}
\sk_\tau(v) = \sum_{ s \subset \tau\cap\tilde{Y}_v} Y(s) \in \BZ [\rdV_\tau]
\end{equation}
where the sum is over all segments of $\tY_v$ in $\tau$.

\begin{lemma}\label{lemma-skel-welldef}
The skeleton $\sk_\tau(v)$ is well-defined, i.e., it does not depend on the choice of $\nu$.
\end{lemma}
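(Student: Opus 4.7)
The only case that requires verification is when $v$ lies on an interior edge $e$ of $\lambda$ shared by two distinct faces $\nu_1$ and $\nu_2$; otherwise $\nu$ is unique. The strategy is to show that the weighted graph $\tY_v \subset \fS$, together with its decomposition into the main and arc segments, is intrinsic to $v$ and the triangulation, so in particular independent of whether one starts with $\nu = \nu_1$ or $\nu = \nu_2$.

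The first step is an immediate reduction: since $v$ lies on $e$, the barycentric coordinate of $v$ in $\nu_m$ corresponding to the ideal vertex opposite $e$ vanishes for $m = 1,2$. Hence the edge of $Y_v^{(m)}$ pointing toward $e$ has weight $0$ and is not elongated. The remaining two edges of $Y_v^{(m)}$ exit $\nu_m$ through its two non-shared boundary edges, carrying as weights the two nonzero barycentric coordinates of $v$ in $\nu_m$. These boundary weights are intrinsic to $v$ because the small vertex $v$ is identified consistently across the gluing along $e$ by the definition of the $n$-triangulation.

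The second and main step is the combinatorial matching between the two scenarios. Since the assumption on $\surface$ rules out interior ideal points, every left-turn elongation terminates after finitely many triangles on the boundary of $\ext\fS$. The key observation is that a left-turn elongation starting from $\nu_1$ that turns around an ideal endpoint $p$ of $e$ cycles through all faces incident to $p$ and eventually enters $\nu_2$ from the opposite side; thus the segments produced in $\nu_2$ by scenario~1 correspond precisely to the main and arc segments in $\nu_2$ produced by scenario~2, and symmetrically with the roles of $\nu_1$ and $\nu_2$ reversed. The upshot is that $\tY_v^{(1)}$ and $\tY_v^{(2)}$ are the same weighted subgraph of $\fS$, merely decomposed from two different starting points, and hence the segment sums defining $\sk_\tau(v)$ via \eqref{eq-skel-def} coincide for every face $\tau$.

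The main obstacle is the local bookkeeping at $\nu_1$ and $\nu_2$: one must check that the arc segments in $\nu_1$ produced by scenario~2's elongation, together with the main segment $v \in \rdV_{\nu_2}$ of scenario~2, give the same total $\sk_\tau(v)$ for $\tau \in \{\nu_1, \nu_2\}$ as the corresponding contributions in scenario~1. This reduces to a case analysis applying the definition of $Y(s)$ for arc segments whose near-edge endpoint lies on $e$, together with the identification $v \in \rdV_{\nu_1} \cap \rdV_{\nu_2}$ induced by the gluing \eqref{eq.glue}. Once this local equivalence is verified, the global equality of $\sk_\tau(v)$ in the two scenarios follows face by face.
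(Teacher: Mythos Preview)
Your second paragraph contains a genuine error that propagates through the rest of the argument. You claim that ``the edge of $Y_v^{(m)}$ pointing toward $e$ has weight $0$ and is not elongated''. This is false. If $v$ lies on the shared edge $e$, the barycentric coordinate that vanishes is the one indexed by the vertex \emph{opposite} $e$; but by the paper's convention (Figure~\ref{fig-skel-main}) the edge of $Y_v$ landing on $e_m$ carries the $m$-th coordinate, not the coordinate of the opposite vertex. Concretely, with $v=(i,0,k)$ in $\nu_1$ and $e=e_3$ of $\nu_1$, the edge of $Y_v^{(1)}$ toward $e$ has weight $k\ne 0$, while the weight-$0$ edge points toward the third side $e_2$. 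So one nonzero edge of $Y_v^{(1)}$ \emph{does} cross $e$ into $\nu_2$; your premise that both nonzero edges exit through non-shared sides is wrong, and the ``cycling around $p$ back into $\nu_2$ from the opposite side'' picture in your third paragraph is not the mechanism at work.

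The actual mechanism, which the paper reads off Figure~\ref{fig-edge-share}, is this: the $k$-edge of $Y_v^{(1)}$ crosses $e$ directly into $\nu_2$ and contributes an arc segment $s$ with $Y(s)=(n-k,k,0)=(i,k,0)=v\in\rdV_{\nu_2}$, exactly the main-segment contribution of scenario~2; it then exits $\nu_2$ through the same edge and with the same weight as the $k$-edge of $Y_v^{(2)}$. Symmetrically, the $i$-edge of $Y_v^{(2)}$ crosses $e$ into $\nu_1$, gives an arc segment with $Y(s)=(i,0,n-i)=(i,0,k)=v\in\rdV_{\nu_1}$, and thereafter coincides with the $i$-edge of $Y_v^{(1)}$. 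Thus $\tY_v^{(1)}$ and $\tY_v^{(2)}$ are the same weighted graph with matching segment contributions in every face --- your final conclusion is correct --- but the one-line coordinate check $Y(s)=v$ that you defer in your last paragraph \emph{is} the entire content of the proof, not a subsidiary detail.
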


\begin{proof}
The only ambiguous case is when $v\in\rd{V}_{\tau_1}\cap\rd{V}_{\tau_2}$ for two faces $\tau_1,\tau_2$ sharing a common edge $e$. Choose one end of the common edge as the top vertex in both $\tau_1$ and $\tau_2$ as in Figure~\ref{fig-edge-share}.

\begin{figure}
\centering
\input{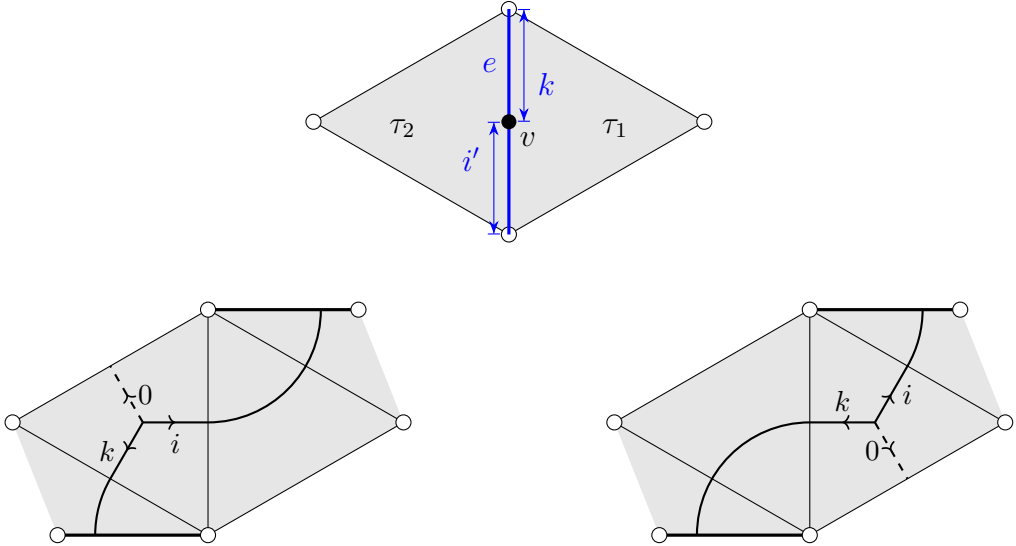}
\caption{Faces adjacent to $v$}\label{fig-edge-share}
\end{figure}

Let $v=(ijk)$ in $\tau_1$ and $v=(i'j'k')$ in $\tau_2$. Then the choice of $v_1$ implies
\[j=0=k',\qquad i'=n-k=i,\qquad j'=n-i'-k'=k.\]
Then the weighted graphs $\tY_v$ for the two choices are given in Figure~\ref{fig-edge-share}. Note the dashed line is the $0$ weighted edge, which is not elongated. There is a clear one-to-one correspondence between the segments.
\end{proof}

\subsection{The $A$-version quantum tori} \label{sec.Atori}

Continue to assume $\surface$ does not have interior punctures, and $\lambda$ is an ideal triangulation.

Define the matrix $\rdm{P}_\lambda:\rd{V}_\lambda\times\rd{V}_\lambda\to n\ints$ by
\begin{equation}
\rdm{P}_\lambda(v,v')=\sum_{\tau\in\face_\lambda}\rdm{P}_\tau(\sk_\tau(v),\sk_\tau(v')),
\end{equation}
where we also denote by $\rdm{P}_\tau$ the $\BZ$-bilinear extension of $\rdm{P}_\tau: \rdV _\tau \times\rdV _\tau \to n \BZ $ .

The extended matrix $\mat{P}_\lambda$ is obtained from the extended surface with a change of basis. Define a map $p:\rd{V}_{\ext{\lambda}}\setminus\rd{V}_\lambda\to\rd{V}_{\ext{\lambda}}\setminus\lv{V}_\lambda$ as follows. Every $v \in\rd{V}_{\ext{\lambda}}\setminus\rd{V}_\lambda$ has coordinates $ijk$ in an attached triangle with $k\ne0$, and $\rd{V}_{\ext{\lambda}}\setminus\lv{V}_\lambda$ consists of vertices $ijk$ in attached triangles with $i=0$. Then
\begin{equation}\label{eq-cov-pdef}
p(v)=(0,n-k,k)\quad \text{in the same triangle}.
\end{equation}
The change-of-variable matrix $\mat{C}:\lv{V}_\lambda\times\rd{V}_{\ext{\lambda}}\to\ints$ is defined by
\begin{equation}
\begin{aligned}
\mat{C}(v,v)&=1,&&v\in\lv{V}_\lambda,\\
\mat{C}(v,p(v))&=-1,&&v\in\lv{V}_\lambda\setminus\rd{V}_\lambda,\\
\mat{C}(v,v')&=0,&&\text{otherwise}.
\end{aligned}
\end{equation}
The nontrivial matrix elements are shown in Figure~\ref{fig-cov}, where $\pm$ denotes the values $\pm1$. The extended matrix $\mat{P}_\lambda:\lv{V}_\lambda\times\lv{V}_\lambda\to n\ints$ is given by
\begin{equation}
\mat{P}_\lambda=\mat{C}\exm{P}\mat{C}^t.
\end{equation}
Clearly, the restriction of $\mat{P}_\lambda$ to $\rd{V}_\lambda\times\rd{V}_\lambda$ is $\rdm{P}_\lambda$.

\begin{figure}
\centering
\input{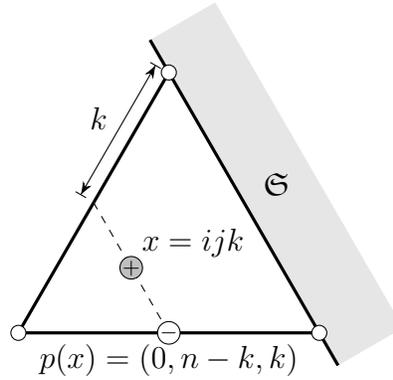}
\caption{Change-of-variable matrix $\mat{C}$}\label{fig-cov}
\end{figure}

Define the following \term{$A$-version quantum tori and quantum spaces}.
\begin{alignat*}{2}
\rd{\lenT}(\surface,\lambda)& =\qtorus(\rdm{P}_\lambda),&\qquad
\rd{\lenT}_+(\surface,\lambda)&=\qplane(\rdm{P}_\lambda),\\
\lenT(\surface,\lambda)&=\qtorus(\mat{P}_\lambda),&
\lenT_+(\surface,\lambda)&=\qplane(\mat{P}_\lambda).
\end{alignat*}

\subsection{Balanced parts of $X$-tori}

The notion of balanced vectors can be generalized to any triangulable surface $\surface$ with an ideal triangulation $\lambda$. A vector $\vec{k}\in\ints^{\rd{V}_\lambda}$ is \term{balanced} if its pullback to $\stdT$ is balanced for every triangle of $\lambda$. Here, for every face $\tau$ and its characteristic map $f_\tau:\stdT\to\surface$, the pullback $f_\tau^\ast\vec{k}$ is a vector $\rd{V}_\stdT\to\ints$ given by $f_\tau^\ast\vec{k}(v)=\vec{k}(f_\tau(v))$. The subgroup of balanced vectors is denoted $\rd{\Lambda}_\lambda$.

The \term{balanced Fock-Goncharov algebra} is the monomial subalgebra
\[\rd{\FG}^\bal(\surface,\lambda)=\bT(\rdm{Q}_\lambda;\rd{\Lambda}_\lambda).\]
The extended version is defined by \[\FGbl(\surface,\lambda)=\qtorus(\mat{Q}_\lambda)\cap\rd{\FG}(\ext{\surface},\lambda)=\qtorus(\mat{Q}_\lambda;\Lambda_\lambda).\]
Here, the intersection is taken in $\qtorus(\exm{Q})$, where $\qtorus(\mat{Q}_\lambda)$ is considered a subalgebra by the natural embedding, and $\Lambda_\lambda=\rd{\Lambda}_{\ext{\lambda}}\cap\ints^{V_\lambda}$ is the subgroup of balanced vectors.

As in the triangle case, the balanced condition has a few equivalent statements. See Proposition~\ref{prop-bal}.

\subsection{Transitions between $A$- and $X$-tori}\label{sec-AX}

We generalize the properties from Subsection~\ref{sec-AX-tri} to more general surfaces. Again assume that $\surface$ does not have interior punctures, and that $\lambda$ is an ideal triangulation.

Define $\rdm{K}_\lambda:\rd{V}_\lambda\times\rd{V}_\lambda\to\ints$ as follows. Let $u,v\in \rd{V}_\lambda$. Choose a face $\tau\in\face_\lambda$ containing $v$ and let
\begin{equation}\label{eq-surgen-exp}
\rdm{K}_\lambda(u,v)=\rdm{K}_\tau(\sk_\tau(u),v)=\sum_{s\subset\tau\cap\tilde{Y}_u}\rdm{K}_\tau(Y(s),v).
\end{equation}

\begin{lemma}\label{lemma-K-welldef}
The matrix $\rdm{K}_\lambda$ is well-defined, that is, it is independent of the choices of $\tau$.
\end{lemma}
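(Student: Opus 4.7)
The value $\rdm{K}_\lambda(u,v)$ is unambiguous whenever the face $\tau$ containing $v$ is uniquely determined; this covers the case where $v$ lies in the interior of a face or on a boundary edge. The only case requiring verification is when $v$ lies on an interior edge $e$ shared by two faces $\tau_1$ and $\tau_2$. There the task is to show
\[\sum_{s\subset\tilde Y_u\cap\tau_1}\rdm{K}_{\tau_1}(Y(s),v)\;=\;\sum_{s\subset\tilde Y_u\cap\tau_2}\rdm{K}_{\tau_2}(Y(s),v).\]

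First I would normalize coordinates. By $\BZ/3$-invariance of $\rdm{K}_\stdT$, I may orient each $\tau_i$ so that $e$ plays the role of its $e_1$ edge. The orientation-reversing gluing identifies the top vertex $v_1^{(1)}$ of $\tau_1$ with the right vertex $v_2^{(2)}$ of $\tau_2$, so if $v$ lies at position $a$ from $v_1^{(1)}$ along $e$, then $v=(n-a,a,0)_{\tau_1}=(a,n-a,0)_{\tau_2}$. Applying formula \eqref{eq-trigen-0}, each summand reduces to an expression of the form $n\min\{Y(s)_1,a_i\}-a_i\,Y(s)_1$, depending only on the first coordinate $Y(s)_1$ in the chosen frame of $\tau_i$, where $a_1=n-a$ and $a_2=a$.

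Next I would match segments of $\tilde Y_u$ across $e$, in the spirit of the proof of Lemma \ref{lemma-skel-welldef}. Each segment is either the main tripod segment at $u$ (lying in a single face) or a left-turn elongation arc arising from a weighted edge of $Y_u$. Along any single elongation arc the weight is preserved, and the arc contributes one segment to each face it traverses by left turns. Elongation arcs that do not cross $e$ contribute to only one of $\tau_1,\tau_2$; arcs that do cross $e$ contribute one segment to each triangle, and these paired segments share an endpoint on $e$ whose position determines the first-coordinate data in both frames.

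The principal obstacle is the nonlinearity of the $\min$ term in \eqref{eq-trigen-0}. I would handle this by a case split on the position $b$ on $e$ where an elongation arc crosses relative to $a$, and on whether the arc terminates on $e$ or passes through. For each matched pair, the identity to check is a finite numerical comparison of $n\min\{Y(s)_1,n-a\}-(n-a)Y(s)_1$ in $\tau_1$ against its counterpart $n\min\{Y(s')_1,a\}-a\,Y(s')_1$ in $\tau_2$, which follows from the explicit relationship between the two first coordinates (their sum depending only on the weight and the crossing position). The unpaired segments and the tripod lie in only one triangle but, being disjoint from $e$, contribute identically through a direct calculation using the same left-turn structure. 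Assembling the bijective matching with these local identities gives the required equality.
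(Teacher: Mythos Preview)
Your overall setup is sound, but the argument has a gap at the ``unpaired'' segments and an unnecessary detour at the paired ones.

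The missing point is that a segment $s$ in $\tau_r$ that does \emph{not} meet $e$ contributes \emph{zero}, not merely ``identically''. With your normalization ($e=e_1$ in $\tau_r$), an arc segment avoiding $e_1$ is a corner arc around $v_3$, and then $Y(s)=(0,n-t,t)$; by \eqref{eq-trigen-0} we get $\rdm{K}_{\tau_r}(Y(s),v)=n\langle\varpi_0,\varpi_{a_r}\rangle=0$. Without this vanishing, your two sums do not match, since such segments live in only one of $\tau_1,\tau_2$. Your sentence ``contribute identically through a direct calculation'' does not say this; please make the zero explicit. Relatedly, the main tripod segment is \emph{not} disjoint from $e$ when $u\in\tau_1\cup\tau_2$: it touches all three edges of its face and is paired with an elongation arc on the other side (if its weight on $e$ is zero, the same vanishing argument applies).

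For the paired segments your proposed case split on the crossing position $b$ and on ``terminates versus passes through'' is unnecessary. By \eqref{eq-trigen-0} the value $\rdm{K}_{\tau_r}(Y(s),v)$ depends only on the first coordinate of $Y(s)$, which is determined by the weight $t$ and which vertex of $e$ the segment goes around --- not by where it crosses. If the pair goes around $v_1^{(1)}=v_2^{(2)}$, then $Y(s)_1=t$ in $\tau_1$ and $Y(s')_1=n-t$ in $\tau_2$, giving
\[
\rdm{K}_{\tau_1}(Y(s),v)=n\langle\varpi_t,\varpi_{n-a}\rangle
\quad\text{and}\quad
\rdm{K}_{\tau_2}(Y(s'),v)=n\langle\varpi_{n-t},\varpi_a\rangle,
\]
which agree by \eqref{eq.rorbra}. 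The other corner is symmetric. This is exactly the paper's proof: vanishing for non-crossing segments, then a single identity $\langle\varpi_t,\varpi_{n-a}\rangle=\langle\varpi_{n-t},\varpi_a\rangle$ for each crossing point, no cases.
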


\begin{proof}
The only ambiguous case is when $v$ is on an edge $e$ shared by faces $\tau_1,\tau_2$. If the segment $s$ does not intersect the edge $e$ (or has zero weight on $e$), the special case \eqref{eq-trigen-0} shows that $\rdm{K}(Y(s),v)=0$. Any segment that does intersect $e$ does so exactly once by the assumption that there are no interior punctures. For an intersection $a\in e\cap\tilde{Y}_u$, let $s_{r,a}$ be the segment of $\tilde{Y}_u\cap\tau_r$ incident to $a$ for $r=1,2$. Then
\[\rdm{K}_\lambda(u,v)=\sum_{a\in e\cap\tilde{Y}_u} \rdm{K}_{\tau_r}(Y(s_{r,a}),v).\]
We prove the lemma by showing $\rdm{K}_{\tau_r}(Y(s_{r,a}),v)$ is independent of $r$.

\begin{figure}
\centering
\input{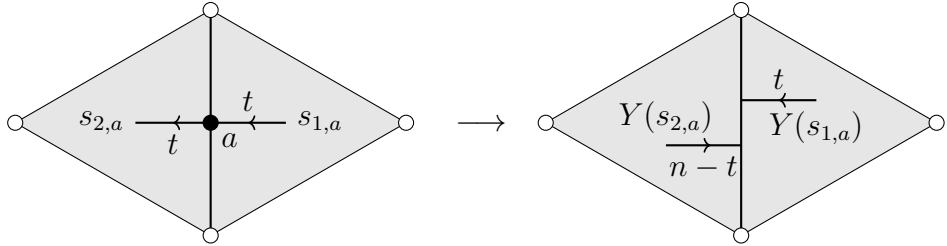}
\caption{Segments $s_{r,a}$}\label{fig-esh-seg}
\end{figure}

Draw the local picture in the same way as Lemma~\ref{lemma-skel-welldef} such that the coordinates of $v$ are $(i,0,n-i)$ in $\tau_1$ and $(i,n-i,0)$ in $\tau_2$. Since $s_{1,a}$ and $s_{2,a}$ are connected through $a$, one of them is elongated from the other. Thus they have the same weight $t$ and consistent directions near $a$. If the segments go to the left, we obtain Figure~\ref{fig-esh-seg}. Then using \eqref{eq-trigen-0}, we get
\[\rdm{K}_{\tau_1}(Y(s_{1,a}),v)=n\langle\varpi_t,\varpi_{n-i}\rangle,\qquad
\rdm{K}_{\tau_2}(Y(s_{2,a}),v)=n\langle\varpi_{n-t},\varpi_i\rangle.\]
They agree by \eqref{eq.rorbra}. The other segment direction is similar.
\end{proof}


To define the extended matrix $\mat{K}_\lambda:\lv{V}_\lambda\times V_\lambda\to\ints$, start with the reduced matrix of the extended surface, $\exm{K}:\rd{V}_{\ext{\lambda}}\times\rd{V}_{\ext{\lambda}}\to\ints$. The product $\mat{C}\,\exm{K}$ is a matrix on $\lv{V}\times\rd{V}_{\ext{\lambda}}$. $\mat{K}_\lambda$ is defined as the restriction of $\mat{C}\, \exm{K}$, that is,
\begin{equation}
\mat{K}_\lambda=(\mat{C}\, \exm{K})|_{\lv{V}_\lambda\times V_\lambda}.
\end{equation}

\begin{lemma}\label{lemma-CK0}
The restriction of $\mat{C}\exm{K}$ to $\lv{V}_\lambda\times(\rd{V}_{\ext{\lambda}}\setminus V_\lambda)$ is $0$.
\end{lemma}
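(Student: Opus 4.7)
Since every $w\in\rdV_{\ext{\lambda}}\setminus V_\lambda$ lies on the $e_3$-edge of a unique attached triangle $\tau_0$ (unique because $e_3$ is a boundary edge of $\ext{\surface}$), we may write $w=(i',0,n-i')$ in the coordinates of $\tau_0$. Applying \eqref{eq-trigen-0} after the cyclic rotation of $\tau_0$ that sends $e_3$ to $e_1$, I would first establish
\[\rdm{K}_{\tau_0}((i,j,k),w)=n\min(k,n-i')-k(n-i')=n\langle\varpi_k,\varpi_{\bar{i'}}\rangle,\]
which depends only on the $k$-coordinate of the first argument. By the definition \eqref{eq-surgen-exp} of $\exm{K}$,
\[\exm{K}(u,w)=\rdm{K}_{\tau_0}(\sk_{\tau_0}(u),w)=n\Big\langle\sum_{s\subset\tau_0\cap\tilde{Y}_u}\varpi_{k_s},\,\varpi_{\bar{i'}}\Big\rangle,\]
where $k_s$ denotes the $k$-coordinate of $Y(s)$ in $\tau_0$.

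The crux is to show that every non-main segment $s\subset\tau_0\cap\tilde{Y}_u$ contributes $k_s=0$. Since $e_2(\tau_0)$ and $e_3(\tau_0)$ are boundary edges of $\ext{\surface}$, the attaching edge $e_1$ is the only interior edge of $\tau_0$, so such an arc segment must enter $\tau_0$ across $e_1$ from the $\surface$-side. Tracking the walker direction under the left-turn convention (walker's left obtained by rotating the direction $90^\circ$ counterclockwise in the ambient orientation of $\ext{\surface}$) together with the paper's labeling of attached triangles, any arc entering $\tau_0$ in this way turns around the vertex $v_2(\tau_0)$, giving $Y(s)=(n-t,t,0)$ with $k_s=0$. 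Hence only the main-segment term survives, and
\[\exm{K}(u,w)=n\langle\varpi_{k_u^{\tau_0}},\varpi_{\bar{i'}}\rangle,\qquad k_u^{\tau_0}=\begin{cases}k_u,&u\in\rdV_{\tau_0},\\ 0,&\text{otherwise.}\end{cases}\]

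With this reduction the case analysis is immediate. From the definition of $\mat{C}$, the entry $(\mat{C}\exm{K})(v,w)$ equals $\exm{K}(v,w)$ for $v\in\rdV_\lambda$, and equals $\exm{K}(v,w)-\exm{K}(p(v),w)$ for $v\in\lv{V}_\lambda\setminus\rdV_\lambda$. If $v\in\rdV_\lambda$, then either $v\notin\rdV_{\tau_0}$, or $v$ lies on the attaching edge of $\tau_0$ and has $k_v=0$; in both situations $\exm{K}(v,w)=0$. If $v\in\lv{V}_\lambda\setminus\rdV_\lambda$ and $\tau_v\ne\tau_0$, then both $v$ and $p(v)$ lie in $\tau_v$ and miss $\rdV_{\tau_0}$, so $\exm{K}(v,w)=\exm{K}(p(v),w)=0$. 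Finally, if $\tau_v=\tau_0$ and $v=(i,j,k)$, then $p(v)=(0,n-k,k)$ has the same $k$-coordinate in $\tau_0$, so $\exm{K}(v,w)=\exm{K}(p(v),w)$ and the difference vanishes.

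The main obstacle is the geometric step asserting that every non-main arc segment of $\tilde{Y}_u$ lying in $\tau_0$ turns around $v_2(\tau_0)$ rather than $v_1(\tau_0)$. This is a careful but elementary check of the left-turn convention at the attaching edge, together with the observation that a walker exiting $\tau_0$ through $e_1$ can only re-enter $\tau_0$ (if at all) from the same $\surface$-side, so that the uniform direction argument applies to every such segment.
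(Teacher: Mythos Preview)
Your proof is correct and follows essentially the same approach as the paper's: both reduce to computing $\rdm{K}_{\tau}(Y(s),w)$ for segments $s\subset\tau\cap\tilde Y_u$, use the rotated form of \eqref{eq-trigen-0} to see that only the $k$-coordinate of $Y(s)$ matters, observe that every non-main segment entering the attached triangle through $e_1$ turns left around $v_2$ (so $k_s=0$), and then note that $u$ and $p(u)$ share the same $k$-coordinate. Your write-up is a bit more explicit than the paper about the case $u\notin\tau_0$ and about why the left-turn rule forces the arc around $v_2$ rather than $v_1$, which the paper leaves to Figure~\ref{fig-ck-seg}.
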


\begin{proof}
Let $u\in\lv{V}_\lambda$ and $v\in\rd{V}_{\ext{\lambda}}\setminus V_\lambda$. Suppose $v$ is in the (attached) triangle $\tau$. If $u$ is not in $\tau$, then $p(u)$ (if defined) is not in $\tau$ either. Thus $(\mat{C}\exm{K})(u,v)=0$.

\begin{figure}
\centering
\input{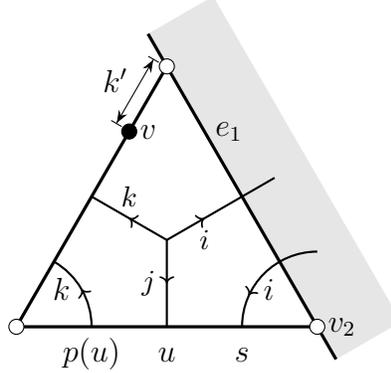}
\caption{Segments in an attached triangle}\label{fig-ck-seg}
\end{figure}

Now suppose $u$ is in $\tau$ as well. Recall the attaching edge is $e_1$ by convention. Let $u=(ijk)$ and $v=(i',0,k')$. To find $\exm{K}(u,v)$, we use the expanded definition in \eqref{eq-surgen-exp}. The segments $\tau\cap\tilde{Y}_u$ consist of the main segment $Y_u$ and possibly an elongated segment $s$ with weight $i$ around $v_2$. Then $Y(s)$ has coordinates $(n-i,i,0)$, and \eqref{eq-trigen-0} shows that
\begin{equation}
\rdm{K}_\tau(Y(s),v)=0,\qquad
\exm{K}(u,v)=\rdm{K}_\tau(u,v)=n\langle\varpi_k,\varpi_{k'}\rangle.
\end{equation}
Similarly, $\exm{K}(p(u),v)=n\langle\varpi_k,\varpi_{k'}\rangle$ as well. Thus
\[(\mat{C}\exm{K})(u,v)=\exm{K}(u,v)-\exm{K}(p(u),v)=0.\qedhere\]
\end{proof}

Thus $\mat{K}_\lambda$ contains all the information in $\mat{C}\exm{K}$. It is also easy to check that the restriction of $\mat{K}_\lambda$ to $\rd{V}_\lambda\times\rd{V}_\lambda$ is $\rdm{K}_\lambda$.

\begin{theorem}\label{thm.dual}
Assume the pb surface $\fS$ does not have interior puncture and has a triangulation $\lambda$.
\begin{enuma}
\item The $R$-linear maps
\begin{align}
&\rd{\psi}_\lambda: \rd{\lenT}(\fS,\lambda) \to \rd{\FG}(\fS,\lambda), \quad \text{given by} \ \rd{\psi}_\lambda (a^\vec{k})=x^{\vec{k}\rdm{K}_\lambda}, \bk \in \BZ^{\rd{V }_\lambda }\\
&{\psi_\lambda}: {\lenT}(\fS,\lambda) \to {\FG}(\fS,\lambda), \quad\text{given by} \ {\psi_\lambda} (a^\vec{k})=x^{\vec{k}\mK_\lambda}, \ \bk \in \BZ^{{V }_\lambda }
\end{align}
are $R$-algebra embeddings with images equal to the balanced subalgebras $\rdbl(\fS, \lambda)$ and $\bl{\FG}(\fS, \lambda)$ respectively.
\item Let $\irdV_\lambda\subset \rdV_\lambda$ be the subset of all small vertices in the interior of $\fS$. Then
\begin{equation}
\bmP_\lambda \, \bmQ_\lambda = \left[\begin{array}{c|c}
-4 n^2 (\Id_{\irdV_\lambda \times \irdV_\lambda } )& \ast \\ \hline
0 & \ast \end{array}\right] \label{eq.compa1}
\end{equation}
\end{enuma}
\end{theorem}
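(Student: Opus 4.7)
The plan is to reduce the statement to the matrix identities of Lemma~\ref{lemma-HK-tri} for the triangle, by introducing an auxiliary matrix $\rdm{H}_\lambda$ on the triangulated surface and proving surface analogs of those identities. Specifically, I would define $\rdm{H}_\lambda : \rdV_\lambda \times \rdV_\lambda \to \ints$ by summing the zero-extensions of the triangle matrices $\rdm{H}_\tau$ over faces $\tau \in \face_\lambda$, in the same manner as $\rdm{Q}_\lambda$ is defined in \eqref{eq-Q-def}. At a small vertex on an interior edge of $\lambda$, both adjacent faces contribute to this sum.

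Next, I would establish the following surface identities: (i) $n(\rdm{K}_\lambda - \rdm{K}_\lambda^t) = \rdm{P}_\lambda$; (ii) $\rdm{H}_\lambda^t - \rdm{H}_\lambda = \rdm{Q}_\lambda$; (iii) $\rdm{H}_\lambda \, \rdm{K}_\lambda = n\,\id$; (iv) $\rdm{K}_\lambda \, \rdm{Q}_\lambda \, \rdm{K}_\lambda^t = \rdm{P}_\lambda$. Identities (i), (ii), (iv) follow fairly directly by unfolding the definitions one face at a time, using Lemma~\ref{lemma-K-welldef} to evaluate $\rdm{K}_\lambda(u,v)$ in any face containing $v$ and reducing to the triangle identities. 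Identity (iii) is the key non-degeneracy statement and requires a local verification at each small vertex, handling the cases of interior vertices, boundary vertices, and vertices on shared interior edges separately; at shared vertices, one must check that the contributions from the two adjacent faces combine correctly, which is in the same spirit as the well-definedness check in Lemma~\ref{lemma-K-welldef}. Once (iii) and (iv) are in hand, part~(a) of the theorem for $\rd{\psi}_\lambda$ follows exactly as in the triangle case of Theorem~\ref{thm-tdual-tri}: (iv) makes $\rd{\psi}_\lambda$ an algebra homomorphism, (iii) makes $\rdm{K}_\lambda$ non-degenerate hence $\rd{\psi}_\lambda$ injective, and a surface analog of Proposition~\ref{prop-bal-tri} (proved via (iii) and $\rdm{K}_\lambda \equiv k\vec{k}_1 - j\vec{k}_2 \pmod{n}$ restricted to each face) identifies the image with the balanced subalgebra $\rdbl(\fS,\lambda)$. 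For the extended map $\psi_\lambda$, I would apply the reduced argument on the extended triangulation $\ext{\lambda}$ and then pull back through the change-of-basis matrix $\mat{C}$, invoking Lemma~\ref{lemma-CK0} to check that $\mat{K}_\lambda$ captures all the non-trivial information in $\mat{C}\,\exm{K}$.

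For part~(b), I would follow the triangle proof in Theorem~\ref{thm-tdual-tri}(b) verbatim at the level of matrix manipulations. Using (i) and the antisymmetry of $\rdm{Q}_\lambda$,
\begin{equation*}
\rdm{P}_\lambda \, \rdm{Q}_\lambda = n\,\rdm{K}_\lambda \, \rdm{Q}_\lambda + n\,(\rdm{Q}_\lambda \, \rdm{K}_\lambda)^t.
\end{equation*}
Restricting to the block $\rdV_\lambda \times \irdV_\lambda$: any $u'' \in \irdV_\lambda$ lies in the interior of $\fS$, so it is not on a boundary edge of $\fS$, and therefore by (ii) and the construction of $\rdm{H}_\lambda$ one has $\rdm{Q}_\lambda(v,u'') = -2\,\rdm{H}_\lambda(v,u'')$ for every $v \in \rdV_\lambda$. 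Identity (iii) then shows $\rdm{K}_\lambda \, \rdm{Q}_\lambda$ equals $-2n\,\id$ on the diagonal block $\irdV_\lambda \times \irdV_\lambda$ and $0$ on the boundary-interior block; the transposed term $(\rdm{Q}_\lambda \, \rdm{K}_\lambda)^t$ contributes the same. Adding them yields the claimed $-4n^2\,\id$ on the upper-left block and $0$ on the lower-left block.

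The main obstacle will be establishing identity (iii) at small vertices on shared interior edges of $\lambda$. At such a vertex the two adjacent faces each contribute to $\rdm{H}_\lambda$, and the skeleton-based definition of $\rdm{K}_\lambda$ still evaluates in only one face; confirming that $\rdm{H}_\lambda\, \rdm{K}_\lambda = n\,\id$ in this case requires a delicate local computation using the elongation procedure from Subsection~\ref{ss.skeleton} and the interior-edge analysis already appearing in Lemmas~\ref{lemma-skel-welldef} and~\ref{lemma-K-welldef}. The remaining bookkeeping for the extended surface and the transport via $\mat{C}$ is routine by comparison.
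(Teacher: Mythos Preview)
Your overall strategy matches the paper's exactly: the paper states that the proof is the same as the triangle case of Theorem~\ref{thm-tdual-tri}, with Lemma~\ref{lemma-HK-tri} and Proposition~\ref{prop-bal-tri} replaced by their surface analogs Lemma~\ref{lemma-HK} and Proposition~\ref{prop-bal}. Your identities (i)--(iv) are precisely parts (a)--(d) of Lemma~\ref{lemma-HK}, and your derivation of part~(b) of the theorem is verbatim the paper's.

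There is, however, a genuine gap in your definition of $\rdm{H}_\lambda$. Summing the zero-extensions $\sum_\tau \rdm{H}_\tau$ does \emph{not} give the correct matrix. At a small vertex $u$ on an interior edge $e$ of $\lambda$, each adjacent face $\tau$ sees $u$ as lying on a boundary edge of $\tau$, so it contributes a nonzero diagonal term $\rdm{H}_\tau(u,u)$ and a nonzero term at the edge-neighbor of $u$ on $e$; these contributions from the two faces \emph{add} rather than cancel. If you carry out the local verification of (iii) at such $u$ with this definition, the extra terms do not vanish. The paper instead defines $\rdm{H}_\lambda(v,v')=-\tfrac12\rdm{Q}_\lambda(v,v')$ whenever $v,v'$ are not both on the same boundary edge of $\fS$, retaining the special boundary formula only on actual boundary edges of the surface; with this definition the interior-edge case reduces to a four-term sum that can be matched to the triangle boundary computation (see Subsection~\ref{sec-dual-sur}). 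The same issue undermines your argument for part~(b): the equality $\rdm{Q}_\lambda=-2\rdm{H}_\lambda$ on columns indexed by $\irdV_\lambda$ holds only with the paper's definition, since $\irdV_\lambda$ contains vertices on interior edges of $\lambda$.
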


The proof is exactly the same as in the triangle case of Theorem~\ref{thm-tdual-tri}, with Lemma \ref{lemma-HK-tri} and Proposition \ref{prop-bal-tri} replaced respectively by Lemma \ref{lemma-HK} and Proposition \ref{prop-bal}.

\begin{remark}
\begin{enuma}
\item If $\tilde B $ is the $\rdV_\lambda \times \irdV_\lambda$-submatrix of $\bmQ_\lambda$, then Equ. \eqref{eq.compa1} shows that the pair $(\bmP_\lambda, \tilde B)$ is compatible in the theory of quantum cluster algebra \cite{BZ}. Similar statement holds in the extended case.
\item The pair $(\bmP_\lambda, \bmQ_\lambda)$ is also compatible in the sense of \cite{GS}. The result of \cite[Section 12]{GS} implies that $\bmQ_\lambda$ has a compatible matrix. However compatible matrix might not be unique, and we don't know if our $\bmP_\lambda$ is the same compatible matrix obtained in \cite{GS}.
\end{enuma}
\end{remark}

\subsection{Inverses of $\bmK_\lambda$ and $\mK_\lambda$}\label{ss.KH2}

Define the reduced matrix $\rdm{H}_\lambda:\rd{V}_\lambda\times\rd{V}_\lambda\to\ints$ exactly as the triangle case. That is,
\begin{itemize}
\item If $v$ and $v'$ are not on the same boundary edge then $\rdm{H}_\lambda(v,v')=-\frac{1}{2}\rdm{Q}_\lambda(v,v')\in\ints$.
\item If $v$ and $v'$ are on the same boundary edge, then
\[ \bmH(v,v') = \begin{cases} 1  \qquad &\text{when $v=v'$},\\
 -1 &\text{when there is arrow from $v$ to $v'$} \\
0 &\text{otherwise}
\end{cases} \]
\end{itemize}
See Figure~\ref{fig-H} for an illustration of $\rdm{H}_\lambda$ values. Define $\mat{H}_\lambda$ as the restriction of $\exm{H}$ to $V_\lambda\times\lv{V}_\lambda$, which agrees with the restriction of $-\frac{1}{2}\exm{Q}$ since the domain of $\mat{H}_\lambda$ does not contain pairs $(v,v')$ on the same boundary edge of $\ext{\surface}$.

\begin{figure}
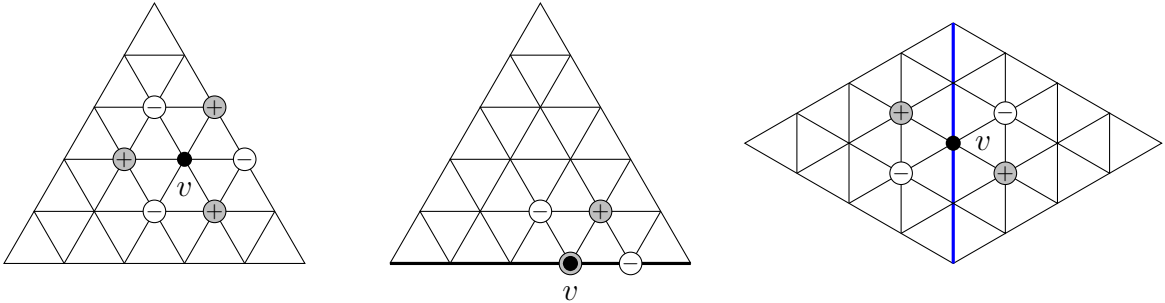

\centering
\input{H-int2}
\qquad
\input{H-edge-d}
\quad
\input{H-edge-int2}
\caption{$\rdm{H}_\lambda(v,\cdot)$ values for (Left) $v$ in the interior of a triangle, (Middle) $v$ on a boundary edge, (Right) $v$ on an interior edge}\label{fig-H}
\end{figure}

\begin{lemma}\label{lemma-HK}
The following matrix identities holds.
\begin{enuma}
\item $n(\rdm{K}_\lambda-\rdt{K}_\lambda)=\rdm{P}_\lambda$.
\item $\rdt{H}_\lambda-\rdm{H}_\lambda=\rdm{Q}_\lambda$.
\item $\rdm{H}_\lambda\rdm{K}_\lambda=n\id$ and $\mat{H}_\lambda\mat{K}_\lambda=n\id$.
\item $\rdm{K}_\lambda\rdm{Q}_\lambda\rdt{K}_\lambda=\rdm{P}_\lambda$ and $\mat{K}_\lambda\mat{Q}_\lambda\mat{K}^t_\lambda=\mat{P}_\lambda$.
\end{enuma}
\end{lemma}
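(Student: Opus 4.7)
The plan is to reduce each of the four identities to its triangle counterpart in Lemma~\ref{lemma-HK-tri} via the skeleton construction, and then to deduce the extended versions from the reduced ones on $(\ext{\fS}, \ext{\lambda})$ by conjugation with the change-of-basis matrix $\mat{C}$.

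First I would dispose of part (b) by a direct case analysis from the piecewise definition of $\rdm{H}_\lambda$. If $v, v' \in \rdV_\lambda$ are not on the same boundary edge of $\fS$, then $\rdm{H}_\lambda(v, v') = -\tfrac{1}{2}\rdm{Q}_\lambda(v, v')$, and antisymmetry of $\rdm{Q}_\lambda$ immediately yields $\rdt{H}_\lambda(v,v') - \rdm{H}_\lambda(v,v') = \rdm{Q}_\lambda(v,v')$. If they lie on the same boundary edge, the three nontrivial values of $\rdm{H}_\lambda$ in Figure~\ref{fig-H} are checked against the corresponding entries of $\rdm{Q}_\lambda$ case by case.

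The main obstacle is part (c), specifically the reduced identity $\rdm{H}_\lambda \rdm{K}_\lambda = n\id$. Fixing $u, v \in \rdV_\lambda$, I would pick a face $\tau$ whose interior contains $v$ (so that $\sk_\tau(v) = v$ and $\sk_\tau(u)(v) = \delta_{u,v}$ when $u \in \rdV_\tau$), and use Lemma~\ref{lemma-K-welldef} to rewrite $\rdm{K}_\lambda(w, v) = \rdm{K}_\tau(\sk_\tau(w), v)$. The product then becomes
\[\sum_{w \in \rdV_\lambda} \rdm{H}_\lambda(u, w)\,\rdm{K}_\lambda(w, v) = \sum_{w' \in \rdV_\tau} G_\tau(u, w')\,\rdm{K}_\tau(w', v), \qquad G_\tau(u, w') := \sum_{w} \rdm{H}_\lambda(u, w)\,\sk_\tau(w)(w').\]
The combinatorial heart of the proof is to identify $G_\tau(u, w')$ with $\rdm{H}_\tau(\sk_\tau(u), w')$ modulo terms that vanish against $\rdm{K}_\tau(\cdot, v)$ via the special-zero formula \eqref{eq-trigen-0}, after which Lemma~\ref{lemma-HK-tri}(c) inside $\tau$ produces $n\sk_\tau(u)(v) = n\delta_{u,v}$. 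The delicate point is that $\rdm{H}_\lambda$ has diagonal value $0$ on vertices of an interior edge of $\lambda$ but $-1$ on vertices of a boundary edge of $\fS$, whereas the naive superposition of triangle $\rdm{H}_\tau$'s would give $-2$ and $-1$ respectively; the cancellation in the interior-edge case must emerge from the fact that such a vertex contributes simultaneously to the skeletons in two adjacent faces. Tracking elongated segments of $\tilde{Y}_w$ across interior edges is the bulk of the work.

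Once (c) is in hand, (d) follows algebraically. Using $\rdm{Q}_\lambda = \rdt{H}_\lambda - \rdm{H}_\lambda$ from (b), together with the observation that $\rdm{H}_\lambda \rdm{K}_\lambda = n\id$ implies $\rdm{K}_\lambda \rdm{H}_\lambda = n\id$ over $\BQ$ (since $\rdm{K}_\lambda$ is square), I would compute $\rdm{K}_\lambda \rdm{Q}_\lambda \rdt{K}_\lambda = \rdm{K}_\lambda \rdt{H}_\lambda \rdt{K}_\lambda - \rdm{K}_\lambda \rdm{H}_\lambda \rdt{K}_\lambda = n\rdm{K}_\lambda - n\rdt{K}_\lambda$. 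An independent triangle-by-triangle expansion of $\rdm{K}_\lambda \rdm{Q}_\lambda \rdt{K}_\lambda$ via Lemma~\ref{lemma-HK-tri}(d) matches the defining sum $\rdm{P}_\lambda = \sum_\tau \rdm{P}_\tau(\sk_\tau\cdot, \sk_\tau\cdot)$; combining the two expressions yields (d), and (a) is then equivalent. Finally, the extended identities $\mat{H}_\lambda \mat{K}_\lambda = n\id$ and $\mat{K}_\lambda \mat{Q}_\lambda \mat{K}^t_\lambda = \mat{P}_\lambda$ follow by applying the reduced versions to the extended triangulation $(\ext{\fS}, \ext{\lambda})$ and conjugating by $\mat{C}$, using the definitions $\mat{K}_\lambda = (\mat{C}\exm{K})|_{\lv{V}_\lambda \times V_\lambda}$, $\mat{H}_\lambda = \exm{H}|_{V_\lambda \times \lv{V}_\lambda}$, and $\mat{P}_\lambda = \mat{C}\exm{P}\mat{C}^t$, together with Lemma~\ref{lemma-CK0} to ensure that the restrictions to the relevant index sets are clean.
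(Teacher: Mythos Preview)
Your proposal follows the paper's strategy almost exactly: (b) by direct inspection, reduced (c) by showing that the extended-segment contributions in $\sum_{w}\rdm{H}_\lambda(u,w)\sk_\tau(w)$ cancel so that only the triangle pattern survives, reduced (d) by the triangle-by-triangle expansion via Lemma~\ref{lemma-HK-tri}(d), and (a) by the algebraic equivalence with (d).

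One step needs sharpening. For the extended identity $\mat{H}_\lambda\mat{K}_\lambda=n\id$, invoking Lemma~\ref{lemma-CK0} is not enough: that lemma controls $\mat{C}\,\exm{K}$, not $\mat{H}_\lambda\mat{C}$. The paper first establishes, by a short case check (Figure~\ref{fig-HC}), that $\mat{H}_\lambda\mat{C}$ equals the restriction of $\exm{H}$ to $V_\lambda\times\rd{V}_{\ext{\lambda}}$; only then does $\mat{H}_\lambda\mat{K}_\lambda=(\mat{H}_\lambda\mat{C})\exm{K}\big|_{V_\lambda\times V_\lambda}=(\exm{H}\,\exm{K})\big|_{V_\lambda\times V_\lambda}=n\id$ follow from the reduced case on $\ext{\fS}$. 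This auxiliary identity is the one concrete ingredient your sketch omits; the rest is on target.
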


\begin{proof}
First consider the reduced case. (b) is exactly the same as the triangle case. The calculation in (c) is given in Subsection~\ref{sec-dual-sur}. (a) and (d) are equivalent by the same proof in Lemma~\ref{lemma-HK-tri}. Here we prove (d). For $u,v\in\rd{V}_\lambda$,
\begin{align*}
\rdm{P}_\lambda(u,v)&=\sum_{\tau\in\face_\lambda}\rdm{P}_\tau(\sk_\tau(u) ,\sk_\tau(v))\\
&=\sum_{\tau\in\face_\lambda}\sum_{z,z'\in\rd{V}_\tau}\rdm{K}_\tau(\sk_\tau(u),z)\rdm{Q}_\tau(z,z')\rdm{K}_\tau(\sk_\tau(v),z')
&&(\text{(d) for the triangle})\\
&=\sum_{\tau\in\face_\lambda}\sum_{z,z'\in\rd{V}_\tau}\rdm{K}_\lambda(u,z)\rdm{Q}_\tau(z,z')\rdm{K}_\lambda(v,z')
&&(\text{Definition of }\rdm{K}_\lambda)\\
&=\sum_{z,z'\in\rd{V}_\lambda}\sum_{\tau\in\face_\lambda}\rdm{K}_\lambda(u,z)\rdm{Q}_\tau(z,z')\rdm{K}_\lambda(v,z')\\
&=(\rdm{K}_\lambda\rdm{Q}_\lambda\rdt{K}_\lambda)(u,v).
\end{align*}
In the fourth line, there are extra terms where $z$ or $z'$ is not in $\tau$, but these terms are zero since $\rdm{Q}_\tau$ is an extension by zero. The last line used the definition of $\rdm{Q}_\lambda$ for the sum over $\tau$.

Now consider the non-reduced case. (c) is proved in Subsection~\ref{sec-dual-ext}. For (d),
\begin{equation}
\mat{P}_\lambda=\mat{C}\exm{P}\mat{C}^t
=\mat{C}\exm{K}\exm{Q}\exm{K}^t\mat{C}^t
=(\mat{C}\exm{K})\exm{Q}(\mat{C}\exm{K})^t.
\end{equation}
To proceed, we write out the matrix multiplication.
\begin{align*}
\mat{P}_\lambda(u,v)
&=\sum_{z,z'\in\rd{V}_{\ext{\lambda}}} (\mat{C}\exm{K})(u,z)\exm{Q}(z,z')(\mat{C}\exm{K})(v,z')\\
&=\sum_{z,z'\in V_\lambda} (\mat{C}\exm{K})(u,z)\exm{Q}(z,z')(\mat{C}\exm{K})(v,z')
\qquad(\text{by Lemma~\ref{lemma-CK0}})\\
&=(\mat{K}_\lambda\mat{Q}_\lambda\mat{K}_\lambda^t)(u,v).\qedhere
\end{align*}
\end{proof}

\begin{proposition}\label{prop-bal}
Let $\vec{k}$ be a vector in $\ints^{\rd{V}_\lambda}$. Then the following are equivalent
\begin{enumerate}
\item $\vec{k}$ is balanced.
\item $\vec{k}\rdm{H}_\lambda\in(n\ints)^{\rd{V}_\lambda}$.
\item There exists a vector $\vec{c}\in\ints^{\rd{V}_\lambda}$ such that $\vec{k}=\vec{c}\rdm{K}_\lambda$.
\end{enumerate}
The same results hold for the non-reduced case, i.e., when $\rd{V}_\lambda, \rdm{H}_\lambda, \rdm{K}_\lambda$ are replaced respectively by ${V}_\lambda, \mH_\lambda, \mK_\lambda$.
\end{proposition}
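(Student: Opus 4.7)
My plan is to establish the three-way equivalence by proving the cycle $(1)\Rightarrow(2)\Rightarrow(3)\Rightarrow(1)$, with every implication reducing to the triangle-case Proposition~\ref{prop-bal-tri} via the local-to-global structure of the $n$-triangulation, and with the bridge between $(2)$ and $(3)$ coming from the inversion identity $\rdm{H}_\lambda\rdm{K}_\lambda = n\id$ of Lemma~\ref{lemma-HK}(c). First I would handle the reduced case; the non-reduced case will reduce cleanly to the reduced case applied to the extended surface $\ext{\surface}$.

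For $(1)\Rightarrow(2)$, I would compute $(\vec{k}\rdm{H}_\lambda)(v)$ locally at each small vertex $v$ and compare it to $\sum_{\tau\ni v}(f_\tau^*\vec{k})\rdm{H}_\stdT(f_\tau^{-1}(v))$, which is a sum of elements of $n\ints$ by Proposition~\ref{prop-bal-tri}. When $v$ lies in the interior of a face, or on a boundary edge of $\surface$, the definitions of $\rdm{H}_\lambda$ and $\rdm{H}_\stdT$ match locally and the two quantities coincide. The only subtle case is when $v$ lies on an interior edge $e$ of $\lambda$ shared by faces $\tau_1,\tau_2$: the definition of $\rdm{H}_\lambda$ uses $-\tfrac12\rdm{Q}_\lambda$ along $e$ (where the two face contributions cancel), whereas each $\rdm{H}_\stdT$ treats boundary arrows of a triangle separately. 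A direct expansion shows the difference between $\vec{k}\rdm{H}_\lambda(v)$ and $\sum_\tau(f_\tau^*\vec{k})\rdm{H}_\stdT(f_\tau^{-1}(v))$ equals $-2\vec{k}(v)+\vec{k}(u_1)+\vec{k}(u_2)$, where $u_1,u_2$ are the small vertices of $e$ adjacent to $v$ (or the single neighbor, if $v$ sits at an endpoint of $e$). Writing $f_{\tau_1}^*\vec{k}=a\vec{k}_1+b\vec{k}_2+c\vec{k}_3+n\vec{k}'$ per Lemma~\ref{lemma-HK-tri} and substituting the coordinates of $v,u_1,u_2$ (all with $\vec{k}_3=0$), a short calculation shows the correction is always in $n\ints$, so $\vec{k}\rdm{H}_\lambda(v)\in n\ints$.

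For $(2)\Rightarrow(3)$, set $\vec{c}=\tfrac{1}{n}\vec{k}\rdm{H}_\lambda$, which is integer-valued by $(2)$; Lemma~\ref{lemma-HK}(c) then gives $\vec{c}\rdm{K}_\lambda=\vec{k}$. For $(3)\Rightarrow(1)$, unfolding the definition of $\rdm{K}_\lambda$ via skeletons in \eqref{eq-surgen-exp} gives, for $v\in\rd{V}_\tau$, the identity $(\vec{c}\rdm{K}_\lambda)(v)=\sum_u\vec{c}(u)\rdm{K}_\tau(\sk_\tau(u),v)=(\tilde{\vec{c}}_\tau\rdm{K}_\stdT)(f_\tau^{-1}(v))$, where $\tilde{\vec{c}}_\tau:=\sum_u\vec{c}(u)\sk_\tau(u)\in\ints^{\rd{V}_\stdT}$. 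Thus $f_\tau^*(\vec{c}\rdm{K}_\lambda)=\tilde{\vec{c}}_\tau\rdm{K}_\stdT$ is balanced in the triangle sense by Proposition~\ref{prop-bal-tri}. Since $\tau$ is arbitrary, $\vec{c}\rdm{K}_\lambda$ is balanced.

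The non-reduced case reduces to the reduced case applied to $\ext{\surface}$ using the vanishing result in the proof of Lemma~\ref{lemma-CK0}: a vector $\vec{k}\in\ints^{V_\lambda}$ is $\lambda$-balanced iff its extension by zero $\bar{\vec{k}}$ to $\rd{V}_{\ext{\lambda}}$ is $\ext{\lambda}$-balanced; moreover $\vec{k}\mH_\lambda$ equals $\bar{\vec{k}}\exm{H}$ restricted to $\lv{V}_\lambda$, and for $\vec{c}\in\ints^{\lv{V}_\lambda}$ the restriction of $(\vec{c}\mat{C})\exm{K}$ to $V_\lambda$ agrees with $\vec{c}\mK_\lambda$, while by Lemma~\ref{lemma-CK0} the vector $(\vec{c}\mat{C})\exm{K}$ already vanishes on $\rd{V}_{\ext{\lambda}}\setminus V_\lambda$. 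These observations convert each of the three implications into its reduced counterpart applied to $\ext{\lambda}$. The main obstacle I expect is the interior-edge calculation in $(1)\Rightarrow(2)$: although the final verification is a short substitution, one must keep straight the opposing orientations of boundary arrows contributed by $\tau_1$ and $\tau_2$ on a shared interior edge, and confirm that the correction term lies in $n\ints$ for every location of $v$ along that edge, including its endpoints.
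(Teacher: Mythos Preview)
Your proposal is correct and follows essentially the same route as the paper: the cycle $(1)\Rightarrow(2)\Rightarrow(3)\Rightarrow(1)$, with $(2)\Leftrightarrow(3)$ via $\rdm{H}_\lambda\rdm{K}_\lambda=n\id$, $(3)\Rightarrow(1)$ via the skeleton expansion of $\rdm{K}_\lambda$ and the triangle case, and $(1)\Rightarrow(2)$ by a local computation at $v$ that is routine except on interior edges. The only cosmetic difference is in that interior-edge step: you sum the two triangle contributions $(f_{\tau_i}^\ast\vec{k})\rdm{H}_\stdT$ and verify the correction $-2\vec{k}(v)+\vec{k}(u_1)+\vec{k}(u_2)$ lies in $n\ints$, whereas the paper instead uses the consistency of the balanced decomposition across the shared edge to write $\vec{k}|_{\tau_1\cup\tau_2}\bmod n$ as a combination of three explicit vectors and checks directly that $(\vec{k}\rdm{H}_\lambda)(v)=-\vec{k}(v_1)+\vec{k}(v_2)-\vec{k}(v_3)+\vec{k}(v_4)$ vanishes on each. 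Your treatment of the non-reduced case (extending by zero and invoking Lemma~\ref{lemma-CK0}) is more explicit than the paper's one-line remark but amounts to the same reduction. One minor point: the decomposition $f_{\tau_1}^\ast\vec{k}=a\vec{k}_1+b\vec{k}_2+c\vec{k}_3+n\vec{k}'$ comes from the definition of $\rd{\Lambda}$, not from Lemma~\ref{lemma-HK-tri}.
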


\begin{proof}
First consider the reduced case. (2) and (3) are equivalent by Lemma~\ref{lemma-HK} with $\vec{c}=\vec{k}\rdm{H}_\lambda/n$. (3)$\Rightarrow$(1) by the definition of $\rdm{K}_\lambda$ and the triangle case Proposition~\ref{prop-bal-tri}.

Next we show (1)$\Rightarrow$(2). Write
\begin{equation}
(\vec{k}\rdm{H}_\lambda)(v)=\sum_{u\in\rd{V}_\lambda}\vec{k}(u)\rdm{H}_\lambda(u,v).
\end{equation}

If $v$ is in the interior of a triangle or on a boundary edge, then $\rdm{H}_\lambda(u,v)$ is nontrivial only if $u$ is in the same triangle as $v$. Then $(\vec{k}\rdm{H}_\lambda)(v)\in n\ints$ by the triangle case.

If $v$ is on an interior edge, then there are two triangles adjacent to the edge. See Figure~\ref{fig-bal-edge-int}. Label the edges so that the $e_3$ edge in the right triangle $\tau_1$ is identified with the $e_1$ edge in the left triangle $\tau_2$. Then $v=f_{\tau_1}(n-k,0,k)=f_{\tau_2}(i',n-i',0)$ with $k+i'=n$. To proceed further, write $\vec{k}\bmod{n}$ as a linear combination of the generators in each triangle. Let
\begin{equation}\label{eq-k-decomp}
\vec{k}|_{\tau_1}\equiv a\vec{k}_1+b\vec{k}_2,\qquad
\vec{k}|_{\tau_2}\equiv c\vec{k}'_1+d\vec{k}'_3\pmod{n},
\end{equation}
where $\vec{k}_1,\vec{k}_2$ are the generators in $\tau_1$, and $\vec{k}'_1,\vec{k}'_3$ are the generators in $\tau_2$. Note $\vec{k}_2$ and $\vec{k}'_3$ vanish on the common edge, and $\vec{k}_1=\vec{k}'_1$ on the common edge. Thus consistency implies that in \eqref{eq-k-decomp}, $a\equiv c\pmod{n}$. Therefore, $\vec{k}|_{\tau_1\cup\tau_2}\bmod{n}$ is a combination of three vectors: $\vec{k}_1\cup\vec{k}'_1$, $\vec{k}_2$, and $\vec{k}'_3$ (extended by $0$). Each of these vectors satisfies the equation
\[(\vec{k}\rdm{H}_\lambda)(v)=-\vec{k}(v_1)+\vec{k}(v_2)-\vec{k}(v_3)+\vec{k}(v_4)=0.\]
Therefore, $(\vec{k}\rdm{H}_\lambda)(v)\in n\ints$ for any balanced $\vec{k}$.

\begin{figure}
\centering
\input{bal-edge-int}
\caption{$(\vec{k}\rdm{H}_\lambda)(v)$ calculation for $v$ on an interior edge}
\label{fig-bal-edge-int}
\end{figure}

This proves the proposition for the reduced case. The non-reduced case follows using the fact that $\mat{H}_\lambda$ is the restriction of $\exm{H}$.
\end{proof}

\section{Quantum trace maps, the $X$-version} \label{sec.Xtrace}

In this section we prove the existence of $X$-version quantum traces for all triangulable surfaces. The reduced version $\btr^X_\lambda$ is constructed by patching together $\btr^X$ of the triangles. The extended version is constructed using the reduced version of the extended surface.

\subsection{Cutting for Fock-Goncharov algebra}

The $X$-version quantum trace is compatible with cutting homomorphisms. The cutting for skein algebras is given by Theorem~\ref{t.splitting2} and Proposition~\ref{prop-cut-rd}. Here we explain the cutting for Fock-Goncharov algebras.

Let $\lambda$ be an ideal triangulation of a pb surface $\surface$. For an interior edge $c\in\lambda$, the cut surface $\Cut_c\surface$ has a triangulation $\Cut_c\lambda$, which is $\lambda$ with $c$ replaced by the two copies of $c$ in the cut surface.

Construct an algebra embedding $\qtorus(\rdm{Q}_\lambda)\to\qtorus(\rdm{Q}_{\Cut_c\lambda})$ as follows. If $v\in\rd{V}_\lambda$ is not on the edge $c$, then $v$ is naturally identified with a unique $v\in\rd{V}_{\Cut_c\lambda}$. In this case, let $x_v\mapsto x_v$. If $v$ is on $c$, then it is cut into two copies $v_1,v_2\in\rd{V}_{\Cut_c\lambda}$. In this case, let $x_v\mapsto[x_{v_1}x_{v_2}]_\Weyl$. This extends to a well-defined algebra homomorphism by the sum-of-faces definition \eqref{eq-Q-def}. The image is a monomial subalgebra characterized by the matching exponents of vertices cut from the same one.

Recall $\rdbl\subset\qtorus$ is the monomial subalgebra corresponding to the balanced subgroup. Since the balanced condition is defined using faces of the triangulation, it behaves well with cutting, as the faces are unaffected. Thus the embedding above restricts to the cutting homomorphism
\begin{equation}
\Theta_c:\rdbl(\surface,\lambda)\to\rdbl(\Cut_c\surface,\Cut_c\lambda),
\end{equation}
and the image is characterized by the matching condition above.

\subsection{Quantum trace, the reduced case}

\begin{theorem}\label{thm-rdtrX}
Assume $\fS$ is a triangulable punctured bordered surface with an ideal triangulation $\lambda$, and the ground ring $R$ is a commutative domain with a distinguished invertible $\hq$. There exists an algebra homomorphism
\begin{equation}\label{eq.bXtr}
\rdtr_\lambda^X: \reduceS(\surface) \to \rdbl(\fS,\lambda)
\end{equation}
with the following properties:
\begin{enumerate}
\item $\rdtr_\lambda^X$ is compatible with cutting along an edge of $\lambda$.
\item When $\fS=\stdT$, the map $\rdtr_\lambda^X$ is the one given in Theorem~\ref{thm.Xtr0}.
\item When $R=\BC$, $\hq=1$, and $\al$ is a simple closed curve on $\fS$, one has
\begin{equation}
\rdtr_\lambda^X(\alpha) = \tTr_\lambda(\alpha)
\end{equation}
where $\tTr_\lambda(\al)$ the Fock-Goncharov classical trace, which is denoted by $\widetilde{\operatorname{Tr}}_\alpha$ in \cite{Douglas}.
\end{enumerate}
\end{theorem}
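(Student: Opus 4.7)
The plan is to build $\rdtr^X_\lambda$ by cutting $\fS$ into ideal triangles along all non-boundary edges of $\lambda$ and assembling the triangle-case maps from Theorem~\ref{thm.Xtr0}. Let $\fS' = \sqcup_{\tau \in \face_\lambda} \tau$ and let $\Theta = \Theta_\lambda : \cS(\fS) \to \bigotimes_\tau \cS(\tau)$ be the iterated cutting homomorphism from Theorem~\ref{t.splitting2}. I would first check that $\Theta$ descends to $\tilde\Theta : \reduceS(\fS) \to \bigotimes_\tau \reduceS(\tau)$: a generator $C(v)_{ij}$ of $\Ibad(\fS)$ can be realized by a corner arc in a small neighborhood of $v$ contained in a single face $\tau$ adjacent to $v$, so $\Theta(C(v)_{ij})$ equals the corresponding bad corner arc in $\reduceS(\tau)$. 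Then define
\[
\rdtr^X_\lambda : \reduceS(\fS) \xrightarrow{\tilde\Theta} \bigotimes_\tau \reduceS(\tau) \xrightarrow{\otimes_\tau \btr^X_\tau} \bigotimes_\tau \rdbl(\tau) \hookrightarrow \bigotimes_\tau \rd\FG(\tau) = \rd\FG_\lambda .
\]

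The main obstacle is to show the image lies in $\rdbl(\fS,\lambda) \subset \rd\FG_\lambda$. Since each $\btr^X_\tau$ already lands in $\rdbl(\tau)$, the pullback to each face is balanced, so it remains to verify the matching condition defining $\rd\FG(\fS,\lambda) \subset \rd\FG_\lambda$: for each interior edge $c \in \lambda$ with cut copies $c_1 \subset \partial\tau_1$, $c_2 \subset \partial\tau_2$, each small vertex $v \in c$ appears as $v_1 \in c_1$ and $v_2 \in c_2$, and the exponents at $v_1$ and $v_2$ of the two tensor factors must agree. Since $c$ is interior, a web $\al$ has no endpoints on $c$; the cutting formula~\eqref{eq.cut00} inserts matching states on the two sides, and unpacking the definition of $\dd_e$ in Subsection~\ref{sec-grading} shows that an outgoing endpoint on $c_1$ with state $i$ produces an incoming endpoint on $c_2$ with state $i$, contributing $\ww_{\bar i}$ to $\dd_{c_1}$ and $-\ww_i = \ror(\ww_{\bar i})$ to $\dd_{c_2}$. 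Hence, summand by summand, $\dd_{c_1} = \ror(\dd_{c_2})$. Because the orientations of $c_1$ and $c_2$ as boundary edges of $\tau_1, \tau_2$ are opposite in $\fS$, the position $i$ on $c_1$ corresponds to position $n-i$ on $c_2$, so Proposition~\ref{prop-bdry-exp} gives exponents $n\langle \dd_{c_1}, \varpi_i\rangle$ at $v_1$ and $n\langle \dd_{c_2}, \varpi_{n-i}\rangle$ at $v_2$; by \eqref{eq.rorbra} these coincide. Thus $\rdtr^X_\lambda$ lands in $\rdbl(\fS, \lambda)$.

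With $\rdtr^X_\lambda$ constructed, property~(1) is immediate from the functoriality of cutting: $\Theta_\lambda = \Theta_{\lambda \setminus c} \circ \Theta_c$ for any single interior edge $c$, and the triangle traces are inserted on the same set of faces. Property~(2) is also immediate, since for $\fS = \stdT$ the triangulation has no interior edges, $\tilde\Theta$ is the identity, and $\rdtr^X_\lambda$ reduces to the map of Theorem~\ref{thm.Xtr0}. For property~(3), after specializing at $R = \BC$ and $\hq = 1$, the cut-and-paste description of $\rdtr^X_\lambda(\al)$ for a simple closed curve $\al$ becomes a product over the faces crossed by $\al$ of classical traces of transport matrices, which, via the explicit path-sum formula of Theorem~\ref{thm-trX-CS} discussed in Subsection~\ref{ss.CS}, coincides with Fock and Goncharov's definition of $\tTr_\lambda(\al)$; the algebra homomorphism property combined with agreement on a generating set of curves (or equivalently, on the triangle case) yields the identification $\rdtr^X_\lambda(\al) = \tTr_\lambda(\al)$.
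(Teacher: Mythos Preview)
Your proposal is correct and follows essentially the same route as the paper: define $\rdtr^X_\lambda$ as the composition of the cutting homomorphism (descended to reduced skein algebras) with the tensor product of the triangle traces $\btr^X_\tau$, then use Proposition~\ref{prop-bdry-exp} together with the identity $\dd_{c_1}=\ror(\dd_{c_2})$ and \eqref{eq.rorbra} to verify the matching condition on interior edges, and finally invoke Theorem~\ref{thm-trX-CS} for the classical comparison. The only difference is that you spell out explicitly why $\Theta_\lambda$ descends to the reduced algebras (bad corner arcs stay inside a single face), which the paper leaves implicit.
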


\begin{proof}
For each triangle $\tau$ we have the trace $\rdtr_\tau^X:\reduceS(\tau)\to\rd{\FG}(\tau)$. Consider the composition
\begin{equation}\label{eq-rdtrX-def}
\begin{tikzcd}
\reduceS(\surface)\arrow[r,"\rd\Theta_\lambda"] &
\displaystyle\bigotimes_{\tau\in\face_\lambda}\reduceS(\tau) \arrow[r,"\otimes\rdtr_\tau^X"] &
\displaystyle\rd{\FG}_\lambda:=\bigotimes_{\tau\in\face_\lambda}\rd{\FG}(\tau).
\end{tikzcd}
\end{equation}
Here $\Theta_\lambda$ is the composition of all cutting homomorphisms on the interior edges of $\lambda$.

Next, we show that the image of \eqref{eq-rdtrX-def} is contained in $\rd{\FG}(\surface,\lambda)$, which, by Subsection \ref{ss.rdFG}, is identified with the $R$-submodule of $\rd{\FG}_\lambda$ spanned by $x^\bk$ with $\bk(v')= \bk (v'')$ whenever $v'$ and $v''$ are identified under the gluing $\fS = \sqcup \tau /\sim$.

This is a corollary of Proposition~\ref{prop-bdry-exp}. Assume we glue edge $e'$ of triangle $\tau'$ to edge $e''$ of $\tau''$, giving edge $e$ of triangulation $\lambda$. Let $u'_1, \dots u'_{n-1}$ (respectively $u''_1, \dots u''_{n-1}$) be the small vertices on $e'$ (respectively $e''$) in the positive direction. Then $u'_i$ will be identified with $u''_{n-i}$. 

Let $\al$ be a stated web diagram transverse to every edge of $\lambda$. By definition \eqref{eq.cut00},
\begin{equation}
\Theta_\lambda(\al) = \sum_s \bigotimes_{\tau \in \cF_\lambda} (\al \cap \tau, s),
\end{equation}
where the sum is over all states $s: \al \cap c \to \JJ$ for all interior edges $c$. Let fix one $s$ and focus on the edge $e$. Every point $z\in \alpha\cap e$ is cut into two endpoints with the same state $s(z)$ but opposite orientations. Hence the weight of one is the obtained by applying the involution $\ror$ to the other; see Subsection \ref{sec-grading}. It follows that $\dd_e( (\al\cap \tau'', s)) = \ror (\dd_e( (\al\cap \tau', s)))$. Hence from \eqref{eq.rorbra} we have
\begin{equation}\label{eq.300}
\la \dd_e( (\al\cap \tau', s)), \varpi_i\ra = \la \dd_e( (\al\cap \tau'', s)), \varpi_{n-i}\ra.
\end{equation}

By Proposition~\ref{prop-bdry-exp}, the element $\rdtr^X( (\al\cap \tau', s) )$ is homogeneous in $x_{u'_i}$ of degree equal to the left-hand side of \eqref{eq.300}, while $\btr^X( (\al\cap \tau'', s) )$ is homogeneous in $x_{u''_{n-i}}$ of degree equal to the right-hand side.
This shows that the image of \eqref{eq-rdtrX-def} is contained in $\rd{\FG}(\surface,\lambda)$.

Let $\rdtr_\lambda^X$ be the map \eqref{eq-rdtrX-def} with codomain restricted to $\rd{\FG}(\surface,\lambda)$. The image is clearly balanced. Properties (1) and (2) are obvious from the definition. To relate to the classical case, use Theorem~\ref{thm-trX-CS} to replace $\rdtr^X(\al)$ by the right-hand side of \eqref{eq.301}, which is equal to $\tTr_\lambda(\al)$, see \cite[Section 2]{CS}.
\end{proof}

A corollary of the proof is the following.

\begin{corollary}\label{cor-bdry-exp}
Let $v$ be a small vertex on the boundary of a triangulable surface $\fS$ and $\al$ is stated web diagram. Then for any triangulation $\lambda$ of $\fS$ the image $\btr^X_\lambda(\al)$ is homogeneous in $x_{v}$ of degree $ n \la \dd_e(\al), \varpi_i\ra$. Here $v$ is the $i$-th small vertex on the boundary edge containing $v$ if we list the boundary small vertices in the positive direction.
\end{corollary}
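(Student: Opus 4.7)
The plan is to extract the corollary directly from the construction of $\btr^X_\lambda$ used in the proof of Theorem~\ref{thm-rdtrX}, together with Proposition~\ref{prop-bdry-exp}. Recall that $\btr^X_\lambda$ is defined as the composition
\[\btr^X_\lambda=\Big(\bigotimes_{\tau\in\face_\lambda}\btr^X_\tau\Big)\circ\Theta_\lambda,\]
where $\Theta_\lambda$ is cutting along all interior edges of $\lambda$. Since $v$ sits on a boundary edge $e$ of $\fS$, and $e$ is a boundary edge of exactly one face $\tau_0\in\face_\lambda$, the variable $x_v$ lives only in the tensor factor $\rd\FG(\tau_0)$; it contributes trivial ($0$) degree in every other factor.

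First I would isotope $\al$ to be transverse to every interior edge of $\lambda$, so that
\[\Theta_\lambda(\al)=\sum_{s}\bigotimes_{\tau\in\face_\lambda}(\al\cap\tau,\,s|_\tau),\]
where $s$ ranges over states on $\al$'s intersection points with interior edges. Multiplicativity of the $x_v$-degree reduces the claim to showing, for each fixed $s$, that $\btr^X_{\tau_0}((\al\cap\tau_0,\,s|_{\tau_0}))$ is homogeneous in $x_v$ of degree $n\langle\dd_e(\al),\varpi_i\rangle$. Since $e$ is a boundary edge of $\fS$ and is untouched by the cutting along interior edges, the endpoints of $\al\cap\tau_0$ on $e$ coincide with $\al\cap e$ together with their states. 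Consequently
\[\dd_e\bigl((\al\cap\tau_0,\,s|_{\tau_0})\bigr)=\dd_e(\al)\in\LL,\]
independently of the choice of $s$. Applying Proposition~\ref{prop-bdry-exp} to the triangle $\tau_0$ with boundary edge $e$ and the $i$-th small vertex $v$ gives the desired homogeneity of degree $n\langle\dd_e(\al),\varpi_i\rangle$.

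Summing over $s$ preserves $x_v$-homogeneity since every summand has the same degree, and the terms from other tensor factors have $x_v$-degree zero. This completes the argument. There is no real obstacle here: the corollary is essentially a packaging of Proposition~\ref{prop-bdry-exp} once one observes that cutting along \emph{interior} edges cannot alter the weight $\dd_e(\al)$ on a \emph{boundary} edge.
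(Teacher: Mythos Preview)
Your proof is correct and is exactly the argument implicit in the paper, which states the corollary as an immediate consequence of the proof of Theorem~\ref{thm-rdtrX}. The only observation needed beyond that proof is the one you make: a boundary edge $e$ of $\fS$ lies in a single face $\tau_0$ and is untouched by cutting along interior edges, so the $\dd_e$-grading is preserved and Proposition~\ref{prop-bdry-exp} applies directly.
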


\subsection{The non-reduced case}

Recall the extended surface $\ext{\surface}$ defined by attaching triangles to each boundary edge of $\surface$. Let $e$ be a boundary edge of $\surface$. By convention $e=e_1$ in the attached triangle. There is an embedding $\iota:\surface\to\ext{\surface}$ so that $\iota(e)=e_2$. See Figure~\ref{fig-attach-embed}.

\begin{figure}
\centering
\input{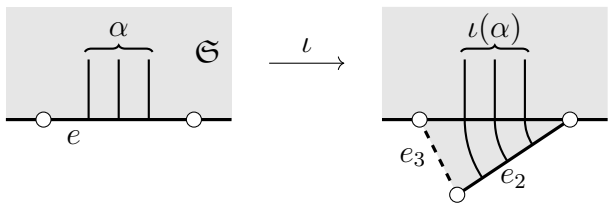}
\caption{The embedding $\iota$}\label{fig-attach-embed}
\end{figure}

Recall the subgroup $\rd{B}$ and submonoid $B$ of the balanced group $\rd{\Lambda}_\stdT$ defined in Subsection~\ref{sec-attach-counit}. Let $f_\tau:\tau=\stdT\to\surface$ be the characteristic map of an attached triangle $\tau$. Let $\rd{B}_\lambda\subset\Lambda_\lambda$ be the subgroup consisting of vectors $\vec{k}$ such that the pullback (or restriction) $f_\tau^\ast\vec{k}$ is in $\rd{B}\subset\rd{\Lambda}_\stdT$ for every attached triangle $\tau$. The submonoid $B_\lambda\subset\Lambda_\lambda$ is similarly defined.

\begin{theorem}\label{thm.full}
Assume $\fS$ is a triangulable surface with an ideal triangulation $\lambda$. There exists an algebra homomorphism
\begin{align}\label{eq.Xtr}
\tr_\lambda^X: \SS \to \qtorus(\mat{Q}_\lambda;B_\lambda) \subset \FGbl(\surface,\lambda)
\end{align}
and an algebra projection $\pr:\qtorus(\mat{Q}_\lambda;B_\lambda)\onto\rdbl(\surface,\lambda)$ such that $\tr_\lambda^X$ is a lift of the reduced trace $\rdtr_\lambda^X$. In other words, the following diagram commutes.
\begin{equation}\label{eq-tr-lift}
\begin{tikzcd}
\skein(\surface)\arrow[r,"\tr_\lambda^X"]\arrow[d,two heads,"\pr"]
&\qtorus(\mat{Q}_\lambda;B_\lambda)\arrow[d,two heads,"\pr"]\\
\reduceS(\surface)\arrow[r,"\rdtr_\lambda^X"]&\rdbl(\surface,\lambda)
\end{tikzcd}
\end{equation}
\end{theorem}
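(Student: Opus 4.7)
The strategy is to build $\tr^X_\lambda$ from the already-constructed reduced trace on the extended surface. Specifically, since the embedding $\iota:\surface\hookrightarrow\ext\surface$ sends distinct boundary edges of $\surface$ into distinct $e_2$-edges of distinct attached triangles, it is a strict proper embedding, so by Subsection~\ref{sec.embed} the induced map $\iota_*:\skein(\surface)\to\skein(\ext\surface)$ is an algebra homomorphism. Define
\[
\tr^X_\lambda \;:=\; \rdtr^X_{\ext\lambda}\circ\pi\circ\iota_*,
\]
where $\pi:\skein(\ext\surface)\twoheadrightarrow\reduceS(\ext\surface)$ is the reduced quotient and $\rdtr^X_{\ext\lambda}$ is the reduced trace of Theorem~\ref{thm-rdtrX} applied to the triangulated surface $(\ext\surface,\ext\lambda)$.

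To show the image lies in $\qtorus(\mat{Q}_\lambda;B_\lambda)$, I would invoke compatibility of $\rdtr^X_{\ext\lambda}$ with cutting (Theorem~\ref{thm-rdtrX}(1)) along the interior edges of $\ext\lambda$ corresponding to $\partial\surface$. This expresses $\rdtr^X_{\ext\lambda}$ as a product over faces $\tau\in\face_{\ext\lambda}$ of the triangle traces $\rdtr^X_\tau$, composed with the cutting homomorphism $\Theta$. For faces $\tau$ inside $\surface$ (non-attached), the contribution lies in $\rd\FG(\tau)$. For each attached triangle $\tau$, the key point is that, after cutting, the piece of $\iota_*(\al)$ sitting inside $\tau$ has endpoints only on $e_1$ (the attaching edge, where cut states have been summed over) and $e_2$; in $\reduceS(\tau)$ this piece lies in the image of the corner embedding $\LO\hookrightarrow\reduceS(\tau)$ at $v_2$. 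By Theorem~\ref{thm-attach-counit}, the trace of any such element lands in the monomial subalgebra $\qtorus(\rdm{Q}_\tau;B)$, so in particular the exponents on all $e_3$-vertices of $\tau$ (the vertices in $\rd{V}_{\ext\lambda}\setminus V_\lambda$) vanish. Combining the attached-triangle constraint $B$ with the balanced constraint from $\rdbl(\ext\surface,\ext\lambda)$ on the $\surface$-faces yields exactly the submonoid $B_\lambda$, so the image lies in $\qtorus(\mat{Q}_\lambda;B_\lambda)\subset\FGbl(\surface,\lambda)$.

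The projection $\pr$ is then defined using the tensor-product structure: in the decomposition where $\qtorus(\mat{Q}_\lambda;B_\lambda)$ is described by monomials whose restriction to each attached triangle lies in $\qtorus(\rdm{Q}_\tau;B)$, set
\[
\pr \;=\; \id_{\rd\FG(\surface,\lambda)} \otimes \bigotimes_{\tau \text{ attached}} \epsilon_{X,\tau},
\]
with $\epsilon_{X,\tau}$ the counit from Theorem~\ref{thm-attach-counit}. Since each $\epsilon_{X,\tau}$ is an algebra homomorphism, so is $\pr$, and the image is visibly contained in $\rdbl(\surface,\lambda)$. To verify commutativity of Diagram~\eqref{eq-tr-lift}, chase an element $\al\in\skein(\surface)$: applying cutting gives $\Theta(\iota_*(\al))=\sum_s(\al,s)\otimes\bigotimes_\tau(\text{stated corner arcs at }v_2)$, where $s$ runs over states on the cut points and the corner arcs are $C(v_2)_{s(z),t(z)}$ with $t(z)$ the original state of $\al$ at the corresponding boundary point $z$. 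Using Theorem~\ref{thm-attach-counit} that $\epsilon_{X,\tau}\circ\rdtr^X_\tau$ restricted to the corner arc algebra $\cS(C(v_2))\cong\Oq$ factors through $\LO$ as the ordinary counit $\bar u_{st}\mapsto\delta_{st}$, the inner sum collapses exactly to $\al$ regarded as an element of $\reduceS(\surface)$, yielding $\pr\circ\tr^X_\lambda(\al)=\rdtr^X_\lambda(\pi(\al))$.

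The main obstacle is the bookkeeping in the last step: one must carefully track how the cutting formula, coaction on the $e_1$-edge of each attached triangle, and the counit $\epsilon_{X,\tau}$ conspire to reproduce the original stated web rather than introducing extraneous coaction terms. The cleanest way is to observe that $(\id\otimes\bigotimes_\tau\epsilon_{\LO})\circ\Theta\circ\iota_*$ is the identity on $\skein(\surface)$ (it is the standard retraction of a coaction by its counit), and to use Theorem~\ref{thm-attach-counit} to lift this identity to the quantum-torus level via $\epsilon_{X,\tau}$. Once this identity is established, the commutativity of Diagram~\eqref{eq-tr-lift} is automatic, and since $\tr^X_\lambda$ is defined as a composition of natural maps, its naturality with respect to triangulation changes (proved in Theorem~\ref{thm-co-chg-X}) will follow from the corresponding naturality of $\rdtr^X_{\ext\lambda}$.
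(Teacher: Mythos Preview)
Your proposal is correct and follows essentially the same approach as the paper: define $\tr^X_\lambda$ as $\rdtr^X_{\ext\lambda}\circ\iota_*$, use cutting compatibility together with Theorem~\ref{thm-attach-counit} to pin the image inside $\qtorus(\mat{Q}_\lambda;B_\lambda)$, define $\pr$ as cutting off the attached triangles followed by $\bigotimes\epsilon_X$, and check the diagram by reducing to the counit identity on each attached triangle. The paper's proof is organized identically, with the same key inputs.
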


\begin{remark}
The domain $\SS$ and codomain $\cX(\fS,\lambda)$ of $\tr^X$ both have the same GK dimension, given by $(n^2-1)r(\surface)$. See Lemma~\ref{lemma-vset-size}. In Corollary~\ref{cor-trX-embed}, we use this fact to show that $\tr^X$ is an embedding when $\surface$ has no interior punctures. We conjecture that $\tr^X$ is an embedding for all triangulable surfaces.
\end{remark}

\begin{proof}
First define the trace $\tr_\lambda^X$. Consider the composition
\begin{equation}\label{eq-trx-extdef}
\begin{tikzcd}
\skein(\surface) \arrow[r,"\iota_\ast"] &
\reduceS(\ext{\surface}) \arrow[r,"\rdtr_{\ext{\lambda}}^X"] &
\rdbl(\ext{\surface},\ext{\lambda}),
\end{tikzcd}
\end{equation}
where $\ext{\lambda}$ is the triangulation extending $\lambda$. To restrict the codomain, apply the compatibility of $\rdtr_{\ext{\lambda}}^X$ with cutting. Given a diagram $\alpha$ on $\surface$, cutting $\iota(\alpha)$ along $e$ produces parallel corner arcs connecting $e=e_1$ and $e_2$. After applying the trace, Theorem~\ref{thm-attach-counit} implies that the image of the attached triangle part is in $\qtorus(\rdm{Q}_\stdT;B)$. By definition, this means $\tr_\lambda^X(\alpha)=\rdtr_{\ext{\lambda}}^X(\iota(\alpha))$ is in $\qtorus(\mat{Q}_\lambda;B_\lambda)$.

Define $\pr$ by the composition
\begin{equation}\label{eq-trpr-def}
\begin{tikzcd}
\qtorus(\mat{Q}_\lambda;B_\lambda) \arrow[r,"\Theta"]&
\displaystyle\rdbl(\surface,\lambda)\otimes\bigotimes_{e\in\lambda_\partial}\qtorus(\rdm{Q}_\stdT;B) \arrow[r,"\id\otimes\bigotimes\epsilon_X"]&
\rdbl(\surface,\lambda).
\end{tikzcd}
\end{equation}
Here $\lambda_\partial$ is the set of boundary edges, $\Theta$ is the cutting homomorphism along $\lambda_\partial$ so that it cuts off all attached triangles, and $\epsilon_X$ is the extended counit in Theorem~\ref{thm-attach-counit}. This is clearly an algebra homomorphism. Working through the definition, we have the formula
\begin{equation}
\pr(x^{\vec{k}})=
\begin{cases}
x^{\iota^\ast\vec{k}},&\vec{k}\in\rd{B}_\lambda,\\
0,&\vec{k}\notin\rd{B}_\lambda.
\end{cases}
\end{equation}
Here $\iota^\ast:\rd{B}_\lambda\to\rd{\Lambda}_\lambda$ is the pullback (restriction) by $\iota$. It is easy to see that $\iota^\ast$ is an isomorphism. Thus $\pr$ is surjective.

Finally, we prove the lifting property. Again by compatibility with cutting, we can exchange the order of $\Theta$ and $\rdtr_{\ext{\lambda}}^X$ in the composition $\pr\circ\tr_\lambda^X$. For a diagram $\alpha$ on $\surface$, the cutting of $\iota(\alpha)$ has the form
\begin{equation}
\Theta(\iota(\alpha))=\sum_{\text{states }s}(\alpha,s)\otimes C_s,
\end{equation}
where $(\alpha,s)$ is $\alpha$ with a different state $s$, and $C_s$ denotes the arcs in all the attached triangles. Then
\begin{align}
(\pr\circ\tr_\lambda^X)(\alpha)&=(\id\otimes\bigotimes\epsilon_X)(\rdtr_\lambda^X\otimes\bigotimes\rdtr^X)(\Theta(\iota(\alpha))) \notag \\
&=\sum_{\text{states }s}\epsilon_X(\rdtr^X(C_s))\rdtr_\lambda^X(\alpha,s)
\end{align}
Again by Theorem~\ref{thm-attach-counit}, $\epsilon_X(\rdtr^X(C_s))=1$ if $s$ is the same as the original states of $\alpha$, and $\epsilon_X(\rdtr^X(C_s))=0$ otherwise. This proves the commutativity of \eqref{eq-tr-lift}.
\end{proof}

\begin{remark}
The theorem can apply formally to $\surface=\poly_2$. In this case, $\ext{\poly_2}$ is the quadrilateral $\poly_4$ where $\poly_2$ is a neighborhood of a diagonal $e$. The triangulation $\ext{\lambda}$ consists of $e$ and the boundary edges, and both faces are ``attached". This defines $\mat{Q}_\lambda$ and $B_\lambda$. $\rd{\FG}(\poly_2,\lambda)$ is defined as the subalgebra that only involves the small vertices on $e$, which is the (commutative) Laurent polynomial algebra $R[x_1,\dots,x_{n-1}]$, and the balanced subalgebra is generated by $x_1x_2^2\dots x_{n-1}^{n-1}$ and $x_i^{\pm n}$. With $\skein(\poly_2)\cong\Oq$, $\reduceS(\poly_2)$ is identified with $R[u_{11},\dots,u_{nn}]/(u_{11}\dots u_{nn}=1)$, and $\rdtr_\lambda^X(u_{ss})=\prod_{i=1}^{n-1}x_i^{n\langle\weight_{\bar{s}},\varpi_i\rangle}$. The proof goes through with little change.
\end{remark}

\section{Quantum trace maps, the $A$-version}
\label{sec.Atrace}

Throughout this section $\lambda$ is a triangulation of a pb surface $\fS$ which has no interior puncture. We construct $A$-version quantum traces $\btr^A_\lambda$ and $\tr^A_\lambda$ and show that their images are sandwiched between the quantum $A$-tori and their quantum spaces.

Unlike the $X$-version case, one cannot patch the $\btr^A$ of the triangles together to get a global $A$-version quantum trace. This is because the $A$-tori do not admit a cut like the $X$-version. However it is straightforward to construct the $A$-version quantum traces once the $X$-versions have been defined, via the isomorphism $\FGbl(\fS,\lambda)\cong \cA(\fS,\lambda)$. The real task is to prove the images of the $A$-version quantum traces are sandwiched between the quantum $A$-tori and the quantum $A$-spaces.

Recall that we had the extended surface $\fS^\ast$, the vertex sets $\rdV_\lambda, V_\lambda$, and $V'_\lambda$ in Section \ref{sec.qtori}.

\subsection{Results}

\begin{theorem}\label{thm-Atr}
Assume $\surface$ is a triangulable punctured bordered surface with no interior puncture, and $\lambda$ is an ideal triangulation of $\surface$. Assume the ground ring $R$ is a commutative domain with a distinguished invertible element $\hq$.
\begin{enuma}
\item There is a unique algebra embedding
\begin{equation}\label{eq.Atr}
\tr_\lambda^A: \SS \to \cA(\fS,\lambda)
\end{equation}
such that
\begin{equation}\label{eq.TrAX}
\tr_\lambda^X = \psi_\lambda \circ\tr_\lambda^A.
\end{equation}
In addition,
\begin{equation}\label{eq-Aincl}
\cA_+(\fS,\lambda) \subset \tr_\lambda^A(\SS) \subset \cA(\fS,\lambda).
\end{equation}
\item There is a unique algebra homomorphism
\begin{equation}\label{eq.bAtr}
\rdtr_\lambda^A: \bSS \to \bA(\fS,\lambda)
\end{equation}
such that
\begin{equation}\label{eq.bTrAX}
\rdtr_\lambda^X = \rd\psi_\lambda \circ\rdtr_\lambda^A.
\end{equation}
In addition,
\begin{equation}\label{eq-Aincl-rd}
\bA_+(\fS,\lambda) \subset \rdtr_\lambda^A(\bSS) \subset \bA(\fS,\lambda).
\end{equation}
If $\fS$ is a polygon, then $\rdtr_\lambda^A$ is injective.
\end{enuma}
\end{theorem}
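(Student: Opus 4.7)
The construction of both $\tr^A_\lambda$ and $\rdtr^A_\lambda$ is essentially forced by the defining identities $\tr^X_\lambda = \psi_\lambda\circ\tr^A_\lambda$ and $\rdtr^X_\lambda = \rd\psi_\lambda\circ\rdtr^A_\lambda$. Indeed, by Theorem~\ref{thm.dual}, $\psi_\lambda$ (resp.\ $\rd\psi_\lambda$) is an algebra isomorphism onto the balanced subalgebra $\FGbl(\fS,\lambda)$ (resp.\ $\rdbl(\fS,\lambda)$). By Theorems~\ref{thm.full} and~\ref{thm-rdtrX} the images of $\tr^X_\lambda$ and $\rdtr^X_\lambda$ lie in these balanced subalgebras, so I simply set
\[
\tr^A_\lambda := \psi_\lambda^{-1}\circ\tr^X_\lambda, \qquad \rdtr^A_\lambda := \rd\psi_\lambda^{-1}\circ\rdtr^X_\lambda.
\]
Uniqueness is immediate from the injectivity of $\psi_\lambda$ and $\rd\psi_\lambda$, and the upper inclusions in~\eqref{eq-Aincl} and~\eqref{eq-Aincl-rd} are built in. What remains is the lower inclusion together with injectivity (in the cases claimed).

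The heart of the proof is the realization of every generator $a_v$ of the quantum space $\cA_+(\fS,\lambda)$ (resp.\ $\bA_+(\fS,\lambda)$) as $\tr^A_\lambda(\gaa_v)$ for an explicit element $\gaa_v$ of $\SS$ (resp.\ $\bSS$). The natural construction follows the skeleton formalism of Subsection~\ref{ss.skeleton}: for each $v\in V_\lambda$ (resp.\ $\rdV_\lambda$) and each face $\tau\in\cF_\lambda$, the skeleton $\sk_\tau(v)\in\BZ[\rdV_\tau]$ determines, via the triangle-level quantum torus frame $\{\gaa_u : u\in\rdV_\tau\}$ of Theorem~\ref{thm.bP3a}, a Weyl-normalized monomial $\gaa_{\sk_\tau(v)}\in\cS(\tau)$. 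I assemble these face-local elements into a single stated web $\gaa_v$ on $\fS$ by gluing along interior edges of $\lambda$ (using the inverse of the cutting homomorphism $\Theta_\lambda$ of Theorem~\ref{t.splitting2}, which is injective since $\fS$ is essentially bordered), and by taking the reflection-normalization to fix scalars. Then two checks are required. First, the $q$-commutation $\gaa_v\gaa_{v'} = \hq^{2\bmP_\lambda(v,v')}\gaa_{v'}\gaa_v$ follows at once from the face-by-face identity $\bmP_\lambda(v,v')=\sum_\tau \bmP_\tau(\sk_\tau(v),\sk_\tau(v'))$ and the triangle commutation~\eqref{eq-trigen-comm}. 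Second, the equality $\btr^X_\lambda(\gaa_v) \eqq x^{\bmK_\lambda(v,\cdot)}$ follows from compatibility of $\btr^X$ with cutting (Theorem~\ref{thm-rdtrX}(1)) together with the face-by-face definition~\eqref{eq-surgen-exp} of $\bmK_\lambda$. The non-reduced case needs the additional observation of Theorem~\ref{thm-attach-counit}, to account for the attached triangles and the corner arcs they contribute. Once established, this says precisely $\tr^A_\lambda(\gaa_v)\eqq a_v$, giving $\cA_+(\fS,\lambda)\subset\tr^A_\lambda(\SS)$ and its reduced analogue.

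Injectivity of $\tr^A_\lambda$ is then a GK-dimension count: by Theorem~\ref{thm.domain}(a,b), $\SS$ is an $R$-torsion-free domain of GK dimension $(n^2-1)r(\fS) = |V_\lambda| = \GKdim\cA(\fS,\lambda)$, and the lower sandwich inclusion gives $\GKdim \tr^A_\lambda(\SS) \ge \GKdim\cA_+(\fS,\lambda) = |V_\lambda|$, so Lemma~\ref{r.GKdim}(b) forces $\tr^A_\lambda$ to be injective. For $\rdtr^A_\lambda$ in the polygon case, the identical argument applies because Theorem~\ref{thm.domainr} guarantees that $\bSS=\reduceS(\poly_k)$ is an $R$-torsion-free domain of the matching GK dimension $|\rdV_\lambda|=\GKdim\bA(\fS,\lambda)$ (via Lemma~\ref{lemma-vset-size} and~\eqref{eq.GKrd}). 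The main obstacle, and the only truly nontrivial verification, is the global construction of $\gaa_v$: the skeleton $\tY_v$ can traverse several faces, so one must check that the Weyl-normalized local pieces glue consistently along interior edges---this is where Lemmas~\ref{lemma-skel-welldef} and~\ref{lemma-K-welldef} enter to confirm that the combinatorics match on both sides of each shared edge---and that the ordering/normalization conventions used to assemble the product give a reflection-invariant element whose $\btr^X_\lambda$-image is exactly $x^{\bmK_\lambda(v,\cdot)}$.
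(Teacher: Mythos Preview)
Your overall strategy matches the paper's exactly: define $\tr^A_\lambda=\psi_\lambda^{-1}\circ\tr^X_\lambda$ and $\rdtr^A_\lambda=\rd\psi_\lambda^{-1}\circ\rdtr^X_\lambda$, exhibit explicit elements $\gaa_v$ with $\tr^A_\lambda(\gaa_v)=a_v$ to get the lower sandwich inclusion, and then conclude injectivity by the GK-dimension count via Lemma~\ref{r.GKdim}(b), Theorem~\ref{thm.domain}, Theorem~\ref{thm.domainr}, and Lemma~\ref{lemma-vset-size}.

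Two points where your execution diverges from the paper and where the paper's route is cleaner:

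\emph{Construction of $\gaa_v$.} You propose to build $\gaa_v$ by gluing face-local Weyl monomials $\gaa_{\sk_\tau(v)}\in\reduceS(\tau)$ via the inverse of $\Theta_\lambda$. But $\Theta_\lambda$ is only injective, not surjective, so you cannot simply invert it on an arbitrary tensor; you would first need to show that $\bigotimes_\tau\gaa_{\sk_\tau(v)}$ lies in the image. The paper avoids this by going the other direction: it constructs $\gaa_v$ directly as a global stated web on $\fS$, obtained from the elongated graph $\tY_v$ of Figure~\ref{fig-skel} by replacing each weight-$k$ edge with $k$ parallel strands carrying the highest consecutive states (Figure~\ref{fig-skel-web}). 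The cutting $\Theta_\lambda(\gaa''_v)$ is then a sum over states on the interior edges, and the key observation (proof of Lemma~\ref{lemma-trX-av}) is that in the \emph{reduced} algebras $\reduceS(\tau)$ only one state assignment survives---all others produce a bad arc or a repeated state at the sink. This single surviving term is precisely your $\bigotimes_\tau\gaa_{\sk_\tau(v)}$. For $v\in\lv{V}_\lambda\setminus\rd{V}_\lambda$ the paper uses a separate explicit description: $\gaa''_v=M^{[j+1,j+i]}_{[\bar i,n]}(c)$ for the left-turning corner arc $c$ of $\fS$, and then relates $\iota_\ast(\gaa_v)$ to the ratio $[\ext{\gaa}_v(\ext{\gaa}_{p(v)})^{-1}]_\Weyl$ in $\reduceS(\ext\fS)$.

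\emph{The $q$-commutation step.} Your face-by-face argument for $\gaa_v\gaa_{v'}=\hq^{2\mP_\lambda(v,v')}\gaa_{v'}\gaa_v$ would require the (reduced) cutting homomorphism to be injective, which is not established. The paper does not prove $q$-commutation of the $\gaa_v$ prior to injectivity of $\tr^A_\lambda$; rather, it proves $\tr^A_\lambda(\gaa_v)=a_v$ directly (Lemma~\ref{lemma-trX-av}), deduces the sandwich inclusion, and only \emph{then} records that $\{\gaa_v\}$ is a quantum torus frame as a corollary of injectivity. So this step is unnecessary for the proof and, as you have stated it, not correctly justified.
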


We get the following important corollary.

\begin{corollary}\label{cor-trX-embed}
Under the assumption of Theorem \ref{thm-Atr}, the $X$-version quantum trace $\tr_\lambda^X$ is injective, and its reduced version $\rdtr_\lambda^X$ is injective if $\surface$ is a polygon.
\end{corollary}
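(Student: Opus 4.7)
The proof will be an immediate consequence of Theorem~\ref{thm-Atr} combined with Theorem~\ref{thm.dual}. The plan is to exploit the factorizations $\tr_\lambda^X = \psi_\lambda \circ \tr_\lambda^A$ and $\rdtr_\lambda^X = \rd{\psi}_\lambda \circ \rdtr_\lambda^A$, observing that in each case the outer map is already known to be injective (as it is a multiplicatively linear embedding of a quantum torus into a quantum torus), so injectivity of the $X$-version follows from injectivity of the $A$-version on the nose.

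More precisely, for the non-reduced statement, I would first invoke Theorem~\ref{thm-Atr}(a), which asserts that $\tr_\lambda^A: \SS \to \cA(\fS,\lambda)$ is an algebra embedding. Next, by Theorem~\ref{thm.dual}(a), the multiplicatively linear homomorphism $\psi_\lambda: \cA(\fS,\lambda) \to \FG(\fS,\lambda)$ is also an algebra embedding (with image equal to the balanced subalgebra $\FG^{\mathrm{bl}}(\fS,\lambda)$). Using the intertwining identity $\tr_\lambda^X = \psi_\lambda \circ \tr_\lambda^A$ from \eqref{eq.TrAX}, the composition of two injective homomorphisms is injective, hence $\tr_\lambda^X$ is injective.

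For the reduced case, I would assume that $\fS$ is a polygon. Then Theorem~\ref{thm-Atr}(b) provides the injectivity of $\rdtr_\lambda^A: \bSS \to \bA(\fS,\lambda)$. Again by Theorem~\ref{thm.dual}(a), the map $\rd{\psi}_\lambda: \bA(\fS,\lambda) \to \rd{\FG}(\fS,\lambda)$ is an embedding with image the balanced subalgebra $\rdbl(\fS,\lambda)$. Combining these via the relation $\rdtr_\lambda^X = \rd{\psi}_\lambda \circ \rdtr_\lambda^A$ from \eqref{eq.bTrAX} yields the injectivity of $\rdtr_\lambda^X$ on the polygon.

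Since both steps reduce immediately to facts already established, there is no real obstacle here; the corollary is a formal consequence of the two theorems. The conceptual point worth emphasizing is that the $A$-version quantum trace is the more fundamental object for detecting injectivity, and the $X$-version inherits its injectivity through the compatibility diagram \eqref{eq.diag00} (generalized to the surface case). The only mild subtlety is ensuring that the intertwining identities \eqref{eq.TrAX} and \eqref{eq.bTrAX} hold as stated in Theorem~\ref{thm-Atr}, but these are part of its assertion, so nothing further is needed.
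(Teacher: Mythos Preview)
Your proposal is correct and matches the paper's intent exactly: the corollary is stated without proof immediately after Theorem~\ref{thm-Atr}, since it follows at once from the factorizations \eqref{eq.TrAX}, \eqref{eq.bTrAX} together with the injectivity of $\psi_\lambda$ and $\rd\psi_\lambda$ from Theorem~\ref{thm.dual}(a).
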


\subsection{Quantum frames}

We construct a quantum torus frame $\{\ag_v\mid v\in\lv{V}_\lambda\}$ for $\SS$.

First assume $v\in\rd{V}_\lambda$. Then $v= (ijk)\in \rdV_\nu$ for an ideal triangle $\nu$ of $\lambda$. We constructed the $Y$-graph $\tY_v$ in Figure~\ref{fig-skel}. Turn $\tY_v$ into the stated $n$-web $\ag''_v$ by replacing a $k$-labeled edge of $\tY_v$ with $k$ parallel edges of $\gaa''_v$, adjusted by a sign. See Figure~\ref{fig-skel-web} top. By Lemma~\ref{r.triad}, the element $\gaa''_v$ is reflection-normalizable.

\begin{figure}
\centering
\input{skel-web_new}
\caption{Definition of $\gaa''_v$}\label{fig-skel-web}
\end{figure}

Now assume $v\in \lv{V}_\lambda\setminus\rd{V}_\lambda$. Then $v=(ijk)$ is in an attached triangle $\nu\equiv \PP_3$ (of $\fS^\ast$), whose edge $e_1$ is glued to a boundary edge $e$ of $\fS$. Let $c$ be the oriented corner arc of $\fS$ starting on $e$ and going counterclockwise, i.e. turning left all the time. Then the element $\gaa''_v:=M^{[j+1,j+i]}_{[\bar{i},n]}(c)$ is reflection-normalizable by Lemmas~\ref{r.det} and \ref{r.height5}. See Figure~\ref{fig-skel-web} bottom for the diagram of $\gaa''_v$.

Define $\gaa_v$ to be the reflection normalization of $\gaa''_v$ for all $v\in\lv{V}_\lambda$. Let $\bar \ag_v$ be the image of $\ag_v$ in $\reduceS(\surface)$. Note that $\bar \ag_v = 0$ if $v \in \lv{V}_\lambda\setminus\rd{V}_\lambda$.

\begin{lemma}\label{lemma-trX-av}
We have
\begin{align}
\rdtr_\lambda^X(\rd{\ag}_v)& =x^{\rdm{K}_\lambda(v,\cdot)}= \rd{\psi}_\lambda(a_v), \qquad \text{for } \ v\in\rd{V}_\lambda \label{eq.bgav}\\
\tr_\lambda^X(\ag_v)& =x^{\mat{K}_\lambda(v,\cdot)}= {\psi}_\lambda(a_v), \qquad \text{for } \ v\in\lv{V}_\lambda . \label{eq.gav}
\end{align}
\end{lemma}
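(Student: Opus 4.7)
The second equality in each of \eqref{eq.bgav} and \eqref{eq.gav} is immediate from Theorem \ref{thm.dual}: applied to $a_v = a^{\delta_v}$ one gets $\rd\psi_\lambda(a_v) = x^{\rdm K_\lambda(v,\cdot)}$ and $\psi_\lambda(a_v) = x^{\mat K_\lambda(v,\cdot)}$. Thus the content of the lemma is the first equality in each case.

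\emph{Plan for \eqref{eq.bgav} ($v\in\rd V_\lambda$).} By construction $\rdtr_\lambda^X = (\bigotimes_\tau \rdtr^X)\circ\Theta_\lambda$, so the task reduces to computing the image of $\bar\ag_v$ under the cutting homomorphism $\Theta_\lambda$ inside $\bigotimes_{\tau\in\cF_\lambda}\reduceS(\tau)$. The diagram $\ag''_v$ is obtained from the elongated skeleton $\tilde Y_v$ by thickening each weight-$t$ edge into $t$ parallel oriented arcs stated by $\bar t,\bar t+1,\dots,n$. After $\Theta_\lambda$, each segment $s\subset\tau\cap \tilde Y_v$ contributes a piece in $\reduceS(\tau)$: the main segment $Y_v$ (in the triangle $\nu\ni v$) yields the full Y-diagram $\ag'_v$ of Figure~\ref{fig-trigen-res}, while each elongated arc segment yields a bundle of parallel stated arcs between two edges of a neighbouring triangle. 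The first central claim is that, modulo the reduced ideal, $\Theta_c$ collapses to the unique term in which the states across $c$ are forced by the ``sink/source'' structure of the triangle's top and bottom; all other states in the cutting sum produce a bad corner arc at one of the triangle vertices and hence vanish in $\bigotimes_\tau \reduceS(\tau)$. Combining this with Lemmas~\ref{r.det} and \ref{r.Ibad1}(c) identifies the piece in each $\tau$, up to a reflection-invariant scalar, as the Weyl-normalised product $\prod_{s\subset\tau\cap\tilde Y_v}\bar\ag_{Y(s)}$ (with $Y(s)$ as in Subsection~\ref{ss.skeleton}). Applying Theorem~\ref{thm.Xtr0} triangle by triangle yields
\[
\rdtr_\lambda^X(\bar\ag_v)\eqq \prod_{\tau}\prod_{s\subset\tau\cap\tilde Y_v}x^{\rdm K_\tau(Y(s),\cdot)}=x^{\sum_{\tau}\sum_s\rdm K_\tau(Y(s),\cdot)}=x^{\rdm K_\lambda(v,\cdot)},
\]
where the last equality is the definition \eqref{eq-surgen-exp} of $\rdm K_\lambda$. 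The residual $\hq$-scalar is pinned down to $1$ by the reflection-invariance of $\bar\ag_v$ (Lemma~\ref{r.triad}) and of normalised monomials in $\rd\FG$.

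\emph{Plan for \eqref{eq.gav} ($v\in\lv V_\lambda$).} Recall $\tr_\lambda^X = \rdtr_{\ext\lambda}^X\circ\iota_\ast$. If $v\in\rd V_\lambda$ then $\iota_\ast\ag_v$ is, as a diagram in $\surface^\ast$, the same as $\ag_v$ (the attached triangles are disjoint from its support), and \eqref{eq.bgav} applied to $(\surface^\ast,\ext\lambda)$ gives $\rdtr_{\ext\lambda}^X(\iota_\ast\ag_v)=x^{\rdm K_{\ext\lambda}(v,\cdot)}$. By Lemma~\ref{lemma-CK0} the components of $\rdm K_{\ext\lambda}(v,\cdot)$ on the $e_2$-vertices of attached triangles vanish, so the exponent restricts to $\mat K_\lambda(v,\cdot)$ on $V_\lambda$, as required.

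If $v\in\lv V_\lambda\setminus\rd V_\lambda$, then $v=(ijk)$ lies in an attached triangle $\tau\subset\surface^\ast$ and $\ag_v$ is the reflection-normalisation of a quantum minor on the corner arc $c\subset\surface$ ending on the attaching edge. Pushing $c$ across $\iota$ into $\tau$ and using Lemma~\ref{r.cutdet} (splitting the minor along the image of the attaching edge), one rewrites $\iota_\ast(\ag''_v)$, modulo bad arcs of $\surface^\ast$, as a $q$-scalar multiple of the ``difference'' diagram corresponding to the pair $(v,p(v))$ with $p$ as in \eqref{eq-cov-pdef}; equivalently $\iota_\ast\bar\ag_v\eqq\bar\ag_v\,\bar\ag_{p(v)}^{-1}$ in $\reduceS(\surface^\ast)$. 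Applying \eqref{eq.bgav} to both factors and using the definition $\mat K_\lambda = (\mat C\,\exm K)|_{\lv V_\lambda\times V_\lambda}$ gives
\[
\rdtr_{\ext\lambda}^X(\iota_\ast\ag_v)=x^{\rdm K_{\ext\lambda}(v,\cdot)-\rdm K_{\ext\lambda}(p(v),\cdot)}=x^{\mat K_\lambda(v,\cdot)},
\]
again invoking Lemma~\ref{lemma-CK0} to discard the irrelevant components. Reflection invariance eliminates the residual $\hq$-scalar.

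\emph{Main obstacle.} The delicate step is the collapse of the cutting sum $\Theta_c(\bar\ag_v)$ onto a single surviving state assignment in $\bigotimes_\tau\reduceS(\tau)$. This is where the geometric information in the elongation rule ``always turn left'' translates into the algebraic fact that the elongated segment is a genuine quantum minor in its triangle, and where all other state profiles are killed by the ideal of bad arcs via Lemma~\ref{r.Ibad1}(c). The second non-trivial point is the identification in the $\lv V_\lambda\setminus\rd V_\lambda$ subcase of $\iota_\ast\bar\ag_v$ with the ``quotient'' $\bar\ag_v\,\bar\ag_{p(v)}^{-1}$ in $\reduceS(\surface^\ast)$, which is the geometric content of the change-of-variable matrix $\mat C$.
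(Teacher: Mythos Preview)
Your outline matches the paper's proof: cut along $\lambda$, show that the bad-arc ideal together with the no-repeated-states-at-a-vertex constraint force a unique surviving state assignment in $\bigotimes_\tau\reduceS(\tau)$, apply Theorem~\ref{thm.Xtr0} to each segment to get $\rdtr^X_\tau(\gaa''_v\cap\tau)\eqq x^{\rdm{K}_\tau(\sk_\tau(v),\cdot)}$, assemble via \eqref{eq-surgen-exp}, and fix the scalar by reflection invariance. For \eqref{eq.gav} you likewise use the identity $\iota_\ast\gaa_v=[\ext{\gaa}_v\,(\ext{\gaa}_{p(v)})^{-1}]_\Weyl$ and \eqref{eq.bgav} on $\ext{\fS}$, exactly as the paper does.

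Two details are off. For $v\in\rd{V}_\lambda$ in \eqref{eq.gav}, the attached triangles are \emph{not} disjoint from the support of $\iota_\ast\gaa_v$: the endpoints of $\gaa_v$ lie on $\partial\fS$, and $\iota$ carries them onto the $e_2$-edges of the attached triangles. The correct assertion is that $\iota_\ast\gaa_v$ coincides with the element $\ext{\gaa}_v$ of $\reduceS(\ext{\fS})$, because the left-turn elongation through an attached triangle enters at $e_1$ and exits at $e_2$, matching Figure~\ref{fig-attach-embed}. Relatedly, the vertices on which the exponent must vanish (those in $\rd{V}_{\ext{\lambda}}\setminus V_\lambda$) lie on $e_3$, not $e_2$. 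For $v\in\lv{V}_\lambda\setminus\rd{V}_\lambda$, Lemma~\ref{r.cutdet} is not the direct route: it yields a sum over $L\in\binom{\JJ}{k}$ that would still need a bad-arc collapse. The paper instead invokes the computation of Lemma~\ref{lemma-agen-decomp} to obtain $\ext{\gaa}_v=[\ext{\gaa}_{p(v)}\,\iota_\ast(\gaa_v)]_\Weyl$ in one step.
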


\begin{proof}
The second equalities in \eqref{eq.bgav} and \eqref{eq.gav} follow from the definition of $\rd\psi_\lambda$ and $\psi_\lambda$, respectively. Let us prove the first identities.

To calculate $\rdtr_\lambda^X(\rd{\ag}_v)$, we use the cutting homomorphism \eqref{eq-rdtrX-def}. Suppose $v=ijk$ in the triangle $\nu$. After an isotopy of the heights, the cutting of $\gaa''_v$ consists of $\gaa'_{ijk}$ (of Figure~\ref{fig-trigen-res}, but with a priori different states) in $\nu$ and corner arcs in various triangles. If the states assigned in the cut contain bad arcs or repeated states connecting to the vertex, then the corresponding term is zero. It is easy to see that to avoid these configurations, there is only one possible state for the cut, where the corner arcs extending from the same endpoint must have the same states throughout. See Figure~\ref{fig-sgcut-state}, where the notation $\bar{2}=n-1$ is used. It follows that
\[ \Theta_\lambda(\gaa''_v) = \bigotimes_{\tau \in \face(\lambda)} \gaa''_v \cap \tau, \]
where each $\gaa''_v \cap \tau$ is stated by the above unique state.

\begin{figure}
\centering
\input{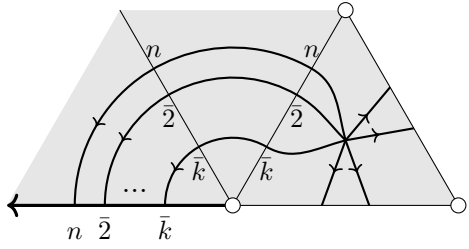}
\caption{The unique nontrivial state assignment}\label{fig-sgcut-state}
\end{figure}

Recall that the edges of the triangulation cut $\tY_v$ into segments, consisting of the main segment $Y_v$ and labeled directed arc segments. As a $k$-labeled edge of $\tY_v$ generates $k$-parallel edges of $\gaa''_v$, an arc segment $s$ in a triangle $\tau$ corresponds to a group of $k$ stated parallel arcs in $\tau$ denoted by $\gaa''(s)$. More precisely,
\[\input{surgen-corner-seg}=M^{[\bar{k};n]}_{[\bar{k};n]}(s),\]
where the last identity is from Lemma \ref{r.Ibad1}. Also define $\gaa''(s)= \gaa_{ijk}$ if $s$ is the main segment.

For each segment $s$ of $\tY_v$ in triangle $\tau$, we defined the small vertex $Y(s)\in \rdV_\tau$ in Subsection~\ref{ss.skeleton}. By definition, $\btr^A_\tau(\gaa''(s))\eqq a_{Y(s)}$. Hence
\[\btr^X_\tau(\gaa''(s))\eqq x^{\bmK_\tau ( Y(s), \cdot) }.\]
By Lemma \ref{r.height5}, in the triangle $\tau$ all the $\gaa(s)$ are $q$-commuting. Hence
\begin{equation}
\btr^X_\tau(\gaa''_v \cap \tau )
\eqq \prod_{s \subset \tau\cap\tilde{Y}_v} x^{\bmK_\tau( Y(s), \cdot)}
\eqq x ^{ \bmK_\tau ( \sk_\tau(v), \cdot) },
\end{equation}
where for the last identity we use the definition \eqref{eq-skel-def}.

By definition \eqref{eq-surgen-exp}, we have $\bmK_\lambda ( v,v')= \bmK_\tau ( \sk_\tau(v), v') $ for all $v' \in \rdV_\tau$. It follows that
\[\btr^X( \gaa''_v) \eqq x^{\bmK_\lambda(v, \cdot) }.\]
Then by reflection invariance, we have $\rdtr_\lambda^X(\gaa_v)=x^{\rdm{K}_\lambda(v,\cdot)}$, proving \eqref{eq.bgav}

Now consider the full trace $\tr_\lambda^X$, which is defined by \eqref{eq-trx-extdef}. Each $v\in\lv{V}_\lambda$ is also a vertex in $\rd{V}_{\ext{\lambda}}$. The corresponding element in $\reduceS(\ext{\surface})$ is denoted $\ext{\gaa}_v$.

If $v\in\rd{V}_\lambda$, then $\tr_\lambda^X(\gaa_v)=\rdtr_{\ext{\lambda}}^X(\ext{\gaa}_v)$. By the first part of the Lemma, $\rdtr_{\lambda^\ast}^X(\ext{\gaa}_v)=x^{\rdm{K}_{\ext{\lambda}}(v,\cdot)}$, which restricts to $x^{\mat{K}_\lambda(v,\cdot)}$.

If $v\in\lv{V}_\lambda\setminus\rd{V}_\lambda$ is $ijk$ in an attached triangle, then by the same calculation as Lemma~\ref{lemma-agen-decomp},
\begin{equation}
\ext{\gaa}_v=\left[\ext{\gaa}_{p(v)} \gaa_v\right]_\Weyl,
\end{equation}
where $p(v)$ is defined by \eqref{eq-cov-pdef}. Thus
\begin{equation}
\tr_\lambda^X(\gaa_v)
=\rdtr_{\ext{\lambda}}^X\left[\ext{\gaa}_v(\ext{\gaa}_{p(v)})^{-1}\right]_\Weyl
=x^{\rdm{K}_{\ext{\lambda}}(v,\cdot)-\rdm{K}_{\ext{\lambda}}(p(v),\cdot)}
=x^{(\mat{C}\rdm{K}_{\ext{\lambda}})(v,\cdot)},
\end{equation}
which restricts to $x^{\mat{K}_\lambda(v,\cdot)}$ by definition.
\end{proof}

\subsection{Proof of Theorem \ref{thm-Atr}}

\begin{proof}
By Theorem \ref{thm.dual} we have the algebra isomorphisms
\[ \rd{\psi}_\lambda: \rd{\cA}(\fS,\lambda) \xrightarrow{\cong} \rd\FG^\bal(\fS,\lambda), \quad
{\psi}_\lambda: {\cA}(\fS,\lambda) \xrightarrow{\cong} \FG^\bal(\fS,\lambda). \]
We define the
the $A$-version quantum traces by
\[ \btr^A_\lambda = (\rd{\psi}_\lambda)^{-1} \circ \btr^X, \quad
\tr^A_\lambda = ({\psi}_\lambda)^{-1} \circ \tr^X. \]
Clearly \eqref{eq.TrAX} and \eqref{eq.bTrAX} are satisfied.

(a) From \eqref{eq.gav} we have, for all $v\in V_\lambda$,
\begin{equation}
\tr_\lambda^X(\gaa_v) = a_v.
\label{eq.gaaa}
\end{equation}
This proves the inclusion ${\lenT}_+(\surface,\lambda)\subset\tr_\lambda^A(\SS)$. Hence
\[\GKdim ({\lenT}_+(\surface,\lambda) ) \le \GKdim (\tr_\lambda^A(\SS) ).\]
Since $\GKdim({\lenT}_+(\surface,\lambda))=|V_\lambda|$, which is equal to $\GKdim(\SS)$ by Lemma \ref{lemma-vset-size}, we have
\[\GKdim \tr_\lambda^A(\SS) \ge \GKdim(\SS).\]
By Theorem \ref{thm.domain} $\SS$ is an torsion-free $R$-domain. Hence by Lemma~\ref{r.GKdim}, $\tr_\lambda^A$ is injective.

(b) Exactly the same proof as in (a) gives the inclusion $\rd{\lenT}_+(\surface,\lambda)\subset\rdtr_\lambda^A(\reduceS(\surface))$ and
\begin{equation}
\btr_\lambda^X(\bar \gaa_v) = a_v \quad \text{for all } \ v\in \rdV_\lambda.
\end{equation}

Assume $\surface=\poly_k$. By Theorem \ref{thm.domainr} the algebra $\reduceS(\surface)$ is an $R$-torsion free domain with GK dimension given by the right-hand side of \eqref{eq.GKrd}, which, by Lemma~\ref{lemma-vset-size}, is equal to
$\abs{\rd{V}_\lambda}$, or the GK dimension of $\rd{\lenT}_+(\surface,\lambda)$. Again Lemma~\ref{r.GKdim} implies that $\rdtr_\lambda^A$ is injective.
\end{proof}

The injectivity of $\tr^A_\lambda: \SS \embed {\lenT}_+(\surface,\lambda)$ and \eqref{eq.gaaa} implies the following.

\begin{corollary}
With the assumption of Theorem \ref{thm-Atr}, the set $\{ \gaa_v \mid v\in V_\lambda\}$ is a quantum torus frame for $\SS$.

If $\fS$ is the polygon $\PP_k$, then $\{ \bar \gaa_v \mid v\in \rdV_\lambda\}$ is a quantum torus frame for $\bSS$.
\end{corollary}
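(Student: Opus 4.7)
The plan is to derive both claims as direct consequences of the $A$-version quantum trace embeddings constructed in Theorem \ref{thm-Atr} by verifying the three conditions of Definition \ref{def.qtf}. Recall that the proof of Theorem \ref{thm-Atr}(a) yields an injective algebra homomorphism $\tr^A_\lambda:\SS\hookrightarrow\cA(\fS,\lambda)=\BT(\mat{P}_\lambda)$ sending $\ag_v\mapsto a_v$ for every $v\in V_\lambda$ (the identity called \eqref{eq.gaaa}, noting that via $\tr^X=\psi_\lambda\circ\tr^A$ and Lemma \ref{lemma-trX-av} this is the statement $\tr^A_\lambda(\ag_v)=a_v$). The generators $a_v$ of the quantum torus $\BT(\mat{P}_\lambda)$ are nonzero and satisfy $a_v a_{v'}=\hq^{2\mat{P}_\lambda(v,v')}a_{v'}a_v$, so by injectivity of $\tr^A_\lambda$ the elements $\ag_v\in\SS$ are nonzero and $q$-commute with the same matrix of exponents, giving Condition (1). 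For Condition (3), the Weyl-normalized monomials $\{a^{\bk}\mid \bk\in\BN^{V_\lambda}\}$ form a free $R$-basis of $\BT_+(\mat{P}_\lambda)$; since $\tr^A_\lambda(\ag_1^{n_1}\cdots\ag_r^{n_r})$ equals (up to an invertible scalar coming from Weyl normalization) $a^{\bk}$ with $\bk=(n_1,\dots,n_r)$, an $R$-linear dependence among the $\ag$-monomials would, after applying $\tr^A_\lambda$, produce one among the $a^{\bk}$, contradicting freeness.

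For Condition (2), the weak generation, take any $x\in\SS$ and set $y=\tr^A_\lambda(x)\in\cA(\fS,\lambda)$. Because $\cA(\fS,\lambda)$ is a quantum torus, there exists $\bk\in\BN^{V_\lambda}$ with $y\cdot a^{\bk}\in\BT_+(\mat{P}_\lambda)$. Letting $\fm=[\prod_v\ag_v^{k_v}]_\Weyl$ be the corresponding Weyl-normalized $\{\ag_v\}$-monomial in $\SS$, we have $\tr^A_\lambda(\fm)=a^{\bk}$, hence $\tr^A_\lambda(x\fm)\in\BT_+(\mat{P}_\lambda)$. By \eqref{eq-Aincl} and injectivity, $\tr^A_\lambda$ restricts to an isomorphism $\Pol(\{\ag_v\})\xrightarrow{\cong}\BT_+(\mat{P}_\lambda)$ (the domain is sent onto a subalgebra containing all $a_v$, and is sent into $\BT_+$ because $\tr^A_\lambda$ is an algebra map), so $x\fm\in\Pol(\{\ag_v\})$. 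Thus $\{\ag_v\mid v\in V_\lambda\}$ is a quantum torus frame for $\SS$.

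For the polygon case, Theorem \ref{thm-Atr}(b) gives an injective algebra homomorphism $\rdtr^A_\lambda:\bSS\hookrightarrow\bA(\fS,\lambda)=\BT(\rdm{P}_\lambda)$ with $\rdtr^A_\lambda(\bar\ag_v)=a_v$ for $v\in\rdV_\lambda$, and the sandwich \eqref{eq-Aincl-rd} again forces $\rdtr^A_\lambda$ to restrict to an isomorphism $\Pol(\{\bar\ag_v\})\xrightarrow{\cong}\BT_+(\rdm{P}_\lambda)$. Repeating verbatim the three verifications above, with $\tr^A_\lambda$, $\SS$, $\mat{P}_\lambda$, $V_\lambda$, $\ag_v$ replaced by $\rdtr^A_\lambda$, $\bSS$, $\rdm{P}_\lambda$, $\rdV_\lambda$, $\bar\ag_v$ respectively, establishes that $\{\bar\ag_v\mid v\in\rdV_\lambda\}$ is a quantum torus frame for $\bSS$. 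There is no real obstacle here: the work was done in proving Theorem \ref{thm-Atr}, and the corollary is a bookkeeping translation of injectivity plus the inclusion $\BT_+\subset\mathrm{image}\subset\BT$ into the language of Definition \ref{def.qtf}.
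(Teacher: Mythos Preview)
Your proof is correct and follows essentially the same approach as the paper: the paper simply states that the corollary follows from the injectivity of $\tr^A_\lambda$ together with the identity $\tr^A_\lambda(\gaa_v)=a_v$ (equation \eqref{eq.gaaa}), and your argument is a careful unpacking of exactly this, verifying the three conditions of Definition \ref{def.qtf} using the sandwich \eqref{eq-Aincl}. There is nothing more to add.
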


If Conjecture \ref{conj.inj2} is true, then for any surface $\fS$ of Theorem \ref{thm-Atr}, the set $\{ \bar \gaa_v \mid v\in \rdV_\lambda\}$ is a quantum torus frame for $\bSS$.

\section{Coordinate change of quantum trace maps}
\label{sec.Nat}

We establish the naturality of the quantum traces with respect to the change of triangulations. For the $A$-version quantum trace this follows easily from the sandwichness \eqref{eq-Aincl}. For the $X$-version the proof is much more difficult. We first apply the $A$-version case to quadrilaterals, composed with the isomorphism $\rd\psi_\lambda$, to define the transition isomorphism for the $X$-version quantum trace. Then we use the $A$-version case for pentagons to prove the well-definedness.

\subsection{Statements of the results}

\begin{theorem}\label{thm-co-chg-A}
Suppose $\surface$ is a triangulable surface with no interior punctures. Given two ideal triangulations $\lambda,\lambda'$, there exists a unique coordinate change isomorphism
\begin{equation}
\Psi^A_{\lambda'\lambda}:\Fr(\lenT(\surface,\lambda))\to\Fr(\lenT(\surface,\lambda'))
\end{equation}
such that
\begin{equation}\label{eq-co-chg-trA}
\Psi^A_{\lambda'\lambda}\circ\tr_\lambda^A=\tr_{\lambda'}^A.
\end{equation}
The coordinate change isomorphism is functorial in the sense that for ideal triangulations $\lambda,\lambda',\lambda''$,
\begin{equation}
\Psi^A_{\lambda\lambda}=\id,\qquad
\Psi^A_{\lambda''\lambda'}\circ\Psi^A_{\lambda'\lambda}=\Psi^A_{\lambda''\lambda}.
\end{equation}

Analogous results for the reduced algebras holds when $\surface$ is a polygon (or more generally when $\rdtr_\lambda^A$ is injective). The map is denoted by $\rd{\Psi}^A_{\lambda'\lambda}$.
\end{theorem}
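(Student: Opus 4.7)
The plan is to exploit the sandwichness \eqref{eq-Aincl} from Theorem~\ref{thm-Atr}(a) to produce $\Psi^A_{\lambda'\lambda}$ as a composition of induced isomorphisms on fraction fields, in direct analogy with the outline given in the introduction around \eqref{eq.Frtr1}. The main input is that $\tr^A_\lambda:\skein(\surface)\embed\lenT(\surface,\lambda)$ is an embedding whose image sits between $\lenT_+(\surface,\lambda)$ and $\lenT(\surface,\lambda)$. Since $\lenT(\surface,\lambda)$ is the Ore localization of its monomial subalgebra $\lenT_+(\surface,\lambda)$ at the set of monomials in the generators (which is a right Ore set in the quantum torus), and since $\lenT_+(\surface,\lambda)\subset\tr^A_\lambda(\skein(\surface))$, the multiplicative set of $\lenT_+$-monomials is also a right Ore set inside $\tr^A_\lambda(\skein(\surface))$, so $\skein(\surface)$ is an Ore domain and
\[
\Fr(\tr^A_\lambda):\Fr(\skein(\surface))\xrightarrow{\cong}\Fr(\lenT(\surface,\lambda)).
\]
This last statement is essentially Proposition~\ref{r.qtframe}(a) applied to the quantum torus frame $\{\ag_v\mid v\in \lv{V}_\lambda\}$ produced at the end of Section~\ref{sec.Atrace}.

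With $\Fr(\tr^A_\lambda)$ and $\Fr(\tr^A_{\lambda'})$ available as isomorphisms, I would simply \emph{define}
\[
\Psi^A_{\lambda'\lambda}\eqdef \Fr(\tr^A_{\lambda'})\circ\Fr(\tr^A_{\lambda})^{-1}:\Fr(\lenT(\surface,\lambda))\xrightarrow{\cong}\Fr(\lenT(\surface,\lambda')).
\]
The intertwining relation \eqref{eq-co-chg-trA} then holds tautologically on $\skein(\surface)$, and uniqueness is immediate: any algebra isomorphism satisfying \eqref{eq-co-chg-trA} agrees with $\Psi^A_{\lambda'\lambda}$ on $\tr^A_\lambda(\skein(\surface))$, which generates $\Fr(\lenT(\surface,\lambda))$ since it already contains $\lenT_+(\surface,\lambda)$ and every element of a quantum torus is a right fraction of a quantum-space element by a monomial.

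Functoriality is a formal consequence of the definition: $\Psi^A_{\lambda\lambda}=\Fr(\tr^A_{\lambda})\circ\Fr(\tr^A_{\lambda})^{-1}=\id$, and the cocycle identity follows from
\[
\Psi^A_{\lambda''\lambda'}\circ\Psi^A_{\lambda'\lambda}
=\Fr(\tr^A_{\lambda''})\circ\Fr(\tr^A_{\lambda'})^{-1}\circ\Fr(\tr^A_{\lambda'})\circ\Fr(\tr^A_{\lambda})^{-1}
=\Psi^A_{\lambda''\lambda}.
\]
For the reduced version with $\surface=\poly_k$, the same argument applies verbatim after replacing $\tr^A$, $\lenT$, $\lenT_+$ by $\rdtr^A$, $\rd{\lenT}$, $\rd{\lenT}_+$, using the injectivity clause of Theorem~\ref{thm-Atr}(b) for polygons together with the sandwichness \eqref{eq-Aincl-rd}.

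Since the injectivity and sandwichness have already been established in Theorem~\ref{thm-Atr}, there is essentially no obstacle here — the only point that requires a little care is the passage from sandwichness to an Ore-set/fraction-field statement, but this is the content of Proposition~\ref{r.qtframe}(a) and is already packaged in the existing infrastructure. The genuine difficulty in the naturality theory of the paper lies instead with the $X$-version (Theorem~\ref{thm-co-chg-X}), where one must patch together the $A$-transitions for quadrilaterals via flips and check independence of the flip sequence using pentagon relations; but that is a separate statement and not part of the present theorem.
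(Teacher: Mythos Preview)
Your proposal is correct and follows essentially the same approach as the paper: define $\Psi^A_{\lambda'\lambda}=\Fr(\tr^A_{\lambda'})\circ\Fr(\tr^A_\lambda)^{-1}$ using that the sandwichness \eqref{eq-Aincl} forces $\Fr(\tr^A_\lambda)$ to be an isomorphism, then verify uniqueness and functoriality formally. The paper's proof is terser but identical in substance, and your additional remarks invoking Proposition~\ref{r.qtframe}(a) and the quantum torus frame from Section~\ref{sec.Atrace} correctly flesh out the one-line justification the paper gives.
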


\begin{theorem}\label{thm-co-chg-X}
Suppose $\surface$ is a triangulable surface. Given two ideal triangulations $\lambda,\lambda'$, there exists a coordinate change isomorphism
\begin{equation}
\Psi^X_{\lambda'\lambda}:\Fr(\FGbl(\surface,\lambda))\to\Fr(\FGbl(\surface,\lambda'))
\end{equation}
such that
\begin{equation}\label{eq-co-chg-trX}
\Psi^X_{\lambda'\lambda}\circ\tr_\lambda^X=\tr_{\lambda'}^X.
\end{equation}
The coordinate change isomorphism is functorial in the sense that for ideal triangulations $\lambda,\lambda',\lambda''$,
\begin{equation}\label{eq-co-chg-funcX}
\Psi^X_{\lambda\lambda}=\id,\qquad
\Psi^X_{\lambda''\lambda'}\circ\Psi^X_{\lambda'\lambda}=\Psi^X_{\lambda''\lambda}.
\end{equation}

Analogous results holds for the reduced algebras. The map is denoted by $\rd{\Psi}^X_{\lambda'\lambda}$.
\end{theorem}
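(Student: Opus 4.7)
The plan is a three-step bootstrap from polygons to general surfaces: first establish the $A$-version naturality (Theorem~\ref{thm-co-chg-A}); then use the multiplicatively linear isomorphism $\psi_\lambda$ to transport it to the $X$-version on polygons, which provides a recipe for single flips on arbitrary surfaces; and finally verify independence of the flip sequence by reducing the pentagon relation to the $A$-version pentagon identity.

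For Theorem~\ref{thm-co-chg-A}, by Theorem~\ref{thm-Atr}(a) $\tr_\lambda^A$ is injective and its image is sandwiched between $\cA_+(\fS,\lambda)$ and $\cA(\fS,\lambda)$; equivalently, $\{\gaa_v\mid v\in V_\lambda\}$ is a quantum torus frame for $\SS$. Proposition~\ref{r.qtframe}(a) then extends $\tr_\lambda^A$ to an isomorphism $\Fr(\tr_\lambda^A):\Fr(\SS)\xrightarrow{\cong}\Fr(\cA(\fS,\lambda))$. I would define
\[
\Psi^A_{\lambda'\lambda}:=\Fr(\tr_{\lambda'}^A)\circ\Fr(\tr_\lambda^A)^{-1},
\]
for which \eqref{eq-co-chg-trA} is tautological, uniqueness follows from the injectivity of $\tr_\lambda^A$, and functoriality is immediate from the definition. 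The reduced statement on polygons is identical, invoking Theorem~\ref{thm-Atr}(b) in place of (a).

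Next I would use $\psi_\lambda$ of Theorem~\ref{thm.dual} to define, for any triangulable $\fS$ with no interior puncture (in particular for $\PP_4$ and $\PP_5$),
\[
\Psi^X_{\lambda'\lambda}:=\Fr(\psi_{\lambda'})\circ\Psi^A_{\lambda'\lambda}\circ\Fr(\psi_\lambda)^{-1},
\]
which intertwines the $X$-traces by \eqref{eq.TrAX}. For a general triangulable $\fS$ and a single flip $\lambda\to\lambda'$ replacing an edge $e$ of a quadrilateral $Q\subset\fS$ (formed by the two, possibly glued, faces adjacent to $e$), I would use the tensor-product presentation \eqref{eq.Xlambda} of $\rd\FG_\lambda$ together with the cutting compatibility of Theorem~\ref{thm-rdtrX}(1). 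Cutting $\fS$ along $\partial Q\setminus e$ realizes $\FGbl(\fS,\lambda)$ as the balanced subalgebra of a factor-wise tensor product in which only the $Q$-factor is affected by the flip; the single-flip transition is then defined to be the quadrilateral $\Psi^X$ on the $Q$-factor and the identity on every other face, and intertwining \eqref{eq-co-chg-trX} follows from the polygon case plus cutting compatibility. For arbitrary $\lambda,\lambda'$, connectivity of the flip graph provides a finite flip sequence $\lambda=\lambda_0\to\lambda_1\to\cdots\to\lambda_k=\lambda'$, and the candidate $\Psi^X_{\lambda'\lambda}$ is defined as the composition of the corresponding single-flip maps.

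The main obstacle is to show that this composition is independent of the flip sequence, from which the functoriality \eqref{eq-co-chg-funcX} then follows automatically by concatenation. Two flip sequences connecting $\lambda$ to $\lambda'$ differ by a finite succession of (i) commutations of flips in disjoint quadrilaterals, and (ii) pentagon relations asserting that five successive flips around an embedded pentagonal subsurface return to the starting triangulation. Commutations are trivial from the locality of the single-flip construction. The pentagon relation reduces, by locality and by cutting along the boundary of the embedded $\PP_5\subset\fS$, to the assertion that the five-flip composition on the abstract pentagon is the identity on $\Fr(\FGbl(\PP_5,\lambda))$. Conjugating by $\Fr(\psi_\lambda)$ this becomes the analogous $A$-version pentagon identity on $\Fr(\cA(\PP_5,\lambda))$, which is immediate from $\Psi^A_{\lambda\lambda}=\id$ established in Step~1 (applied to the polygon $\PP_5$, for which Theorem~\ref{thm-Atr}(b) guarantees injectivity of $\rdtr^A_\lambda$ and hence of $\tr^A_\lambda$). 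This closes the proof of well-definedness; the reduced version is entirely parallel, replacing $\tr_\lambda^A$ by $\rdtr_\lambda^A$ and the $A$- and $X$-quantum tori by their reduced counterparts throughout, noting that only polygons arise when the pentagon argument is invoked.
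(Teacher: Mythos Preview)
Your strategy matches the paper's: $A$-version naturality on polygons via sandwichness, transport to the $X$-version through $\psi_\lambda$, local flip maps via the quadrilateral, and well-definedness via the pentagon. Two points deserve comment.

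First, there is a genuine technical gap in your Step~2. When you write ``the single-flip transition is then defined to be the quadrilateral $\Psi^X$ on the $Q$-factor and the identity on every other face,'' you are defining a map on the full tensor product $\Fr(\rd\FG_{\hat e})$, and you must still show it \emph{restricts} to $\Fr(\rdbl(\fS,\lambda))\to\Fr(\rdbl(\fS,\lambda'))$. The image is characterized by a matching of edge-vertex exponents on the boundary $\partial Q$, so you need to know that the quadrilateral map $\rd\Psi^X_{e'e}$ preserves the $x_v$-degree at each boundary small vertex $v$ of $\PP_4$. The paper isolates this as Corollary~\ref{cor-co-chg-bdry} and proves it (for polygons) from sandwichness plus Corollary~\ref{cor-bdry-exp}: every balanced monomial is, up to units, in the image of $\rdtr^X$, and the boundary degree of $\rdtr^X(\alpha)$ is the intrinsic quantity $n\langle\dd_e(\alpha),\varpi_i\rangle$, independent of the triangulation. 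Your argument as written does not supply this; without it the local definition does not obviously descend to the glued algebra.

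Second, your organization differs slightly from the paper's. You build the non-reduced $\Psi^X$ directly and treat the reduced case as parallel; the paper proceeds in the opposite order, constructing $\rd\Psi^X$ first on arbitrary triangulable $\fS$ and then obtaining the non-reduced $\Psi^X$ by restricting $\rd\Psi^X_{\lambda'^*\lambda^*}$ from the extended surface $\fS^*$ (again using the boundary-degree preservation to see that the restriction lands in $\Fr(\FGbl)$). The paper's ordering has the advantage that during the flip construction one works only with $\rdbl$, where the cutting/matching picture is simplest, and the attached triangles enter only at the very end.
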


Note Theorem~\ref{thm-co-chg-A} has uniqueness compared to Theorem~\ref{thm-co-chg-X}. An easy corollary of the theorems is the following.

\begin{corollary}
Suppose $\surface$ is a triangulable surface with no interior punctures.
\begin{enuma}
\item $\Psi^X_{\lambda'\lambda}$ is uniquely determined by
\begin{equation}\label{eq-co-chg-AX}
\Psi^X_{\lambda'\lambda}=\Fr(\psi_{\lambda'})\circ\Psi^A_{\lambda'\lambda}\circ\Fr(\psi_\lambda^{-1}).
\end{equation}
Analogous results hold for the reduced algebras when $\surface$ is a polygon.
\item $A$-version coordinate change maps can be defined for the reduced algebra.
\end{enuma}
\end{corollary}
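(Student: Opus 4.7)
\medskip\noindent\textbf{Proof proposal.} The corollary is essentially formal, given Theorems~\ref{thm-co-chg-A} and \ref{thm-co-chg-X} together with the intertwining relations $\tr_\lambda^X=\psi_\lambda\circ\tr_\lambda^A$ and $\rdtr_\lambda^X=\rd{\psi}_\lambda\circ\rdtr_\lambda^A$ from Theorem~\ref{thm-Atr}, so the plan is to verify that the prescribed formula defines the correct map and then unwind functoriality.

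For part (a), the plan is to first check that the formula
\[\Psi^X_{\lambda'\lambda}:=\Fr(\psi_{\lambda'})\circ\Psi^A_{\lambda'\lambda}\circ\Fr(\psi_\lambda^{-1})\]
gives an algebra isomorphism $\Fr(\FG^\bal(\fS,\lambda))\to\Fr(\FG^\bal(\fS,\lambda'))$: the maps $\psi_\lambda,\psi_{\lambda'}$ are algebra isomorphisms from the $A$-tori onto the balanced parts (Theorem~\ref{thm.dual}), hence induce isomorphisms on fraction fields, and $\Psi^A_{\lambda'\lambda}$ is the unique isomorphism produced by Theorem~\ref{thm-co-chg-A}. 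Next I would verify the intertwining \eqref{eq-co-chg-trX}: composing with $\tr_\lambda^X=\psi_\lambda\circ\tr_\lambda^A$ yields
\[\Psi^X_{\lambda'\lambda}\circ\tr_\lambda^X=\Fr(\psi_{\lambda'})\circ\Psi^A_{\lambda'\lambda}\circ\tr_\lambda^A=\Fr(\psi_{\lambda'})\circ\tr_{\lambda'}^A=\tr_{\lambda'}^X,\]
using \eqref{eq-co-chg-trA}. The ``uniquely determined'' clause then follows because $\tr_\lambda^X$ induces an isomorphism $\Fr(\SS)\xrightarrow{\cong}\Fr(\FG^\bal(\fS,\lambda))$: the sandwichness \eqref{eq-Aincl} together with the fact that $\SS$ is a domain of GK dimension $|V_\lambda|$ (Theorem~\ref{thm.domain} and Lemma~\ref{lemma-vset-size}) implies that $\tr_\lambda^A$ extends to an isomorphism on fraction fields, and then composition with $\Fr(\psi_\lambda)$ gives the same for $\tr_\lambda^X$. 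Any coordinate change map satisfying \eqref{eq-co-chg-trX} must then agree with the above formula on the image $\tr_\lambda^X(\SS)$, which generates $\Fr(\FG^\bal(\fS,\lambda))$. The reduced analogue for polygons is identical, using Theorem~\ref{thm-Atr}(b) and the polygon-case uniqueness of $\rd{\Psi}^A$ in Theorem~\ref{thm-co-chg-A}.

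For part (b), the plan is to run the construction in reverse. Define
\[\rd{\Psi}^A_{\lambda'\lambda}:=\Fr(\rd{\psi}_{\lambda'}^{-1})\circ\rd{\Psi}^X_{\lambda'\lambda}\circ\Fr(\rd{\psi}_\lambda),\]
which is an algebra isomorphism of fraction fields because $\rd{\psi}_\lambda,\rd{\psi}_{\lambda'}$ are algebra isomorphisms onto the balanced parts. To justify this as a genuine $A$-version coordinate change, I would verify the intertwining $\rd{\Psi}^A_{\lambda'\lambda}\circ\rdtr_\lambda^A=\rdtr_{\lambda'}^A$ by direct substitution: using $\rdtr_\lambda^X=\rd{\psi}_\lambda\circ\rdtr_\lambda^A$ (from Theorem~\ref{thm-Atr}(b)) and the intertwining for $\rd{\Psi}^X$ (Theorem~\ref{thm-co-chg-X}) one obtains
\[\rd{\Psi}^A_{\lambda'\lambda}\circ\rdtr_\lambda^A=\Fr(\rd{\psi}_{\lambda'}^{-1})\circ\rd{\Psi}^X_{\lambda'\lambda}\circ\rdtr_\lambda^X=\Fr(\rd{\psi}_{\lambda'}^{-1})\circ\rdtr_{\lambda'}^X=\rdtr_{\lambda'}^A.\]
Functoriality $\rd{\Psi}^A_{\lambda\lambda}=\id$ and $\rd{\Psi}^A_{\lambda''\lambda'}\circ\rd{\Psi}^A_{\lambda'\lambda}=\rd{\Psi}^A_{\lambda''\lambda}$ then follow from the corresponding properties of $\rd{\Psi}^X$ by inserting and cancelling $\Fr(\rd{\psi}_{\lambda'})\circ\Fr(\rd{\psi}_{\lambda'}^{-1})=\id$ in the middle.

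There is no serious obstacle: the content is entirely formal diagram-chasing. The only point that requires minor care is the absence of \emph{a priori} uniqueness for $\rd{\tr}^A_\lambda$ (hence for $\rd{\Psi}^A$) when $\fS$ is not a polygon; this is exactly why in part (b) the definition must be routed through the $X$-version rather than being characterised intrinsically by an intertwining property of $\rd{\tr}^A$.
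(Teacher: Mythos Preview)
Your proposal is correct and essentially the same as the paper's argument. The only cosmetic difference is the direction of part~(a): the paper conjugates the already-constructed $\Psi^X_{\lambda'\lambda}$ by $\Fr(\psi_\lambda),\Fr(\psi_{\lambda'}^{-1})$ and invokes the uniqueness clause of Theorem~\ref{thm-co-chg-A} for $\Psi^A$, whereas you conjugate $\Psi^A$ and re-derive uniqueness on the $X$-side from the fact that $\tr_\lambda^X$ induces an isomorphism of fraction fields; both are the same formal diagram chase, and your part~(b) matches the paper exactly.
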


\begin{proof}
(a) Let $f_{\lambda'\lambda}=\Fr(\psi_{\lambda'}^{-1})\circ\Psi^X_{\lambda'\lambda}\circ\Fr(\psi_\lambda)$. Then $f_{\lambda'\lambda}$ satisfy the defining property \eqref{eq-co-chg-trA} of the $A$-version coordinate change. By uniqueness, $f_{\lambda'\lambda}=\Psi_{\lambda'\lambda}^A$.

(b) $\rd{\Psi}^A_{\lambda'\lambda}=\Fr(\rd{\psi}_{\lambda'}^{-1})\circ\rd{\Psi}^X_{\lambda'\lambda}\circ\Fr(\rd{\psi}_\lambda)$ is a coordinate change map using the properties of the $X$-version.
\end{proof}

\subsection{Proof of the $A$-version Theorem~\ref{thm-co-chg-A}}

Fix an ideal triangulation $\lambda$. Clearly $\lenT(\surface,\lambda)$ is a localization of $\lenT_+(\surface,\lambda)$, so $\Fr(\lenT_+(\surface,\lambda)) = \Fr(\lenT(\surface,\lambda))$. Hence from \eqref{eq-Aincl},
\begin{equation}
\Fr(\lenT_+(\surface,\lambda)) \cong \Fr(\skein(\surface)) \cong \Fr(\lenT(\surface,\lambda)),
\end{equation}
where the second isomorphism is $\Fr(\tr_\lambda^A)$. Then $\Psi^A_{\lambda'\lambda}$ is uniquely defined as $\Fr(\tr_{\lambda'}^A)\circ\Fr(\tr_\lambda^A)^{-1}$. The properties of $\Psi^A_{\lambda'\lambda}$ are trivial to verify.

The reduced case follows from the same argument using \eqref{eq-Aincl-rd}.

\subsection{Proof of the $X$-version Theorem~\ref{thm-co-chg-X}}

We first define the coordinate change for the reduced case, and the non-reduced case can be obtained by a restriction of the reduced case for the extended surface $\ext{\surface}$.

We state the behavior of the coordinate change at boundary edges here. The proof will be given at each step of the construction.

\begin{corollary}\label{cor-co-chg-bdry}
Suppose $\lambda_1,\lambda_2$ are ideal triangulations of the surface $\surface$. Let $\rd{\FG}'_i$ be the subalgebra of $\rd{\FG}(\surface,\lambda_i)$ generated by $x_v$ with $v$ not on a boundary edge. Given a monomial $\mon{m}\in\rdbl(\surface,\lambda)$, write $\mon{m}=\mon{m}_\partial\mon{m}'$ where $\mon{m}_\partial$ is a monomial on the variables $x_v$ with $v$ on a boundary edge, and $\mon{m}'\in\rd{\FG}'_1$. Then
\begin{equation}
\rd{\Psi}_{\lambda_2\lambda_1}^X(\mon{m})\in\mon{m}_\partial\Fr(\rd{\FG}'_2).
\end{equation}
\end{corollary}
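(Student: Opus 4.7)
The plan is to reduce the corollary to a single flip and then read off the boundary behavior from the explicit local coordinate change. By the functoriality \eqref{eq-co-chg-funcX} of $\rd{\Psi}^X$ and the fact that the flip graph of ideal triangulations of $\surface$ is connected, it suffices to prove the claim when $\lambda_2=\mathrm{flip}_e(\lambda_1)$ for a single interior edge $e$ of $\lambda_1$. Writing $\mon{m}$ multiplicatively, the corollary further reduces to the per-vertex statement: for each small vertex $v\in\rd{V}_{\lambda_1}$,
\[
\rd{\Psi}^X_{\lambda_2\lambda_1}(x_v)\in\begin{cases} x_v\cdot\Fr(\rd{\FG}_2'), & \text{if $v$ is on a boundary edge of }\surface,\\ \Fr(\rd{\FG}_2'), & \text{otherwise.}\end{cases}
\]

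To prove the per-vertex statement I would exploit locality. A flip at $e$ only alters the variables $x_v$ for $v$ in the quadrilateral $\square$ formed by the two triangles of $\lambda_1$ meeting along $e$; outside $\square$ the variables are fixed and both cases of the display hold trivially. Inside $\square$ I would factor $\rd{\Psi}^X$ through its $A$-version using the multiplicatively linear isomorphism $\rd{\psi}$ of Theorem~\ref{thm.dual}:
\[
\rd{\Psi}^X_{\lambda_2\lambda_1}\big|_\square=\Fr(\rd{\psi}_{\lambda_2})\circ\rd{\Psi}^A_{\lambda_2\lambda_1}\big|_\square\circ\Fr(\rd{\psi}_{\lambda_1})^{-1}.
\]
This composition is well defined because $\square$ is a polygon, so Theorem~\ref{thm-co-chg-A} supplies $\rd{\Psi}^A$ on $\square$. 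Computing the image of each generator $x_v$ through the three factors should then yield precisely the desired factorization: for boundary $v$ the image is $x_v$ times a rational function in the non-boundary variables of $\square$, and for non-boundary $v$ the image lies entirely in $\Fr(\rd{\FG}_2')$.

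The main obstacle will be the explicit bookkeeping in this last step, which has two ingredients. First, one must check that the columns of $\rdm{K}_{\lambda_i}$ indexed by a boundary small vertex $v$ agree between $i=1,2$ up to contributions at non-boundary indices. This should follow from the local nature of the skeleton construction of Subsection~\ref{ss.skeleton}: for boundary $v$, the elongated graph $\tilde{Y}_v$ is supported in the triangle containing $v$ and its left-turning elongations, and the weights that land on boundary edges of $\surface$ depend only on the triangle adjacent to $v$ on $\partial\surface$, which is unaffected by the flip at the interior edge $e$. Second, one must check that $\rd{\Psi}^A_{\lambda_2\lambda_1}$ on $\square$ preserves the subalgebra generated by the boundary $a_v$'s up to multiplication by elements of the non-boundary $A$-torus; this can be verified by expressing each $\rd{\gaa}_v^{(\lambda_1)}$ for boundary $v$ in the quasimonomial basis $\{\rd{\gaa}_u^{(\lambda_2)}\}$ via the sandwich inclusion \eqref{eq-Aincl-rd}, noting that all non-trivial corrections use interior skein generators because the boundary $Y$-graph data is common to both triangulations.
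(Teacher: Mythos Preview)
Your overall strategy---reduce to a single flip by functoriality, then localize to the quadrilateral $\square$---matches the paper's. However, the per-vertex reduction contains a genuine gap. The map $\rd{\Psi}^X_{\lambda_2\lambda_1}$ is only defined on $\Fr(\rdbl(\surface,\lambda_1))$, and an individual generator $x_v$ is in general \emph{not} balanced: if $e_v\notin\rd{\Lambda}_{\lambda_1}$ (which fails already for an interior small vertex when $n=3$), then writing $x_v=ab^{-1}$ with $a,b\in\rdbl$ would force every monomial of $x_vb$ to lie in $\rd{\Lambda}_{\lambda_1}$, a contradiction. So $x_v\notin\Fr(\rdbl(\surface,\lambda_1))$, and your displayed expression $\rd{\Psi}^X_{\lambda_2\lambda_1}(x_v)$ is not well defined. (The paper remarks explicitly in the final subsection of Section~\ref{sec.Nat} that the coordinate change does not extend to the full torus $\rd{\FG}(\surface,\lambda)$.) Hence ``writing $\mon{m}$ multiplicatively'' and reducing to single variables breaks down.

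The paper repairs this by replacing the naive generators $x_v$ with the balanced monomials $\rdtr^X_{\lambda_1}(\bar{\gaa}_v)=x^{\rdm{K}_{\lambda_1}(v,\cdot)}$, which weakly generate $\rdbl(\square,\lambda_1)$ by the quantum torus frame property. For these, the defining relation $\rd{\Psi}^X_{\lambda_2\lambda_1}\circ\rdtr^X_{\lambda_1}=\rdtr^X_{\lambda_2}$ gives
\[
\rd{\Psi}^X_{\lambda_2\lambda_1}\bigl(x^{\rdm{K}_{\lambda_1}(v,\cdot)}\bigr)=\rdtr^X_{\lambda_2}(\bar{\gaa}_v),
\]
and Corollary~\ref{cor-bdry-exp} says that for \emph{any} triangulation $\mu$, the exponent of $x_u$ in $\rdtr^X_\mu(\alpha)$ at a boundary small vertex $u$ equals $n\langle\dd_e(\alpha),\varpi_i\rangle$, which depends only on $\alpha$ and the boundary edge $e$, not on $\mu$. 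Thus both sides have the same boundary part, and the corollary for $\square$ follows at once. This bypasses all of the skeleton bookkeeping you anticipated: your first bookkeeping item is precisely Corollary~\ref{cor-bdry-exp} applied to $\bar{\gaa}_v$, and your second item (analyzing $\rd{\Psi}^A$ on boundary $a_v$'s) is not needed.
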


\textbf{Step 1}: For the reduced algebra of a polygon, define the map $\rd{\Psi}^X_{\lambda'\lambda}$ by \eqref{eq-co-chg-AX}. Then \eqref{eq-co-chg-trX} and \eqref{eq-co-chg-funcX} are trivial to verify using the $A$-version theorem.

To prove Corollary~\ref{cor-co-chg-bdry} for a polygon, first consider $\mon{m}=\rdtr_{\lambda_1}^X(a_v)$ for any $v\in\rd{V}_{\lambda_1}$. Then $\rd{\Psi}_{\lambda_2\lambda_1}^X(\mon{m})=\rdtr_{\lambda_2}^X(a_v)$, and the statement holds by Corollary~\ref{cor-bdry-exp}. Since these monomials weakly generates $\rdbl(\poly_k;\lambda_1)$, the result holds for all monomials.

\textbf{Step 2}: We define the coordinate change for a flip with the reduced algebra. Consider the flip at an edge $e$, shown in Figure~\ref{fig-flip}. The edges of the quadrilateral need not be distinct.

\begin{figure}
\centering
\input{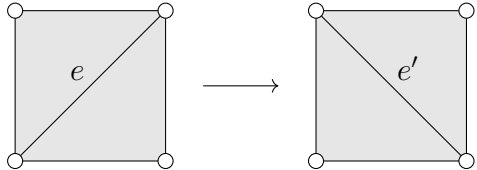}
\caption{A flip at the edge $e$}\label{fig-flip}
\end{figure}

Let $\surface_{\hat{e}}$ be the surface $\surface$ cut along all interior edges of $\lambda$ except $e$, and let $\face_{\hat{e}}$ be the set of triangle components of $\surface_{\hat{e}}$. The one remaining component of $\surface_{\hat{e}}$ is the quadrilateral $\poly_4$ containing $e$. Let
\begin{equation}
\Theta_{\hat{e}}:\reduceS(\surface)\to\reduceS_{\hat{e}}
:=\Bigg(\bigotimes_{\tau\in\face_{\hat{e}}}\reduceS(\tau)\Bigg)\otimes\reduceS(\poly_4)
\end{equation}
be the corresponding cutting homomorphism. Let
\begin{equation}
\rdbl_{\hat{e}}=\Bigg(\bigotimes_{\tau\in\face_{\hat{e}}}\rdbl(\tau)\Bigg)\otimes\rdbl(\poly_4;e),
\end{equation}
where by abuse of notation, $e$ also denotes the ideal triangulation of $\poly_4$ containing the edge $e$. Recall that by cutting, $\rdbl(\surface,\lambda)$ is embedded in $\rdbl_{\hat{e}}$ as the subalgebra satisfying the matching condition. By the compatibility of the $X$-version trace with cutting, $\rdtr_\lambda^X$ is the restriction of the composition $((\bigotimes\rdtr_\tau^X)\otimes\rdtr_e^X)\circ\Theta_{\hat{e}}$.

Define
\begin{equation}
\rd{\Psi}_e^X=\Fr((\bigotimes\id)\otimes\rd{\Psi}_{e'e}^X):\Fr(\rdbl_{\hat{e}})\to\Fr(\rdbl_{\hat{e}'}).
\end{equation}
Restricted to $\rdbl(\surface,\lambda)$, Corollary~\ref{cor-co-chg-bdry} shows that the image of $\rd{\Psi}_e^X$ satisfies the matching condition. Thus $\rd{\Psi}_e^X$ restricts to a map
\[\rd{\Psi}^X_{\lambda'\lambda}:\Fr(\rdbl(\surface,\lambda))\to\Fr(\rdbl(\surface,\lambda')).\]
This is an isomorphism. The inverse is defined by the same flip construction on $e'$. Then \eqref{eq-co-chg-trX} follows from the case of $\poly_4$ and the splitting definition of $\rdtr^X$.

Corollary~\ref{cor-co-chg-bdry} for a flip is an easy consequence of the construction, since the flip uses the coordinate change of the polygon $\poly_4$.

\textbf{Step 3}: Any two triangulations $\lambda,\lambda'$ are connected by a sequence of flips, see \cite{Penner},
\[\lambda=\lambda_0\to\lambda_1\to\cdots\to\lambda_k=\lambda'.\]
Define
\begin{equation}
\rd{\Psi}^X_{\lambda'\lambda}=\rd{\Psi}^X_{\lambda_k\lambda_{k-1}}\circ\cdots\circ\rd{\Psi}^X_{\lambda_1\lambda_0}.
\end{equation}
We need to show that this is well-defined.

It is known (see e.g. \cite[Chapter 5]{Penner}) that two sequences of flips connecting $\lambda\to\lambda'$ are related by the following moves and their inverses:
\begin{enumerate}
\item (Reflexivity Relation) Suppose the new edge in the flip at $e$ is $e'$, then the flips at $e$ then $e'$ can be canceled.
\item (Distant Commutativity Relation) If $e$ and $f$ are edges in different triangles, then flips at $e$ then $f$ is the same as flips at $f$ then $e$.
\item (Pentagon Relation) Suppose 5 (not necessary distinct) edges of the triangulation bound a pentagon $\poly_5$. The 5 triangulations of $\poly_5$ are connected by flips shown in Figure~\ref{fig-pentagon}. Then the sequence of flips at $e$, $f$, $e'$, $f'$, and $e''$ can be canceled.
\end{enumerate}

\begin{figure}
\centering
\input{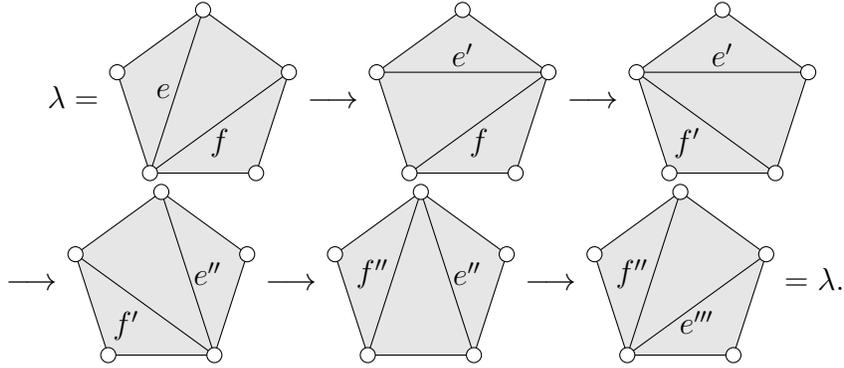}
\caption{The Pentagon Relation}\label{fig-pentagon}
\end{figure}

The corresponding relations holds for the coordinate change isomorphisms using the following cutting argument. Let $b,c\in\lambda$ be disjoint arcs and $\lambda'$ be the flip of $\lambda$ at $b$. By the definition of the coordinate change for a flip, we obtain the following commutative diagram.
\begin{equation}
\begin{tikzcd}[column sep=small]
\Fr(\rdbl(\surface,\lambda)) \arrow[rr,hook] \arrow[rd,hook] \arrow[ddd] & & \Fr(\rdbl(\Cut_c\surface,\Cut_c\lambda)) \arrow[ld,hook] \arrow[ddd] \\
& \Fr(\rdbl_{\hat{b}}) \arrow[d] & \\
& \Fr(\rdbl_{\hat{b}'}) & \\
\Fr(\rdbl(\surface,\lambda')) \arrow[rr,hook] \arrow[ru,hook] & & \Fr(\rdbl(\Cut_c\surface,\Cut_c\lambda')) \arrow[lu,hook]
\end{tikzcd}
\end{equation}
Here, the hook arrows are all cutting homomorphisms, and the vertical arrows are the coordinate changes. The two quadrilateral commute by definition, and the triangles commute since they are just various stages of cutting. Using the outside square of the diagram, we can cut along all edges unchanged in the flips when we verify the above relations for coordinate changes.
\begin{enumerate}
\item This follows from the case of $\reduceS(\poly_4)$, as mentioned after the construction of a flip.
\item $\surface$ cut along unchanged edges becomes a collection of triangles and two quadrilaterals whose diagonals are $e$ and $f$. Each flip is identity on the quadrilaterals not containing the flipped edge. Clearly these two flips commute.
\item $\surface$ cut along unchanged edges becomes a collection of triangles and a pentagon $\poly_5$ containing $e$ and $f$. Since the triangulation goes back to the original after the sequence of flips, the composition of the coordinate changes is identity by the result of $\reduceS(\poly_5)$.
\end{enumerate}
This shows that $\rd{\Psi}_{\lambda'\lambda}^X$ is well-defined.

The properties of the coordinate change isomorphism are easy. Then \eqref{eq-co-chg-trX} and Corollary~\ref{cor-co-chg-bdry} follow from the properties of the flip. The functorial properties \eqref{eq-co-chg-funcX} follow directly from definition. This completes the proof for the reduced case.

\textbf{Step 4}: To obtain the coordinate change for the non-reduced algebra, consider the extended surface $\ext{\surface}$. $\FGbl(\surface,\lambda)$ is the subalgebra of $\rdbl(\ext{\surface},\ext{\lambda})$ characterized by certain boundary generators having zero exponents. By Corollary~\ref{cor-co-chg-bdry}, the coordinate change $\rd{\Psi}_{\ext{(\lambda')}\ext{\lambda}}$ preserves this property. Thus it restricts to a map $\Psi^X_{\lambda'\lambda}:\Fr(\FGbl(\surface,\lambda))\to\Fr(\FGbl(\surface,\lambda'))$. The properties of the non-reduced case follow easily from the reduced case. This complete the proof of Theorem \ref{thm-co-chg-X}.

{
\subsection{Comments}
We defined the coordinate change isomorphisms for the balanced subalgebras $\FG^\bal(\fS,\lambda)$ and $\rd\FG^\bal(\fS,\lambda)$ of the full algebras $\FG(\fS,\lambda)$ and $\rd\FG(\fS,\lambda)$.
It should be noted that there are no extensions of the coordinate change isomorphisms to the full algebras $\FG(\fS,\lambda)$ and $\rd\FG(\fS,\lambda)$. This is one reason why the proof of Theorem Theorem \ref{thm-co-chg-X} is difficult, even for the case when $n=2$, see \cite{BW}. Our approach, which uses the $A$-version quantum trace and avoids the complications arising when the triangulation has self-glued edges, is new even for the case $n=2$. The original Fock-Goncharov algebra is a subspace of $\rd\FG^\bal(\fS,\lambda)$, and one can check that our coordinate change isomorphism restricts to a coordinate change isomorphism of the original Fock-Goncharov algebra, and the restriction is equal to the composition of a sequence of quantum mutations. The details will appear elsewhere. Note that even if we have already had the coordinate change isomorphism for the original Fock-Goncharov algebra, it is non-trivial to extend it to $\rd\FG^\bal(\fS,\lambda)$, as we see that we cannot extend it to $\rd\FG(\fS,\lambda)$.
}

\section{The $SL_3$ case}
\label{sec.sl3}

In the case $n=3$, we will show that the reduced quantum trace $\btr^X_\lambda$ is injective.

\subsection{Positively stated subalgebra}

A stated web $\alpha$ is \term{positively stated} if the state of every endpoint of $\alpha$ is $>(n+1)/2$. The $R$-submodule of $\skein(\surface)$ spanned by positively stated webs form a subalgebra, denoted by $\skein_+(\surface)$. Similarly, $\rdSplus(\surface)\subset\reduceS(\surface)$ is the reduced version.

For convenience, let $m=\lfloor(n+1)/2\rfloor+1\in\JJ$ be the smallest positive state, and define $P=[m;n]$ be the set of positive states. Let $M\subset\rdSplus(\surface)$ be the multiplicatively closed subset generated by $M^P_P(a)$ where $a=C(v),\cev{C}(v)$ for all vertices $v$. By Lemma~\ref{r.height5}, elements of $M$ $q$-commute. Positively stated corner arcs $q$-commute with every diagram, so it is an Ore set.

\begin{lemma}\label{lemma-loc-sur}
For every $\alpha\in\reduceS(\surface)$, there exists an element $m\in M$, such that $m\alpha\in\rdSplus(\surface)$.
\end{lemma}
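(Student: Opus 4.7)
The plan is to prove this by induction on a complexity measure of $\alpha$. For a stated web $\alpha$, let the \emph{negativity defect} be the pair (number of non-positively stated endpoints, sum of the values $m-s$ over negative states $s$) ordered lexicographically. The base case, when there are no non-positively stated endpoints, gives $\alpha \in \rdSplus(\surface)$ and we take the unit $1 \in M$. For the inductive step, we pick an endpoint $p$ of $\alpha$ with state $s<m$ lying on a boundary edge incident to some vertex $v$, and show that multiplying by a suitable quantum minor $M^P_P(a)$ (with $a=C(v)$ or $\cev{C}(v)$) yields a sum of stated webs each of strictly smaller negativity defect, at which point we invoke the inductive hypothesis.

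The core of the argument is a local analysis at $v$. Cut $\surface$ along an ideal arc parallel to the relevant boundary edge near $v$; equivalently, invoke the $\Oq$-coaction at that edge. This lets us write $\alpha$ as a sum over states $t$ of a ``rest'' stated web $\alpha_t$ on $\surface$ and a stated $\partial \poly_2$-arc representing a generator of $\Oq$ (or $\Bq$) at $v$. After passing to $\rd\cS(\surface)$, the local factor lives in $\LO$ (respectively $\bBq$), and we need to prove: for every $y\in\LO$, some product $w$ of minors of type $M^P_P$ satisfies that $wy$ equals an $R$-linear combination of positively stated arcs in $\LO$. This is a statement about the Borel-type algebra $\LO$ alone.

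For this local statement, the key input is Theorem~\ref{thm.qtorus1}: $\LO$ has a quantum torus frame $\TT=\{\bar v_{ij}\}$ of quantum minors whose generators are exactly the top-right quantum minors, and in particular $M^P_P$ (up to indexing) lies among them. The sandwichness property in Proposition~\ref{r.qtframe} then produces, for each generator $\bar u_{ij}$ with $j<m$, an $R$-linear expression $(\text{product of }M^P_P\text{ minors})\cdot \bar u_{ij} = \sum c_k \bar v_{I_k, J_k}$ in the quantum space $\bT_+(Q)$. Using Laplace expansion to rewrite each $\bar v_{I_k, J_k}$ in terms of $\bar u_{ij}$'s with $j \ge m$ (i.e. positively stated arcs), and iterating over all non-positive endpoints of the chosen coaction image, completes the local step. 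The monogon-vertex case follows the same pattern using the transmutation isomorphism $\kappa:\Oq\xrightarrow{\cong}\Bq$ and its reduced version from Section~\ref{sec.pMon}, which transports the quantum torus frame to $\bBq$.

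The main obstacle will be Step~2: verifying that each monomial rewriting actually decreases the negativity defect, and not just replacing one negative endpoint by another of equal (or greater) negativity from a different generator appearing in the expansion. The crucial technical point is the \emph{upper-triangular} nature of the $R$-matrix (Lemma~\ref{r.upper}), which ensures that in the Laplace/Cramer expansion, the ``lower order'' terms always involve stricter state patterns than those we started with, allowing the lexicographic induction to close. A secondary, largely bookkeeping difficulty is that multiplication by $M^P_P(a)$ introduces new \emph{positive} endpoints on the boundary, which do not affect the defect but must be tracked to ensure the global element $m\in M$ we construct by iterating this procedure at different vertices remains a product in $M$; since each $M^P_P(C(v))$ is supported in a small neighborhood of $v$, operations at different vertices commute up to a $q$-scalar and can be assembled without conflict.
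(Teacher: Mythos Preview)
Your high-level strategy matches the paper's: induct on a deficit measuring how far the states are from being positive, and at each step multiply by an element of $M$ to decrease the deficit. The gap is in the mechanism you propose for the inductive step. Your reduction to a local statement about $\LO$ via the edge coaction does not capture what is actually happening. The corner arc $a=C(v)$ connects the two boundary edges meeting at $v$, so the product $M^P_P(a)\cdot\alpha$ is not computable by coacting at a single edge; the whole point is the interaction across two edges. Worse, inside $\LO$ itself your local statement is false: since $\bar\buu$ is lower triangular, $M^P_P(\bar\buu)=\prod_{k=m}^n \bar u_{kk}$ is a product of diagonal generators, which by \eqref{eq.iijk} $q$-commutes with every $\bar u_{ij}$ and is invertible. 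Multiplying by any power of it therefore cannot turn $\bar u_{ij}$ with $j<m$ into a combination of positively stated generators. Likewise, the quantum torus frame $\TT$ of Theorem~\ref{thm.qtorus1} contains many minors whose column sets are not inside $[m;n]$, so neither the sandwichness property of Proposition~\ref{r.qtframe} nor a Laplace expansion of the $\bar v_{ij}$ yields positively stated arcs.

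The paper's proof avoids all of this with a direct diagrammatic identity. Given a worst (highest) endpoint $x$ with state $i<m$, one builds a new web $\alpha'$ by attaching an $n$-valent vertex near $x$ and routing $|P|$ of its outgoing strands around the vertex $v$ to the adjacent boundary edge with states $P$, leaving $m-2$ strands on the original edge with states $I=[\bar m+1,n]\setminus\{\bar i\}$. One then resolves this vertex at the boundary using Lemma~\ref{lemma-vertex-rev}; in $\reduceS(\fS)$ all but one term vanish (the others contain a bad arc or repeated states at the vertex), and the surviving term is exactly a group of $P$-stated parallel corner arcs $m_1\in M$ times $\alpha$, up to a $\hq$-scalar. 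The deficit of $I$ is strictly smaller than that of the original state $i$, so $\alpha'$ has strictly smaller deficit. This is the step your proposal is missing: a concrete skein identity producing $m_1\alpha\eqq\pm\alpha'$, rather than an algebraic localization to $\LO$.
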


\begin{proof}
We just need to prove the lemma for diagrams. Let $\alpha$ be a web diagram on $\surface$ with state $s:\partial\alpha\to\JJ$. Define the deficit of $\alpha$ as
\[s_-(\alpha)=\sum_{\substack{x\in\partial\alpha\\s(x)<m}}(m-s(x)).\]
We induct on the deficit. If the deficit is $0$, then all states are positive, and the lemma is trivial. Now assume $\alpha$ has positive deficit.

Among the endpoints with non-positive states, choose the endpoint $x$ with maximal height. Let $i=s(x)<m$. Define a new diagram $\alpha'$ by adding a vertex close to this endpoint, replace the small segment near the boundary by $m-2$ parallel strands with reverse orientation that connect to the vertex, assign the states $I=[\bar{m}+1,n]\setminus\{\bar{i}\}$, adding strands close to the boundary $\partial\surface$ with constant height that extends to the boundary edge counterclockwise to the current one, and assign positive states $P=[m,n]$ to the new strand. The diagrams $\alpha$ and $\alpha'$ are shown in Figure~\ref{fig-st-switch}.

The only state in $I$ that is potentially non-positive is $\bar{m}+1$, which has deficit $1$ if $m$ is odd. Thus the deficit of $I$ is
\[s_-(I)=\begin{cases}1,&i<m-1,\\0,&i=m-1.\end{cases}.\]
This is strictly less than the deficit of the state $i$.

\begin{figure}
\centering
\input{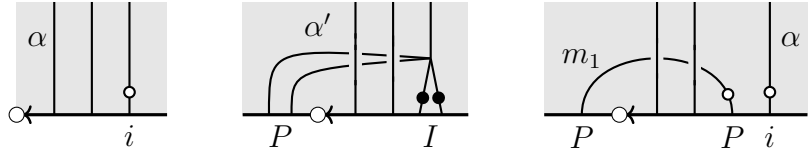}
\caption{The diagrams $\alpha$, $\alpha'$, and the resolution of the vertex}\label{fig-st-switch}
\end{figure}

Using Lemma~\ref{lemma-vertex-rev}, we can resolve the new vertex of $\alpha'$ on the boundary. The only permutation of states giving a nonzero diagram is the last one in Figure~\ref{fig-st-switch}. The states on the returning arcs are $I$ and $\bar{I}=[1,m-1]\setminus\{i\}$, and the states connected to the other boundary edge must be $P$ to avoid a bad arc. The remaining state $i$ is assigned to $\alpha$. The coefficient of this term is $\pm\hq^l$ for some $l\in\ints$.

The corner arcs in the resolution have the highest consecutive states. Let $m_1$ be these corner arcs. We can applying height exchange to the new endpoints stated with $P$ and write the resolved diagram as a product $m_1\alpha$. This results in an addition power of $\hq$. Thus we found a product of positively stated corner arc $m_1$ such that
\[m_1\alpha \eqq \pm\alpha',\]
and the deficit of $\alpha'$ is less than $\alpha$. Then by induction, the lemma is true.
\end{proof}

When $n=3$, The only state allowed is the highest state $3$. In Theorem~\ref{thm-Splus-iso}, we will show that $\skein_+(\surface)$ and $\rdSplus(\surface)$ both agree with the ``reduced $SU_3$-skein algebra" $\RSthree(\surface)$ of \cite{FrohmanSikora} if we set $a=1$ and replace $q^{1/3}$ by $\hq^6=q^{-1/3}$. One direction is easy. We can construct an algebra homomorphism $i:\RSthree(\surface)\to\skein_+(\surface)$ as follows. $\RSthree(\surface)$ is spanned by $3$-webs with no states or height order on the boundary. For such a web $\alpha$, $i(\alpha)\in\skein_+(\surface)$ is obtained by assigning the state $3$ to all endpoints. It is easy to check that the defining relations of $\RSthree(\surface)$ are preserved. Clearly, $i$ is surjective. The projection $\skein(\surface)\to\reduceS(\surface)$ also restricts to $\pr:\skein_+(\surface)\to\rdSplus(\surface)$. In Theorem~\ref{thm-Splus-iso}, we will show that $i$ and the restricted $\pr$ are both isomorphisms.

\subsection{Split triangulation}\label{sec-split}

Given a triangulation $\lambda$ of the surface $\surface$, let $\mathring{\lambda}$ denote the set of interior edges of $\lambda$. The corresponding \term{split triangulation} $\hat{\lambda}=\lambda\sqcup\mathring{\lambda}$ is a collection of disjoint arcs containing $\lambda$ such that each interior edge of $\lambda$ has two isotopic copies in $\hat{\lambda}$. If $\surface$ is cut along the interior edges of $\hat{\lambda}$, the components are triangles and bigons. The triangles are in bijection with the faces $\face_\lambda$, and the bigons are in bijection with the interior edges $\mathring{\lambda}$.

In the definition of $\rdtr^X$, we can cut along the interior edges of $\hat{\lambda}$ instead, and then apply $\rdtr_\tau^X$ to each face $\tau\in\face_\lambda$ and the counit $\epsilon$ to each bigon. The counit property implies that the composition is the same as the previous definition.

\subsection{Basis elements in canonical position}

First we define the basic components of webs in canonical position.

A \term{crossbar web} is a $3$-web in the bigon $\poly_2$ whose underlying graph consists of parallel lines connecting the two sides of the bigon and at most one line (crossbar) connecting each pair of the adjacent parallel lines. An example is given in Figure~\ref{fig-crossbar}.

The \term{honeycomb} of degree $h\in\ints$, denoted $H_h$, is defined to be a $3$-web in the triangle $\poly_3$ whose underlying graph is dual to the $\abs{h}$-triangulation of $\poly_3$. The orientation on the web near the boundary points out of $\poly_3$ if $h>0$ and into $\poly_3$ if $h<0$. By convention, the web is empty when $h=0$. The honeycomb of degree $1$ is the generator $a_{111}$. More examples are shown in Figure~\ref{fig-honeycomb}.

\begin{figure}
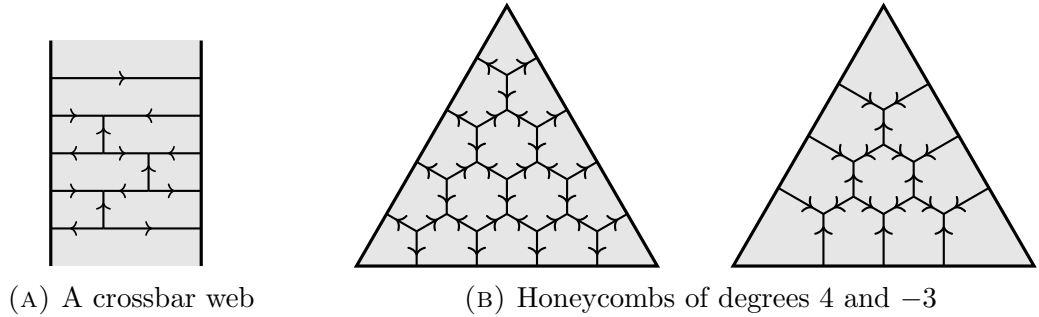

\centering
\begin{subfigure}[b]{0.3\linewidth}
\centering
\input{crossbar}
\subcaption{A crossbar web}\label{fig-crossbar}
\end{subfigure}
\begin{subfigure}[b]{0.6\linewidth}
\centering
\input{honeycomb}
\subcaption{Honeycombs of degrees $4$ and $-3$}\label{fig-honeycomb}
\end{subfigure}
\caption{Basic components of webs in canonical position}
\end{figure}

Suppose $\surface$ is a triangulable surface with an ideal triangulation $\lambda$. Let $B=B(\fS)$ be the basis of $\RSthree(\surface)$ in \cite{FrohmanSikora}. A basis element $\alpha\in B$ is in \term{canonical position} if
\begin{itemize}
\item in each bigon of the split triangulation, the web $\alpha$ is a crossbar web, and
\item in each face of the split triangulation, the web is a disjoint union of corner arcs and a honeycomb.
\end{itemize}

\subsection{Coordinates of basis}

Suppose $\surface$ is a triangulable surface with an ideal triangulation $\lambda$. Following \cite{FrohmanSikora} and \cite{DouglasSun}, the basis can be parameterized by $\nats^{\rd{V}_\lambda}$ as follows.

Every edge of $\lambda$ has two orientations. Let $\avec{\lambda}$ be the set of all oriented edges of $\lambda$. We identify $\rd{V}_\lambda=\avec{\lambda}\sqcup\face_\lambda$ such that
\begin{itemize}
\item an oriented edge $e\in\avec{\lambda}$ is identified with the vertex on $e$ closer to its tail, and that
\item a triangle $\tau\in\face_\lambda$ is identified with the vertex at the center of $\tau$.
\end{itemize}

Given a basis element $\alpha\in B$ in canonical position, we introduce the intersection and rotation numbers from \cite{FrohmanSikora}. For an oriented edge $e\in\avec{\lambda}$, the \term{intersection number} $e(\alpha)$ is the number of intersection points $\alpha\cap e$ where the tangent to $e$ is counterclockwise with respect to the tangent of $\alpha$. For a face $\tau\in\face_\lambda$, define the \term{rotation numbers} $\tau_+(\alpha)$ and $\tau_-(\alpha)$ as the numbers of counterclockwise and clockwise corner arcs of $\al$ in $\tau$, respectively. The net rotation number is defined as $r_\tau(\alpha)=\tau_-(\alpha)-\tau_+(\alpha)$.

The \term{Fock-Goncharov coordinates} $\vec{k}_\alpha\in\nats^{\rd{V}_\lambda}$ are defined by
\begin{equation}\label{eq-FG-coord}
\vec{k}_\alpha(e)=2e(\alpha)+\cev{e}(\alpha),\quad
e\in\avec{\lambda},\qquad
\vec{k}_\alpha(\tau)=\sum_{i=1}^3\left(e_i(\alpha)+\cev{e}_i(\alpha)\right)-\tau_-(\alpha).\quad
\tau\in\face_\lambda.
\end{equation}
Here, $e_1,e_2,e_3$ are the edges of $\tau$, and $\cev{e}$ is the edge $e$ with the opposite orientation. \cite{DouglasSun} showed that the Fock-Goncharov coordinate map
\[\kappa:B\to\nats^{\rd{V}_\lambda},\qquad \alpha\mapsto\vec{k}_\alpha\]
is injective, and that the image $\Gamma=\kappa(B)$ is a submonoid (with an explicit description by linear inequalities).

\subsection{Leading term}

Choose a total order $\preceq$ on $\nats^{\rd{V}_\lambda}$ such that $\vec{k}(v)\le\vec{k}'(v)$ for all $v\in\rd{V}_\lambda$ implies $\vec{k}\preceq\vec{k}'$. This order defines a filtration on $\rdbl(\surface,\lambda)$ by
\[F_\vec{n}=\Span\{x^\vec{k}\mid\vec{k}\preceq\vec{n}\}.\]
Similarly, $\RSthree(\surface)$ has a filtration
\[F_\vec{n}=\Span\{\alpha\in B\mid \kappa(\alpha)\preceq\vec{n}\}.\]

\begin{theorem}[Proposition~5.80 of \cite{Kim1}]\label{thm-lt-trX}
For a basis element $\alpha\in B$, the leading term of $\rdtr_\lambda^X(\pr(i(\alpha)))$ with respect to the filtration $F_\cdot$ is a monomial $\hq^m x^{\vec{k}_\alpha}$ for some $m\in\ints$. Here $\pr:\skein(\surface)\to\reduceS(\surface)$ is the natural projection.
\end{theorem}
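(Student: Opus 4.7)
My plan is to use the split-triangulation decomposition of Subsection~\ref{sec-split} together with an explicit computation of the $X$-trace of each local piece when $\alpha$ is in canonical position. First I would isotope $\alpha$ into canonical position relative to $\hat\lambda$: in each bigon of $\hat\lambda$ the restriction is a crossbar web, and in each triangle face $\tau$ it is a disjoint union of boundary corner arcs together with a single honeycomb $H_{h_\tau}$ of integer degree $h_\tau$. Writing
\[\rdtr^X_\lambda(\pr(i(\alpha)))=\Big(\bigotimes_{\tau\in\face_\lambda}\rdtr^X_\tau\otimes\bigotimes_{e\in\mathring\lambda}\epsilon_e\Big)\circ\Theta_{\hat\lambda}(\pr(i(\alpha)))\]
reduces the problem to evaluating three kinds of local pieces: state-$3$ corner arcs and honeycombs inside triangles, and the counit applied to crossbar webs in bigons.

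Next I would compute these local contributions. A state-$3$ counterclockwise corner arc at a vertex $v$ of a triangle is, up to a unit in $R$, the generator $\gaa_{v'}$ for an appropriate edge small vertex $v'$ by Lemma~\ref{r.det}, so Theorem~\ref{thm.Xtr0} gives its $X$-trace as the monomial $x^{\bmK_\tau(v',\cdot)}$. For a honeycomb $H_{h_\tau}$ (and its clockwise mirror), an isotopy into a $q$-commuting product of state-$3$ arcs combined with Lemma~\ref{r.height5} shows that its $X$-trace is, up to a scalar, a single Weyl-normalized monomial whose exponent at the central small vertex $v_\tau$ of $\tau$ is $h_\tau$ and whose edge-vertex exponents are determined by the boundary states. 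For a crossbar in a bigon, the counit formula \eqref{eq.epsR} has a diagonal term $\delta_{ii'}\delta_{jj'}$ that preserves the adjacent states and a swap term $\delta_{i<j}\delta_{ij'}\delta_{ji'}$ that strictly decreases at least one overcrossing state.

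Finally I would isolate the leading term. The contribution obtained by resolving every crossbar diagonally and assembling the triangle pieces yields a single Weyl-normalized monomial $\qq^m x^{\vec k}$, and I would check $\vec k=\vec k_\alpha$ by matching with \eqref{eq-FG-coord} at each type of small vertex. For $v$ on an oriented edge $e$, Corollary~\ref{cor-bdry-exp} applied on the two adjacent triangles yields $\vec k(v)=2e(\alpha)+\cev e(\alpha)$; for $v$ at the center of a face $\tau$, the honeycomb contributes $h_\tau$ and the corner arcs contribute the remaining boundary crossings, so $\vec k(v)=\sum_{i=1}^3(e_i(\alpha)+\cev e_i(\alpha))-\tau_-(\alpha)$. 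The main obstacle will be ruling out cancellations or unexpected dominating contributions from the swap resolutions of crossbars and from the height-exchange corrections of Lemmas~\ref{r.upper} and \ref{r.height2} when reordering boundary states in neighboring triangles; one must show that every such correction strictly decreases at least one coordinate, so no subleading term is $\succeq x^{\vec k_\alpha}$. This monotonicity is essentially the combinatorial heart of Kim's argument in \cite[Proposition~5.80]{Kim1}, which transplants directly to the present normalization.
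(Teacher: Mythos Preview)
The paper does not give its own proof of this statement: it is quoted verbatim as Proposition~5.80 of \cite{Kim1}, followed only by a remark translating conventions. So there is nothing in the paper to compare your argument against beyond that attribution.

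Your outline is a reasonable sketch of Kim's strategy, and you correctly isolate the combinatorial heart (the monotonicity of swap and height-exchange corrections). Two steps deserve more caution, though. First, your claim that the $X$-trace of a honeycomb $H_{h_\tau}$ is a single monomial is not justified by the argument you give: a honeycomb has internal trivalent vertices and is not literally a $q$-commuting product of corner arcs, so Lemma~\ref{r.height5} alone does not apply. One must first resolve the vertices (e.g.\ via \eqref{e.sinksource} or Lemma~\ref{lemma-vertex-rev}), which produces a sum, and then identify its leading term. Second, invoking Corollary~\ref{cor-bdry-exp} computes the $x_v$-degree of the \emph{entire} trace $\rdtr^X_\lambda(\pr(i(\alpha)))$, not of its leading monomial; you still need to know that the top part in the filtration $F_\cdot$ realizes that edge degree, which is again the cancellation analysis you defer. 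In short, your plan is correct in shape, but the substantive work is precisely the step you hand off to \cite{Kim1}.
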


\begin{remark}
In \cite{Kim1}, the ``stated skein algebra" is bigger than ours, and the ``reduced" skein algebra is our stated skein algebra. In addition, the isomorphism requires $\hq\leftrightarrow\omega^{-1/2}$ (so $q\leftrightarrow q^{-1}$) and the states are in reverse order $s\leftrightarrow 4-s$. Since Kim's quantum trace $\operatorname{Tr}$ do not have extra attached triangles, it is our reduced trace composed with the projection $\rdtr_\lambda^X\circ\pr$.

Kim calculated the leading term when the surface is a triangle and proved the result only for webs that do not end on the boundary. The proof is exactly the same if the webs end on the boundary with the highest states (our $s=3$ and their $s=1$).
\end{remark}

\begin{theorem}\label{thm-Splus-iso}
Suppose $n=3$.
\begin{enuma}
\item $i:\RSthree(\surface)\to\skein_+(\surface)$ an isomorphism.
\item The restricted projection $\pr:\skein_+(\surface)\to\reduceS(\surface)$ is injective.
\item The restricted trace $\rdtr_\lambda^X:\skein_+(\surface)\to\rdbl(\surface,\lambda)$ is injective.
\end{enuma}
We will identify $\RSthree(\surface)$ with $\skein_+(\surface)$, and consider it as subalgebras of both $\skein(\surface)$ and $\reduceS(\surface)$.
\end{theorem}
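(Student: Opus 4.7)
The plan is to use Theorem~\ref{thm-lt-trX} (Kim's leading term computation) together with the injectivity of the Fock-Goncharov coordinate map $\kappa:B\to\nats^{\rd{V}_\lambda}$ (from \cite{DouglasSun}) to prove all three statements simultaneously. Set
\[\Phi := \rdtr_\lambda^X \circ \pr \circ i : \RSthree(\surface) \longrightarrow \rdbl(\surface,\lambda).\]
By Theorem~\ref{thm-lt-trX}, for each basis element $\alpha\in B$ the leading term of $\Phi(\alpha)$ with respect to the filtration $\{F_\vec{n}\}$ is $\hat q^{m_\alpha}\, x^{\vec{k}_\alpha}$ for some $m_\alpha\in\ints$. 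Since $\kappa$ is injective, the leading monomials $x^{\vec{k}_\alpha}$ are pairwise distinct as $\alpha$ ranges over $B$.

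The first step is to deduce that $\{\Phi(\alpha):\alpha\in B\}$ is $R$-linearly independent. Since the monomials $\{x^{\vec{k}}\}_{\vec{k}\in\Lambda_\lambda}$ form a free $R$-basis of $\rdbl(\surface,\lambda)$ and $\hat q$ is invertible in the domain $R$, a standard leading-term argument applies: in any finite relation $\sum c_\alpha\Phi(\alpha)=0$, take $\alpha_0$ with $\vec{k}_{\alpha_0}$ maximal in $\preceq$ among those $\alpha$ with $c_\alpha\ne0$; only $\Phi(\alpha_0)$ contributes to the coefficient of $x^{\vec{k}_{\alpha_0}}$, which must equal $c_{\alpha_0}\hat q^{m_{\alpha_0}}=0$, forcing $c_{\alpha_0}=0$. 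Iterating, all $c_\alpha$ vanish. Since $B$ is a free $R$-basis of $\RSthree(\surface)$, this shows that $\Phi$ is injective.

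With $\Phi$ injective, the three parts follow at once. For (a), $i$ is already surjective by construction (noted immediately before the theorem statement), and is injective because $\Phi=\rdtr_\lambda^X\circ\pr\circ i$ is; hence $i$ is an isomorphism. For (b), under the isomorphism $i$ the restricted projection $\pr|_{\skein_+(\surface)}$ becomes $\pr\circ i$, which is injective since $\rdtr_\lambda^X\circ(\pr\circ i)=\Phi$ is. For (c), after using (b) to identify $\skein_+(\surface)$ with its image in $\reduceS(\surface)$, the restriction $\rdtr_\lambda^X|_{\skein_+(\surface)}$ is $\Phi$, hence injective.

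The main work is concentrated in Theorem~\ref{thm-lt-trX}, which we use as a black box from \cite{Kim1}; once the leading-term computation and the injectivity of $\kappa$ are granted, the remainder is essentially formal linear algebra. The only point that needs a moment of care is verifying that the one-term-at-a-time leading-term cancellation argument goes through over the domain $R$ rather than a field, but this is immediate from $R$ being a domain and $\hat q$ being invertible, together with the freeness of $\rdbl(\surface,\lambda)$ on the monomial basis.
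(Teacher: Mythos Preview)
Your argument is correct and follows essentially the same route as the paper: both use Theorem~\ref{thm-lt-trX} together with the injectivity of $\kappa$ to show that the composition $\Phi=\rdtr_\lambda^X\circ\pr\circ i$ is injective, and then deduce (a)--(c) formally. The paper phrases the linear independence step via the associated graded map $\Gr(\Phi)$, whereas you unpack it as a direct leading-term argument; these are the same reasoning in different packaging.
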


\begin{proof}
Since the filtration of $\RSthree(\surface)$ is defined as the span of basis elements, $B$ is also a basis for the associated graded algebra $\Gr(\RSthree(\surface))$. Similarly, the monomials form a basis for $\Gr(\rdbl(\surface,\lambda))=\rdbl(\surface,\lambda)$.

By Theorem~\ref{thm-lt-trX}, the composition
\begin{equation}
\rdtr_\lambda^+:
\begin{tikzcd}
\RSthree(\surface)\arrow[r,"i"] &
\skein_+(\surface)\arrow[r,"\pr"] &
\reduceS(\surface)\arrow[r,"\rdtr_\lambda^X"] &
\rdbl(\surface,\lambda)
\end{tikzcd}
\end{equation}
preserves the filtrations defined above. The associated graded map
\[\Gr(\rdtr_\lambda^+):\Gr(\RSthree(\surface))\to
\Gr(\rdbl(\surface,\lambda))=\rdbl(\surface,\lambda)\]
is injective since the image of the basis $B$ is a subset of the monomial basis of $\rdbl(\surface,\lambda)$.

Therefore, $i$ is injective. We already know $i$ is surjective. Thus $i$ is an isomorphism, and the restricted projection $\pr:\skein_+(\surface)\to\reduceS(\surface)$ is injective, too. With the identification $\RSthree(\surface)=\skein_+(\surface)$ by $i$, $\rdtr_\lambda^X=\rdtr_\lambda^+$ is injective.
\end{proof}

\begin{theorem} \label{thm.n3}
Suppose $n=3$. Let $M\subset\skein_+(\surface)$ be the multiplicative closed subset generated by positively stated corner arcs.
\begin{enuma}
\item $\reduceS(\surface)$ is the Ore localization $\skein_+(\surface)\{M^{-1}\}$, which is a domain.
\item $\rdtr_\lambda^X:\reduceS(\surface)\to\rdbl(\surface,\lambda)$ is the localization of the map $\skein_+(\surface)\to\rdbl(\surface,\lambda)$, and it is injective.
\end{enuma}
\end{theorem}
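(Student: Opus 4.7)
The plan is to bootstrap part (a) from the injectivity results in Theorem~\ref{thm-Splus-iso} and the ``normalization'' Lemma~\ref{lemma-loc-sur}, and then to deduce (b) by transporting the quantum trace through the localization. First I verify that $M$ is a right Ore set of regular elements in $\skein_+(\surface)$: the $q$-commutation of elements of $M$ with every web diagram is recorded in the paragraph preceding Lemma~\ref{lemma-loc-sur}, and regularity is automatic because $\skein_+(\surface)$ is a subalgebra of the domain $\skein(\surface)$ given by Theorem~\ref{thm.domain}(a). Consequently $\skein_+(\surface)\{M^{-1}\}$ is a well-defined algebra and, being a localization of a domain, is itself a domain.

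For part (a), I observe that for $n=3$ the only positive state is $3$, so $M$ is generated by $C(v)_{33}$ and $\cev{C}(v)_{33}$ as $v$ ranges over the vertices of $\fS$. Lemma~\ref{r.Ibad1}(a) gives $\prod_{i=1}^{3} C(v)_{ii} = 1$ in $\reduceS(\surface)$, so $C(v)_{33}$ is the inverse of $C(v)_{11} C(v)_{22}$, and similarly for $\cev{C}(v)_{33}$; hence every generator of $M$ becomes invertible in $\reduceS(\surface)$. Combined with the injectivity $\skein_+(\surface) \hookrightarrow \reduceS(\surface)$ from Theorem~\ref{thm-Splus-iso}(b), the universal property of the Ore localization produces an algebra homomorphism $\Phi \colon \skein_+(\surface)\{M^{-1}\} \to \reduceS(\surface)$. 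Surjectivity is exactly Lemma~\ref{lemma-loc-sur}: for any $\alpha \in \reduceS(\surface)$ there is $m \in M$ with $m\alpha \in \skein_+(\surface)$, whence $\alpha = m^{-1}(m\alpha)$ lies in the image of $\Phi$. For injectivity, any element of the localization has the form $\beta m^{-1}$; if $\Phi(\beta m^{-1}) = 0$, then $\beta = 0$ in $\reduceS(\surface)$, and by Theorem~\ref{thm-Splus-iso}(b) already $\beta = 0$ in $\skein_+(\surface)$.

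For part (b), once the identification $\reduceS(\surface) \cong \skein_+(\surface)\{M^{-1}\}$ from (a) is in place, the mere fact that $\rdtr_\lambda^X$ is a well-defined algebra homomorphism on $\reduceS(\surface)$ forces $\rdtr_\lambda^X(m)$ to be invertible in the quantum torus $\rdbl(\surface,\lambda)$ for every $m \in M$; hence $\rdtr_\lambda^X$ is the Ore localization of its restriction $\rdtr_\lambda^X|_{\skein_+(\surface)}$. Injectivity of that restriction is Theorem~\ref{thm-Splus-iso}(c), and inverting non-zero-divisors preserves the kernel, so $\rdtr_\lambda^X \colon \reduceS(\surface) \to \rdbl(\surface,\lambda)$ is itself injective.

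There is no serious obstacle: the substantive technical input was already assembled in Theorem~\ref{thm-Splus-iso} and Lemma~\ref{lemma-loc-sur}. The only point demanding attention is to recognize that the hypothesis $n=3$, which forces $P=\{3\}$ and thus reduces $M$ to the single-arc generators $C(v)_{33}, \cev{C}(v)_{33}$, is precisely what renders invertibility of $M$ in $\reduceS(\surface)$ explicit through Lemma~\ref{r.Ibad1}; for general $n$ one would need an analogous statement about the quantum minors $M^P_P(a)$, which at present is unavailable.
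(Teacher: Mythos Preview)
Your proof is correct and follows essentially the same route as the paper's: both use Lemma~\ref{r.Ibad1} for invertibility of $M$ in $\reduceS(\surface)$, Theorem~\ref{thm-Splus-iso}(b) for the embedding $\skein_+(\surface)\hookrightarrow\reduceS(\surface)$, Lemma~\ref{lemma-loc-sur} for surjectivity, and Theorem~\ref{thm-Splus-iso}(c) for injectivity of the trace on $\skein_+(\surface)$. The one difference is that in part~(b) the paper invokes Theorem~\ref{thm-trX-CS} to see directly that $\rdtr_\lambda^X(m)$ is a monomial (hence invertible), whereas you deduce invertibility immediately from part~(a); your route is slightly cleaner and avoids an unnecessary citation. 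One small caveat: your appeal to Theorem~\ref{thm.domain}(a) for integrality of $\skein(\surface)$ only covers essentially bordered surfaces, so if you want the statement for punctured surfaces without boundary you should instead note that Theorem~\ref{thm-Splus-iso}(c) already embeds $\skein_+(\surface)$ into the quantum torus $\rdbl(\surface,\lambda)$, which suffices.
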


\begin{proof}
Since $\skein_+(\surface)$ is a subalgebra of $\skein(\surface)$, it is a domain. Thus the Ore localization $\skein_+(\surface)\{M^{-1}\}$ is a domain such that $\skein_+(\surface)$ is an embedded subalgebra.

Elements of $M$ are invertible in $\reduceS(\surface)$ by Lemma~\ref{r.Ibad1}. Thus the inclusion (restricted projection) $\pr:\skein_+(\surface)\hookrightarrow\reduceS(\surface)$ induces a map
\[f:\skein_+(\surface)\{M^{-1}\}\hookrightarrow\reduceS(\surface).\]
It is surjective by Lemma~\ref{lemma-loc-sur}. Thus $f$ is an isomorphism. This proves (a).

If $m$ is a positively stated corner arc, then by Theorem~\ref{thm-trX-CS}, $\rdtr_\lambda^X(m)$ is a monomial, which is invertible. Then (b) follows from Theorem~\ref{thm-Splus-iso} and the universal property of localization.
\end{proof}


\appendix

\section{Proofs of the matrix identities}

\subsection{Proof of Lemma~\ref{lemma-HK-tri}(c)} \label{sec-dual-tri}

It is helpful to rewrite the definition of $\rdm{H}$ more explicitly. Using our coordinate conventions, if $x=ijk$ is an interior vertex, then
\begin{equation}\label{eq-H-int}
\begin{split}
\rdm{H}(x;i+1,j,k-1)=\rdm{H}(x;i-1,j+1,k)=\rdm{H}(x;i,j-1,k+1)&=1,\\
\rdm{H}(x;i-1,j,k+1)=\rdm{H}(x;i+1,j-1,k)=\rdm{H}(x;i,j+1,k-1)&=-1.
\end{split}
\end{equation}

To obtain a formula when $x=ijk$ an edge vertex, we may assume $i=0$ using the rotational symmetry. Then the nonzero components are
\begin{equation}\label{eq-H-edge-d}
\begin{split}
\rdm{H}(x;0jk)=\rdm{H}(x;1,j,k-1)&=1,\\
\rdm{H}(x;0,j+1,k-1)=\rdm{H}(x;1,j-1,k)&=-1.
\end{split}
\end{equation}

First we consider when $x=ijk$ is interior, and let $y=i'j'k'$. By \eqref{eq-H-int},
\begin{equation}\label{eq-tri-dual-int}
\begin{split}
(\rdm{H}\rdm{K})(x,y)&=\rdm{K}(i+1,j,k-1;y)+\rdm{K}(i-1,j+1,k;y)+\rdm{K}(i,j-1,k+1;y)\\
&\quad-\rdm{K}(i-1,j,k+1;y)-\rdm{K}(i+1,j-1,k;y)-\rdm{K}(i,j+1,k-1;y).
\end{split}
\end{equation}

There are 7 cases depending on the relation between $ijk$ and $i'j'k'$, but the expression \eqref{eq-tri-dual-int} is symmetric under rotation. This reduces the calculation to only $3$ cases. See Figure~\ref{fig-tri-dual-int}, where the blue dots are Case 1, the white dots are Case 2, and the black dot is Case 3.

Case 1: $k'\le k-1$, $i'\ge i+1$. Then \eqref{eq-tri-dual-int} becomes
\begin{equation}
\begin{split}
&\quad[(i+1)j'+jk'+k'(i+1)]+[(i-1)j'+(j+1)k'+k'(i-1)]\\
&+[ij'+(j-1)k'+k'i]-[(i-1)j'+jk'+k'(i-1)]\\
&-[(i+1)j'+(j-1)k'+k'(i+1)]-[ij'+(j+1)k'+k'i].
\end{split}
\end{equation}
A careful cancellation shows the result is $0$.

Case 2: $k'\le k-1$, $i'=i$, so $j'\ge j+1$. Then \eqref{eq-tri-dual-int} becomes
\begin{equation}
\begin{split}
&\quad[jk'+(k-1)i'+i'j]+[(i-1)j'+(j+1)k'+k'(i-1)]\\
&+[ij'+(j-1)k'+k'i]-[(i-1)j'+jk'+k'(i-1)]\\
&-[(j-1)k'+ki'+i'(j-1)]-[ij'+(j+1)k'+k'i].
\end{split}
\end{equation}
A careful cancellation shows the result is $0$.

Case 3: $k'=k$, $i'=i$, so $i'j'k'=ijk$. Then \eqref{eq-tri-dual-int} becomes
\begin{equation}
\begin{split}
&\quad[jk+(k-1)i+ij]+[(i-1)j+(j+1)k+k(i-1)]\\
&+[ij+(j-1)k+ki]-[(i-1)j+jk+k(i-1)]\\
&-[(j-1)k+ki+i(j-1)]-[(k-1)i+ij+j(k-1)].
\end{split}
\end{equation}
This simplifies to $i+j+k=n$.

\begin{figure}
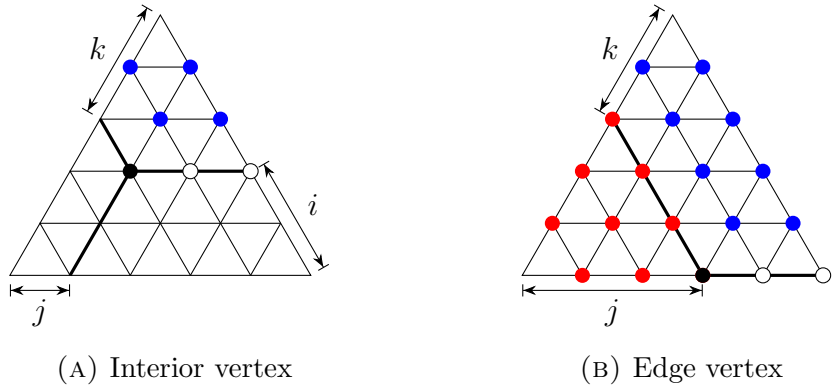

\centering
\begin{subfigure}{0.4\linewidth}
\centering
\input{tri-dual-int}
\subcaption{Interior vertex}\label{fig-tri-dual-int}
\end{subfigure}
\begin{subfigure}{0.4\linewidth}
\centering
\input{tri-dual-edge}
\subcaption{Edge vertex}\label{fig-tri-dual-edge}
\end{subfigure}
\caption{Cases to check in the triangle}
\end{figure}

Next we consider an edge vertex $x=ijk$. Recall we can assume $i=0$ using the rotational symmetry. By \eqref{eq-H-edge-d},
\begin{equation}\label{eq-tri-dual-edge}
(\rdm{H}\rdm{K})(x,y)=\rdm{K}(0jk,y)+\rdm{K}(1,j,k-1;y)-\rdm{K}(0,j+1,k-1;y)-\rdm{K}(1,j-1,k;y).
\end{equation}

This time there are 4 cases to check. See Figure~\ref{fig-tri-dual-edge}, where the blue dots are Case 1, the white dots are Case 2, the red dots are Case 3, and the black dot is Case 4.

Case 1: $k'\le k-1$, $i'\ge 1$. Then \eqref{eq-tri-dual-edge} becomes
\begin{equation}
[jk']+[j'+jk'+k']-[(j+1)k']-[j'+(j-1)k'+k']=0.
\end{equation}

Case 2: $k'\le k-1$, $i'=0$, so $j'\ge j+1$. Then \eqref{eq-tri-dual-edge} becomes
\begin{equation}
[jk']+[jk'+(k-1)i'+i'j]-[(j+1)k']-[(j-1)k'+ki'+i'(j-1)]=0.
\end{equation}

Case 3: $k'\ge k$, $j'\le j-1$. Then \eqref{eq-tri-dual-edge} becomes
\begin{equation}
[ki'+j'k]+[(k-1)i'+j'+j'(k-1)]-[(k-1)i'+j'(k-1)]-[ki'+j'+j'k]=0.
\end{equation}

Case 4: $i'j'k'=0jk$. Then \eqref{eq-tri-dual-edge} becomes
\begin{equation}
[jk]+[jk]-[j(k-1)]-[(j-1)k]=j+k=n.
\end{equation}

Therefore, we have $\rdm{H}\rdm{K}=n\id_{\rd{V}}$ for the triangle.

\subsection{Proof of Lemma~\ref{lemma-HK}(c) for the reduced case} \label{sec-dual-sur}

Let $\tau$ be a face containing $v\in\rd{V}_\lambda$. Let $W=\{w\in\rd{V}_\lambda\mid\rdm{H}_\lambda(u,w)\ne0\}$. Then by definition \eqref{eq-surgen-exp},
\begin{equation}\label{eq-HK-sur}
\rdm{H}_\lambda\rdm{K}_\lambda(u,v)
=\rdm{K}_\tau\left(\sum_{w\in W}\rdm{H}(u,w)\sk_\tau(w),v\right).
\end{equation}
We will show that the sum of skeletons can be reduced to $\tau$. The calculation depends on whether $u$ is in the interior of a face, on a boundary edge, or on an interior edge.

First consider when $u$ is in the interior of the face $\nu$. There are six vertices in $W$, and they are in $\nu$ as well. See Figure~\ref{fig-H-j}. The corresponding $\tilde{Y}_w$ have the same underlying graph $\tilde{Y}$ but with different weights. For any segment $s\subset\tau\cap\tilde{Y}$ extended from, say, $e_1$, the corresponding segment for $\tilde{Y}_w$ form 3 pairs with weights $i-1,i,i+1$, and the $\rdm{H}_\lambda(u,w)$ values have the opposite signs in each pair. Thus all such terms cancel, and
\begin{equation}
\sum_{w\in W}\rdm{H}(u,w)\sk_\tau(w)=\sum_{w\in W}\rdm{H}(u,w)w\in\ints[\rd{V}_\nu].
\end{equation}
Thus $\rdm{H}_\lambda\rdm{K}_\lambda=n\id$ follows from the triangle case. The case when $u$ is on a boundary edge is analogous.

\begin{figure}
\centering
\input{H-int-j2}
\input{H-edge-d-j2}
\input{H-edge-int-j2}
\caption{The calculation of $(\rdm{H}_\lambda\rdm{K}_\lambda)(u,v)$}\label{fig-H-j}
\end{figure}

Finally, there is the case when $u$ is on an interior edge $e$. See Figure~\ref{fig-H-j}. As in the proof of Lemma~\ref{lemma-K-welldef}, segments that do not intersect $e$ do not contribute to $\rdm{K}_\lambda$. Segments extended from an edge of the quadrilateral cancel as before. In particular, if $v$ is not in the quadrilateral, $\rdm{H}_\lambda\rdm{K}_\lambda(u,v)=0$. If $v$ is in the quadrilateral, by a possible rotation of the picture, we assume both $u,v$ are in the left triangle $\tau$. The segments in $\tau$ extending from $a,b$ are $u,f$, respectively. Thus the remaining terms in the skeleton sum is
\[\sum_{w\in W}\rdm{H}(u,w)\sk_\tau(w)=u-f+c-d\in\ints[\rd{V}_\tau],\]
which has the same pattern as the boundary case of the triangle. Thus $\rdm{H}_\lambda\rdm{K}_\lambda(u,v)=n\id(u,v)$ follow from the triangle case.

\subsection{Proof of Lemma~\ref{lemma-HK}(c), the non-reduced case}
\label{sec-dual-ext}

We first show that $\mat{H}_\lambda\mat{C}$ is the restriction of $\exm{H}$ to $V_\lambda\times\rd{V}_{\ext{\lambda}}$. Recall the only nonzero entries of $\mat{C}$ are $\mat{C}(v,v)=1$ when $v\in\lv{V}_\lambda$ and $\mat{C}(w,v)=-1$ when $w\in\lv{V}_\lambda\setminus\rd{V}_\lambda$ and $v=p(w)\in\rd{V}_{\ext{\lambda}}\setminus\lv{V}$. Thus
\begin{equation}\label{eq-HC-sum}
(\mat{H}_\lambda\mat{C})(u,v)=\sum_{w\in\lv{V}_\lambda}\mat{H}_\lambda(u,w)\mat{C}(w,v)
=\begin{cases}
\mat{H}_\lambda(u,v),&v\in\lv{V},\\
-\sum_{w\in p^{-1}(v)}\mat{H}_\lambda(u,w),&\text{otherwise}.
\end{cases}
\end{equation}

In the first case, $\mat{H}_\lambda(u,v)=\exm{H}(u,v)$. In the second case, if $u$ is not in the attached triangle containing $v$, then $\mat{H}_\lambda(u,w)=0$ for all $w\in p^{-1}(v)$, so $(\mat{H}_\lambda\mat{C})(u,v)=0=\exm{H}(u,v)$.

The remaining case is when $u\in V_\lambda$ is in the attached triangle containing $v\in\rd{V}_{\ext{\lambda}}\setminus\lv{V}_\lambda$. The calculation can be divided into 5 cases shown in Figure~\ref{fig-HC}, where the black dot is $u$. The first two diagrams are when $u$ is on the attaching edge. The next one is when $u\in V_\lambda\setminus\lv{V}_\lambda$. The last two are when $u\in (V_\lambda\cap\lv{V}_\lambda)\setminus\rd{V}_\lambda$. The white and gray circles are $\mat{H}_\lambda(u,\cdot)$ values. The empty circles in each diagram are $v$ positions where \eqref{eq-HC-sum} is not trivially zero, and the thick lines indicate preimages of $p$. The sum is also indicated in the figure. Comparing these results with $\exm{H}$ (see Figure~\ref{fig-H}), we see that the equality holds.

\begin{figure}
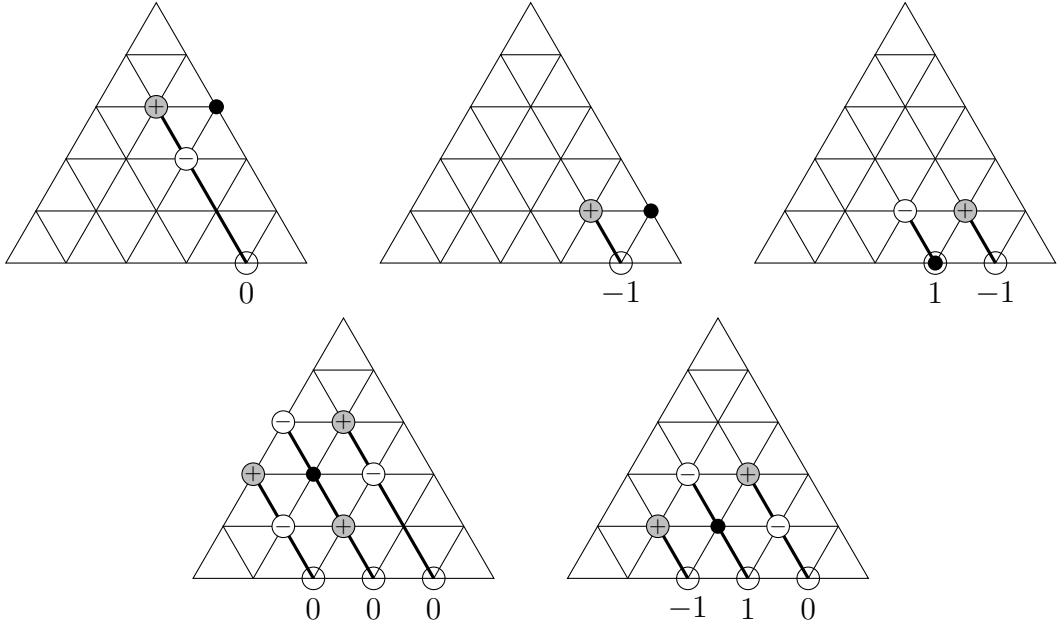

\centering
\input{QC-attach-1}\qquad
\input{QC-attach-2}\qquad
\input{QC-boundary}

\input{QC-int-1}\qquad
\input{QC-int-2}\qquad
\caption{Nontrivial calculations for $\mat{H}_\lambda\mat{C}$}\label{fig-HC}
\end{figure}

Then it follows that for $u,v\in V_\lambda$,
\begin{align*}
(\mat{H}_\lambda\mat{K}_\lambda)(u,v)&=(\mat{H}_\lambda\mat{C}\exm{K})(u,v)
=\sum_{w\in\rd{V}_{\ext{\lambda}}}(\mat{H}_\lambda\mat{C})(u,w)\exm{K}(w,v)\\
&=(\exm{H}\exm{K})(u,v)=n\id_{\rd{V}_{\ext{\lambda}}}(u,v)=n\id_{V_\lambda}(u,v).
\end{align*}

\section{Proof of Theorem~\ref{thm-trX-CS}}
\label{sec-trX-CS}

Recall by Lemma~\ref{lemma-agen-decomp}, for $v=(ijk)\in\rd{V}$,
\[\gaa_{ijk}=\left[M_1(i,k)M_2(j)\right]_\Weyl,\]
where $M_1(i,k)=M^{[i+1; i+k]}_{[\bar{k};n]}(\ceC(v_1))$ and $M_2(j)=M^{[\bar{j};n]}_{[\bar{j};n]}(C(v_2))$.

The following special cases are useful.
\begin{equation}\label{eq-agen-decomp-sp}
M_2(j)=\gaa_{n-j,j,0},\qquad M_1(i,k)\eqq \gaa_{ijk}M_2(j)^{-1}=\gaa_{ijk}\gaa_{n-j,j,0}^{-1}.
\end{equation}
In particular, $\rdtr^X(M_1(i,k))$ is a monomial. Hence it is invertible.

\begin{lemma}\label{lemma-trX-CS-diag}
Theorem~\ref{thm-trX-CS} holds for $\alpha=C(v_2)_{ss}$.
\end{lemma}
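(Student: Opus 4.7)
My plan is to express $C(v_2)_{ss}$ as a Weyl-normalized ratio of two quantum torus frame generators $\gaa_v$, apply Theorem~\ref{thm.Xtr0}, and match the result with the unique compatible path value $x^{\vec{k}_p}$. By the $\BZ/3$-equivariance of $\btr^X$ (Theorem~\ref{thm.Xtr0}) and of the formula in Theorem~\ref{thm-trX-CS}, the choice of $v_2$ over $v_1$ or $v_3$ is just a matter of convenience.

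First I would use Lemma~\ref{r.Ibad1}(a)--(c) to write the diagonal minor $M_2(j)=M^{[\bar{j};n]}_{[\bar{j};n]}(C(v_2))$ as the pairwise-commuting product $\prod_{k=\bar{j}}^{n}C(v_2)_{kk}$, which gives the telescoping identity
\[
C(v_2)_{ss}=M_2(n-s+1)\,M_2(n-s)^{-1}.
\]
Combining with \eqref{eq-agen-decomp-sp}, which identifies $M_2(j)=\gaa_{n-j,j,0}$, I then obtain
\[
C(v_2)_{ss}\eqq\gaa_{s-1,\,n-s+1,\,0}\,\gaa_{s,\,n-s,\,0}^{-1},
\]
with the boundary cases $s=1$ and $s=n$ handled using $\prod_{k=1}^{n}C(v_2)_{kk}=1$ from \eqref{eq.diag0}. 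Applying Theorem~\ref{thm.Xtr0} gives
\[
\btr^X(C(v_2)_{ss})\eqq x^{\bmK((s-1,n-s+1,0),\,\cdot)-\bmK((s,n-s,0),\,\cdot)}.
\]

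Next I would determine $P(C(v_2)_{ss})$. Because both endpoints carry the same state $s$, the restriction that every vertical segment must point upward forces a unique compatible path $p$: a zigzag girdle on the dual graph that separates $v_2$ from the opposite edge at combinatorial distance $s$. I would then read off $\vec{k}_p$ from \eqref{eq-path-val} by counting the small vertices lying on the $v_2$-side of $p$; those vertices contribute the weight $n$, and the subtraction of $\vec{k}_2$ (in the rotated role) accounts for the endpoint labels.

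The main technical step is the identity
\[
\vec{k}_p(v)=\bmK((s-1,n-s+1,0),v)-\bmK((s,n-s,0),v),\qquad v=(ijk)\in\rdV.
\]
Using the explicit formula \eqref{eq-trigen-exp} for $\bmK$ (after applying the $\BZ/3$-rotation that sends $v_1$ to $v_2$), a short case analysis split by whether $v$ lies strictly inside the $v_2$-side of $p$, strictly outside, or on $p$ itself (with a further sub-case for edge vertices of $\stdT$) reduces the identity to elementary arithmetic in $i,j,k,s,n$. This is the main obstacle, but it is routine rather than delicate. Finally, the overall $\hat{q}$-prefactor implicit in the $\eqq$ above can be pinned down without computation by invoking reflection invariance: both the quantum minor decomposition and the monomial $x^{\vec{k}_p}$ are reflection invariant, so no scalar can appear in front, and equality of the exponents suffices.
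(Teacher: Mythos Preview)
Your proposal is correct and follows essentially the same approach as the paper: express $C(v_2)_{ss}$ as $M_2(\bar s)M_2(\bar s-1)^{-1}=\gaa_{s-1,\bar s,0}\,\gaa_{s,n-s,0}^{-1}$, apply the trace to obtain a monomial with exponent $\rdm{K}((s-1,\bar s,0),\cdot)-\rdm{K}((s,n-s,0),\cdot)$, and check directly that this equals $\vec{k}_p$ for the unique compatible path. The only cosmetic difference is that the paper fixes the scalar by noting the two $\gaa$-factors commute (so the ratio is already Weyl-normalized), whereas you invoke reflection invariance; both work.
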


\begin{proof}
Write
\[C(v_2)_{ss}=M_2(\bar{s})M_2(\bar{s}-1)^{-1}
=\gaa_{s-1,\bar{s},0}\gaa_{s,n-s,0}^{-1}\]
using Lemmas~\ref{lemma-agen-decomp} and \ref{r.Ibad1}. The image under $\rdtr^X$ is a normalized monomial since the factors commute. The exponent is
\[\rdm{K}((s-1,\bar{s},0),\cdot)-\rdm{K}((s,n-s,0),\cdot)=\vec{k}_p\]
for the unique compatible path $p$ by direct calculation.
\end{proof}

\begin{lemma}\label{lemma-trX-CS-uni}
Recall $I_1=\{(i,k)\in\nats^2\mid i\ge0,k\ge1,i+k\le n\}$. The system of equations
\begin{align}
&\sum_{\sigma\in\Sym_k}(-q)^{\ell(\sigma)}\cev{z}_{\bar{k},i+\sigma(1)}\cdots\cev{z}_{n,i+\sigma(k)}=\rdtr^X(M_1(i,k)),\quad(i,k)\in I_1,\label{eq-T-eq1}\\
&\cev{z}_{ij}=0,\quad i<j.
\end{align}
has a unique solution $\cev{z}_{ij}=\rdtr^X(\ceC(v_1)_{ij})$ in $\rd\FG(\stdT)$.
\end{lemma}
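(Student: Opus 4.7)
Existence is a direct computation: $M_1(i,k)$ is (up to the $C$--$\cev{C}$ reindexing) the quantum determinant of a $k\times k$ submatrix of the $q$-quantum matrix formed by the corner arcs $(\cev{C}(v_1)_{rs})$, so applying the algebra homomorphism $\rdtr^X$ shows that $\cev{z}_{ij}=\rdtr^X(\cev{C}(v_1)_{ij})$ satisfies the equations; the triangularity $\cev{z}_{ij}=0$ for $i<j$ holds since those arcs are bad and vanish in $\reduceS(\stdT)$. The work is in uniqueness, which I plan to prove by a double induction.

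The outer induction is on $k=1,2,\ldots,n$, determining the rows of the formal matrix $Z=(\cev{z}_{rs})$ from the bottom row $n$ upward; the base $k=1$ reads $\cev{z}_{n,i+1}=\rdtr^X(M_1(i,1))$ directly. Suppose rows $\bar{k}+1,\ldots,n$ are uniquely determined. Expanding $E(i,k)$ along row $\bar{k}$ by the quantum Laplace formula and using $\cev{z}_{\bar{k},j}=0$ for $j>\bar{k}$,
\[
\sum_{t=1}^{\min(k,\bar{k}-i)}(-q)^{t-1}\cev{z}_{\bar{k},i+t}\cdot\det_q\bigl(Z_{[\bar{k}+1;n],\,[i+1;i+k]\setminus\{i+t\}}\bigr)=\rdtr^X(M_1(i,k)).
\]
The inner induction is on $s=0,1,\ldots,\bar{k}-1$: use $E(\bar{k}-1-s,k)$ to solve for $\cev{z}_{\bar{k},\bar{k}-s}$ after substituting the already-determined values $\cev{z}_{\bar{k},\bar{k}-s+1},\ldots,\cev{z}_{\bar{k},\bar{k}}$. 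Isolating the $t=1$ term, the coefficient of the unknown is the quantum minor $C_s=\det_q(Z^{\ast}_{[\bar{k}+1;n],[\bar{k}-s+1;n-s]})$ of the claimed matrix $Z^{\ast}=(\rdtr^X(\cev{C}(v_1)_{rs}))$.

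The case $s=0$ is easy: $C_0=\rdtr^X(M_1(\bar{k},k-1))$ is a Weyl-normalized monomial by \eqref{eq-agen-decomp-sp}, hence invertible in $\rd\FG(\stdT)$, and $\cev{z}_{\bar{k},\bar{k}}$ is pinned down directly. For $s\ge 1$, $C_s$ is generally a sum of monomials and need not be invertible; however, it suffices to show $C_s$ is a non-zero-divisor in the domain $\rd\FG(\stdT)$ (equivalently, nonzero). Given this, uniqueness in the fraction field $\Fr(\rd\FG(\stdT))$ pins down $\cev{z}_{\bar{k},\bar{k}-s}$, and the existence of $\rdtr^X(\cev{C}(v_1)_{\bar{k},\bar{k}-s})\in\rd\FG(\stdT)$ as a valid value forces the unique solution to lie inside $\rd\FG(\stdT)$.

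The main obstacle is thus verifying $C_s\ne 0$ for $s\ge 1$. Since $\rdtr^X$ is an algebra embedding by Theorem \ref{thm.Xtr0}, this reduces to showing the quantum minor $M^{[\bar{k}+1;n]}_{[\bar{k}-s+1;n-s]}(\cev{C}(v_1))$ is nonzero in $\reduceS(\stdT)$. My plan is a leading-term argument: equip the exponent lattice $\BZ^{\rd{V}}$ of the Weyl-monomial basis of $\rd\FG(\stdT)$ with a total order compatible with addition so that leading monomials multiply correctly. The identity-permutation summand in the column-based expansion of the minor is a product of non-bad corner arcs, and its leading Weyl exponent cannot be matched by any other permutation, so no cancellation occurs and $C_s\ne 0$.
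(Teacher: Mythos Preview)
Your overall plan is essentially the paper's: both solve for each $\cev{z}_{st}$ using the equation $E(t-1,\bar s)$, expand along the first row, and observe that the coefficient of $\cev{z}_{st}$ is a cofactor minor that one must invert. The only difference is the bookkeeping: the paper inducts on $s+t$ (so all other entries in the relevant $k\times k$ matrix automatically have strictly larger $s'+t'$), while you induct on the row and then the column. Both orderings are valid.

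The real issue is your claim that for $s\ge 1$ the cofactor $C_s=\det_q\bigl(Z^{\ast}_{[\bar k+1;n],[\bar k-s+1;n-s]}\bigr)$ ``is generally a sum of monomials and need not be invertible.'' This is false, and it is exactly the point the paper exploits. With $i'=\bar k-s$ and $k'=k-1$ you have $\overline{k'}=\bar k+1$ and $[i'+1;i'+k']=[\bar k-s+1;n-s]$, so
\[
C_s=\rdtr^X\bigl(M_1(\bar k-s,\,k-1)\bigr),
\]
which by \eqref{eq-agen-decomp-sp} is a single Weyl-normalized monomial in $\rd\FG(\stdT)$, hence invertible. (Your $s=0$ case $C_0=\rdtr^X(M_1(\bar k,k-1))$ is the special case of this identification.) Once you recognize this, the entire ``main obstacle'' paragraph---the fraction-field argument, the non-vanishing of $C_s$, and the leading-term sketch---is unnecessary, and your proof becomes identical to the paper's up to the choice of induction order.
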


\begin{proof}
$\cev{z}_{ij}=\rdtr^X(\ceC(v_1)_{ij})$ is a solution essentially by definition. To show uniqueness, we solve these equations inductively. Let $P_d$ be the statement
\begin{center}
$\cev{z}_{st}$ is uniquely determined to be $\rdtr^X(\ceC(v_1)_{st})$ for all $s+t\ge d$.
\end{center}
The base case is $d=2n$ or $(s,t)=(n,n)$. When $(i,k)=(n-1,1)$, the left-hand side of \eqref{eq-T-eq1} is simply $\cev{z}_{nn}$. Then
\[\cev{z}_{nn}=\rdtr^X(M_1(n-1,1))=\rdtr^X(M^{[n;n]}_{[n;n]}(\ceC(v_1)))
=\rdtr^X(\ceC(v_1)_{nn}).\]
This proves the base case.

Let $Z^{[i+1;i+k]}_{[\bar{k};n]}$ denote the left-hand side of \eqref{eq-T-eq1}. Now consider an arbitrary $\cev{z}_{st}$, $s\ge t$. Perform a ``cofactor expansion" on \eqref{eq-T-eq1} with $(i,k)=(t-1,\bar{s})$. Let $\Sym_{k-1}\subset\Sym_k$ be the embedded subgroup consisting of permutations $\sigma$ with $\sigma(1)=1$. Such a permutation is identified with $\sigma_1\in\Sym_{k-1}$, $\sigma_1(i)=\sigma(i+1)-1$. Then $\ell(\sigma)=\ell(\sigma_1)$. The left-hand side of \eqref{eq-T-eq1} becomes
\begin{align*}
Z^{[t;n-s+t]}_{[s;n]}
&=\sum_{\sigma_1\in\Sym_{k-1}}(-q)^{\ell(\sigma_1)}\cev{z}_{st}\cev{z}_{s+1,t+\sigma_1(1)}\cdots\cev{z}_{n,t+\sigma_1(k-1)}
+\sum_{\sigma\in\Sym_k\setminus\Sym_{k-1}}(\cdots)\\
&=\cev{z}_{st}Z^{[t+1;n-s+t]}_{[s+1;n]}+\sum_{\sigma\in\Sym_k\setminus\Sym_{k-1}}(\cdots).
\end{align*}
All $\cev{z}_{s't'}$ other than $\cev{z}_{st}$ satisfy $s'+t'>s+t$, so they are determined by the induction hypothesis $P_{s+t+1}$. In addition, $Z^{[t+1;n-s+t]}_{[s+1;n]}=\rdtr^X(M_1(t,n-s))$ is invertible by \eqref{eq-agen-decomp-sp}. Therefore, $\cev{z}_{st}$ can be uniquely solved. Since we already verified that $\cev{z}_{st}=\rdtr^X(\ceC(v_1)_{st})$ is a solution, this completes the inductive step.
\end{proof}

For this section only, we modify the definitions in Section~\ref{sec-ntri} to include the vertices $v_1,v_2,v_3$. Now $\rd{V}$ has 3 more vertices. The definitions of the quiver $\Gamma$, the matrix $\rdm{Q}$, and the vectors $\vec{k}_1,\vec{k}_2,\vec{k}_3$ (from \eqref{eq-bal-basis}) are copied verbatim. For example, $\Gamma$ has 6 extra boundary arrows determined by the positive direction of the boundary.

When the extra vertices are included, $\vec{k}_p$ defined by \eqref{eq-path-val} satisfies $\vec{k}_p(0jk)=0$ and $\vec{k}_p(n00)=0$. In particular, $\vec{k}_p$ always vanishes at the vertices $v_1,v_2,v_3$. Thus the extra vertices do not appear in the final result.

Here are two lemmas useful for calculating the Weyl-normalization.

\begin{lemma}\label{lemma-Q-k-normed}
If $\vec{l}\in\ints^{\rd{V}}$ vanishes at $v_1,v_2,v_3$, then $\rdm{Q}(\vec{k}_p,\vec{l})=\rdm{Q}(\vec{k}'_p,\vec{l})$.
\end{lemma}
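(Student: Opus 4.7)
The plan is to reduce the identity to a local ``divergence'' calculation for the enlarged quiver $\Gamma$. By the definition of $\vec{k}_p$ in \eqref{eq-path-val}, the difference is
\[
\vec{k}'_p - \vec{k}_p = \begin{cases} \vec{k}_1, & \alpha = C(v_1)_{ij}, \\ \vec{k}_2 + \vec{k}_3, & \alpha = \cev{C}(v_1)_{ij}. \end{cases}
\]
Since the $\BZ/3$-rotation of $\stdT$ preserves $\rdm{Q}$ and cyclically permutes $\vec{k}_1, \vec{k}_2, \vec{k}_3$, both cases follow from the single claim
\[
\rdm{Q}(\vec{k}_1, \vec{l}) = 0 \qquad \text{whenever } \vec{l}(v_1) = \vec{l}(v_2) = \vec{l}(v_3) = 0.
\]

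Writing $\rdm{Q}(\vec{k}_1, \vec{l}) = \sum_{v'} \vec{l}(v') F_1(v')$ with $F_1(v') := \sum_{v \in \rd{V}} \vec{k}_1(v)\, \rdm{Q}(v, v')$, the problem reduces to the pointwise statement that $F_1(v') = 0$ for every small vertex $v' \neq v_1, v_2, v_3$. I would check this directly in two cases. If $v' = (i', j', k')$ is interior, then $v'$ has six weight-$2$ quiver neighbors; the multiset of first coordinates of its three incoming neighbors is $\{i'+1, i'-1, i'\}$, and the same holds for its three outgoing neighbors, so the incoming and outgoing $\vec{k}_1$-sums cancel and $F_1(v') = 0$. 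If $v'$ lies on an open boundary edge, then $v'$ has exactly two interior neighbors (weight $2$, one in and one out) whose contribution to $F_1(v')$ telescopes to $-2$ and two boundary neighbors on the same edge (weight $1$, one in and one out under the counterclockwise orientation of $\partial \stdT$) whose contribution telescopes to $+2$; the two cancel.

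The main subtlety, and the reason the section opens with the remark that $\rd{V}$ must be enlarged by the three corners, occurs at a boundary vertex $v'$ one step away from a corner (for instance $v' = (n-1, 0, 1)$): one of its two boundary neighbors is then the corner $v_1 = (n,0,0)$ itself, and the $\vec{k}_1$-contribution of the arrow to/from $v_1$, weighted by $\vec{k}_1(v_1) = n$, is precisely what the cancellation in the boundary case requires. Without the corner vertices in $\rd{V}$ this contribution would be missing and $F_1(v')$ would equal $-n$ at such a $v'$, breaking the lemma. Once $F_1 \equiv 0$ off the corners is established, the $\vec{l}(v_i) = 0$ hypothesis kills any remaining terms, and the $\BZ/3$-symmetry argument finishes both cases of the lemma. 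The bulk of the work is thus an elementary local case-check together with bookkeeping of arrow orientations at non-corner boundary vertices; no deeper input is needed.
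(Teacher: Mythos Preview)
Your strategy is exactly the paper's: by \eqref{eq-path-val} the difference $\vec{k}'_p-\vec{k}_p$ is $\vec{k}_1$ or $\vec{k}_2+\vec{k}_3$, so the lemma reduces to $\rdm{Q}(\vec{k}_m,\vec{l})=0$ for $m=1,2,3$, and by the $\BZ/3$-symmetry it suffices to show $F_1(v'):=\sum_v \vec{k}_1(v)\,\rdm{Q}(v,v')=0$ for every non-corner $v'$. The paper simply cites \cite[Lemma~6.4]{CS} for this last step; you supply the local verification directly, which is a nice self-contained addition. Your interior-vertex case is correct, and your discussion of why the corner vertices must be adjoined is exactly right.

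There is, however, a small gap in your boundary case. The specific values you quote (interior-neighbor contribution $-2$, boundary-neighbor contribution $+2$) hold only on the edge $e_3$ (where $j'=0$). On $e_1$ (where $k'=0$) the two contributions swap to $(+2,-2)$, and on $e_2$ (where $i'=0$) both contributions are $0$, since every relevant neighbor has first coordinate $0$ or $1$. In each of the three cases the sum is still $0$, so your conclusion survives, but the boundary verification genuinely requires a three-way split by edge: you cannot use the $\BZ/3$-rotation to reduce to a single edge, since that rotation cyclically permutes the $\vec{k}_m$ rather than fixing $\vec{k}_1$.
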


\begin{proof}
This follows from \cite[Lemma~6.4]{CS}. In our notations, the lemma cited implies that $\rdm{Q}(\vec{k}_1,\vec{l})=0$ if $\vec{l}(v_2)=\vec{l}(v_3)=0$. By the rotational symmetry, analogous statements for $\vec{k}_2$ and $\vec{k}_3$ also hold. This clearly implies our lemma.
\end{proof}

\begin{lemma}\label{lemma-path-comm}
Let $p\in P(\ceC(v_1)_{ij})$ and $p'\in P(\ceC(v_1)_{i'j'})$ be two compatible paths such that $j<j'$. Let $\vec{s}=\vec{k}_{p'}-\vec{k}_p$.
\begin{enumerate}
\item If $p$ and $p'$ are disjoint, then $x^{\vec{k}_p}$ and $x^{\vec{k}_{p'}}$ commute.
\item Suppose $p$ and $p'$ merge exactly once and do not separate. (In particular $i=i'$.) Then
\begin{equation}\label{eq-path-sep}
x^{\vec{k}_{p'}}=q^{-1/2}x^{\vec{k}_p}x^{\vec{s}}=q^{1/2}x^{\vec{s}}x^{\vec{k}_p}.
\end{equation}
\end{enumerate}
\end{lemma}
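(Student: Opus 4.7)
The proof reduces to computing the exchange exponent $\langle \vec{k}_p, \vec{k}_{p'}\rangle_{\rdm{Q}}$, since by~\eqref{eq.prod} the Weyl-normalized monomials satisfy $x^{\vec{k}_p} x^{\vec{k}_{p'}} = \hq^{\langle \vec{k}_p, \vec{k}_{p'}\rangle_{\rdm{Q}}} x^{\vec{k}_p + \vec{k}_{p'}}$. Part~(1) is the statement $\langle \vec{k}_p, \vec{k}_{p'}\rangle_{\rdm{Q}} = 0$, while, after substituting $\vec{s} = \vec{k}_{p'} - \vec{k}_p$ and recalling that $q^{-1/2}=\hq^{-n^2}$, both equalities of~\eqref{eq-path-sep} follow from $\langle \vec{k}_p, \vec{k}_{p'}\rangle_{\rdm{Q}} = n^2$ (antisymmetry handles the second equality).

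The key initial step is to apply Lemma~\ref{lemma-Q-k-normed}. By direct inspection of~\eqref{eq-path-val}, the vector $\vec{k}_{p'}$ vanishes at each of the three corners $v_1, v_2, v_3$: for instance at $v_1=(n,0,0)$, the corner $v_1$ lies to the left of any admissible path $p'$, so $\vec{k}'_{p'}(v_1) = n = \vec{k}_1(v_1)$, giving $\vec{k}_{p'}(v_1)=0$; the checks at $v_2,v_3$ are analogous. Lemma~\ref{lemma-Q-k-normed} therefore yields
\[\langle \vec{k}_p, \vec{k}_{p'}\rangle_{\rdm{Q}} = \langle \vec{k}'_p, \vec{k}_{p'}\rangle_{\rdm{Q}}.\]
Since $\vec{k}'_p = n\chi_{L_p}$, where $L_p\subset\rd{V}$ denotes the set of small vertices strictly to the left of $p$, the right-hand side expands as $n\sum_{v\in L_p}\sum_{v'\in\rd{V}}\rdm{Q}(v,v')\,\vec{k}_{p'}(v')$. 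Arrows of the quiver $\Gamma$ with both endpoints in $L_p$ can be grouped, and by antisymmetry of $\rdm{Q}$ their aggregate contribution depends only on boundary values of $\vec{k}_{p'}$; after this reorganization only arrows crossing $\partial L_p$ survive, and these are precisely the arrows of $\Gamma$ perpendicular to the dual-graph edges traversed by $p$. The exponent thus becomes a signed sum of $\vec{k}_{p'}$-values along the two strips of small vertices immediately flanking $p$.

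For part~(1), disjointness together with the planar and monotone (up-right) nature of dual-graph compatible paths forces $p'$ to lie entirely on one side of $p$; consequently $\vec{k}_{p'}$ is locally constant across $p$ on each maximal run of same-type arrows, and the signed sum telescopes to~$0$. For part~(2), with $i=i'$ the paths share a common terminal portion beyond the merge point; the telescoping argument eliminates contributions from both the pre-merge and post-merge stretches of $p$, leaving only a residual local contribution at the merge point, where the configuration of three arrows of weight~$2$ meets a unit jump of $\vec{k}_{p'}$ across $p'$ and accumulates to exactly~$n^2$. The principal obstacle is the delicate case analysis at each type of local transition along $p$---vertical, up-left, and up-right turns, and especially the merge point in part~(2)---where the clean values $0$ and $n^2$ emerge only after verifying substantial pairwise cancellations by direct inspection of each configuration.
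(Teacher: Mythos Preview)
Your overall framework---reduce to computing $\langle\vec{k}_p,\vec{k}_{p'}\rangle_{\rdm{Q}}$ and invoke Lemma~\ref{lemma-Q-k-normed}---matches the paper's, but you miss the one simplification that makes the computation tractable, and without it your argument has genuine gaps.

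The paper does not evaluate $\langle\vec{k}'_p,\vec{k}_{p'}\rangle_{\rdm{Q}}$ directly. It first notes that $\rdm{Q}(\vec{k}_p,\vec{k}_{p'})=\rdm{Q}(\vec{k}_p,\vec{s})$ by antisymmetry, applies Lemma~\ref{lemma-Q-k-normed} to get $\rdm{Q}(\vec{k}'_p,\vec{s})$, and then uses that $\vec{s}$ is \emph{also} a characteristic function: $\vec{s}=n\chi_S$ where $S=L_{p'}\setminus L_p$ is the region between the two paths. The pairing is therefore $n^2$ times the signed count of arrows between the two regions $K=L_p$ and $S$. All such arrows cross $p$, and splitting each interior arrow into the two small triangles on either side pairs them up with opposite signs---except for a single unpaired arrow in the merging triangle in case~(2), giving $n^2$.

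By contrast, your second factor $\vec{k}_{p'}$ is not a characteristic function, and this costs you. Your claim that arrows with both endpoints in $L_p$ cancel ``by antisymmetry'' is wrong as stated: such a pair contributes $\rdm{Q}(v,v')(\vec{k}_{p'}(v')-\vec{k}_{p'}(v))$, which does not vanish since $\vec{k}_{p'}$ is nonconstant on $L_p$. (The reduction to boundary arrows is salvageable via $\sum_v\rdm{Q}(v,v')=0$, but that is not the reason you give.) More seriously, your assertion that ``$\vec{k}_{p'}$ is locally constant across $p$'' in the disjoint case is false: vertices in the strip just to the right of $p$ can lie on either side of $p'$, so $\vec{k}'_{p'}$ (and hence $\vec{k}_{p'}$) jumps there. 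The ``delicate case analysis'' you defer in the last paragraph is precisely the work that passing to $\vec{s}$ eliminates; as written, cases~(1) and~(2) are asserted rather than proved.
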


\begin{proof}
In both cases, the path $p$ never cross to the right of $p'$. Let $S\subset\rd{V}$ be the set of generators in the region bounded by $p,p'$ and the boundary edges. In addition, let $K$ be the set of vertices to the left of $p$. Then $\vec{s}(v)=n$ for $v\in S$ and $\vec{s}(v)=0$ otherwise. Similarly, $\vec{k}'_p(v)=n$ for $v\in K$ and $\vec{k}'_p(v)=0$ otherwise.

The key of the proof is the calculation of $\rdm{Q}(\vec{k}_p,\vec{s})$, which is the same as $\rdm{Q}(\vec{k}'_p,\vec{s})$ by Lemma~\ref{lemma-Q-k-normed}. This means we are counting the arrows between $K$ and $S$ (multiplied by $n^2$ because $\vec{k}'_p$ and $\vec{s}$ have value $n$ on their respective sets). Due to the adjacency nature of $\rdm{Q}$, the relevant arrows are the ones that intersects $p$.

\begin{figure}
\begin{subfigure}[b]{0.4\linewidth}
\centering
\input{trigen-between3}
\subcaption{Case (1)}
\end{subfigure}
\begin{subfigure}[b]{0.4\linewidth}
\centering
\input{trigen-between2}
\subcaption{Case (2)}
\end{subfigure}
\caption{Relevant arrows for $\rdm{Q}(\vec{k}'_p,\vec{s})$}\label{fig-trigen-between}
\end{figure}

These arrows are illustrated in Figure~\ref{fig-trigen-between}. Recall that interior arrows count double. We can split each interior arrow into two and assign one each to the adjacent triangles. Except for the arrow in the triangle containing the merging point, these split arrows (together with the boundary ones) can be grouped according to the triangles they are in. If two arrows are grouped, then one points towards $S$, and the other points away from $S$. Thus their contributions cancel for $\rdm{Q}$.

In case (1), all arrows are paired. Thus $\rdm{Q}(\vec{k}_p,\vec{s})=0$. It follows that $\rdm{Q}(\vec{k}_p,\vec{k}_{p'})=0$ as well, so the corresponding monomials commute.

In case (2), the arrow in the merging triangle is the only one not in a pair, and it always points towards $S$. Thus $\rdm{Q}\big(\vec{k}_p,\vec{s}\big)=n^2$. Then \eqref{eq-path-sep} follows from the definition of Weyl-normalization.
\end{proof}

We extend the path description to a product of arcs. Suppose $\alpha_1,\dots,\alpha_k$ are simple stated arcs in the triangle $\stdT$, and let $\alpha=\alpha_1\cdots\alpha_k$. Define $P(\alpha)=P(\alpha_1)\times\cdots\times P(\alpha_k)$. Each element $p=(p_1,\dots,p_k)\in P(\alpha)$ can be represented by a path diagram. Define
\begin{equation}\label{eq-path-def}
T(\alpha)=\sum_{p\in P(\alpha)}x^{\vec{k}_{p_1}}\cdots x^{\vec{k}_{p_k}}\in\rdbl(\stdT)
\end{equation}
and extend linearly to formal linear combinations of such diagrams. A priori, $T$ may not preserve the defining relations of the skein algebra.

With this definition, Lemma~\ref{lemma-trX-CS-diag} implies that
\begin{equation}
\rdtr^X(M_2(j))=T(M_2(j)),
\end{equation}
and it is given by the unique path diagram compatible with $M_2(j)$.

\begin{lemma}\label{lemma-trigen-qminor}
$T(M_1(i,k))$ is a Weyl-normalized monomial given by the unique path diagram with disjoint paths.
\end{lemma}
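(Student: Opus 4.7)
The plan is to unfold the quantum determinant formula for $M_1(i,k)$, apply $T$ factor by factor, and then isolate the single ``non-crossing'' configuration via a quantum Lindström--Gessel--Viennot involution. First I would expand
\[M_1(i,k)=\sum_{\sigma\in\Sym_k}(-q)^{\ell(\sigma)}\prod_{t=1}^k\cev C(v_1)_{i+t,\,\bar k+\sigma(t)-1}\]
and push through $T$, obtaining an $R$-linear combination indexed by pairs $(\sigma,\vec p)$ where $\vec p=(p_1,\dots,p_k)$ is a tuple of compatible paths whose left endpoints are the $k$ consecutive positions $i+1,\dots,i+k$ and whose right endpoints are the permutation $\{\bar k+\sigma(t)-1\}_t$ of $\{\bar k,\dots,n\}$. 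Terms where some factor is a bad arc contribute empty path sets and may be ignored.

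Next I would set up a sign-reversing pairing on the ``crossed'' sector: call $(\sigma,\vec p)$ crossed if some $p_s,p_t$ share a vertex. For such a configuration, take the minimal pair $(s,t)$ with $s<t$ whose paths meet, locate the first shared vertex along $p_s$, and swap the tails of $p_s$ and $p_t$ there, producing $(\sigma',\vec p')$ with $\sigma'=\sigma\cdot(s,t)$. This is an involution on crossed configurations, and classically it flips the sign $(-q)^{\ell(\sigma)}$. To upgrade the cancellation to the quantum level I would invoke Lemma~\ref{lemma-path-comm}(2): the two paths involved in the swap fit exactly into the ``merge once'' hypothesis, so the quantum monomials $x^{\vec k_{p_s}}x^{\vec k_{p_t}}$ and $x^{\vec k_{p_s'}}x^{\vec k_{p_t'}}$ differ by a $q$-factor of the form $q^{\pm 1/2}$ that, when combined with the classical sign change, cancels the two contributions exactly. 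What remains after all cancellations is the uncrossed sector, and because the left and right endpoints occupy consecutive blocks on each edge, non-crossing forces $\sigma=\id$ and determines $\vec p$ uniquely. By Lemma~\ref{lemma-path-comm}(1) the surviving monomials pairwise commute, so their product is automatically Weyl-normalized, proving the claim.

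The main obstacle will be the precise quantum bookkeeping in the involution step. The classical LGV argument for determinants is textbook, but here every reordering of two path-monomials injects a $q^{\pm 1/2}$ from Lemma~\ref{lemma-path-comm}(2), and one has to check that, after the tail-swap at the first shared vertex, the factor relating $x^{\vec k_{p_s'}}x^{\vec k_{p_t'}}$ to $x^{\vec k_{p_s}}x^{\vec k_{p_t}}$ is exactly the inverse of the ratio $(-q)^{\ell(\sigma')}/(-q)^{\ell(\sigma)}=-1$ times the reordering factor needed to normalize both sides consistently. A secondary subtlety is ensuring the pairing is globally well-defined on tuples with several simultaneous crossings: using the lexicographically minimal colliding pair $(s,t)$ and the first vertex along $p_s$ removes this ambiguity, and one must verify that the swap only affects the chosen pair rather than disturbing collisions between other paths, so that the involution really is an involution.
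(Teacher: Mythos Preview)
Your overall plan is correct and is exactly the paper's strategy: a Lindstr\"om--Gessel--Viennot involution on the set of path tuples, leaving only the disjoint configuration, whose factors then commute by Lemma~\ref{lemma-path-comm}(1). The gap is in how you pick the pair to swap.

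You propose to take the lexicographically minimal colliding pair $(s,t)$ and swap tails. Two things break when $t>s+1$. First, the two factors $x^{\vec k_{p_s}}$ and $x^{\vec k_{p_t}}$ are not adjacent in the ordered product $x^{\vec k_{p_1}}\cdots x^{\vec k_{p_k}}$: the intermediate factors $x^{\vec k_{p_{s+1}}},\dots,x^{\vec k_{p_{t-1}}}$ sit between them, and you give no argument that these commute past the modified pieces. Second, the ratio $(-q)^{\ell(\sigma')}/(-q)^{\ell(\sigma)}$ is not $-1$; for the transposition $(s,t)$ the length changes by some odd integer of absolute value up to $2(t-s)-1$, so the ratio is $(-q)^{\pm 1},(-q)^{\pm 3},\dots$, and you would need the monomial ratio from Lemma~\ref{lemma-path-comm}(2) to match this exactly, which it will not in general.

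The fix, and what the paper does, is to always choose an \emph{adjacent} merging pair $(l,l+1)$ with minimal $l$. One checks (by a planarity argument on the consecutive left endpoints) that whenever any two paths meet, some adjacent pair meets. Then the swap replaces $x^{\vec k_{p_l}}x^{\vec k_{p_{l+1}}}$ by $x^{\vec k_{p'_l}}x^{\vec k_{p'_{l+1}}}$ with everything else fixed; applying Lemma~\ref{lemma-path-comm}(2) twice (to $(p_l,p'_l)$ and to $(p'_{l+1},p_{l+1})$, which each merge once and share a tail) gives $x^{\vec k_{p'_l}}x^{\vec k_{p'_{l+1}}}=q^{-1}x^{\vec k_{p_l}}x^{\vec k_{p_{l+1}}}$, precisely cancelling the factor $(-q)^{+1}$ from the adjacent transposition.
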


\begin{proof}
For each permutation $\sigma\in\Sym_k$, let $\alpha_\sigma=\cev{C}(v_1)_{\bar{k},i+\sigma(1)}\cdots\cev{C}(v_1)_{n,i+\sigma(k)}$. Then
\begin{equation}\label{eq-TM1}
T(M_1(i,k))=\sum_{\sigma\in\Sym_k}(-q)^{\ell(\sigma)}T(\alpha_\sigma)
=\sum_{\sigma\in\Sym_k}\sum_{p\in P(\alpha_\sigma)} (-q)^{\ell(\sigma)}x^{\vec{k}_{p_1}}\cdots x^{\vec{k}_{p_k}}.
\end{equation}
Let $P$ be the union of all $P(\alpha_\sigma)$, and let $p^{(0)}\in P$ be the unique path diagram where the paths are disjoint. It is easy to check that the $p^{(0)}$-term is the lowest degree term. Let $P_0=P\setminus\{p^{(0)}\}$.

For any path diagram $p\in P_0$, there are overlapping segments between paths. Each overlapping segment has a merging point, shown in Figure~\ref{fig-trigen-merge} by the circles. Here $a_\sigma=\cev{C}(v_1)_{42}\cev{C}(v_1)_{54}\cev{C}(v_1)_{63}$.

\begin{figure}
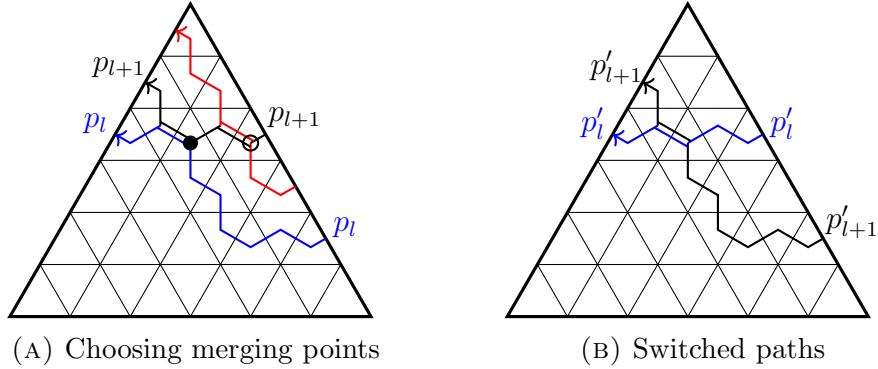

\centering
\begin{subfigure}[b]{0.4\linewidth}
\centering
\input{trigen-cancel2}
\subcaption{Choosing merging points}\label{fig-trigen-merge}
\end{subfigure}
\begin{subfigure}[b]{0.4\linewidth}
\centering
\input{trigen-switched}
\subcaption{Switched paths}\label{fig-trigen-switched}
\end{subfigure}
\caption{Cancellation of path diagrams with overlapping paths}
\label{fig-trigen-cancel}
\end{figure}

There is at least one pair of adjacent paths
\[p_l\in P(\cev{C}(v_1)_{\bar{k}+l-1,i+\sigma(l)})\qquad\text{and}\qquad
p_{l+1}\in P(\cev{C}(v_1)_{\bar{k}+l,i+\sigma(l+1)})\]
in the product that merges. We choose the pair $p_l,p_{l+1}$ with minimum $l$. Such a pair of paths may also merge at multiple points. In this case, we pick out the first merging point as we follow the direction of the paths. By switching the segments of the paths before this point, we produce a new path diagram $p'\in P_0$ with the new paths
\[p'_l\in P(\cev{C}(v_1)_{\bar{k}+l-1,i+\sigma(l+1)})
\qquad\text{and}\qquad
p'_{l+1}\in P(\cev{C}(v_1)_{\bar{k}+l,i+\sigma(l)})\]
which correspond to the permutation $\sigma'\in\Sym_k$ obtained from $\sigma$ by swapping the values at $l$ and $l+1$.

This operation is clearly involutive. Thus $P_0$ is partitioned into pairs. In each pair $(p,p')$, we can assume $\sigma(l)<\sigma(l+1)$, so $\ell(\sigma')=\ell(\sigma)+1$. Then the $p$- and $p'$-terms in the sum are of the form
\begin{gather}
(-q)^{\ell(\sigma)}(\cdots x^{\vec{k}_{p_l}}x^{\vec{k}_{p_{l+1}}}\cdots)
+(-q)^{\ell(\sigma')}(\cdots x^{\vec{k}_{p'_l}}x^{\vec{k}_{p'_{l+1}}}\cdots)\notag\\
=(-q)^{\ell(\sigma)}(\cdots)(x^{\vec{k}_{p_l}}x^{\vec{k}_{p_{l+1}}}-qx^{\vec{k}_{p'_l}}x^{\vec{k}_{p'_{l+1}}})(\cdots).\label{eq-pl-term}
\end{gather}
Note the region bounded by $p_l$ and $p'_l$ is the same as the one bounded by $p'_{l+1}$ and $p_{l+1}$. By Lemma~\ref{lemma-path-comm},
\begin{align}
x^{\vec{k}_{p'_l}}x^{\vec{k}_{p'_{l+1}}}
&=\Big(q^{-1/2}x^{\vec{k}_{p_l}}x^{\vec{s}}\Big)\Big(q^{-1/2}x^{-\vec{s}}x^{\vec{k}_{p_{l+1}}}\Big)
=q^{-1}x^{\vec{k}_{p_l}}x^{\vec{k}_{p_{l+1}}}.
\end{align}
This shows that \eqref{eq-pl-term} is zero.

So far, we showed that in \eqref{eq-TM1}, the only term that does not cancel is the $p^{(0)}$-term. The permutation correspond to $p^{(0)}$ is $\sigma=\id$. Thus $(-q)^{\ell(\sigma)}=0$. This shows that $T(M_1(i,k))$ is a product of normalized monomials. By Lemma~\ref{lemma-path-comm}, the monomials in the product commute. Thus $T(M_1(i,k))$ is normalized as well.
\end{proof}

\begin{lemma}\label{lemma-trX-CS-M1}
$T(M_1(i,k))=\rdtr^X(M_1(i,k))$.
\end{lemma}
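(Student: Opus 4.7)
The plan is to show that both sides are Weyl-normalized monomials with the same exponent vector, and then conclude. By Lemma \ref{lemma-trigen-qminor}, $T(M_1(i,k))$ is the Weyl-normalized monomial $x^{\vec{e}_T}$ with exponent
\[
\vec{e}_T \;=\; \sum_{l=1}^{k}\vec{k}_{p_l},
\]
where $(p_1,\dots,p_k)$ is the unique disjoint path diagram compatible with the identity permutation in $M_1(i,k)$; by \eqref{eq-path-val}, $\vec{k}_{p_l}(v')$ equals $n$ minus the first-coordinate of $v'$ if $v'$ lies to the left of $p_l$, and $-v'_1$ otherwise. On the other hand, \eqref{eq-agen-decomp-sp} gives $M_1(i,k)\eqq \gaa_{ijk}\gaa_{n-j,j,0}^{-1}$; applying the algebra embedding $\rdtr^X$ and using \eqref{eq.bgav} (noting that $\gaa_{ijk}$ and $\gaa_{n-j,j,0}$ $q$-commute, so their images are $q$-commuting monomials whose Weyl-normalized ratio is again a Weyl-normalized monomial) produces
\[
\rdtr^X(M_1(i,k)) \;=\; x^{\vec{e}_X}, \qquad \vec{e}_X(v') \;=\; \rdm{K}((ijk),v')-\rdm{K}((n-j,j,0),v').
\]
Hence the lemma reduces to verifying the combinatorial identity $\vec{e}_T=\vec{e}_X$ pointwise on $\rdV$.

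To establish this identity, I would proceed by a case analysis on the position of $v'=(i'j'k')$ relative to the strip of parallel paths $\{p_l\}$, using the $\BZ/3$-symmetry of $\rdm{K}$ to reduce to the canonical region in which the definition \eqref{eq-trigen-exp} of $\rdm{K}$ applies directly (say $i'\le i$ and $j'\ge j$; the other rotated configurations are handled symmetrically via \eqref{eq-trigen-0}, which shows in particular that $\rdm{K}((n-j,j,0),v')$ depends only on whether $v'$ sits above, below, or on the $v_1v_2$-direction through the triangle). The paths $p_l$ run from the $(\bar k+l-1)$-th dual vertex on the left edge to the $(i+l)$-th dual vertex on the right edge by the greedy left-turn rule, so they form a band separating the $v_1$-corner region determined by $(i,j,k)$. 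In each case one counts: the number of paths $p_l$ having $v'$ to their left, and hence the value $\vec{e}_T(v')$; and one expands $\rdm{K}((ijk),v')-\rdm{K}((n-j,j,0),v')$ using \eqref{eq-trigen-exp}. The two expressions match.

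The genuinely non-trivial step is this last bookkeeping, and it is simplified by the following observation: both $\vec{e}_T$ and $\vec{e}_X$ are invariant under replacing $(i,j,k)$ by $(i+1,j,k-1)$ up to adding $\vec{k}_{p_{k+1}}$ (the ``next'' parallel path) on the left and up to the corresponding change of $\rdm{K}$ on the right, as can be checked by inspection of the formulas. This lets one induct on $k$, reducing to the base case $k=0$ (where $M_1(i,0)=1$ and both sides are trivially $1$) after verifying the agreement when a single path is added. The induction step is a one-line verification using $\rdm{K}((ijk),v')-\rdm{K}((i+1,j,k-1),v')=\vec{k}_{p_k}(v')+\vec{k}_1(v')$, which follows immediately from \eqref{eq-trigen-exp}.

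Finally, equality of the Weyl-normalized prefactors is automatic: both $\vec{e}_T$ and $\vec{e}_X$ are balanced vectors in $\rd\Lambda$, so the expressions $x^{\vec{e}_T}$ and $x^{\vec{e}_X}$ each denote the unique Weyl-normalized monomial in $\rdbl(\stdT)$ attached to that exponent. Since the exponents agree, so do the monomials, and the lemma follows. The main obstacle is really just the patient geometric accounting in the inductive step; conceptually, it records the fact that $\sum_{l=1}^k \vec{k}_{p_l}$ is the ``area'' of the region swept out by the parallel paths, and this area is exactly the $(i,j,k)$-vs-$(n-j,j,0)$ difference of the bilinear form $\rdm{K}$.
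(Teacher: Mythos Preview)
Your approach is correct and essentially the same as the paper's: both sides are Weyl-normalized monomials, so it suffices to compare exponent vectors. The only cosmetic difference is that the paper multiplies by $\rdtr^X(M_2(j))$ and compares $[T(M_1(i,k))\rdtr^X(M_2(j))]_\Weyl$ with $\rdtr^X(\gaa_v)=x^{\rdm{K}(v,\cdot)}$, whereas you divide and compare $\rdtr^X(M_1(i,k))$ with $x^{\rdm{K}((ijk),\cdot)-\rdm{K}((n-j,j,0),\cdot)}$; these are equivalent reformulations, and your region-by-region case analysis on $v'$ is exactly what the paper carries out (it writes out the region $i'\le i,\ j'\ge j$ and declares the others similar).

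One caution about your inductive alternative: the bookkeeping is off. Passing from $M_1(i+1,k-1)$ to $M_1(i,k)$ adds the \emph{topmost} path $p_1\in P(\ceC(v_1)_{\bar k,\,i+1})$, not $p_k$ or ``$p_{k+1}$'', since the disjoint paths for $M_1(i+1,k-1)$ coincide with $p_2,\dots,p_k$ of $M_1(i,k)$. Your claimed identity $\rdm{K}((ijk),v')-\rdm{K}((i+1,j,k-1),v')=\vec{k}_{p_k}(v')+\vec{k}_1(v')$ fails already in the region $i'\le i,\ j'\ge j$: there the left side equals $i'=\vec{k}_1(v')$, forcing $\vec{k}_{p_k}(v')=0$, which is false in general. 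The correct telescoping relation is $\rdm{K}((ijk),\cdot)-\rdm{K}((i+1,j,k-1),\cdot)=\vec{k}_{p_1^{(i,k)}}$, and verifying it still requires the same case split on the position of $v'$ relative to $p_1$ that the direct approach needs. Since your primary route (the case analysis) is sound and matches the paper, this does not affect the validity of the proof, but the ``one-line verification'' claim for the induction should be dropped.
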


\begin{proof}
Let $v=(i,n-i-k,j)\in\rd{V}$. Since $T(M_1(i,k))$ is a Weyl-normalized monomial, the lemma is equivalent to
\[T_v:=[T(M_1(i,k))\rdtr^X(M_2(j))]_\Weyl=\rdtr^X(\gaa_v).\]
Now we just need to evaluate the exponents of the monomial $T_v$. This is given by the lowest degree path diagram of $M_1(i,k)$ and the unique path diagram of $M_2(j)$. An example is shown in Figure~\ref{fig-trigen-lodeg}.

\begin{figure}
\centering
\input{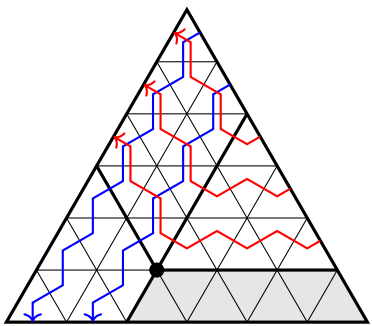}
\caption{Lowest degree term of $T_{123}$}\label{fig-trigen-lodeg}
\end{figure}

The calculation is different in the four regions separated by the thick lines in Figure~\ref{fig-trigen-lodeg}. Here we show the calculation of the exponents in the bottom right region ($i'\le i,j'\ge j$), where $\rdm{K}(v,\cdot)$ is given by \eqref{eq-trigen-exp}. The other cases are similar.

The exponent of $x_{i'j'k'}$ in $T_v$ is as follows. The chosen region is to the left of every path. Each $v_1$ corner arc contributes $n-\vec{k}_2(i'j'k')-\vec{k}_3(i'j'k')=n-j'-k'=i'$, and each $v_2$ corner arc contributes $n-\vec{k}_2(i'j'k')=n-j'=i'+k'$. Thus the total is
\[ki'+j(i'+k')=\rdm{K}(ijk,i'j'k').\qedhere\]
\end{proof}

\begin{corollary}
Theorem~\ref{thm-trX-CS} holds for $\alpha=\ceC(v_m)_{ij}$.
\end{corollary}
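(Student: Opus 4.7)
By the $\BZ/3$-equivariance of both sides (the left-hand side $\rdtr^X$ is $\BZ/3$-equivariant by Theorem~\ref{thm.Xtr0}, and the right-hand side is manifestly $\BZ/3$-invariant by construction of the compatible paths), it suffices to treat the case $m=1$. So the goal is to prove
\[\rdtr^X(\ceC(v_1)_{ij}) \;=\; \sum_{p\in P(\ceC(v_1)_{ij})} x^{\vec{k}_p} \quad\text{for all } i,j\in\JJ.\]

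The plan is to set $\cev{z}_{ij} := \sum_{p\in P(\ceC(v_1)_{ij})} x^{\vec{k}_p} = T(\ceC(v_1)_{ij})$ and verify that the collection $\{\cev{z}_{ij}\}$ satisfies the two hypotheses of the uniqueness Lemma~\ref{lemma-trX-CS-uni}. The second hypothesis $\cev{z}_{ij}=0$ for $i<j$ is immediate from the definition: when $i<j$, any compatible path would have to move strictly upward net, violating the prohibition on downward vertical segments, so $P(\ceC(v_1)_{ij})=\emptyset$ and the sum is empty.

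The core step is to check the system \eqref{eq-T-eq1}. Unfolding $T$ through the definition \eqref{eq-path-def} of path products and then through the alternating-sum expression for the quantum minor $M_1(i,k)$, we have
\[\sum_{\sigma\in\Sym_k}(-q)^{\ell(\sigma)}\cev{z}_{\bar{k},i+\sigma(1)}\cdots\cev{z}_{n,i+\sigma(k)}
\;=\; T(M_1(i,k)),\]
with no rearrangement or Weyl renormalization needed, since $T$ on a product is by definition the sum over tuples of compatible paths taken in the given order. Now Lemma~\ref{lemma-trX-CS-M1} asserts precisely $T(M_1(i,k)) = \rdtr^X(M_1(i,k))$, which is exactly the right-hand side of \eqref{eq-T-eq1}. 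Thus both hypotheses of Lemma~\ref{lemma-trX-CS-uni} are met, and its uniqueness assertion forces $\cev{z}_{ij}=\rdtr^X(\ceC(v_1)_{ij})$ for all $i,j$. Applying the $\BZ/3$ rotation gives the same identity for $m=2,3$, completing the proof.

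There is no serious obstacle remaining: the hard combinatorics (cancellation of merging path diagrams via the involution on $P_0$, and evaluation of the four coordinate regions) was already packaged into Lemmas~\ref{lemma-trigen-qminor} and \ref{lemma-trX-CS-M1}, and the hard algebra (invertibility of $\rdtr^X(M_1(i,k))$ enabling the inductive solution) was packaged into Lemma~\ref{lemma-trX-CS-uni}. The only point that deserves a brief sentence of care is confirming that, after specializing $\sigma=\mathrm{id}$ in \eqref{eq-T-eq1} and reading off $\cev{z}_{nn}$ in the base case $(i,k)=(n-1,1)$, the definition $\cev{z}_{nn}=T(\ceC(v_1)_{nn})$ indeed equals $\rdtr^X(M^{[n;n]}_{[n;n]}(\ceC(v_1)))$ — which is itself a direct consequence of the single-path computation in Lemma~\ref{lemma-trigen-qminor}.
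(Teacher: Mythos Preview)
Your proposal is correct and follows essentially the same route as the paper: set $\cev{z}_{ij}=T(\ceC(v_1)_{ij})$, observe that the empty-path set handles $i<j$, expand the left side of \eqref{eq-T-eq1} as $T(M_1(i,k))$ and invoke Lemma~\ref{lemma-trX-CS-M1}, then conclude by the uniqueness in Lemma~\ref{lemma-trX-CS-uni} and rotate for $m=2,3$. The extra paragraph on the base case $\cev{z}_{nn}$ is unnecessary (it is already absorbed into the uniqueness lemma), but harmless.
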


\begin{proof}
By expanding the definitions of $T$ in Lemma~\ref{lemma-trX-CS-M1}, we see $\cev{z}_{ij}=T(\cev{C}(v_1)_{ij})$ solves the equations in Lemma~\ref{lemma-trX-CS-uni}. By uniqueness, Theorem~\ref{thm-trX-CS} holds for $\alpha=\ceC(v_1)_{ij}$. The cases $m=2,3$ are obtained by rotation.
\end{proof}

The proof of Theorem~\ref{thm-trX-CS} is complete once the following lemma is proved.

\begin{lemma}
Theorem~\ref{thm-trX-CS} holds for $\alpha=C(v_m)_{ij}$.
\end{lemma}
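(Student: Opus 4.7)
By the $\BZ/3$-equivariance of $\rdtr^X$ (Theorem~\ref{thm.Xtr0}) and of the path-sum construction, it suffices to treat $m=1$. The idea is to exploit the antipode structure of $\Oq\cong\cS(C(v_1))$: the Hopf-algebra identity $\sum_k u_{ik}S(u_{kj})=\delta_{ij}$, combined with $S(u_{kj})=(-q)^{k-j}\cev u_{\bj\bk}$ from~\eqref{eq.Saij}, translates to the skein relation
\[\sum_{k=1}^n (-q)^{k-j}\,C(v_1)_{ik}\,\ceC(v_1)_{\bj\bk}\;=\;\delta_{ij}\quad\text{in }\reduceS(\stdT).\]
Applying $\rdtr^X$ and substituting the already-proved formula $\rdtr^X(\ceC(v_1)_{\bj\bk})=T(\ceC(v_1)_{\bj\bk})$, we obtain, for each fixed $i$, a linear system over $\rd\FG(\stdT)$:
\[\sum_{k=1}^n (-q)^{k-j}\,t_{ik}\,T(\ceC(v_1)_{\bj\bk})\;=\;\delta_{ij},\qquad j=1,\dots,n,\qquad(\ast)\]
in the unknowns $t_{ik}:=\rdtr^X(C(v_1)_{ik})$.

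The coefficient matrix of $(\ast)$ has a triangular structure: $T(\ceC(v_1)_{\bj\bk})$ vanishes whenever $\bj<\bk$ (equivalently $j>k$), since $P(\ceC(v_1)_{\bj\bk})$ is empty for bad arcs. Each diagonal coefficient $T(\ceC(v_1)_{\bj\bj})$ comes from a single compatible path (a rotated analogue of Lemma~\ref{lemma-trX-CS-diag}), and so is an invertible Laurent monomial in the quantum torus $\rd\FG(\stdT)$. Hence $(\ast)$ admits a unique solution, expressible as an explicit rational combination of the monomials $T(\ceC(v_1)_{\bj\bk})$ by back-substitution in decreasing $j$.

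It then remains to verify that $\tilde t_{ik}:=T(C(v_1)_{ik})$ also satisfies $(\ast)$; by uniqueness this forces $t_{ik}=\tilde t_{ik}$ and completes the proof. The required identity
\[\sum_{k=1}^n (-q)^{k-j}\,T(C(v_1)_{ik})\,T(\ceC(v_1)_{\bj\bk})\;=\;\delta_{ij}\]
is the quantum cofactor relation $\bM_1\cdot S(\bM_1)=\id$ for the Chekhov--Shapiro transport matrix at $v_1$. I would prove it by a sign-reversing involution on pairs of compatible paths $(p,p')\in P(C(v_1)_{ik})\times P(\ceC(v_1)_{\bj\bk})$, patterned after the cancellation in Lemma~\ref{lemma-trigen-qminor}: the concatenation of $p$ and $p'$ produces a closed loop near $v_1$, non-trivial loops cancel in $\pm q$-matched pairs (with the $q$-bookkeeping supplied by Lemma~\ref{lemma-path-comm}), and only the trivial pairing — which occurs exactly when $i=j$ — survives. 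This combinatorial verification is the main obstacle; the rest of the argument is structural, relying only on the skein identity above and the triangular solvability of $(\ast)$.
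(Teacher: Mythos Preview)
Your route is genuinely different from the paper's, and the difference is where the work goes. The paper does not stay at the single vertex $v_1$. It uses the skein relation~\eqref{e.capnearwall} to rewrite
\[
C(v_1)_{ij}=\sum_{k}\ccc_k^{-1}\,\ceC(v_3)_{ki}\,\ceC(v_2)_{j\bar k},
\]
applies $\rdtr^X$ together with the already proved case for $\ceC(v_m)$, and then invokes (a modification of the proof of) Chekhov--Shapiro's transport-matrix identity \cite[Theorem~2.6]{CS}, in the form $\bM_1=\cev{\bM}_3\,\bC^{-1}\,\cev{\bM}_2$, to finish in one line. All the hard path combinatorics is outsourced to~\cite{CS}.

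Your antipode approach is legitimate and the structural half is fine: the coefficient matrix of~$(\ast)$ is lower-triangular with invertible monomial diagonal, so the solution is unique. The gap is exactly where you locate it. The identity you need,
\[
\sum_{k}(-q)^{k-j}\,T(C(v_1)_{ik})\,T(\ceC(v_1)_{\bj\bk})=\delta_{ij},
\]
is equivalent to $\cev{\bM}_1\,\bC\,\bM_1=\bC$, and that is itself a substantive Chekhov--Shapiro-type statement which you would have to prove from scratch. Your involution sketch is too thin to carry it: the paths $p\in P(C(v_1)_{ik})$ and $p'\in P(\ceC(v_1)_{\bj\bk})$ are of different species (left-to-right versus right-to-left, both with vertical-up segments), their concatenation is a closed loop only when $i=j$, and Lemma~\ref{lemma-path-comm} as stated handles only pairs of $\ceC(v_1)$-paths. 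A correct involution must change the junction index $k$ while tracking the $(-q)$-weight across two mismatched path families, and getting that $q$-bookkeeping right is essentially reproving a piece of~\cite{CS}. The paper avoids exactly this by quoting~\cite{CS}; if you want your argument to be self-contained, you owe the reader the full combinatorics, not a sketch.
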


\begin{proof}
Again by rotation, we can assume $m=1$. Modify the proof of \cite[Theorem~2.6]{CS} to obtain the matrix identity
\begin{equation}
\vec{M}_1=\cev{\vec{M}}_3\vec{C}^{-1}\cev{\vec{M}}_2.
\end{equation}
See Theorem~\ref{thm.CS} for the notations. On the other hand, the following holds in the skein algebra by \eqref{e.capnearwall}.
\[\input{T-eq3}\]
After applying $\rdtr^X$ and using the case $\ceC(v_m)_{ij}$ of Theorem~\ref{thm-trX-CS}, we obtain
\begin{align*}
\rdtr^X(C(v_1)_{ij})&=\sum_{k=1}^n\rdtr^X(\ceC(v_3)_{ki})\returnc_k^{-1}\rdtr^X(\ceC(v_2)_{\bar{k}j})
=\sum_{k=1}^n(\cev{\vec{M}}_3)_{ik}\returnc_k^{-1}(\cev{\vec{M}}_2)_{\bar{k}j}\\
&=(\cev{\vec{M}}_3\vec{C}^{-1}\cev{\vec{M}}_2)_{ij}
=(\vec{M}_1)_{ij}.
\end{align*}
This is the desired result by the definition of $\vec{M}_1$.
\end{proof}

\bibliographystyle{hamsalpha}
\bibliography{biblio}

\providecommand{\bysame}{\leavevmode\hbox to3em{\hrulefill}\thinspace}
\providecommand{\href}[2]{#2}
\providecommand{\eprint}{\begingroup \urlstyle{rm}\Url}
\begin{thebibliography}{BZBJ18}

\bibitem[AGS95]{AGS}
Anton~Yu. Alekseev, Harald Grosse, and Volker Schomerus, \emph{Combinatorial quantization of the {H}amiltonian {C}hern-{S}imons theory. {I}}, Comm. Math. Phys. \textbf{172} (1995), no.~2, 317--358.

\bibitem[BFR23]{BFR}
Stéphane Baseilhac, Matthieu Faitg, and Philippe Roche, \emph{Unrestricted quantum moduli algebras, iii: Surfaces of arbitrary genus and skein algebras}, 2023, Preprint arXiv:2302.00396.

\bibitem[BW11]{BW}
Francis Bonahon and Helen Wong, \emph{Quantum traces for representations of surface groups in {${\rm SL}_2(\mathbb{C})$}}, Geom. Topol. \textbf{15} (2011), no.~3, 1569--1615.

\bibitem[BZ05]{BZ}
Arkady Berenstein and Andrei Zelevinsky, \emph{Quantum cluster algebras}, Adv. Math. \textbf{195} (2005), no.~2, 405--455.

\bibitem[BZBJ18]{BBJ}
David Ben-Zvi, Adrien Brochier, and David Jordan, \emph{Integrating quantum groups over surfaces}, J. Topol. \textbf{11} (2018), no.~4, 874--917.

\bibitem[CF00]{CF}
L.~O. Chekhov and V.~V. Fock, \emph{Observables in 3{D} gravity and geodesic algebras}, vol.~50, 2000, Quantum groups and integrable systems (Prague, 2000), pp.~1201--1208.

\bibitem[CKL23]{CKL}
Francois Costantino, Julien Korinman, and Thang T.~Q. L\^e, \emph{To appear}, 2023.

\bibitem[CKM14]{CKM}
Sabin Cautis, Joel Kamnitzer, and Scott Morrison, \emph{Webs and quantum skew {H}owe duality}, Math. Ann. \textbf{360} (2014), no.~1-2, 351--390.

\bibitem[CL22]{CL}
Francesco Costantino and Thang T.~Q. L\^{e}, \emph{Stated skein algebras of surfaces}, J. Eur. Math. Soc. (JEMS) \textbf{24} (2022), no.~12, 4063--4142.

\bibitem[CS20]{CS}
L.~Chekhov and M.~Shapiro, \emph{Darboux coordinates for symplectic groupoid and cluster algebras}, 2020, Preprint arXiv:2003.07499.

\bibitem[CSV95]{CSV}
Andreas Cap, Hermann Schichl, and Ji\v{r}\'{\i} Van\v{z}ura, \emph{On twisted tensor products of algebras}, Comm. Algebra \textbf{23} (1995), no.~12, 4701--4735.

\bibitem[Dou21]{Douglas}
Daniel~C. Douglas, \emph{Quantum traces for $\mathrm{SL}_n(\mathbb{C})$: the case $n=3$}, 2021, Preprint arXiv:2101.06817.

\bibitem[DS20]{DouglasSun}
Daniel~C. Douglas and Zhe Sun, \emph{Tropical fock-goncharov coordinates for $\mathrm{SL}_3$-webs on surfaces i: construction}, 2020, Preprint arXiv:2011.01768.

\bibitem[FG06]{FG}
Vladimir Fock and Alexander Goncharov, \emph{Moduli spaces of local systems and higher {T}eichm\"{u}ller theory}, Publ. Math. Inst. Hautes \'{E}tudes Sci. (2006), no.~103, 1--211.

\bibitem[FG09]{FG2}
Vladimir~V. Fock and Alexander~B. Goncharov, \emph{Cluster ensembles, quantization and the dilogarithm}, Ann. Sci. \'{E}c. Norm. Sup\'{e}r. (4) \textbf{42} (2009), no.~6, 865--930.

\bibitem[FS22]{FrohmanSikora}
Charles Frohman and Adam~S. Sikora, \emph{{$SU(3)$}-skein algebras and webs on surfaces}, Math. Z. \textbf{300} (2022), no.~1, 33--56.

\bibitem[Gav07]{Gavarini}
Fabio Gavarini, \emph{P{BW} theorems and {F}robenius structures for quantum matrices}, Glasg. Math. J. \textbf{49} (2007), no.~3, 479--488.

\bibitem[Goo06]{Goodearl}
Kenneth~R. Goodearl, \emph{Commutation relations for arbitrary quantum minors}, Pacific J. Math. \textbf{228} (2006), no.~1, 63--102.

\bibitem[GS15]{GS}
Alexander Goncharov and Linhui Shen, \emph{Geometry of canonical bases and mirror symmetry}, Invent. Math. \textbf{202} (2015), no.~2, 487--633.

\bibitem[GW04]{GW}
K.~R. Goodearl and R.~B. Warfield, Jr., \emph{An introduction to noncommutative {N}oetherian rings}, second ed., London Mathematical Society Student Texts, vol.~61, Cambridge University Press, Cambridge, 2004.

\bibitem[Hig20]{Higgins}
Vijay Higgins, \emph{Triangular decomposition of $sl_3$ skein algebras}, 2020, Preprint arXiv:2008.09419.

\bibitem[IY21]{IY}
Tsukasa Ishibashi and Wataru Yuasa, \emph{Skein and cluster algebras of marked surfaces without punctures for $\mathfrak{sl}_3$}, 2021, Preprint arXiv:2101.00643.

\bibitem[JZ97]{JZ}
Hans~Plesner Jakobsen and Hechun Zhang, \emph{The center of the quantized matrix algebra}, J. Algebra \textbf{196} (1997), no.~2, 458--474.

\bibitem[Kas95]{Kass}
Christian Kassel, \emph{Quantum groups}, Graduate Texts in Mathematics, vol. 155, Springer-Verlag, New York, 1995.

\bibitem[Kim20]{Kim1}
Hyun~Kyu Kim, \emph{${\rm sl}_3$-laminations as bases for ${\rm pgl}_3$ cluster varieties for surfaces}, 2020, Preprint arXiv:2011.14765.

\bibitem[Kim21]{Kim2}
\bysame, \emph{The mutation compatibility of the ${\rm sl}_3$ quantum trace maps for surfaces}, 2021, Preprint arXiv:2104.06286.

\bibitem[KL85]{KL}
G.~R. Krause and T.~H. Lenagan, \emph{Growth of algebras and {G}efand-{K}irillov dimension}, Research Notes in Mathematics, vol. 116, Pitman (Advanced Publishing Program), Boston, MA, 1985.

\bibitem[KLS18]{KLS}
Hyun~Kyu Kim, Thang T.~Q. Lê, and Miri Son, \emph{${\rm sl}_2$ quantum trace in quantum teichmüller theory via writhe}, 2018, Algebr. Geom. Topol, to appear.

\bibitem[KS97]{KS}
Anatoli Klimyk and Konrad Schm\"{u}dgen, \emph{Quantum groups and their representations}, Texts and Monographs in Physics, Springer-Verlag, Berlin, 1997.

\bibitem[KS09]{KolbS}
Stefan Kolb and Jasper~V. Stokman, \emph{Reflection equation algebras, coideal subalgebras, and their centres}, Selecta Math. (N.S.) \textbf{15} (2009), no.~4, 621--664.

\bibitem[Kup96]{Kuperberg}
Greg Kuperberg, \emph{Spiders for rank {$2$} {L}ie algebras}, Comm. Math. Phys. \textbf{180} (1996), no.~1, 109--151.

\bibitem[Le18]{Le:triangulation}
Thang T.~Q. Le, \emph{Triangular decomposition of skein algebras}, Quantum Topol. \textbf{9} (2018), no.~3, 591--632.

\bibitem[L{e}19]{Le:qtrace}
Thang T.~Q. L{e}, \emph{Quantum {T}eichm\"{u}ller spaces and quantum trace map}, J. Inst. Math. Jussieu \textbf{18} (2019), no.~2, 249--291.

\bibitem[LMO88]{Leroy}
A.~Leroy, J.~Matczuk, and J.~Okninski, \emph{On the {G}elfand-{K}irillov dimension of normal localizations and twisted polynomial rings}, Perspectives in ring theory ({A}ntwerp, 1987), NATO Adv. Sci. Inst. Ser. C: Math. Phys. Sci., vol. 233, Kluwer Acad. Publ., Dordrecht, 1988, pp.~205--214.

\bibitem[LS93]{Levasseur}
T.~Levasseur and J.~T. Stafford, \emph{The quantum coordinate ring of the special linear group}, J. Pure Appl. Algebra \textbf{86} (1993), no.~2, 181--186.

\bibitem[LS21]{LS}
Thang T.~Q. L\^e and Adam Sikora, \emph{Stated $sl(n$)-skein modules and algebras}, 2021, Preprint arXiv:2201.00045.

\bibitem[LS23]{LS2}
\bysame, \emph{To appear}, 2023, Preprint 2023.

\bibitem[LY22]{LY2}
Thang T.~Q. Le and Tao Yu, \emph{Quantum traces and embeddings of stated skein algebras into quantum tori}, Selecta Math. (N.S.) \textbf{28} (2022), no.~4, Paper No. 66, 48.

\bibitem[Maj95]{Majid}
Shahn Majid, \emph{Foundations of quantum group theory}, Cambridge University Press, Cambridge, 1995.

\bibitem[MOY98]{MOY}
Hitoshi Murakami, Tomotada Ohtsuki, and Shuji Yamada, \emph{Homfly polynomial via an invariant of colored plane graphs}, Enseign. Math. (2) \textbf{44} (1998), no.~3-4, 325--360.

\bibitem[MR01]{MR}
J.~C. McConnell and J.~C. Robson, \emph{Noncommutative {N}oetherian rings}, revised ed., Graduate Studies in Mathematics, vol.~30, American Mathematical Society, Providence, RI, 2001, With the cooperation of L. W. Small.

\bibitem[Mul16]{Muller}
Greg Muller, \emph{Skein and cluster algebras of marked surfaces}, Quantum Topol. \textbf{7} (2016), no.~3, 435--503.

\bibitem[Pen12]{Penner}
Robert~C. Penner, \emph{Decorated {T}eichm\"{u}ller theory}, QGM Master Class Series, European Mathematical Society (EMS), Z\"{u}rich, 2012, With a foreword by Yuri I. Manin.

\bibitem[Prz91]{Prz}
J\'{o}zef~H. Przytycki, \emph{Skein modules of {$3$}-manifolds}, Bull. Polish Acad. Sci. Math. \textbf{39} (1991), no.~1-2, 91--100.

\bibitem[PS19]{PS2}
J\'{o}zef~H. Przytycki and Adam~S. Sikora, \emph{Skein algebras of surfaces}, Trans. Amer. Math. Soc. \textbf{371} (2019), no.~2, 1309--1332.

\bibitem[RT90]{RT}
N.~Yu. Reshetikhin and V.~G. Turaev, \emph{Ribbon graphs and their invariants derived from quantum groups}, Comm. Math. Phys. \textbf{127} (1990), no.~1, 1--26.

\bibitem[She22]{Shen}
Linhui Shen, \emph{Cluster nature of quantum groups}, 2022, Preprint arXiv:2209.06258.

\bibitem[Sik05]{Sikora}
Adam~S. Sikora, \emph{Skein theory for {${\rm SU}(n)$}-quantum invariants}, Algebr. Geom. Topol. \textbf{5} (2005), 865--897.

\bibitem[SS17]{SS2}
Gus Schrader and Alexander Shapiro, \emph{Continuous tensor categories from quantum groups i: algebraic aspects}, 2017, Preprint arXiv:1708.08107.

\bibitem[SS19]{SS0}
\bysame, \emph{A cluster realization of {$U_q( {sl}_{ n})$} from quantum character varieties}, Invent. Math. \textbf{216} (2019), no.~3, 799--846.

\bibitem[Tur88]{Turaev}
V.~G. Turaev, \emph{The {C}onway and {K}auffman modules of a solid torus}, Zap. Nauchn. Sem. Leningrad. Otdel. Mat. Inst. Steklov. (LOMI) \textbf{167} (1988), no.~Issled. Topol. 6, 79--89, 190.

\end{thebibliography}

\end{document}